\tikzset{>={Latex[width=2mm,length=2mm]}}
\renewcommand{\glsglossarymark}[1]{}
\date{}
\address{Institut f\"{u}r Informatik, Universit\"{a}t Heidelberg, 69120 Heidelberg, Deutschland}
\author[Bertille~Granet]{Bertille Granet}
\email{granet@informatik.uni-heidelberg.de}
\newcommand{\COMMENT}[1]{}
\newcommand{\PRIVATEAPPENDIX}[1]{}
\newcommand{\APPENDIX}[1]{}
\newcommand{\NOAPPENDIX}[1]{#1}
\renewcommand{\APPENDIX}[1]{#1}\renewcommand{\NOAPPENDIX}[1]{}% comment out for the appendix-free version
\newcommand{\onlyinsubfile}[1]{#1}
\newcommand{\notinsubfile}[1]{}
\setlist[enumerate]{itemsep=5pt, topsep=5pt,leftmargin=1.5cm}
\setlist[itemize]{itemsep=5pt, topsep=5pt, label=--}
\newlist{steps}{enumerate}{1}
\setlist[steps,1]{
	label=\textbf{Step \arabic*:},
	ref=\arabic*, % if to be used with \cref, don't provide label string or parentheses
	wide,
	parsep=0pt,
	itemsep=10pt,
	topsep=10pt}
\Crefname{stepsi}{Step}{Steps}
\crefname{stepsi}{Step}{Steps}
\newlist{case}{enumerate}{1}
\setlist[case,1]{
	label=\textbf{Case \arabic*:},
	ref=\arabic*, % if to be used with \cref, don't provide label string or parentheses
	wide,
	parsep=0pt,
	itemsep=10pt,
	topsep=10pt}
\Crefname{casei}{Case}{Cases}
\crefname{casei}{Case}{Cases}
\Crefname{enumi}{}{}
\Crefname{thm}{Theorem}{Theorems}
\Crefname{lm}{Lemma}{Lemmas}
\Crefname{cor}{Corollary}{Corollaries}
\Crefname{prop}{Proposition}{Propositions}
\Crefname{claim}{Claim}{Claims}
\Crefname{equation}{}{}
\Crefname{conjecture}{Conjecture}{Conjectures}
\Crefname{figure}{Figure}{Figures}
\Crefname{fact}{Fact}{Facts}
\theoremstyle{definition}
\newtheorem{definition}{Definition}[section]
\theoremstyle{plain}
\newtheorem{claim}{Claim}
\newtheorem{prop}[definition]{Proposition}
\newtheorem{thm}[definition]{Theorem}
\newtheorem{cor}[definition]{Corollary}
\newtheorem{lm}[definition]{Lemma}
\newtheorem{fact}[definition]{Fact}
\newtheorem{conjecture}[definition]{Conjecture}
\newtheorem*{lm*}{Lemma}
\newenvironment{proofclaim}[1][Proof of Claim]{\begin{proof}[#1]}{\end{proof}}
\newcounter{claimnumber}
\numberwithin{equation}{section}
\newcommand{\eps}{\varepsilon}
\renewcommand{\epsilon}{\varepsilon}
\DeclareMathOperator{\Bin}{Bin}
\DeclareMathOperator{\HGeom}{Hyp}
\DeclareMathOperator{\reg}{reg}
\newcommand{\cA}{\mathcal{A}}
\newcommand{\cB}{\mathcal{B}}
\newcommand{\cC}{\mathcal{C}}
\newcommand{\cF}{\mathcal{F}}
\newcommand{\cH}{\mathcal{H}}
\newcommand{\cI}{\mathcal{I}}
\newcommand{\cM}{\mathcal{M}}
\newcommand{\cP}{\mathcal{P}}
\newcommand{\cQ}{\mathcal{Q}}
\newcommand{\cR}{\mathcal{R}}
\newcommand{\cU}{\mathcal{U}}
\newcommand{\cV}{\mathcal{V}}
\newcommand{\tC}{\widetilde{C}}
\newcommand{\tD}{\widetilde{D}}
\newcommand{\tF}{\widetilde{F}}
\newcommand{\tH}{\widetilde{H}}
\newcommand{\tM}{\widetilde{M}}
\newcommand{\tN}{\widetilde{N}}
\newcommand{\tP}{\widetilde{P}}
\newcommand{\tQ}{\widetilde{Q}}
\newcommand{\tR}{\widetilde{R}}
\newcommand{\tS}{\widetilde{S}}
\newcommand{\tV}{\widetilde{V}}
\newcommand{\tW}{\widetilde{W}}
\newcommand{\tGamma}{\widetilde{\Gamma}}
\newcommand{\hC}{\widehat{C}}
\newcommand{\hH}{\widehat{H}}
\newcommand{\hM}{\widehat{M}}
\newcommand{\hR}{\widehat{R}}
\newcommand{\hU}{\widehat{U}}
\newcommand{\hV}{\widehat{V}}
\newcommand{\hW}{\widehat{W}}
\newcommand{\sC}{\mathscr{C}}
\newcommand{\sM}{\mathscr{M}}
\newcommand{\sP}{\mathscr{P}}
\newcommand{\sU}{\mathscr{U}}
\newcommand{\tk}{\widetilde{k}}
\newcommand{\tm}{\widetilde{m}}
\newcommand{\hk}{\widehat{k}}
\newglossaryentry{setup}
{
    name=setup,
    description={\cref{def:ST}: \cref{def:ST-C,def:ST-P',def:ST-P,def:ST-P*,def:ST-R,def:ST-U',def:ST-U'supreg,def:ST-U}}
}
\newglossaryentry{bi-setup}
{
    name=bi-setup,
    description={\cref{def:BST}: \cref{def:BST-P,def:BST-R,def:BST-C,def:BST-U,def:BST-P',def:BST-U',def:BST-U'supreg,def:BST-P*}}
}
\newglossaryentry{cycle-setup}
{
	name=cycle-setup,
	description={\cref{def:CST}: \cref{def:CST-D,def:CST-M,def:CST-ST}}
}
\newglossaryentry{cycle-framework}
{
	name=cycle-framework,
	description={\cref{def:CF}: \cref{def:CF-C,def:CF-M,def:CF-P,def:CF-P*,def:CF-U}}
}
\newglossaryentry{consistent cycle-framework}
{
	name=consistent cycle-framework,
	description={\cref{def:CCF}}
}
\newglossaryentry{consistent bi-system}
{
	name=consistent bi-system,
	description={\cref{def:CBSys}: \cref{def:CBSys-biproboutexp,def:CBSys-C,def:CBSys-C0,def:CBSys-deg,def:CBSys-P,def:CBSys-P0,def:CBSys-R,def:CBSys-R0}}
}
\newglossaryentry{contraction}
{
	name=matching contraction,
	description={\cref{def:contractexpand}\cref{def:contract}}
}
\newglossaryentry{expansion}
{
	name=matching expansion,
	description={\cref{def:contractexpand}\cref{def:expand}}
}
\newglossaryentry{equivalent linear forests}
{
	name=equivalent linear forests,
	description={\cref{def:equivalentP}}
}
\newglossaryentry{feasible system}
{
	name=feasible system,
	description={\cref{def:feasible}: \cref{def:feasible-backward,def:feasible-linforest,def:feasible-exceptional}}
}
\newglossaryentry{pseudo-feasible system}
{
	name=pseudo-feasible system,
	description={\cref{def:pseudo}: \cref{def:feasible-backward} and \cref{def:feasible-cycle,def:feasible-degV',def:feasible-degV*}}
}
\newglossaryentry{placeholder}
{
	name=placeholder,
	description={\cref{def:placeholder}}
}
\newglossaryentry{universal walk}
{
	name=universal walk,
	description={\cref{def:U}: \cref{def:U-degree,def:U-edges,def:U-size}}
}
\newglossaryentry{bi-universal walk}
{
	name=bi-universal walk,
	description={\cref{def:BU}: \cref{def:BU-degree,def:BU-edges,def:BU-size}}
}
\newglossaryentry{canonical interval partition}
{
	name=canonical interval partition,
	description={\cref{def:interval}}
}
\newglossaryentry{special path system}
{
	name=special path system,
	description={\cref{def:SPS}: \cref{def:SPS-V+-,def:SPS-V0}}
}
\newglossaryentry{special factor}
{
	name=special factor,
	description={\cref{def:SF}}
}
\newglossaryentry{friendly extended special path system}
{
	name=friendly extended special path system,
	description={\cref{def:FESPS}: \cref{def:FESPS-M,def:FESPS-SPS}}
}
\newglossaryentry{extended special path system}
{
	name=extended special path system,
	description={\cref{def:ESPS}}
}
\newglossaryentry{extended special factor}
{
	name=extended special factor,
	description={\cref{def:ESF}}
}
\newglossaryentry{special cover}
{
	name=special cover,
	description={\cref{def:SC}}
}
\newglossaryentry{complete special sequence}
{
	name=complete special sequence,
	description={\cref{def:MSC}}
}
\newglossaryentry{localised special cover}
{
	name=localised special cover,
	description={\cref{def:LSC}}
}
\newglossaryentry{balanced special cover}
{
	name=balanced special cover,
	description={\cref{def:BSC}}
}
\newglossaryentry{uniform refinement}
{
	name={uniform refinement},
	description={\cref{def:uniformref}: \cref{def:URef}}
}
\newglossaryentry{optimal partition}
{
	name=optimal partition,
	description={\cref{def:optimal}}
}
\newglossaryentry{exceptional set}
{
	name={\ensuremath{(\varepsilon, \cU)}-exceptional set},
	description={\cref{def:ES}: \cref{def:ES-backward,def:ES-size}},
}
\newglossaryentry{epsilon partition}
{
	name={\ensuremath{(\varepsilon, 4)}-partition},
	description={\cref{def:epspartition}}
}
\title{Hamilton decompositions of regular bipartite tournaments}
\begin{document}

\renewcommand{\onlyinsubfile}[1]{}
\renewcommand{\notinsubfile}[1]{#1}

\begin{abstract}
	
	\onlyinsubfile{
		\setcounter{section}{0}
\section{Abstract}}

A \emph{regular bipartite tournament} is an orientation of a complete balanced bipartite graph $K_{2n,2n}$ where every vertex has its in- and outdegree both equal to $n$.
In 1981, Jackson conjectured that any regular bipartite tournament can be decomposed into Hamilton cycles. We prove this conjecture for all sufficiently large bipartite tournaments.
Along the way, we also prove several further results, including a conjecture of Liebenau and Pehova on Hamilton decompositions of dense bipartite digraphs.

\end{abstract}

\maketitle

\section{Introduction}\label{sec:intro}

	\onlyinsubfile{
\section{Introduction}}

Given a (directed) graph $G$, can we partition the edges of $G$ into Hamilton cycles? This is one of the central questions of graph theory, which dates back to the 19\textsuperscript{th} century and has attracted considerable interest since then.
In 1883, Walecki \cite{lucas1883recreationsII} (see also \cite{alspach2008wonderful}) showed that the edges of a complete graph on $n$ vertices can be decomposed into $\lfloor\frac{n-1}{2}\rfloor$ Hamilton cycles and at most one perfect matching (depending on the parity of $n$). 
It follows that complete digraphs of odd order have a Hamilton decomposition (see e.g.\ \cite{bermond1976decomposition}).
In 1980, Tillson \cite{tillson1980hamiltonian} showed that complete digraphs on $2n\geq 8$ vertices can be decomposed into Hamilton cycles. (Bermond and Faber~\cite{bermond1976decomposition} verified that such decompositions do not exist for $2n\in \{4,6\}$.)
The analogous question for regular tournaments was posed by Kelly in 1968 (see \cite{moon1968topics}) and resolved for large tournaments by K\"{u}hn and Osthus \cite{kuhn2013hamilton} in 2013. Building on this, Csaba, K\"{u}hn, Lo, Osthus, and Treglown \cite{csaba2016proof} extended Walecki's result to show that even-regular graphs whose degree is sufficiently large to ensure the existence of a single Hamilton cycle also have a Hamilton decomposition.

Walecki's construction \cite{lucas1883recreationsII} can also be used to show that a complete bipartite graph on vertex classes of size $2n$ can be decomposed into Hamilton cycles. In 1972, Dirac \cite{dirac1972hamilton} showed that a complete bipartite graph on vertex classes of size $2n+1$ can be decomposed into $n$ Hamilton cycles and one perfect matching.
More generally, Hetyei \cite{hetyei19751}
and Laskar and Auerbach \cite{laskar1976decomposition} independently showed in the 1970's that complete $r$-partite graphs on vertex classes of size $n$ can be decomposed into $\lfloor\frac{n(r-1)}{2}\rfloor$ Hamilton cycles and at most one perfect matching (depending on the parity of $n(r-1)$).
In 1997, Ng \cite{ng1997hamiltonian} showed that complete $r$-partite digraphs on vertex classes of size $n$ have a Hamilton decomposition if and only if $(r,n)\notin\{(4,1),(6,1)\}$.

\subsection{Partite tournaments}

Throughout this paper, a \emph{digraph} $D$ consists of a set $V(D)$ of vertices and a set $E(D)$ of directed edges on $V(D)$. (Note that, if $D$ is a digraph and $u,v\in V(D)$ are distinct, then $D$ may contain a directed edge $uv$ from $u$ to $v$ as well as a directed edge $vu$ from $v$ to $u$.)
Given $r\geq 2$, we say that a digraph $T$ is an \emph{$r$-partite tournament} if $T$ can be obtained by orienting the edges of a complete $r$-partite graph. (Note that, in an $r$-partite tournament, there exists at most one edge between any two distinct vertices.) 
A digraph $D$ is \emph{$r$-regular} if each vertex of $D$ has both its in- and outdegree equal to $r$. A digraph $D$ is \emph{regular} if there exists $r$ such that $D$ is $r$-regular. 

K\"uhn and Osthus \cite{kuhn2014hamilton} showed that if $r\geq 4$, then any sufficiently large regular $r$-partite tournament can be decomposed into Hamilton cycles. They also conjectured that this can be extended to the $r=3$ case (see \cite[Conjecture 1.14]{kuhn2014hamilton}). 
However, we observe that this conjecture is false.

\begin{prop}\label{prop:tripartite}
	For any integer $n\geq 2$, there exists a regular tripartite tournament on vertex classes of size $n$ which does not have a Hamilton decomposition.	
\end{prop}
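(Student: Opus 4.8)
The plan is to exhibit, for each $n\ge 2$, an $n$-regular tripartite tournament $T_n$ containing an edge that lies on \emph{no} Hamilton cycle at all; since a Hamilton decomposition must use every edge, this rules one out immediately. For the construction I would take a minimally perturbed ``blow-up of a directed triangle''. Write the vertex classes as $A=\{a_0,\dots,a_{n-1}\}$, $B=\{b_0,\dots,b_{n-1}\}$, $C=\{c_0,\dots,c_{n-1}\}$; orient every $A$--$B$ edge as $a_i\to b_j$, every $B$--$C$ edge as $b_i\to c_j$, and every $C$--$A$ edge as $c_i\to a_j$; then reverse the single directed triangle $a_0\to b_0\to c_0\to a_0$, i.e.\ replace the edges $a_0b_0,\,b_0c_0,\,c_0a_0$ by $b_0a_0,\,c_0b_0,\,a_0c_0$. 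Reversing a directed cycle alters no in- or out-degree, so $T_n$ is still $n$-regular, and it is visibly a tripartite tournament. Call an edge \emph{forward} if it points $A\to B$, $B\to C$, or $C\to A$, and \emph{backward} otherwise; thus $T_n$ has exactly three backward edges, one in each bipartite part.

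The key step is to show that \emph{every} Hamilton cycle $H$ of $T_n$ uses only forward edges. First I would record the elementary fact that, in any tripartite tournament, a Hamilton cycle meets each bipartite part in exactly $n$ edges: the number of $H$-edges incident to a fixed part is $2n$ (each of its $n$ vertices meets two $H$-edges, all leaving the part), so $e_{AB}(H)+e_{CA}(H)=e_{AB}(H)+e_{BC}(H)=e_{BC}(H)+e_{CA}(H)=2n$, forcing $e_{AB}(H)=e_{BC}(H)=e_{CA}(H)=n$. Now regard the cyclic sequence of classes traversed by $H$ as a closed walk on the triangle with vertex set $\{A,B,C\}$, and let $w_{XY}$ count its $X\to Y$ steps. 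The number of visits to each class equals the number of its vertices, namely $n$, giving $w_{BA}+w_{CA}=w_{AB}+w_{CB}=w_{AC}+w_{BC}=n$; combined with $w_{AB}+w_{BA}=w_{BC}+w_{CB}=w_{CA}+w_{AC}=n$ from the previous sentence, this yields $w_{AB}=w_{BC}=w_{CA}$ and hence $w_{BA}=w_{CB}=w_{AC}=:q$, i.e.\ $H$ uses exactly $q$ backward edges in each of the three bipartite parts. Since $T_n$ has only one backward edge per part, $q\le 1$; and if $q=1$ then $H$ uses all three backward edges, hence the entire $3$-cycle $a_0\to c_0\to b_0\to a_0$, which (each of $a_0,b_0,c_0$ having in- and out-degree $1$ along $H$) forces $H$ to be that $3$-cycle --- impossible since $n\ge 2$. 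Hence $q=0$, which is the claim.

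Granting this, the backward edge $b_0a_0$ lies on no Hamilton cycle of $T_n$, so no family of edge-disjoint Hamilton cycles can cover it, and $T_n$ has no Hamilton decomposition; this refutes \cite[Conjecture~1.14]{kuhn2014hamilton}. I do not anticipate a real obstacle in carrying this out: the one point needing care is the bookkeeping that forces the three backward step-counts to coincide (this is exactly where equality of the three class sizes is used), together with the short degenerate-case check eliminating $q=1$. As a consistency check, for $n=2$ one can verify by hand that $T_2$ has, up to rotation, the unique Hamilton cycle $a_0b_1c_0a_1b_0c_1$, which indeed uses only forward edges.
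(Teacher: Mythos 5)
Your proof is correct and takes essentially the same approach as the paper: reverse a single directed triangle in the complete blow-up of the directed $C_3$, and use the degree/edge-counting argument to show that every Hamilton cycle must contain the same number of backward edges in each of the three bipartite pairs, so the three reversed edges lie on no Hamilton cycle. The construction, the balancing identities, and the elimination of the $q=1$ case all match the paper's argument.
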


In this paper, we focus on the last remaining possibility: the $r=2$ case, that is, \emph{bipartite tournaments}. 
In 1981, Jackson \cite{jackson1981long} showed than any regular bipartite tournament is Hamiltonian and conjectured that such digraphs have a Hamilton decomposition.

\begin{conjecture}[Jackson]\label{conjecture:Jackson}
	Any regular bipartite tournament can be decomposed into Hamilton cycles.
\end{conjecture}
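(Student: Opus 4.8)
The plan is to prove Conjecture~\ref{conjecture:Jackson} for all sufficiently large regular bipartite tournaments --- the asymptotic statement being the substantive content --- using the ``approximate decomposition plus absorption'' strategy that underpins the proofs of Kelly's conjecture and the Hamilton decomposition conjecture. Begin with the basic count: if $T$ is a regular bipartite tournament with vertex classes $A,B$ of size $n$, then each vertex is joined to all $n$ vertices of the opposite class, so regularity forces every in- and outdegree to equal $n/2$ (in particular $n$ is even); hence $T$ has $n^2$ edges, and since every Hamilton cycle uses $2n$ of them, a Hamilton decomposition must consist of exactly $n/2$ Hamilton cycles.

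The next step is a structural dichotomy, with constants $0<\nu\ll\tau\ll\epsilon\ll 1$. Either (i) $T$ is a \emph{bipartite robust $(\nu,\tau)$-outexpander}: for every $S\subseteq A$ with $\tau n\le|S|\le(1-\tau)n$, the set of $b\in B$ receiving at least $\nu n$ edges from $S$ has size at least $|S|+\nu n$, and symmetrically with $A$ and $B$ interchanged. Or (ii) this fails, and then a short counting argument --- using that $e(X,\overline X)=e(\overline X,X)$ for every $X\subseteq V(T)$ because $T$ is regular --- forces the offending set to have size $(1/2\pm O(\sqrt\epsilon))n$ and $T$ to be $\epsilon$-close to the complete blow-up of the directed $4$-cycle: there are partitions $A=A_1\cup A_2$ and $B=B_1\cup B_2$ into parts of size $n/2$ such that all but at most $\epsilon n^2$ edges of $T$ run $A_1\to B_1\to A_2\to B_2\to A_1$. (This near-extremal regime is precisely what does not arise for $r$-partite tournaments with $r\ge4$, which is why that case was more tractable; for $r=3$ the analogous structure is genuinely not Hamilton-decomposable --- the content of Proposition~\ref{prop:tripartite}.)

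In case (i) I would invoke a bipartite version of the robust decomposition lemma of K\"{u}hn and Osthus. Concretely: reserve inside $T$ a sparse ``robustly decomposable'' digraph $G^{\mathrm{rob}}$, built from chord absorbers and switching gadgets whose existence follows from robust expansion; take a near-optimal approximate Hamilton decomposition of $T-G^{\mathrm{rob}}$ (using an approximate-decomposition result for bipartite robust outexpanders), leaving a sparse and roughly regular bipartite leftover $H$; then the defining property of $G^{\mathrm{rob}}$ lets one decompose $G^{\mathrm{rob}}\cup H$ into the remaining Hamilton cycles, the point being that $G^{\mathrm{rob}}$ can absorb any sufficiently sparse regular bipartite leftover of the right size. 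One must check throughout that the parity constraint imposed by bipartiteness --- a Hamilton cycle in a bipartite digraph alternates between $A$ and $B$ --- is respected, which is exactly why the \emph{bipartite} absorbing gadgets are needed rather than the generic ones.

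Case (ii) must be handled directly from the blow-up structure, and I expect it to be the main obstacle. The plan is: first peel off a small number of Hamilton cycles --- at most $\sqrt\epsilon\,n$, say --- that together cover every wrongly oriented edge (routing each through the abundance of correctly oriented edges), leaving a $d$-regular bipartite digraph with $d=n/2-O(\sqrt\epsilon\,n)$, all of whose edges conform to the $\vec C_4$ pattern; then decompose that remainder explicitly. The model case is the complete blow-up of $\vec C_4$ with equal classes of size $m$: identify its vertex set with $\mathbb Z_4\times\mathbb Z_m$ and build each Hamilton cycle from perfect matchings between consecutive layers chosen so that the net shift after one full revolution traces a single $4m$-cycle, patching a few pure shifts by ad hoc matchings in the residual parity/divisibility cases. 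Making the peeling land on a clean structure with a controlled number of cycles, and then carrying out the explicit decomposition (merging sub-cycles as needed) across all these sub-cases and the small imbalances permitted by $\epsilon$, is where essentially all the technical work lies; a lesser difficulty, should no directly citable statement exist, is assembling the bipartite robust decomposition lemma with parameters matched to the density $n/2$.
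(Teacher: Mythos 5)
Your overall strategy is exactly the one the paper follows: the dichotomy of \cref{lm:twocases} (bipartite robust outexpander versus $\varepsilon$-close to the complete blow-up of $\vec C_4$), with the expander case handled by an approximate decomposition plus a bipartite version of the robust decomposition machinery of K\"uhn and Osthus (\cref{thm:biprobexp}), and the near-extremal case treated separately (\cref{thm:blowupC4}). The one place where your plan diverges, and where you yourself admit all the work lies, is case (ii): you propose to first cover \emph{every} reversed edge with $O(\sqrt\varepsilon\,n)$ Hamilton cycles and then explicitly decompose the clean blow-up remainder by an algebraic/matching construction, whereas the paper instead incorporates the reversed edges into an approximate decomposition (via the bipartite analogue of the approximate decomposition lemma of Gir\~ao et al.) and absorbs the sparse regular leftover using a newly derived analogue of the robust decomposition lemma for blow-ups of cycles. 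Be aware that your version of case (ii) is not obviously easier: covering all $\le\varepsilon n^2$ reversed edges with only $O(\sqrt\varepsilon\, n)$ Hamilton cycles while keeping the remainder exactly regular and exactly conforming to the $\vec C_4$ pattern is itself a delicate absorption problem (some vertices may be incident to linearly many reversed edges), so this step cannot be waved through; the paper's choice to absorb the leftover at the end rather than clean the digraph at the start is precisely a response to this difficulty.
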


Some progress on this conjecture was made by Liebenau and Pehova \cite{liebenau2020approximate}, who showed that any sufficiently large regular bipartite digraph of sufficiently large degree has an approximate Hamilton decomposition. 

\begin{thm}[{\cite{liebenau2020approximate}}]\label{thm:approxJackson}
	For any $\delta>\frac{1}{2}$ and $\varepsilon>0$, there exists $n_0\in \mathbb{N}$ such that any $\delta n$-regular bipartite digraph on $2n\geq n_0$ vertices contains at least $(1-\varepsilon)\delta n$ edge-disjoint Hamilton cycles. 
\end{thm}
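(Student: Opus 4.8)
The plan is to prove this by a \emph{regularity-and-blow-up} argument: we reduce $D$ to a bounded-size reduced digraph $R$, show that the hypothesis $\delta>\tfrac12$ forces $R$ to be a robustly expanding bipartite digraph and hence that $R$ has a near-optimal fractional Hamilton decomposition, and then blow up these reduced Hamilton cycles into a near-optimal collection of edge-disjoint Hamilton cycles of $D$. Summing the contributions of all the reduced Hamilton cycles will account for all but an $\varepsilon$-fraction of the $\delta n$ possible Hamilton cycles of $D$, provided the error parameters are chosen small enough in terms of $\varepsilon$ and $\delta$.

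First I would apply the Diregularity Lemma to $D$, refining the bipartition $A\cup B$, to obtain an exceptional set $V_0$ with $|V_0\cap A|=|V_0\cap B|\le\varepsilon' n$ and equally many clusters $V_1,\dots,V_k\subseteq A$ and $W_1,\dots,W_k\subseteq B$, each of size $m=(1\pm o(1))n/k$, with all but an $\varepsilon'$-fraction of ordered cluster pairs $\varepsilon'$-regular; passing to a spanning subdigraph I would further arrange that the surviving regular pairs are super-regular, losing only $o(n^2)$ edges. Let $R$ be the reduced digraph on $\{V_i\}\cup\{W_j\}$, with a directed edge from $V_i$ to $W_j$ of weight $d(V_i,W_j)$ whenever that ordered pair is regular of density at least a fixed $d>0$, and symmetrically for the edges from the $W_j$ to the $V_i$. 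Counting the out- and in-edges of $D$ at a fixed cluster shows that the total out-weight and the total in-weight at every vertex of $R$ is $(1\pm o(1))\delta k$, and since each density is at most $1$, every vertex of $R$ has both out-degree and in-degree at least $(1-o(1))\delta k$, which is larger than $k/2$ \emph{by a constant fraction} precisely because $\delta>\tfrac12$. This is the crucial structural gain: a bipartite digraph on vertex classes of size $k$ with minimum semidegree $(\tfrac12+c)k$ is a robustly expanding bipartite digraph in the sense of \cite{csaba2016proof, kuhn2014hamilton} --- any family of clusters on one side that is neither tiny nor almost all of it has robust in- and out-neighbourhoods of size at least $(\tfrac12+c')k$ on the other side, since a far-side cluster fails to be reached only by sets of size at most $(\tfrac12-c)k$. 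The robust-expansion based arguments for Hamilton decompositions, deployed in their fractional, approximate form (since $R$ is weighted and only near-regular), then yield Hamilton cycles $Z_1,Z_2,\dots$ of $R$ with non-negative integer multiplicities summing to $(1-o(1))\delta km$ such that each edge $V_i\!\to\!W_j$ of $R$ carries total multiplicity at most $(1-o(1))\,m\,d(V_i,W_j)$, i.e.\ at most the number of edge-disjoint perfect matchings that the corresponding super-regular pair supports. Establishing this input with $\delta$ allowed arbitrarily close to $\tfrac12$ is the heart of the proof; that $\tfrac12$ cannot be lowered is witnessed by constructions in the spirit of Proposition~\ref{prop:tripartite}, in which a near-regular reduced digraph of density at most $\tfrac12$ can be made disconnected and hence fails to expand.

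It remains to turn each reduced Hamilton cycle $Z_\ell$, listed with multiplicity, into an actual Hamilton cycle of $D$, edge-disjointly across all of them. Fixing $\ell$, the clusters of $D$ arranged cyclically along $Z_\ell$, together with the super-regular pairs between consecutive clusters of $Z_\ell$, form a cyclic super-regular ring; a blow-up lemma for approximate decompositions (in the spirit of Kim, K\"uhn, Osthus and Tyros, in its directed/bipartite form) lets one extract from such a ring a family of edge-disjoint Hamilton cycles using essentially all of its edges, and carrying this out simultaneously over all the rings is possible precisely because the multiplicities were chosen not to exceed the per-pair capacities; this produces $(1-o(1))\delta km=(1-o(1))\delta n$ edge-disjoint cycles that together span all cluster vertices. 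Finally, the at most $2\varepsilon' n$ vertices of $V_0$ must be incorporated into every cycle: since each exceptional vertex has in- and outdegree $\delta n$ in $D$ and $\varepsilon'$ is small, one can reserve at the outset, for each of the $\approx\delta n$ cycles, one in-edge and one out-edge at each exceptional vertex joining it to suitable clusters, and splice the exceptional vertices into the rings before the extraction step, so that each extracted cycle is a genuine Hamilton cycle of $D$. The main obstacles I anticipate are therefore twofold: verifying the robust-expansion input for \emph{every} density-weighting that can arise from a $\delta n$-regular $D$ with $\delta$ only just above $\tfrac12$ (and in the fractional form needed here), and making the blow-up step compatible both with these prescribed multiplicities and with the reserved exceptional edges. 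Choosing $k$ large and $\varepsilon',d$ small compared to $\varepsilon$ then gives at least $(1-\varepsilon)\delta n$ edge-disjoint Hamilton cycles, as required.
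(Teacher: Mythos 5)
First, note that the paper does not prove \cref{thm:approxJackson} at all: it is quoted verbatim from \cite{liebenau2020approximate}, so there is no in-paper proof to compare against. Judged on its own terms, your sketch follows the standard regularity/robust-expansion route, and its central structural observation is correct and is in fact the same one the present paper uses to derive the stronger exact result \cref{cor:1/2}: a (near-)regular bipartite digraph of semidegree $(\tfrac12+c)k$ is a bipartite robust outexpander (cf.\ the paper's \cref{lm:1/2rob}), and your verification of this for the reduced digraph $R$ is sound. So the skeleton --- Diregularity Lemma, reduced digraph with semidegree above $k/2$, approximate Hamilton decomposition of the weighted reduced digraph respecting capacities, blow-up, exceptional vertices --- is a viable route to the theorem, essentially the Osthus--Staden strategy specialised to the bipartite setting.

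However, as a proof the proposal has two genuine gaps, both of which you flag but neither of which can be treated as routine. (i) The ``fractional, approximate Hamilton decomposition of $R$ respecting the capacities $m\,d(V_i,W_j)$'' is not an off-the-shelf fact; it requires an argument (e.g.\ repeatedly extracting $1$-factors of the reduced multidigraph and merging their cycles using robust expansion), and proving it in the bipartite setting is a nontrivial part of any such proof --- this is precisely the kind of work carried out in \cite{osthus2013approximate,girao2020path} and adapted in \cref{sec:biprobexp} of the present paper. (ii) More seriously, the treatment of $V_0$ is too optimistic: since each exceptional vertex must appear in $(1-\varepsilon)\delta n$ of the cycles and has only $\delta n$ in- and out-edges, essentially \emph{all} of its edges must be partitioned into in/out pairs, one pair per cycle, with each pair compatible with the position of the corresponding ring; ``reserving edges at the outset'' hides a global assignment problem that interacts with the choice of multiplicities on $R$ and with the blow-up step, and this is historically the hardest part of all regularity-based Hamilton decomposition arguments. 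So the proposal is a correct and well-motivated plan rather than a proof; the two steps you defer are where essentially all of the difficulty of \cite{liebenau2020approximate} (and of \cref{thm:biprobexp}) resides.
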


Note however that  \cref{thm:approxJackson} does not actually apply to regular bipartite tournaments, i.e.\ even the existence of an approximate Hamilton decomposition of regular bipartite tournaments was not known so far. We prove \cref{conjecture:Jackson} for sufficiently large bipartite tournaments. (Note that the number of vertices in \cref{thm:Jackson} is a multiple of $4$ since any regular bipartite tournament must necessarily have vertex classes of the same even size.)

\begin{thm}\label{thm:Jackson}
    There exists $n_0\in \mathbb{N}$ such that any regular bipartite tournament $T$ on $4n\geq n_0$ vertices has a Hamilton decomposition.
\end{thm}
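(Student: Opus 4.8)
The plan is to follow the ``absorption'' strategy that has driven recent work on Hamilton decompositions of dense (di)graphs~\cite{kuhn2013hamilton,csaba2016proof}: identify a sparse \emph{robustly decomposable} subdigraph $H$ of the regular bipartite tournament $T$, cover almost all of $T-H$ by edge-disjoint Hamilton cycles leaving a sparse, controlled remainder $R$, and then use the defining property of $H$ to decompose $H\cup R$ into Hamilton cycles exactly. Two features of our setting make this harder than usual. First, bipartite tournaments lie exactly at the density threshold for Hamiltonicity --- every vertex sends precisely half of its incident edges forward --- so $T$ is ``only just'' Hamiltonian: \cref{thm:approxJackson} does not apply, and $T$ is certainly not a robust outexpander in the usual sense (no bipartite digraph is, since $N^+(A)\subseteq B$ whereas $|A|=|B|$). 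Second, there is no parity slack: every edge must be used, so the approximate step and the absorber must be designed so that the leftover fits exactly.

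A clean reformulation organizes the whole argument. Let $A,B$ be the vertex classes, each of size $2n$. Applying K\"onig's edge-colouring theorem to the $n$-regular bipartite graphs formed by the forward ($A\to B$) edges and by the backward ($B\to A$) edges shows that $T$ decomposes into $n$ perfect matchings directed $A\to B$ and $n$ directed $B\to A$. A Hamilton cycle of $T$ is precisely the union of one perfect matching $A\to B$ and one perfect matching $B\to A$ whose union happens to be connected. Hence a Hamilton decomposition amounts to choosing a decomposition of the forward edges into perfect matchings, a decomposition of the backward edges into perfect matchings, and a pairing between the two families such that every paired union is a single cycle rather than a disjoint union of several. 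The absorber will supply the edges needed to ``merge'' the cycles of a union that falls short; this structure is unavailable for $r\ge 3$, which is one way to see why the tripartite analogue fails (\cref{prop:tripartite}).

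To handle the absence of robust expansion I would split into cases via a structural dichotomy. Call $T$ \emph{extremal} if there are balanced bipartitions $A=A_1\cup A_2$ and $B=B_1\cup B_2$ such that, after deleting a small proportion of edges, every edge between $A_1,B_1$ and between $A_2,B_2$ points forward and every edge between $A_1,B_2$ and between $A_2,B_1$ points backward --- that is, $T$ is close to a blow-up of a directed $4$-cycle on $A_1,B_1,A_2,B_2$. A counting argument of the type in~\cite{kuhn2014hamilton} should show that if $T$ is \emph{not} extremal then $T$ is a \emph{bipartite} robust outexpander with respect to $(A,B)$ (robust expansion for vertex sets concentrated in one class, which is the correct notion here; cf.~\cite{csaba2016proof}), whence the standard machinery --- an approximate Hamilton decomposition and the construction of a robustly decomposable subdigraph --- becomes available and the scheme above runs. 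When $T$ \emph{is} extremal, the near-product structure is rigid and I would build the decomposition more explicitly: reserve in advance the few ``atypical'' edges (those not respecting the $4$-cycle pattern), apply K\"onig inside each of the four blocks, recombine matchings from opposite blocks into $2$-factors whose cycles wind around the $4$-cycle, and patch each $2$-factor into a Hamilton cycle using the reserved edges, while keeping a precise degree balance on $A_1,A_2,B_1,B_2$.

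The main obstacle, I expect, is the extremal case, coupled with making the absorption exactly balanced. Since density $1/2$ leaves no room to spare, $H$ must be chosen so that $H\cup R$ is decomposable for \emph{every} sparse near-regular remainder $R$ the approximate step could leave, which calls for directed bipartite analogues of the ``chord absorber'' and parity gadgets of~\cite{csaba2016proof} and a careful interplay between those gadgets and the extremal bipartition. Establishing that ``not extremal'' yields robust outexpansion with parameters strong enough to power the approximate decomposition is the other delicate point, but this should come down to a concentration/counting estimate. The remaining ingredients --- approximate decomposition and absorption for bipartite robust outexpanders --- follow the now-familiar template, adapted to the directed bipartite setting.
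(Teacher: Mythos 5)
Your proposal follows essentially the same architecture as the paper: the dichotomy between ``$T$ is a bipartite robust outexpander with respect to $(A,B)$'' and ``$T$ is close to the complete blow-up of a directed $C_4$'' is exactly \cref{lm:twocases}, the non-extremal case is handled by \cref{thm:biprobexp} via an approximate decomposition plus the robust decomposition lemma, and the extremal case is the separate \cref{thm:blowupC4}. The only substantive difference is in the extremal case, where the paper does not proceed by the explicit K\"onig-based recombination you sketch but instead reserves the few reversed edges and develops an analogue of the robust decomposition lemma for blow-up cycles to absorb the leftover --- but your identification of this case as the main obstacle, and of the need for an exactly balanced absorber at density $1/2$, is accurate.
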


The arguments of \cref{thm:Jackson} will be split into two cases: $T$ is a ``bipartite robust outexpander'' (defined in \cref{sec:intro-biprobexp}) and $T$ is ``close to the complete blow-up $C_4$" (defined in \cref{sec:intro-blowupC4}). 

\subsection{Bipartite robust outexpanders}\label{sec:intro-biprobexp}

Roughly speaking, a robust outexpander is a digraph $D$ such that for any $S\subseteq V(D)$ which is neither too small nor too large, there are significantly many more than $|S|$ vertices of $D$ which have a linear number of inneighbours in $S$. This was first introduced in \cite{kuhn2010hamiltonian} by K\"uhn, Osthus, and Treglown. One can check that bipartite digraphs are not robust outexpanders (the largest vertex class does not expand). However, we can easily define a bipartite analogue of robust outexpansion as follows. (Note that an undirected version of bipartite robust outexpansion was introduced in \cite{kuhn2015robust} by K\"uhn, Lo, Osthus, and Staden.)

More precisely, given a digraph $D$ on $n$ vertices and $S\subseteq V(D)$, the \emph{$\nu$-robust outneighbourhood of $S$}, denoted by $RN_{\nu, D}^+(S)$, consists of all the vertices of $D$ which have at least $\nu n$ inneighbours in $S$.
A digraph $D$ on $n$ vertices is called a \emph{robust $(\nu, \tau)$-outexpander} if $|RN_{\nu, D}^+(S)|\geq |S|+\nu n$ for every $S\subseteq V(D)$ satisfying $\tau n\leq |S|\leq (1-\tau) n$.
A balanced bipartite digraph $D$ on vertex classes $A$ and $B$ of size $n$
is called a \emph{bipartite robust $(\nu, \tau)$-outexpander with bipartition $(A,B)$} if the following holds. Let $S\subseteq V(D)$ satisfy $\tau n\leq |S|\leq (1-\tau)n$. If $S\subseteq A$ or $S\subseteq B$, then $|RN_{\nu, D}^+(S)|\geq |S|+\nu n$.

The main result of \cite{kuhn2013hamilton} states that any sufficiently large regular robust outexpander of linear degree can be decomposed into Hamilton cycles.

\begin{thm}[{\cite{kuhn2013hamilton}}]\label{thm:Kelly}
	For any $\delta >0$, there exists $\tau >0$ such that, for all $\nu>0$, there exists $n_0\in \mathbb{N}$ for which the following holds. Let $D$ be a robust $(\nu, \tau)$-outexpander on $n\geq n_0$ vertices and suppose that $D$ is $r$-regular for some $r\geq \delta n$. Then, $D$ has a Hamilton decomposition.
\end{thm}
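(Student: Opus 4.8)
The plan is to prove the statement of \cref{thm:Kelly} via the \emph{iterative absorption} paradigm: remove from $D$ a sparse but carefully engineered spanning subdigraph $D_{\mathrm{rob}}$ which is \emph{robustly decomposable}, find an approximate Hamilton decomposition of the remainder $D-D_{\mathrm{rob}}$ (which is still robustly expanding and still roughly regular) leaving only a very sparse and well-behaved leftover $H$, and then exploit the defining property of $D_{\mathrm{rob}}$ to decompose $D_{\mathrm{rob}}\cup H$ into Hamilton cycles. Since $D=D_{\mathrm{rob}}\cup H\cup(\text{approximate decomposition})$, concatenating the three families of Hamilton cycles gives the result.

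To build $D_{\mathrm{rob}}$ I would first fix a partition of $V(D)$ into a constant number $k$ of equally sized clusters $V_1,\dots,V_k$ together with a small exceptional set, chosen randomly so that the associated reduced digraph $R$ --- vertices are the clusters, with an edge $V_iV_j$ whenever the bipartite digraph from $V_i$ to $V_j$ is dense and $\varepsilon$-regular --- inherits robust $(\nu',\tau')$-outexpansion and linear minimum semidegree from $D$. Robust outexpansion of $R$ makes it easy to find a single Hamilton cycle of $R$ (as in \cite{kuhn2010hamiltonian}), and more is true: one can find in $R$ a bounded-degree ``skeleton'' consisting of a so-called chord absorber together with a parity extended cycle switcher. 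Blowing this skeleton up and selecting edges inside $D$ at random along it produces $D_{\mathrm{rob}}\subseteq D$, a sparse spanning subdigraph that is $r_{\mathrm{rob}}$-regular for some small $r_{\mathrm{rob}}$ and has the following absorption property: \emph{for every spanning subdigraph $H$ of $D-D_{\mathrm{rob}}$ that is $r^{\ast}$-regular for a suitable small $r^{\ast}$ and sufficiently quasirandom, $D_{\mathrm{rob}}\cup H$ has a Hamilton decomposition.} Establishing this \emph{robust decomposition lemma} is the technical core: the chord absorber lets one reroute paths through prescribed edges so that almost all of $H$ can be mopped up cluster by cluster, while the parity extended cycle switcher is needed to dodge the genuine parity obstruction by which a leftover might otherwise be impossible to turn into \emph{Hamilton} (rather than merely spanning) cycles.

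It remains to handle $D':=D-D_{\mathrm{rob}}$, which is exactly $(r-r_{\mathrm{rob}})$-regular and still a robust $(\nu'',\tau'')$-outexpander, hence of linear degree. Using its near-perfect fractional Hamilton decomposition (itself a soft consequence of robust outexpansion) together with a regularity/random-greedy argument in the spirit of \cite{kuhn2010hamiltonian}, I would extract from $D'$ edge-disjoint Hamilton cycles covering all but a tiny proportion of its edges, carrying out the extraction so that after deletion the leftover $H=D'-(\text{these cycles})$ is \emph{exactly} $r^{\ast}$-regular for the target $r^{\ast}$ dictated by $D_{\mathrm{rob}}$ and inherits the quasirandomness required by the absorption property. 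Achieving exact regularity of $H$ (not merely approximate) is done by reserving a sparse ``spare'' regular subdigraph in advance and using it for degree correction, and the quasirandomness is preserved by choosing the cycles in a sufficiently spread-out (e.g.\ random) fashion; applying the robust decomposition lemma to $D_{\mathrm{rob}}\cup H$ then finishes a Hamilton decomposition of $D$.

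The main obstacle is, unsurprisingly, the robust decomposition lemma itself --- in particular designing the parity extended cycle switcher so that every admissible leftover is absorbable, and threading the quantitative requirements (how sparse $D_{\mathrm{rob}}$ is, how small and quasirandom $H$ must be, how these interact with the cluster sizes $n/k$) through the whole argument in a single consistent hierarchy of constants. Everything else --- the existence of single Hamilton cycles and of approximate Hamilton decompositions in robust outexpanders, and the degree-correction bookkeeping --- is comparatively routine.
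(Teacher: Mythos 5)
Your sketch matches the actual proof strategy of \cite{kuhn2013hamilton} (which this paper only cites as \cref{thm:Kelly} rather than reproving): an approximate Hamilton decomposition of a regular robust outexpander in the spirit of \cite{osthus2013approximate}, combined with a sparse pre-reserved absorber whose key property is exactly the robust decomposition lemma built from a chord absorber and a parity extended cycle switcher, exactly as the present paper describes when it reuses that lemma for the blow-up $C_4$ case. So the proposal is correct in approach, with the understanding that it is an outline and the full technical work lies in the robust decomposition lemma you have named but not proved.
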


As explained in \cite{kuhn2013hamilton,kuhn2014hamilton}, the proof of Kelly's conjecture in \cite{kuhn2013hamilton} and its $r$-partite analogue for $r\geq 4$ are actually special cases of \cref{thm:Kelly}.
In \cite{osthus2013approximate}, Osthus and Staden proved an approximate version of \cref{thm:Kelly} (which was then extended to a full Hamilton decomposition in \cite{kuhn2013hamilton}).
Together with Gir\~{a}o, K\"{u}hn, Lo, and Osthus \cite{girao2020path}, we provided a simpler proof of this approximate result. We will see that these arguments can easily be adapted to find approximate Hamilton decompositions of regular bipartite robust outexpanders. Then, the leftovers can be covered using tools of \cite{kuhn2013hamilton} to obtain the following bipartite version of \cref{thm:Kelly}.

\begin{thm}\label{thm:biprobexp}
	For any $\delta>0$, there exists $\tau>0$ such that, for all $\nu>0$, there exists $n_0\in \mathbb{N}$ for which the following holds. Let $D$ be a balanced bipartite digraph on vertex classes $A$ and $B$ of size $n\geq n_0$. Suppose that $D$ is a bipartite robust $(\nu, \tau)$-outexpander with bipartition $(A,B)$ and that $D$ is $r$-regular for some $r\geq \delta n$. Then, $D$ has a Hamilton decomposition.
\end{thm}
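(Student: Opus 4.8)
The plan is to imitate the proof of \cref{thm:Kelly} from \cite{kuhn2013hamilton}, systematically replacing robust outexpansion by its bipartite analogue, and to split the argument into an \emph{approximate decomposition} step followed by an \emph{absorption} step. Fix the usual constant hierarchy together with an auxiliary parameter $r_1 = \rho n$, where $\varepsilon \ll \rho \ll \delta$ and $\varepsilon$ is the error parameter of the approximate decomposition; thus $r_1$ is linear in $n$ but small compared with the degree $r$, and large compared with $\varepsilon r$.

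\smallskip
\emph{Setting aside a robustly decomposable subdigraph.} Apply the directed version of Szemer\'edi's Regularity Lemma to $D$ and clean up to obtain a reference structure: a partition of $V(D)$ into equal-sized clusters refining $(A,B)$, with all relevant cluster pairs super-regular of positive density, and whose reduced multidigraph is again a bipartite robust $(\nu',\tau')$-outexpander with the inherited bipartition. Feeding this into a bipartite version of the robust decomposition lemma of \cite{kuhn2013hamilton} produces an $O(r_1)$-regular spanning subdigraph $D_{\mathrm{rob}} \subseteq D$ with the following property: for every $r_1$-regular spanning subdigraph $H$ of $D - D_{\mathrm{rob}}$ that is compatible with the reference structure and with $(A,B)$, the digraph $D_{\mathrm{rob}} \cup H$ has a Hamilton decomposition. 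Since $D$ is balanced bipartite, every one of its cycles has even length with exactly half of its vertices in each class, so all of the absorbing gadgets in the construction (universal and parity-switching walks, chord sequences) can be built respecting the bipartition, and every Hamilton cycle produced automatically alternates between $A$ and $B$; one just has to check that \cite{kuhn2013hamilton} states, or readily yields, a bipartite form of this lemma.

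\smallskip
\emph{Approximate decomposition and finishing.} Write $D' := D - D_{\mathrm{rob}}$. As $D_{\mathrm{rob}}$ is $O(r_1) = o(r)$-regular, $D'$ is $(r - O(r_1))$-regular and remains a bipartite robust $(\nu/2, 2\tau)$-outexpander with bipartition $(A,B)$: deleting an $O(r_1)$-regular subdigraph perturbs each robust outneighbourhood by only $o(\nu n)$ vertices and leaves the vertex classes unchanged. Now adapt the proof of the approximate version of \cref{thm:Kelly} (Osthus and Staden \cite{osthus2013approximate}, in the simplified form of \cite{girao2020path}) to obtain an approximate Hamilton decomposition of $D'$. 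The only changes are to note that bipartite robust outexpansion passes to the reduced multidigraph under the Regularity Lemma, and that — this reduced multidigraph being bipartite — every Hamilton cycle produced respects $(A,B)$; no further work is required. This gives $r - O(r_1)$ edge-disjoint Hamilton cycles in $D'$; keeping exactly $r - O(r_1) - r_1$ of them (possible because $r_1 \gg \varepsilon r$), the uncovered edges form a digraph $H$ which is automatically $r_1$-regular, since $D'$ was regular and a Hamilton cycle uses exactly one in- and one out-edge at each vertex. Arranging that $H$ is compatible with the reference structure used to build $D_{\mathrm{rob}}$ — e.g.\ by reserving a sparse quasirandom subdigraph of $D$ in advance and mixing it into $H$ — lets us apply the bipartite robust decomposition lemma to $D_{\mathrm{rob}} \cup H$. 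Its Hamilton decomposition, together with the $r - O(r_1) - r_1$ Hamilton cycles from $D'$, covers every edge of $D$ exactly once.

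\smallskip
\emph{Main obstacle.} I expect the absorption step to be the crux: setting up and invoking a \emph{bipartite} robust decomposition lemma, and certifying that the leftover $H$ has exactly the form it requires. The robust decomposition machinery of \cite{kuhn2013hamilton} is intricate, and in the bipartite setting one must check that the reference structure from regularity is fully bipartite-compatible (clusters refining $(A,B)$, the spanning walk of the reduced digraph alternating between the two sides), that each absorbing substructure can be constructed respecting this, and that the two applications of the Regularity Lemma — one to build $D_{\mathrm{rob}}$, one for the approximate decomposition — use a common partition, so that $D_{\mathrm{rob}} \cup H$ is genuinely decomposable. By contrast, the approximate-decomposition step and the final bookkeeping should be essentially routine adaptations of \cite{osthus2013approximate,girao2020path} and \cite{kuhn2013hamilton}.
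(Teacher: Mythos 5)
Your plan coincides with the paper's: it first reserves a robustly decomposable subdigraph via a bipartite-compatible form of the robust decomposition lemma of \cite{kuhn2013hamilton}, then approximately decomposes the rest by adapting the approximate decomposition argument of \cite{osthus2013approximate,girao2020path} to bipartite robust outexpanders, and finally absorbs the sparse regular leftover into the reserved subdigraph. The obstacles you flag (making the reduced/reference structure respect the bipartition and certifying the leftover is in the form the robust decomposition lemma requires) are exactly the points the paper spends its effort on, so this is essentially the same proof.
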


\Cref{thm:biprobexp} can be used to prove an analogous result for undirected graphs.
Given a graph $G$ on $n$ vertices and $S\subseteq V(G)$, the \emph{$\nu$-robust neighbourhood of $S$}, denoted by $RN_{\nu, G}(S)$, consists of all the vertices of $G$ which have at least $\nu n$ neighbours in $S$.
A balanced bipartite graph $G$ on vertex classes $A$ and $B$ of size $n$
is called a \emph{bipartite robust $(\nu, \tau)$-expander with bipartition $(A,B)$} if, for any $S\subseteq A$ which satisfies $\tau |A|\leq |S|\leq (1-\tau)|A|$, we have $|RN_{\nu, G}(S)|\geq |S|+\nu n$. (Note that the ordering of $A$ and $B$ matters here.)

\begin{cor}\label{cor:undirected}
	For any $\delta>0$, there exists $\tau>0$ such that, for all $\nu>0$, there exists $n_0\in \mathbb{N}$ for which the following holds. Let $G$ be a bipartite graph on vertex classes $A$ and $B$ of size $n\geq n_0$. Suppose that $G$ is a bipartite robust $(\nu, \tau)$-expander with bipartition $(A,B)$, as well as with bipartition $(B,A)$, and that $G$ is $r$-regular for some even $r\geq \delta n$.
	Then, $G$ has a Hamilton decomposition.
\end{cor}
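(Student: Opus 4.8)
The plan is to deduce \cref{cor:undirected} from \cref{thm:biprobexp} by a standard orient--decompose--forget argument. Fix $\delta>0$, let $\tau>0$ be the constant obtained from \cref{thm:biprobexp} with $\delta/2$ in place of $\delta$, and set $\nu':=\nu/8$. I claim this $\tau$ works. So let $G$ be a bipartite graph on vertex classes $A,B$ of size $n$ that is $r$-regular for some even $r\geq\delta n$ and a bipartite robust $(\nu,\tau)$-expander both with bipartition $(A,B)$ and with bipartition $(B,A)$. It suffices to find an orientation $D$ of $G$ that is $(r/2)$-regular and a bipartite robust $(\nu'/2,\tau)$-outexpander with bipartition $(A,B)$: applying \cref{thm:biprobexp} to $D$ (with $\delta/2$ and $\nu'/2$ in place of $\delta$ and $\nu$) decomposes $D$ into $r/2$ directed Hamilton cycles, and since a directed Hamilton cycle on $2n\geq 3$ vertices uses at most one of the two arcs between any pair of vertices, forgetting orientations turns these into $r/2$ edge-disjoint (undirected) Hamilton cycles; as $D$ is an orientation of $G$ (one arc per edge), these cycles partition $E(G)$, giving the desired Hamilton decomposition, provided $n$ is large enough in terms of $\delta$ and $\nu$ for \cref{thm:biprobexp} and the estimates below to apply.

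To build $D$, I would first orient each edge of $G$ independently and uniformly at random, obtaining a digraph $D_0$. By a Chernoff bound and a union bound over the $2n$ vertices, with high probability $\mathrm{indeg}_{D_0}(v),\mathrm{outdeg}_{D_0}(v)\in[r/2-\sqrt n\log n,\,r/2+\sqrt n\log n]$ for every $v$. The crucial claim is that, with high probability, $D_0$ is also a bipartite robust $(\nu',\tau)$-outexpander with bipartition $(A,B)$. To prove it, fix $S\subseteq A$ with $\tau n\leq|S|\leq(1-\tau)n$ (the case $S\subseteq B$ being symmetric and using the expansion of $G$ with bipartition $(B,A)$). By hypothesis $W:=RN_{\nu,G}(S)\subseteq B$ has $|W|\geq|S|+\nu n$, and for $v\in W$ the probability that $v\notin RN^+_{\nu',D_0}(S)$ is at most $\Pr[\mathrm{Bin}(\nu n,1/2)<\nu' n]\leq e^{-\nu n/8}$. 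The key point is that for distinct $v,v'\in B$ these events depend on disjoint sets of edges (those from $S$ to $v$, resp.\ to $v'$), hence are independent; so the probability that $\nu' n$ or more vertices of $W$ fail is at most $\binom{|W|}{\nu' n}e^{-\nu\nu' n^2/8}\leq 2^{n}e^{-\nu\nu' n^2/8}=e^{-\Omega(n^2)}$, and when fewer than $\nu' n$ fail we get $|RN^+_{\nu',D_0}(S)|\geq|W|-\nu' n\geq|S|+\nu' n$. Since this bound is super-exponentially small, a union bound over the at most $2^{n}$ choices of $S$ (in each class) proves the claim.

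Finally I would regularize $D_0$ without destroying robust outexpansion. Fix an outcome $D_0$ with the two properties above and put $\delta_v:=\mathrm{indeg}_{D_0}(v)-r/2$, so $|\delta_v|\leq\sqrt n\log n$ and $\sum_v\delta_v=0$. I seek a set $R\subseteq E(D_0)$ of arcs whose reversal makes every indegree equal to $r/2$; that is, $R$ should send exactly $\delta_v$ more arcs into $v$ than out of $v$, for every $v$. Such an $R$ exists by Hoffman's circulation theorem: the feasibility condition $\sum_{v\in X}\delta_v\leq e_{D_0}(V\setminus X,X)$ holds for every $X\subseteq V$, since a direct computation gives $e_{D_0}(V\setminus X,X)=\frac12 e_G(X,V\setminus X)+\sum_{v\in X}\delta_v$. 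Moreover, decomposing the associated flow into $O(n^{3/2}\log n)$ paths of bounded length (possible because $D_0$ is dense) and routing them through spread-out intermediate vertices, one may take $R$ with $\Delta(R)=O(\sqrt n\log n)=o(n)$. Reversing such an $R$ changes every count $|N^-(v)\cap S|$ by $o(\nu n)$, so the resulting $(r/2)$-regular orientation $D$ is still a bipartite robust $(\nu'/2,\tau)$-outexpander with bipartition $(A,B)$, completing the construction. I expect the robust-outexpansion step for $D_0$ to be the main obstacle: the naive union bound over exponentially many sets $S$ does not close, and it is rescued precisely by the independence of the ``bad vertex'' events within one vertex class---which is where the bipartiteness of $G$ and the per-edge-independent random orientation are used. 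The regularization is the other technical ingredient, but it is routine given that $G$, and hence $D_0$, is dense.
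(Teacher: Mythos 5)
Your proposal is correct and follows essentially the same route as the paper: orient $G$ into an $(r/2)$-regular bipartite robust outexpander and invoke \cref{thm:biprobexp}, with the orientation obtained by orienting each edge uniformly at random (the independence of the ``bad vertex'' events within one class, which is where bipartiteness enters, is exactly the point that makes the union bound over exponentially many sets $S$ close) and then correcting the $O(\sqrt{n}\log n)$ degree imbalances by reversing a sparse arc set. The one soft spot is the regularisation: Hoffman's theorem alone does not control $\Delta(R)$, and finding the correcting paths of bounded length requires the (already established) expansion of $D_0$ rather than mere density, since for small $\delta$ two out-/in-neighbourhoods of size $\delta n/2$ inside a class of size $n$ need not meet; both points are, however, routine to repair by greedily routing the $O(n^{3/2}\log n)$ edge-disjoint short paths from deficient to surplus vertices while capping how often any vertex is used internally.
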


Moreover, it turns out that regular digraphs of sufficiently large degree are bipartite robust outexpanders (see \cref{lm:1/2rob}). Thus, \cref{thm:biprobexp} implies that the following holds, which resolves a conjecture of Liebenau and Pehova \cite[Conjecture 4.1]{liebenau2020approximate}.

\begin{cor}\label{cor:1/2}
	For any $\delta>\frac{1}{2}$, there exists $n_0\in \mathbb{N}$ for which the following holds. Let $D$ be a bipartite digraph on vertex classes of size $n\geq n_0$. If $D$ is $r$-regular for some $r\geq \delta n$, then $D$ has a Hamilton decomposition.
\end{cor}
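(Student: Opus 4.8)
The plan is to derive \cref{cor:1/2} as an essentially immediate consequence of \cref{thm:biprobexp}, once one knows that dense regular bipartite digraphs are automatically bipartite robust outexpanders. Concretely, I would first establish the following statement (this is the lemma referred to as \cref{lm:1/2rob} in the text): for every $\delta>\frac{1}{2}$ and every $\tau>0$ there is $\nu=\nu(\delta,\tau)>0$ such that every $r$-regular bipartite digraph on vertex classes of size $n$ with $r\geq\delta n$ is a bipartite robust $(\nu,\tau)$-outexpander with respect to its bipartition. Granting this, \cref{cor:1/2} follows by chasing quantifiers: given $\delta>\frac{1}{2}$, apply \cref{thm:biprobexp} with this $\delta$ to obtain $\tau$, then obtain $\nu=\nu(\delta,\tau)$ from the lemma, then obtain $n_0$ from \cref{thm:biprobexp} applied with this $\nu$. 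Any $D$ as in the hypothesis of \cref{cor:1/2} is then $r$-regular with $r\geq\delta n$ and, by the lemma, a bipartite robust $(\nu,\tau)$-outexpander with bipartition $(A,B)$, so \cref{thm:biprobexp} yields a Hamilton decomposition. Note that the order of quantifiers in the lemma (namely, $\nu$ is allowed to depend on $\tau$) matches that in \cref{thm:biprobexp}, which is why this composition goes through.

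For the lemma I would argue as follows. Fix $\delta>\frac{1}{2}$ and $\tau>0$, and choose $\nu>0$ small: in particular $\nu\leq\tau$ and $\nu$ small enough that $2\delta-1\geq\nu(3+2/\tau)$ (this also forces $2\delta-1\geq 2\nu$, hence $\delta-2\nu>\frac{1}{2}$). Let $D$ be $r$-regular bipartite on classes $A,B$ of size $n$ with $r\geq\delta n$. Since edges of a bipartite digraph run only between the classes, the roles of $A$ and $B$ are symmetric, so it suffices to take $S\subseteq A$ with $\tau n\leq|S|\leq(1-\tau)n$ and show $|RN_{\nu,D}^+(S)|\geq|S|+\nu n$; write $T:=A\setminus S$, and note $RN_{\nu,D}^+(S)\subseteq B$ since every inneighbour of a vertex of $S$ lies in $B$. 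If $|T|\leq r-2\nu n$, then every $b\in B$ has at least $r-|T|\geq 2\nu n$ inneighbours in $S$, so $RN_{\nu,D}^+(S)=B$ and $|RN_{\nu,D}^+(S)|=n\geq|S|+\nu n$ because $|S|\leq(1-\tau)n\leq(1-\nu)n$. Otherwise $|T|>r-2\nu n\geq(\delta-2\nu)n$, whence $|S|=n-|T|<(1-\delta+2\nu)n\leq\delta n\leq r$, where the middle inequality uses $2\delta-1\geq 2\nu$. Now double-count the edges from $S$ to $B$: there are exactly $|S|r$ of them, at most $2\nu n$ of them reach each vertex of $B\setminus RN_{\nu,D}^+(S)$, and each vertex of $RN_{\nu,D}^+(S)$ receives at most $\min(r,|S|)=|S|$ of them, so $|S|r\leq|RN_{\nu,D}^+(S)|\cdot|S|+n\cdot 2\nu n$, i.e.\ $|RN_{\nu,D}^+(S)|\geq r-2\nu n^2/|S|\geq\delta n-2\nu n/\tau$. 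Finally $\delta n-2\nu n/\tau\geq|S|+\nu n$ holds because $|S|<(1-\delta+2\nu)n$ and $2\delta-1\geq\nu(3+2/\tau)$. This proves the lemma.

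The main thing to get right is the case split. The target bound $|RN_{\nu,D}^+(S)|\geq|S|+\nu n$ looks tight only in the regime where $|S|$ is small — below roughly $(1-\delta)n$, and hence below $n/2$ — but there it is in fact comfortably true, because $RN_{\nu,D}^+(S)$ then has size close to $r\geq\delta n$, exceeding $|S|$ by a linear amount. When $|S|$ is large, the complementary set $T$ is small and $RN_{\nu,D}^+(S)$ is simply all of $B$. The only conceivable difficulty is that the two regimes could overlap and force an awkward compromise, and this is ruled out precisely by the strict inequality $\delta>\frac{1}{2}$ (which gives $(1-\delta+2\nu)n<\delta n\leq r$, keeping the ``small $|S|$'' regime safely below $r$). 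This hypothesis is genuinely necessary: a regular bipartite tournament has $r=n/2$, i.e.\ $\delta=\frac{1}{2}$, and is not a bipartite robust outexpander — which is exactly why such tournaments must be handled by the separate ``close to the complete blow-up $C_4$'' analysis in the proof of \cref{thm:Jackson}.
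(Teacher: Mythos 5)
Your proposal is correct and follows the same route as the paper: establish that every $r$-regular bipartite digraph with $r\geq\delta n$, $\delta>\frac{1}{2}$, is a bipartite robust $(\nu,\tau)$-outexpander for suitable $\nu=\nu(\delta,\tau)$ (this is exactly the paper's \cref{lm:1/2rob}), and then invoke \cref{thm:biprobexp}. Your counting argument for the lemma, the case split on $|T|$, and the quantifier chasing are all sound.
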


Note that the non-bipartite versions of \cref{cor:undirected,cor:1/2} were derived from \cref{thm:Kelly} by K\"{u}hn and Osthus \cite{kuhn2014hamilton}, while the exact threshold for the non-bipartite version of \cref{cor:1/2} was obtained by Csaba, K\"{u}hn, Lo, Osthus, and Treglown \cite{csaba2016proof} using \cref{thm:Kelly}. 
For further related problems in the area, see e.g.\ the survey by K\"{u}hn and Osthus \cite{kuhn2014hamiltonICM}.
In particular, much work has been done to find the size of an optimal packing of edge-disjoint Hamilton cycles in, for example, dense graphs \cite{csaba2016proof,ferber2017counting,kuhn2014hamilton}, dense digraphs \cite{kuhn2014hamilton}, dense hypergraphs \cite{joos2021decomposing}, and $\varepsilon$-regular graphs \cite{frieze2005packing}. 
Since random bipartite (di)graphs are robust (out)expanders with high probability, \cref{thm:biprobexp,cor:undirected} can be used to obtain analogous results for such (di)graphs. Moreover, one can show that $\varepsilon$-regular bipartite (di)graphs are in fact robust (out)expanders and so \cref{thm:biprobexp,cor:undirected} can be used to improve the approximate bounds of Frieze and Krivelevich \cite{frieze2005packing} to exact bounds. (See \cref{sec:applications} for details.)
Another related line of research has been to count Hamilton decompositions, see e.g.\ \cite{ferber2018counting,glebov2017number}. As some of these results (e.g.\ \cite{csaba2016proof,kuhn2014hamilton,ferber2018counting,glebov2017number}) use \cref{thm:Kelly}, it may be that \cref{thm:biprobexp,cor:undirected} have further applications too.

\subsection{The complete blow-up \texorpdfstring{$C_4$}{C4} case}\label{sec:intro-blowupC4}

The \emph{complete blow-up $C_4$ with vertex classes of size $n$} is the $n$-fold blow-up of the directed $C_4$. We say that a regular bipartite tournament is \emph{$\varepsilon$-close to the complete blow-up $C_4$ on vertex classes of size $n$} if it can be obtained from the complete blow-up $C_4$ with vertex classes of size $n$ by flipping the direction of at most $4\varepsilon n^2$ edges.

It is well-known that a regular tournament is a robust outexpander, thus \cref{thm:Kelly} directly implies Kelly's conjecture on Hamilton decompositions of regular tournaments. However, regular bipartite tournaments are not necessarily bipartite robust outexpanders: for example, the vertex classes of the complete blow-up $C_4$ do not expand. It is thus much more difficult to prove the existence of a Hamilton decomposition in the bipartite case.
However, from the definition of a bipartite robust outexpander, one can easily verify that a regular bipartite tournament is either a bipartite robust outexpander or close to the complete blow-up $C_4$.

\begin{lm}\label{lm:twocases}
	For any $\tau>0$, there exists $\nu>0$ such that, for all $0<\nu'\leq \nu$, there exists $n_0\in \mathbb{N}$ for which the following holds. Let $T$ be a regular bipartite tournament on vertex classes $A$ and $B$ of size $2n\geq n_0$. Then, one of the following holds.
	\begin{enumerate}
		\item $T$ is a bipartite robust $(\nu',\tau)$-outexpander with bipartition $(A,B)$.
		\item $T$ is $\sqrt{\nu'}$-close to the complete blow-up $C_4$ on vertex classes of size $n$.
	\end{enumerate}
\end{lm}

Thus, \cref{thm:Jackson} follows from \cref{thm:biprobexp}, \cref{lm:twocases}, and the following.

\begin{thm}\label{thm:blowupC4}
	There exist $\varepsilon>0$ and $n_0\in \mathbb{N}$ for which the following holds. Let $T$ be a regular bipartite tournament on vertex classes of size $2n\geq n_0$. Suppose that $T$ is $\varepsilon$-close to the complete blow-up $C_4$ on vertex classes of size $n$.
	Then, $T$ has a Hamilton decomposition.
\end{thm}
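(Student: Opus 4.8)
The plan is to exploit the rigid structure of the complete blow-up $C_4$ and treat $T$ as a small perturbation of it. Write the vertex classes as $A = A_1 \cup A_2$ and $B = B_1 \cup B_2$, where in the unperturbed blow-up $C_4$ all edges go $A_1 \to B_1 \to A_2 \to B_2 \to A_1$; since $T$ is $\varepsilon$-close, all but at most $4\varepsilon n^2$ edges respect this orientation. The first step is to use the regularity of $T$ together with a defect-version counting argument to show that the four parts $A_1, A_2, B_1, B_2$ each have size $(1\pm O(\varepsilon))n$ and that $T$ behaves, between consecutive parts, like an $\varepsilon'$-regular (in fact almost-complete) bipartite digraph, with only $O(\varepsilon n^2)$ ``backward'' or ``diagonal'' edges total. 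I would then set aside a small reserve of edges (an absorbing structure) to deal with the irregularities at the end, and aim to decompose the bulk.

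The core idea is that a Hamilton cycle of $T$ must, because the forward edges so dominate, behave essentially like a Hamilton cycle of the blow-up $C_4$: it should traverse $A_1, B_1, A_2, B_2$ cyclically in long monotone runs, using a bounded number of ``correction'' edges. So I would first build a Hamilton decomposition of the ``idealized'' part. Concretely, I would (i) split off the $O(\varepsilon n^2)$ exceptional edges (wrong-direction and cross edges) and a small random slice of the forward edges into an exceptional digraph $T^{\mathrm{ex}}$; (ii) show the remaining digraph $T'$ decomposes into an exceptional-free blow-up-$C_4$-like structure that is genuinely the $m$-fold blow-up of $C_4$ for some $m$, which has an explicit Hamilton decomposition (this is essentially Ng's result \cite{ng1997hamiltonian} applied to complete bipartite digraphs / blow-ups, or a direct Walecki-style construction); and (iii) incorporate the exceptional edges by a robust absorption / edge-swapping argument. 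For step (iii), the natural tool is to reserve, inside $T'$, a family of short ``switching gadgets'' — vertex-disjoint paths or near-decompositions — that can absorb a single exceptional edge by rerouting, in the spirit of the absorbing method and of the ``robust decomposition lemma'' machinery of \cite{kuhn2013hamilton}; since there are only $O(\varepsilon n^2) = O(\varepsilon n)\cdot n$ exceptional edges and we have $\Theta(n)$ Hamilton cycles to build, we can afford to spend $O(\varepsilon n)$ correction edges per cycle, which is comfortably below the $\Theta(n)$ ``budget'' each cycle has. Balancing the in/out-degree bookkeeping at each of the four parts — so that after removing a partial structure the leftover is still regular — will require care, and is where choosing $\varepsilon$ small enough (relative to the absorbers' flexibility) is used.

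The main obstacle I expect is step (iii): controlling the interaction between the exceptional edges and the blow-up structure. Unlike in the robust-outexpander setting of Theorem~\ref{thm:Kelly}, here we cannot appeal to expansion to reroute cycles through arbitrary vertex sets — the blow-up $C_4$ has essentially no expansion across its classes — so every correction must be engineered by hand using the limited connectivity between $A_i$ and $B_j$. In particular, a wrong-direction edge inside, say, $A_1 \times B_1$ (now pointing $B_1 \to A_1$) or a diagonal edge (say $A_1 \to B_2$) changes the ``net flow'' between parts, and one must show these defects can be paired up and cancelled — essentially a flow/parity argument on the auxiliary $4$-cycle $A_1 B_1 A_2 B_2$ — while keeping all intermediate digraphs regular and still (blow-up-$C_4$)-like. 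A clean way to organize this is: first preprocess $T$ by a bounded number of edge swaps to reduce to the case where the exceptional edges form a structurally simple pattern (e.g.\ only wrong-direction edges, evenly distributed), then apply the absorber-based decomposition. I would also need the lower-order step of verifying that $|A_1|=|A_2|=|B_1|=|B_2|$ exactly is not required — only approximate balance — because an exact Hamilton decomposition only needs the final leftover to be a regular blow-up $C_4$ of equal parts, which the preprocessing can arrange. Once the leftover is an exact $m$-fold blow-up of $C_4$ with $m \geq 1$ and parts of equal size, its Hamilton decomposition is classical, completing the proof.
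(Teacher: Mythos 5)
Your high-level strategy is the same as the paper's: peel off the $O(\varepsilon n^2)$ exceptional (backward and diagonal) edges, approximately decompose the dominant blow-up-$C_4$ part, and absorb the leftover with a reserved robust structure. You have also correctly located the central difficulty, namely that the blow-up $C_4$ has no expansion across its classes, so the machinery of \cref{thm:Kelly} does not apply and every correction must be engineered through the limited connectivity between consecutive classes. However, the proposal stops at naming the two key obstacles rather than overcoming them, and both are where essentially all of the work lies.

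First, the treatment of the exceptional edges cannot be reduced to a ``budget'' count of $O(\varepsilon n)$ corrections per cycle. Every exceptional edge must appear in exactly one of the $n$ Hamilton cycles, so you must \emph{partition} the exceptional edges into $n$ classes such that each class can be completed to a Hamilton cycle using only forward edges, while the union of the completions exhausts the forward edges exactly. Each class must satisfy exact balance constraints: a Hamilton cycle in a digraph close to the blow-up $C_4$ traverses the auxiliary $4$-cycle in runs, and the numbers of edges it uses between each pair of classes are determined by the class sizes and by which exceptional edges it carries; since the classes need not have equal sizes, \emph{every} cycle may be forced to use exceptional edges, and the edges assigned to one cycle must moreover be mutually compatible (linkable into a single cycle, with bounded multiplicity at each vertex). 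Your ``flow/parity argument'' and ``preprocessing by edge swaps'' gesture at this, but the existence of such a partition, and the completion of each class to a Hamilton cycle while keeping all intermediate digraphs regular and still close to the blow-up $C_4$, is precisely the content of the paper's notion of (pseudo-)feasible systems and occupies most of its length. Note also that you cannot freely ``flip'' or ``swap'' edges of $T$: the decomposition must use the edges of $T$ as given. Second, the absorber: the robust decomposition lemma of \cite{kuhn2013hamilton} is stated for robust outexpanders, which the blow-up $C_4$ emphatically is not, so it cannot be invoked off the shelf; the paper has to derive a new analogue of this lemma for blow-up cycles, and your ``switching gadgets'' are not constructed. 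As written, the proposal is a correct plan with the two hardest steps left as acknowledged gaps.
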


\begin{proof}[Proof of \cref{thm:Jackson}]
    Define $\delta\coloneqq \frac{1}{2}$. Let $\tau>0$ be the constant obtained by applying \cref{thm:biprobexp}, let $\nu>0$ be the constant obtained by applying \cref{lm:twocases}, and let $\varepsilon>0$ be the constant obtained by applying \cref{thm:blowupC4}. Define $\nu'\coloneqq \min\{\nu, \varepsilon^2\}$. Let $n_0'$ the largest of the constants obtained by applying \cref{thm:biprobexp,lm:twocases,thm:blowupC4}. Define $n_0\coloneqq 2n_0'$. Let $T$ be a regular bipartite tournament on $4n\geq n_0$ vertices. Denote by $A$ and $B$ the vertex classes of $T$. By definition of a regular bipartite tournament, we have $|A|=|B|=2n\geq n_0'$ and $T$ is $n$-regular. If $T$ is a bipartite robust $(\nu, \tau)$-outexpander with bipartition $(A,B)$, then \cref{thm:biprobexp} (applied with $T$ and $2n$ playing the roles of $D$ and $n$) implies that $T$ has a Hamilton decomposition, as desired. Otherwise, \cref{lm:twocases} implies that $T$ is $\varepsilon$-close to the complete blow-up $C_4$ on vertex classes of size $n$ and so \cref{thm:blowupC4} implies that $T$ also has a Hamilton decomposition.
\end{proof}

Most of this paper will be devoted to the proof of \cref{thm:blowupC4}. The core of the proof will be to decompose and incorporate the few edges with reversed direction. The approximate decomposition will be constructed using the bipartite analogue of the approximate decomposition lemma of \cite{girao2020path}. To decompose the leftovers, we will use the ``robust decomposition lemma" of \cite{kuhn2013hamilton}. Roughly speaking, this tool states that a robust outexpander $D$ contains an absorber $D^{\rm rob}\subseteq D$ which can decompose any sparse regular leftover of $D$. In this paper, we derive an analogue of this lemma for blow-up cycles.

\subsection{Organisation}
This paper is organised as follows. In \cref{sec:sketch}, we provide a proof overview of \cref{thm:biprobexp,thm:blowupC4}. The notation is introduced in \cref{sec:notation}.
In \cref{sec:applications}, we prove \cref{prop:tripartite}, derive \cref{cor:1/2,cor:undirected}, and discuss optimal packings of Hamilton cycles more thoroughly. \Cref{sec:preliminaries} collects useful tools and preliminary results. We then introduce our main tools for constructing approximate decompositions and absorbing leftovers in \cref{sec:maintools}. In \cref{sec:biprobexp}, we prove \cref{thm:biprobexp}, while \cref{sec:blowups,sec:cyclerobustdecomp,sec:backward,sec:blowupC4,sec:constructfeasible,sec:constructpseudofeasible,sec:pseudofeasible,sec:specialpseudofeasible,sec:ESF} are devoted to proving \cref{lm:twocases,thm:blowupC4} (which are derived in \cref{sec:twocases,sec:blowupC4proof}, respectively).

\onlyinsubfile{\bibliographystyle{abbrv}
\bibliography{Bibliography/Bibliography}}

\section{Proof overview}\label{sec:sketch}

	\onlyinsubfile{
		\setcounter{section}{1}
\section{Proof overview}}

First, we give a proof overview of our main theorems, that is, \cref{thm:biprobexp,thm:blowupC4}. Given a bipartite digraph $D$ on vertex classes $A$ and $B$, we denote by $E_D(A,B)$ the set of edges of $D$ which are oriented from $A$ to $B$ and by $D[A,B]$ the bipartite (undirected) graph induced by $E_D(A,B)$.

\subsection{Constructing a Hamilton cycle in a bipartite digraph}\label{sec:sketch-cycle}

Most of our Hamilton cycles will be constructed using the following procedure. (See also \cref{fig:sketch-bipHam}.)
Let $D$ be a balanced bipartite digraph on vertex classes $A$ and $B$. First, we find a perfect matching $M$ of $D$ whose edges are all oriented from $B$ to $A$. (For example, if $D$ is regular, then we can simply obtain $M$ by applying Hall's theorem in $D[B,A]$.) Then, we restrict ourselves to constructing a Hamilton cycle which contains $M$. That is, we need to find a perfect matching $M'$ of $D$ whose edges are all oriented from $A$ to $B$ and such that $M\cup M'$ forms a Hamilton cycle. 
To do so, we construct an auxiliary digraph $D_M$ on $A$ whose edge set is obtained from $E_D(A,B)$ by identifying the vertices which are matched in $M$. (This digraph will be called the \emph{$M$-contraction of $D[A,B]$}, see \cref{def:contractexpand}\cref{def:contract} for a formal definition.)
Then, we find a Hamilton cycle $C$ in $D_M$. Finally, we observe that $C$ corresponds to a perfect matching $M'$ of $D$ whose edges are all oriented from $A$ to $B$ and such that $M\cup M'$ forms a Hamilton cycle, as desired.

\begin{figure}[htb]
	\centering
	\begin{subfigure}[t]{0.28\textwidth}
	\centering
	\begin{tikzpicture}[scale=0.9]
		%%vertices%%
		\draw node[circle, draw=black,fill=black, inner sep=1.5pt](a1) at (-3,0) {};
		\node[above] at (-3,0) {$a_1$};
		\node[circle, draw=black,fill=black, inner sep=1.5pt](a2) at (-1.5,0) {};
		\node[above] at (-1.5,0) {$a_2$};
		\node[circle, draw=black,fill=black, inner sep=1.5pt](a3) at (0,0) {};
		\node[above] at (0,0) {$a_3$};
		\draw node[circle, draw=black,fill=black, inner sep=1.5pt](a4) at (1.5,0) {};
		\node[above] at (1.5,0) {$a_4$};
		\draw node[circle, draw=black,fill=black, inner sep=1.5pt](b1) at (-3,-3) {};
		\node[below] at (-3,-3) {$b_1$};
		\node[circle, draw=black,fill=black, inner sep=1.5pt](b2) at (-1.5,-3) {};
		\node[below] at (-1.5,-3) {$b_2$};
		\node[circle, draw=black,fill=black, inner sep=1.5pt](b3) at (0,-3) {};
		\node[below] at (0,-3) {$b_3$};
		\draw node[circle, draw=black,fill=black, inner sep=1.5pt](b4) at (1.5,-3) {};
		\node[below] at (1.5,-3) {$b_4$};
		%%edges%%
		\draw[->,thick,dashed] (b1)-- (a1);
		\draw[->,thick,dashed] (b2)-- (a2);
		\draw[->,thick,dashed] (b3)-- (a3);
		\draw[->,thick,dashed] (b4)-- (a4);
		\draw[->,thick] (a1)-- (b2);
		\draw[->,thick] (a2)-- (b3);
		\draw[->,thick] (a3)-- (b4);
		\draw[->,thick] (a4)-- (b1);
		\draw[->,thick] (a1)-- (b3);
		\draw[->,thick] (a2)-- (b4);
		\draw[->,thick] (a2)-- (b1);
	\end{tikzpicture}
	\caption{A bipartite digraph $D$ on vertex classes $A= \{a_1, a_2,a_3,a_4\}$ and $B= \{b_1, b_2,b_3,b_4\}$ which contains a perfect matching $M$ (dashed edges) whose edges are all oriented from $B$ to~$A$.}
	\end{subfigure}
	\hspace{0.05\textwidth}
	\begin{subfigure}[t]{0.28\textwidth}
		\centering
		\begin{tikzpicture}[scale=0.9]
			%%vertices%%
			\draw node[circle, draw=black,fill=black, inner sep=1.5pt](a1) at (0,0) {};
			\node[above] at (0,0) {$a_1$};
			\node[circle, draw=black,fill=black, inner sep=1.5pt](a2) at (1.5,-1.5) {};
			\node[right] at (1.5,-1.5) {$a_2$};
			\node[circle, draw=black,fill=black, inner sep=1.5pt](a3) at (0,-3) {};
			\node[below] at (0,-3) {$a_3$};
			\draw node[circle, draw=black,fill=black, inner sep=1.5pt](a4) at (-1.5,-1.5) {};
			\node[left] at (-1.5,-1.5) {$a_4$};
			%%edges%%
			\draw[->,thick,dotted] (a1)-- (a2);
			\draw[->,thick,dotted] (a2)-- (a3);
			\draw[->,thick,dotted] (a3)-- (a4);
			\draw[->,thick,dotted] (a4)-- (a1);
			\draw[->,thick] (a1) -- (a3);
			\draw[->,thick] (a2) -- (a4);
			\draw[->,thick] (a2) to [out=80,in=10] (a1);
		\end{tikzpicture}
		\caption{The digraph $D_M$ on $A=\{a_1, a_2,a_3,a_4\}$ whose edge set is obtained from $E_D(A,B)$ by identifying the vertices which are matched in $M$. The dotted edges form a Hamilton cycle $C$ of $D_M$.}
	\end{subfigure}
	\hspace{0.05\textwidth}
	\begin{subfigure}[t]{0.28\textwidth}
		\centering
		\begin{tikzpicture}[scale=0.9]
			%%vertices%%
			\draw node[circle, draw=black,fill=black, inner sep=1.5pt](a1) at (-3,0) {};
			\node[above] at (-3,0) {$a_1$};
			\node[circle, draw=black,fill=black, inner sep=1.5pt](a2) at (-1.5,0) {};
			\node[above] at (-1.5,0) {$a_2$};
			\node[circle, draw=black,fill=black, inner sep=1.5pt](a3) at (0,0) {};
			\node[above] at (0,0) {$a_3$};
			\draw node[circle, draw=black,fill=black, inner sep=1.5pt](a4) at (1.5,0) {};
			\node[above] at (1.5,0) {$a_4$};
			\draw node[circle, draw=black,fill=black, inner sep=1.5pt](b1) at (-3,-3) {};
			\node[below] at (-3,-3) {$b_1$};
			\node[circle, draw=black,fill=black, inner sep=1.5pt](b2) at (-1.5,-3) {};
			\node[below] at (-1.5,-3) {$b_2$};
			\node[circle, draw=black,fill=black, inner sep=1.5pt](b3) at (0,-3) {};
			\node[below] at (0,-3) {$b_3$};
			\draw node[circle, draw=black,fill=black, inner sep=1.5pt](b4) at (1.5,-3) {};
			\node[below] at (1.5,-3) {$b_4$};
			%%edges%%
			\draw[->,thick,dashed] (b1)-- (a1);
			\draw[->,thick,dashed] (b2)-- (a2);
			\draw[->,thick,dashed] (b3)-- (a3);
			\draw[->,thick,dashed] (b4)-- (a4);
			\draw[->,thick,dotted] (a1)-- (b2);
			\draw[->,thick,dotted] (a2)-- (b3);
			\draw[->,thick,dotted] (a3)-- (b4);
			\draw[->,thick,dotted] (a4)-- (b1);
		\end{tikzpicture}
		\caption{The Hamilton cycle $C$ of $D_M$ induces a perfect matching $M'$ (dotted edges) of $D$ whose edges are all oriented from $A$ to $B$ and such that $M\cup M'$ forms a Hamilton cycle of~$D$.}
	\end{subfigure}
	\caption{Constructing a Hamilton cycle in a bipartite digraph.\label{fig:sketch-bipHam}}	
	\end{figure}
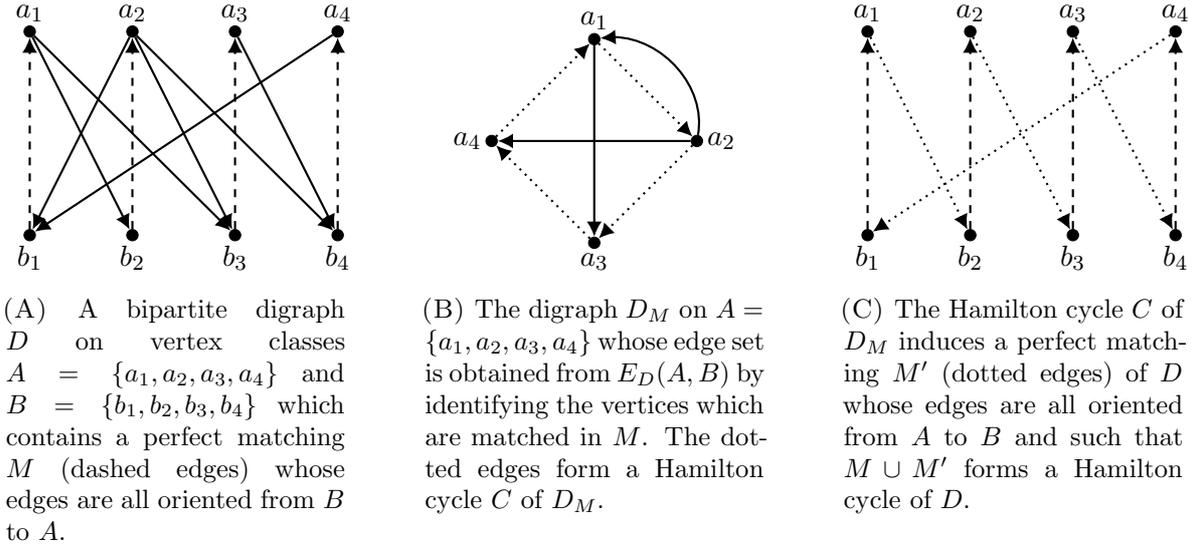

\subsection{The bipartite robust expander case: proof overview of Theorem \ref{thm:biprobexp}}\label{sec:sketch-rob}

Let $T$ be a regular bipartite tournament on vertex classes $A$ and $B$ of size $n$ and suppose that $T$ is a bipartite robust outexpander with bipartition $(A,B)$. (The same arguments hold for a regular bipartite robust outexpander of linear degree.) 

\begin{steps}
	\item \textbf{Constructing an absorber.}\label{step:sketch-rob-A} First, we apply Szemer\'{e}di's regularity lemma to exhibit the structure required to apply (the bipartite version of) the robust decomposition lemma of \cite{kuhn2013hamilton}. This then guarantees a sparse regular absorber $D^{\rm abs}\subseteq T$ which satisfies the following property: for any sparse regular leftover $H\subseteq T\setminus D^{\rm abs}$, the digraph $H\cup D^{\rm abs}$ has a Hamilton decomposition.
	
	\item \textbf{Approximate decomposition.} Denote $D\coloneqq T\setminus D^{\rm abs}$ and note that since $D^{\rm abs}$ is regular and sparse, $D$ is still a very dense regular bipartite robust outexpander. We approximately decompose $D$ into Hamilton cycles using the procedure described in \cref{sec:sketch-cycle}. More precisely, we can construct a Hamilton cycle of $D$ as follows. Since $D$ is regular, we can obtain a perfect matching $M$ of $D$ whose edges are all oriented from $B$ to $A$ simply by applying Hall's theorem in $D[B,A]$. Denote by $D_M$ the auxiliary digraph as defined in \cref{sec:sketch-cycle}. Since $D$ is a regular bipartite robust outexpander, one can verify that $D_M$ is a regular robust outexpander. Then, we use arguments of \cite{girao2020path} to construct a Hamilton cycle $C$ of $D_M$. Let $M'$ be the perfect matching of $D$ induced by $C$. As explained in \cref{sec:sketch-cycle}, $M\cup M'$ is Hamilton cycle of $D$.
	
	Of course, removing the edges of $M\cup M'$ from $D$ affects the bipartite robust outexpansion and, in general, we would not be be able to repeat this argument sufficiently many times to obtain an approximate decomposition. However, the arguments of \cite{girao2020path} allow us to preserve bipartite robust outexpansion in a sufficiently strong way that we can repeat the above arguments to construct many edge-disjoint Hamilton cycles $C_1, \dots, C_{\frac{(1-\varepsilon)n}{2}}$ of~$D$.
	
	\item \textbf{Decomposing the leftovers.} Let $H\coloneqq D\setminus \bigcup_{i\in [\frac{(1-\varepsilon)n}{2}]}E(C_i)$. Note that $H$ is sparse and regular. Thus, the absorbing property described in \cref{step:sketch-rob-A} implies that $H\cup D^{\rm abs}$ can be decomposed into edge-disjoint Hamilton cycles. Together with $C_1, \dots, C_{\frac{(1-\varepsilon)n}{2}}$, this gives us a Hamilton decomposition of $T$, as desired.
\end{steps}

Note that \cref{thm:biprobexp} is proved in \cref{sec:biprobexp}. The tools for constructing the absorber are introduced in \cref{sec:robustdecomp,sec:preprocessing}. The approximate decomposition step is discussed more thoroughly in \cref{sec:approxdecomp}.

\subsection{The complete blow-up \texorpdfstring{$C_4$}{C4} case: proof overview of a special case of Theorem \ref{thm:blowupC4}}\label{sec:sketch-completeblowupC4}

Let $T$ be the complete blow-up $C_4$ with vertex classes of size $n$. That is, there is a partition of $V(T)$ into vertex classes $U_1, \dots, U_4$ of size $n$ such that $E(T)$ consists of all the edges which start in $U_i$ and end in $U_{i+1}$ for some $i\in [4]$ (where $U_5\coloneqq U_1$).

Note that the vertex classes $U_1, \dots, U_4$ do not expand, so $T$ is not a bipartite robust outexpander and we cannot apply the above arguments. (Recall that robust outexpansion was key to construct the approximate decomposition. It is also needed to apply the robust decomposition lemma.) However, we can (roughly) reduce the decomposition of $T$ to the bipartite robust outexpander case as follows.

First, we discuss how to construct a single Hamilton cycle. (See also \cref{fig:sketch-C4Ham}.) For each $i\in [3]$, observe that $T[U_i, U_{i+1}]$ is a complete balanced bipartite graph and so Hall's theorem implies that it contains a perfect matching $M_i$. Then, $M_1\cup M_2\cup M_3$ induces a set $\sP$ of $n$ vertex-disjoint paths of $T$, each starting in $U_1$ and ending in $U_4$. Moreover, $\sP$ covers all of the vertices of $T$. We restrict ourselves to constructing a Hamilton cycle of $T$ which contains $E(\sP)$. Let $M$ be the auxiliary perfect matching from $U_1$ to $U_4$ obtained by replacing each path in $\sP$ by an edge from its starting point to its ending point. Then, it suffices to find a perfect matching $M'\subseteq E_T(U_4,U_1)$ such that $M\cup M'$ forms a Hamilton cycle on $U_4\cup U_1$. This can be done using the arguments of \cref{sec:sketch-cycle} (with $A=U_4$, $B=U_1$, and $E(D)=M\cup E_T(U_4,U_1)$).

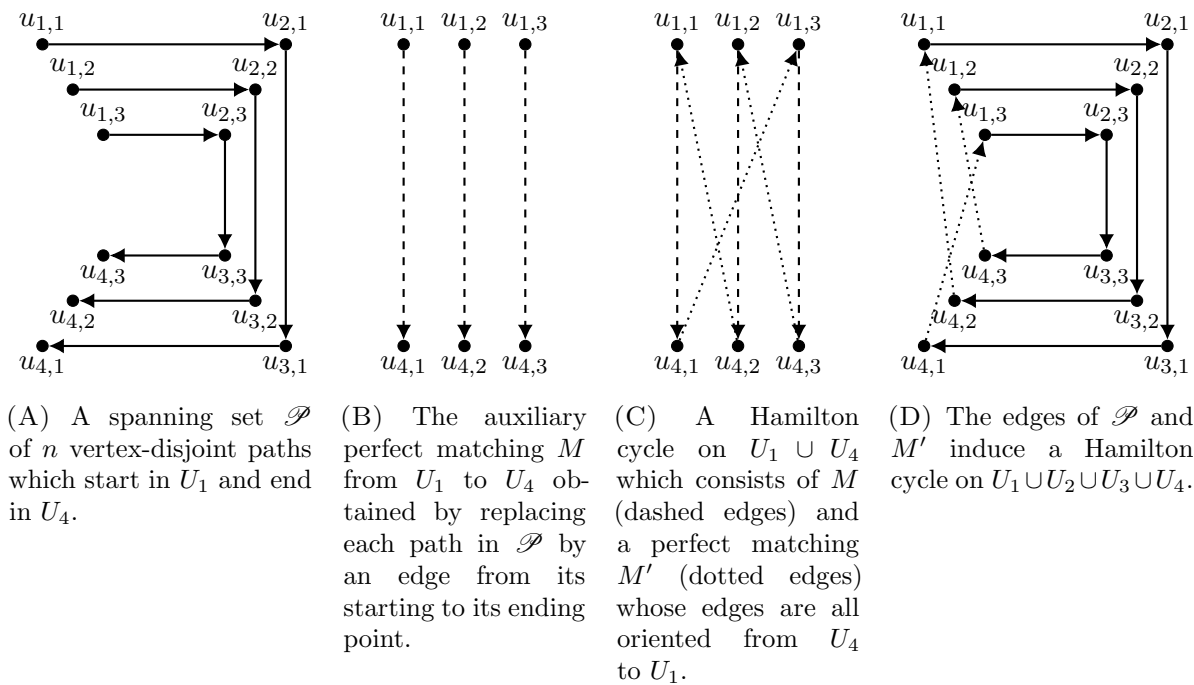
\begin{figure}[htb]
	\centering
	\begin{subfigure}[t]{0.25\textwidth}
	\centering
	\begin{tikzpicture}[scale=0.8]
		%%vertices%%
		\draw node[circle, draw=black,fill=black, inner sep=1.5pt](u11) at (0,0) {};
		\node[above] at (0,0) {$u_{1,1}$};
		\node[circle, draw=black,fill=black, inner sep=1.5pt](u12) at (0.5,-0.75) {};
		\node[above] at (0.5,-0.75) {$u_{1,2}$};
		\node[circle, draw=black,fill=black, inner sep=1.5pt](u13) at (1,-1.5) {};
		\node[above] at (1,-1.5) {$u_{1,3}$};
		\draw node[circle, draw=black,fill=black, inner sep=1.5pt](u21) at (4,0) {};
		\node[above] at (4,0) {$u_{2,1}$};
		\node[circle, draw=black,fill=black, inner sep=1.5pt](u22) at (3.5,-0.75) {};
		\node[above] at (3.5,-0.75) {$u_{2,2}$};
		\node[circle, draw=black,fill=black, inner sep=1.5pt](u23) at (3,-1.5) {};
		\node[above] at (3,-1.5) {$u_{2,3}$};
		\draw node[circle, draw=black,fill=black, inner sep=1.5pt](u31) at (4,-5) {};
		\node[below] at (4,-5) {$u_{3,1}$};
		\node[circle, draw=black,fill=black, inner sep=1.5pt](u32) at (3.5,-4.25) {};
		\node[below] at (3.5,-4.25) {$u_{3,2}$};
		\node[circle, draw=black,fill=black, inner sep=1.5pt](u33) at (3,-3.5) {};
		\node[below] at (3,-3.5) {$u_{3,3}$};
		\draw node[circle, draw=black,fill=black, inner sep=1.5pt](u43) at (1,-3.5) {};
		\node[below] at (1,-3.5) {$u_{4,3}$};
		\node[circle, draw=black,fill=black, inner sep=1.5pt](u42) at (0.5,-4.25) {};
		\node[below] at (0.5,-4.25) {$u_{4,2}$};
		\node[circle, draw=black,fill=black, inner sep=1.5pt](u41) at (0,-5) {};
		\node[below] at (0,-5) {$u_{4,1}$};
		%%edges%%
		\draw[->,thick] (u11)-- (u21);
		\draw[->,thick] (u21)-- (u31);
		\draw[->,thick] (u31)-- (u41);
		\draw[->,thick] (u12)-- (u22);
		\draw[->,thick] (u22)-- (u32);
		\draw[->,thick] (u32)-- (u42);
		\draw[->,thick] (u13)-- (u23);
		\draw[->,thick] (u23)-- (u33);
		\draw[->,thick] (u33)-- (u43);
	\end{tikzpicture}
	\caption{A spanning set $\sP$ of $n$ vertex-disjoint paths which start in $U_1$ and end in $U_4$.}
	\end{subfigure}
	\hspace{0.01\textwidth}
	\begin{subfigure}[t]{0.2\textwidth}
		\centering
		\begin{tikzpicture}[scale=0.8]
			%%vertices%%
			\draw node[circle, draw=black,fill=black, inner sep=1.5pt](u11) at (0,0) {};
			\node[above] at (0,0) {$u_{1,1}$};
			\node[circle, draw=black,fill=black, inner sep=1.5pt](u12) at (1,0) {};
			\node[above] at (1,0) {$u_{1,2}$};
			\node[circle, draw=black,fill=black, inner sep=1.5pt](u13) at (2,0) {};
			\node[above] at (2,0) {$u_{1,3}$};
			\draw node[circle, draw=black,fill=black, inner sep=1.5pt](u41) at (0,-5) {};
			\node[below] at (0,-5) {$u_{4,1}$};
			\node[circle, draw=black,fill=black, inner sep=1.5pt](u42) at (1,-5) {};
			\node[below] at (1,-5) {$u_{4,2}$};
			\node[circle, draw=black,fill=black, inner sep=1.5pt](u43) at (2,-5) {};
			\node[below] at (2,-5) {$u_{4,3}$};
			%%edges%%
			\draw[->,thick,dashed] (u11)-- (u41);
			\draw[->,thick,dashed] (u12)-- (u42);
			\draw[->,thick,dashed] (u13)-- (u43);
		\end{tikzpicture}
		\caption{The auxiliary perfect matching $M$ from $U_1$ to $U_4$ obtained by replacing each path in $\sP$ by an edge from its starting to its ending point.}
	\end{subfigure}
	\hspace{0.01\textwidth}
	\begin{subfigure}[t]{0.2\textwidth}
		\centering
		\begin{tikzpicture}[scale=0.8]
			%%vertices%%
			\draw node[circle, draw=black,fill=black, inner sep=1.5pt](u11) at (0,0) {};
			\node[above] at (0,0) {$u_{1,1}$};
			\node[circle, draw=black,fill=black, inner sep=1.5pt](u12) at (1,0) {};
			\node[above] at (1,0) {$u_{1,2}$};
			\node[circle, draw=black,fill=black, inner sep=1.5pt](u13) at (2,0) {};
			\node[above] at (2,0) {$u_{1,3}$};
			\draw node[circle, draw=black,fill=black, inner sep=1.5pt](u41) at (0,-5) {};
			\node[below] at (0,-5) {$u_{4,1}$};
			\node[circle, draw=black,fill=black, inner sep=1.5pt](u42) at (1,-5) {};
			\node[below] at (1,-5) {$u_{4,2}$};
			\node[circle, draw=black,fill=black, inner sep=1.5pt](u43) at (2,-5) {};
			\node[below] at (2,-5) {$u_{4,3}$};
			%%edges%%
			\draw[->,thick,dashed] (u11)-- (u41);
			\draw[->,thick,dashed] (u12)-- (u42);
			\draw[->,thick,dashed] (u13)-- (u43);
			\draw[->,thick,dotted] (u41)-- (u13);
			\draw[->,thick,dotted] (u42)-- (u11);
			\draw[->,thick,dotted] (u43)-- (u12);
		\end{tikzpicture}
		\caption{A Hamilton cycle on $U_1\cup U_4$ which consists of $M$ (dashed edges) and a perfect matching $M'$ (dotted edges) whose edges are all oriented from $U_4$ to~$U_1$.}
	\end{subfigure}
	\hspace{0.01\textwidth}
	\begin{subfigure}[t]{0.25\textwidth}
		\centering
		\begin{tikzpicture}[scale=0.8]
			%%vertices%%
			\draw node[circle, draw=black,fill=black, inner sep=1.5pt](u11) at (0,0) {};
			\node[above] at (0,0) {$u_{1,1}$};
			\node[circle, draw=black,fill=black, inner sep=1.5pt](u12) at (0.5,-0.75) {};
			\node[above] at (0.5,-0.75) {$u_{1,2}$};
			\node[circle, draw=black,fill=black, inner sep=1.5pt](u13) at (1,-1.5) {};
			\node[above] at (1,-1.5) {$u_{1,3}$};
			\draw node[circle, draw=black,fill=black, inner sep=1.5pt](u21) at (4,0) {};
			\node[above] at (4,0) {$u_{2,1}$};
			\node[circle, draw=black,fill=black, inner sep=1.5pt](u22) at (3.5,-0.75) {};
			\node[above] at (3.5,-0.75) {$u_{2,2}$};
			\node[circle, draw=black,fill=black, inner sep=1.5pt](u23) at (3,-1.5) {};
			\node[above] at (3,-1.5) {$u_{2,3}$};
			\draw node[circle, draw=black,fill=black, inner sep=1.5pt](u31) at (4,-5) {};
			\node[below] at (4,-5) {$u_{3,1}$};
			\node[circle, draw=black,fill=black, inner sep=1.5pt](u32) at (3.5,-4.25) {};
			\node[below] at (3.5,-4.25) {$u_{3,2}$};
			\node[circle, draw=black,fill=black, inner sep=1.5pt](u33) at (3,-3.5) {};
			\node[below] at (3,-3.5) {$u_{3,3}$};
			\draw node[circle, draw=black,fill=black, inner sep=1.5pt](u43) at (1,-3.5) {};
			\node[below] at (1,-3.5) {$u_{4,3}$};
			\node[circle, draw=black,fill=black, inner sep=1.5pt](u42) at (0.5,-4.25) {};
			\node[below] at (0.5,-4.25) {$u_{4,2}$};
			\node[circle, draw=black,fill=black, inner sep=1.5pt](u41) at (0,-5) {};
			\node[below] at (0,-5) {$u_{4,1}$};
			%%edges%%
			\draw[->,thick] (u11)-- (u21);
			\draw[->,thick] (u21)-- (u31);
			\draw[->,thick] (u31)-- (u41);
			\draw[->,thick] (u12)-- (u22);
			\draw[->,thick] (u22)-- (u32);
			\draw[->,thick] (u32)-- (u42);
			\draw[->,thick] (u13)-- (u23);
			\draw[->,thick] (u23)-- (u33);
			\draw[->,thick] (u33)-- (u43);
			\draw[->,thick,dotted] (u41)-- (u13);
			\draw[->,thick,dotted] (u42)-- (u11);
			\draw[->,thick,dotted] (u43)-- (u12);
		\end{tikzpicture}
		\caption{The edges of $\sP$ and $M'$ induce a Hamilton cycle on $U_1\cup U_2\cup U_3\cup U_4$.}
	\end{subfigure}
	\caption{Constructing a Hamilton cycle in the complete blow-up $C_4$ on vertex classes of size $n=3$, where $U_i= \{u_{i,1},u_{i,2},u_{i,3}\}$ for each $i\in [4]$.\label{fig:sketch-C4Ham}}	
	\end{figure}

In fact, the above argument can be repeated to obtain an approximate decomposition of $T$ into Hamilton cycles. Indeed, for each $i\in [3]$, $T[U_i, U_{i+1}]$ is a complete bipartite graph and so Hall's theorem can be applied repeatedly to (approximately) decompose it into perfect matchings. Moreover, $T[U_4,U_1]$ is also a complete bipartite graph and so one can verify that it is a bipartite robust expander. Thus, it can be approximately decomposed into suitable perfect matchings using the same arguments as in the bipartite robust outexpander case.

To obtain a full Hamilton decomposition of $T$, one needs to find an absorber $A$ which will decompose the edges which are leftover after the approximate decomposition. Unfortunately, we cannot directly apply the tools of \cite{kuhn2013hamilton} in $T$ since it is not a robust outexpander. However, we will derive an analogue of the robust decomposition lemma which can be applied in a blow-up $C_4$. This argument is discussed in \cref{sec:cyclerobustdecomp} (a detailed proof overview is given in \cref{sec:cyclerobustdecomp-sketch}).

\subsection{The \texorpdfstring{$\varepsilon$}{epsilon}-close to the complete blow-up \texorpdfstring{$C_4$}{C4} case: proof overview of Theorem \ref{thm:blowupC4}}

Let $T$ be a regular bipartite tournament and suppose that $T$ is $\varepsilon$-close to the complete blow-up $C_4$ on vertex classes of size $n$. That is, there is a partition of $V(T)$ into vertex classes $U_1, \dots, U_4$ of size $n$ such that $E(T)$ satisfies the following properties (where $U_5\coloneqq U_1$).
\begin{itemize}
	\item For each $i\in [4]$, $u\in U_i$, and $v\in U_{i+1}$, $E(T)$ contains either the edge $uv$ from $u$ to $v$ or the edge $vu$ from $v$ to $u$ (but not both).
	\item $E(T)$ does not contain any other edges.
	\item $\sum_{i\in [4]}|E_T(U_{i+1}, U_i)|\leq 4\varepsilon n^2$.
\end{itemize}
Note that if $E_T(U_{i+1}, U_i)=\emptyset$ for each $i\in [4]$, then $T$ is in fact the complete blow-up $C_4$ on vertex classes of size $n$ and so it can be decomposed using the arguments presented in \cref{sec:sketch-completeblowupC4}. In general, the sets $E_T(U_{i+1}, U_i)$ will be non-empty and the main difficulty will be to incorporate these edges, which we call \emph{backward edges}. 

Our overall strategy is the following. First, we decompose all the backward edges into $n$ small digraphs $\cF_1, \dots, \cF_n$. Then, we restrict ourselves to constructing a Hamilton decomposition of $T$ where each Hamilton cycle contains precisely one of the $\cF_i$'s.

For this to be possible, $\cF_1, \dots, \cF_n$ will need to have a very specific structure.
First, each $\cF_i$ will have to be a linear forest (any proper subdigraph of a Hamilton cycle is a linear forest). Moreover, each $\cF_i$ will need to contain a ``balanced" number of backward edges. To see this, suppose that $C$ is cycle of $T$ such that $C$ contains a backward edge, say from $U_1$ to $U_4$, and all other edges of $C$ are from $U_i$ to $U_{i+1}$ for some $i\in [4]$. Then, one can verify that $C$ covers one more vertex from each of $U_1$ and $U_4$ than from each of $U_2$ and $U_3$. Thus, $C$ cannot be a Hamilton cycle of $T$ (recall that $U_1, \dots, U_4$ are equal sized vertex classes which partition $V(T)$). This example shows no $\cF_i$ can consist of a single backward edge. 

More generally, we will have restrictions on the number of backward edges contained in each $\cF_i$. (Formally, each $\cF_i$ will have to be a \emph{feasible system}, as defined in \cref{sec:feasible}.)
To illustrate this further, consider the simple example where $T$ is obtained from the complete blow-up $C_4$ on vertex classes of size $n$ by flipping the orientation of precisely one $C_4$. Then, $T$ contains precisely four backward edges and since these form a small cycle, they cannot all be included into a common Hamilton cycle. As discussed above, they also cannot be spread across four different Hamilton cycles. Thus, they will be incorporated two by two as follows:
\begin{itemize}
    \item $\cF_1$ will consists of the backward edge from $U_2$ to $U_1$ and the backward edge from $U_4$ to $U_3$,
    \item $\cF_2$ will consists of the backward edge from $U_1$ to $U_4$ and the backward edge from $U_3$ to $U_2$, and
    \item $\cF_3, \dots, \cF_n$ will be empty.
\end{itemize}
We will see that this decomposition of backward edges will ensure that the vertex classes $U_1, \dots, U_4$ can be covered in a ``balanced" way (as opposed to the previous example where $C$ covered more vertices from $U_1$ and $U_4$ than from $U_2$ and $U_3$).

Decomposing the backward edges of $T$ into suitable digraphs $\cF_1, \dots, \cF_n$ will be the core of the proof and the sole focus of \cref{sec:backward,sec:pseudofeasible,sec:constructfeasible,sec:constructpseudofeasible,sec:specialpseudofeasible}. We defer further discussions about how to decompose backward edges to these \lcnamecrefs{sec:backward}, which contain further intuition and motivation.

Once we have constructed suitable digraphs $\cF_1, \dots, \cF_n$, the Hamilton decomposition will be constructed using the arguments presented in \cref{sec:sketch-completeblowupC4}. However, the backward edges will introduce additional problems.
In particular, recall that in \cref{sec:sketch-completeblowupC4} we decomposed $T[U_1,U_2]$, $T[U_2, U_3]$, and $T[U_3, U_4]$ into perfect matchings by applying Hall's theorem. But, this is no longer possible since $T[U_1, U_2]$, $T[U_2, U_3]$, and $T[U_3, U_4]$ may no longer be regular bipartite graphs. Moreover, the matchings will now have to incorporate some of backward edges, as prescribed by $\cF_1, \dots, \cF_n$. Thus, $T[U_1,U_2]$, $T[U_2, U_3]$, and $T[U_3, U_4]$ will now need to be decomposed building on methods from \cite{girao2020path}. (For more detail on how construct an approximate decomposition which incorporates given $\cF_i$'s see \cref{sec:blowupC4approxdecomp}.) 

As mentioned in \cref{sec:sketch-completeblowupC4}, the absorber required to decompose the leftovers will be constructed using an analogue of the robust decomposition lemma for blow-up cycles (see \cref{sec:cyclerobustdecomp} for more detail). The decomposition properties of this absorber will be robust enough to allow us to prescribe the backward edges of the $\cF_i$'s left over by the approximate decomposition.

\onlyinsubfile{\bibliographystyle{abbrv}
	\bibliography{Bibliography/Bibliography}}

\section{Notation}\label{sec:notation}

	\onlyinsubfile{
		\setcounter{section}{2}
\section{Notation and definitions}}

For simplicity, we collect the key notation and concepts that will be used throughout the paper.
The core definitions will be defined when first needed and are indexed in the glossary at the end of the paper.
Given $n\in \mathbb{N}$, we define $[n]\coloneqq \{1, \dots, n\}$. Given $a,b,c\in \mathbb{R}$, we write $a=b\pm c$ to mean that $b-c\leq a\leq b+c$.

\subsection{Graphs and digraphs}\label{sec:notation-graphs}
In this paper, all (directed) graphs are without loops and, unless otherwise specified, without multiple edges.
A \emph{digraph} is a directed graph which contains at most two edges between any pair of distinct vertices and at most one in each direction. An \emph{oriented graph} is a digraph which contains at most one edge between any pair of distinct vertices.
Given a (di)graph $G$, a \emph{sub(di)graph} of $G$ is a (di)graph whose vertex and edge sets are subsets of those of $G$.
Let $G$ be an undirected graph. An \emph{orientation} of $G$ is an oriented graph which can be obtained by orienting the edges of $G$. Given an orientation $D$ of $G$, we say that $G$ is the undirected graph \emph{underlying} $D$.
A directed edge from a vertex $u$ to a vertex $v$ is denoted by $uv$. If $e$ is the directed edge $uv$, we say that $u$ and $v$ are the starting and ending points of $e$, respectively.

A \emph{multigraph} is an undirected graph without loops which may contain multiple edges between the same pair of distinct vertices. Similarly, a \emph{multidigraph} is a directed graph without loops which may contain multiple edges of the same direction between the same pair of distinct vertices. 
Given a multi(di)graph $G$, a \emph{submulti(di)graph} of $G$ is a multi(di)graph whose vertex set is a subset of the vertex set of $G$ and whose edge multiset is a submultiset of the edge multiset of $G$.
The edges of a multi(di)graph are always considered to be distinct. More precisely, given a multi(di)graph $G$ and distinct vertices $u$ and $v$, denote by $\mu_G(uv)$ the \emph{multiplicity} of the edge $uv$ in $G$ (that is, $\mu_G(uv)$ is the number of edges between $u$ and $v$ if $G$ is undirected and the number of edges from $u$ to $v$ if $G$ is directed). Then, given a multi(di)graph $G$ and submulti(di)graphs $G_1$ and $G_2$ of $G$, we say that $G_1$ and $G_2$ are \emph{edge-disjoint} if $\mu_{G_1}(uv)+\mu_{G_2}(uv)\leq \mu_G(uv)$ for any distinct vertices $u$ and $v$ of $G$.

\subsection{Edge sets}
Let $G$ be a (di)graph. We denote by $V(G)$ and $E(G)$ the vertex and edge sets of $G$, respectively. The \emph{order} of $G$ is $|V(G)|$ and we define the \emph{size} of $G$ as $e(G)\coloneqq |E(G)|$. We say that $G$ is \emph{empty} if $E(G)=\emptyset$. 

Let $G$ be a (di)graph and let $A, B\subseteq V(G)$ be disjoint. If $G$ is undirected, we denote by $E_G(A,B)$ the set of undirected edges of $G$ which have an endpoint in $A$ and an endpoint in $B$. If $G$ is directed, we denote by $E_G(A,B)$ the set of directed edges of $G$ which start in $A$ and end in $B$. Define $e_G(A,B)\coloneqq |E_G(A,B)|$. Given any disjoint vertex sets $A'$ and $B'$ which are not necessarily contained in $V(G)$, we sometimes abuse the above notation and write $E_G(A', B')\coloneqq E_G(A'\cap V(G), B'\cap V(G))$ and $e_G(A', B')\coloneqq |E_G(A', B')|$.

Let $G$ be a (di)graph and let $A$ and $B$ be any disjoint vertex sets.
We denote by $G[A,B]$ the undirected bipartite graph on vertex classes $A$ and $B$ induced by $E_G(A,B)$ and, if $G$ is directed, we denote by $G(A,B)$ the directed bipartite graph on vertex classes $A$ and $B$ induced by $E_G(A,B)$. (Thus, if $G$ is directed, then $G[A,B]$ is the undirected graph underlying $G(A,B)$.)

All the above definitions from this subsection extend naturally to multi(di)graphs. That is, if $G$ is a multi(di)graph and $A$ and $B$ are disjoint vertex sets, then $E(G)$ and $E_G(A, B)$ are now multisets of edges, while $G[A,B]$, as well as $G(A,B)$ if $G$ is directed, are now multi(di)graphs. The vertex set $V(G)$ of a multi(di)graph is still a set rather than a multiset.

We sometime abuse notation and consider a set of (directed) edges as a (di)graph. In particular, given a set of edges $E$, we write $V(E)$ for the set of vertices which are incident to an edge in $E$.

\subsection{Subgraphs}
Let $G$ and $H$ be (di)graphs and let $F$ be a sub(di)graph of $G$. We write $F\subseteq G$, and, if $V(F)=V(G)$, we say that $F$ is \emph{spanning}. Given $S\subseteq V(G)$, we write $G[S]$ for the sub(di)graph of $G$ \emph{induced} by $S$ and define $G-S\coloneqq G[V(G)\setminus S]$.
We denote by $G\setminus H$ the (di)graph obtained from $G$ by deleting all the edges in $E(G)\cap E(H)$, we denote by $G\cup H$ the (di)graph with vertex set $V(G)\cup V(H)$ and edge set $E(G)\cup E(H)$, and we denote by $G\cap H$ the (di)graph with vertex set $V(G)\cap V(H)$ and edge set $E(G)\cap E(H)$.
Given a set of (directed) edges $E$, we sometimes abuse the above notation and write $G\setminus E$, $G\cup E$, and $G\cap E$ for the (di)graphs obtained as above when $E$ is viewed as a (di)graph.

All the above definitions from this subsection extend naturally to multi(di)graphs, with unions and differences now considered as multiset unions and differences. More precisely, let $G$ and $H$ be multi(di)graphs. We denote by $G\setminus H$ the multi(di)graph with vertex set $V(G)$ where $\mu_{G\setminus H}(uv)\coloneqq \max\{\mu_G(uv)-\mu_H(uv),0\}$ for any distinct $u,v\in V(G)$, we denote by $G\cup H$ the multi(di)graph with vertex set $V(G)\cup V(H)$ where $\mu_{G\cup H}(uv)\coloneqq \mu_G(uv)+\mu_H(uv)$ for any distinct $u,v\in V(G)\cup V(H)$, and we denote by $G\cap H$ the multi(di)graph with vertex set $V(G)\cap V(H)$ where $\mu_{G\cap H}(uv)\coloneqq \min\{\mu_G(uv),\mu_H(uv)\}$ for any distinct $u,v\in V(G)\cap V(H)$.

\subsection{Neighbourhoods and degrees}
We use standard notation for neighbourhoods and degrees. More precisely, let $G$ be an undirected graph. Given $v\in V(G)$, we denote by $N_G(v)$ the \emph{neighbourhood} of $v$ in $G$ and by $d_G(v)\coloneqq |N_G(v)|$ the \emph{degree} of $v$ in $G$. The \emph{maximum degree} of $G$ is $\Delta(G)\coloneqq \max\{d_G(v)\mid v\in V(G)\}$ and the \emph{minimum degree} of $G$ is $\delta(G)\coloneqq \min\{d_G(v)\mid v\in V(G)\}$.

Similarly, let $D$ be a digraph. Given $v\in V(D)$, we denote by $N_D^+(v)$ and $N_D^-(v)$ the \emph{outneighbourhood} and \emph{inneighbourhood} of $v$ in $D$, respectively, and by $d_D^+(v)\coloneqq |N_D^+(v)|$ and $d_D^-(v)\coloneqq |N_D^-(v)|$ the \emph{outdegree} and \emph{indegree} of $v$ in $D$, respectively. The \emph{neighbourhood} of a vertex $v\in V(D)$ is the set $N_D(v)\coloneqq N_D^+(v)\cup N_D^-(v)$ and the \emph{degree} of a vertex $v\in V(D)$ is $d_D(v)\coloneqq d_D^+(v)+ d_D^-(v)$.
The \emph{maximum} and \emph{minimum outdegree} of $D$ are $\Delta^+(D)\coloneqq \max\{d_D^+(v) \mid v\in V(G)\}$ and $\delta^+(D)\coloneqq \min\{d_D^+(v) \mid v\in V(G)\}$, respectively. The \emph{maximum/minimum indegree} and \emph{maximum/minimum degree} of $D$ are defined analogously and denoted by $\Delta^-(D)$, $\delta^-(D)$, $\Delta(D)$, and $\delta(D)$, respectively. We denote by $\Delta^0(D)\coloneqq \max\{\Delta^+(D), \Delta^-(D)\}$ the \emph{maximum semidegree} of $D$ and by $\delta^0(D)\coloneqq \min\{\delta^+(D), \delta^-(D)\}$ the \emph{minimum semidegree} of $D$.

Let $G$ be a (di)graph and $S\subseteq V(G)$. The \emph{neighbourhood} of $S$ in $G$ is the set $N_G(S)\coloneqq \bigcup_{v\in S}N_G(v)$. If $G$ is directed, the \emph{outneighbourhood} $N_G^+(S)$ and \emph{inneighbourhood} $N_G^-(S)$ of $S$ in $G$ are defined analogously.

\subsection{Regularity}\label{sec:notation-regularity}
An undirected graph $G$ is \emph{$r$-regular} if $d_G(v)=r$ for all $v\in V(G)$ and a digraph $D$ is \emph{$r$-regular} if $d_D^+(v)=r=d_D^-(v)$ for all $v\in V(D)$. A (di)graph is \emph{regular} if it is $r$-regular for some $r\in \mathbb{N}$. An undirected graph $G$ on $n$ vertices is \emph{$(\delta, \varepsilon)$-almost regular} if $d_G(v)= (\delta\pm \varepsilon)n$ for all $v\in V(G)$ and a digraph $D$ is  \emph{$(\delta, \varepsilon)$-almost regular} if both $d_D^+(v), d_D^-(v)= (\delta\pm \varepsilon)n$ for all $v\in V(D)$.

\subsection{Matchings}
A \emph{matching} is a set of pairwise non-adjacent edges. Given a vertex set $V$, a matching $M$ is called \emph{perfect} if $V(M)=V$. 

Let $M$ be a directed matching. We say that $M$ is a \emph{matching from $A$ to $B$} if $V(M)\subseteq A\cup B$ and all the edges of $M$ are directed from $A$ to $B$. We say that $M$ is a \emph{perfect matching from $A$ to $B$} if $M$ is a matching from $A$ to $B$ satisfying $V(M)=A\cup B$.

\subsection{Blow-ups}\label{sec:notation-blowup}
Let $D$ be a digraph and $r\in \mathbb{N}$. The \emph{$r$-fold blow-up of $D$} is the digraph $D'$ defined as follows. The vertex set $V(D')$ consists of $r$ copies of $v$ for each $v\in V(D)$. Let $u,v\in V(D)$ and $u',v'\in V(D')$. Suppose that $u'$ is a copy of $u$ and $v'$ is a copy of $v$. Then, $u'v'\in E(D')$ if and only if $uv\in E(D)$.
For each $v\in V(D)$, the set of $r$ copies of $v$ in $V(D')$ is called a \emph{vertex class} of $D'$.

\subsection{Paths and cycles}
Throughout this paper, all paths and cycles are directed, with consistently oriented edges. The number of edges contained in a path/cycle $P$ is called the \emph{length} of $P$ and denoted by $e(P)$.
A path $P$ is \emph{trivial} if $e(P)=0$. Given a vertex set $V$, a \emph{Hamilton cycle} is a cycle $C$ satisfying $V(C)=V$.

Let $P=v_1\dots v_\ell$ be a path. The \emph{starting point} of $P$ is $v_1$, the \emph{ending point} of $P$ is $v_\ell$, the \emph{endpoints} of $P$ are $v_1$ and $v_\ell$, and the \emph{internal vertices} of $P$ are $v_2, \dots, v_{\ell-1}$. We denote $V^+(P)\coloneqq \{v_1\}$, $V^-(P)\coloneqq \{v_\ell\}$, and $V^0(P)\coloneqq \{v_2, \dots, v_\ell\}$. A \emph{$(u,v)$-path} is a path which starts at $u$ and ends at $v$. Given $1\leq i\leq j\leq \ell$, denote by $v_iPv_j\coloneqq v_iv_{i+1}\dots v_j$ the $(v_i, v_j)$-path induced by $P$.

A set of vertex-disjoint paths is sometimes called a \emph{linear forest}. Given a set $\sP$ of (not necessarily disjoint) paths, we denote by $V^+(\sP)$ the set $\bigcup_{P\in \sP} V^+(P)$ of vertices which are the starting point of a path in $\sP$. Define $V^-(\sP)$ and $V^0(\sP)$ analogously. Note that $V^+(\sP), V^-(\sP)$, and $V^0(\sP)$ are always sets rather than multisets.

Let $\sP$ be a set of (not necessarily disjoint) paths. We sometimes abuse notation and view $\sP$ as a multidigraph. In particular, we denote by $V(\sP)$ the set $\bigcup_{P\in \sP}V(P)$ and by $E(\sP)$ the multiset $\bigcup_{P\in \sP}E(P)$. For any vertex $v\in V(\sP)$, we denote $d_\sP(v)=\sum_{P\in \sP}d_P(v)$, and define the out- and indegrees $d_\sP^+(v)$ and $d_{\sP}^-(v)$ of $v$ in $\sP$ analogously. 
Given a digraph $D$, we write $D\setminus \sP\coloneqq D\setminus E(\sP)$.

\subsection{Decompositions}\label{sec:notation-decomp}
Given a (di)graph $G$, a \emph{decomposition} of $G$ is a set of edge-disjoint sub(di)graphs of $G$ which altogether cover all the edges of $G$. A \emph{Hamilton decomposition} is a decomposition into Hamilton cycles.

Recall that the edges of a multi(di)graph are considered to be distinct. Thus, a \emph{decomposition} of a multi(di)graph $G$ is a set $\{H_1, \dots, H_\ell\}$ of submulti(di)graphs of $G$ such that $\mu_G(uv)=\sum_{i\in [\ell]} \mu_{H_i}(uv)$ for any distinct $u,v\in V(G)$.

\subsection{Hierarchies}
In a statement, the hierarchy $0<a\ll b\ll c\leq 1$ means that there exist non-decreasing functions $f\colon (0,1]\longrightarrow (0,1]$ and $g\colon (0,1]\longrightarrow (0,1]$ for which the statement holds for all $0<a,b,c\leq 1$ satisfying $b\leq f(c)$ and $a\leq g(b)$. Hierarchies with more constants are defined analogously and should always be read from right to left.
Whenever a constant appears in the form $\frac{1}{a}$ in a hierarchy, we implicit assume that $a\in \mathbb{N}$.

\subsection{\texorpdfstring{$\pm$}{Plus/minus}-notation}
To avoid repetitions, we sometime write statements of the form $\cC^\pm$ to mean that the statements $\cC^+$ and $\cC^-$ both hold.
In particular, if $\cC^\pm$ is a statement of the form ``$\cA^\pm$ implies $\cB^\pm$'', then we mean that ``$\cA^+$ implies $\cB^+$'' and ``$\cA^-$ implies $\cB^-$''. Similarly, a statement of the form ``$\cA^\pm$ implies $\cB^\mp$'' means that ``$\cA^+$ implies $\cB^-$'' and ``$\cA^-$ implies $\cB^+$''.

\section{Tripartite tournaments and some applications of Theorem \ref{thm:biprobexp}}\label{sec:applications}

	\onlyinsubfile{
		\setcounter{section}{13}
		\section{Tripartite tournaments and some applications of Theorem \ref{thm:biprobexp}}}
		
In this section, we construct a family of regular tripartite tournaments which cannot be decomposed into Hamilton cycles and derive several consequences of \cref{thm:biprobexp}. In particular, we prove \cref{prop:tripartite} and \cref{cor:1/2,cor:undirected}.
	
\subsection{Tripartite tournaments: proof of Proposition \ref{prop:tripartite}}

Let $n\geq 2$. We show that if $T$ is obtained from the $n$-fold blow-up of the directed $C_3$ by flipping the orientation of precisely one triangle, then $T$ does not have a Hamilton decomposition.

\begin{proof}[Proof of \cref{prop:tripartite}]
	Let $U_1, U_2$, and $U_3$ be disjoint vertex sets of size $n\geq 2$. For each $i\in [3]$, let $u_i\in U_i$. Denote $E\coloneqq \{u_1u_3,u_3u_2,u_2u_1\}$ and let $T$ be the digraph on $U_1\cup U_2\cup U_3$ defined by 
	\[E(T)\coloneqq E\cup (\{uv\mid i\in [3], u\in U_i, v\in U_{i+1}\}\setminus \{u_1u_2,u_2u_3,u_3u_1\})\]
	(where $U_4\coloneqq U_1$).
	Note that $E(T[U_i])=\emptyset$ for each $i\in [3]$.
	
	Suppose for a contradiction that $\sC$ is a Hamilton decomposition of $T$.
	Since $u_1u_3u_2$ is a non-spanning cycle of $T$, the edges $u_1u_3,u_3u_2$, and $u_2u_1$ do not all lie on a common Hamilton cycle in $\sC$. By the pigeon-hole principle, there exists $i\in [3]$ and $C\in \sC$ such that $E(C)\cap E=\{u_iu_{i-1}\}$ (where $u_0\coloneqq u_3$).
	Denote $C=u_iu_{i-1}v_1\dots v_{3n-2}$. Then, for each $j\in [3n-2]$, we have $v_j\in U_{i-1+j}$ (where the index $i-1+j$ is taken modulo $3$). In particular, $v_{3n-2}\in U_i$. But $u_i\in U_i$ and so $v_{3n-2}u_i\notin E(T)$, a contradiction.	
\end{proof}

Note that the above arguments can easily be extended to show that none of the edges in $E$ lie on a Hamilton cycle. 

Moreover, the same arguments can be used to show that, if $T$ is obtained from the complete blow-up $C_4$ on vertex classes of size $n$ by flipping the orientation of a set $E'$ of edges, then no Hamilton cycle of $T$ contains a single edge from $E'$. This illustrates the fact that the edges of $T$ with reversed direction in \cref{thm:blowupC4} will have to be decomposed in a ``balanced" way. 
	
\subsection{Bipartite robust expanders: proof of Corollary \ref{cor:undirected}}

The arguments of \cite[Lemma 3.6]{kuhn2014hamilton} can be easily adapted to the bipartite case to show that the edges of a regular bipartite robust expander can be oriented to form a regular bipartite robust outexpander.

\begin{lm}\label{lm:orientation}
	Let $0<\frac{1}{n}\ll \nu'\ll \nu \leq \tau \ll \delta\leq 1$ and let $r\geq \delta n$ be even. Let $G$ be an $r$-regular bipartite graph on vertex classes $A$ and $B$ of size $n$. Suppose that $G$ is a bipartite robust $(\nu, \tau)$-expander with bipartition $(A,B)$, as well as with bipartition $(B,A)$. Then, there exists an $\frac{r}{2}$-regular orientation $D$ of $G$ such that $D$ is a bipartite robust $(\nu', \tau)$-outexpander with bipartition $(A,B)$.
\end{lm}

This can be proved by considering a random orientation of the edges of $G$ and then adjusting the orientations of a small proportion of edges to ensure that $D$ is $\frac{r}{2}$-regular.
	
\COMMENT{\begin{proof}
		Fix additional constants such that $\nu'\ll \varepsilon, \gamma\ll \nu$.
		Let $G_1\cup G_1'\cup G_2\cup G_2'$ be a decomposition of $G$ such that each edge of $G$ belongs to $G_1$ with probability $\gamma$, to $G_1'$ with probability $\gamma$, to $G_2$ with probability $\frac{1}{2}-\gamma$, and to $G_2'$ with probability $\frac{1}{2}-\gamma$, independently of all the other edges. 
		For each $i\in [2]$, let $D_i$ be obtained from $G_i\cup G_i'$ by orienting each edge of $G_i$ from $A$ to $B$ and orienting each edge of $G_i'$ from $B$ to $A$. 
		\begin{claim}
			The following hold with high probability.
			\begin{enumerate}
				\item Each $v\in V(G)$ satisfies $d_{D_1}^+(v)=(1\pm\varepsilon)\gamma r=d_{D_1}^-(v)$ and $d_{D_2}^+(v)=(1\pm\varepsilon)(\frac{1}{2}-\gamma)r=d_{D_2}^-(v)$.\label{claim:orientation-degree}
				\item $D_1$ is a bipartite robust $(\nu', \tau)$-outexpander with bipartition $(A,B)$.\label{claim:orientation-rob1}
				\item $D_2$ is a bipartite robust $(\nu', \tau)$-outexpander with bipartition $(A,B)$.\label{claim:orientation-rob2}
			\end{enumerate}  
		\end{claim}
		\begin{proofclaim}
			By \cref{lm:Chernoff}, \cref{claim:orientation-degree} holds with high probability.
			We now show that \cref{claim:orientation-rob1} holds with high probability.
			Let $S\subseteq A$ satisfy $\tau n\leq |S|\leq (1-\tau)n$. Denote $T\coloneqq RN_{\nu, G}(S)$. By assumption, $|T|\geq |S|+\nu n$. Let $v\in T\subseteq B$. Then, $\mathbb{E}[|N_{D_1}^-(v)\cap S|]=\mathbb{E}[|N_{G_1}(v)\cap S|]\geq \gamma\nu n\geq 2\nu' n$. Thus, \cref{lm:Chernoff} implies that \[\mathbb{P}\left[|N_{D_1}^-(v)\cap S|\leq \nu'n\right]\leq \exp\left(-\frac{\nu' n}{6}\right)\]
			and so, for any $T'\subseteq T$ which satisfies $|T'|\geq \frac{\nu n}{2}$, we have
			\[\mathbb{P}\left[\forall u\in T': |N_{D_1}^-(v)\cap S|\leq \nu'n\right]\leq \exp\left(-\frac{\nu\nu' n^2}{12}\right).\]
			Thus, a union bound over all such $T'$ gives that
			\[\mathbb{P}\left[|RN_{\nu',D_1}^+(S)|\leq |S|+\frac{\nu n}{2}\right]\leq 2^n \exp\left(-\frac{\nu\nu' n^2}{12}\right).\]
			A union bound over all $S\subseteq A$ gives that, with high probability, $D_1[A,B]$ is a bipartite robust $(\nu', \tau)$-expander with bipartition $(A,B)$. Using similar arguments, one can show that, with high probability, $D_1[B,A]$ is a bipartite robust $(\nu', \tau)$-expander with bipartition $(B,A)$.
			Thus, \cref{fact:biprobexp} implies that \cref{claim:orientation-rob1} holds with high probability. By similar arguments, \cref{claim:orientation-rob2} holds with high probability.
		\end{proofclaim}
		We may therefore assume that \cref{claim:orientation-degree,claim:orientation-rob1,claim:orientation-rob2} are all satisfied.
		For each $v\in V(G)$, let $n_v^\pm\coloneqq 2\gamma r-d_{D_1}^\pm(v)$ and note that
		\[n_v^\pm= 2\gamma r-(1\pm\varepsilon)\gamma r=(1\pm\varepsilon)\gamma r.\]
		Moreover,
		\[\sum_{v\in V(G)}n_v^+=4\gamma rn -\sum_{v\in V(G)}d_{D_1}^+(v)=4\gamma rn- e(D_1)=4\gamma rn- \sum_{v\in V(G)}d_{D_1}^-(v)=\sum_{v\in V(G)}n_v^-.\]
		Apply \cref{regrobust} with $D_2, \frac{1}{3}, \frac{\gamma r}{n}$ playing the roles of $G, \delta, \xi$ to obtain a spanning subdigraph $D_2'\subseteq D_2$ such that $d_{D_2'}^\pm(v)=n_v^\pm$ for each $v\in V(G)$.
		Then, $D_1\cup D_2'$ is $2\gamma r$-regular. Denote by $H$ the undirected graph obtained by replacing each directed edge $uv\in E(D_2\setminus D_2')$ by an undirected edge between $u$ and $v$. Then, $H$ is $(1-4\gamma)r$-regular.
		By \cite[Theorem 3.5]{kuhn2014hamilton}, the exists a decomposition $\cC$ of $H$ into undirected cycles. Let $D_3$ be an orientation of $H$ such that each undirected cycle in $\cC$ induces a directed cycle in $D_3$. Then, $D_3$ is $(\frac{1}{2}-2\gamma)r$-regular and so $D\coloneqq D_1\cup D_2'\cup D_3$ is $\frac{r}{2}$-regular. Moreover, \cref{claim:orientation-rob1} implies that $D\supseteq D_1$ is a bipartite robust $(\nu',\tau)$-outexpander with bipartition $(A,B)$.
	\end{proof}}

\begin{proof}[Proof of \cref{cor:undirected}]
	Let $\delta>0$ and let $\tau'$ be the constant obtained by applying \cref{thm:biprobexp}. We may assume without loss of generality that $\delta\ll 1$. Fix additional constants such that $0<\frac{1}{n_0}\ll \tau \ll \tau', \delta$ and $\frac{1}{n_0}\ll \nu$. Let $n\geq n_0$ and $r\geq \delta n$. Suppose that $r$ is even. Let $G$ be an $r$-regular balanced bipartite graph on vertex classes $A$ and $B$ of size $n$. Suppose that $G$ is a bipartite robust $(\nu, \tau)$-expander with bipartition $(A, B)$, as well as with bipartition $(B,A)$. By definition of a bipartite robust expander, we have $\nu \leq \tau$. Fix an additional constant such that $\frac{1}{n}\ll \nu'\ll \nu$.
	
	By \cref{lm:orientation}, there exists an $\frac{r}{2}$-regular orientation $D$ of $G$ such that $D$ is a bipartite robust $(\nu', \tau)$-outexpander with bipartition $(A,B)$.
	By definition, $D$ is also a bipartite robust $(\nu', \tau')$-outexpander with bipartition $(A,B)$.
	Apply \cref{thm:biprobexp} (with $\nu'$ and $\tau'$ playing the roles of $\nu$ and $\tau$) to obtain a Hamilton decomposition $\sC$ of $D$. Let $\sC'$ be obtained from $\sC$ by replacing each directed edge $uv\in E(\sC)$ by an undirected edge between $u$ and $v$. By construction, $\sC'$ is a Hamilton decomposition of $G$.
\end{proof}

\subsection{Dense bipartite digraphs: proof of Corollary \ref{cor:1/2}}

We show that any bipartite digraph of sufficiently large minimum semidegree is a bipartite robust outexpander.

\begin{lm}\label{lm:1/2rob}
	Let $0<\nu \leq \tau \ll \varepsilon<1$. Let $D$ be a bipartite digraph on vertex classes $A$ and $B$ of size $n$. Suppose that $\delta^0(D)\geq (\frac{1}{2}+\varepsilon)n$. Then, $D$ is a bipartite robust $(\nu,\tau)$-outexpander with bipartition $(A,B)$.
\end{lm}

\begin{proof}
	Let $S\subseteq A$ satisfy $\tau n\leq |S|\leq (1-\tau)n$ and denote $T\coloneqq RN_{\nu, D}^+(S)$. We show that $|T|\geq |S|+\nu n$.
	If $|S|\geq \frac{n}{2}$, then each $v\in B$ satisfies $|N_D^-(v)\cap S|\geq \varepsilon n$ and so $T=B$. We may therefore assume that $|S|\leq \frac{n}{2}$. Then,
	\[\left(\frac{1}{2}+\varepsilon\right)n|S|\leq e_D(S,B)\leq \nu n^2+|S||T| \leq \frac{\nu n}{\tau}|S|+|S||T|\]
	and so $|T|\geq (\frac{1}{2}+\varepsilon-\frac{\nu}{\tau})n\geq |S|+\nu n$.
	
	Similarly, if $S\subseteq B$ satisfies $\tau n\leq |S|\leq (1-\tau)n$, then $|RN_{\nu,D}^+(S)|\geq |S|+\nu n$. Thus, $D$ is a bipartite robust $(\nu, \tau)$-outexpander with bipartition $(A,B)$.
\end{proof}

\begin{proof}[Proof of \cref{cor:1/2}]
	Let $\delta >\frac{1}{2}$ and let $\tau>0$ be the constant obtained by applying \cref{thm:biprobexp}. Let $0<\nu\ll \tau,\delta$ and let $n_0\in \mathbb{N}$ be the constant obtained by applying \cref{thm:biprobexp}. 
	Let $D$ be a bipartite digraph on vertex classes $A$ and $B$ of size $n\geq n_0$. Suppose that $D$ is $r$-regular for some $r\geq \delta n$%
		\COMMENT{Note that $r\leq n-1$.}. 
	Denote $\varepsilon\coloneqq \frac{r}{n}-\frac{1}{2}$. By assumption, $0<\delta-\frac{1}{2}\leq \varepsilon\leq \frac{n-1}{n}-\frac{1}{2}\leq 1$. Fix an additional constant $\tau'$ such that $0<\nu\leq \tau'\ll \varepsilon, \tau$. By \cref{lm:1/2rob}, $D$ is a bipartite robust $(\nu, \tau')$-outexpander with bipartition $(A,B)$. By definition of a bipartite robust outexpander, $D$ is also a bipartite robust $(\nu, \tau)$-outexpander with bipartition $(A,B)$ and so \cref{thm:biprobexp} implies that $D$ has a Hamilton decomposition.
\end{proof}

\subsection{Optimal packings of Hamilton cycles}\label{sec:packings}

Given a graph $G$, denote by $\reg_{\rm even}(G)$ the maximum degree of an even-regular spanning subgraph of $G$. Given a digraph $D$, denote by $\reg(D)$ the maximum degree of a regular spanning subdigraph of $D$. Clearly, $\frac{\reg_{\rm even}(G)}{2}$ and $\reg(D)$ provide natural upper bounds on the size of an optimal packing of Hamilton cycles in a graph $G$ and a digraph $D$, respectively.
The following corollary of \cref{thm:biprobexp,cor:undirected} states that these are the correct values for $\varepsilon$-regular bipartite (di)graphs, random bipartite (di)graphs, and random bipartite tournaments. (Note that an approximate version of \cref{thm:packings}\cref{thm:packings-epsG} was already obtained by Frieze and Krivelevich \cite{frieze2005packing}.)

More precisely, a bipartite graph $G$ on vertex classes $A$ and $B$ is \emph{$\varepsilon$-regular} if 
\[\left|\frac{e_G(A,B)}{|A||B|}-\frac{e_G(A',B')}{|A'||B'|}\right|<\varepsilon\]
for all $A'\subseteq A$ and $B'\subseteq B$ which satisfy $|A'|\geq \varepsilon |A|$ and $|B'|\geq \varepsilon |B|$. A bipartite digraph $D$ on vertex classes $A$ and $B$ is \emph{$\varepsilon$-regular} if both $D[A,B]$ and $D[B,A]$ are $\varepsilon$-regular.

We denote by $G_{n,n,p}$ the \emph{binomial random bipartite graph}, which is obtained from the complete bipartite graph $K_{n,n}$ by selecting each edge independently with probability $p$. Similarly, $D_{n,n,p}$ denotes the \emph{binomial random bipartite digraph}, which is obtained from the complete bipartite digraph on vertex classes of size $n$ by selecting each edge independently with probability $p$.

\begin{cor}\label{thm:packings}
    For any $0<p\leq 1$, there exist $\varepsilon>0$ and $n_0\in \mathbb{N}$ for which the following hold.
    \begin{enumerate}
        \item Let $G$ be an $\varepsilon$-regular bipartite graph on vertex classes of size $n$ and suppose that $\delta(G)\geq p n$. Then, $G$ contains $\frac{\reg_{\rm even}(G)}{2}$ edge-disjoint Hamilton cycles.\label{thm:packings-epsG}
        \item Let $D$ be an $\varepsilon$-regular bipartite digraph on vertex classes of size $n$ and suppose that $\delta^0(D)\geq p n$. Then, $D$ contains $\reg(D)$ edge-disjoint Hamilton cycles.\label{thm:packings-epsD}
        \item With high probability, $G_{n,n,p}$ contains $\frac{\reg_{\rm even}(G_{n,n,p})}{2}$ edge-disjoint Hamilton cycles.\label{thm:packings-Gnnp}
        \item With high probability, $D_{n,n,p}$ contains $\reg(D_{n,n,p})$ edge-disjoint Hamilton cycles.\label{thm:packings-Dnnp}
        \item Let $T$ be chosen uniformly at random among the bipartite tournaments on vertex classes of size $n$. Then, $T$ contains $\reg(T)$ edge-disjoint Hamilton cycles with high probability.\label{thm:packings-T}
    \end{enumerate}
\end{cor}

The proof of \cref{thm:packings} is standard, so we only give a brief proof overview. \APPENDIX{(Full details can be found in \cref{app:packings}.)}\NOAPPENDIX{(Full details can be found in the appendix of the arXiv version of this paper.)}
Let $G,D$, and $T$ be defined as in \cref{thm:packings}. Observe that by \cref{thm:biprobexp,cor:undirected}, it is enough to show that each of $G$, $D$, $G_{n,n,p}$, $D_{n,n,p}$, and $T$ contain, with high probability, a spanning regular sub(di)graph of degree $\frac{\reg_{\rm even}(G)}{2}$, $\reg(D)$, $\frac{\reg_{\rm even}(G_{n,n,p})}{2}$, $\reg(D_{n,n,p})$, and $\reg(T)$, respectively, which is a bipartite robust (out)expander.

Arguments of \cite{frieze2005packing} imply that $\reg_{\rm even}(G)\geq (p- 2\varepsilon)n$. Thus, one can use basic properties of $\varepsilon$-regular bipartite graphs to show that any $\reg_{\rm even}(G)$-regular spanning subgraph of $G$ is still $\varepsilon$-regular. It is also easy to see that any $\varepsilon$-regular bipartite graph is also a bipartite robust expander. Thus, \cref{thm:packings}\cref{thm:packings-epsG} holds. Similar arguments hold for the directed case and so \cref{thm:packings}\cref{thm:packings-epsD} is satisfied.

A simple Chernoff bound can be used to show that $G_{n,n,p}$ is an $\varepsilon$-regular bipartite graph of minimum degree at least $(p-\varepsilon)n$ with high probability. Thus, \cref{thm:packings}\cref{thm:packings-Gnnp} follows from \cref{thm:packings}\cref{thm:packings-epsG}. Similarly, \cref{thm:packings}\cref{thm:packings-Dnnp,thm:packings-T} follow from \cref{thm:packings}\cref{thm:packings-epsD}.

\onlyinsubfile{\bibliographystyle{abbrv}
	\bibliography{Bibliography/Bibliography}}

\section{Preliminaries}\label{sec:preliminaries}

We now introduce some preliminary tools and results which will be used throughout this paper.

\subsection{(Bipartite) robust (out)expanders}

	\onlyinsubfile{
		\setcounter{section}{5}
\subsection{(Bipartite) robust outexpanders}}

In \cref{sec:intro-biprobexp}, we introduced the concept of (bipartite) robust (out)expansion. We start that by recalling and expanding on these definitions.

\subsubsection{Definitions}

Let $D$ be a digraph on $n$ vertices. Recall that for any $S\subseteq V(D)$, we denote by $RN_{\nu, D}^+(S)$ the set of vertices $v\in V(D)$ which satisfy $|N_D^-(v)\cap S|\geq \nu n$.
Then, we say that $D$ is a \emph{robust $(\nu, \tau)$-outexpander} if, for any $S\subseteq V(D)$ satisfying $\tau n\leq |S|\leq (1-\tau)n$, we have $|RN_{\nu, D}^+(S)|\geq |S|+\nu n$.

Let $G$ be a graph on $n$ vertices. Recall that for any $S\subseteq V(G)$, we denote by $RN_{\nu, G}(S)$ the set of vertices $v\in V(G)$ which satisfy $|N_D(v)\cap S|\geq \nu n$.
Then, we say that $G$ is a \emph{robust $(\nu, \tau)$-expander} if, for any $S\subseteq V(G)$ satisfying $\tau n\leq |S|\leq (1-\tau)n$, we have $|RN_{\nu, G}(S)|\geq |S|+\nu n$.
Let $G$ be a bipartite graph on vertex classes $A$ and $B$ of size $n$.
We say that $G$ is a \emph{bipartite robust $(\nu, \tau)$-expander with bipartition $(A,B)$} if, for any $S\subseteq A$ satisfying $\tau n\leq |S|\leq (1-\tau)n$, we have $|RN_{\nu, G}(S)|\geq |S|+\nu n$.
Note that the order of $A$ and $B$ matters.

In \cref{sec:intro-biprobexp}, we defined an analogue of bipartite robust expanders for digraphs. 
Let $D$ be a bipartite digraph on vertex classes $A$ and $B$ of size $n$.
We say that $D$ is a \emph{bipartite robust $(\nu, \tau)$-outexpander with bipartition $(A,B)$} if
\begin{itemize}
	\item for any $S\subseteq A$ such that
	$\tau n\leq |S|\leq (1-\tau)n$, we have $|RN_{\nu, D}^+(S)|\geq |S|+\nu n$; and
	\item for any $S\subseteq B$ such that
	$\tau n\leq |S|\leq (1-\tau)n$, we have $|RN_{\nu, D}^+(S)|\geq |S|+\nu n$.
\end{itemize}
Note that, here, the order of $A$ and $B$ does not matter.

\subsubsection{Basic properties of (bipartite) robust (out)expanders}

The following \lcnamecrefs{fact:biprobexp} hold by definition.

\begin{fact}\label{fact:biprobexp}
	A digraph $D$ is a bipartite robust $(\nu, \tau)$-outexpander with bipartition $(A,B)$ if and only if $D[A, B]$ is a bipartite robust $(\nu, \tau)$-expander with bipartition $(A,B)$ and $D[B, A]$ is a robust $(\nu, \tau)$-expander with bipartition $(B,A)$.
\end{fact}

\begin{fact}\label{fact:biprobexpparameters}
	Suppose that $G$ is a bipartite robust $(\nu,\tau)$-expander with bipartition $(A,B)$. Then, for any $\nu'\leq \nu$ and $\tau'\geq \tau$, $G$ is a bipartite robust $(\nu',\tau')$-expander with bipartition $(A,B)$.
\end{fact}

By definition, bipartite robust outexpansion is preserved when only a few edges are removed at each vertex.

\begin{lm}\label{cor:verticesedgesremovalbiproboutexp}
	Let $0<\frac{1}{n}\ll\varepsilon\leq \nu\leq 1$%
		\COMMENT{No need to add $\tau$ in hierarchy since the existence of $D$ automatically implies $\tau\geq \nu$.}. 
	Let~$D$ be a bipartite digraph on vertex classes $A$ and $B$ of size $n$. Suppose that $D$ is a bipartite robust~$(\nu, \tau)$-outexpander with bipartition $(A,B)$. 
	If~$D'$ is obtained from~$D$ by removing at most~$\varepsilon n$ inedges and $\varepsilon n$ outedges at each vertex, then~$D'$ is a bipartite robust~$(\nu-\varepsilon, \tau)$-expander with bipartition $(A,B)$.
\end{lm}

\COMMENT{\begin{proof}
	For any $S\subseteq V(D)$, we have $RN_{\nu-\varepsilon,D'}^+(S)\supseteq RN_{\nu, D}^+(S)$.
\end{proof}}

In \cite{kuhn2010hamiltonian,keevash2009exact}, Keevash, K\"uhn, Osthus, and Treglown showed that a robust outexpander of linear minimum degree is Hamiltonian.

\begin{thm}[{\cite[Theorem 16]{kuhn2010hamiltonian}}]\label{lm:rob}
	Let $0<\frac{1}{n}\ll \nu\ll \tau\leq \frac{\delta}{2}\leq 1$. Let~$D$ be a robust~$(\nu, \tau)$-outexpander on~$n$ vertices with $\delta^0(D)\geq \delta n$. Then, $D$ is Hamiltonian.
\end{thm}

The analogue of \cref{lm:rob} holds for bipartite robust outexpanders. This can be derived from \cref{lm:rob} using the procedure presented in \cref{sec:sketch-cycle}. The formal proof is deferred to the end of \cref{sec:contractingM}, where we introduce the required definitions.

\begin{cor}\label{lm:biprobHamcycle}
	Let $0<\frac{1}{n}\ll \nu\ll \tau\leq \delta\leq 1$. Let~$D$ be a balanced bipartite digraph on vertex classes $A$ and $B$ of size~$n$. Suppose that $D$ is a bipartite robust~$(\nu, \tau)$-outexpander with bipartition $(A,B)$ and that $\delta^0(D)\geq \delta n$. Then, $D$ is Hamiltonian.
\end{cor}

Almost complete bipartite graphs are bipartite robust expanders.

\begin{prop}\label{prop:almostcompleterob}
	Let $0<\frac{1}{n}\ll \varepsilon\ll \nu\ll \tau \ll 1$. Let $G$ be a bipartite graph on vertex classes $A$ and $B$ of size $n$. If $\delta(G)\geq (1-\varepsilon)n$, then $G$ is a bipartite robust $(\nu, \tau)$-expander with bipartition $(A,B)$.
\end{prop}

\begin{proof}
	Let $S\subseteq A$ satisfy $\tau n\leq |S|\leq (1-\tau)n$. 
	Each $v\in B$ satisfies $|N_G(v)\cap S|\geq (1-\varepsilon)n-|A\setminus S|\geq (\tau-\varepsilon)n\geq \nu n$. Thus, $|RN_{\nu, G}(S)|=|B|\geq |S|+\tau n\geq |S|+\nu n$.
	Similarly, if $S'\subseteq B$ satisfies $\tau n\leq |S'|\leq (1-\tau)n$, then $|RN_{\nu, G}(S')|\geq |S'|+\nu n$.
\end{proof}

Recall the definition of an $r$-fold blow-up from \cref{sec:notation-blowup}.
The next \lcnamecref{lm:biprobblowup2} states that bipartite robust outexpansion is preserved when taking $r$-fold blow-ups.
The proof is very similar to that of its non-bipartite analogue (see \cite[Lemma 5.3]{kuhn2013hamilton}), so we omit the details.

\begin{lm}\label{lm:biprobblowup2}
	Let $0<3\nu\le \tau<1$ and $r\ge 3$. Let $D$ be a balanced bipartite robust $(\nu,\tau)$-outexpander with bipartition $(A, B)$. Let $D'$ be the $r$-fold blow-up of	$D$. Let $A'$ be the set of vertices in $V(D')$ which are a copy of a vertex in $A$. Let $B'\coloneqq V(D')\setminus A'$. Then $D'$ is a bipartite robust $(\nu^3,2\tau)$-outexpander with bipartition $(A', B')$.
\end{lm}

\COMMENT{\begin{proof}
		Let $n\coloneqq |A|=|B|$. Call two vertices in $D'$ \emph{friends} if they correspond
		to the same vertex of $D$. (In particular, every vertex is a friend of itself.) Note that, $|A'|=|B'|=rn$.
		Consider any $S' \subseteq A'$ with $2\tau r n
		\leq |S'| \leq (1 - 2\tau)rn$. Call a vertex $x\in S'$ \emph{bad} if $S'$ contains
		at most $\nu^2 r$ friends of $x$. So if $\nu^2 r<1$ then no vertex in $S'$ is bad. Let $b$ denote the number of bad vertices in $S'$.
		Then $S'$ contains a set $S^*$ of at least $\frac{b}{\nu^2 r}$ bad vertices corresponding to different vertices of $G$.
		(So no two vertices in $S^*$ are friends.) But every $x\in S^*$ has at least $r-1-\nu^2 r\ge \frac{r}{2}$ friends.  
		Thus, \[\frac{b}{\nu^2 r}\cdot \frac{r}{2}\leq |S^*|\cdot \frac{r}{2}\le |A'|=rn\]
		and so $b\le 2\nu^2 rn$. Let $S''\subseteq S'$ be the set of all those vertices in $S'$ which are not bad
		and let $S$ be the set of all those vertices $x$ in $G$ for which $S''$ contains a copy of $x$.
		Thus $S\subseteq A$ and
		\begin{equation*}
			|S|\geq \frac{|S''|}{r}=\frac{|S'|-b}{r}\geq\frac{|S'|}{r}-2\nu^2n \geq \frac{|S'|}{2r}\geq \tau n.
		\end{equation*}
		Since $D$ is a bipartite robust $(\nu,\tau)$-outexpander with bipartition $(A, B)$, it follows that:
		\begin{enumerate}[label=(\roman*)]
			\item Either $|RN^+_{\nu,D} (S)| \geq  |S| + \nu n$;\label{lm:biprobblowup2-1}
			\item or $|S| \geq (1 - \tau)n$, in which case (considering
			a subset of $S$ of size $(1 - \tau)n$) we have
			$|RN^+_{\nu,D}(S)| \geq (1 - \tau + \nu)n$.\label{lm:biprobblowup2-2}
		\end{enumerate}
		Note that if a vertex $x$ of $D$ belongs to $RN^+_{\nu,D}(S)$, then any copy
		$x'$ of $x$ in $D'$ has at least $\nu^2 r\cdot \nu n=\nu^3 |A'|$ inneighbours in $S''$ (since no vertex in $S''$ is bad) and
		so $x'\in RN^+_{\nu^3,D'}(S')$. It follows that $|RN^+_{\nu^3,D'}(S')|
		\geq r |RN^+_{\nu,D}(S)|$. Thus, in case \cref{lm:biprobblowup2-1} we have
		\[ |RN^+_{\nu^3,D'}(S')| \geq r |RN^+_{\nu,D}(S)| \geq r|S| + r \nu n \geq |S'|-b+ r \nu n \geq |S'| + \nu^3 rn,\]
		while in case \cref{lm:biprobblowup2-2} we have
		\[ |RN^+_{\nu^3,D'}(S')| \geq r |RN^+_{\nu,D}(S)| \geq (1-\tau)rn + \nu r n  \geq |S'| + \nu^3 rn,\]
		as required.
\end{proof}}

\onlyinsubfile{\bibliographystyle{abbrv}
\bibliography{Bibliography/Bibliography}}

\subsection{(Super)regularity}\label{sec:regularity}

	\onlyinsubfile{
		\setcounter{section}{5}
		\setcounter{subsection}{1}
\subsection{Regularity}}

In \cref{sec:packings}, we introduced the concept of $\varepsilon$-regular (di)graphs. We start by recalling and expanding on these definitions.

\subsubsection{Definitions}

Suppose that $G$ is an undirected bipartite graph on vertex classes $A$ and $B$. The \emph{density} of $G$ is defined as
\[d_G(A,B)\coloneqq \frac{e_G(A,B)}{|A||B|}.\]
Let $\varepsilon>0$. We say that $G$ is \emph{$\varepsilon$-regular} if, for any $A'\subseteq A$ and $B'\subseteq B$ satisfying $|A'|\geq \varepsilon |A|$ and $|B'|\geq \varepsilon |B|$, we have $|d_G(A,B)-d_G(A',B')|<\varepsilon$.
Let $0\leq d \leq 1$. We say that $G$ is \emph{$(\varepsilon, d)$-regular} if $G$ is $\varepsilon$-regular and has density $d_G(A,B)=d$. We say that $G$ is \emph{$(\varepsilon, \geq d)$-regular} if there exists $d'\geq d$ such that $G$ is $(\varepsilon, d')$-regular.
We say that $G$ is \emph{$[\varepsilon, d]$-superregular} if $G$ is $\varepsilon$-regular, each $a\in A$ satisfies $d_G(a)=(d\pm \varepsilon)|B|$, and each $b\in B$ satisfies $d_G(b)=(d\pm\varepsilon)|A|$. We say that $G$ is \emph{$[\varepsilon, \geq d]$-superregular} if there exists $d'\geq d$ such that $G$ is $[\varepsilon, d']$-superregular.

\subsubsection{Basic properties of (super)regular pairs}

The next \lcnamecref{prop:epsremovingadding} states that (super)regulari\-ty is preserved when few vertices and edges are removed and/or added to a bipartite graph. This follows easily from the definitions and a similar observation was already made (and proved) in \cite[Proposition 4.3]{kuhn2013hamilton}, so we omit its proof here.

\begin{prop}\label{prop:epsremovingadding}
	Let $0<\frac{1}{m}\ll \varepsilon \leq \varepsilon'\leq d\leq 1$. Let $G$ be a bipartite graph on vertex classes $A$ and $B$ of size at least $m$. Let $A'$ and $B'$ be disjoint vertex sets satisfying $|A\triangle A'|\leq \varepsilon |A'|$ and $|B\triangle B'|\leq \varepsilon |B'|$. Let $G'$ be a bipartite graph on vertex classes $A'$ and $B'$ and suppose that $G'[A'\cap A, B'\cap B]$ is obtained from $G[A'\cap A, B'\cap B]$ by removing and adding at most $\varepsilon'|B'|$ edges incident to each vertex in $A'\cap A$ and at most $\varepsilon'|A'|$ edges incident to each vertex in $B'\cap B$.
	\begin{enumerate}
		\item If $G$ is $(\varepsilon, \geq d)$-regular, then $G'$ is $(3\sqrt{\varepsilon'}, \geq d-3\sqrt{\varepsilon'})$-regular.\label{prop:epsremovingadding-reg}
		\item Suppose that $A'\subseteq A$ and $B'\subseteq B$. If $G$ is $[\varepsilon, d]$-superregular, then $G'$ is $[3\sqrt{\varepsilon'},d]$-superregular.\label{prop:epsremovingadding-supreg}
	\end{enumerate}
\end{prop}

\COMMENT{\begin{proof}
    Note that
    \[|A|=|A\cap A'|+|A\setminus A'|= (|A'|-|A'\setminus A|)+ |A\setminus A'|\leq |A'|+|A\triangle A'|\leq (1+\varepsilon)|A'|\]
    and
    \[|A|\geq |A\cap A'|= |A'|-|A'\setminus A|\geq |A'|-|A\triangle A'|\geq (1-\varepsilon)|A'|.\]
    For \cref{prop:epsremovingadding-reg}, suppose that $G$ is $(\varepsilon, \geq d-\varepsilon)$-regular and denote by $d'\coloneqq d_G(A,B)$ the density of $G$.
	Let $X\subseteq A'$ and $Y\subseteq B'$ satisfy $|X|\geq 3\sqrt{\varepsilon'}|A'|$ and $|Y|\geq 3\sqrt{\varepsilon'} |B'|$. Let $X'\coloneqq X\cap A$ and $Y'\coloneqq Y\cap B$. Note that $|X'|\geq (3\sqrt{\varepsilon'}-\varepsilon)|A'|\geq \frac{2\sqrt{\varepsilon'}}{1+\varepsilon}|A|\geq \varepsilon |A|$ and, similarly, $|Y'|\geq \varepsilon|B|$.
	Thus,
	\begin{align*}
		d_{G'}(X,Y)&=\frac{e_{G'}(X,Y)}{|X||Y|}
		\geq \frac{e_{G'}(X',Y')}{|X||Y|}
		\geq \frac{e_G(X',Y')-\varepsilon'|X'||B'|}{|X||Y|}\\
		&\geq \frac{(d'-\varepsilon)|X'||Y'|}{|X||Y|}-\frac{\varepsilon'|X||B'|}{|X|\cdot 3\sqrt{\varepsilon'}|B'|}\\
		&\geq \frac{(d'-\varepsilon)(|X|-\varepsilon'|A'|)(|Y|-\varepsilon'|B'|)}{|X||Y|}-\frac{\sqrt{\varepsilon'}}{3}\\
		&\geq \frac{(d'-\varepsilon)(|X||Y|-\varepsilon'|B'||X|-\varepsilon'|A'||Y|+(\varepsilon')^2|A'||B'|}{|X||Y|}-\frac{\sqrt{\varepsilon'}}{3}\\
		&\geq (d'-\varepsilon)\left(1-\frac{2\varepsilon'}{3\sqrt{\varepsilon'}}+(\varepsilon')^2\right)-\frac{\sqrt{\varepsilon'}}{3}\\
		&\geq d'-\varepsilon-\frac{2(d'-\varepsilon)\sqrt{\varepsilon'}}{3}-\frac{\sqrt{\varepsilon'}}{3}\geq d'-\varepsilon-\sqrt{\varepsilon'}.
	\end{align*}
	Moreover,
	\begin{align*}
		d_{G'}(X,Y)&=\frac{e_{G'}(X,Y)}{|X||Y|}
		\leq \frac{e_{G'}(X',Y')+|X\setminus X'||Y|+|X||Y\setminus Y'|}{|X||Y|}\\
		&\leq \frac{e_G(X',Y')+\varepsilon'|X'||B'|}{|X||Y|}+\frac{|X\setminus X'|}{|X|}+\frac{|Y\setminus Y'|}{|Y|}
		\leq \frac{(d'+\varepsilon)|X'||Y'|}{|X||Y|}+\frac{\sqrt{\varepsilon'}}{3}+\frac{\varepsilon|A'|}{3\sqrt{\varepsilon'}|A'|}+\frac{\varepsilon|B'|}{3\sqrt{\varepsilon'}|B'|}\\
		&\leq \frac{(d'+\varepsilon)|X'||Y'|}{|X||Y|}+\frac{\sqrt{\varepsilon'}}{3}+\frac{2\varepsilon}{3\sqrt{\varepsilon'}}\leq d'+\varepsilon +\sqrt{\varepsilon'}.
	\end{align*}
	Thus, \cref{prop:epsremovingadding-reg} holds.\\	
	For \cref{prop:epsremovingadding-supreg}, suppose that $A'\subseteq A$, $B'\subseteq B$, and $G$ is $[\varepsilon, d]$-superregular. By definition, $G$ is $(\varepsilon, d')$-regular for some $d'\geq d-\varepsilon$. Thus, by the above, $G'$ is $3\sqrt{\varepsilon'}$-regular. Moreover, each $v\in A'$ satisfies
	\begin{align*}
		d_{G'}(v)\leq d_G(v)+\varepsilon' |B'|\leq (d+\varepsilon)|B|+\varepsilon'|B'|\leq (d+\varepsilon)(1+\varepsilon')|B'|+\varepsilon' |B'|\leq (d+3\sqrt{\varepsilon'})|B'|
	\end{align*}
	and
	\begin{align*}
		d_{G'}(v)\geq d_G(v)-\varepsilon'|B'|\geq (d-\varepsilon)|B|-\varepsilon'|B'|\geq (d-\varepsilon)(1-\varepsilon')|B'|-\varepsilon'|B'|\geq (d-3\sqrt{\varepsilon'})|B'|.
	\end{align*}
	Similarly, each $v\in B'$ satisfies $d_{G'}(v)=(d\pm 3\sqrt{\varepsilon'})|A'|$ and so \cref{prop:epsremovingadding-supreg} holds.
\end{proof}}

\begin{lm}[{\cite[Proposition 4.14]{kuhn2013hamilton}}]\label{lm:epsperfectmatching}
	Let $0<\frac{1}{m}\ll \varepsilon\ll \delta\leq 1$. Let $G$ be a balanced bipartite graph on vertex classes of size $m$. Suppose that $G$ is $\varepsilon$-regular and $\delta(G)\geq \delta m$. Then, $G$ contains a perfect matching.
\end{lm}

One can easily verify from the definition of superregularity that bipartite graphs of very high minimum degree are superregular.

\begin{prop}\label{prop:almostcompleteeps}
	Let $0<\frac{1}{m}\ll \varepsilon\ll \varepsilon'\ll 1$. Let $G$ be a bipartite graph on vertex classes $A$ and $B$ of size at least $m$. Suppose that each $a\in A$ satisfies $d_G(a)\geq (1-\varepsilon) |B|$ and each $b\in B$ satisfies $d_G(b)\geq (1-\varepsilon)|A|$. Then, $G$ is $[\varepsilon', \geq 1-\varepsilon']$-superregular.
\end{prop}

\COMMENT{\begin{proof}
		Note that $d_G(A, B)\geq 1-\varepsilon$.
		Let $A'\subseteq A$ satisfy $|A'|\geq \varepsilon' |A|$ and $B'\subseteq B$ satisfy $|B'|\geq \varepsilon' |B|$.
		Then, 
		\[1\geq d_G(A', B')\geq \frac{|A'|(|B'|-\varepsilon |B|)}{|A'||B'|}\geq 1-\frac{\varepsilon}{\varepsilon'}\geq 1-\varepsilon'\]
		and so $G$ is $\varepsilon'$-regular.
		Thus, $G$ is $[\varepsilon', d]$-superregular for each $1-\varepsilon'\leq d\leq 1$.
\end{proof}}

\begin{lm}[{\cite[Corollary 4.15]{kuhn2013hamilton}}]\label{lm:regularitypaths}
	Let $0<\frac{1}{m}\ll \varepsilon\ll d\leq 1$ and $k\geq 4$. Let $D$ be a digraph and $V_1\cup \dots \cup V_k$ be a partition of $V(D)$ into $k$ clusters of size $m$. Suppose that $D[V_i, V_{i+1}]$ is $[\varepsilon, \geq d]$-superregular for each $i\in [k-1]$. Let $u_1, \dots, u_m$ and $v_1, \dots, v_m$ be enumerations of $V_1$ and $V_k$, respectively. Then, $D$ contains a spanning set $\sP$ of $m$ vertex-disjoint paths, one $(u_i,v_i)$-path for each $i\in [m]$.
\end{lm}

If the pair $D[V_k, V_1]$ is also superregular, one can find a matching in $D(V_k,V_1)$ to tie the paths obtained with \cref{lm:regularitypaths} into a Hamilton path.

\begin{cor}\label{cor:regularityHam}
	Let $0<\frac{1}{m}\ll \varepsilon\ll d\leq 1$ and $k\geq 4$. Let $D$ be a digraph and $V_1\cup \dots \cup V_k$ be a partition of $V(D)$ into $k$ clusters of size $m$. Suppose that $D[V_i, V_{i+1}]$ is $[\varepsilon, \geq d]$-superregular for each $i\in [k]$ (where $V_{k+1}\coloneqq V_1$). Let $u\in V_1$ and $v\in V_k$. Then, $D$ contains a Hamilton $(u,v)$-path.
\end{cor}

\begin{proof}
	By \cref{prop:epsremovingadding}, $D[V_k\setminus \{v\}, V_1\setminus \{u\}]$ is still $[3\sqrt{\varepsilon}, \geq d]$-superregular and so \cref{lm:epsperfectmatching} implies that there exists a perfect matching $M\subseteq E_D(V_k\setminus \{v\}, V_1\setminus \{u\})$. Let $v_1u_1, \dots, v_{m-1}u_{m-1}$ be an enumeration of $M$. Denote $u_0\coloneqq u$ and $v_m\coloneqq v$. Let $\sP$ be the spanning set of vertex-disjoint paths obtained by applying \cref{lm:regularitypaths} with $u,u_1, \dots, u_{m-1}$ and $v_1,\dots, v_{m-1}, v$ playing the roles of $u_1, \dots, u_m$ and $v_1, \dots, v_m$. For each $i\in [m]$, let $P_i$ denote the $(u_{i-1},v_i)$-path contained in $\sP$. Then, $uP_1v_1u_1P_2v_2\dots u_{m-1}P_mv$ is a Hamilton $(u,v)$-path of $D$.
\end{proof}

Let $D$ be a digraph and suppose that $V(D)$ is partitioned into clusters which form superregular pairs. Then, one can adjust this partition in such a way that superregularity is preserved and all the vertices of a small given set $S$ are concentrated into few of the clusters.

\begin{lm}\label{lm:adjustP}
	Let $0<\frac{1}{n}\ll \varepsilon \ll \frac{1}{k}\ll \varepsilon'\ll \varepsilon''\ll 1$. Let $U_1, \dots, U_4$ be disjoint vertex sets of size $n$. Let $D$ be a digraph on $U_1\cup \dots \cup U_4$. For each $i\in [4]$, let $\cP_i$ be a partition of $U_i$ into $k$ clusters of size $\frac{n}{k}$. Suppose that for each $i\in [4]$, $D[V,W]$ is $[\varepsilon', \geq 1-\varepsilon']$-superregular whenever $V\subseteq U_i$ and $W\subseteq U_{i+1}$ are unions of clusters in $\cP_i$ and $\cP_{i+1}$, respectively (where $U_5\coloneqq U_1$ and $\cP_5\coloneqq \cP_1$).
	Let $S\subseteq V(D)$ satisfy $|S|\leq \varepsilon n$. Define $\cP_3'\coloneqq \cP_3$ and $\cP_4'\coloneqq \cP_4$. Then, there exists, for each $i\in [2]$, a partition $\cP_i'$ of $U_i$ into $k$ clusters of size $\frac{n}{k}$ such that the following hold.
	\begin{enumerate}
		\item For each $i\in [4]$, $D[V,W]$ is $[\varepsilon'', \geq 1-\varepsilon'']$-superregular whenever $V\subseteq U_i$ and $W\subseteq U_{i+1}$ are unions of clusters in $\cP_i'$ and $\cP_{i+1}'$, respectively (where $\cP_5'\coloneqq \cP_1'$).\label{lm:adjustP-supereg}
		\item For each $i\in [2]$, there exists a cluster $V\in \cP_i'$ for which $S\cap U_i\subseteq V$.\label{lm:adjustP-S}
	\end{enumerate}
\end{lm}

\begin{proof}
	For each $i\in [2]$, denote by $V_{i,1}, \dots, V_{i,k}$ the clusters in $\cP_i$ and observe that since $|S\cap U_i|\leq |V_{i,k}|$, one can greedily swap each vertex in $S\cap (U_i\setminus V_{i,k})$ with a distinct vertex in $V_{i,k}\setminus S$ to obtain a partition $\cP_i'$ of $U_i$ into $k$ clusters $V_{i,1}', \dots, V_{i,k}'$ such that $S\cap U_i\subseteq V_{i,k}'$ and 
	\begin{equation*}
		|V_{i,j}\triangle V_{i,j}'|\leq |S\cap U_i|\leq \frac{\varepsilon' n}{k}
	\end{equation*} 
	for each $j\in [k]$.
	Then, \cref{lm:adjustP-S} holds.
	Moreover, \cref{lm:adjustP-supereg} follows easily from \cref{prop:epsremovingadding}.%
		\COMMENT{To verify \cref{lm:adjustP-supereg}, fix $i\in [4]$ and let $V\subseteq U_i$ and $W\subseteq U_{i+1}$ be unions of clusters in $\cP_i'$ and $\cP_{i+1}'$, respectively. By construction, there exist $V'\subseteq U_i$ and $W'\subseteq U_{i+1}$ such that $V'$ and $W'$ are unions of clusters in $\cP_i$ and $\cP_{i+1}$, respectively, and such that 
		\begin{equation*}
			|V'\triangle V|\leq \varepsilon' |V|=\varepsilon' |V'| \quad\text{and}\quad |W'\triangle W|\leq \varepsilon' |W|=\varepsilon' |W'|.
		\end{equation*}
		By assumption, $D[V', W']$ is $[\varepsilon', d]$-superregular for some $d\geq 1-\varepsilon'$. In particular, $D[V', W']$ is $\varepsilon'$-regular and so \cref{prop:epsremovingadding} implies that $D[V, W]$ is also $\varepsilon''$-regular.
		Let $v\in V$. We show that $|N_D^+(v)\cap W|=(1\pm\varepsilon'')|W|$.	
		If $v\in V'$, then the superregularity of $D[V', W']$ implies that
		\begin{align*}
			|N_D^+(v)\cap W|= |N_D^+(v)\cap W'|\pm\varepsilon' |W|=(d\pm \varepsilon')|W'|\pm\varepsilon' |W|=(1\pm\varepsilon'')|W|.
		\end{align*}
		We may therefore assume that $v\in V\setminus V'$. Let $V''$ be the cluster in $\cP_i$ which contains $v$. By assumption, $D[V'', W'']$ is $[\varepsilon', \geq 1-\varepsilon']$-superregular for each cluster $W''\subseteq W'$ in $\cP_{i+1}$. Therefore,
		\begin{align*}
			|N_D^+(v)\cap W'|&= |N_D^+(v)\cap W'| \pm\varepsilon' |W|\\
			&=\sum_{\substack{W''\subseteq W'\\W''\in \cP_{i+1}}} |N_D^+(v)\cap W''|\pm\varepsilon' |W|
			=\sum_{\substack{W''\subseteq W'\\W''\in \cP_{i+1}}} (1\pm 2\varepsilon')|W''|\pm\varepsilon' |W|\\
			&=(1\pm 2\varepsilon')|W'|\pm\varepsilon' |W|
			=(1\pm \varepsilon'')|W|,
		\end{align*}
		as desired. Similarly, each $w\in W$ satisfies $|N_D^-(w)\cap V|=(1\pm\varepsilon'')|V|$ and so $D[V,W]$ is $[\varepsilon'',1]$-superregular.}
\end{proof}

\subsubsection{The regularity lemma}

We now state a degree form of Szemer\'{e}di's regularity lemma for balanced bipartite digraphs. In \cite{alon2004testing}, Alon and Shapira proved a regularity lemma for digraphs. A degree form can be derived using similar arguments as the undirected version (see e.g.\ \cite{taylor2013regularity}).
The bipartite version stated below can easily be obtained by adjusting the partition obtained with the degree form regularity lemma for digraphs.

\begin{lm}[Degree form regularity lemma for balanced bipartite digraphs]\label{lm:bipreglm}
	For all $\varepsilon>0$ and $M'\in \mathbb{N}$, there exist $M,n_0\in \mathbb{N}$ such that, if $D$ is a balanced bipartite digraph on vertex classes $A$ and $B$ of size $n\geq n_0$ and $d\in [0,1]$, then there exist a spanning subdigraph $D'\subseteq D$ and a partition of $V(D)$ into an \emph{exceptional set} $V_0$ and $2k$ \emph{clusters} $V_1, \dots, V_{2k}$ such that the following hold.
	\begin{enumerate}
		\item $M'\leq 2k \leq M$.\label{lm:bipreglm-k}
		\item $|V_0\cap A|=|V_0\cap B|\leq \varepsilon n$.\label{lm:bipreglm-V0}
		\item For each $i\in [2k]$, either $V_i\subseteq A$ or $V_i\subseteq B$.\label{lm:bipreglm-AB}
		\item $|V_1|=\dots =|V_{2k}|\eqqcolon m$. In particular, there are precisely $k$ indices $i\in [2k]$ such that $V_i\subseteq A$ and precisely $k$ indices $i\in [2k]$ such that $V_i\subseteq B$. \label{lm:bipreglm-m}
		\item For each $v\in V(D)$, $d_{D'}^\pm(v)>d_D^\pm(v)-(d+\varepsilon)n$.\label{lm:bipreglm-deg}
		\item For each $i\in [2k]$, $D'[V_i]$ is empty.\label{lm:bipreglm-empty}
		\item Let $i,j\in [2k]$ be distinct. Then, $D'[V_i,V_j]$ is either empty or $(\varepsilon, \geq d)$-regular. Moreover, if $D'[V_i,V_j]$ is non-empty, then $D'[V_i,V_j]=D[V_i,V_j]$.\label{lm:bipreglm-reg}
	\end{enumerate}
\end{lm}

Let $\varepsilon>0$, $M'\in \mathbb{N}$, and $d\in [0,1]$.
Let $D$ be a balanced bipartite digraph.
The \emph{bipartite pure digraph of $D$ with parameters $\varepsilon, d$, and $M'$} is the digraph $D'\subseteq D$ obtained by applying \cref{lm:bipreglm} with these parameters.
The \emph{bipartite reduced digraph of $D$ with parameters $\varepsilon, d$, and $M'$} is the digraph $R$ defined as follows. Let $V_0, V_1, \dots, V_{2k}$ be the partition of $V(D)$ obtained by applying \cref{lm:bipreglm} with parameters $\varepsilon, d$, and $M'$. Denote by $D'$ the bipartite pure digraph of $D$ with parameters $\varepsilon, d$, and $M'$.
Then, $V(R)\coloneqq \{V_i\mid i\in [2k]\}$ and, for any distinct $U,V\in V(R)$, $UV\in E(R)$ if and only if $D'[U,V]$ is non-empty. Note that \cref{lm:bipreglm}\cref{lm:bipreglm-reg} implies that $D'[U,V]=D[U,V]$ is $(\varepsilon, \geq d)$-regular for any $UV\in E(R)$ and \cref{lm:bipreglm}\cref{lm:bipreglm-AB} implies that $R$ is a bipartite digraph on vertex classes $\{V\in V(R)\mid V\subseteq A\}$ and $\{V\in V(R)\mid V\subseteq B\}$.

The following lemma states that if a balanced bipartite digraph $D$ is a robust outexpander, then so is its corresponding bipartite reduced digraph.
The proof is very similar to that of its non-bipartite analogue (see \cite[Lemma 14]{kuhn2010hamiltonian}) and is therefore omitted.

\begin{lm}\label{lm:Rrob}
	Let $0<\frac{1}{n}\ll \varepsilon \ll d\ll \nu, \tau, \delta \leq 1$ and $\frac{M'}{n}\ll 1$. Let $D$ be a balanced bipartite digraph on vertex classes $A$ and $B$ of size $n$. Suppose that $D$ is a bipartite robust $(\nu, \tau)$-outexpander and that $\delta^0(D)\geq \delta n$.
	Let $R$ be the bipartite reduced digraph of $D$ with parameters $\varepsilon, d$, and $M'$.
	Then, $\delta^0(R)\geq \frac{\delta |R|}{4}$ and $R$ is a bipartite robust $(\frac{\nu}{2}, 2\tau)$-outexpander with bipartition $(\cA,\cB)$, where $\cA\coloneqq \{V\in V(R)\mid V\subseteq A\}$ and $\cB\coloneqq \{V\in V(R)\mid V\subseteq B\}$. 
\end{lm}

\COMMENT{\begin{proof}
		Let $D'$ denote the bipartite pure digraph of $D$ with parameters $\varepsilon, d$, and $M'$. Denote $k\coloneqq |\cA|=|\cB|$, $\cA=\{A_1, \dots, A_k\}$, and $\cB=\{B_1, \dots, B_k\}$. 
		Denote by $A_0\coloneqq A\setminus \bigcup\cA$ and $B_0\coloneqq B\setminus \bigcup\cB$.
		By \cref{lm:bipreglm}\cref{lm:bipreglm-V0}, $|A_0|=|B_0|\leq \varepsilon n$ and, by \cref{lm:bipreglm}\cref{lm:bipreglm-m}, $m\coloneqq |A_1|=\dots=|A_k|=|B_1|=\dots=|B_k|$.\\
		First, observe that
		\[\delta^0(R)\stackrel{\text{\cref{lm:bipreglm}\cref{lm:bipreglm-reg}}}{\geq} \frac{\delta^0(D')-\varepsilon n}{m}\stackrel{\text{\cref{lm:bipreglm}\cref{lm:bipreglm-deg}}}{\geq} \frac{\delta^0(D)-(d+2\varepsilon)n}{m}\geq \frac{\delta k}{2},\]
		as desired.\\
		Let $S\subseteq \cA$ satisfy $2\tau k\leq |S|\leq (1-2\tau)k$. Let $S'\coloneqq \bigcup_{V\in \cA} V$. Then, $\tau n \leq |S'|\leq (1-\tau)n$.
		For each $v\in RN_{\nu, D'}^+(S')$, we have $|N_{D'}(v)\cap S'|\geq |N_D(v)\cap S'|-(d+\varepsilon)n\geq \frac{\nu n}{2}$. Therefore,
		\begin{equation*}
			|RN_{\frac{\nu}{2}, D'}^+(S')|\geq |RN_{\nu, D}^+(S')|\geq |S'|+\nu n\geq |S|m+\nu mk.
		\end{equation*}
		Let $v\in RN_{\frac{\nu}{2}, D'}^+(S')\setminus B_0$. Then, there exist at least $\frac{|N_{D'}^-(v)\cap S'|}{m}\geq \frac{\nu n}{m}\geq \nu k$ clusters $V\in S$ such that $N_{D'}(v)\cap S\neq \emptyset$. 
		Therefore, by \cref{lm:bipreglm}\cref{lm:bipreglm-reg}, for each $V\in \cB$ such that $V\cap (RN_{\frac{\nu}{2}, D'}^+(S')\setminus B_0)\neq \emptyset$, we have $|N_R(V)\cap S|\geq \frac{\nu k}{2}$. Thus,
		\[|RN_{\frac{\nu}{2}, R}^+(S)|\geq \frac{|RN_{\frac{\nu}{2}, D'}^+(S')\setminus B_0|}{m} \geq |S|+\nu k-\frac{\varepsilon n}{m}\geq |S|+\frac{\nu k}{2}.\]
		Similarly, if $S\subseteq \cB$ satisfies $2\tau k\leq |S|\leq (1-2\tau)k$, then $|RN_{\frac{\nu}{2}, R}^+(S)|\geq |S|+\frac{\nu k}{2}$.
		Therefore, $R$ is a bipartite robust $(\frac{\nu}{2},2\tau)$-outexpander with bipartition $(\cA,\cB)$, as desired.
\end{proof}}

\onlyinsubfile{\bibliographystyle{abbrv}
\bibliography{Bibliography/Bibliography}}

\subsection{Probabilistic estimates}\label{sec:probability}

	\onlyinsubfile{
		\setcounter{section}{5}
		\setcounter{subsection}{2}
\subsection{Probabilistic estimates}}

Let~$X$ be a random variable. We write~$X\sim \Bin(n,p)$ if~$X$ follows  a binomial distribution with parameters~$n$ and~$p$. Let~$N,n,m\in \mathbb{N}$ be such that~$\max\{n, m\} \leq N$. Let~$\Gamma$ be a set of size~$N$ and~$\Gamma'\subseteq \Gamma$ be of size~$m$. Recall that~$X$ has a \textit{hypergeometric distribution with parameters~$N, n$, and $m$} if~$X=|\Gamma_n\cap \Gamma'|$, where~$\Gamma_n$ is a random subset of~$\Gamma$ with~$|\Gamma_n|=n$ (i.e.~$\Gamma_n$ is obtained by drawing~$n$ elements of~$\Gamma$ without replacement). We will denote this by~$X\sim \HGeom(N,n,m)$. \COMMENT{Note that if~$X\sim \HGeom(N,n,m)$ then~$\mathbb{E}[X]=\frac{nm}{N}$.}

\subsubsection{Chernoff's bound}

First, we will need Chernoff's bound.

\begin{lm}[{Chernoff's bound, see e.g.~\cite[Theorems 2.1 and 2.10]{janson2011random}}]\label{lm:Chernoff}
	Assume~$X\sim \Bin(n,p)$ or~$X\sim \HGeom(N,n,m)$. Then, for any~$0< \varepsilon \leq 1$, the following hold.
	\begin{enumerate}
		\item $\mathbb{P}\left[X\leq (1-\varepsilon)\mathbb{E}[X]\right] \leq \exp \left(-\frac{\varepsilon^2}{3}\mathbb{E}[X]\right)$.
		\item $\mathbb{P}\left[X\geq (1+\varepsilon)\mathbb{E}[X]\right] \leq \exp \left(-\frac{\varepsilon^2}{3}\mathbb{E}[X]\right)$.
	\end{enumerate}
\end{lm}

One can use \cref{lm:Chernoff} to show that (super)regularity is preserved with high probability when taking a random edge-slice, i.e.\ when selecting a random spanning subgraph by including each edge independently with some fixed probability $p$. This was already observed in (the proof of) \cite[Lemma 4.10(iv)]{kuhn2013hamilton} and so we omit the details here.

\begin{lm}\label{lm:randomreg}
	Let $0<\frac{1}{n}\ll \varepsilon\ll \varepsilon'\ll d\leq 1$ and let $\varepsilon\ll p\leq 1$.
	Let $G$ be a bipartite graph on vertex classes of size $n$ and let $G'$ be obtained from $G$ by selecting each edge independently with probability $p$.
	\begin{enumerate}
		\item If $G$ is $(\varepsilon, \geq d)$-regular, then $G'$ is $(\varepsilon', \geq pd-\varepsilon)$-regular with high probability.
		\item If $G$ is $[\varepsilon, d]$-superregular, then $G'$ is $[\varepsilon', pd]$-superregular with high probability.
	\end{enumerate}
\end{lm}

\COMMENT{\begin{proof}
	Use the arguments of \cite[Lemma 4.10(iv)]{kuhn2013hamilton} to get $\varepsilon'$-regularity. We have $\mathbb{E}[d_{G'}(A,B)]=p\cdot d_G(A,B)\geq pd$ and so \cref{lm:Chernoff} implies that
	\[\mathbb{P}[d_{G'}(A,B)< pd-\varepsilon]\leq \mathbb{P}[d_{G'}(A,B)< (1-\varepsilon)\mathbb{E}[d_{G'}(A,B)]]\leq \exp\left(-\frac{\varepsilon^2 pd}{3}\right).\] If $d_G(v)=(d\pm \varepsilon)n$, then $\mathbb[d_{G'}(v)]=(pd\pm p\varepsilon)n$ and so \cref{lm:Chernoff} implies that
	\[\mathbb{P}[d_{G'}(v)\neq (pd \pm \varepsilon')n]\leq \mathbb{P}[d_{G'}(v)\neq (1 \pm 2\varepsilon')\mathbb{E}[d_{G'}(v)]]\leq \exp\left(-\frac{4(\varepsilon')^2(pd-\varepsilon')n}{3}\right).\]
\end{proof}}

\begin{cor}\label{lm:edgeslice}
	Let $0<\frac{1}{n}\ll \varepsilon\ll \varepsilon'\ll \frac{1}{k}\ll d\ll 1$.
	Let $U_1, \dots, U_4$ be disjoint vertex sets of size $n$.
	Let $D$ be a digraph on $U_1\cup \dots \cup U_4$. For each $i\in [4]$, let $\cP$ be a partition of $U_i$ into $k$ clusters of size $\frac{n}{k}$. Suppose that for each $i\in [4]$, $D[V, W]$ is $[\varepsilon, \geq 1-\varepsilon]$-superregular whenever $V$ and $W$ are unions of clusters in $\cP_i$ and $\cP_{i+1}$, respectively (where $\cP_5\coloneqq \cP_1$).
	Let $D_1$ be obtained by selecting each edge of $D$ independently with probability $1-2d$. Let $D_2\coloneqq D\setminus D_1$.
	Then, the following holds with high probability. For each $i\in [4]$, $D_1[V, W]$ is $[\varepsilon', \geq 1-3d]$-superregular and $D_2[V,W]$ is $[\varepsilon', \geq d+\varepsilon']$-superregular whenever $V$ and $W$ are unions of clusters in $\cP_i$ and $\cP_{i+1}$, respectively.
\end{cor}

\COMMENT{\begin{proof}
		Let $i\in [4]$ and let $V$ and $W$ be unions of clusters in $\cP_i$ and $\cP_{i+1}$, respectively. Using the arguments of \cite[Lemma 4.10(iv)]{kuhn2013hamilton}, $D_1[V,W]$ is $[\varepsilon', \geq (1-2d)(1-\varepsilon)]$-superregular with high probability and $D_2[V,W]$ is $[\varepsilon', \geq 2d(1-\varepsilon)]$-superregular with high probability. There are
		\[4\left(\sum_{j\in [k]}\binom{k}{j}\right)^2=4(2^k-1)^2\]
		choices for $(i,V,W)$ so a union bound gives the desired result.
\end{proof}}

One can also use \cref{lm:Chernoff} to show that bipartite robust outexpansion is preserved with high probability when taking random edge-slices. The arguments are similar to those used in the proof of \cite[Lemma 3.2(ii)]{kuhn2014hamilton} and are therefore omitted.

\begin{lm}\label{lm:randomrob}
	Let $0<\frac{1}{n}\ll \nu \ll \tau \ll \gamma \ll \delta \leq 1$. Let $D$ be a balanced bipartite digraph on vertex classes $A$ and $B$ of size $n$. Suppose that $D$ is a bipartite robust $(\nu, \tau)$-outexpander with bipartition $(A,B)$. Let $D'$ be obtained from $D$ by taking each edge independently with probability $\frac{1}{2}$. 
	Then, with high probability, both $D'$ and $D\setminus D'$ are bipartite robust $(\frac{\nu}{4}, \tau)$-outexpanders with bipartition $(A,B)$.
\end{lm}

\subsubsection{McDiarmid's inequality}

We will also need McDiarmid's inequality.

\begin{lm}[{McDiarmid's inequality \cite{mcdiarmid1989method}}]\label{lm:McDiarmid}
	Let $X_1, \dots, X_n$ be independent random variables, each taking values in $\{0,1\}$. Let $c_1, \dots, c_n\in \mathbb{R}$ and let $f\colon \{0,1\}^n\longrightarrow \mathbb{R}$ be a measurable function. Suppose that for any $i\in [n]$ and $x_1, \dots, x_n, x_i'\in \{0,1\}$, we have
	\[|f(x_1, \dots, x_n)-f(x_1, \dots, x_{i-1}, x_i', x_{i+1}, \dots, x_n)|\leq c_i.\]
	Then, for any $t>0$,
	\[\mathbb{P}[|f(X_1, \dots, X_n)-\mathbb{E}[f(X_1, \dots, X_n)]|>t]\leq 2\exp\left(-\frac{2t^2}{\sum_{i\in [n]}c_i^2}\right).\]
\end{lm}

\begin{lm}\label{lm:edgepartition}
	Let $0<\frac{1}{n}\ll \varepsilon\ll \frac{1}{k}\ll 1$.
	Let $G$ be a bipartite graph on vertex classes $A$ and $B$ of size $n$. 
	Suppose that $\Delta(G)\leq \varepsilon n$ and $e(G)\geq \frac{n}{2}$.
	Let $A_1\cup \dots\cup A_k$ be a random partition of $A$ such that, for each $i\in [k]$ and $v\in A$, $v\in A_i$ with probability $\frac{1}{k}$ independently of all other vertices.
	Similarly, let $B_1\cup \dots \cup B_k$ be a random partition of $B$ such that, for each $i\in [k]$ and $v\in B$, $v\in B_i$ with probability $\frac{1}{k}$ independently of all other vertices.
	Then, with probability at least $\frac{4}{5}$, we have $e_G(A_i, B_j)\geq \frac{e(G)}{2k^2}$ for all $i,j\in [k]$.
\end{lm}

\begin{proof}
	Denote $A=\{a_1, \dots, a_n\}$ and $B=\{b_1, \dots, b_n\}$.
	Let $i,j\in [k]$. For each $\ell\in [n]$, let
			\[X_\ell\coloneqq 
			\begin{cases}
				1 & \text{if }a_\ell\in A_i;\\
				0 & \text{otherwise;}
			\end{cases}
			\quad \text{and} \quad
			X_{2n+1-\ell}\coloneqq 
			\begin{cases}
				1 & \text{if }b_\ell\in B_j;\\
				0 & \text{otherwise.}
			\end{cases}\]
			Let $f(X_1, \dots, X_{2n})\coloneqq e_G(A_i,B_j)$. Then, $\mathbb{E}[f(X_1, \dots, X_{2n})]=\frac{e(G)}{k^2}$.
			Observe that, for each $\ell\in [n]$, we have 
			\[f(X_1, \dots, X_{\ell-1}, 1, X_{\ell+1}, \dots, X_{2n})-f(X_1, \dots, X_{\ell-1}, 0, X_{\ell+1}, \dots, X_{2n})\leq d_G(a_\ell)\]
			and
			\[f(X_1, \dots, X_{2n-\ell}, 1, X_{2n-\ell+2}, \dots, X_{2n})-f(X_1, \dots, X_{2n-\ell}, 0, X_{2n-\ell+2}, \dots, X_{2n})\leq d_G(b_\ell).\]
			Moreover, $\sum_{\ell\in [n]}((d_G(a_\ell))^2+(d_G(b_\ell))^2)\leq 2\frac{e(G)}{\Delta(G)}(\Delta(G))^2\leq 2e(G)\varepsilon n$%
			    \COMMENT{Sum maximised when as many vertices as possible have maximum degree.}.
			Thus, \cref{lm:McDiarmid} implies that
			\begin{align*}
				\mathbb{P}\left[e_G(A_i, B_j)<\frac{e(G)}{2k^2}\right]
				&\leq \mathbb{P}\left[|f(X_1, \dots, X_{2n})-\mathbb{E}[f(X_1, \dots, X_{2n})]|>\frac{e(G)}{2k^2}\right]\\
				&\leq 2\exp\left(-\frac{e(G)}{4k^4\varepsilon n}\right)\leq 2\exp\left(-\frac{1}{8\varepsilon k^4}\right).
			\end{align*}
			Therefore, a union bound implies that, with probability at least $1-2k^2\exp\left(-\frac{1}{\varepsilon k^5}\right)\geq \frac{4}{5}$, we have $e_G(A_i, B_j)\geq \frac{e(G)}{2k^2}$ for all $i,j\in [k]$.
\end{proof}

\onlyinsubfile{\bibliographystyle{abbrv}
\bibliography{Bibliography/Bibliography}}

\subsection{Matchings}

	\onlyinsubfile{
		\setcounter{section}{5}
		\setcounter{subsection}{3}
\section{Matchings}}

In this section, we collect tools for constructing and working with matchings. First, we need the following two \lcnamecrefs{prop:Koniglarge}, which follow from K\"{o}nig's theorem \cite{konig1916graphok} (see also \cite{konig1916graphen} for a German translation).

\begin{prop}\label{prop:Koniglarge}
	Let $G$ be a bipartite graph with maximum degree at most $\Delta$. Then, $G$ contains a matching of size $\frac{e(G)}{\Delta}$.
\end{prop}

\begin{prop}[see e.g.\ {\cite[Exercise 7.1.33]{west2001introduction}}]\label{prop:Konigsamesize}
	Let $G$ be a bipartite graph with maximum degree at most $\Delta$. Then, $G$ can be decomposed into edge-disjoint matchings $M_1, \dots, M_\Delta$ such that, for any $i,j\in[\Delta]$, $||M_i|-|M_j||\leq 1$.
\end{prop}

We will also need the following corollary of Hall's theorem \cite{hall1935representatives}.

\begin{prop}\label{prop:Hall}
	Let~$G$ be a bipartite graph on vertex classes~$A$ and~$B$ with~$|A|\leq |B|$. Suppose that, for each~$a\in A$, $d_G(a)\geq\frac{|B|}{2}$ and, for each $b\in B$, $d_G(b)\geq |A|-\frac{|B|}{2}$.
	Then,~$G$ contains a matching covering~$A$. 
\end{prop}

\COMMENT{\begin{proof}
		Let $A'\subseteq A$. If $|A'|\leq\frac{|B|}{2}$, then $|N_D(A')|\geq |A'|$. Otherwise, $N_G(A')=B$. Indeed, assume for a contradiction that $|A'|>\frac{|B|}{2}$ and $b\in B\setminus N_G(A')$. Then, $|A|-\frac{|B|}{2}\leq d_G(b)\leq |A|-|A'|< |A|-\frac{|B|}{2}$, a contradiction. 
\end{proof}}

\onlyinsubfile{\bibliographystyle{abbrv}
\bibliography{Bibliography/Bibliography}}

\subsection{Matching contractions}\label{sec:contractingM}

	\onlyinsubfile{
		\setcounter{section}{5}
		\setcounter{subsection}{4}
\subsection{Matching contractions}}

Note that the concepts introduced in this subsection will not be used formally until \cref{sec:cyclerobustdecomp}. However, we introduce them here as they will help us to explain the approximate decomposition strategy presented in \cref{sec:approxdecomp}.

As discussed in the proof overview, most of our Hamilton cycles will be formed by first constructing a perfect matching, which is then extended to a Hamilton cycle by constructing a Hamilton cycle in an auxiliary digraph which is, roughly speaking, obtained by contracting the edges of $M$. In this section, we give a formal definition of this auxiliary digraph and state its main properties.

\begin{definition}[\Gls*{contraction} and \gls*{expansion}]\label{def:contractexpand}
	Let $A$ and $B$ be disjoint vertex sets of equal size.
	Let $M$ be an auxiliary directed perfect matching from $B$ to $A$.
	\begin{enumerate}[label=\rm(\roman*)]
		\item Let $G$ be a bipartite graph on vertex classes $A$ and $B$. The \emph{$M$-contraction of $G$} is the digraph $G_M$ on vertex set $A$ defined as follows. Let $a,a'\in A$ be distinct and denote by $b$ the (unique) neighbour of $a$ in $M$. Then, $a'a\in E(G_M)$ if and only if $a'b\in E(G)$.\label{def:contract}
		\item Let $D$ be a digraph on vertex set $A$. The \emph{$M$-expansion of $D$} is the bipartite graph $D_M$ on vertex classes $A$ and $B$ defined as follows. Let $b\in B$ and let $a$ be the (unique) neighbour of $b$ in $M$. Then, for any $a'\in A$, $a'b\in E(D_M)$ if and only if $a'a\in E(D)$.\label{def:expand}%
			\COMMENT{Note that we automatically have $a\neq a'$ since $D$ is loopless.}
	\end{enumerate}
\end{definition}

(Recall that \cref{def:contractexpand} and all other main definitions are indexed in the glossary at the end of this paper.)

The condition that $a$ and $a'$ have to be distinct in \cref{def:contractexpand}\cref{def:contract} ensures that the resulting digraph $G_M$ does not contain any loop.
However, this implies that the edges lying along $M$ are lost in the process of contraction and expansion. 
(Of course, one could slightly change \cref{def:contractexpand}\cref{def:contract} to allow $M$-contractions to have loops. In this way, the $M$-expansion would the exact reverse operation of the $M$-contraction. But working with loops is impractical for our purposes.) 

\begin{fact}\label{fact:contractinguncontracting}
	Let $A$ and $B$ be disjoint vertex sets of equal size. Let $M$ be a directed perfect matching from $B$ to $A$.
	Let $G$ be a bipartite graph on vertex classes $A$ and $B$.
	Denote by $D$ the $M$-contraction of $G$ and by $G'$ the $M$-expansion of $D$.
	Then, $e(D)=e(G\setminus M[B,A])$ and $G'=G\setminus M[B,A]$.
\end{fact}

Let $A,B,M, G$, and $D$ be as in \cref{fact:contractinguncontracting}.
By \cref{def:contractexpand}\cref{def:contract}, the outneighbourhood of a vertex $a\in A$ in $D$ corresponds to the neighbourhood of $a$ in $G$, while the inneighbourhood of $a$ in $D$ corresponds to the neighbourhood of $N_M(a)$ in $G$.
We also observe for later use that there is a one-to-one correspondence between the connected components of $G\cup M[B, A]$ and $D$.

\begin{fact}\label{fact:Ncontract}
	Let $A,B,M, G$, and $D$ be as in \cref{fact:contractinguncontracting}.
	Then, the following hold.
	\begin{enumerate}
		\item Each $a\in A$ satisfies $N_D^+(a)=N_M(N_G(a))\setminus \{a\}$ and $N_D^-(a)=N_G(N_M(a))\setminus \{a\}$.\label{fact:Ncontract-N}
		\item Any $a,a'\in A$ belong to a common connected component of $D$ if and only if they belong to a common connected component of $G\cup M[B,A]$.\label{fact:Ncontract-components}
	\end{enumerate}
\end{fact}

Let $A,B$, and $M$ be as in \cref{fact:contractinguncontracting}.
Let $D$ be a digraph on $A$ and denote by $G$ the $M$-expansion of $D$.
By \cref{def:contractexpand}\cref{def:expand}, the neighbourhood of a vertex $a\in A$ in $G$ corresponds to the outneighbourhood of $a$ in $D$, while the neighbourhood of $N_M(a)$ in $G$ corresponds to the inneighbourhood of $a$ in $D$.

\begin{fact}\label{fact:Nuncontract}
	Let $A,B$, and $M$ be as in \cref{fact:contractinguncontracting}.
	Let $D$ be a digraph on $A$ and denote by $G$ the $M$-expansion of $D$. Then, the following hold.
	\begin{enumerate}
		\item Each $a\in A$ satisfies $N_G(a)=N_M(N_D^+(a))$.\label{fact:Nuncontract-A}
		\item Each $b\in B$ satisfies $N_G(b)=N_D^-(N_M(b))$.\label{fact:Nuncontract-B}
	\end{enumerate}
\end{fact}

We now state our key property of matching contractions and matching expansions: finding a Hamilton cycle in a bipartite digraph $D$ on vertex classes $A$ and $B$ is equivalent to finding a perfect matching $M$ from $B$ to $A$ in $D$ and then finding a Hamilton cycle in the $M$-contraction of $D[A, B]$.

\begin{fact}\label{fact:contractingHamcycle}
	Let $A$ and $B$ be disjoint vertex sets of equal size. Let $M$ be a directed perfect matching from $B$ to $A$.
	Let $H$ be a directed Hamilton cycle on $A$. Let $G$ be obtained by orienting from $A$ to $B$ all the edges in the $M$-expansion of $H$.
	Then, $G$ is a directed perfect matching from $A$ to $B$ and $G\cup M$ is a directed Hamilton cycle on $A\cup B$.
\end{fact}

To construct Hamilton cycles in matching contractions, we will use the following \lcnamecref{lm:contraction}, which states that almost regularity, superregularity, and robust outexpansion are preserved in contracted digraphs. Its proof follows easily from definitions and is therefore omitted. (Similar observations were also made and proved in \cite{kuhn2015robust}.)

\begin{prop}\label{lm:contraction}
	Let $0<\frac{1}{n}\ll \varepsilon\ll \nu\ll  \tau\ll \delta, d \leq 1$. Let $G$ be a bipartite graph on vertex classes $A$ and $B$ of size $n$ and $M$ be a directed perfect matching from $B$ to $A$.
	Then, the $M$-contraction $D$ of $G$ satisfies the following properties.
	\begin{enumerate}
		\item If $G$ is $(\delta, \varepsilon)$-almost regular, then $D$ is $(2\delta, 2\varepsilon)$-almost regular.\label{lm:contraction-regular}
		\item Let $A_1,A_2\subseteq A$ be disjoint. If $G[A_1, N_M(A_2)]$ is $[\varepsilon,d]$-superregular, then $D[A_1, A_2]$ is $[\varepsilon, d]$-superregular.\label{lm:contraction-supreg}
		\item If $G$ is a bipartite robust $(\nu, \tau)$-expander with bipartition $(A,B)$, then $D$ is a robust $(\frac{\nu}{2}, \tau)$-outexpander%
			\COMMENT{Parameters are left unchanged in \cite[Lemma 6.10]{kuhn2015robust} but there is mistake? We do not have $RN_{\nu, G^*}^+(S)\supseteq RN_{\nu, G}(S_A)$ on line 3 of the proof of \cite[Lemma 6.10]{kuhn2015robust} because loops are not allowed? (Suppose that $x\in RN_{\nu, G}(S_A)\cap S$ satisfies $|N_G(x)\cap S_A|=\nu n$. Then, $N_{G^*}^-(x)\cap S=(N_G(x)\cap S_A)\setminus \{x'\}$. I.e.\ $|N_{G^*}^-(x)\cap S|=\nu n-1$ and so $x\notin RN_{\nu, G^*}^-(S)$.)}.\label{lm:contraction-rob}
	\end{enumerate} 
\end{prop}

\COMMENT{\begin{proof}
	By \cref{fact:Ncontract}\cref{fact:Ncontract-N}, each $v\in V(G)$ satisfies $d_D^\pm(v)\in \{d_G(v), d_G(v)-1\}$. 
	Therefore, \cref{lm:contraction-regular} is satisfied.\\	
	For \cref{lm:contraction-supreg}, let $A_1, A_2\subseteq A$ be disjoint and suppose that $G[A_1, N_M(A_2)]$ is $[\varepsilon,d]$-superregular. We show that $D[A_1, A_2]$ is $[\varepsilon, d]$-superregular.
	By \cref{fact:Ncontract}\cref{fact:Ncontract-N}, each $a\in A_1$ satisfies \[|N_D^+(a)\cap A_2|=|N_M(N_G(a))\cap A_2|=|N_G(a)\cap N_M(A_2)|=(d\pm\varepsilon)|N_M(A_2)|=(d\pm\varepsilon)|A_2|.\]
	(Note that $a\notin N_M(N_G(a))\cap A_2$ since $A_1\cap A_2=\emptyset$.)
	Moreover, \cref{fact:Ncontract}\cref{fact:Ncontract-N} implies that each $a\in A_2$ satisfies \[|N_D^-(a)\cap A_1|=|N_G(N_M(a))\cap A_1|=(d\pm\varepsilon)|A_1|.\] Thus, it suffices to show that $D[A_1, A_2]$ is $\varepsilon$-regular.
	For any $A_1'\subseteq A_1$ and $A_2'\subseteq A_2$, we have
	\begin{align*}
		d_{D[A_1,A_2]}(A_1', A_2')&
		=\frac{\sum_{a\in A_1'}|N_D^+(a)\cap A_2'|}{|A_1'||A_2'|}
		\stackrel{\text{\cref{fact:Ncontract}\cref{fact:Ncontract-N}}}{=} \frac{\sum_{a\in A_1'}|N_G(a)\cap N_M(A_2')|}{|A_1'||N_M(A_2')|}\\
		&=d_{G[A_1, N_M(A_2)]}(A_1',N_M(A_2')).
	\end{align*}
	Since $G[A_1,N_M(A_2)]$ is $\varepsilon$-regular, this implies that $D[A_1, A_2]$ is also $\varepsilon$-regular. Therefore, \cref{lm:contraction-supreg} holds.\\
	For \cref{lm:contraction-rob}, suppose that $G$ is a bipartite robust $(\nu,\tau)$-expander with bipartition $(A,B)$. Let $S\subseteq A$ satisfy $\tau n\leq |S|\leq (1-\tau)n$.
	Let $T\coloneqq N_M(RN_{\nu,G}(S))$. Note that $T\subseteq A$.
	By \cref{fact:Ncontract}\cref{fact:Ncontract-N}, each $a\in T$ satisfies \[|N_D^-(a)\cap S|\geq |N_G(N_M(a))\cap S|-1\geq \frac{\nu n}{2}.\] Thus, $T\subseteq RN_{\frac{\nu}{2},D}^+(S)$ and so $|RN_{\frac{\nu}{2},D}^+(S)|\geq |RN_{\nu, D}(S)|\geq |S|+\nu n$. Therefore \cref{lm:contraction-rob} holds.\\
\end{proof}}

Note that \cref{lm:biprobHamcycle} follows from \cref{lm:rob} and \cref{lm:contraction}\cref{lm:contraction-rob}.

\begin{proof}[Proof of \cref{lm:biprobHamcycle}]
	First, we find a perfect matching from $A$ to $B$ as follows.
	Let $M$ be an arbitrary perfect matching from $B$ to $A$. Let $D_M$ be the $M$-contraction of $D[A,B]$. By \cref{lm:contraction}\cref{lm:contraction-rob}, $D_M$ is a robust $(\frac{\nu}{2},\tau)$-outexpander. Moreover, \cref{fact:Ncontract}\cref{fact:Ncontract-N} implies that $\delta^0(D_M)\geq \frac{\delta n}{2}$. Thus, \cref{lm:rob} implies that $D_M$ contains a Hamilton cycle $H$. Let $M'$ be obtained by orienting from $A$ to $B$ all the edges in the $M$-expansion of $H$. Then, \namecrefs{fact:Ncontract} \ref{fact:Ncontract}\cref{fact:Ncontract-N}, \ref{fact:Nuncontract}, and \ref{fact:contractingHamcycle} imply that $M'$ is a perfect matching of $D$ from $A$ to~$B$.
	
	We close $M'$ into a Hamilton cycle as follows. Let $D_{M'}$ be the $M'$-contraction of $D[B,A]$. By the same arguments as above, $D_{M'}$ contains a Hamilton cycle $H'$. Let $M''$ be obtained by orienting from $B$ to $A$ all the edges in the $M'$-expansion of $H'$. By the same arguments as above, $M''$ is a perfect matching of $D$ from $B$ to $A$. Moreover, \cref{fact:contractingHamcycle} implies that $M'\cup M''$ is a Hamilton cycle.
\end{proof}

Finally, observe that contracting a linear forest $F$ gives a linear forest with endpoints corresponding to those of $F$. This follows easily from \cref{fact:Ncontract} and so we omit the details.

\begin{prop}\label{fact:contractlinforest}
	Let $F$ be a balanced bipartite directed linear forest on vertex classes $A$ and $B$.
	Suppose that $F[B,A]$ is a perfect matching and let $M \coloneqq E_F(B,A)$.
	Denote by $D$ the $M$-contraction of $F[A,B]$. Then, $D$ is a linear forest satisfying
	\[V^+(D)=N_M(V^+(F)),\quad  V^-(D)=V^-(F),\quad \text{and}\quad V^0(D)=(V^0(F)\cap A)\setminus N_M(V^+(F)).\]%
		\COMMENT{(Recall that an isolated vertex is considered as a starting and ending point of a path.)}
\end{prop}

\COMMENT{\begin{proof}
	First, we show that $D$ is a linear forest. By \cref{fact:Ncontract}\cref{fact:Ncontract-N}, $\Delta^0(D)\leq 1$.
	Suppose for a contradiction that $C$ is a cycle in $D$. Since $\Delta^0(D)\leq 1$, $C$ is a connected component of $D$ and, since $M$ is a perfect matching from $B$ to $A$, we have $V^-(F)\subseteq A$.
	Thus, \cref{fact:Ncontract}\cref{fact:Ncontract-components} implies that there exists $a\in V(C)\cap V^-(F)$.
	Let $a'$ be the outneighbour of $a$ in $C$. By \cref{fact:Ncontract}\cref{fact:Ncontract-N}, $a'\in  N_M(N_{F[A,B]}(a))=N_M(N_F^+(a))=\emptyset$, a contradiction. Thus, $D$ is a linear forest, as desired.\\
	Let $a\in A$. Note that since $M$ is a perfect matching, $V^+(F)\subseteq B$ and $V^-(F)\subseteq A$. By \cref{fact:Ncontract}\cref{fact:Ncontract-N} and since $F$ is a linear forest containing $M$, we have
	\[N_D^-(a)=N_{F[A,B]}(N_M(a))\setminus \{a\}=N_F^-(N_M(a)).\]
	Thus, $a\in V^+(D)$ if and only if $N_F^-(N_M(a))=\emptyset$. That is, $V^+(D)=N_M(V^+(F)\cap B)=N_M(V^+(F))$, as desired.
	Similarly, \cref{fact:Ncontract}\cref{fact:Ncontract-N} implies that
	\[N_D^+(a)=N_M(N_{F[A,B]}(a))\setminus \{a\}=N_M(N_F^+(a))\]
	and so $a\in V^-(D)$ if and only if $N_F^+(a)=\emptyset$. That is, $V^-(D)=V^-(F)\cap A=V^-(F)$, as desired.
	Finally, \[V^0(D)=A\setminus (V^+(D)\cup V^-(D))=A\setminus (N_M(V^+(F))\cup V^-(F))=(V^0(F)\cap A)\setminus N_M(V^+(F)),\]
	so we are done.
\end{proof}}

\onlyinsubfile{\bibliographystyle{abbrv}
	\bibliography{Bibliography/Bibliography}}

\section{Main tools}\label{sec:maintools}

We now introduce our main tools for constructing approximate decompositions and decomposing leftovers. These will be used in the proofs of \cref{thm:biprobexp,thm:blowupC4}. 

\subsection{Approximate decomposition tools}\label{sec:approxdecomp}

	\onlyinsubfile{
		\setcounter{section}{6}
\subsection{Approximate decomposition tools}}

As mentioned in \cref{sec:sketch}, we adapt arguments of \cite{girao2020path} using the concept of matching contraction. We now discuss this in more detail. In \cite{girao2020path}, we showed that any dense almost regular robust outexpander $D$ can be approximately decomposed into Hamilton cycles. (Moreover, one can ensure that each Hamilton cycle contains a small set of prescribed edges.) The key idea behind the proof is to reserve a sparse random edge-slice $\Gamma\subseteq D$ and then construct, one by one, edge-disjoint Hamilton cycles which use very few edges of $\Gamma$. This ensures that robust outexpansion is preserved throughout the approximate decomposition.

\begin{thm}[{\cite[Theorem 14.2 and Lemma 14.3]{girao2020path}}]\label{thm:approxHamdecomp}
	Let $0<\frac{1}{n}\ll \tau\ll \delta \leq 1$ and  $0<\frac{1}{n}\ll\varepsilon\ll\eta,\nu\leq 1$. Let $\ell \leq (\delta-\eta)n$.	
	Let $D$ be a $(\delta,\varepsilon)$-almost regular robust~$(\nu,\tau)$-outexpander on~$n$ vertices. Suppose that $F_1, \dots, F_\ell$ are linear forests
	on~$V(D)$ satisfying the following properties.
	\begin{enumerate}
		\item For each $i\in [\ell]$, $e(F_i)\leq \varepsilon n$.
		\item For each~$v\in V(D)$, there exist at most~$\varepsilon n$ indices~$i\in[\ell]$ such that~$v\in V(F_i)$.
	\end{enumerate}
	Define a multidigraph~$\cF$ by $\cF\coloneqq \bigcup_{i\in [\ell]}F_i$.
	Then, the multidigraph $D\cup \cF$ contains edge-disjoint Hamilton cycles $C_1,\dots, C_\ell$ such that $F_i\subseteq C_i$ for each $i\in [\ell]$.
\end{thm}

Let $D$ be a bipartite digraph on vertex classes $A$ and $B$ and suppose that $D[A,B]$ is an almost regular bipartite robust expander. Let $M_1, \dots, M_\ell$ be edge-disjoint perfect matchings whose edges are all oriented from $B$ to $A$. Then, we can extend $M_1, \dots, M_\ell$ into edge-disjoint Hamilton cycles as follows. For each $i\in [\ell]$, denote by $D_i$ the $M_i$-contraction of $D$ and note that, by \cref{lm:contraction}, $D_i$ is an almost regular robust outexpander. 
By \cref{fact:contractingHamcycle}, it is enough to find, for each $i\in [\ell]$, a Hamilton cycle of $D_i$.
Since the $D_i$'s are distinct, we cannot apply \cref{thm:approxHamdecomp} directly. However, we can adapt the strategy discussed above as follows. We initially reserve a randomly chosen edge-slice $\Gamma\subseteq D[A,B]$ and denote, for each $i\in [\ell]$, by $\Gamma_i$ the corresponding random edge-slice of $D_i$. At each stage $i\in [\ell]$, we use the arguments of \cref{thm:approxHamdecomp} to construct a Hamilton cycle of $D_i$ which uses very few edges of $\Gamma_i$. This ensures that, overall, very few edges of $\Gamma$ are used and so $D[A,B]$ remains a bipartite robust expander throughout the approximate decomposition. By \cref{lm:contraction}, this implies that, at each stage $i\in [\ell]$, $D_i$ is still a robust outexpander and so the approximate decomposition can be completed.
\APPENDIX{(See \cref{app:approximatedecomp} for details.)}%
\NOAPPENDIX{(See the appendix of the arXiv version of this paper for details and a proof of \cite[Lemma 9.3]{girao2020path}, which was omitted in \cite{girao2020path} as it is straightforward.)}

Recall from \cref{sec:notation-regularity} that a balanced bipartite digraph $D$ on vertex classes of size $n$ is $(\delta, \varepsilon)$-regular if all its vertices have in- and outdegree both roughly equal to $\delta|V(D)|=2\delta n$. This justifies the factor of $2$ in the upper bound on $\ell$ in \cref{thm:biphalfapproxHamdecomp}. Moreover, recall from \cref{sec:notation-graphs} that the parallel edges of a multi(di)graph are considered to be distinct. Thus, we do not require the linear forests $F_1, \dots, F_\ell$ in \cref{thm:biphalfapproxHamdecomp} to be edge-disjoint and the theorem states that each edge of $D$ is covered by at most one of the resulting Hamilton cycles $C_1,\dots, C_\ell$ (while each linear forest $F_i$ is fully incorporated into its corresponding cycle $C_i$).

\begin{thm}[Extending an approximate perfect matching decomposition into an approximate Hamilton decomposition]\label{thm:biphalfapproxHamdecomp}
	Let $0<\frac{1}{n}\ll \tau\ll \delta \leq 1$ and  $0<\frac{1}{n}\ll\varepsilon\ll\eta,\nu\leq 1$. Let $\ell \leq 2(\delta-\eta)n$.
	Let $D$ be a balanced bipartite digraph on vertex classes $A$ and $B$ of size $n$.
	Suppose that~$D[A,B]$ is a $(\delta,\varepsilon)$-almost regular bipartite robust~$(\nu,\tau)$-expander with bipartition $(A,B)$. 
	Suppose that $F_1, \dots, F_\ell$ are bipartite directed linear forests on vertex classes $A$ and $B$ satisfying the following properties.
	\begin{enumerate}
		\item For each~$i\in[\ell]$, $e_{F_i}(B,A)=n$.\label{thm:biphalfapproxHamdecomp-BA}
		\item For each~$i\in[\ell]$, $e_{F_i}(A,B)\leq \varepsilon n$.\label{thm:biphalfapproxHamdecomp-AB}
		\item For each~$v\in V(D)$, there exist at most~$\varepsilon n$ indices~$i\in[\ell]$ such that~$d_{F_i[A,B]}(v)=1$.\label{thm:biphalfapproxHamdecomp-size}
	\end{enumerate}
	Define a multidigraph~$\cF$ by $\cF\coloneqq \bigcup_{i\in [\ell]}F_i$.
	Then, the multidigraph $D\cup \cF$ contains edge-disjoint Hamilton cycles $C_1,\dots, C_\ell$ such that $F_i\subseteq C_i$ for each $i\in [\ell]$.
	Moreover, $D[A,B]\setminus \bigcup_{i\in [\ell]} C_i$ is still a bipartite robust~$(\frac{\nu}{2},\tau)$-expander with bipartition $(A,B)$.
\end{thm}

If $D$ is a bipartite robust outexpander (i.e.\ if both $D[A,B]$ and $D[B,A]$ are bipartite robust expanders (recall \cref{fact:biprobexp})), then we can apply \cref{thm:biphalfapproxHamdecomp} twice in a row to construct an approximate Hamilton decomposition of $D$: first, we apply \cref{thm:biphalfapproxHamdecomp} with arbitrary perfect matchings from $B$ to $A$ to approximately decompose the edges of $D$ from $A$ to $B$ into edge-disjoint perfect matchings, and then we apply \cref{thm:biphalfapproxHamdecomp} a second time to extend these perfect matchings into edge-disjoint Hamilton cycles of $D$.

\begin{cor}[Approximate Hamilton decomposition]\label{cor:bipapproxHamdecomp}
	Let $0<\frac{1}{n}\ll \tau\ll \delta \leq 1$ and  $0<\frac{1}{n}\ll\varepsilon\ll\eta,\nu\leq 1$. Let $\ell \leq 2(\delta-\eta)n$.	
	Let~$D$ be a balanced bipartite digraph on vertex classes $A$ and $B$ of size~$n$. Suppose that $D$ is a $(\delta,\varepsilon)$-almost regular bipartite robust~$(\nu,\tau)$-outexpander with bipartition $(A,B)$.
	Suppose that $F_1, \dots, F_\ell$ are bipartite directed linear forests on vertex classes $A$ and $B$ satisfying the following properties.
	\begin{enumerate}
		\item For each $i\in[\ell]$, $e(F_i)\leq \varepsilon n$.\label{cor:bipapproxHamdecomp-size}
		\item For each~$v\in V(D)$, there exist at most~$\varepsilon n$ indices~$i\in[\ell]$ such that~$v\in V(F_i)$.\label{cor:bipapproxHamdecomp-deg}
	\end{enumerate}
	Define a multidigraph $\cF$ by $\cF\coloneqq \bigcup_{i\in [\ell]}F_i$.
	Then, the multidigraph $D\cup\cF$ contains edge-disjoint Hamilton cycles $C_1,\dots, C_\ell$ such that $F_i\subseteq C_i$ for each $i\in [\ell]$.
	Moreover, $D\setminus \bigcup_{i\in [\ell]} C_i$ is still a bipartite robust~$(\frac{\nu}{2},\tau)$-outexpander with bipartition $(A,B)$.
\end{cor}

\begin{proof}
	First, we extend $F_1, \dots, F_\ell$ to auxiliary linear forests which satisfy \cref{thm:biphalfapproxHamdecomp}\cref{thm:biphalfapproxHamdecomp-AB,thm:biphalfapproxHamdecomp-BA,thm:biphalfapproxHamdecomp-size}.
	\begin{claim}\label{claim:approxdecomp}
		For each $i\in [\ell]$, there exists a bipartite linear forest $F_i'$ on vertex classes $A$ and $B$ such that $F_i'[A,B]=F_i[A,B]$ and $F_i'[B,A]$ is a perfect matching containing $F_i[B,A]$.
	\end{claim}
	
	\begin{proofclaim}
		Let $i\in [\ell]$. 
		\begin{itemize}
			\item Denote by $a_1, \dots, a_q$ the vertices $v\in A$ satisfying $d_{F_i}^-(v)=0$ and $d_{F_i}^+(v)=1$.
			\item Denote by $a_{q+1}, \dots, a_{q+r}$ the vertices $v\in A$ satisfying $d_{F_i}^-(v)=0$ and $d_{F_i}^+(v)=0$.
			\item Denote by $b_1, \dots, b_s$ the vertices $v\in B$ satisfying $d_{F_i}^-(v)=1$ and $d_{F_i}^+(v)=0$.
			\item Denote by $b_{s+1}, \dots, b_{s+t}$ the vertices $v\in B$ satisfying $d_{F_i}^-(v)=0$ and $d_{F_i}^+(v)=0$.
		\end{itemize} 
		Observe that there exist exactly $r$ vertices in $A$ which have degree $0$ in $F_i$. Therefore,
		\begin{equation*}
			r\geq |A|-e(F_i)\stackrel{\text{\cref{cor:bipapproxHamdecomp-size}}}{\geq} (1-\varepsilon)n>0.
		\end{equation*}
		Note that $a_1, \dots, a_q$ is an enumeration of $V^+(F_i)\cap A$ and $b_1, \dots, b_s$ is an enumeration of $V^-(F_i)\cap B$.
		Since $F_i$ is a linear forest, we may therefore assume without loss of generality that, if $F_i$ contains an $(a_j, b_k)$-path for some $j\in [q]$ and $k\in [s]$, then $j=k$.
		Note that $e(F_i[B,A])=|A|-(q+r)=|B|-(s+t)$ and so $q+r=s+t$.		
		Let $F_i'\coloneqq F_i\cup \{b_ia_{i+1}\mid i\in [q+r]\}$ (where $a_{q+r+1}\coloneqq a_1$). Then, $F_i$ is a bipartite digraph on vertex classes $A$ and $B$ such that $F_i'[A,B]=F_i[A,B]$ and $F_i'[B,A]$ is a perfect matching which contains $F_i[B,A]$. It is easy to check that $F_i$ is a linear forest.%
			\COMMENT{Suppose for a contradiction that $C$ is a cycle in $F_i'$. For each $j\in [r]$, $d_{F_i'}(a_{q+j})=1$. Similarly, $d_{F_i'}(b_{s+j})=1$ for each $j\in [t]$. Moreover, $d_{F_i'}(a)=1$ for all $a\in A\cap V^-(F_i)$ and $d_{F_i'}(b)=1$ for all $b\in B\cap V^+(F_i')$.
			Therefore, $V(C)\subseteq V^0(F_i)\cup \{a_j\mid j\in [q]\}\cup \{b_j\mid j\in [s]\}$ and so, by assumption, there exist $j<k\in [\min\{q,s\}]$ such that $C=a_jP_jb_ja_{j+1}P_{j+1}b_{j+1}a_{j+2}\dots b_k$, where $P_j, \dots, P_k$ are components of $F_i$. But, $b_ka_j\in E(F_i')$ implies that $j=1$ and $k=q+r\leq q$. Thus, $r=0$, a contradiction. Therefore, $F_i'$ is a linear forest, as desired.}
	\end{proofclaim}
	
	Let $F_1', \dots, F_\ell'$ be the linear forests obtained by applying \cref{claim:approxdecomp}. Observe that \cref{thm:biphalfapproxHamdecomp}\cref{thm:biphalfapproxHamdecomp-BA,thm:biphalfapproxHamdecomp-AB,thm:biphalfapproxHamdecomp-size} are satisfied with $F_1', \dots, F_\ell'$ playing the roles of $F_1, \dots, F_\ell$.
	Define a multidigraph $\cF'$ by $\cF'\coloneqq \bigcup_{i\in [\ell]}F_i'$.
	By \cref{thm:biphalfapproxHamdecomp} (applied with $F_1', \dots, F_\ell'$ playing the roles of $F_1, \dots, F_\ell$), the multidigraph $D\cup \cF'$ contains edge-disjoint Hamilton cycles $C_1', \dots, C_\ell'$ such that $F_i'\subseteq C_i'$ for each $i\in [\ell]$.
	For each $i\in [\ell]$, let $F_i''\coloneqq C_i'[A,B]\cup F_i[B,A]\subsetneq C_i'$ and note that $F_i\subseteq F_i''\subseteq \cF\cup D[A,B]$. Moreover, \cref{thm:biphalfapproxHamdecomp}\cref{thm:biphalfapproxHamdecomp-BA,thm:biphalfapproxHamdecomp-AB,thm:biphalfapproxHamdecomp-size} are satisfied with $B,A$, and $F_1'', \dots, F_\ell''$ playing the roles of $A,B$, and $F_1, \dots, F_\ell$. Define a multidigraph $\cF''$ by $\cF''\coloneqq \bigcup_{i\in [\ell]} F_i''$. Let $D'\coloneqq D\setminus \cF''$. Note that $D'[B,A]=D[B,A]$ and, by the ``moreover part" of \cref{thm:biphalfapproxHamdecomp}, $D'[A,B]$ is a bipartite robust $(\frac{\nu}{2},\tau)$-expander with bipartition $(A,B)$.
	
	By \cref{thm:biphalfapproxHamdecomp} (applied with $D',B, A$, and $F_1'', \dots, F_\ell''$ playing the roles of $D, A, B$, and $F_1, \dots, F_\ell$), the multidigraph $D'\cup \cF''=D\cup \cF$ contains edge-disjoint Hamilton cycles $C_1, \dots, C_\ell$ such that $F_i\subseteq F_i''\subseteq C_i$ for each $i\in [\ell]$. 
	Let $D''\coloneqq D\setminus \bigcup_{i\in [\ell]}C_i$. 
	By the ``moreover part" of \cref{thm:biphalfapproxHamdecomp}, $D''[B,A]$ is a bipartite robust $(\frac{\nu}{2},\tau)$-expander with bipartition $(B,A)$.
	By construction, $D''[A,B]=D'[A,B]$ and so $D''[A,B]$ is also a bipartite robust $(\frac{\nu}{2},\tau)$-expander with bipartition $(A,B)$. Therefore,  \cref{fact:biprobexp} implies that $D''$ is a bipartite robust $(\frac{\nu}{2},\tau)$-outexpander with bipartition $(A,B)$.
\end{proof}

Note that the ``moreover part"  of \cref{cor:bipapproxHamdecomp} implies that we can prescribe some edges to most of the Hamilton cycles in the decomposition given by \cref{thm:biprobexp}.

Similarly, one can apply \cref{thm:biphalfapproxHamdecomp} with auxiliary perfect matchings from $B$ to $A$ to obtain an approximate decomposition of a bipartite robust expander into perfect matchings which extend given small matchings.

\begin{cor}[Approximate perfect matching decomposition]\label{cor:bipapproxmatchdecomp}
	Let $0<\frac{1}{n}\ll \tau\ll \delta \leq 1$ and  $0<\frac{1}{n}\ll\varepsilon\ll\eta,\nu\leq 1$. Let $\ell \leq 2(\delta-\eta)n$.	
	Let $G$ be a balanced bipartite graph on vertex classes $A$ and $B$ of size~$n$. Suppose that $G$ is a $(\delta,\varepsilon)$-almost regular bipartite robust~$(\nu,\tau)$-expander with bipartition $(A,B)$. Suppose that $F_1, \dots, F_\ell$ are bipartite matchings on vertex classes $A$ and $B$ satisfying the following properties.
	\begin{enumerate}
		\item For each~$i\in[\ell]$, $e(F_i)\leq \varepsilon n$.\label{cor:bipapproxmatchdecomp-size}
		\item For each~$v\in V(G)$, there exist at most~$\varepsilon n$ indices~$i\in[\ell]$ such that~$v\in V(F_i)$. \label{cor:bipapproxmatchdecomp-deg}
	\end{enumerate}
	Define a multigraph~$\cF$ by $\cF\coloneqq \bigcup_{i\in [\ell]}F_i$.
	Then, the multigraph $G\cup \cF$ contains edge-disjoint perfect matchings $M_1,\dots, M_\ell$ such that $F_i\subseteq M_i$ for each $i\in [\ell]$.
	Moreover, $G\setminus \bigcup_{i\in [\ell]} M_i$ is still a bipartite robust~$(\frac{\nu}{2},\tau)$-expander with bipartition $(A,B)$.
\end{cor}

\COMMENT{\begin{proof}
	Let $D$ be obtained from $G$ by orienting all edges from $A$ to $B$. Note that $D[A,B]=G$.
	Let $i\in [\ell]$. Let $a_1b_1, \dots, a_kb_k$ be an enumeration of $F_i$ where, for each $j\in [k]$, $a_j\in A$ and $b_j\in B$. Let $a_{k+1}, \dots, a_n$ and $b_{k+1}, \dots, b_n$ be enumerations of $A\setminus V(F_i)$ and $B\setminus V(F_i)$.
	Let $F_i'$ be the digraph on vertex set $A\cup B$ with $E_{F_i'}\coloneqq \{a_ib_i\mid i\in [k]\} \cup \{b_ia_{i+1} \mid i\in [n]\}$, where $a_{n+1}\coloneqq a_1$.
	Note that $F_i'[A,B]=F_i$ and, since $k\leq \varepsilon n$, $F_i'$ is a bipartite linear forest on vertex classes $A$ and $B$. Thus, \cref{thm:biphalfapproxHamdecomp}\cref{thm:biphalfapproxHamdecomp-AB,thm:biphalfapproxHamdecomp-BA,thm:biphalfapproxHamdecomp-size} are satisfied with $F_1', \dots, F_\ell'$ playing the roles of $F_1, \dots, F_\ell$.
	Define a multidigraph $\cF'$ by $\cF'\coloneqq \bigcup_{i\in [\ell]} F_i'$.
	Apply \cref{thm:biphalfapproxHamdecomp} with $F_1', \dots, F_\ell'$ playing the roles of $F_1, \dots, F_\ell$ to obtain edge-disjoint Hamilton cycles $C_1, \dots, C_\ell\subseteq D\cup \cF'$ such that, for each $i\in [\ell]$, $F_i'\subseteq C_i$.
	For each $i\in [\ell]$, let $M_i\coloneqq C_i[A,B]$. This completes the proof.
\end{proof}}

\onlyinsubfile{\bibliographystyle{abbrv}
\bibliography{Bibliography/Bibliography}}

\subsection{The robust decomposition lemma}\label{sec:robustdecomp}

	\onlyinsubfile{
		\setcounter{section}{6}
		\setcounter{subsection}{1}
\subsection{The robust decomposition lemma}}

In this section, we state (a modified version of) the robust decomposition lemma of \cite{kuhn2013hamilton}. Roughly speaking, this result guarantees the existence of a sparse absorber $D^{\rm rob}$ which can decompose any sparse leftover $H$ into Hamilton cycles. To state this lemma, we need some definitions. These are needed in order to describe the structure within which the sparse absorber $D^{\rm rob}$ can be found. Roughly speaking, the structure consists of a ``quasirandom" blow-up of a graph consisting of a cycle and a suitable set of chords on this cycle.

\subsubsection{Equivalent linear forests}\label{sec:equivalentP}
We start with a simple concept which will enable us to simplify some arguments.

\begin{definition}[\Gls*{equivalent linear forests}]\label{def:equivalentP}
	Two linear forests $F$ and $F'$ are \emph{equivalent} if $V(F) = V(F')$ and there exist enumerations $P_1, \dots, P_\ell$ and $P'_1, \dots, P'_\ell$ of the components of $F$ and $F'$ such for each $i \in [\ell]$, $P_i$ and $P_i'$ have the same starting and ending points.
\end{definition}

\begin{fact}\label{fact:equivalentP}
	Let $V$ be a vertex set and $D$ be a digraph with $V(D)\subseteq V$.
	Let $F$ and $F'$ be two equivalent linear forests. 
	Then, $D\cup F$ is a Hamilton cycle on $V$ if and only if $D\cup F'$ is a Hamilton cycle on $V$. 
\end{fact}

Roughly speaking, \cref{fact:equivalentP} states that, if $F$ is a linear forest that we want to extend into a Hamilton cycle, then the internal structure of $F$ is irrelevant.
This simple observation will enable us to simplify the statement and application of the robust decomposition lemma of \cite{kuhn2013hamilton}.

\subsubsection{Refinements}
Let $D$ be a digraph and $\cP$ be a partition of $V(D)$ into an exceptional set $V_0$ and $k$ clusters $V_1,\dots,V_k$ of size $m$. Let $\cP'$ be a partition of $V(D)$.
We say that $\cP'$ is an \emph{$\ell$-refinement of $\cP$} if $\cP'$ is obtained by splitting each cluster in $\cP$
into $\ell$ subclusters of size $\frac{m}{\ell}$. (Thus, $\cP'$ consists of the exceptional set $V_0$ and $\ell k$ clusters.)

\begin{definition}[\Gls*{uniform refinement}]\label{def:uniformref}
    Let $D$ be a digraph and $\cP$ be a partition of $V(D)$ into an exceptional set $V_0$ and $k$ clusters $V_1,\dots,V_k$ of size $m$. An $\ell$-refinement $\cP'$ of $\cP$ is \emph{$\eps$-uniform (with respect to $D$)} if the following condition holds, where for each $i\in [k]$, $V_{i,1}\cup \dots\cup V_{i,\ell}$ denotes the partition of $V_i$ induced by $\cP'$.
    \begin{enumerate}[label=(URef),longlabel]
    	\item Let $v\in V(D)$, $i\in [k]$, $j\in [\ell]$, and $\diamond\in \{+,-\}$. If $|N_D^\diamond(v)\cap V_i|\ge \varepsilon m$,
    	then $|N_D^\diamond(v)\cap V_{i,j}|=(1\pm \eps)\frac{|N_D^\diamond(v)\cap V_i|}{\ell}$.\label{def:URef}
    \end{enumerate}
\end{definition}

Given a partition $\cP$ and a random $\ell$-refinement $\cP'$ of $\cP$, one can use \cref{lm:Chernoff} to show that $\cP'$ is $\varepsilon$-uniform with high probability.

\begin{lm}[{\cite[Lemma 4.7]{kuhn2013hamilton}}]\label{lm:URefexistence}
	Let $0<\frac{1}{m} \ll \frac{1}{k},\varepsilon \ll \frac{1}{\ell} \le 1$ and suppose that $\frac{m}{\ell}\in\mathbb{N}$.
	Let $D$ be a digraph on $n\le 2km$ vertices and let $\cP$ be a partition of $V(D)$ into
	an exceptional set $V_0$ and $k$ clusters of size $m$. 
	Then, there exists an $\varepsilon$-uniform $\ell$-refinement of $\cP$.
\end{lm}

Using the definition of $\varepsilon$-regularity, one can easily verify that (super)regularity is preserved under taking uniform refinements.

\begin{lm}[{\cite[Lemma 4.7]{kuhn2013hamilton}}]\label{lm:URefreg}
	Let $0<\frac{1}{m} \ll \frac{1}{k},\eps \ll d,\frac{1}{\ell} \le 1$ and $\varepsilon\ll \varepsilon'\leq 1$. Suppose that $\frac{m}{\ell}\in\mathbb{N}$.
	Let $D$ be a digraph on $n\le 2km$ vertices and let $\cP$ be a partition of $V(D)$ into
	an exceptional set $V_0$ and $k$ clusters of size $m$. Let $\cP'$ is an $\varepsilon$-uniform $\ell$-refinement of $\cP$ and let $V,W\in\cP$ and $V',W'\in \cP'$ be distinct clusters satisfying $V'\subseteq V$ and
	$W'\subseteq W$.
	\begin{enumerate}
		\item If $D[V,W]$ is $(\varepsilon, \geq d)$-regular, then $D[V',W']$ is $(\varepsilon',\ge d-\varepsilon)$-regular.\label{lm:URef-reg}%
			\COMMENT{$d-\varepsilon$ here because our definition is slightly different than that of \cite[page 12]{kuhn2013hamilton}.}
		\item If $D[V,W]$ is $[\varepsilon,\geq d]$-superregular, then $D[V',W']$ is $[\varepsilon',\geq d]$-superregular.\label{lm:URef-supreg}
	\end{enumerate}
\end{lm}

Using \cref{lm:Chernoff}, one can easily verify that the uniformity of a refinement is preserved with high probability when considering edge-slices.

\begin{lm}\label{lm:URefrandom}
	Let $0<\frac{1}{m}\ll \frac{1}{k}, \varepsilon\ll \frac{1}{\ell},p\leq 1$. Let $D$ be a digraph on $n\leq 2km$ vertices and let $\cP$ be a partition of $V(D)$ into an exceptional set $V_0$ and $k$ clusters of size $m$. Let $\cP'$ be an $\varepsilon$-uniform $\ell$-refinement of $\cP$ with respect to $D$. Let $D'$ be obtained from $D$ by selecting each edge independently with probability $p$. Then, with high probability, $\cP'$ is $2\varepsilon$-uniform with respect to both $D'$ and $D\setminus D'$.
\end{lm}

\COMMENT{\begin{proof}
		For any $v\in V(D)$ and any cluster $V\in \cP\cup \cP'$, we have $\mathbb{E}[|N_{D'}^\pm(v)\cap V|]=\frac{|N_D^\pm(v)\cap V|}{2}$ and so, if $|N_D^\pm(v)\cap V|\geq \varepsilon^2 m$, then \cref{lm:Chernoff} implies that
		\[\mathbb{P}\left[|N_{D'}^\pm(v)\cap V|\neq (1\pm \varepsilon^2)\frac{|N_D^\pm(v)\cap V|}{2}\right]\leq 2\exp\left(-\frac{\varepsilon^6 m}{12}\right).\]
		Thus, a union implies that, with high probability, $|N_{D'}^\pm(v)\cap V|\neq (1\pm \varepsilon^2)\frac{|N_D^\pm(v)\cap V|}{2}$ for any $v\in V(D)$ and any cluster $V\in \cP\cup \cP'$ such that $|N_D^\pm(v)\cap V|\geq \varepsilon^2 m$.\\	
		Thus, it suffices to show that $\cP'$ is $2\varepsilon$-uniform with respect to $D'$ under these conditions. Let $v\in V(D)$, $\diamond\in \{+,-\}$, and suppose that $V\in \cP$ is a cluster which satisfies $|N_{D'}^\diamond(v)\cap V|\geq 3\varepsilon m$. Then, $|N_D^\diamond(v)\cap V|\geq \frac{2\varepsilon m}{1+\varepsilon}\geq \varepsilon m$ and so \cref{def:URef} implies that any subcluster $V'\subseteq V$ in $\cP'$ satisfies
		\begin{align*}
			|N_{D'}^\diamond(v)\cap V'|&\leq \frac{(1+\varepsilon^2)}{2}\cdot |N_D^\diamond(v)\cap V'|\leq \frac{(1+\varepsilon^2)(1+\varepsilon)}{2}\cdot \frac{|N_D^\diamond(v)\cap V|}{\ell}\\
			&\leq \frac{(1+\varepsilon^2)(1+\varepsilon)}{(1-\varepsilon^2)}\cdot\frac{|N_{D'}^\diamond(v)\cap V|}{\ell}
			\leq (1+2\varepsilon)\frac{|N_{D'}^\diamond(v)\cap V|}{\ell}
		\end{align*}
		and, similarly,
		\begin{align*}
			|N_{D'}^\diamond(v)\cap V'|&\geq \frac{(1-\varepsilon^2)}{2}\cdot |N_D^\diamond(v)\cap V'|\geq \frac{(1-\varepsilon^2)(1-\varepsilon)}{2}\cdot \frac{|N_D^\diamond(v)\cap V|}{\ell}\\
			&\geq \frac{(1-\varepsilon^2)(1-\varepsilon)}{(1+\varepsilon^2)}\cdot\frac{|N_{D'}^\diamond(v)\cap V|}{\ell}
			\geq (1-2\varepsilon)\frac{|N_{D'}^\diamond(v)\cap V|}{\ell}.
		\end{align*}
\end{proof}}

Finally, observe that refinements are always uniform in digraphs of very high minimum degree.

\begin{lm}\label{lm:URefdense}
		Let $0<\frac{1}{m}\ll \varepsilon\ll \frac{1}{k}, \frac{1}{\ell}\leq 1$ and suppose that $\frac{m}{\ell}\in \mathbb{N}$. Let $D$ be a digraph on $n\leq 2km$ vertices and suppose that $\delta^0(D)\geq (1-\varepsilon)n$. Let $\cP$ be a partition of $V(D)$ into an exceptional set $V_0$ and $k$ clusters of size $m$. Then, any $\ell$-refinement of $\cP$ is $\sqrt{\varepsilon}$-uniform with respect to $D$.
\end{lm}

\begin{proof}
		Let $\cP'$ be an $\ell$-refinement of $\cP$. Let $v\in V(D)$ and fix clusters $V\in \cP$ and $W\in \cP'$ satisfying $W\subseteq V$. By assumption, both
		\begin{align*}
			|N^\pm(v)\cap W|\leq \frac{m}{\ell} \leq \frac{|N^\pm(v)\cap V|+\varepsilon n}{\ell}\leq (1+\sqrt{\varepsilon})\frac{|N^\pm(v)\cap V|}{\ell}
		\end{align*}
		and
		\begin{align*}
			|N^\pm(v)\cap W|\geq \frac{m}{\ell} -\varepsilon n\geq \frac{|N^\pm(v)\cap V|}{\ell}-\frac{\varepsilon\ell n}{\ell}\geq (1-\sqrt{\varepsilon})\frac{|N^\pm(v)\cap V|}{\ell}.
		\end{align*}
		Thus, \cref{def:URef} holds and we are done.
\end{proof}

\subsubsection{(Bi)-universal walks}
Let $R$ be a digraph whose vertices are $V_1,\dots,V_k$ and suppose that $C=V_1\dots V_k$ is a Hamilton cycle of $R$.
Let $i,j\in [k]$. A \emph{chord sequence $CS(V_i,V_j)$ from $V_i$ to $V_j$ in $R$} is an ordered sequence of edges of the form
\[ CS(V_i,V_j) = (V_{i_1-1} V_{i_2}, V_{i_2-1} V_{i_3},\dots, V_{i_t-1} V_{i_{t+1}}),\]
where $V_{i_1}\coloneqq V_i$, $V_{i_{t+1}} \coloneqq  V_j$ and, for each $s\in [t]$, $V_{i_s-1} V_{i_{s+1}}\in E(R)$.
Thus, the simplest example of a chord sequence $CS(V_i, V_j)$ is simply $(V_{i-1}V_j)$. Chord sequences are used in the proof of the robust decomposition lemma in \cite{kuhn2013hamilton} to extend arbitrary edges into cycles which meet each cluster $V_i$ the same number of times.

\begin{definition}[\Gls*{universal walk}]\label{def:U}
	Suppose that $R$ is a digraph whose vertices are $k$ clusters $V_1,\dots,V_k$ and that $C\coloneqq V_1\dots V_k$ is a Hamilton cycle of $R$.
	A closed walk $U$ in $R$ is a \emph{universal walk for $C$ with parameter $\ell'$} if the following conditions hold.
	\begin{enumerate}[label=\rm(U\arabic*),longlabel]
    	\item For every $i\in [k]$, $U$ contains a chord sequence $CS(V_i,V_{i+1})$ from $V_i$ to $V_{i+1}$ (where $V_{k+1}\coloneqq V_1$) such that \cref{def:U-size}, \cref{def:U-degree}, and the following hold.
    	All the remaining edges of $U$ lie on $C$.\label{def:U-edges} 
    	\item For each $i\in [k]$, $CS(V_i,V_{i+1})$ consists of at most $\frac{\sqrt{\ell'}}{2}$ edges.\label{def:U-size} 
    	\item For each $i\in [k]$, both $d_U^\pm(V_i)=\ell'$.\label{def:U-degree} 
	\end{enumerate}
\end{definition}

(Recall that \cref{def:U}, as well as all the core definitions and their main properties are indexed in the glossary at the end of this paper.)

\begin{lm}[{\cite[Lemma 2.9.1]{csaba2016proof}}]\label{lm:U}
	Let $R$ be a complete digraph and $C$ be a Hamilton cycle of $R$. For any $\ell'\geq 4$, $R$ contains a universal walk for $C$ with parameter $\ell'$.	
\end{lm}

We will also need the bipartite analogue of a universal walk.

\begin{definition}[\Gls*{bi-universal walk}]\label{def:BU}
	Suppose that $R$ is a digraph whose vertices are $k$ clusters $V_1,\dots,V_k$, where $k$ is even,
	and that $C\coloneqq V_1\dots V_k$ is a Hamilton cycle of $R$.
	A closed walk $U$ in $R$ is a \emph{bi-universal walk for $C$ with parameter $\ell'$} if the following conditions hold.
	\begin{enumerate}[label=\rm(BU\arabic*),longlabel]
		\item The edge set of $U$ has a partition into $U_{\rm odd}$ and $U_{\rm even}$ and, for every $i\in [k]$, $U$ contains a chord sequence $CS(V_i,V_{i+2})$
		from $V_i$ to $V_{i+2}$ (where $V_{k+1}\coloneqq V_1$ and $V_{k+2}\coloneqq V_2$) such that \cref{def:BU-size}, \cref{def:BU-degree}, and the following hold.
		All of the edges in the multiset $\bigcup \{CS(V_i,V_{i+2})\mid i\in [k] \text{ is odd}\}$ are contained in $U_{\rm odd}$, all of the edges in the multiset $\bigcup \{CS(V_i,V_{i+2})\mid i\in [k] \text{ is even}\}$ are contained in $U_{\rm even}$, and all the remaining edges of $U$ lie on~$C$.\label{def:BU-edges} 
		\item For each $i\in [k]$, $CS(V_i,V_{i+2})$ consists of at most $\frac{\sqrt{\ell'}}{2}$ edges.\label{def:BU-size} 
		\item For each $i\in [k]$, both $d_{U_{\rm odd}}^\pm(V_i)=\frac{\ell'}{2}$ and both $d_{U_{\rm even}}^\pm(V_i)=\frac{\ell'}{2}$\label{def:BU-degree} 
	\end{enumerate}
\end{definition}

\subsubsection{(Bi)-setups}\label{sec:ST}
We introduce the key structures required to construct the absorber in the robust decomposition lemma.

\begin{definition}[\Gls*{setup}]\label{def:ST}
	We say that $(D, \cP, \cP', \cP^*, R,  C, U,U')$ is an \emph{$(\ell', \ell^*, k, m, \varepsilon, d)$-setup} if the following properties are satisfied.
	\begin{enumerate}[label=\rm(ST\arabic*),longlabel]
		\item $D$ is a digraph. $\cP$ is a partition of $V(D)$ into an exceptional set $V_0$ of size $|V_0| \le \eps |V(D)|$ and $k$ clusters $V_1, \dots, V_k$ of size $m$.
		\label{def:ST-P}
		\item $R$ is a digraph on the clusters in $\cP$, that is, $V(R)=\{V_i\mid i\in [k]\}$. For each $VW\in E(R)$, the corresponding pair $D[V,W]$ is $(\eps,\ge d)$-regular.\label{def:ST-R}
		\item $C$ is a Hamilton cycle of $R$ and, for each $VW \in E(C)$, the corresponding pair $D[V,W]$ is $[\eps,\ge d]$-superregular.\label{def:ST-C}
		\item $U$ is a universal walk for $C$ in $R$ with parameter~$\ell'$.\label{def:ST-U}
		\item $\cP'$ is an $\eps$-uniform $\ell'$-refinement of $\cP$.\label{def:ST-P'}
		\item For each $i\in [k]$, let $V_{i,1},\dots,V_{i,\ell'}$ denote the subclusters of $V_i$ contained in $\cP'$. Then, $U'$ is a closed walk on the clusters in $\cP'$ which is obtained from $U$ as follows.
		For each $i\in [k]$ and $j\in [\ell']$, when $U$ visits $V_i$ for the $j^{\rm th}$ time, $U'$ visits the subcluster $V_{i,j}$.\label{def:ST-U'}
		\item For each $VW\in E(U')$, the corresponding pair $D[V,W]$ is $[\eps,\ge d]$-superregular.\label{def:ST-U'supreg}
		\item $\cP^*$ is an $\varepsilon$-uniform $\ell^*$-refinement of $\cP$.\label{def:ST-P*}
	\end{enumerate}
\end{definition}

We will also need the bipartite analogue of a setup. 

\begin{definition}[\Gls*{bi-setup}]\label{def:BST}
	We say that $(D,\cP,\cP', \cP^*,R,C,U,U')$ is an \emph{$(\ell',\ell^*,2k,m,\eps,d)$-bi-setup} if $k\in \mathbb{N}$ and the following properties are satisfied.
	\begin{enumerate}[label=\rm(BST\arabic*),longlabel]
		\item $D$ is a balanced bipartite digraph on vertex classes $A$ and $B$. $\mathcal{P}$ is a partition of $V(D)$ into an exceptional set $V_0$ which satisfies $|V_0\cap A|=|V_0\cap B|\leq \varepsilon|A|=\varepsilon|B|$, and
		$2k$ clusters $V_1, \dots, V_{2k}$ of size $m$. Let $\cA$ be the set of clusters $V\in \cP$ such that $V\subseteq A$. Define $\cB$ analogously.
		Then, $A\setminus V_0=\bigcup \cA$ and $B\setminus V_0=\bigcup \cB$.
		(In particular, each cluster $V\in \cP$ satisfies $V\subseteq A$ or $V\subseteq B$.)\label{def:BST-P}
		\item $R$ is a balanced bipartite digraph on vertex classes $\cA$ and $\cB$. For each $VW\in E(R)$, the corresponding pair $D[V,W]$ is $(\eps,\ge d)$-regular.\label{def:BST-R}
		\item $C$ is a Hamilton cycle of $R$ and for each $VW\in E(C)$ the corresponding pair $D[V,W]$ is $[\eps,\ge d]$-superregular.\label{def:BST-C}
		\item $U$ is a bi-universal walk for $C$ in $R$ with parameter~$\ell'$.\label{def:BST-U}
		\item $\cP'$ is an $\eps$-uniform $\ell'$-refinement of $\cP$.\label{def:BST-P'}
		\item For each $i\in [2k]$, let $V_{i,1},\dots,V_{i,\ell'}$ denote the subclusters of $V_i$ contained
		in $\cP'$. Then, $U'$ is a closed walk on the clusters in $\cP'$ which is obtained from $U$ as follows.
		For each $i\in [2k]$ and $j\in [\ell']$, when $U$ visits $V_i$ for the $j^{\rm th}$ time, $U'$ visits the subcluster $V_{i,j}$.\label{def:BST-U'}
		\item For each $VW\in E(U')$, the corresponding pair $D[V,W]$ is $[\eps,\ge d]$-superregular.\label{def:BST-U'supreg}
		\item $\cP^*$ is an $\varepsilon$-uniform $\ell^*$-refinement of $\cP$.\label{def:BST-P*}
	\end{enumerate}
\end{definition}

Note that these definitions of a setup and a bi-setup are slightly different to that of~\cite{kuhn2013hamilton}. 
The original definitions required the exceptional set $V_0$ to form an independent set in $D$. Here, we only need the definition of a (bi)-setup within the setting of the robust decomposition lemma (\cref{lm:newrobustdecomp} below), where $V_0$ is empty. The independent set condition is therefore redundant and we omit it.
In \cite{kuhn2013hamilton}, the refinement $\cP^*$ is added in the statement of the robust decomposition lemma directly. For convenience, we incorporate $\cP^*$ into the definition of a (bi)-setup and, for technical reasons, we also require that $\cP^*$ is $\varepsilon$-uniform.
Finally, the definition of a bi-setup in \cite{kuhn2013hamilton} did not require $D$ to be a bipartite digraph. We add this constraint here for convenience. For clarity, we also specify that the clusters in $\cP$ must be a subset of one of the vertex classes of $D$ (this actually follows from \cref{def:BST-C} and the fact that $D$ is bipartite).

By \cref{prop:epsremovingadding}, a (bi)-setup remains a (bi)-setup (with slightly worse parameters) if only a few edges are removed and added at each vertex. A similar observation was already made (and proved) in \cite[Lemma 9.2]{kuhn2013hamilton}, so we omit the details here.

\begin{prop}\label{prop:bisetupedgesremoval}
	Let $0<\frac{1}{m}\ll\frac{1}{k}, \varepsilon\leq \varepsilon' \ll d\ll \frac{1}{\ell'}\ll 1$ and $\varepsilon'\ll \frac{1}{\ell^*}$. 
	Let $D$ be a digraph and suppose that $D'$ is obtained from $D$ by removing and adding at most $\varepsilon' m$ inedges and at most $\varepsilon' m$ outedges incident to each vertex.
	If $(D,\cP,\cP', \cP^*,R,C,U,U')$ is an $(\ell',\ell^*,k,m,\eps,d)$-(bi)-setup, then $(D',\cP,\cP', \cP^*,R,C,U,U')$ is an $(\ell',\ell^*,k,m,(\varepsilon')^{\frac{1}{3}},\frac{d}{2})$-(bi)-setup.
\end{prop}

\COMMENT{\begin{proof}
	We show the bi-setup version. The setup version holds analogously.
	Note that \cref{def:BST-P,def:BST-U,def:BST-U',def:BST-P*} hold immediately. Since $D'$ is obtained from $D$ by removing at most $\varepsilon' m\leq \frac{(\varepsilon')^{\frac{4}{5}}m}{\ell'}$ inedges and at most $\varepsilon' m\leq \frac{(\varepsilon')^{\frac{4}{5}}m}{\ell'}$ outedges incident to each vertex \cref{def:BST-R,def:BST-C,def:BST-U'supreg} follow from \cref{prop:epsremovingadding}. For \cref{def:BST-P'}, let $v\in V(D)$, $i\in [2k]$, and let $W\subseteq V_i$ be a cluster in $\cP'$. If $|N_{D'}^\pm(v)\cap V_i|\geq (\varepsilon')^{\frac{1}{3}}m$, then \cref{def:BST-P'} and \cref{def:URef} imply that
	\begin{align*}
		|N_{D'}^\pm(v)\cap W|&\leq|N_D^\pm(v)\cap W|+ \varepsilon' m\leq (1+ \varepsilon)\frac{|N_D^\pm(v)\cap V_i|}{\ell'}+ (\varepsilon')^{\frac{2}{3}}\ell'\frac{|N_D^\pm(v)\cap V_i|}{\ell'}\\
		&\leq (1+(\varepsilon')^{\frac{1}{3}})\frac{|N_D^\pm(v)\cap V_i|}{\ell'}
	\end{align*}
	and
	\begin{align*}
		|N_{D'}^\pm(v)\cap W|&\geq|N_D^\pm(v)\cap W|- \varepsilon' m\geq (1- \varepsilon)\frac{|N_D^\pm(v)\cap V_i|}{\ell'}- \varepsilon'\ell'\frac{|N_D^\pm(v)\cap V_i|}{\ell'}\\
		&\geq (1-(\varepsilon')^{\frac{1}{3}})\frac{|N_D^\pm(v)\cap V_i|}{\ell'}.
	\end{align*}
	Thus, \cref{def:URef} holds and so \cref{def:BST-P'} is satisfied.
\end{proof}}

Note that any partition is an $\varepsilon$-uniform $1$-refinement of itself.

\begin{fact}\label{fact:bisetupP}
	Suppose that $(D,\cP,\cP', \cP^*,R,C,U,U')$ is an $(\ell',\ell^*,2k,m,\eps,d)$-(bi)-setup. Then, $(D,\cP,\cP', \cP,R,C,U,U')$ is an $(\ell',1,2k,m,\eps,d)$-(bi)-setup.
\end{fact}

By definition, one can delete the exceptional vertices of a (bi)-setup.

\begin{fact}\label{fact:bisetupV0}
	Let $(D,\cP,\cP', \cP^*,R,C,U,U')$ be an $(\ell',\ell^*,2k,m,\eps,d)$-(bi)-setup. Denote by $V_0$ the exceptional set contained in $\cP$. Let $\cP_\emptyset, \cP_\emptyset'$, and $\cP_\emptyset^*$ be obtained from $\cP, \cP'$, and $\cP^*$ by replacing the exceptional set $V_0$ by the empty set. Then, $(D-V_0,\cP_\emptyset,\cP_\emptyset', \cP_\emptyset^*,R,C,U,U')$ is an $(\ell',\ell^*,2k,m,\varepsilon,d)$-(bi)-setup.
\end{fact}

Finally, observe that if $D$ forms a (bi)-setup, then the edges of $D$ can be randomly partitioned to obtain, with high probability, two edge-disjoint digraphs which both form a (bi)-setup. Indeed, properties \cref{def:ST-P,def:ST-U,def:ST-U'} of a setup and properties \cref{def:BST-P,def:BST-U,def:BST-U'} of a bi-setup are automatically preserved. Moreover, \cref{lm:URefrandom} implies that properties \cref{def:ST-P',def:ST-P*} of a setup and properties \cref{def:BST-P',def:BST-P*} of a bi-setup hold with high probability.
Finally, \cref{lm:randomreg} implies that (super)regularity is preserved with high probability, as desired for properties \cref{def:ST-R,def:ST-C,def:ST-U'supreg} of a setup and properties \cref{def:BST-R,def:BST-C,def:BST-U'supreg} of a bi-setup.

\begin{lm}\label{lm:randomsetup}
	Let $0<\frac{1}{m}\ll \frac{1}{k}\ll \varepsilon\ll \varepsilon' \ll d \ll \frac{1}{\ell'}\ll 1$ and $\varepsilon\ll \frac{1}{\ell^*}$. Let $D$ be a digraph and suppose that $D'$ is obtained from $D$ by selecting each edge independently with probability $\frac{1}{2}$. If $(D,\cP,\cP', \cP^*,R,C, U,U')$ is an $(\ell',\ell^*,k,m,\eps,d)$-(bi)-setup, then, with high probability, both $(D',\cP,\cP', \cP^*,R,C, U,U')$ and $(D\setminus D',\cP,\cP', \cP^*,R,C, U,U')$ are $(\ell',\ell^*,k,m,\varepsilon',\frac{d}{2})$-(bi)-setups.
\end{lm}

\COMMENT{\begin{proof}
		Note that \cref{def:ST-P,def:ST-U,def:ST-U'} and \cref{def:BST-P,def:BST-U,def:BST-U'} are clearly satisfied. For \cref{def:ST-R,def:ST-C,def:ST-U'supreg} and \cref{def:BST-R,def:BST-C,def:BST-U'supreg}, use the arguments of \cite[Lemma 4.10(iv)]{kuhn2013hamilton}. By \cref{lm:URefexistence}, \cref{def:ST-P',def:ST-P*} and \cref{def:BST-P',def:BST-P*} still hold with high probability.
\end{proof}}

\subsubsection{Special path systems and special factors}\label{sec:SPS}

Roughly speaking, special path systems can be viewed as blocks of prescribed edges for our Hamilton cycles; in the robust decomposition lemma (\cref{lm:newrobustdecomp} below), each special path system will be extended to a distinct Hamilton cycle.
Special path systems are then organised into special factors to provide a convenient way of finding them and incorporating them into Hamilton cycles in a balanced way.

\begin{definition}[\Gls*{canonical interval partition}]\label{def:interval}
	Let $V$ be a vertex set and $\cP$ be a partition of $V$ into an exceptional set and $k$ clusters. Suppose that $C=V_1\dots V_k$ is a Hamilton cycle on the clusters in $\cP$. Suppose that $\frac{k}{f}\in \mathbb{N}$. 
	The \emph{canonical interval partition of $C$ into $f$ intervals} is $\cI=\{I_1, \dots, I_f\}$, where
	\[I_i=V_{(i-1)\frac{k}{f}+1}V_{(i-1)\frac{k}{f}+2}\dots V_{i\frac{k}{f}+1}\]
	for each $i\in [f]$. For each $i\in [f]$, the clusters $V_{(i-1)\frac{k}{f}+2},V_{(i-1)\frac{k}{f}+3},\dots, V_{i\frac{k}{f}}$ are called the \emph{internal clusters} of the interval $I_i$.
\end{definition}

\begin{definition}[\Gls*{special path system}]\label{def:SPS}
	Let $V$ be a vertex set and $\cP$ be a partition of $V$ into an exceptional set $V_0$ and $k$ clusters of size $m$. Suppose that $C=V_1\dots V_k$ is a Hamilton cycle on the clusters in $\cP$. Suppose that $\frac{k}{f}\in \mathbb{N}$.
	Suppose that $\cP^*$ is an $\ell^*$-refinement of $\cP$. For each $i\in [k]$, let $V_{i,1}, \dots, V_{i,\ell^*}$ be an enumeration of the subclusters of $V_i$ contained in $\cP^*$.
	For any $(h,j)\in [\ell^*]\times [f]$, an \emph{$(\ell^*, f,h,j)$-special path system $SPS$ with respect to $\cP^*$ and $C$} is a set of $\frac{m}{\ell^*}$ vertex-disjoint paths satisfying the following conditions.\COMMENT{Replaced ``style $h$ and spanning the $j^{\rm th}$ interval" by ``$(\ell^*,f,h,j)$" to get a more compact definition and be able to track the role of each variable more easily. This is particularly helpful when we use the blow-up cycle version, which has even more parameters (see \cref{sec:cyclerobustdecomp}).}
	\begin{enumerate}[label=\rm(SPS\arabic*),longlabel]
		\item $V^+(SPS)=V_{(j-1)\frac{k}{f}+1,h}$ and $V^-(SPS)=V_{j\frac{k}{f}+1,h}$.\label{def:SPS-V+-}
		\item $V^0(SPS)=V_{(j-1)\frac{k}{f}+2,h}\cup \dots \cup V_{j\frac{k}{f},h}$.\label{def:SPS-V0}
	\end{enumerate}
\end{definition}

Roughly speaking, an $(\ell^*, f,h,j)$-special path system with respect to $\cP^*$ and $C$ is a set of vertex-disjoint paths which lies along the ``$h^{\rm th}$ refinement" of the $j^{\rm th}$ interval in the canonical interval partition of $C$ into $f$ intervals (see also \cref{fig:SPS}).

\begin{definition}[\Gls*{special factor}]\label{def:SF}
	Let $V, \cP^*, C$, and $V_0$ be as in \cref{def:SPS}.
	An \emph{$(\ell^*, f)$-special factor $SF$ with respect to $\cP^*$ and $C$} is a $1$-regular digraph on $V\setminus V_0$ which has a decomposition $\{SPS_{h,j}\mid (h,j)\in [\ell^*]\times [f]\}$ where, for each $(h,j)\in [\ell^*]\times [f]$, $SPS_{h,j}$ induces an $(\ell^*,f,h,j)$-special path system in $D$.
\end{definition}

Observe that in the original definition of a special path system in \cite{kuhn2013hamilton}, most of the edges belonged to a host digraph $D$ and the other edges, called ``fictive edges", had to satisfy some additional properties. Thanks to \cref{fact:equivalentP}, we can omit these conditions. Indeed, suppose that we want to construct a Hamilton cycle which contains a special path system $SPS$, but $SPS$ does not satisfy all the desired internal conditions. Then, we can temporarily consider a suitable equivalent special path system $SPS'$ and construct a Hamilton cycle $H$ containing $SPS'$ instead of $SPS$ since, by \cref{fact:equivalentP}, $H$ induces a Hamilton cycle containing $SPS$, as desired.

\begin{figure}[htb]
	\centering
	\includegraphics[width=0.8\textwidth]{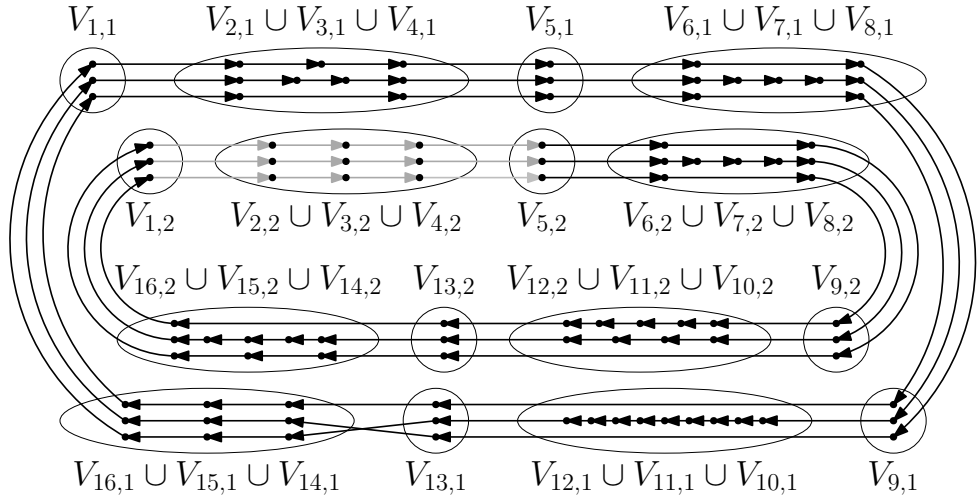}
	\caption{\small A $(2,4)$-special factor with respect to $\cP^*=\{V_0,V_{1,1}, V_{1,2}, V_{2,1}, \dots, V_{16,2}\}$ and $C=V_1\dots V_{16}$. The grey edges form a $(2,4,2,1)$-special path system with respect to $\cP^*$ and $C$.}\label{fig:SPS}
\end{figure}

\subsubsection{Statement of the robust decomposition lemma}\label{sec:newrobustdecomp}

We are now ready to state a modified version of the robust decomposition lemma of~\cite{kuhn2013hamilton}. We discuss the differences from the original version after the statement. \APPENDIX{(A formal derivation of \cref{lm:newrobustdecomp} is available in \cref{app:robustdecomp}.)}\NOAPPENDIX{(A formal derivation of \cref{lm:newrobustdecomp} is available in an appendix of the arXiv version of this paper.)}

Observe that $\mathcal{SF}$ and $\mathcal{SF}'$ may have edges with common starting and ending points in \cref{lm:newrobustdecomp}. Indeed, recall our convention that the edges of a multidigraph are all distinct (see \cref{sec:notation-graphs}) and that, in particular, a decomposition of a multidigraph covers each edge according to its multiplicity (see \cref{sec:notation-decomp}). Thus, \cref{lm:newrobustdecomp} simply states that each occurrence of an edge in each of $H, D^{\rm rob}, \mathcal{SF}$, and $\mathcal{SF}'$ is covered by precisely one of the Hamilton cycles in $\sC$.

\begin{lm}[{Modified robust decomposition lemma \cite{kuhn2013hamilton}}]\label{lm:newrobustdecomp}
	Let $0<\frac{1}{m}\ll \frac{1}{k}\ll \eps \ll \frac{1}{q} \ll \frac{1}{f} \ll \frac{r_1}{m}\ll d\ll \frac{1}{\ell'}, \frac{1}{g}\ll 1$ and suppose
	that $rk^2\le m$. Let
	\[r_2\coloneqq 96\ell'g^2kr, \quad  r_3\coloneqq \frac{rfk}{q}, \quad r^\diamond\coloneqq r_1+r_2+r-(q-1)r_3, \quad s'\coloneqq rfk+7r^\diamond,\]
	and suppose that $\frac{k}{14}, \frac{k}{f}, \frac{k}{g}, \frac{q}{f}, \frac{m}{4\ell'}, \frac{fm}{q}, \frac{2fk}{3g(g-1)} \in \mathbb{N}$.
	Suppose that $(D,\cP,\cP', \cP^*,R,C, U,U')$ is an $(\ell',\frac{q}{f},k,m,\eps,d)$-setup with empty exceptional set $V_0$.
	Let $\mathcal{SF}$ be a multidigraph which consists of the union of $r_3$ $(\frac{q}{f},f)$-special factors
	with respect to $\cP^*$ and $C$ and let $\mathcal{SF}'$ be a multidigraph which consists of the union of $r^\diamond$ $(1,7)$-special factors	with respect to $\cP$ and~$C$.
	
	Then, $D$ contains an $(r_1+r_2+5r^\diamond)$-regular spanning subdigraph $D^{\rm rob}$ for which the holds. 
	For any $r$-regular digraph $H$ on $V(D)$ which is edge-disjoint from $D^{\rm rob}$, the multidigraph $H \cup D^{\rm rob}\cup \mathcal{SF}\cup \mathcal{SF}'$ has a decomposition $\sC$ into $s'$ Hamilton cycles such that each cycle in $\sC$ contains precisely one of the special path systems in the multidigraph $\mathcal{SF}\cup \mathcal{SF}'$.
	
	The analogue holds if $(D,\cP,\cP', \cP^*,R,C,U,U')$ is an $(\ell',\frac{q}{f},k,m,\eps,d)$-bi-setup and $H$ is an $r$-regular bipartite digraph on the same vertex classes as $D$%
		\COMMENT{The vertex classes are $\bigcup\cV_{\rm even}$ and $\bigcup\cV_{\rm odd}$ in the original statement. But these are just $A$ and $B$ thanks to \cref{def:BST-P}.}.
\end{lm}

\Cref{lm:newrobustdecomp} differs from \cite[Lemma 12.1]{kuhn2013hamilton} in four minor points. \hypertarget{newrobustdecomplm-1}{(i)} As discussed in \cref{sec:SPS}, our definition of special factor is more general: there is no restriction on how many edges can lie outside $D$. 
\hypertarget{newrobustdecomplm-2}{(ii)} $\mathcal{SF}$ and $\mathcal{SF}'$ are multidigraphs rather than digraphs.
\hypertarget{newrobustdecomplm-3}{(iii)} The robustly decomposable digraph $D^{\rm rob}$ is now constructed in only one stage. In \cite[Lemma 12.1]{kuhn2013hamilton}, we first input a set of $r_3$ (edge-disjoint) $(\frac{q}{f},f)$-special factors to obtain a subdigraph $CA^\diamond(r)\subseteq D$ and then, in a second stage, we input a set of $r^\diamond$ (edge-disjoint) $(1,7)$-special factors to obtain a second subdigraph $PCA^\diamond(r)\subseteq D$. In \cref{lm:newrobustdecomp}, these two stages are condensed into one: $D^{\rm rob}$ from \cref{lm:newrobustdecomp} corresponds to $CA^\diamond(r)\cup PCA^\diamond(r)$ from \cite[Lemma 12.1]{kuhn2013hamilton}.
\hypertarget{newrobustdecomplm-4}{(iv)} $H$ only needs to be edge-disjoint from $D^{\rm rob}$.

Using the concept of equivalent sets of vertex-disjoint paths, modifications \hyperlink{newrobustdecomplm-1}{(i)}--\hyperlink{newrobustdecomplm-3}{(iii)} can be derived immediately from \cite[Lemma 12.1]{kuhn2013hamilton}. Indeed, since each special path system goes into a different Hamilton cycle, \cref{fact:equivalentP} implies that it is enough to apply \cite[Lemma 12.1]{kuhn2013hamilton} with special path systems which are equivalent to those in the multidigraph $\mathcal{SF}\cup \mathcal{SF}'$. In both stages of the application of \cite[Lemma 12.1]{kuhn2013hamilton}, one can use the superregular pairs in $D$ (which exist by \cref{def:ST-C} an \cref{def:BST-C}) to find suitable special path systems in $D$ which are equivalent to those contained in the multidigraph $\mathcal{SF}\cup \mathcal{SF}'$ and edge-disjoint from each other (as well as from $CA^\diamond(r)$ in the second stage).

Modification \hyperlink{newrobustdecomplm-4}{(iv)} cannot be derived immediately from the statement of \cite[Lemma 12.1]{kuhn2013hamilton} but follows easily from its proof and \cref{fact:equivalentP}. More precisely, using the equivalent special path system approach described above, \cite[Lemma 12.1]{kuhn2013hamilton} requires that $H$ is edge-disjoint from $D^{\rm rob}$ as well as the auxiliary special path systems we used to apply \cite[Lemma 12.1]{kuhn2013hamilton}. But, the proof of \cite[Lemma 12.1]{kuhn2013hamilton} implies that the relevant absorbing properties come from $CA^\diamond(r)$ and $PCA^\diamond(r)$. Thus, if $H$ is not edge-disjoint from the auxiliary special path systems used to apply \cite[Lemma 12.1]{kuhn2013hamilton}, then \cref{fact:equivalentP} implies that we can simply replace these special path systems by equivalent ones which are edge-disjoint from $H$.
\APPENDIX{(More details on how to obtain these modifications can be found in \cref{app:robustdecomp}.)}\NOAPPENDIX{(More details on how to obtain these modifications can be found in an appendix of the arXiv version of this paper.)}

\subsubsection{Incorporating the exceptional vertices}\label{sec:SC}
Recall that \cref{lm:newrobustdecomp} can only be applied with an empty exceptional set $V_0$. In general, $V_0$ will be non-empty and so we will have to apply \cref{lm:newrobustdecomp} with $D-V_0$ playing the role of $D$. As a result, the cycles obtained via \cref{lm:newrobustdecomp} will not be Hamilton cycles on $V(D)$, they will only span $V(D)\setminus V_0$. We will incorporate the exceptional vertices into these almost spanning cycles using the special path systems as follows. (Note that a special cover as defined below is a generalisation of an exceptional cover as defined in \cite{kuhn2013hamilton}, while a complete special sequence as defined below is the analogue of a complete exceptional sequence as defined in \cite{kuhn2013hamilton}.)

\begin{definition}[\Gls*{special cover}]\label{def:SC}
	Let $D$ be a digraph and $V_0\subseteq V(D)$ be an exceptional set. A \emph{special cover in $D$ (with respect to $V_0$)} is a linear forest $SC\subseteq D$ such that $V^0(SC)=V_0$.
\end{definition}

\begin{definition}[\Gls*{complete special sequence}]\label{def:MSC}
	Let $D$ be a digraph and $V_0\subseteq V(D)$ be an exceptional set. Suppose that $SC$ is a special cover in $D$. Let $P_1, \dots, P_\ell$ be an enumeration of the components of $SC$ which are not isolated vertices. For each $i\in [\ell]$, denote by $u_i$ and $v_i$ the starting and ending points of $P_i$. The \emph{complete special sequence associated to $SC$} is the directed matching $M_{SC}\coloneqq \{u_iv_i\mid i\in [\ell]\}$.
\end{definition}

Then, observe that the following holds.

\begin{fact}\label{fact:SC}
	Let $D$ be a digraph and $V_0\subseteq V(D)$ be an exceptional set. Let $SC$ be a special cover in $D$ with respect to $V_0$ and denote by $M_{SC}$ the complete special sequence associated to $SC$. Suppose that $C$ is a spanning cycle on $V(D)\setminus V_0$ satisfying $M_{SC}\subseteq C\subseteq D\cup M_{SC}$. Then, $(C\setminus M_{SC})\cup SC$ is a Hamilton cycle of $D$.
\end{fact}

Our strategy for incorporating the exceptional vertices into the cycles obtained via the robust decomposition lemma will thus be as follows. Before applying the robust decomposition lemma, we will reserve the edges from $s'$ special covers in $D$ with respect to $V_0$. Then, we will construct the special factors for \cref{lm:newrobustdecomp} in such a way that each of the $s'$ special path systems contains the complete special sequence associated to one of the reserved special covers. Since each cycle obtained via the robust decomposition lemma will contain precisely one of these special path systems, these cycles will contain precisely one of these complete special sequences. Using \cref{fact:SC} and the reserved special covers, we will thus be able to transform the cycles from the robust decomposition lemma into Hamilton cycles on $V(D)$.
We discuss how to find these special sequences in \cref{sec:applybirobdecomp}.

\onlyinsubfile{\bibliographystyle{abbrv}
\bibliography{Bibliography/Bibliography}}

\subsection{The preprocessing step}\label{sec:preprocessing}

	\onlyinsubfile{
		\setcounter{section}{6}
		\setcounter{subsection}{2}
		\subsection{The preprocessing lemma}}

Recall that in the robust decomposition lemma (\cref{lm:newrobustdecomp}), the exceptional set must be empty. In the proof of \cref{thm:biprobexp}, the partition $\cP$ in the bi-setup will be obtained via the regularity lemma (\cref{lm:bipreglm}) and so we will have a non-empty exceptional set $V_0$. Thus, we will only be able to apply the (bipartite) robust decomposition lemma in $D-V_0$ rather than $D$. This means that the absorber $D^{\rm rob}$ will only be able to decompose leftovers on $V(D)\setminus V_0$ and so we need an additional absorber to cover the leftover edges incident to $V_0$. This absorber will be constructed by adapting the preprocessing step of~\cite{kuhn2013hamilton} to the bipartite case. Roughly speaking, the preprocessing lemma (\cref{lm:preprocessing} below) says that given a bipartite digraph $D$ and an exceptional set $V_0$, one can find a sparse absorber $PG\subseteq D$ (called a preprocessing graph in \cite{kuhn2013hamilton}) such that for any very sparse leftover $H$ which is edge-disjoint from $PG$, the digraph $H\cup PG$ contains edge-disjoint Hamilton cycles which cover all the edges incident to $V_0$.

\subsubsection{Consistent bi-systems}
First, we need a bipartite analogue of a consistent system defined in \cite{kuhn2013hamilton}. This is the key structure required to construct the absorber in the preprocessing lemma. It is similar to a bi-setup.

\begin{definition}[\Gls*{consistent bi-system}]\label{def:CBSys}
	We say that $(D,\cP_0, R_0,C_0,\cP,R,C)$ is a \emph{consistent
		$(\ell^*,k,m,\eps,d,\nu,\tau,\delta,\theta)$-bi-system} if the following properties
	are satisfied.
	\begin{enumerate}[label=\rm(CBSys\arabic*),longlabel]
		\item $D$, $R_0$, and $R$ are balanced bipartite digraphs on vertex classes $A$ and $B$, $\cA_0$ and $\cB_0$, and $\cA$ and $\cB$, respectively.
		Moreover, $D$, $R_0$, and $R$ are bipartite robust $(\nu,\tau)$-outexpanders with bipartitions $(A,B)$, $(\cA_0,\cB_0)$, and $(\cA, \cB)$, respectively. Furthermore, $\delta^0(D)\ge \delta |D|$,
		$\delta^0(R_0)\ge \delta |R_0|$, and $\delta^0(R)\ge \delta |R|$.\label{def:CBSys-biproboutexp} 
		\item $\mathcal{P}_0$ is a partition of $V(D)$ into an exceptional set $V_0$ which satisfies $|V_0\cap A|=|V_0\cap B|\leq \varepsilon |A|=\varepsilon|B|$,
		and $\frac{k}{\ell^*}$ clusters of size $m\ell^*$. The vertex set of $R_0$ consists of these clusters. (Thus, $|R_0|=\frac{k}{\ell^*}$.)
		\label{def:CBSys-P0}
		\item $\mathcal{P}$ is an $\ell^*$-refinement of $\mathcal{P}_0$ (and so the clusters in $\cP$ have size $m$). The vertex set of $R$ consists of the clusters in $\cP$. (Thus, $|R|=k$.)\label{def:CBSys-P}%
			\COMMENT{This is stronger than in \cite{kuhn2013hamilton}.}
		\item For each $VW\in E(R)$, the corresponding pair $D[V,W]$ is $(\eps,\ge d)$-regular.\label{def:CBSys-R}
		\item $C_0$ is a Hamilton cycle in $R_0$ and $C$ is a Hamilton cycle in $R$. For each $VW\in E(C)$, the corresponding
		pair $D[V,W]$ is $[\eps,\ge d]$-superregular.\label{def:CBSys-C}
		\item Suppose that $W,W'\in V(R_0)$ and $V,V'\in E(R)$ satisfy $V\subseteq W$ and $V'\subseteq W'$.
		Then, $WW'\in E(R_0)$ if and only if $VV'\in E(R)$.\label{def:CBSys-R0}
		\item $C$ can be viewed as obtained by winding $\ell^*$ times around $C_0$, i.e.\ for every edge $WW'\in E(C_0)$,
		there are precisely $\ell^*$ edges $VV'\in E(C)$ such that $V\subseteq W$ and $V'\subseteq W'$.\label{def:CBSys-C0}
		\item Let $V$ be a cluster in $\mathcal{P}_0$ and $W\subseteq V$ be a cluster in $\cP$. Let $v\in V(D)$ and $\diamond\in \{+,-\}$.
		If $|N_D^\diamond(v)\cap V|\ge \tau |V|$, then $|N_D^\diamond(v)\cap W|\geq \frac{\theta |N_D^\diamond(v)\cap V|}{\ell^*}$.\label{def:CBSys-deg}
	\end{enumerate}
\end{definition}

Observe that the original definition of a consistent bi-system in \cite{kuhn2013hamilton} also required the exceptional set $V_0$ to be an independent set. However, we will see that this condition is not necessary for our purposes and so we omit it here for convenience.

By \cref{cor:verticesedgesremovalbiproboutexp,prop:epsremovingadding}, a consistent bi-system remains a consistent bi-system (with slightly worse parameters) if only a few edges are removed at each vertex. An analogous observation was made (and proved) in \cite[Lemma 7.1]{kuhn2013hamilton}, so we omit the details here. 

\begin{prop}\label{prop:CBSysedgesremoval}
	Let $0<\frac{1}{m}\ll \frac{1}{k}\ll \varepsilon\leq \varepsilon'\ll d \ll \nu\ll \tau \ll \delta, \theta\leq 1$. Let $D$ be a digraph and suppose that $D'$ is obtained from $D$ by removing at most $\varepsilon'm$ inedges and at most $\varepsilon'm$ outedges incident to each vertex.
	If $(D,\cP_0, R_0,C_0,\cP,R,C)$ is a consistent	$(\ell^*,k,m,\eps,d,\nu,\tau,\delta,\theta)$-bi-system, $(D',\cP_0, R_0,C_0,\cP,R,C)$ is still a consistent	$(\ell^*,k,m,3\sqrt{\varepsilon'},\frac{d}{2},\frac{\nu}{2},\tau,\frac{\delta}{2},\frac{\theta}{2})$-bi-system.%
		\COMMENT{Note that \cite[Lemma 7.1]{kuhn2013hamilton} has $d$ unchanged. This is because of the slight difference in the definition of $(\varepsilon, \geq d)$-regularity.}
\end{prop}

\COMMENT{\begin{proof}
	First, observe that \cref{def:CBSys-P0,def:CBSys-P,def:CBSys-R0,def:CBSys-C0} are immediately satisfied. Moreover, \cref{def:CBSys-biproboutexp} follows from \cref{cor:verticesedgesremovalbiproboutexp}, while \cref{def:CBSys-R,def:CBSys-C} follow from \cref{prop:epsremovingadding}. For \cref{def:CBSys-deg}, let $V$ be a cluster in $\cP_0$ and $W\subseteq V$ be a cluster in $\cP$. Suppose that $v\in V(D)$ satisfies $|N_{D'}^+(v)\cap V|\geq \tau |V|$. Then, \cref{def:CBSys-deg} implies that
	\begin{align*}
		|N_{D'}^+(v)\cap W|\geq |N_D^+(v)\cap W|-\varepsilon' m\geq \frac{\theta |N_D^+(v)\cap V|}{\ell^*}-\frac{\varepsilon' |N_{D'}^+(v)\cap V|}{\tau \ell^*}\geq \frac{\theta |N_{D'}^+(v)\cap V|}{2\ell^*}.
	\end{align*}
	Similarly, if $|N_{D'}^-(v)\cap V|\geq \tau |V|$, then $|N_{D'}^+(v)\cap W|\geq \frac{\theta |N_{D'}^+(v)\cap V|}{2\ell^*}$. Therefore, \cref{def:CBSys-deg} holds.
\end{proof}}

Finally, we observe that if $D$ forms a consistent bi-system, then the edges of $D$ can be randomly partitioned to obtain, with high probability, two edge-disjoint digraphs which both form a consistent bi-system. Indeed,  \cref{def:CBSys-P0,def:CBSys-P,def:CBSys-R0,def:CBSys-C0} are automatically preserved and, by \cref{lm:randomrob}, bipartite robust outexpansion is preserved with high probability, as desired for \cref{def:CBSys-biproboutexp}. Moreover, a simple application of \cref{lm:Chernoff} can guarantee suitable minimum degree conditions for \cref{def:CBSys-biproboutexp,def:CBSys-deg}. 
Finally, \cref{lm:randomreg} implies that (super)regularity is preserved with high probability, as desired for \cref{def:CBSys-R,def:CBSys-C}.

\begin{lm}\label{lm:randomsystem}
	Let $0<\frac{1}{m}\ll \frac{1}{k}\ll \varepsilon\ll \varepsilon' \ll d \ll\nu\ll \tau\ll\delta, \theta\leq 1$. Suppose that $(D,\cP_0, R_0, C_0, \cP, R,C)$ is a consistent $(\ell^*,k,m,\eps,d, \nu, \tau, \delta, \theta)$-bi-system. Let $D'$ be obtained from $D$ by selecting each edge independently with probability $\frac{1}{2}$. Then, with high probability, both $(D',\cP_0, R_0, C_0, \cP, R,C)$ and $(D\setminus D',\cP_0, R_0, C_0, \cP, R,C)$ are consistent $(\ell^*,k,m,\varepsilon',\frac{d}{2}, \frac{\nu}{4}, \tau, \frac{\delta}{3}, \frac{\theta}{3})$-bi-systems.
\end{lm}

\COMMENT{\begin{proof}
	\Cref{def:CBSys-P0,def:CBSys-P,def:CBSys-R0,def:CBSys-C0} hold automatically. For \cref{def:CBSys-R,def:CBSys-C}, use the arguments of \cite[Lemma 3.2(ii)]{kuhn2014hamilton}. By \cref{lm:randomrob}, both $D'$ and $D\setminus D'$ are bipartite robust $(\frac{\nu}{4}, \tau)$-outexpanders with high probability. We have $\mathbb{E}[\delta^0(D')]=\mathbb{E}[\delta^0(D\setminus D')]=\frac{\delta^0(D)}{2}\geq \frac{\delta n}{2}$. Thus, \cref{lm:Chernoff} implies that both $\delta^0(D'),\delta^0(D\setminus D')\geq \frac{\delta n}{3}$ with high probability. So \cref{def:CBSys-biproboutexp} holds. For \cref{def:CBSys-deg}, note that $\mathbb{E}[|N_{D'}^\diamond(v)\cap W|]=\mathbb{E}[|N_{D\setminus D'}^\diamond(v)\cap W|]=\frac{|N_D^\diamond(v)\cap W|}{2}\geq \frac{\theta |N_D^\diamond(v)\cap V|}{2\ell^*}$ and so \cref{lm:Chernoff} implies that both $|N_{D'}^\diamond(v)\cap W|,|N_{D\setminus D'}^\diamond(v)\cap W|\geq \frac{\theta |N_D^\diamond(v)\cap V|}{3\ell^*}$.
\end{proof}}

\subsubsection{Statement of the preprocessing lemma for bipartite digraphs}

The following \lcnamecref{lm:preprocessing} is a bipartite analogue of \cite[Corollary 8.5 and Lemma 8.6]{kuhn2013hamilton}.
Since it can be proved using very similar arguments as those of \cite{kuhn2013hamilton}, we omit its proof here. (\APPENDIX{A detailed explanation on how to derive \cref{lm:preprocessing} can be found in \cref{app:preprocessing}.}\NOAPPENDIX{A detailed explanation on how to derive \cref{lm:preprocessing} can be found in an appendix of the arXiv version of this paper.})
As mentioned at the beginning of \cref{sec:preprocessing}, \cref{lm:preprocessing} states that a consistent bi-system contains a sparse absorber $PG$ which can cover all the exceptional edges of a very sparse leftover $H$ with edge-disjoint Hamilton cycles.

\begin{lm}[Preprocessing lemma for bipartite digraphs {\cite{kuhn2013hamilton}}]\label{lm:preprocessing}
	Let $0<\frac{1}{m}\ll \frac{r}{m}\ll \frac{r'}{m}\ll \frac{1}{k}\ll \eps\ll \frac{1}{\ell^*}\ll d\ll \nu\ll \tau\ll \delta, \theta\le 1$. Denote $s\coloneqq \frac{10^7}{\nu^2}$ and suppose that $\frac{m}{50}, \frac{50\ell^*}{s-1}\in\mathbb{N}$.
	Let $(D,\cP_0, R_0,C_0,\cP,R,C)$ be a consistent $(\ell^*,k,m,\eps,d,\nu,\tau,\delta,\theta)$-bi-system.
	Then, there exists a spanning subdigraph $PG\subseteq D$ such that the following hold.
	\begin{enumerate}
		\item Each $v\in V_0$ satisfies $d_{PG}^\pm(v)=r(s-1)$ and each $w\in V(D)\setminus V_0$ satisfies $d_{PG}^\pm(w)=r'$. \label{lm:preprocessing-degrees}
		\item Let $H$ be an $r$-regular bipartite digraph on the same vertex classes as $D$. If $H$ is edge-disjoint from $PG$ and satisfies $e(H[V_0])=0$, then $H\cup PG$ contains $rs$ edge-disjoint Hamilton cycles $C_1, \dots, C_{rs}$ such that the following hold.\label{lm:preprocessing-Hamcycles}
		\begin{enumerate}[label=\rm(\alph*),ref=\rm(\roman{enumi}.\alph*)]
			\item $H\subseteq \bigcup_{i\in [rs]} C_i\subseteq H\cup PG$.\label{lm:preprocessing-Hamcycles-H}
			\item Let $PG'\coloneqq PG\setminus \bigcup_{i\in [rs]}C_i$. Then, each $v\in V_0$ satisfies $d_{PG'}^\pm(v)=0$ and each $w\in V(D)\setminus V_0$ satisfies $d_{PG'}^\pm(w)= r'-r(s-1)$.\label{lm:preprocessing-Hamcycles-degrees}
		\end{enumerate}
	\end{enumerate}
\end{lm}

Note that in \cite[Corollary 8.5]{kuhn2013hamilton}, $H$ must be a spanning subdigraph of the host digraph $D$. However, this condition is not necessary since, as $H$ is very sparse, its edges could be added to the host digraph without affecting the parameters of the consistent system significantly. This is why, in our bipartite version of the preprocessing lemma (\cref{lm:preprocessing}), we may omit the condition that $H$ is a spanning subdigraph of $D$.
Moreover, \cite[Corollary 8.5]{kuhn2013hamilton} requires the exceptional set $V_0$ to form an independent set in $D$. But, \cref{prop:CBSysedgesremoval} implies that $D\setminus D[V_0]$ also contains a consistent bi-system (with slightly worse parameters). Thus, this condition can be omitted in \cref{lm:preprocessing}.

Since $PG'$ is a regular digraph on $V(D)\setminus V_0$, we can use $D^{\rm rob}$ from the robust decomposition lemma (\cref{lm:newrobustdecomp}) to decompose it into Hamilton cycles on $V(D)\setminus V_0$. (The vertices in $V_0$ will later be incorporated into these cycles via the special path systems as discussed in \cref{sec:SC}.) Thus, we can combine $PG$ and $D^{\rm rob}$ to form an absorber $D^{\rm abs}$ which can decompose a sparse leftover digraph $H$ which have edges incident to $V_0$. This is made precise in the following \lcnamecref{cor:absorber}.

\begin{cor}\label{cor:absorber}
	Let $0<\frac{1}{m}\ll \frac{r}{m}\ll \frac{r'}{m}\ll \frac{1}{k}\ll \eps \ll \frac{1}{q} \ll \frac{1}{\ell^*},\frac{1}{f} \ll \frac{r_1}{m}\ll d\ll \nu,\frac{1}{\ell'}, \frac{1}{g}\ll \tau\ll \delta,\theta\leq 1$. Let 
	\[s\coloneqq \frac{10^7}{\nu^2}, \quad r^*\coloneqq r'-(s-1)r,\]
	\[r_2\coloneqq 96\ell'g^2kr^*, \quad  r_3\coloneqq \frac{r^*fk}{q}, \quad r^\diamond\coloneqq r_1+r_2+r^*-(q-1)r_3, \quad s'\coloneqq r^*fk+7r^\diamond.\]
	Suppose that $\frac{k}{14}, \frac{k}{f}, \frac{k}{g}, \frac{q}{f}, \frac{m}{50}, \frac{m}{4\ell'}, \frac{fm}{q}, \frac{2fk}{3g(g-1)}, \frac{50\ell^*}{s-1} \in \mathbb{N}$.
	Suppose that $(D,\cP_0, R_0,C_0, \cP, R, C)$ is a consistent $(\ell^*,k,m,\eps,d,\nu,\tau,\delta,\theta)$-bi-system and suppose that $(D,\cP,\cP', \cP^*,R,C, U,U')$ is an $(\ell',\frac{q}{f},k,m,\eps,d)$-bi-setup. Suppose that the exceptional set $V_0$ forms an independent set in~$D$.
	Let $\mathcal{SF}$ be a multidigraph which consists of the union of $r_3$ $(\frac{q}{f},f)$-special factors
	with respect to $\cP^*$ and $C$ and let $\mathcal{SF}'$ be a multidigraph which consists of the union of $r^\diamond$ $(1,7)$-special factors	with respect to $\cP$ and $C$.
	Then, $D$ contains a spanning subdigraph $D^{\rm abs}$ for which the following hold.
	\begin{enumerate}
		\item Each $v\in V_0$ satisfies $d_{D^{\rm abs}}^\pm(v)=r(s-1)$ and each $w\in V(D)\setminus V_0$ satisfies $d_{D^{\rm abs}}^\pm(w)=r'+r_1+r_2+5r^\diamond$.\label{cor:absorber-degrees}
		\item Let $H$ be an $r$-regular bipartite digraph on the same vertex classes as $D$. If $H$ is edge-disjoint from $D^{\rm abs}$ and satisfies $e(H[V_0])=0$, then the multidigraph $H \cup D^{\rm abs}\cup \mathcal{SF}\cup \mathcal{SF}'$ has a decomposition $\sC\cup \sC'$ into $rs+s'$ edge-disjoint cycles satisfying the following properties.\label{cor:absorber-decomp}
		\begin{enumerate}[label=\rm(\alph*),ref=\rm(\roman{enumi}.\alph*)]
			\item $\sC\subseteq H\cup D^{\rm abs}$ and consists of $rs$ edge-disjoint Hamilton cycles on $V(D)$.\label{cor:absorber-decomp-preprocessing}
			\item $\sC'$ consist of $s'$ edge-disjoint Hamilton cycles on $V(D)\setminus V_0$ such that each cycle in $\sC'$ contains precisely one of the special path systems in the multidigraph $\mathcal{SF}\cup \mathcal{SF}'$.\label{cor:absorber-decomp-robustdecomp}
		\end{enumerate}
	\end{enumerate}
\end{cor}

\begin{proof}
	First, let $PG$ be the spanning subdigraph of $D$ obtained by applying \cref{lm:preprocessing}. Define $D'\coloneqq D\setminus PG$ and let $\cP_\emptyset, \cP_\emptyset'$, and $\cP_\emptyset^*$ be obtained by replacing the exceptional set $V_0$ by the empty set in $\cP, \cP'$, and $\cP^*$, respectively. By \cref{lm:preprocessing}\cref{lm:preprocessing-degrees}, \cref{prop:bisetupedgesremoval}, and \cref{fact:bisetupV0}, $(D'-V_0, \cP_\emptyset, \cP_\emptyset', \cP_\emptyset^*, R,C,U,U')$ is an $(\ell',\frac{q}{f},k,m,\varepsilon^{\frac{1}{3}},\frac{d}{2})$-bi-setup with empty exceptional set. Let $D^{\rm rob}$ be the spanning subdigraph of $D'-V_0$ obtained by applying \cref{lm:newrobustdecomp} with $D'-V_0, \cP_\emptyset, \cP_\emptyset', \cP_\emptyset^*, \varepsilon^{\frac{1}{3}}, \frac{d}{2}$, and $r^*$ playing the roles of $D, \cP, \cP', \cP^*, \varepsilon, d$, and $r$.
	
	Define $D^{\rm abs}\coloneqq PG\cup D^{\rm rob}$. Then, \cref{cor:absorber-degrees} follows from \cref{lm:preprocessing}\cref{lm:preprocessing-degrees} and \cref{lm:newrobustdecomp}. For \cref{cor:absorber-decomp}, let $H$ be an $r$-regular bipartite digraph on the same vertex classes as $D$. Suppose that $H$ is edge-disjoint from $D^{\rm abs}$ and satisfies $e(H[V_0])=0$. Let $\sC$ be the set of $rs$ Hamilton cycles obtained by applying \cref{lm:preprocessing}\cref{lm:preprocessing-Hamcycles}. 
	In particular, \cref{cor:absorber-decomp-preprocessing} holds and \cref{lm:preprocessing}\cref{lm:preprocessing-Hamcycles-H} implies that $E(H)\subseteq E(\sC)\subseteq E(H)\cup E(PG)$.
	Let $PG'\coloneqq PG\setminus \sC$. Then, \cref{lm:preprocessing}\cref{lm:preprocessing-Hamcycles-degrees} implies that $E(PG')=E(PG'-V_0)$ and $PG'-V_0$ is $r^*$-regular. Let $\sC'$ be the set Hamilton cycles on $V(D)\setminus V_0$ obtained by applying \cref{lm:newrobustdecomp} with $PG'$ and $r^*$ playing the roles of $H$ and $r$. Then, \cref{cor:absorber-decomp-robustdecomp} holds and we are done.
\end{proof}

\onlyinsubfile{\bibliographystyle{abbrv}
	\bibliography{Bibliography/Bibliography}}

\section{The bipartite robust outexpander case: proof of Theorem \ref{thm:biprobexp}}\label{sec:biprobexp}

	\onlyinsubfile{
		\setcounter{section}{6}
\section{The bipartite robust outexpander case}}

In this section, we combine the approximate decomposition (\cref{cor:bipapproxHamdecomp}) and the absorption step (\cref{cor:absorber}) to derive \cref{thm:biprobexp}.

\subsection{Applying the robust decomposition lemma in a bipartite robust outexpander}\label{sec:applybirobdecomp}

In order to apply \cref{cor:absorber}, we will need to find a consistent bi-system, a bi-setup, and special factors. In this section, we discuss how these can be found in a bipartite robust outexpander.

First, we explain how to form the special factors required for \cref{cor:absorber}. As discussed in \cref{sec:SC}, their role is to incorporate the exceptional vertices into the cycles obtained via the robust decomposition lemma. By \cref{fact:SC}, we would like each special path system to consist of a complete special sequence (recall \cref{def:MSC}) and edges of $D$. (Note that such special path systems were called complete exceptional path systems in \cite{kuhn2013hamilton}.)
One can achieve this by adapting the arguments of \cite[Lemma 7.6]{kuhn2013hamilton} to the bipartite case (\APPENDIX{see \cref{app:preprocessing} for more details}\NOAPPENDIX{see the appendix in the arXiv version of this paper for more details}).

\begin{lm}[Constructing special covers and special factors]\label{lm:SF}
	Let $0< \frac{1}{m}\ll \frac{1}{k}\ll \varepsilon\ll d\ll \nu \ll \tau\ll \delta, \theta \leq 1$ and $\varepsilon\ll \frac{1}{\ell'}, \frac{1}{f}$ and $\frac{\ell' r}{m}\ll d$. Suppose that $\frac{\ell^*}{f}, \frac{m}{\ell'}\in \mathbb{N}$ and $\frac{f}{\ell^*}\ll 1$.
	Let $(D, \cP_0, R_0, C_0, \cP, R, C)$ be a consistent $(\ell^*, k,m,\varepsilon, d, \nu, \tau, \delta, \theta)$-bi-system with exceptional set $V_0$.
	Let $\cP'$ be an $\varepsilon$-uniform $\ell'$-refinement of $\cP$.	
	Then, there exist
	\begin{enumerate}
		\item a set $\mathcal{SC}=\{SC_{i,h,j}\mid (i,h,j)\in [r]\times[\ell']\times [f]\}$ of $r\ell'f$ edge-disjoint special covers in $D$ with respect to $V_0$ and
		\item $r$ $(\ell',f)$-special factors $SF_1, \dots, SF_r$ with respect to $\cP'$ and $C$
	\end{enumerate}	
	such that the following hold, where for each $(i,h,j)\in [r]\times [\ell']\times [f]$, $M_{i,h,j}$ denotes the complete special sequence associated to $SC_{i,h,j}$ and $SPS_{i,h,j}$ denotes the $(\ell',f,h,j)$-special path system contained in $SF_i$.
	\begin{enumerate}[resume]
		\item For each $(i,h,j)\in [r]\times [\ell']\times [f]$, we have $M_{i,h,j}\subseteq SPS_{i,h,j}\subseteq (D\setminus \mathcal{SC})\cup M_{i,h,j}$.\label{lm:SF-M}
		\item Let $(i,h,j),(i',h',j')\in [r]\times [\ell']\times [f]$ be distinct. Then, we have $(SPS_{i,h,j}\setminus M_{i,h,j})\cap (SPS_{i',h',j'}\setminus M_{i',h',j'})=\emptyset$.\label{lm:SF-disjoint}
	\end{enumerate} 
\end{lm}

Roughly speaking, \cref{lm:SF}\cref{lm:SF-M} means that each complete special sequence is incorporated into a distinct special path system, while \cref{lm:SF}\cref{lm:SF-disjoint} states that each edge of $D\setminus \mathcal{SC}$ is incorporated into at most one of the special path systems.
Note that in \cite[Lemma 7.6]{kuhn2013hamilton}, the exceptional set $V_0$ must form an independent set in $D$. But, \cref{prop:CBSysedgesremoval} implies that $D\setminus D[V_0]$ also contains a consistent bi-system (with slightly worse parameters). Thus, this condition can be omitted in \cref{lm:SF}.

Consistent bi-systems and bi-setups can be constructed from Szemer\'edi's regularity lemma (\cref{lm:bipreglm}) as follows.
Recall that, by \cref{lm:Rrob}, the reduced digraph $R_0$ obtained by applying the regularity lemma (\cref{lm:bipreglm}) to a bipartite robust outexpander $D$ is also a bipartite robust outexpander. By \cref{lm:biprobblowup2}, this property is also preserved when taking refinements. Thus, one can obtain a consistent bi-system by applying the regularity lemma to obtain a partition $\cP_0$ of $V(D)$, then selecting a uniform refinement $\cP$ of $\cP_0$ using \cref{lm:URefexistence}, and finally using \cref{lm:biprobHamcycle} to find the desired Hamilton cycles $C_0$ and $C$.

The additional refinements $\cP'$ and $\cP^*$ of $\cP$ required to form a bi-setup can also be obtained by applying \cref{lm:URefexistence} (the superregular pairs required for \cref{def:BST-U'supreg} can be obtained using \cref{lm:URefreg}). The next \lcnamecref{lm:U} gives the bi-universal walk required for \cref{def:BST-U}. (\Cref{lm:BU} is easily proved by adapting the arguments of \cite[Lemma 9.1]{kuhn2013hamilton} to the bipartite case,
so we defer its proof to \APPENDIX{\cref{app:preprocessing}}\NOAPPENDIX{an appendix of the arXiv version of this paper}.)

\begin{lm}\label{lm:BU}
	Let $0<\frac{1}{k}\ll \nu \ll \tau\ll \delta <1$ and let $\ell'$ be an even integer satisfying $\ell'\geq 36\nu^{-2}$. Suppose that $R$ a balanced bipartite robust $(\nu, \tau)$-outexpander with bipartition $(\cA, \cB)$, where $|\cA|=|\cB|=k$. Suppose that $\delta^0(R)\geq \delta k$. Let $C$ be a Hamilton cycle in $R$. Then, there exists a bi-universal walk $U$ for $C$ with parameter $\ell'$.
\end{lm}

Altogether, we obtain a consistent bi-system and a bi-setup (as defined in \cref{def:BST,def:CBSys}). Then, we apply \cref{lm:randomsetup,lm:randomsystem} to partition the edges of $D$ into two edge-disjoint subdigraphs $D_1$ and $D_2$ which each form a consistent bi-system and a bi-setup. We will use $D_1$ to construct the special factors (\cref{lm:SF}) and $D_2$ to construct the absorber (\cref{cor:absorber}). This is necessary because, after constructing all the special factors, the clusters in $\cP$ and $\cP'$ may no longer form suitable (super)regular pairs (that is, properties \cref{def:CBSys-R,def:CBSys-C} of a consistent bi-system and properties \cref{def:BST-R,def:BST-C,def:BST-U'supreg} of a bi-setup might not hold anymore), so we cannot apply \cref{cor:absorber} directly after \cref{lm:SF}.

\begin{lm}\label{lm:bisetup}
	Let $0<\frac{1}{M'}\ll \varepsilon$. Then, there exist $M'',n_0\in \mathbb{N}$ such that the following holds.
	Suppose that $\varepsilon\ll\frac{1}{q}\ll \frac{1}{f}, \frac{1}{\ell^*}\ll d \ll \nu \ll \tau \ll \delta, \theta \ll 1$ and $d\ll \frac{1}{g}\ll 1$.
	Moreover, let $\ell'\geq 324\nu^{-2}$ be even.
	Let $D$ be a balanced bipartite digraph on vertex classes $A$ and $B$ of size $n\geq n_0$.
	Suppose that $D$ is a bipartite robust $(\nu, \tau)$-outexpander with bipartition $(A, B)$ and that $\delta^0(D)\geq \delta n$.
	Then, there exist $m,k\in \mathbb{N}$ and edge-disjoint $D_1, D_2\subseteq D$ such that the following conditions are satisfied.
	\begin{enumerate}
		\item $M'\leq k\leq M''$ and $\frac{k}{7},\frac{k}{f}, \frac{k}{g},\frac{m}{50},\frac{m}{4\ell'},\frac{fm}{q},\frac{2fk}{3g(g-1)}\in \mathbb{N}$. \label{lm:bisetup-parameters}
		\item There exist $\cP_0, \cP, \cP', \cP^*, R_0, R, C_0, C, U$, and $U'$ which satisfy the following conditions for each $i\in [2]$.\label{lm:bisetup-bisetup}
		\begin{itemize}
			\item $(D_i,\cP_0, R_0, C_0, \cP, R, C)$ is a consistent $(\ell^*,2k, m, \varepsilon, d, \nu^4, 8\tau, \frac{\delta}{9}, \theta)$-bi-system.
			\item $(D_i, \cP, \cP',\cP^*, R, C, U ,U')$ is an $(\ell', \frac{q}{f}, 2k, m, \varepsilon, d)$-bi-setup.
		\end{itemize}
	\end{enumerate}
\end{lm}

Note that in order to apply \cref{cor:absorber}, we need a constant $g$ which divides $k$. We also require that this constant $g$ is sufficiently large but sufficiently small compared to $k$ and $f$. We would not be able to fix such a parameter if, for instance, $f$ is a prime number and $k=7f$%
	\COMMENT{\cref{lm:bisetup} already gives that $7$ and $f$ divide $k$.}.
It is therefore necessary to introduce $g$ in \cref{lm:bisetup} even though it does not explicitly appear in \cref{lm:bisetup}\cref{lm:bisetup-bisetup}.

For a formal proof of \cref{lm:bisetup}, see \APPENDIX{\cref{app:reglm}}\NOAPPENDIX{the appendix of the arXiv version of this paper}.

\subsection{Proof of Theorem \ref{thm:biprobexp}}

We are now ready to derive \cref{thm:biprobexp}. Our strategy is as follows. 
In \cref{step:bisetup}, we construct consistent bi-systems and bi-setups using \cref{lm:bisetup}.
In \cref{step:birobexp-A}, we construct an absorber $D^{\rm abs}$ using \cref{cor:absorber} (the required special factors are constructed with \cref{lm:SF}). In \cref{step:birobexp-approxdecomp}, we approximately decompose $D$ using \cref{cor:bipapproxHamdecomp}. In \cref{step:birobexp-leftovers}, we decompose the leftover using $D^{\rm abs}$.

\begin{proof}[Proof of \cref{thm:biprobexp}]
	We may assume without loss of generality that $\delta \ll 1$.
	Let $0<\frac{1}{n_0}\ll \tau \ll \delta$ and $\frac{1}{n_0}\ll \nu$.
	By \cref{fact:biprobexpparameters}, we may assume that $\nu \ll \tau$.
	Let $n\geq n_0$ and $r\geq \delta n$. Let $D$ be an $r$-regular bipartite digraph on vertex classes $A$ and $B$ of size $n$. 
	Suppose that $D$ is a bipartite robust $(\nu, \tau)$-outexpander with bipartition $(A, B)$. Let $s\coloneqq 10^7\nu^{-8}$.
	Fix additional parameters such that 
	\[0<\frac{1}{n}\ll \frac{1}{M''}\ll\frac{1}{M'}\ll \varepsilon \ll \frac{1}{q}\ll \frac{1}{f} \ll d \ll \nu , \frac{1}{g}\ll \tau \ll \delta, \theta\ll 1\]
	and $\frac{q}{f}, \frac{f^2}{7}, \frac{50f^2}{s-1}\in \mathbb{N}$.
	Let $\ell'$ be the smallest even integer satisfying $\ell'\geq 324\nu^{-2}$. Denote $\ell^*\coloneqq f^2$ and observe that $\frac{\ell^*}{7},\frac{50\ell^*}{s-1}\in \mathbb{N}$ and $\frac{f}{\ell^*}=\frac{1}{f}\ll 1$.
	
	\begin{steps}
		\item \textbf{Constructing consistent bi-systems and bi-setups.}\label{step:bisetup}
		Apply \cref{lm:bisetup} to obtain $m, k, D_1, D_2, \cP_0, \cP, \cP', \cP^*, R_0, R, C_0, C, U$, and $U'$ such that the following hold for each $i\in [2]$.
		\begin{enumerate}[label=(\roman*)]
			\item $M'\leq k\leq M''$ and $\frac{k}{7}, \frac{k}{f}, \frac{k}{g}, \frac{m}{50}, \frac{m}{4\ell'}, \frac{fm}{q}, \frac{2fk}{3g(g-1)}\in \mathbb{N}$.
			\newcounter{bisetup}
			\setcounter{bisetup}{\value{enumi}}
			\item $(D_i, \cP_0, R_0, C_0, \cP, R, C)$ is a consistent $(\ell^*,2k, m, \varepsilon, d, \nu^4, 8\tau, \frac{\delta}{9}, \theta)$-bi-system.\label{thm:birobexp-bisys}
			\item $(D_i, \cP, \cP', \cP^*, R, C, U, U')$ is an $(\ell', \frac{q}{f}, 2k, m, \varepsilon, d)$-bi-setup.\label{thm:birobexp-bisetup}
		\end{enumerate}

		Fix additional constants $\varepsilon',r_0,r_0'$, and $r_1$ such that
		$\frac{1}{n}\ll \varepsilon' \ll \frac{r_0}{m}\ll \frac{r_0'}{m}\ll \frac{1}{M''}$ and $\frac{1}{f}\ll \frac{r_1}{m} \ll d$. Let
		\[r^*\coloneqq r_0'-(s-1)r_0, \quad r_2\coloneqq 192\ell'g^2kr^*, \quad  r_3\coloneqq \frac{2r^*fk}{q},\]
		\[r^\diamond\coloneqq r_1+r_2+r^*-(q-1)r_3, \quad s'\coloneqq 2r^*fk+7r^\diamond.\]
		Let $V_0$ denote the exceptional set in $\cP$.

		\item \textbf{Constructing the absorber.}\label{step:birobexp-A}
		We will use \cref{cor:absorber}. First, we construct the required special factors in $D_1$.
		
		Note that \cref{thm:birobexp-bisetup} and \cref{def:BST-P*} imply that $\cP^*$ is an $\varepsilon$-uniform $\frac{q}{f}$-refinement of $\cP$ with respect to $D_1$.
		Apply \cref{lm:SF} with $D_1, r_3, 2k, \nu^4, 8\tau, \frac{\delta}{9}, \cP^*$, and $\frac{q}{f}$ playing the roles of $D, r, k, \nu, \tau, \delta, \cP'$, and $\ell'$ to obtain
		\begin{enumerate}[label=(\alph*)]
			\item a set $\mathcal{SC}=\{SC_{i,h,j}\mid (i,h,j)\in [r_3]\times [\frac{q}{f}]\times [f]\}$ of $qr_3$ edge-disjoint special covers in $D_1\subseteq D$ with respect to $V_0$ and\label{thm:birobexp-SC}
			\item $r_3$ $(\frac{q}{f},f)$-special factors $SF_1, \dots, SF_{r_3}$ with respect to $\cP^*$ and $C$\label{thm:birobexp-SF}
		\end{enumerate}
		for which the following hold, where for each $(i,h,j)\in [r_3]\times [\frac{q}{f}]\times [f]$, $M_{i,h,j}$ denotes the complete special sequence associated to $SC_{i,h,j}$ and $SPS_{i,h,j}$ denotes the $(\frac{q}{f},f,h,j)$-special path system contained in $SF_i$.
		\begin{enumerate}[resume,label=(\alph*)]
			\item For each $(i,h,j)\in [r_3]\times [\frac{q}{f}]\times [f]$, we have $M_{i,h,j}\subseteq SPS_{i,h,j}\subseteq (D_1\setminus \mathcal{SC})\cup M_{i,h,j}$.\label{thm:birobexp-M}
			\item Let $(i,h,j),(i',h',j')\in [r_3]\times [\frac{q}{f}]\times [f]$ be distinct. Then, we have $(SPS_{i,h,j}\setminus M_{i,h,j})\cap (SPS_{i',h',j'}\setminus M_{i',h',j'})=\emptyset$.\label{thm:birobexp-disjoint}
		\end{enumerate}
		Define a multiset $\mathcal{M}$ by $\mathcal{M}\coloneqq \{M_{i,h,j}\mid (i,h,j)\in [r_3]\times [\frac{q}{f}]\times [f]\}$ and a multidigraph $\mathcal{SF}$ by $\mathcal{SF}\coloneqq SF_1\cup \dots \cup SF_{r_3}$. Let $D_1'\coloneqq D_1\setminus (\mathcal{SC}\cup \mathcal{SF})$. Observe that $\mathcal{SC}$ consists of $r_3q$ linear forests and $\mathcal{SF}$ consists of $r_3$ digraphs of maximum semidegree $1$. Thus, \cref{prop:CBSysedgesremoval} and \cref{thm:birobexp-bisys} imply that $(D_1', \cP_0, R_0, C_0, \cP, R, C)$ is a consistent $(\ell^*,2k, m, 3\sqrt{\varepsilon}, \frac{d}{2}, \frac{\nu^4}{2}, 8\tau, \frac{\delta}{18}, \frac{\theta}{2})$-bi-system.
		Note that $\cP$ is a $3\sqrt{\varepsilon}$-refinement of itself with respect to $D_1'$.
		Apply \cref{lm:SF} with $D_1', r^\diamond, 2k, 3\sqrt{\varepsilon}, \frac{d}{2}, \frac{\nu^4}{2}, 8\tau, \frac{\delta}{18}, \frac{\theta}{2}, \cP, 1$, and $7$ playing the roles of $D, r, k, \varepsilon, d, \nu, \tau, \delta, \theta, \cP', \ell'$, and $f$ to obtain
		\begin{enumerate}[label=(\alph*$'$)]
			\item a set $\mathcal{SC}'=\{SC_{i,h,j}'\mid (i,h,j)\in [r^\diamond]\times [1]\times [7]\}$ of $7r^\diamond$ edge-disjoint special covers in $D_1'\subseteq D$ with respect to $V_0$ and\label{thm:birobexp-SC'}
			\item $r^\diamond$ $(1,7)$-special factors $SF_1', \dots, SF_{r^\diamond}'$ with respect to $\cP$ and $C$\label{thm:birobexp-SF'}
		\end{enumerate}
		for which the following hold, where for each $(i,h,j)\in [r^\diamond]\times [1]\times [7]$, $M_{i,h,j}'$ denotes the complete special sequence associated to $SC_{i,h,j}'$ and $SPS_{i,h,j}'$ denotes the $(1,7,h,j)$-special path system contained in $SF_i'$.
		\begin{enumerate}[resume,label=(\alph*$'$)]
			\item For each $(i,h,j)\in [r^\diamond]\times [1]\times [7]$, we have $M_{i,h,j}'\subseteq SPS_{i,h,j}'\subseteq (D_1'\setminus \mathcal{SC}')\cup M_{i,h,j}'$.\label{thm:birobexp-M'}
			\item Let $(i,h,j),(i',h',j')\in [r^\diamond]\times [1]\times [7]$ be distinct. Then, we have $(SPS_{i,h,j}'\setminus M_{i,h,j}')\cap (SPS_{i',h',j'}'\setminus M_{i',h',j'}')=\emptyset$.\label{thm:birobexp-disjoint'}
		\end{enumerate}
		Define a multiset $\mathcal{M}'$ by $\mathcal{M}'\coloneqq \{M_{i,h,j}'\mid (i,h,j)\in [r^\diamond]\times [1]\times [7]\}$ and a multidigraph $\mathcal{SF}'$ by $\mathcal{SF}'\coloneqq SF_1'\cup \dots \cup SF_{r^\diamond}'$.
		
		By \cref{thm:birobexp-bisys,thm:birobexp-bisetup}, we can let $D^{\rm abs}$ be the absorber obtained by applying \cref{cor:absorber} with $D_2, r_0, r_0', 2k, \nu^4, 8\tau$, and $\frac{\delta}{9}$ playing the roles of $D, r, r', k, \nu, \tau$, and $\delta$.
		
		\item \textbf{Approximate decomposition.}\label{step:birobexp-approxdecomp}
		In this step, we approximately decompose $D'\coloneqq D\setminus (\mathcal{SC}\cup \mathcal{SC}'\cup \mathcal{SF}\cup \mathcal{SF}'\cup D^{\rm abs})$ into Hamilton cycles, with a sparse leftover. Let $r'\coloneqq r-r_0(s-1)-s'$.
		
		\begin{claim}\label{claim:birobexp}
			$D'$ is an $r'$-regular bipartite robust $(\frac{\nu}{2},\tau)$-outexpander with bipartition $(A,B)$.
		\end{claim}
	
		\begin{proofclaim}
			By \cref{cor:verticesedgesremovalbiproboutexp}, it is enough to show that $D'$ is $r'$-regular.
			By \cref{cor:absorber}\cref{cor:absorber-degrees}, each $v\in V(D)$ satisfies
			\begin{equation*}
				d_{D^{\rm abs}}^\pm(v)=
				\begin{cases}
					r_0(s-1) & \text{if }v\in V_0;\\
					r_0'+r_1+r_2+5r^\diamond & \text{otherwise}.
				\end{cases}
			\end{equation*}
			By \cref{thm:birobexp-SC} and \cref{thm:birobexp-SC'}, $\mathcal{SC}$ and $\mathcal{SC}'$ are edge-disjoint sets of edge-disjoint special covers in $D$ with respect to $V_0$ and so \cref{def:SC,def:MSC} imply that each $v\in V(D)$ satisfies
			\begin{equation*}
				d_{\mathcal{SC} \cup\mathcal{SC}'}^\pm(v)=d_{\mathcal{SC}}^\pm(v)+d_{\mathcal{SC}'}^\pm(v)=
				\begin{cases}
					qr_3+7r^\diamond & \text{if }v\in V_0;\\
					d_\cM^\pm(v)+d_{\cM'}^\pm(v) & \text{otherwise}.
				\end{cases}
			\end{equation*}
			By \cref{def:SF}, \cref{thm:birobexp-SF}, and \cref{thm:birobexp-SF'}, each $v\in V(D)$ satisfies
			\begin{equation*}
				d_{\mathcal{SF}}^\pm(v)+d_{\mathcal{SF}'}^\pm(v)=
				\begin{cases}
					0 & \text{if }v\in V_0;\\
					r_3+r^\diamond & \text{otherwise}.
				\end{cases}
			\end{equation*}
			By construction, $D^{\rm abs}$ and $\mathcal{SC}\cup \mathcal{SC}'$ are edge-disjoint. By \cref{thm:birobexp-M}, \cref{thm:birobexp-disjoint}, \cref{thm:birobexp-M'}, and \cref{thm:birobexp-disjoint'}, $\mathcal{SF}\cap D$ and $\mathcal{SF}'\cap D$ are edge-disjoint digraphs (rather than multidigraphs) and are both subdigraphs of $D\setminus (\mathcal{SC} \cup \mathcal{SC}'\cup D^{\rm abs})$.
			Thus, each $v\in V_0$ satisfies
			\begin{align*}
				d_{D'}^\pm(v)&=d_D^\pm(v)-d_{D^{\rm abs}}^\pm(v)-d_{\mathcal{SC}\cup\mathcal{SC}'}^\pm(v)-d_{\mathcal{SF}\cap D}^\pm(v)-d_{\mathcal{SF}'\cap D}^\pm(v)\\
				&=r-r_0(s-1)-(qr_3+7r^\diamond)-0-0=r'.
			\end{align*}
			Moreover, \cref{thm:birobexp-M} and \cref{thm:birobexp-M'} imply that each $v\in V(D)\setminus V_0$ satisfies
			\begin{align*}
				d_{D'}^\pm(v)&=d_D^\pm(v)-d_{D^{\rm abs}}^\pm(v)-d_{\mathcal{SC}\cup \mathcal{SC}'}^\pm(v)-d_{\mathcal{SF}\cap D}^\pm(v)-d_{\mathcal{SF}'\cap D}^\pm(v)\\
				&=r-d_{D^{\rm abs}}^\pm(v)-d_{\mathcal{SC}\cup \mathcal{SC}'}^\pm(v)-(d_{\mathcal{SF}}^\pm(v)-d_\cM^\pm(v))-(d_{\mathcal{SF}'}^\pm(v)-d_{\cM'}^\pm(v))\\
				&=r-(r_0'+r_1+r_2+5r^\diamond)-(r_3+r^\diamond)=r'.
			\end{align*}
			Thus, $D'$ is $r'$-regular, as desired.
		\end{proofclaim}
	
		Note that by \cref{cor:absorber}\cref{cor:absorber-decomp}, $D^{\rm abs}$ cannot absorb any edges which entirely lie in $V_0$. Thus, we start by covering the edges of $D'[V_0]$ with a small number Hamilton cycles as follows. 
		By \cref{thm:birobexp-bisetup} and \cref{def:BST-P}, \[\ell_0\coloneqq |V_0\cap A|=|V_0\cap B|\leq \varepsilon n\leq r'-2\nu n.\]
		Apply K\"{o}nig's theorem to decompose $D'[V_0]$ into $\ell_0$
		edge-disjoint matchings $M_1, \dots, M_{\ell_0}$ and observe that \cref{claim:birobexp} and \cref{cor:verticesedgesremovalbiproboutexp} imply that $D'\setminus \bigcup_{i\in [\ell_0]}M_i$ is an $(\frac{r'}{2n}, \varepsilon)$-almost regular bipartite robust $(\frac{\nu}{3}, \tau)$-outexpander with bipartition $(A,B)$.
		Note that \cref{cor:bipapproxHamdecomp}\cref{cor:bipapproxHamdecomp-deg,cor:bipapproxHamdecomp-size} hold with $M_1, \dots, M_{\ell_0}$ playing the roles of $F_1, \dots, F_\ell$.		
		Apply \cref{cor:bipapproxHamdecomp} with $D'\setminus \bigcup_{i\in [\ell_0]}M_i, \ell_0, \frac{r'}{2n}, \nu, \frac{\nu}{3}$, and $M_1, \dots, M_{\ell_0}$ playing the roles of $D, \ell, \delta, \eta, \nu$, and $F_1, \dots, F_\ell$ to obtain a set $\sC_0$ of $\ell_0$ edge-disjoint Hamilton cycles of $D'$ such that $D'[V_0]\subseteq E(\sC_0)$ and $D''\coloneqq D'\setminus E(\sC_0)$ is a bipartite robust $(\frac{\nu}{6},\tau)$-outexpander with bipartition $(A, B)$.
		Note that $D''$ is regular of degree $r''\coloneqq r'-\ell_0$.
		
		We now approximately decompose $D''$ as follows.
		For each $i\in [r''-r_0]$, let $F_i$ be the empty digraph (so \cref{cor:bipapproxHamdecomp}\cref{cor:bipapproxHamdecomp-deg,cor:bipapproxHamdecomp-size} are satisfied).
		Apply \cref{cor:bipapproxHamdecomp} with $D'', r''-r_0, \frac{r''}{2n}, \varepsilon', \frac{r_0}{2n}$, and $\frac{\nu}{6}$ playing the roles of $D, \ell, \delta, \varepsilon, \eta$, and $\nu$ to obtain a set $\sC_{\rm approx}$ of $r''-r_0$ edge-disjoint Hamilton cycles of $D''$.

		\item \textbf{Absorbing the leftovers.}\label{step:birobexp-leftovers}
		Finally, we decompose $H\coloneqq D''\setminus \sC_{\rm approx}=D'\setminus (\sC_0\cup \sC_{\rm approx})$ using the absorber $D^{\rm abs}$ constructed in \cref{step:birobexp-A}. Note that $H$ is an $r_0$-regular subdigraph of $D'\setminus D'[V_0]$ (by \cref{claim:birobexp}, $H$ is obtained from the $r'$-regular digraph $D'$ by removing the edges of $r'-r_0$ edge-disjoint Hamilton cycles of $D'$).
		Define a multidigraph $D'''$ by $D'''\coloneqq H\cup D^{\rm abs}\cup \mathcal{SF}\cup \mathcal{SF}'$.
		
		\begin{claim}\label{claim:D5}
			$D'''\setminus (\cM\cup \cM')$ is a digraph (rather than a multidigraph) and satisfies $D'''\setminus (\cM\cup \cM')= D\setminus (\mathcal{SC}\cup \mathcal{SC}'\cup \sC_0\cup \sC_{\rm approx})$.
		\end{claim}
	
		\begin{proofclaim}
			By \cref{thm:birobexp-M}, \cref{thm:birobexp-disjoint}, \cref{thm:birobexp-M'}, and \cref{thm:birobexp-disjoint'}, $\mathcal{SF}\setminus \cM$ and $\mathcal{SF}'\setminus \cM'$ are digraphs rather than multidigraphs and are edge-disjoint subdigraphs of $D\setminus (\mathcal{SC}\cup \mathcal{SC}')$.
			By \cref{step:birobexp-A}, $D^{\rm abs}\subseteq D\setminus (\mathcal{SC}\cup \mathcal{SF}\cup \mathcal{SC}'\cup \mathcal{SF}')$ and, by definition,
			\begin{equation}\label{eq:birobexp-H}
				H=D''\setminus \sC_{\rm approx}=D\setminus (\mathcal{SC}\cup \mathcal{SC}'\cup \mathcal{SF}\cup \mathcal{SF}'\cup D^{\rm abs}\cup \sC_0\cup \sC_{\rm approx}).
			\end{equation}
			Thus, $\mathcal{SF}\setminus \cM$, $\mathcal{SF}'\setminus \cM'$, $D^{\rm abs}$, and $H$ are all pairwise edge-disjoint subdigraphs of $D$. Therefore, $D'''\setminus (\cM\cup \cM')$ is a digraph.
			Moreover, recall from \cref{step:birobexp-approxdecomp} that $\sC_0\cup \sC_{\rm approx}\subseteq D'\subseteq D\setminus (\mathcal{SF}\cup \mathcal{SF}'\cup D^{\rm abs})$. Thus, \cref{eq:birobexp-H} implies that $D'''\setminus (\cM\cup \cM')= D\setminus (\mathcal{SC}\cup \mathcal{SC}'\cup \sC_0\cup \sC_{\rm approx})$, as desired.
		\end{proofclaim}
		
		Let $\sC\cup \sC'$ be the decomposition of $D'''$ obtained by applying \cref{cor:absorber}\cref{cor:absorber-decomp} (with $r_0$ and $r_0'$ playing the roles of $r$ and $r'$).
		By \cref{cor:absorber}\cref{cor:absorber-decomp-robustdecomp}, $\sC'$ is a set of $s'$ Hamilton cycles on $V(D)\setminus V_0$, each containing precisely one of the special path systems in the multidigraph $\mathcal{SF}\cup \mathcal{SF}'$. That is, there exists an enumeration
		\[\{C_{i,h,j}\mid (i,h,j)\in [r_3]\times [\tfrac{q}{f}]\times [f]\}\cup \{C_{i,h,j}'\mid (i,h,j)\in [r^\diamond]\times [1]\times [7]\}\]
		of $\sC'$ such that $C_{i,h,j}\cap (\mathcal{SF}\cup \mathcal{SF}')=SPS_{i,h,j}$ for each $(i,h,j)\in [r_3]\times [\frac{q}{f}]\times [f]$ and $C_{i',h',j'}'\cap (\mathcal{SF}\cup \mathcal{SF}')=SPS_{i',h',j'}'$ for each $(i',h',j')\in [r^\diamond]\times [1]\times [7]$.
		
		For each $(i,h,j)\in [r_3]\times [\frac{q}{f}]\times [f]$, define $C_{i,h,j}^*\coloneqq (C_{i,h,j}\setminus M_{i,h,j})\cup SC_{i,h,j}$ and note that \cref{thm:birobexp-M} and \cref{fact:SC} imply that  $C_{i,h,j}^*$ is a Hamilton cycle of $D$.
		For each $(i,h,j)\in [r^\diamond]\times [1]\times [7]$, define $C_{i,h,j}''\coloneqq (C_{i,h,j}'\setminus M_{i,h,j}')\cup SC_{i,h,j}'$ and note that \cref{thm:birobexp-M'} and \cref{fact:SC} imply that $C_{i,h,j}''$ is a Hamilton cycle of $D$.
		Let \[\sC''\coloneqq \{C_{i,h,j}^*\mid (i,h,j)\in [r_3]\times [\tfrac{q}{f}]\times [f]\}\cup \{C_{i,h,j}''\mid (i,h,j)\in [r^\diamond]\times [1]\times [7]\}.\]
		By \cref{thm:birobexp-SC}, \cref{thm:birobexp-SC'}, and \cref{step:birobexp-approxdecomp}, $\mathcal{SC}, \mathcal{SC}'$, and $\sC_0\cup \sC_{\rm approx}$ are all pairwise edge-disjoint. Thus, \cref{claim:D5} implies that $\sC''$ is a Hamilton decomposition of \[(D'''\setminus (\cM\cup \cM'\cup \sC))\cup (\mathcal{SC}\cup \mathcal{SC}')=D\setminus (\sC_0\cup \sC_{\rm approx}\cup \sC).\] By \cref{step:birobexp-approxdecomp}, $\sC_0\cup \sC_{\rm approx}$ is a set of edge-disjoint Hamilton cycles of $D$ and, by \cref{cor:absorber}\cref{cor:absorber-decomp-preprocessing} and \cref{claim:D5}, $\sC$ is a set of edge-disjoint Hamilton cycles of $H\cup D^{\rm abs}\subseteq D\setminus (\sC_0\cup \sC_{\rm approx})$.		
		Thus, $\sC_0\cup \sC_{\rm approx}\cup \sC\cup \sC''$ is a Hamilton decomposition of $D$.		
		This completes the proof of \cref{thm:biprobexp}.\qedhere
	\end{steps}
\end{proof}

\onlyinsubfile{\bibliographystyle{abbrv}
\bibliography{Bibliography/Bibliography}}

\section{Blow-up cycles: definitions and proof of Lemma \ref{lm:twocases}}\label{sec:blowups}

	\onlyinsubfile{
		\setcounter{section}{3}
\section{Blow-up cycles: notation and proof of Lemma \ref{lm:twocases}}}

The remainder of this paper is devoted to the proofs of \cref{lm:twocases,thm:blowupC4}. In this section, we recall and expand on the definitions of the complete blow-up $C_4$ and a digraph which is $\varepsilon$-close to the complete blow-up $C_4$. We also state a few properties of blow-up cycles and prove \cref{lm:twocases}. 

\subsection{Blow-up cycles}\label{sec:blowupdef}
We now generalise the concept of complete blow-up $C_4$ introduced in \cref{sec:intro-blowupC4}.
Let $K\geq 3$. The \emph{complete blow-up $C_K$ on vertex classes of size $n$} is the $n$-fold blow-up of the consistently directed $C_K$. 
Any spanning subdigraph of the $n$-fold blow-up of the directed $C_K$ is called a \emph{blow-up $C_K$ on vertex classes of size $n$}.
Recall from \cref{sec:intro-blowupC4} that a digraph is \emph{$\varepsilon$-close to the complete blow-up $C_4$ on vertex classes of size $n$} if it can be obtained from the complete blow-up $C_4$ on vertex classes of size $n$ by flipping the direction of at most $4\varepsilon^2n$ edges.

Let $K\geq 3$. It will sometimes be convenient to label the vertex classes of the (complete) blow-up $C_K$. This motivates the following variants of the above definitions.
Let $U_1, \dots, U_K$ be disjoint vertex sets of size $n$ and let $\cU\coloneqq (U_1, \dots, U_K)$.
The \emph{complete blow-up $C_K$ with vertex partition $\cU$} is the digraph $D$ on $\bigcup\cU= \bigcup_{i\in [K]}U_i$ defined by $E(D)\coloneqq \{uv\mid u\in U_i, v\in U_{i+1}, i\in [K]\}$ (where $U_{K+1}\coloneqq U_1$). Note that the ordering of $U_1, \dots, U_K$ matters. The vertex sets $U_1, \dots, U_K$ are the \emph{vertex classes} of $D$. In informal discussions, we sometime refer to $(U_1,U_2), \dots, (U_K, U_1)$ as the \emph{pairs of the blow-up $C_K$}.
(In other words, $D$ is the $n$-fold blow-up of the directed $C_K$ whose vertex classes are denoted by $U_1, \dots, U_K$ and ordered according to the ordering of the vertices in the directed $C_K$.)
Let $D'$ be a digraph on $\bigcup\cU$.
We say that $D'$ is a \emph{blow-up $C_K$ with vertex partition $\cU$} if $D'$ is a spanning subdigraph of $D$.
If $K=4$, we say that $D'$ is \emph{$\varepsilon$-close to the complete blow-up $C_4$ with vertex partition $\cU$} if $D'$ can be obtained from $D$ by flipping the direction of at most $4\varepsilon n^2$ edges of $D$.

\begin{definition}[\gls*{epsilon partition}]\label{def:epspartition}
    Let $U_1, \dots, U_4$ be disjoint vertex sets of size $n$ and denote $\cU\coloneqq (U_1, \dots, U_4)$. Let $D$ be a digraph on $\bigcup\cU$. We say that $\cU$ is an \emph{$(\varepsilon,4)$-partition for $D$} if $D$ is $\varepsilon$-close to the complete blow-up $C_4$ with vertex partition $\cU$.
\end{definition}

Note that while the ordering of $U_1, \dots, U_K$ in the above definitions matters, this ordering can be shifted without effect. We also emphasise that, in the above definitions, we assume that the vertex classes $U_1, \dots, U_K$ are equally sized.

\begin{fact}\label{fact:partition}
	Let $D$ be a digraph on $4n$ vertices and suppose that $\cU=(U_1, \dots, U_4)$ is an $(\varepsilon,4)$-partition for $D$. Then, the following hold.
	\begin{enumerate}
		\item $|U_1|=\dots=|U_4|=n$.\label{fact:partition-size}
		\item For each $i\in [4]$, $(U_i, \dots, U_{i+3})$ is an $(\varepsilon,4)$-partition for $D$.\label{fact:partition-cycle}
	\end{enumerate}
\end{fact}

Throughout this paper, when we work with a vertex partition $\cU=(U_1, \dots, U_K)$, the subscripts in $U_1, \dots, U_K$ are always taken modulo $K$ (so $U_{K+1}\coloneqq U_1$ for example).

\subsection{Forward and backward edges}\label{sec:forwardbackwarddef}

Let $U_1, \dots, U_K$ be disjoint vertex sets (not necessarily of the same size) and denote $\cU\coloneqq (U_1, \dots, U_K)$.
Let $u,v\in \bigcup\cU$ be distinct.
We say that $uv$ is a \emph{forward edge (with respect to $\cU$)} if there exists $i\in [K]$ such that $u\in U_i$ and $v\in U_{i+1}$. 
We say that $uv$ is a \emph{backward edge (with respect to $\cU$)} if there exists $i\in [K]$ such that $u\in U_{i+1}$ and $v\in U_i$.
Let $D$ be a digraph on $\bigcup\cU$.
We denote by $\overrightarrow{D}_\cU$ the subdigraph of $D$ induced by the forward edges of $D$ with respect to $\cU$ and by $\overleftarrow{D}_\cU$ the subdigraph of $D$ induced by the backward edges of $D$ with respect to $\cU$.
Let $v\in V(D)$. 
The \emph{forward (in/out)degree of $v$ in $D$ (with respect to $\cU$)} is the (in/out)degree of $v$ in $\overrightarrow{D}_\cU$ and the \emph{backward (in/out)degree of $v$ in $D$ (with respect to $\cU$)} is the (in/out)degree of $v$ in $\overleftarrow{D}_\cU$. 
These are denoted by
\[\overrightarrow{d}_{D,\cU}(v)\coloneqq d_{\overrightarrow{D}_\cU}(v), \quad
\overrightarrow{d}_{D,\cU}^\pm(v)\coloneqq d_{\overrightarrow{D}_\cU}^\pm(v),\quad
\overleftarrow{d}_{D,\cU}(v)\coloneqq d_{\overleftarrow{D}_\cU}(v), \quad \overleftarrow{d}_{D,\cU}^\pm(v)\coloneqq d_{\overleftarrow{D}_\cU}^\pm(v).\]
Similarly, the \emph{forward (in/out)neighbourhood of $v$ in $D$ (with respect to $\cU$)} is the (in/out)neigh\-bourhood of $v$ in $\overrightarrow{D}_\cU$ and the \emph{backward (in/out)neighbourhood of $v$ in $D$ (with respect to $\cU$)} is the (in/out)neighbourhood of $v$ in $\overleftarrow{D}_\cU$.
These are denoted by
\[\overrightarrow{N}_{D,\cU}(v)\coloneqq N_{\overrightarrow{D}_\cU}(v), \quad
\overrightarrow{N}_{D,\cU}^\pm(v)\coloneqq N_{\overrightarrow{D}_\cU}^\pm(v), \quad  
\overleftarrow{N}_{D,\cU}(v)\coloneqq N_{\overleftarrow{D}_\cU}(v), \quad
\overleftarrow{N}_{D,\cU}^\pm(v)\coloneqq N_{\overleftarrow{D}_\cU}^\pm(v).\]

\subsection{Regular bipartite tournaments}\label{sec:regbiT}

We now make a few observations about regular bipartite tournaments. Let $U_1, \dots, U_4$ be disjoint vertex sets of size $n$ and denote $\cU\coloneqq (U_1, \dots, U_4)$. Let $T$ be a bipartite tournament and suppose that $\cU$ is an $(\varepsilon, 4)$-partition for $T$. Then, it is easy to see that $T$ is a bipartite tournament on vertex classes $U_1\cup U_3$ and $U_2\cup U_4$. Moreover, note that the complete blow-up $C_4$ with vertex partition $\cU$ is a regular digraph. Thus, $T$ must be obtained by changing, for each $v\in \bigcup\cU$, the direction of the same number of in- and outedges incident to $v$. 

\begin{fact}\label{fact:backwarddegree}
	Let $U_1, \dots, U_4$ be disjoint vertex sets and $\cU\coloneqq (U_1, \dots, U_4)$.
	Let $T$ be a regular bipartite tournament on vertex classes $U_1\cup U_3$ and $U_2\cup U_4$. Then, each $v\in V(T)$ satisfies \[\overrightarrow{d}_{T,\cU}^+(v)=\overrightarrow{d}_{T,\cU}^-(v) \quad \text{and} \quad \overleftarrow{d}_{T,\cU}^+(v)=\overleftarrow{d}_{T,\cU}^-(v).\]
\end{fact}

In particular, this implies that the number of forward/backward edges is the same in each pair of the blow-up $C_4$.

\begin{fact}\label{fact:backwardedges}
	Let $U_1, \dots, U_4$ be disjoint vertex sets and denote $\cU\coloneqq (U_1, \dots, U_4)$.
	Let $T$ be a regular bipartite tournament on vertex classes $U_1\cup U_3$ and $U_2\cup U_4$. Then,
	\[e_T(U_1,U_2)=e_T(U_2,U_3)=e_T(U_3,U_4)=e_T(U_4,U_1)\] and \[e_T(U_1, U_4)=e_T(U_4, U_3)=e_T(U_3, U_2)=e_T(U_2, U_1).\]
\end{fact}

Thus, we may use the following alternative definition of an $(\varepsilon,4)$-partition.

\begin{fact}\label{fact:epsilon4partition}
	Let $U_1, \dots, U_4$ be disjoint vertex sets and $\cU\coloneqq (U_1, \dots, U_4)$.
	Let $T$ be a regular bipartite tournament on vertex classes $U_1\cup U_3$ and $U_2\cup U_4$. Then, $\cU$ is an $(\varepsilon,4)$-partition for $T$ if and only if $e_T(U_i, U_{i-1})\leq \varepsilon n^2$ for each $i\in [4]$.
\end{fact}

\subsection{Proof of Lemma \ref{lm:twocases}}\label{sec:twocases}
We are now ready to prove \cref{lm:twocases}, which states that if a regular bipartite tournament $T$ is not a bipartite robust outexpander, then $T$ is close to the complete blow-up $C_4$.

\begin{proof}[Proof of \cref{lm:twocases}]
	Let $0<\frac{1}{n_0}\ll \nu'\leq \nu\ll \tau$ and let $T$ be a regular bipartite tournament on vertex classes $A$ and $B$ of size $2n\geq n_0$. Note that $T$ is $n$-regular.
	Suppose that $T$ is not a bipartite robust $(\nu',\tau)$-outexpander with bipartition $(A,B)$.  We show that $T$ is $\sqrt{\nu'}$-close to the complete blow-up $C_4$ on vertex classes of size $n$.
		
	We may assume without loss of generality that there exists $A'\subseteq A$ satisfying $2\tau n\leq |A'|\leq 2(1-\tau)n$ for which 
	\begin{equation}\label{eq:A'}
		|RN_{\nu', T}^+(A')|<|A'|+2\nu' n.
	\end{equation}
	Denote $B'\coloneqq RN_{\nu', T}^+(A')$.
	By definition of a bipartite robust outexpander, we have
	\begin{equation}\label{eq:A'BB'}
		e_T(A', B\setminus B')< 2\nu' n|B\setminus B'|\leq 4\nu' n^2.
	\end{equation}
	Thus,
	\begin{align*}
		|A'||B'|\geq e_T(A', B')&= e_T(A', B)-e_T(A', B\setminus B')
		\stackrel{\text{\cref{eq:A'BB'}}}{\geq} n|A'|-4\nu' n^2
		\geq \left(1-\frac{2\nu'}{\tau}\right)n|A'|.
	\end{align*}
	Therefore, 
	\begin{equation}\label{eq:B'>}
		|B'|\geq \left(1-\frac{2\nu'}{\tau}\right)n
	\end{equation}
	and so 
	\begin{equation}\label{eq:A'>}
		|A'|\stackrel{\text{\cref{eq:A'},\cref{eq:B'>}}}{\geq} \left(1-\frac{3\sqrt{\nu'}}{4}\right)n.
	\end{equation}
	Moreover,
	\begin{equation*}
		n|B\setminus B'|\geq e_T(B\setminus B',A')= |B\setminus B'||A'|-e_T(A', B\setminus B')\stackrel{\text{\cref{eq:A'BB'}}}{\geq} |B\setminus B'|(|A'|-2\nu' n).
	\end{equation*}
	Therefore, 
	\begin{equation}\label{eq:A'<}
		|A'|\leq (1+2\nu')n
	\end{equation}
	and so
	\begin{equation}\label{eq:B'<}
		|B'|\stackrel{\text{\cref{eq:A'},\cref{eq:A'<}}}{\leq} (1+4\nu')n.
	\end{equation}	
	Let $U_1\cup U_3$ be a partition of $A$ such that $|U_1|=n=|U_3|$ and $|U_1\bigtriangleup A'|=|n-|A'||$. Similarly, let $U_2\cup U_4$ be a partition of $B$ such that $|U_2|=n=|U_4|$ and $|U_2\bigtriangleup B'|=|n-|B'||$.
	Note that
	\begin{equation}\label{eq:U}
		|U_1\bigtriangleup A'|\stackrel{\text{\cref{eq:A'>},\cref{eq:A'<}}}{\leq} \frac{3\sqrt{\nu'}n}{4} \quad \text{and}\quad |U_2\bigtriangleup B'|\stackrel{\text{\cref{eq:B'>},\cref{eq:B'<}}}{\leq} \frac{3\sqrt{\nu'}n}{4}.
	\end{equation}
	By \cref{fact:backwardedges,fact:epsilon4partition}, it is enough to show that $e_T(U_1,U_4)\leq 4\sqrt{\nu'}n^2$.
	We have
	\begin{align*}
		e_T(U_1,U_4)&\stackrel{\text{\eqmakebox[twocases]{}}}{\leq} e_T(A'\cap U_1, U_4\setminus B')+e_T(U_1\setminus A',U_4)+e_T(U_1,B'\cap U_4)\\
		&\stackrel{\text{\eqmakebox[twocases]{\text{\cref{eq:A'BB'},\cref{eq:U}}}}}{\leq} 4\nu' n^2+ \frac{6\sqrt{\nu'}n^2}{4}+ \frac{6\sqrt{\nu'}n^2}{4}
		\leq 4\sqrt{\nu'} n^2,
	\end{align*}
	as desired.
\end{proof}

\section{A robust decomposition lemma for blow-up cycles}\label{sec:cyclerobustdecomp}

	\onlyinsubfile{
		\setcounter{section}{7}
		\section{A robust decomposition lemma for blow-up cycles}}
	
In this section, we introduce a robust decomposition lemma for blow-up cycles. This result will be used in the proof of \cref{thm:blowupC4} to decompose the edges leftover after the approximate decomposition. (See \cref{sec:sketch} for a proof overview of \cref{thm:blowupC4}.)

First, observe that the standard robust decomposition lemma (\cref{lm:newrobustdecomp}) cannot be directly applied when $T$ is ($\varepsilon$-close to) the complete blow-up $C_4$ because we cannot find a setup or a bi-setup: since (almost) all the edges lie along a blow-up cycle, we cannot find the necessary chord edges to form a universal or bi-universal walk.
Thus, we will need to derive an analogue of \cref{lm:newrobustdecomp} which holds for blow-up $C_4$'s.

\subsection{Aim and strategy}\label{sec:cyclerobustdecomp-sketch}

Let $D$ be a blow-up $C_4$ with vertex partition $\cU=(U_1, \dots, U_4)$. (In the proof of \cref{thm:blowupC4}, $D$ will be (a subdigraph of) $\overrightarrow{T}_\cU$, that is, $D$ will consist of (some of) the forward edges of $T$.) We want to find an absorber $D^{\rm rob}\subseteq D$ such that for any sparse regular leftover $H\subseteq D\setminus D^{\rm rob}$, the digraph $H\cup D^{\rm rob}$ has a Hamilton decomposition.

Roughly speaking, the overall strategy is as follows. Recall the notion of matching contractions from \cref{sec:contractingM}. For each $i\in [4]$ in turn, we apply the standard robust decomposition lemma in a suitable auxiliary ``contracted" digraph corresponding to the pair $(U_i, U_{i+1})$ of the blow-up $C_4$. This enables us to decompose small leftovers into suitable auxiliary ``contracted" Hamilton cycles spanning $U_i$.
These are then ``expanded" into full Hamilton cycles of $D$.
(See also \cref{sec:sketch-completeblowupC4} for an informal discussion about how to construct a Hamilton cycle in a blow-up $C_4$.)

We now explain this strategy in more detail. Note that it is enough to consider each pair $(U_i, U_{i+1})$ of the blow-up $C_4$ in turn. Indeed, suppose that for each $i\in [4]$, we have constructed an absorber $D_i^{\rm rob}\subseteq D$ such that for any sparse leftover $H_i\subseteq D(U_i,U_{i+1})\setminus D_i^{\rm rob}$, the digraph $H_i\cup D_i^{\rm rob}$ has a decomposition into Hamilton cycles of $D$. Let $D^{\rm rob}\coloneqq \bigcup_{i\in [4]}D_i^{\rm rob}$. Then, for any sparse leftover $H\subseteq D\setminus D^{\rm rob}$, we can use each $D_i^{\rm rob}$ in turn to decompose the edges of $H(U_i, U_{i+1})$. Altogether, this induces a Hamilton decomposition of $H\cup D^{\rm rob}$ (recall that $D$, and so $H$, only contains edges which lie in one of the pairs $(U_i, U_{i+1})$).

Let $i\in [4]$. We now explain our strategy for constructing the absorber $D_i^{\rm rob}$. 
First, as mentioned above, observe that the problem of constructing Hamilton cycles of $D$ can be reduced to constructing Hamilton cycles on $U_i\cup U_{i+1}$.
Then, the following holds.

\begin{fact}\label{fact:cyclerobustdecomp-sketch}
	Fix an auxiliary perfect matching $M_i$ from $U_{i+1}$ to $U_i$. Let $M$ be a perfect matching from $U_i$ to $U_{i+1}$ and suppose that $M\cup M_i$ forms a Hamilton cycle on $U_i\cup U_{i+1}$. Let $\sP$ be a spanning set of vertex-disjoint paths on $\bigcup\cU$ which consists of a $(u,v)$-path for each $uv\in M_i$. Then, $M\cup \sP$ forms a Hamilton cycle on~$\bigcup\cU$.
\end{fact}

In our robust decomposition lemma for blow-up cycles, we will input such spanning sets of vertex-disjoint paths (these will be incorporated into the special factors). Thus, we have reduced the original problem to that of finding an absorber $D_i^{\rm rob}$ such that the following holds: for any sparse leftover $H_i\subseteq D(U_i, U_{i+1})\setminus D_i^{\rm rob}$, the digraph $H_i\cup D_i^{\rm rob}$ has a decomposition into perfect matchings from $U_i$ to $U_{i+1}$, each of which forms a Hamilton cycle on $U_i\cup U_{i+1}$ with a fixed auxiliary matching $M_i$.

We now discuss the construction of $D_i^{\rm rob}$. We have already discussed (e.g.\ in \cref{sec:contractingM,sec:sketch-cycle,sec:approxdecomp}) that one can construct Hamilton cycles which contain a prescribed perfect matching by considering contracted digraphs. More precisely, fix an auxiliary perfect matching $M_i$ from $U_{i+1}$ to $U_i$ and let $\tD_i$ be the $M_i$-contraction of $D[U_i, U_{i+1}]$ (recall \cref{def:contractexpand}\cref{def:contract}). Then, as seen in \cref{fact:contractingHamcycle}, a Hamilton cycle in $\tD_i$ induces a perfect matching from $U_i$ to $U_{i+1}$ in $D$ which forms a Hamilton cycle on $U_i\cup U_{i+1}$ with $M_i$.
Thus, we can let $D_i^{\rm rob}$ be the $M_i$-expansion of the absorber $\tD_i^{\rm rob}$ obtained by applying the robust decomposition lemma in $\tD_i$.
Indeed, suppose that $H_i\subseteq D(U_i, U_{i+1})\setminus D_i^{\rm rob}$ is a sparse leftover. Denote by $\tH_i$ the $M_i$-contraction of $H_i$. Then, \cref{lm:newrobustdecomp} implies that $\tH_i\cup \tD_i^{\rm rob}$ has a decomposition into Hamilton cycles on $U_i$. By \cref{fact:contractingHamcycle}, this induces a decomposition of $H_i\cup D_i^{\rm rob}$ into perfect matchings from $U_i$ to $U_{i+1}$ which form Hamilton cycles on $U_i\cup U_{i+1}$ with $M_i$, as desired.

Note that since we consider each pair $(U_i, U_{i+1})$ of the blow-up $C_4$ in turn, our methods hold for more general blow-up cycles of any length. Thus, we write the rest of this section for general blow-up $C_K$'s for possible future use. In this paper, we will only need the case $K=4$ (to prove \cref{thm:blowupC4}).

\subsection{Definitions}\label{sec:cyclerobustdecomp-def}
First, we introduce the cycle analogues of setups, special path systems, and special factors (which were defined in \cref{sec:ST,sec:SPS}).

\subsubsection{Cycle-setups}
Let $D$ be a blow-up $C_K$ with vertex partition $\cU=(U_1, \dots, U_K)$. For each $i\in [K]$, let $M_i$ be an auxiliary perfect matching from $U_{i+1}$ to $U_i$ and let $\tD_i$ be the $M_i$-contraction of $D[U_i, U_{i+1}]$. 
As discussed in \cref{sec:cyclerobustdecomp-sketch}, we aim to apply the standard robust decomposition lemma (\cref{lm:newrobustdecomp}) in each $\tD_i$ in turn and so we will need a setup in each $\tD_i$. This motivates the next definition:
roughly speaking, a cycle-setup consists of $K$ setups, one in each $\tD_i$. 

\begin{definition}[\Gls*{cycle-setup}]\label{def:CST}
	$(D, \cU, \cP, \cP', \cP^*, \cR, \cC, \sU, \sU', \cM)$ is a \emph{$(K,\ell',\ell^*,k, m, \varepsilon,d)$-cycle-setup} if the following properties are satisfied.
	\begin{enumerate}[label=\rm(CST\arabic*),longlabel]
	\item $D$ is a blow-up $C_K$ with vertex partition $\cU=(U_1, \dots, U_K)$. \label{def:CST-D}
	\item $\cM=(M_1, \dots, M_K)$ where, for each $i\in [K]$, $M_i$ is an auxiliary directed perfect matching from $U_{i+1}$ to $U_i$.\label{def:CST-M}
	\item $\cP=(\cP_1, \dots, \cP_K)$, $\cP'=(\cP_1', \dots, \cP_K')$, $\cP^*=(\cP_1^*, \dots, \cP_K^*)$, $\cC=(C^1, \dots, C^K)$, $\cR=(R_1, \dots, R_K)$, $\sU=(U^1, \dots, U^K)$, and $\sU'=(U'^1, \dots, U'^K)$ are such that the following holds for each $i\in [K]$. Let $\tD_i$ be the $M_i$-contraction of $D[U_i, U_{i+1}]$. Then,
	$(\tD_i, \cP_i, \cP_i', \cP_i^*, R_i, C^i, U^i, U'^i)$ is an $(\ell', \ell^*, k, m, \varepsilon, d)$-setup with an empty exceptional set.\label{def:CST-ST}
	\end{enumerate}
\end{definition}

Whenever $(D, \cU, \cP, \cP', \cP^*, \cR, \cC, \sU, \sU', \cM)$ is a $(K,\ell',\ell^*,k, m, \varepsilon,d)$-cycle-setup, we implicitly use the notation $\cU=(U_1, \dots, U_K)$, $\cP=(\cP_1, \dots, \cP_K)$, $\cP'=(\cP_1', \dots, \cP_K')$, $\cP^*=(\cP_1^*, \dots, \cP_K^*)$, $\cR=(R_1, \dots, R_K)$, $\cC=(C^1, \dots, C^K)$, $\sU=(U^1, \dots, U^K)$, $\sU'=(U'^1, \dots, U'^K)$, and $\cM=(M_1, \dots, M_K)$.

\begin{definition}[\Gls*{cycle-framework}]\label{def:CF}
	To avoid repetitions, we say that $(\cU, \cP, \cP^*,\cC, \cM)$ is a \emph{$(K,\ell^*,k,n)$-cycle-framework} if $\cU= (U_1, \dots, U_K)$, $\cP=(\cP_1, \dots, \cP_K)$, $\cP^*=(\cP_1^*, \dots, \cP_K^*)$, $\cC= (C^1, \dots, C^K)$, and $\cM= (M_1, \dots, M_K)$ satisfy the following properties for each $i\in [K]$.
	\begin{enumerate}[label=\rm(CF\arabic*),longlabel]
		\item $U_i$ is a vertex set of size $n$ which is disjoint from the other sets in $\cU$.\label{def:CF-U}
		\item $\cP_i$ is a partition of $U_i$ into an empty exceptional set and $k$ clusters of size $\frac{n}{k}$.\label{def:CF-P}
		\item $\cP_i^*$ is an $\ell^*$-refinement of $\cP_i$.\label{def:CF-P*}
		\item $C^i$ is a Hamilton cycle on the clusters in $\cP_i$.\label{def:CF-C}
		\item $M_i$ is an auxiliary perfect matching from $U_{i+1}$ to $U_i$.\label{def:CF-M}
	\end{enumerate}
\end{definition}

Whenever we say that $(\cU, \cP, \cP^*,\cC, \cM)$ is a $(K,\ell^*,k,n)$-cycle-framework, we implicitly use the notation $\cU= (U_1, \dots, U_K)$, $\cP=(\cP_1, \dots, \cP_K)$, $\cP^*=(\cP_1^*, \dots, \cP_K^*)$, $\cC= (C^1, \dots, C^K)$, and $\cM= (M_1, \dots, M_K)$.

One can easily verify that a cycle-setup induces a cycle-framework.

\begin{fact}\label{fact:CSTCF}
	Let $(D, \cU, \cP, \cP', \cP^*, \cR, \cC, \sU, \sU', \cM)$ be a $(K,\ell',\ell^*,k, m, \varepsilon,d)$-cycle-setup. Then, $(\cU, \cP, \cP^*,\cC, \cM)$ is a $(K,\ell^*,k,n)$-cycle-framework where $n\coloneqq |U_1|$.
\end{fact}

\COMMENT{\begin{proof}
		First, \cref{def:CF-U,def:CF-M} follow from \cref{def:CST-D,def:CST-M}, respectively. By \cref{def:CST-ST}, \cref{def:CF-P,def:CF-P*} follow from \cref{def:ST-P,def:ST-P*}, respectively, while \cref{def:CF-C} follows from \cref{def:ST-C,def:ST-R}.
\end{proof}}

Any partition is a $1$-refinement of itself, so the following holds.

\begin{fact}\label{fact:CFP}
	Let $(\cU, \cP, \cP^*,\cC, \cM)$ be a $(K,\ell^*,k,n)$-cycle-framework. Then, $(\cU, \cP, \cP,\cC, \cM)$ is a $(K,1,k,n)$-cycle-framework.
\end{fact}

Recall from \cref{prop:bisetupedgesremoval} that a setup remains a setup (with slightly worse parameters) after removing a few edges incident to each vertex. Using similar arguments, one can show that the analogue holds for a cycle-setup.

\begin{prop}\label{prop:CST}
	Let $0<\frac{1}{m}\ll\frac{1}{k}, \varepsilon\leq \varepsilon'\ll d\ll \frac{1}{\ell'}\ll 1$ and $\varepsilon'\ll \frac{1}{\ell^*}$.
	Let $D$ be a digraph and let $D'$ be obtained from $D$ by removing at most $\varepsilon'm$ inedges and $\varepsilon'm$ outedges incident to each vertex.
	If $(D, \cU, \cP, \cP', \cP^*, \cR, \cC, \sU, \sU', \cM)$ is a $(K, \ell', \ell^*, k, m, \varepsilon, d)$-cycle-setup, then $(D', \cU, \cP, \cP', \cP^*, \cR, \cC, \sU, \sU', \cM)$ is a $(K, \ell', \ell^*, k, m, (\varepsilon')^{\frac{1}{3}}, \frac{d}{2})$-cycle-setup.
\end{prop}

\COMMENT{\begin{proof}
	Note that \cref{def:CST-D,def:CST-M} are still satisfied. For each $i\in [K]$, denote by $\tD_i$ and $\tD_i'$ the $M_i$-contractions of $D[U_i, U_{i+1}]$ and $D'[U_i, U_{i+1}]$. By \cref{fact:Ncontract}, $\tD_i'$ is obtained from $\tD_i$ by removing at most $\varepsilon'm+1$ inedges and $\varepsilon'm+1$ outedges incident to each vertex. Therefore, \cref{def:CST-ST} follows from the proof of \cref{prop:bisetupedgesremoval} (there is room to spare for the extra edges removed in the contraction step).
\end{proof}}

\subsubsection{Extended special path systems and extended special factors}

We will now introduce the concept of extended special path systems. Roughly speaking, these can be viewed as the analogues of the special path systems for blow-up $C_K$'s. As discussed in \cref{sec:SPS}, special path systems can viewed as building blocks for Hamilton cycles; in \cref{lm:newrobustdecomp}, each special path system that we input gives rise to a distinct Hamilton cycle. Analogously, extended special path systems (defined formally below) will be building blocks for constructing Hamilton cycles in a blow-up cycle; in the blow-up cycle version of the robust decomposition lemma (\cref{lm:cyclerobustdecomp} below), each extended special path system that we input will give rise to a distinct Hamilton cycle.

The structure of an extended special path system follows naturally from the proof idea described in \cref{sec:cyclerobustdecomp-sketch}.
More precisely, let $D$ be a blow-up $C_K$ with vertex partition $\cU=(U_1, \dots, U_K)$. For each $i\in [K]$, let $M_i$ be an auxiliary perfect matching from $U_{i+1}$ to $U_i$ and let $\tD_i$ be the $M_i$-contraction of $D[U_i, U_{i+1}]$.
As discussed in \cref{sec:cyclerobustdecomp-sketch}, the leftovers in each of the pairs $(U_i, U_{i+1})$ will be decomposed in two steps. First, we use the robust decomposition lemma in $\tD_i$ to decompose the leftovers into Hamilton cycles in the contracted pair $(U_i, U_{i+1})$.
Then, we expand each of these contracted Hamilton cycles using a spanning set of vertex-disjoint paths whose endpoints are prescribed by $M_i$ (see \cref{fact:cyclerobustdecomp-sketch}).
Thus, an extended special path system will consist of two parts: a special path system $SPS$ in the contracted pair $(U_i, U_{i+1})$ (which will be used to apply \cref{lm:newrobustdecomp} in $\tD_i$) and a spanning set of paths with prescribed endpoints (which will be used to expand the contracted Hamilton cycle containing $SPS$). (Recall that special path systems were introduced in \cref{def:SPS}.)

\begin{definition}[\Gls*{friendly extended special path system}]\label{def:FESPS}
	Let $(\cU, \cP, \cP^*,\cC, \cM)$ be a $(K,\ell^*,k,n)$-cycle-framework and suppose that $\frac{k}{f}\in \mathbb{N}$. For any $(h,i,j)\in [\ell^*]\times [K]\times [f]$, a \emph{friendly $(\ell^*,K, f, h,i,j)$-extended special path system with respect to $\cU, \cP^*, \cC$, and $\cM$} is a linear forest $FESPS$ for which the following hold.
	\begin{enumerate}[label=\rm(FESPS\arabic*),longlabel]
		\item The digraph obtained by deleting all the isolated vertices in the $M_i$-contraction of $FESPS[U_i, U_{i+1}]$ is an $(\ell^*,f,h,j)$-special path system with respect to $\cP_i^*$ and $C^i$.\label{def:FESPS-SPS}
		\item $FESPS\setminus E_{FESPS}(U_i, U_{i+1})$ is a spanning linear forest on $\bigcup\cU$ which consists of $n$ components, one $(u,v)$-path for each $uv\in M_i$.\label{def:FESPS-M}
	\end{enumerate}
\end{definition}

Recall that the main purpose of special path systems is to prescribe edges in our Hamilton decompositions. In particular, in the $\varepsilon$-close to the blow-up $C_4$ case (\cref{thm:blowupC4}), we will need to incorporate prescribed sets of backward edges. It turns out that the concept of friendly extended special path systems is very inconvenient for doing so. However, as discussed in \cref{sec:equivalentP}, if we want to incorporate a linear forest $F$ into a Hamilton cycle, then the internal structure of $F$ is not important; we can always consider an equivalent linear forest instead (recall \cref{def:equivalentP}). Thus, we can generalise the concept of friendly extended special path systems as follows.

\begin{definition}[\Gls*{extended special path system}]\label{def:ESPS}
	Let $(\cU, \cP, \cP^*,\cC, \cM)$ be a $(K,\ell^*,k,n)$-cycle-framework and suppose that $\frac{k}{f}\in \mathbb{N}$. For any $(h,i,j)\in [\ell^*]\times [K]\times [f]$, a linear forest is an \emph{$(\ell^*,K,f,h,i,j)$-extended special path system with respect to $\cU, \cP^*, \cC$, and $\cM$} if it is equivalent to a friendly $(\ell^*,K,f,h,i,j)$-extended special path system with respect to $\cU, \cP^*, \cC$, and $\cM$.
\end{definition}

Note that since a linear forest is equivalent to itself, a friendly extended special path system is indeed an extended special path system.

\begin{definition}[\Gls*{extended special factor}]\label{def:ESF}
	Let $(\cU, \cP, \cP^*,\cC, \cM)$ be a $(K,\ell^*,k,n)$-cycle-frame\-work and suppose that $\frac{k}{f}\in \mathbb{N}$.
	An \emph{$(\ell^*,K,f)$-extended special factor with respect to $\cU, \cP^*, \cC$, and $\cM$} is a multidigraph which has a decomposition $\{ESPS_{h,i,j}\mid (h,i,j)\in [\ell^*]\times [K]\times [f]\}$ where, for each $(h,i,j)\in [\ell^*]\times [K]\times [f]$, $ESPS_{h,i,j}$ is an $(\ell^*,K,f,h,i,j)$-extended special path system with respect to $\cU, \cP^*, \cC$, and $\cM$.
\end{definition}

\subsection{Statement of the robust decomposition lemma for blow-up cycles}

We are now ready to state a blow-up cycle version of the robust decomposition lemma.

\begin{lm}[Robust decomposition lemma for blow-up cycles]\label{lm:cyclerobustdecomp}
	Let $0<\frac{1}{m}\ll \frac{1}{k}\ll \eps \ll \frac{1}{q} \ll \frac{1}{f} \ll \frac{r_1}{m}\ll d\ll \frac{1}{\ell'}, \frac{1}{g}\ll 1$ and suppose
	that $rk^2\le m$. Let
	\[r_2\coloneqq 96\ell'g^2kr, \quad  r_3\coloneqq \frac{rfk}{q}, \quad r^\diamond\coloneqq r_1+r_2+r-(q-1)r_3, \quad s'\coloneqq rfk+7r^\diamond,\]
	and suppose that $\frac{k}{14}, \frac{k}{f}, \frac{k}{g}, \frac{q}{f}, \frac{m}{4\ell'}, \frac{fm}{q}, \frac{2fk}{3g(g-1)} \in \mathbb{N}$.
	Let $(D, \cU, \cP, \cP', \cP^*, \cR, \cC, \sU, \sU', \cM)$ be a $(K,\ell',\frac{q}{f},k, m, \varepsilon,d)$-cycle-setup.
	Let $\mathcal{ESF}$ be a multidigraph which consists of the union of $r_3$ $(\frac{q}{f},K,f)$-extended special factors with respect to $\cU, \cP^*, \cC$, and $\cM$ and let $\mathcal{ESF}'$ be a multidigraph which consists of the union of $r^\diamond$ $(1,K,7)$-extended special factors with respect to $\cU, \cP, \cC$, and $\cM$.
	Then, $D$ contains an $(r_1+r_2+5r^\diamond)$-regular spanning subdigraph $D^{\rm rob}$ for which the following holds.
	Let $H$ be an $r$-regular blow-up $C_K$ with vertex partition $\cU$. Suppose that $H$ is edge-disjoint from $D^{\rm rob}$ and that $E(H)\cap \{uv\mid vu\in \bigcup\cM\}=\emptyset$.
	Then, the multidigraph $H\cup D^{\rm rob}\cup \mathcal{ESF}\cup \mathcal{ESF}'$ has a decomposition $\sC$ into $Ks'$
	edge-disjoint Hamilton cycles such that each cycle in $\sC$ contains precisely one of the extended special path systems in the multidigraph $\mathcal{ESF}\cup \mathcal{ESF}'$.
\end{lm}

By \cref{fact:equivalentP}, we may assume without loss of generality that all extended special path systems contained in $\mathcal{ESF}\cup \mathcal{ESF}'$ are friendly. Thus, as discussed in \cref{sec:cyclerobustdecomp-sketch}, \cref{lm:cyclerobustdecomp} can be obtained by applying the original robust decomposition lemma (\cref{lm:newrobustdecomp}) to each contracted pair $(U_i,U_{i+1})$ of the blow-up cycle in turn. (\APPENDIX{A formal derivation is available in \cref{app:robustdecomp}.}\NOAPPENDIX{A formal derivation is available is available in an appendix of the arXiv version of this paper.})

\onlyinsubfile{\bibliographystyle{abbrv}
	\bibliography{Bibliography/Bibliography}}

\section{Applying the robust decomposition lemma in a very dense blow-up \texorpdfstring{$C_4$}{C4}}\label{sec:ESF}

	\onlyinsubfile{
		\setcounter{section}{7}
		\section{Applying the robust decomposition lemma in a very dense blow-up \texorpdfstring{$C_4$}{C4}}}

In this section, we discuss how to apply \cref{lm:cyclerobustdecomp} in the context of \cref{thm:blowupC4}. Let $T$ be a bipartite tournament which is $\varepsilon$-close to a blow-up $C_4$ with vertex partition $\cU$. Then, observe that $\overrightarrow{T}_\cU$ (that is, the subdigraph of $T$ induced by the forward edges of $T$ (see \cref{sec:forwardbackwarddef})) is a very dense blow-up $C_4$ with vertex partition $\cU$ (only at most an $\varepsilon$ proportion of the edges are missing for $\overrightarrow{T}_\cU$ to be the complete blow-up $C_4$). We will find our absorber $D^{\rm rob}$ by applying the robust decomposition lemma for blow-up cycles (\cref{lm:cyclerobustdecomp}) to (a subdigraph of) $\overrightarrow{T}_\cU$. In this section, we show how to construct, in a very dense blow-up $C_4$, the extended special factors and the cycle-setup required to apply \cref{lm:cyclerobustdecomp}.

\subsection{An alternative description of extended special path systems}
To construct extended special path systems, it will be convenient to consider the following alternative description of extended special path systems.

\begin{prop}\label{prop:ESPS}
	Let $(\cU, \cP, \cP^*,\cC, \cM)$ be a $(K,\ell^*,k,n)$-cycle-framework and suppose that $\frac{k}{f}\in \mathbb{N}$. Let $(h,i,j)\in [\ell^*]\times [K]\times [f]$ and denote $k'\coloneqq \frac{k}{f}+1$. Denote by $I=W_1\dots W_{k'}$ the $j^{\rm th}$ interval in the canonical interval partition of $C^i$ into $f$ intervals. Let $W_{1,h}, \dots, W_{k',h}$ denote the $h^{\rm th}$ subclusters of $W_1, \dots, W_{k'}$ contained in $\cP_i^*$, respectively. Then, a linear forest $ESPS$ is an $(\ell^*,K,f, h,i,j)$-extended special path system if and only if the following properties are satisfied.
	\begin{enumerate}
		\item $V(ESPS)=\bigcup\cU$.\label{prop:ESPS-V}
		\item $V^+(ESPS)= U_{i+1}\setminus N_{M_i}(W_{2,h}\cup \dots \cup W_{k',h})$.\label{prop:ESPS-V+}
		\item $V^-(ESPS)=U_i\setminus (W_{1,h}\cup \dots \cup W_{k'-1,h})$.\label{prop:ESPS-V-}
		\item If $uv\in M_i-(W_{1,h}\cup \dots\cup W_{k',h})$, then $ESPS$ has a component which is a $(u,v)$-path.\label{prop:ESPS-M} 
	\end{enumerate}
\end{prop}

We now give a brief overview of the idea behind \cref{prop:ESPS}.
Recall from \cref{def:SPS} that a special path system is a linear forest which covers a given interval of $C^i$. By \cref{fact:Nuncontract}, the $M_i$-expansion of a special path system is thus a matching which covers a given interval. Thus, \cref{def:FESPS} implies that a friendly extended special path system is a spanning linear forest whose components have endpoints which avoid a given interval and which are matched according to the auxiliary matching $M_i$.
By \cref{def:equivalentP}, these properties are shared by any linear forest which is equivalent to a friendly extended special path system, that is, by any extended special path system (recall \cref{def:ESPS}).
Thus, an extended special path system is simply a spanning linear forest with suitably prescribed endpoints.

\begin{proof}[Proof of \cref{prop:ESPS}]
	$(\Rightarrow)$ Firstly, suppose that $ESPS$ is an $(\ell^*,K,f, h,i,j)$-extended special path system. We need to show that \cref{prop:ESPS-M,prop:ESPS-V,prop:ESPS-V+,prop:ESPS-V-} are satisfied. By \cref{def:equivalentP}, we may assume without loss of generality that $ESPS$ is friendly. Denote $D_1\coloneqq ESPS\setminus ESPS(U_i, U_{i+1})$ and $D_2\coloneqq ESPS\setminus D_1$ (i.e.\ $E(D_2)= E_{ESPS}(U_i, U_{i+1})$). Then, \cref{def:FESPS-M} implies that each $v\in \bigcup\cU$ satisfies
	\begin{equation*}
		d_{D_1}^+(v)=
		\begin{cases}
			1 & \text{if }v\in \bigcup\cU\setminus U_i;\\
			0 & \text{if } v\in U_i;\\
		\end{cases}
		\quad \text{and} \quad
		d_{D_1}^-(v)=
		\begin{cases}
			1 & \text{if }v\in \bigcup\cU\setminus U_{i+1};\\
			0 & \text{if }v \in U_{i+1}.\\
		\end{cases}
	\end{equation*}
	In particular, \cref{prop:ESPS-V} holds.
	Let $SPS$ be the $M_i$-contraction of $ESPS[U_i,U_{i+1}]$. By \cref{def:FESPS-SPS} and \cref{def:SPS}, each $v\in U_i\cup U_{i+1}$ satisfies
	\begin{equation*}
		d_{SPS}^+(v)=
		\begin{cases}
			1 & \text{if }v\in \bigcup_{j'\in [k'-1]}W_{j',h};\\
			0 & \text{otherwise};\\
		\end{cases}
		\quad \text{and} \quad
		d_{SPS}^-(v)=
		\begin{cases}
			1 & \text{if }v\in \bigcup_{j'\in [k'-1]}W_{j'+1,h};\\
			0 & \text{otherwise}.\\
		\end{cases}
	\end{equation*}
	By \cref{def:FESPS-M}, $ESPS$ contains a $(u,v)$-path for each $uv\in M_i$. Since $ESPS$ is a linear forest, this implies that $E(ESPS)\cap \{uv\mid vu\in M_i\}=\emptyset$. Therefore, \cref{fact:contractinguncontracting} implies that $ESPS[U_i, U_{i+1}]$ is the $M_i$-expansion of $SPS$. Thus, \cref{fact:Nuncontract} implies that each $v\in \cU$ satisfies
	\begin{equation*}
		d_{D_2}^+(v)=
		\begin{cases}
			1 & \text{if }v\in \bigcup_{j'\in [k'-1]}W_{j',h};\\
			0 & \text{otherwise};\\
		\end{cases}
		\quad \text{and} \quad
		d_{D_2}^-(v)=
		\begin{cases}
			1 & \text{if }v\in \bigcup_{j'\in [k'-1]}N_{M_i}(W_{j'+1,h});\\
			0 & \text{otherwise}.\\
		\end{cases}
	\end{equation*}
	Therefore, \cref{prop:ESPS-V+,prop:ESPS-V-} are satisfied. For \cref{prop:ESPS-M}, suppose that $uv\in M_i-(W_{1,h}\cup \dots \cup W_{k',h})$. By \cref{def:FESPS-M}, $D_1$ has a component $P_{uv}$ which is a $(u,v)$-path. Moreover, $d_{D_2}(u)=0=d_{D_2}(v)$. Thus, $P_{uv}$ is also a component of $ESPS$ and so \cref{prop:ESPS-M} holds.
	
	$(\Leftarrow)$ Secondly, suppose that $ESPS$ is a linear forest which satisfies \cref{prop:ESPS-M,prop:ESPS-V,prop:ESPS-V+,prop:ESPS-V-}. We need to show that $ESPS$ is an $(\ell^*,K,f, h,i,j)$-extended special path system. By \cref{def:ESPS}, it is enough to construct a friendly $(\ell^*,K,f, h,i,j)$-extended special path system $FESPS$ which is equivalent to $ESPS$.
	
	In order to satisfy \cref{def:FESPS-M}, our friendly extended special path system will need to contain a spanning set of vertex-disjoint paths whose endpoints ``correspond" to the edges of $M_i$. We construct this set of paths as follows. For each $i'\in [K]$, let $u_{i',1}, \dots, u_{i',n}$ be an enumeration of $U_{i'}$. Suppose without loss of generality that $M_i=\{u_{i+1,j'}u_{i,j'}\mid j'\in [n]\}$. Let $\sP\coloneqq \{u_{i+1,j'}u_{i+2,j'}\dots u_{i+K-1,j'}u_{i,j'}\mid j'\in [n]\}$.
	Note that \cref{def:FESPS-M} holds with $\sP$ playing the role of $FESPS\setminus E_{FESPS}(U_i, U_{i+1})$.
	
	We now list the components of $ESPS$ and specify their endpoints. (This will enable to us to construct a friendly extended special path system which is equivalent to $ESPS$.)
	For each $uv\in M_i-(W_{1,h}\cup \dots\cup W_{k',h})$, denote by $P_{uv}$ the component of $ESPS$ which is a $(u,v)$-path ($P_{uv}$ exists by \cref{prop:ESPS-M} and is unique since $ESPS$ is a linear forest).
	Let $\sP_1\coloneqq \{P_{uv}\mid uv\in M_i-(W_{1,h}\cup \dots\cup W_{k',h})\}$. Let $\sP_2$ be the set of components of $ESPS\setminus \sP_1$. By \cref{prop:ESPS-V+,prop:ESPS-V-}, $\sP_2$ consists of $m'\coloneqq |W_{1,h}|$ paths, each of which starts in $N_{M_i}(W_{1,h})$ and ends in $W_{k',h}$.
	
	We are now ready to select the edges from $U_i$ to $U_{i+1}$.
	For each $j'\in [k']$, let $v_{j',1}, \dots, v_{j',m'}$ be an enumeration of $W_{j',h}$ and denote by $w_{j',1}, \dots, w_{j',m'}$ the (unique) neighbours of $v_{j',1}, \dots, v_{j',m'}$ in $M_i$, respectively. Suppose without loss of generality that $\sP_2$ consists of a $(w_{1,j'},v_{k',j'})$-path $P_{j'}$ for each $j'\in [m']$.
	For each $j'\in [k'-1]$, let $M_{j'}'\coloneqq \{v_{j',1}w_{j'+1,1}, \dots, v_{j',m'}w_{j'+1,m'}\}$. Let $M'\coloneqq M_1'\cup \dots \cup M_{k'-1}'$.
	
	Let $FESPS$ be the digraph on $\bigcup\cU$ defined by $E(FESPS)\coloneqq E(\sP) \cup M'$. Observe that $FESPS\setminus FESPS(U_i, U_{i+1})=\sP$ and so \cref{def:FESPS-M} holds.
	Thus, it remains to prove that $FESPS$ is a spanning linear forest which is equivalent to $ESPS$ and that \cref{def:FESPS-SPS} holds.
	
	\begin{claim}\label{claim:ESPS-linforest}
		$FESPS$ is a spanning linear forest satisfying both $V^\pm(FESPS)=V^\pm(ESPS)$.
	\end{claim}
	
	\begin{proofclaim}
		By construction of $\sP$, each $v\in \bigcup\cU$ satisfies
		\begin{equation*}
			d_{\sP}^+(v)=
			\begin{cases}
				1 & \text{if }v\in \bigcup\cU\setminus U_i;\\
				0 & \text{if }v\in U_i;\\
			\end{cases}
			\quad \text{and} \quad
			d_{\sP}^-(v)=
			\begin{cases}
				1 & \text{if }v\in \bigcup\cU\setminus U_{i+1};\\
				0 & \text{if }v\in U_{i+1}.\\
			\end{cases}
		\end{equation*}
		By definition of $M'$, each $v\in \bigcup\cU$ satisfies
		\begin{equation*}
			d_{M'}^+(v)=
			\begin{cases}
				1 & \text{if }v\in \bigcup_{j'\in [k'-1]}W_{j',h};\\
				0 & \text{otherwise};\\
			\end{cases}
			\quad \text{and} \quad
			d_{M'}^-(v)=
			\begin{cases}
				1 & \text{if }v\in \bigcup_{j'\in [k'-1]}N_{M_i}(W_{j'+1,h});\\
				0 & \text{otherwise}.\\
			\end{cases}
		\end{equation*}
		Thus, $FESPS$ is spanning and $\Delta^0(FESPS)=1$.
		Moreover, \cref{prop:ESPS-V+,prop:ESPS-V-} imply that both $V^\pm(FESPS)=V^\pm(ESPS)$. 
		
		Suppose for a contradiction that $FESPS$ contains a cycle $C$. Since $\sP$ is a linear forest, $E(C)\cap M'\neq \emptyset$. Let $j'\in [k'-1]$ be the largest index such that $E(C)\cap M_{j'}'\neq\emptyset$ and let $vw\in E(C)\cap M_{j'}'$. By construction of $M_{j'}'$, we have $w\in N_{M_i}(W_{j'+1,h})$. Let $w'$ be the (unique) neighbour of $w$ in $M_i$. Note that $w'\in W_{j'+1,h}\subseteq U_i$.		
		By definition of $\sP$, we have $w'\in V(C)$ and $d_{\sP}^+(w')=0$. Therefore, there exists $e\in E(C)\cap M'$ which starts at $w'$. By construction of $M'$, we have $j'< k'-1$ and $e\in M_{j'+1}'$. But this contradicts the maximality of $j'$ and so $FESPS$ does not contain a cycle.
	\end{proofclaim}
	
	\begin{claim}
		$ESPS$ and $FESPS$ are equivalent.
	\end{claim}
	
	\begin{proofclaim}
		Recall \cref{def:equivalentP}. By \cref{claim:ESPS-linforest} and \cref{prop:ESPS-V}, we have $V(ESPS)=\bigcup\cU=V(FESPS)$.
		Thus, it remains to find a bijection $\phi$ from the components of $ESPS$ to the components of $FESPS$ such that for each component $P$ of $ESPS$, the paths $P$ and $\phi(P)$ have the same starting and ending points.
		
		Recall that $\sP_1\cup \sP_2$ denotes the set of components of $ESPS$, where $\sP_1$ consists of a $(u,v)$-path $P_{uv}$ for each $uv\in M_i-(W_{1,h}\cup \dots\cup W_{k',h})$ and $\sP_2$ consists of a $(w_{1,j'},v_{k',j'})$-path $P_{j'}$ for each $j'\in [m']$.
		
		Let $uv\in M_i-(W_{1,h}\cup \dots\cup W_{k',h})$. 
		Let $P_{uv}'$ be the $(u,v)$-path contained in $\sP$.
		By construction of $M'$, both $u,v\notin V(M')$. Thus, $P_{uv}'$ is a component of $FESPS$ and so we can let $\phi(P_{uv})\coloneqq P_{uv}'$.
		
		Let $j'\in [m']$. By definition of $M'$, we have both $w_{1,j'},v_{k',j'}\notin V(M')$. Moreover, \[v_{1,j'}w_{2,j'}, v_{2,j'}w_{3,j'}, \dots, v_{k'-1,j'}w_{k',j'}\in M'.\] For each $i'\in [k']$, let $Q_{i'}$ be the $(w_{i',j'},v_{i',j'})$-path contained in $\sP$. Then, \[P_{j'}'\coloneqq w_{1,j'}Q_1v_{1,j'}w_{2,j'}Q_2v_{2,j'}w_{3,j'}\dots w_{k'-1,j'}Q_{k'-1}v_{k'-1,j'}w_{k',j'}\] is a component of $FESPS$ and so we can let $\phi(P_{j'})\coloneqq P_{j'}'$.
		
		By construction, $\phi$ is an injection from the components of $ESPS$ to the components of $FESPS$ such that for each component $P$ of $ESPS$, the paths $P$ and $\phi(P)$ have the same starting and ending points.
		Since $FESPS$ is a linear forest satisfying both $V^\pm(FESPS)=V^\pm(ESPS)$, $\phi$ is also a surjection. 
	\end{proofclaim}
	
	\begin{claim}
		\cref{def:FESPS-SPS} is satisfied.
	\end{claim}
	
	\begin{proofclaim}
		Let $D$ be the $M_i$-contraction of $FESPS[U_i, U_{i+1}]=M'[U_i,U_{i+1}]$ and let $SPS$ be obtained from $D$ by deleting all the isolated vertices. We need to show that $SPS$ is an $(\ell^*,f,h,j)$-special path system with respect to $\cP_i^*$ and $C^i$.
		By \cref{def:FESPS-M}, $F\coloneqq M'\cup M_i$ is 
		obtained from $FESPS$ by contracting each path in $\sP$ into an edge from its starting point to its ending point. Together with \cref{claim:ESPS-linforest}, \cref{prop:ESPS-V+}, and \cref{prop:ESPS-V-}, this implies that $F$ is	a linear forest satisfying the following properties.
		\begin{itemize}
			\item $V^+(F)= V^+(FESPS)=V^+(ESPS)= U_{i+1}\setminus N_{M_i}(W_{2,h}\cup \dots \cup W_{k',h})$.
			\item $V^-(F)=V^-(FESPS)=V^-(ESPS)=U_i\setminus (W_{1,h}\cup \dots \cup W_{k'-1,h})$.
			\item $V^0(F)\cap U_i= U_i\setminus (V^+(F)\cup V^-(F))=W_{1,h}\cup \dots \cup W_{k'-1,h}$.
		\end{itemize}
		Thus, \cref{fact:contractlinforest} (applied with $U_i, U_{i+1}$, and $M_i$ playing the roles of $A, B$, and $M$) implies that $D$ is a linear forest satisfying the following properties.
		\begin{itemize}
			\item $V^+(D)= N_{M_i}(V^+(F))=U_i\setminus(W_{2,h}\cup \dots \cup W_{k',h})$.
			\item $V^-(D)=V^-(F)=U_i\setminus (W_{1,h}\cup \dots \cup W_{k'-1,h})$.
			\item $V^0(D)= (V^0(F)\cap U_i)\setminus N_{M_i}(V^+(F))=W_{2,h}\cup \dots \cup W_{k'-1,h}$.
		\end{itemize}
		In particular, $SPS$ is a linear forest satisfying $V^0(SPS)=V^0(D)=W_{2,h}\cup \dots \cup W_{k'-1,h}$ and so \cref{def:SPS-V0} holds.
		Note that the set of isolated vertices in $D$ is precisely $V^+(D)\cap V^-(D)$. Thus, $V^+(SPS)=V^+(D)\setminus V^-(D)=W_{1,h}$ and $V^-(SPS)=V^-(D)\setminus V^+(D)=W_{k',h}$, so \cref{def:SPS-V+-} holds. Therefore, $SPS$ is an $(\ell^*,f,h,j)$-special path system and so \cref{def:FESPS-SPS} is satisfied.
	\end{proofclaim}
	This concludes the proof of \cref{prop:ESPS}.
\end{proof}

\begin{cor}\label{cor:ESFreg}
	Let $(\cU, \cP, \cP^*,\cC, \cM)$ be a $(K,\ell^*,k,n)$-cycle-framework and suppose that $\frac{k}{f}\in \mathbb{N}$. An $(\ell^*,K,f)$-extended special factor $ESF$ is a $(1+\ell^*(K-1)f)$-regular multidigraph.
\end{cor}

\begin{proof}
	Let $\{ESPS_{h,i,j}\mid (h,i,j)\in [\ell^*]\times [K]\times [f]\}$ be the decomposition of $ESF$ which witnesses that $ESF$ is an $(\ell^*,K,f)$-extended special factor.
	Let $i\in [K]$ and $v\in U_i$. 
	By \cref{prop:ESPS}, there is a pair $(h,j)\in [\ell^*]\times [f]$ such that $v\notin V^-(ESPS_{h,i,j})$ but $v\in V^-(ESPS_{h',i,j'})$ for all $(h',j')\in ([\ell^*]\times [f])\setminus \{(h,j)\}$.
	Moreover, $v\notin V^-(ESPS_{h',i',j'})$ for each $(h',i',j')\in [\ell^*]\times ([K]\setminus \{i\})\times [f]$. By \cref{prop:ESPS}\cref{prop:ESPS-V}, $ESPS$ is spanning linear forest on $\bigcup \cU$ and so
	\begin{align*}
		d_{ESF}^+(v)&=d_{ESPS_{h,i,j}}^+(v)+\sum_{(h',j')\in ([\ell^*]\times [f])\setminus \{(h,j)\}} d_{ESPS_{h',i,j'}}^+(v)\\ &\qquad\qquad+\sum_{(h',i',j')\in [\ell^*]\times ([K]\setminus \{i\})\times [f]}d_{ESPS_{h',i',j'}}^+(v)\\
		&=1+(\ell^*f-1)\cdot 0+\ell^*(K-1)f\cdot 1=1+\ell^*(K-1)f.
	\end{align*}
	Since $M_{i-1}$ is a perfect matching from $U_i$ to $U_{i-1}$, one can apply similar arguments to show that there are precisely $\ell^*f-1$ tuples $(h,i',j)\in [\ell^*]\times [K]\times [f]$ for which $v\in V^+(ESF)$ and so $d_{ESF}^-(v)=1+\ell^*(K-1)f$.
\end{proof}

\subsection{Constructing extended special factors}\label{sec:constructESF}

Recall that the blow-up cycle version of the robust decomposition lemma (\cref{lm:cyclerobustdecomp}) can only be applied when there are no exceptional vertices (see \cref{def:CST-ST}). In general, we will have a non-empty exceptional set $U^*\subseteq V(D)$ and so we will apply \cref{lm:cyclerobustdecomp} with $D-U^*$ playing the role of $D$. As a result, the cycles obtained via \cref{lm:cyclerobustdecomp} will not be Hamilton cycles on $V(D)$, but will only span $V(D)\setminus U^*$. We will incorporate the exceptional vertices into these cycles using the strategy described in \cref{sec:SC}: we will initially reserve $4s'$ special covers in $D$ (see \cref{def:SC}) and then construct the extended special factors for \cref{lm:cyclerobustdecomp} in such a way that each extended special path system contains the complete special sequence (see \cref{def:MSC}) associated to one of the reserved special covers.

However, as described in \cref{prop:ESPS}, an extended special path system is a linear forest whose components have prescribed endpoints. Thus, our special covers will need to satisfy certain constraints. More precisely, let $SC$ be a special cover and denote by $M_{SC}$ the associated complete special sequence. Suppose that we want to construct an extended special path system which contains $M_{SC}$.
Let $P$ be a component of $SC$ which is not an isolated vertex. By definition, $M_{SC}$ contains an edge from the starting point $u$ of $P$ to the ending point $v$ of $P$ and so for any linear forest $F\supseteq M_{SC}$, we have $u\notin V^-(F)$ and $v\notin V^+(F)$. Thus, \cref{prop:ESPS}\cref{prop:ESPS-V+,prop:ESPS-V-} imply that we require $u\notin U_i\setminus (W_{1,h}\cup \dots \cup W_{k'-1,h})$ and $v\notin U_{i+1}\setminus N_{M_i}(W_{2,h}\cup \dots \cup W_{k',h})$. 
Moreover, $u$ and $v$ will lie in a common connected component of $F$, so \cref{prop:ESPS}\cref{prop:ESPS-M} implies that we cannot have $u$ and $v$ lying in different edges of $M_i-(W_{1,h}\cup \dots\cup W_{k',h})$.
For convenience, we will require that $u$ and $v$ completely avoid the vertices of $M_i-(W_{1,h}\cup \dots\cup W_{k',h})$. Altogether, this motivates the following definition.

\begin{definition}[\Gls*{localised special cover}]\label{def:LSC}
	Let $(\cU, \cP, \cP^*,\cC, \cM)$ be a $(4,\ell^*,k,n)$-cycle-frame\-work. Let $D$ be a digraph with $V(D)\supseteq \bigcup \cU$ and denote by $U^*\coloneqq V(D)\setminus \bigcup \cU$ the exceptional set of $D$. Suppose that $\frac{k}{f}\in \mathbb{N}$ and  denote $k'\coloneqq \frac{k}{f}+1$. Let $(h,i,j)\in [\ell^*]\times [4]\times [f]$ and let $W_{1,h}, \dots, W_{k',h}$ be defined as in \cref{prop:ESPS}.
	A special cover $SC$ in $D$ with respect to $U^*$ is \emph{$(\ell^*,4,f,h,i,j)$-localised (with respect to $\cP^*, \cC$, and $\cM$)} if the following holds.
	\begin{equation*}
		(V^+(SC)\cup V^-(SC))\cap (U_i\cup U_{i+1}) \subseteq (W_{1,h}\cup \dots \cup W_{k'-1,h})\cup N_{M_i}(W_{2,h}\cup \dots \cup W_{k',h}).
	\end{equation*}
\end{definition}

The next \lcnamecref{fact:LSC} follows immediately from \cref{def:MSC}.

\begin{fact}\label{fact:LSC}
    Let $(\cU, \cP, \cP^*,\cC, \cM)$ be a $(4,\ell^*,k,n)$-cycle-framework. Let $D$ be a digraph with $V(D)\supseteq \bigcup \cU$ and denote by $U^*\coloneqq V(D)\setminus \bigcup \cU$ the exceptional set of $D$. Suppose that $\frac{k}{f}\in \mathbb{N}$ and  denote $k'\coloneqq \frac{k}{f}+1$. Let $(h,i,j)\in [\ell^*]\times [4]\times [f]$ and let $W_{1,h}, \dots, W_{k',h}$ be defined as in \cref{prop:ESPS}. Suppose that $SC$ is an $(\ell^*,4,f,h,i,j)$-localised special cover in $D$ with respect to $U^*$. Then, the complete special sequence $M_{SC}$ associated to $SC$ satisfies
    \begin{equation*}
		V(M_{SC})\cap (U_i\cup U_{i+1}) \subseteq (W_{1,h}\cup \dots \cup W_{k'-1,h})\cup N_{M_i}(W_{2,h}\cup \dots \cup W_{k',h}).
	\end{equation*}
\end{fact}

Our strategy for incorporating the complete special sequence $M_{SC}$ into an extended special path system will be to extend each edge of $M_{SC}$ into a longer path by ``winding around'' $\cU$. This can be done greedily, with room to spare, so it will be possible to ensure that these paths all start and end in given small sets of vertices $X$ and $Y$ and avoid the vertices of a small set $Z$.

\begin{lm}\label{lm:Meps}
	Let $0<\frac{1}{n}\ll \varepsilon\ll 1$. Let $D$ be a digraph and $U_1\cup \dots\cup U_4$ be a partition of $V(D)$ into vertex classes of size $n$. Suppose that $\delta(D[U_i, U_{i+1}])\geq (1-\varepsilon)n$ for each $i\in [4]$ (where $U_5\coloneqq U_1$). 
	Let $X\subseteq U_1$ and $Y\subseteq U_4$. Let $Z\subseteq V(D)\setminus (X\cup Y)$ satisfy $|Z|\leq \varepsilon n$.
	Let $M$ be a matching on $V(D)\setminus (X\cup Y\cup Z)$. Suppose that $|M|\leq |X|= |Y|\leq \varepsilon n$. For each $i\in [4]$, let $n_i^+$ and $n_i^-$ be the number of edges in $M$ which start and end in $U_i$, respectively. Suppose that $n_i^+=n_{i+1}^-$ for each $i\in [4]$ (where $n_5^-\coloneqq n_1^-$).
	Then, there exists a set $\sP$ of $|M|$ vertex-disjoint paths for which the following hold.
	\begin{enumerate}
		\item $M\subseteq \sP\subseteq D\cup M$.\label{lm:Meps-M}
		\item $V^+(\sP)\subseteq X$, $V^-(\sP)\subseteq Y$, and $V^0(\sP)\subseteq V(D)\setminus (X\cup Y\cup Z)$.\label{lm:Meps-V}
		\item $|V(\sP)\cap U_1|=\dots=|V(\sP)\cap U_4|\leq 4|M|$.\label{lm:Meps-balanced}
	\end{enumerate}
\end{lm}

\begin{proof}
	Let $u_1v_1, \dots, u_mv_m$ be an enumeration of $M$. Let $x_1, \dots, x_m\in X$ and $y_1, \dots, y_m\in Y$ be distinct.
	For each $j\in [m]$, we will construct a path $x_j P_j' u_jv_j Q_j' y_j$ such that $P_j'$ and $Q_j'$ are paths of length between $4$ and $8$ which ``wind around'' $\cU$.
	
	For each $j\in [m]$, let $j^+,j^-\in [4]$ be such that $u_j\in U_{j^+}$ and $v_j\in U_{j^-}$.
	For each $i\in [4]$, let $U_i'\coloneqq U_i\setminus (X\cup Y \cup Z\cup V(M))$. Note that each $i\in [4]$ and $v\in U_i$ satisfy
	\begin{equation}\label{eq:Meps-delta-}
		|N_D^-(v)\cap U_{i-1}'|\geq (1-\varepsilon)n-|X|-|Y|-|Z|-2|M|\geq (1-6\varepsilon)n> \frac{n}{2}+4m
	\end{equation}
	and, similarly,
	\begin{equation}\label{eq:Meps-delta+}
		|N_D^+(v)\cap U_{i+1}'|> \frac{n}{2}+4m.
	\end{equation}
	Thus, one can greedily construct vertex-disjoint paths $P_1, \dots, P_m, Q_1, \dots, Q_m\subseteq D$ such that for each $j\in [m]$, $P_j=u_{j,3}u_{j,4}u_{j,1}'\dots u_{j,j^+-1}'u_j$ for some $u_{j,3}\in U_3'$, $u_{j,4}\in U_4'$, $u_{j,1}'\in U_1', \dots$, $u_{j,j^+-1}'\in U_{j^+-1}'$ and, similarly, $Q_j=v_jv_{j,j^-+1}'\dots v_{j,4}'v_{j,1}v_{j,2}$ for some $v_{j,j^-+1}'\in U_{j^-+1}', \dots$, $v_{j,4}'\in U_4'$, $v_{j,1}\in U_1'$, $v_{j,2}\in U_2'$.
	Then, \cref{eq:Meps-delta-,eq:Meps-delta+} imply that there exist distinct $u_{1,2}, \dots, u_{m,2}\in U_2'\setminus \bigcup_{j\in [m]}V(P_m\cup Q_m)$ and $v_{1,3}, \dots, v_{m,3}\in U_3'\setminus \bigcup_{j\in [m]}V(P_m\cup Q_m)$ such that, for each $j\in [m]$, $u_{j,2}\in N_D^+(x_j)\cap N_D^-(u_{j,3})$ and $v_{j,3}\in N_D^+(v_{j,2})\cap N_D^-(y_j)$.
	For each $j\in [m]$, denote $P_j'\coloneqq x_ju_{j,2}u_{j,3}P_ju_j$ and $Q_j'\coloneqq v_jQ_jv_{j,2}v_{j,3}y_j$.
	Let $\sP\coloneqq \{x_jP_j'u_jv_jQ_j'y_j\mid j\in [m]\}$. By construction, $\sP$ is a set of vertex-disjoint paths satisfying \cref{lm:Meps-M,lm:Meps-V}. It remains to verify \cref{lm:Meps-balanced}. For each $i\in [4]$ and $j\in [m]$, we have
	\begin{equation}\label{eq:Meps-balanced}
		|V(P_j')\cap U_i|=
		\begin{cases}
			2 & \text{if }i\leq j^+;\\
			1 & \text{otherwise};\\
		\end{cases}
		\quad \text{and} \quad
		|V(Q_j')\cap U_i|=
		\begin{cases}
			2 & \text{if }i\geq j^-;\\
			1 & \text{otherwise}.\\
		\end{cases}
	\end{equation}
	Recall that for each $i\in [4]$, $n_i^+$ denotes the number of indices $j\in [m]$ for which $j^+=i$ and $n_i^-$ denotes the number of indices $j\in [m]$ for which $j^-=i$. Therefore, each $i\in [4]$ satisfies
	\begin{align*}
		|V(\sP)\cap U_i|&\stackrel{\text{\eqmakebox[Meps]{}}}{=}\sum_{j\in [m]}|V(P_j')\cap U_i|+\sum_{j\in [m]}|V(Q_j')\cap U_i|\\
		&\stackrel{\text{\eqmakebox[Meps]{\text{\cref{eq:Meps-balanced}}}}}{=}(m+n_i^++\dots +n_4^+)+(m+n_1^-+\dots+n_i^-)\\
		&\stackrel{\text{\eqmakebox[Meps]{}}}{=}2m+(n_{i+1}^-+\dots +n_5^-)+(n_1^-+\dots+n_i^-)=3m+n_1^-.
	\end{align*}
	Thus, \cref{lm:Meps-balanced} holds.
\end{proof}

Note that in the proof of \cref{lm:Meps} the conditions on the number of paths starting and ending in each vertex class was necessary to obtain a set $\sP$ of vertex-disjoint paths which covers the same number of vertices from each vertex class (see \cref{lm:Meps}\cref{lm:Meps-balanced}). Eventually, we will want to extend $\sP$ to a full extended special path system. The number of vertices covered by $\sP$ will thus be of particular importance since, by \cref{prop:ESPS}\cref{prop:ESPS-V}, an extended special path system needs to span all the vertex classes $U_1, \dots, U_4$, which are all of the same size (see \cref{def:CF-U}). 
This motivates the following \lcnamecref{def:BSC}.

\begin{definition}[\Gls*{balanced special cover}]\label{def:BSC}
	Let $(\cU, \cP, \cP^*,\cC, \cM)$ be a $(4,\ell^*,k,n)$-cycle-frame\-work. Let $D$ be a digraph with $V(D)\supseteq \bigcup \cU$ and denote by $U^*\coloneqq V(D)\setminus \bigcup \cU$ the exceptional set of $D$. Let $SC$ be a special cover in $D$ with respect to $U^*$. For each $i\in [4]$, let $n_i^+$ and $n_i^-$ be the number of components of $SC$ which are not isolated vertices and start and end in $U_i$, respectively. We say that $SC$ is \emph{$\cU$-balanced} if $n_i^+=n_{i+1}^-$ for each $i\in [4]$ (where $n_5^-\coloneqq n_1^-$).
\end{definition}

The next \lcnamecref{lm:ESF} states that given small special covers which are localised and balanced, one can incorporate the associated complete special sequences into extended special path systems. (Note that \cref{lm:ESF} below is the analogue of \cref{lm:SF} from the bipartite robust outexpander case. The only difference is that, in \cref{lm:SF}, we also constructed the special covers at the same time. In the context of \cref{thm:blowupC4}, constructing the special covers is much more difficult because of the backward edges and so this will be done separately at a later stage.)

\begin{lm}[Constructing extended special path systems and factors from special covers]\label{lm:ESF}
	Let $0<\frac{1}{n}\ll \varepsilon\ll \frac{1}{k}\ll \varepsilon'\ll 1$ and $\frac{1}{k}\ll \frac{1}{f}, \frac{1}{\ell^*}\leq 1$.
	Let $r$ be an integer satisfying $\frac{f(\ell^*)^2rk}{n} \ll 1$. Suppose that $\ell^*f\geq 2$ and $\frac{k}{f}\in \mathbb{N}$.
	Let $(\cU, \cP, \cP^*,\cC, \cM)$ be a $(4,\ell^*,k,n)$-cycle-framework. Let $D$ be a digraph with $V(D)\supseteq \bigcup \cU$ and denote by $U^*\coloneqq V(D)\setminus \bigcup \cU$ the exceptional set of $D$.
	Suppose that the following hold for each $i\in [4]$.
	\begin{enumerate}
		\item For any cluster $V\in \cP_i^*$, the set $N_{M_i}(V)$ is a cluster in $\cP_{i+1}^*$ (where $\cP_5^*\coloneqq \cP_1^*$).\label{lm:ESF-Mi}
		\item $D[V,W]$ is $[\varepsilon', \geq 1-\varepsilon']$-superregular whenever $V\subseteq U_i$ and $W\subseteq U_{i+1}$ are unions of clusters in $\cP_i^*$ and $\cP_{i+1}^*$, respectively.\label{lm:ESF-supreg}
	\end{enumerate}	
	Let $\mathcal{SC}=\{SC_{\ell,h,i,j}\mid (\ell,h,i,j)\in [r]\times [\ell^*]\times [4]\times [f]\}$ be a set of edge-disjoint special covers in $D$ with respect to $U^*$ such that the following hold for each $(\ell,h,i,j)\in [r]\times [\ell^*]\times [4]\times [f]$.
	\begin{enumerate}[resume]
		\item $|SC_{\ell,h,i,j}|\leq \varepsilon n$. In particular, \cref{def:MSC} implies that the complete special sequence $M_{\ell,h,i,j}$ associated to $SC_{\ell,h,i,j}$ satisfies $|M_{\ell,h,i,j}|\leq \varepsilon n$.\label{lm:ESF-small}
		\item $SC_{\ell,h,i,j}$ is $(\ell^*,4,f,h,i,j)$-localised.\label{lm:ESF-localised}
		\item $SC_{\ell,h,i,j}$ is $\cU$-balanced.\label{lm:ESF-balanced}
	\end{enumerate}
	Then, there exist $r$ $(\ell^*, 4,f)$-extended special factors $ESF_1, \dots, ESF_r$ with respect to $\cU, \cP^*, \cC$, and $\cM$
	such that the following hold, where for each $(\ell,h,i,j)\in [r]\times [\ell^*]\times [4]\times [f]$, $ESPS_{\ell,h,i,j}$ denotes the $(\ell^*,4,f,h,i,j)$-special path system contained in $ESF_\ell$.
	\begin{enumerate}[label=\rm(\alph*)]
		\item For each $(\ell,h,i,j)\in [r]\times [\ell^*]\times [4]\times [f]$, we have $M_{\ell,h,i,j}\subseteq ESPS_{\ell,h,i,j}\subseteq (D\setminus \mathcal{SC})\cup M_{\ell,h,i,j}$.\label{lm:ESF-M}
		\item Let $(\ell,h,i,j),(\ell',h',i',j')\in [r]\times [\ell^*]\times [4]\times [f]$ be distinct. Then, we have $(ESPS_{\ell,h,i,j}\setminus M_{\ell,h,i,j})\cap (ESPS_{\ell',h',i',j'}\setminus M_{\ell',h',i',j'})=\emptyset$.\label{lm:ESF-disjoint}
	\end{enumerate}
\end{lm}

Roughly speaking, \cref{lm:ESF}\cref{lm:ESF-M} means that each complete special sequence is incorporated into a distinct extended special path system, while \cref{lm:ESF}\cref{lm:ESF-disjoint} states that each edge of $D\setminus \mathcal{SC}$ is incorporated into at most one of the extended special path systems.

\begin{proof}[Proof of \cref{lm:ESF}]
	First, recall from \cref{def:CF-P*,def:CF-P} that for each $i\in [4]$, $U_i$ is the union of the clusters in $\cP_i^*$. Thus, \cref{lm:ESF-supreg} implies that each $i\in [4]$ satisfies
	\begin{equation}\label{eq:ESF-delta}
		\delta(D[U_i, U_{i+1}])\geq (1-2\varepsilon')n.
	\end{equation}
	
	Fix additional constants $\varepsilon_1$ and $\varepsilon_2$ such that $\frac{f(\ell^*)^2rk}{n}, \varepsilon' \ll \varepsilon_1\ll \varepsilon_2\ll 1$.
	Suppose inductively that for some $0\leq t\leq 4r\ell^*f$ we have constructed a set $T_t\subseteq [r]\times[\ell^*]\times [4]\times [f]$ of size $t$ and a set $\mathcal{ESPS}_t=\{ESPS_{\ell,h,i,j}\mid (\ell,h,i,j)\in T_t\}$ such that the following properties hold.
	\begin{enumerate}[label=\rm(\alph*$'$)]
		\item For each $(\ell,h,i,j)\in T_t$, we have $M_{\ell,h,i,j}\subseteq ESPS_{\ell,h,i,j} \subseteq (D\setminus \mathcal{SC})\cup M_{\ell,h,i,j}$.\label{lm:ESF-IH-M}
		\item Let $(\ell,h,i,j),(\ell',h',i',j')\in T_t$ be distinct. Then, we have $(ESPS_{\ell,h,i,j}\setminus M_{\ell,h,i,j})\cap (ESPS_{\ell',h',i',j'}\setminus M_{\ell',h',i',j'})=\emptyset$.\label{lm:ESF-IH-disjoint}
		\item For each $(\ell,h,i,j)\in T_t$, $ESPS_{\ell,h,i,j}$ is an $(\ell^*,4,f,h,i,j)$-extended special path system with respect to $\cU, \cP^*, \cC$, and $\cM$.\label{lm:ESF-IH-ESPS}
	\end{enumerate}
	First, suppose that $t= 4r\ell^*f$. Then, define $ESF_\ell\coloneqq \bigcup_{(h,i,j)\in [\ell^*]\times [4]\times [f]} ESPS_{\ell,h,i,j}$ for each $\ell\in [r]$.
	By \cref{lm:ESF-IH-ESPS}, $ESF_1, \dots, ESF_r$ are $(\ell^*, 4, f)$-extended special factors. Moreover, \cref{lm:ESF-M,lm:ESF-disjoint} follow from \cref{lm:ESF-IH-M,lm:ESF-IH-disjoint}.
	
	We may therefore assume that $t<4r\ell^*f$. Let $(\ell,h,i,j)\in ([r]\times[\ell^*]\times [4]\times [f])\setminus T_t$ and define $T_{t+1}\coloneqq T_t\cup \{(\ell,h,i,j)\}$.
	We will now construct $ESPS_{\ell,h,i,j}$ as follows. 
	Let $D'\coloneqq D\setminus (\mathcal{SC}\cup \mathcal{ESPS}_t)$. Since $\mathcal{SC}$ consists of $4r\ell^*f$ linear forests and $\mathcal{ESPS}_t$ consists of $t$ linear forests, we have
	\begin{equation}\label{eq:ESF-Delta}
		\Delta^0(D\setminus D')\leq 4r\ell^*f+t\leq \frac{\varepsilon_1 n}{k\ell^*}.
	\end{equation}
	
	By \cref{lm:ESF-small} and \cref{lm:adjustP} (applied with $2\varepsilon, U_{i-2}, U_{i-1}, U_i, U_{i+1}, \cP_{i-2}^*, \dots, \cP_{i+1}^*$, and $V(M_{\ell,h,i,j})$ playing the roles of $\varepsilon, U_1, \dots, U_4,\cP_1, \dots, \cP_4$, and $S$), we may assume without loss of generality that there exists, for each $i'\in [4]\setminus \{i,i+1\}$, a cluster $V_{i'}\in \cP_{i'}^*$ which satisfies 
	\begin{equation}\label{eq:ESF-Vi'}
		V(M_{\ell,h,i,j})\cap U_{i'}\subseteq V_{i'}.
	\end{equation}
	(Otherwise, we may simply apply the arguments below with the partitions $\cP_{i-2}', \dots, \cP_{i+1}'$ guaranteed by \cref{lm:adjustP} playing the roles of $\cP_{i-2}^*, \dots, \cP_{i+1}^*$. This is possible since these satisfy \cref{lm:ESF-supreg} up to a slightly worse $\varepsilon'$-parameter.)
	
	To ensure that \cref{lm:ESF-IH-ESPS} is satisfied, we will use \cref{prop:ESPS} and construct a linear forest which satisfies \cref{prop:ESPS}\cref{prop:ESPS-M,prop:ESPS-V,prop:ESPS-V+,prop:ESPS-V-}.
	Let $W_{1,h}, \dots, W_{k',h}$ be defined as in \cref{prop:ESPS} and denote $M_i'\coloneqq M_i-(W_{1,h}\cup \dots \cup W_{k',h})$. Let
	\begin{equation}\label{eq:ESF-defXi}
		X_i\coloneqq W_{k',h} \quad \text{and} \quad X_{i+1}\coloneqq N_{M_i}(W_{1,h}).
	\end{equation} 
	By \cref{lm:ESF-localised} and \cref{fact:LSC}, we have \begin{equation}\label{eq:ESF-Xi}
		V(M_{\ell,h,i,j})\cap (X_i\cup X_{i+1})=\emptyset.
	\end{equation}
	For each $i'\in [4]\setminus \{i,i+1\}$, fix a cluster
	\begin{equation}\label{eq:ESF-defXi'}
		X_{i'}\in \cP_{i'}^*\setminus \{V_{i'}\}.
	\end{equation} 
	By \cref{eq:ESF-Vi'}, we have
	\begin{equation}\label{eq:ESF-Xi'}
		V(M_{\ell,h,i,j})\cap X_{i'}=\emptyset.
	\end{equation} 
	By \cref{lm:ESF-supreg}, $D[X_{i'}, X_{i'+1}]$ is $[\varepsilon', \geq 1-\varepsilon']$-superregular for each $i'\in [4]$ (where $X_5\coloneqq X_1$). We will reserve these superregular pairs to finish off the construction of $ESPS_{\ell,h,i,j}$.
	
	\begin{steps}
		\item \textbf{Constructing the components for \cref{prop:ESPS}\cref{prop:ESPS-M}.}\label{step:ESF-Mi}
		In this step, we will use \cref{lm:regularitypaths} to construct a set $\sP_1$ of vertex-disjoint paths which consists of one $(u,v)$-path for each $uv\in M_i'$. The paths in $\sP_1$ will eventually be incorporated as components of $ESPS_{\ell,h,i,j}$ to ensure that \cref{prop:ESPS}\cref{prop:ESPS-M} is satisfied.
		
		Let
		\begin{equation}\label{eq:ESF-U'}
			U_i'\coloneqq U_i\cap V(M_i') \quad \text{and} \quad U_{i+1}'\coloneqq U_{i+1}\cap V(M_i').
		\end{equation}
		By \cref{def:CF-P}, \cref{def:CF-P*}, \cref{def:CF-M}, and \cref{lm:ESF-Mi}, $U_i'$ and $U_{i+1}'$ are unions of $k\ell^*-k'$ clusters in $\cP_i^*$ and $\cP_{i+1}^*$, respectively. Moreover, note for later that \cref{lm:ESF-localised} and \cref{fact:LSC} imply that 
		\begin{equation}\label{eq:ESF-U'M}
			V(M_{\ell,h,i,j})\cap (U_i'\cup U_{i+1}')=\emptyset.
		\end{equation}
		For each $i'\in [4]\setminus \{i,i+1\}$, let $U_{i'}'\subseteq U_i$ be the union of $k\ell^*-k'$ clusters in $\cP_{i'}^*\setminus \{V_{i'},X_{i'}\}$ and note for later that, by \cref{eq:ESF-Vi'}, we have
		\begin{equation}\label{eq:ESF-U'M2}
			V(M_{\ell,h,i,j})\cap U_{i'}'=\emptyset.
		\end{equation} 
		By \cref{def:CF-U,def:CF-P,def:CF-P*}, we have 
		\begin{equation}\label{eq:ESF-n'}
			n'\coloneqq |U_1'|=\dots= |U_4'|=n-\frac{k'n}{k\ell^*}.
		\end{equation} 
		Denote $\cU'\coloneqq (U_1', \dots, U_4')$.
		By \cref{lm:ESF-supreg}, \cref{eq:ESF-Delta}, and \cref{prop:epsremovingadding}\cref{prop:epsremovingadding-supreg},%
		    \COMMENT{Applied with $A=A'=U_{i'}'$ and $B=B'=U_{i'+1}'$.}
		$D'[U_{i'}', U_{i'+1}']$ is $[\varepsilon_2, \geq 1-\varepsilon_2]$-superregular for each $i'\in [4]$.
		Let $\sP_1$ be the set of vertex-disjoint paths obtained by applying \cref{lm:regularitypaths} with $D'[\bigcup\cU'], U_{i+1}', \dots, U_i', 4, n', \varepsilon_2, 1-\varepsilon_2$, and $M_i'$ playing the roles of $D, V_1, \dots, V_k, k, m, \varepsilon, d$, and $\{u_1v_1, \dots, u_mv_m\}$. Then, $V(\sP_1)=\bigcup\cU'$ and $\sP_1$ consists of a $(u,v)$-path for each $uv\in M_i'$, as desired.
		
		\item \textbf{Incorporating $M_{\ell,h,i,j}$.}\label{step:ESF-M}
		In order to satisfy \cref{lm:ESF-IH-M}, we will now use \cref{lm:Meps} to construct a set $\sP_2$ of vertex-disjoint paths which cover all the edges in $M_{\ell,h,i,j}$. 
		
		Recall from \cref{step:ESF-Mi}, \cref{eq:ESF-defXi}, and \cref{eq:ESF-defXi'} that, for each $i'\in [4]$, $U_{i'}'$ is the union of $k\ell^*-k'$ clusters in $\cP_{i'}^*$ and $X_{i'}\subseteq U_{i'}\setminus U_{i'}'$ is a cluster in $\cP_{i'}^*$. In particular, \cref{def:CF-P,def:CF-P*} imply that for each $i'\in [4]$, $U_{i'}\setminus U_{i'}'$ is the union of $k'$ clusters in $\cP_{i'}^*$, so
		\begin{equation}\label{eq:ESF-U-U'}
			|U_1\setminus U_1'|=\dots =|U_4\setminus U_4'|=\frac{k' n}{k\ell^*}
		\end{equation}
		and $Z\coloneqq \bigcup_{i'\in [4]\setminus \{i,i+1\}} X_{i'}$ satisfies
		\begin{equation}\label{eq:ESF-M}
			\frac{|M_{\ell,h,i,j}|}{\varepsilon_2}\stackrel{\text{\cref{lm:ESF-small}}}{\leq}\frac{n}{k\ell^*}=|X_1|=\dots=|X_4|\leq |Z|\leq \frac{\varepsilon_1k' n}{k\ell^*}.
		\end{equation}		
		By \cref{lm:ESF-supreg}, \cref{eq:ESF-Delta}, and \cref{eq:ESF-U-U'}, each $i'\in [4]$ satisfies
		\begin{equation*}
			\delta(D'[U_{i'}\setminus U_{i'}', U_{i'+1}\setminus U_{i'+1}'])\geq (1-2\varepsilon')\frac{k'n}{k\ell^*}-\frac{\varepsilon_1 n}{k\ell^*}\geq (1-\varepsilon_1)\frac{k'n}{k\ell^*}.
		\end{equation*}
		Moreover, \cref{eq:ESF-Xi,eq:ESF-Xi',eq:ESF-U'M,eq:ESF-U'M2} imply that 
		\[V(M_{\ell,h,i,j})\subseteq \bigcup \cU\setminus \left(\bigcup_{i'\in [4]}X_{i'}\cup \bigcup\cU'\right).\]
		For each $i'\in [4]$, let $n_{i'}^+$ and $n_{i'}^-$ be the number of edges of $M_{\ell,h,i,j}$ which start and end in $U_{i'}$, respectively. Since by \cref{lm:ESF-balanced} $SC_{\ell,h,i,j}$ is $\cU$-balanced, \cref{def:MSC,def:BSC} imply that $n_{i'}^+=n_{i'+1}^-$ for each $i'\in [4]$.
		Thus, we can let $\sP_2$ be the set of vertex-disjoint paths obtained by applying \cref{lm:Meps} with $D'-\bigcup\cU', U_{i+1}\setminus U_{i+1}', U_{i+2}\setminus U_{i+2}', \dots, U_{i-1}\setminus U_{i-1}', U_i\setminus U_i', X_{i+1}, X_i, \varepsilon_1, \frac{k'n}{k\ell^*}$, and $M_{\ell,h,i,j}$ playing the roles of $D, U_1, \dots, U_4, X, Y,\varepsilon, n$, and~$M$.
		
		\item \textbf{Covering the remaining vertices.}\label{step:ESF-spanning}
		In order to satisfy \cref{prop:ESPS}\cref{prop:ESPS-V}, we will now use \cref{lm:regularitypaths,cor:regularityHam} to construct a set $\sP_3$ of vertex-disjoint paths which cover all the vertices in $\bigcup\cU\setminus V(\sP_1\cup \sP_2)$.
		
		By \cref{eq:ESF-M} and \cref{lm:Meps}\cref{lm:Meps-V}, there exist $x\in X_{i+1}\setminus V(\sP_2)$ and $y\in X_i\setminus V(\sP_2)$. Denote $U_{i+1}''\coloneqq X_{i+1}\setminus (V(\sP_2)\cup \{x\})$ and $U_i''\coloneqq X_i\setminus (V(\sP_2)\cup \{y\})$. Let $U_{i+1}^*\coloneqq U_{i+1}\setminus (V(\sP_1\cup \sP_2)\cup U_{i+1}'')$ and $U_i^*\coloneqq U_i\setminus (V(\sP_1\cup \sP_2)\cup U_i'')$.
		By \cref{step:ESF-Mi} and \cref{lm:Meps}\cref{lm:Meps-V,lm:Meps-balanced}, we have
		\begin{align}\label{eq:ESF-n''}
			n''\coloneqq |U_{i+1}''|=|U_i''|= |X_i|-|M_{\ell,h,i,j}|-1\stackrel{\text{\cref{eq:ESF-M},\cref{lm:ESF-small}}}{\geq} \frac{n}{k\ell^*}-\varepsilon n-1\geq (1-\varepsilon_1)\frac{n}{k\ell^*}
		\end{align}
		and
		\begin{align}\label{eq:ESF-n*}
			n^*\coloneqq |U_{i+1}^*|=|U_i^*|=n-n'-|X_i|+1\stackrel{\text{\cref{eq:ESF-n'},\cref{eq:ESF-M}}}{\geq} (1-\varepsilon_1)\frac{k'n}{k\ell^*}.
		\end{align}
		For each $i'\in [4]\setminus \{i,i+1\}$, \cref{step:ESF-Mi} and \cref{lm:Meps}\cref{lm:Meps-V} imply that $X_{i'}\cap V(\sP_1\cup \sP_2)=\emptyset$ and so we can let $U_{i'}''\subseteq X_{i'}$ satisfy $|U_{i'}''|=n''$. For each $i'\in [4]\setminus \{i,i+1\}$, let $U_{i'}^*\coloneqq U_{i'}\setminus V(\sP_1\cup \sP_2\cup U_{i'}'')$ and observe that, by \cref{step:ESF-Mi} and \cref{lm:Meps}\cref{lm:Meps-balanced}, we have $|U_{i'}^*|=n^*$. Denote $\cU''\coloneqq (U_1'', \dots, U_4'')$ and $\cU^*\coloneqq (U_1^*, \dots, U_4^*)$.
		
		\begin{claim}
		    For each $i'\in[4]$, $D'[U_{i'}'', U_{i'+1}'']$ and $D'[U_{i'}^*, U_{i'+1}^*]$ are both $[\varepsilon_2, \geq 1-\varepsilon_2]$-superregular.
		\end{claim}
		
		\begin{proofclaim}
		    Let $i'\in[4]$. Recall that $X_{i'}$ and $X_{i'+1}$ are clusters of size $\frac{n}{k\ell^*}$ in $\cP_{i'}^*$ and $\cP_{i'+1}^*$, respectively. Thus, \cref{lm:ESF-supreg} implies that $D[X_{i'}, X_{i'+1}]$ is $[\varepsilon', \geq 1-\varepsilon']$-superregular. By \cref{eq:ESF-n''}, $U_{i'}''$ and $U_{i'+1}''$ are obtained from $X_{i'}$ and $X_{i'+1}$ by deleting at most $\varepsilon_1 |X_{i'}|=\varepsilon_1 |X_{i'+1}|$ vertices. Thus, \cref{eq:ESF-Delta} and \cref{prop:epsremovingadding}\cref{prop:epsremovingadding-supreg} imply that $D'[U_{i'}'', U_{i'+1}'']$ is still $[\varepsilon_2, \geq 1-\varepsilon_2]$-superregular.
		    
		    Similarly, recall from \cref{step:ESF-Mi} that $U_{i'}\setminus V(\sP_1)$ and $U_{i'+1}\setminus V(\sP_1)$ are the unions of $k'$ clusters of size $\frac{n}{k\ell^*}$ in $\cP_{i'}^*$ and $\cP_{i'+1}^*$, respectively. Thus, \cref{lm:ESF-supreg} implies that $D[U_{i'}\setminus V(\sP_1),U_{i'+1}\setminus V(\sP_1)]$ is $[\varepsilon', \geq 1-\varepsilon']$-superregular. By \cref{eq:ESF-n*}, $U_{i'}^*$ and $U_{i'+1}^*$ are obtained from $U_{i'}\setminus V(\sP_1)$ and $U_{i'+1}\setminus V(\sP_1)$ by deleting at most $\varepsilon_1 |U_{i'}\setminus V(\sP_1)|=\varepsilon_1 |U_{i'+1}\setminus V(\sP_1)|$ vertices. Thus, \cref{eq:ESF-Delta} and \cref{prop:epsremovingadding}\cref{prop:epsremovingadding-supreg} imply that $D'[U_{i'}^*, U_{i'+1}^*]$ is still $[\varepsilon_2, \geq 1-\varepsilon_2]$-superregular.
		\end{proofclaim}
		
		Let $u_1, \dots, u_{n''}$ and $v_1, \dots, v_{n''}$ be enumerations of $U_{i+1}''$ and $U_i''$.
		Let $\sP_3'$ be the set of vertex-disjoint paths obtained by applying \cref{lm:regularitypaths} with $D'[\bigcup\cU''], U_{i+1}'', \dots, U_i'', 4, n'', \varepsilon_2$, and $1-\varepsilon_2$ playing the roles of $D, V_1, \dots, V_k, k, m, \varepsilon$, and $d$.
		Apply \cref{cor:regularityHam} with $D'[\bigcup\cU^*], U_{i+1}^*, \dots$, $U_i^*, 4, n^*, \varepsilon_2, 1-\varepsilon_2, x$, and $y$ playing the roles of $D, V_1, \dots, V_k, k, m, \varepsilon, d, u$, and $v$ to obtain a Hamilton $(x,y)$-path $P$ of $D'[\bigcup \cU^*]$. Denote $\sP_3\coloneqq \sP_3'\cup \{P\}$. By \cref{lm:regularitypaths} and \cref{cor:regularityHam}, $\sP_3$ is a set of vertex-disjoint paths satisfying $V(\sP_3)=\bigcup\cU\setminus V(\sP_1\cup \sP_2)$, $V^+(\sP_3)=X_{i+1}\setminus V(\sP_2)$ and $V^-(\sP_3)=X_i\setminus V(\sP_2)$.
	\end{steps}
	
	Let $ESPS_{\ell,h,i,j}\coloneqq \sP_1\cup \sP_2\cup \sP_3$ and denote $\mathcal{ESPS}_{t+1}\coloneqq \mathcal{ESPS}_t\cup \{ESPS_{\ell,h,i,j}\}$. Then, \cref{lm:ESF-IH-M} holds by \cref{lm:Meps}\cref{lm:Meps-M} and definition of $D'$, while \cref{lm:ESF-IH-disjoint} holds by definition of $D'$.
	It remains to show that \cref{lm:ESF-IH-ESPS} holds. By construction, $ESPS_{\ell,h,i,j}$ is a linear forest and so \cref{prop:ESPS} implies that it is enough to verify that $ESPS_{\ell,h,i,j}$ satisfies \cref{prop:ESPS}\cref{prop:ESPS-M,prop:ESPS-V,prop:ESPS-V+,prop:ESPS-V-}.
	Note that \cref{prop:ESPS}\cref{prop:ESPS-V} follows from \cref{step:ESF-spanning} and \cref{prop:ESPS}\cref{prop:ESPS-M} follows from \cref{step:ESF-Mi}. 
	By construction, \cref{eq:ESF-defXi}, and \cref{eq:ESF-U'}, we have $V^+(ESPS_{\ell,h,i,j})= U_{i+1}'\cup X_{i+1}=U_{i+1}\setminus N_{M_i}(W_{2,h}\cup\dots \cup W_{k',h})$ and $V^-(ESPS_{\ell,h,i,j})= U_i'\cup X_i=U_i\setminus (W_{1,h}\cup\dots \cup W_{k'-1,h})$.
	Thus, \cref{prop:ESPS}\cref{prop:ESPS-V+,prop:ESPS-V-} hold and we are done.
\end{proof}

\subsection{Constructing a cycle-setup}\label{sec:CST}
Finally, we build the cycle-setup required for the robust decomposition lemma for blow-up cycles (\cref{lm:cyclerobustdecomp}). Given a very dense blow-up $C_4$, say $D$, one could of course construct a cycle-setup by first randomly partitioning the vertices of $D$ and then use \cref{lm:Chernoff} to exhibit the desired (super)regular pairs for \cref{def:CST-ST}. However, our cycle-setup will need to satisfy additional properties.
\begin{itemize}
	\item To construct the extended special factors, $\cP^*$ and $\cM$ need to satisfy \cref{lm:ESF}\cref{lm:ESF-supreg,lm:ESF-Mi}. (This motivates \cref{lm:CST}\cref{lm:CST-M,lm:CST-supreg} below.)
	\item After constructing the extended special factors, the clusters in $\bigcup\cP$ and $\bigcup\cP'$ may no longer form suitable (super)regular pairs%
	    \COMMENT{The $\varepsilon$-parameter gets too large for the robust decomposition lemma hierarchy.}.
	To solve this problem, we randomly partition the edges of $D$ into a dense digraph $D_1$ and a sparse digraph $D_2$. We will only use $D_1$ to construct the extended special factors and reserve $D_2$ for the application of \cref{lm:cyclerobustdecomp}. (This motivates \cref{lm:CST}\cref{lm:CST-CST} below.)
	(Recall that a similar strategy was used in the robust outexpander case, see \cref{sec:biprobexp}.)
	\item To be able to construct the localised and balanced special covers required for \cref{lm:ESF}, we will need the backward edges of our bipartite tournament $T$ to be well distributed across the clusters in $\cP$ and $\cP^*$. Since there may be relatively few backward edges, this cannot be guaranteed via a simple application of \cref{lm:Chernoff}. This explains why, in \cref{lm:CST}, we construct a cycle-setup with respect to given sets of partitions $\cP$ and $\cP^*$. (To help the reader gain intuition for this step, we will only detail the construction of $\cP$ and $\cP^*$ after we have discussed our strategy for decomposing backward edges.)
\end{itemize}
Note that in \cref{lm:CST}, we assume that the minimum semidegree of $D$ is very large. While $\overrightarrow{T}_\cU$ (that is, the digraph which consists of all the forward edges of $T$ (see \cref{sec:forwardbackwarddef})) is very dense, its minimum semidegree may be low (the backward edges may be concentrated on a few vertices). To solve this problem, we will assign the few vertices of large backward degree into the exceptional set $U^*$ and then apply \cref{lm:CST} with $\overrightarrow{T}_\cU-U^*$ playing the role of $D$. This will ensure that the minimum semidegree condition in \cref{lm:CST} is satisfied.

For technical reasons, we will need the matchings in $\cM$ to satisfy a stronger property than \cref{lm:ESF}\cref{lm:ESF-Mi}. Roughly speaking, \cref{lm:ESF}\cref{lm:ESF-Mi} states that, for each $i\in [4]$, $M_i$ matches the clusters in $\cP_i^*$ and $\cP_{i+1}^*$.  It will be convenient that each $M_i$ matches ``corresponding" clusters together.

\begin{definition}[\Gls*{consistent cycle-framework}]\label{def:CCF}
	We say that a $(4, \ell^*, k, n)$-cycle-framework $(\cU, \cP, \cP^*, \cC, \cM)$ is \emph{consistent} if the following holds for each $(h,i,j)\in [\ell^*]\times[4]\times [k]$. Let $V_{i,j}$ and $V_{i+1,j}$ denote the $j$\textsuperscript{th} clusters in $\cP_i$ and $\cP_{i+1}$, respectively. Let $V_{i,j,h}$ and $V_{i+1,j,h}$ denote the $h$\textsuperscript{th} subclusters of $V_{i,j}$ and $V_{i+1,j}$ contained in $\cP_i^*$ and $\cP_{i+1}^*$, respectively. Then, $N_{M_i}(V_{i,j,h})=V_{i+1,j,h}$.
\end{definition}

Recall from \cref{fact:CFP} that a cycle-framework remains a cycle-framework when $\cP^*$ is replaced by $\cP$. Observe that consistency is also preserved.

\begin{fact}\label{fact:CCFP}
	Suppose that $(\cU, \cP, \cP^*, \cC, \cM)$ is a consistent $(4, \ell^*, k, n)$-cycle-framework. Then, $(\cU, \cP, \cP, \cC, \cM)$ is also a consistent $(4, 1, k, n)$-cycle-framework.
\end{fact}

We are now ready to construct our cycle-setup.

\begin{lm}\label{lm:CST}
	Let $0<\frac{1}{n}\ll \varepsilon\ll \varepsilon'\ll\frac{1}{k}  \ll \frac{1}{\ell'}, \frac{1}{\ell^*},d\ll 1$ and denote $m\coloneqq \frac{n}{k}$. Suppose that $\frac{m}{\ell'}, \frac{m}{\ell^*}\in \mathbb{N}$.
	Let $U_1, \dots, U_4$ be disjoint vertex sets of size $n$ and denote $\cU\coloneqq (U_1, \dots, U_4)$. Let $D$ be a blow-up $C_4$ with vertex partition $\cU$. Suppose that $\delta^0(D)\geq (1-\varepsilon)n$. For each $i\in [4]$, let $\cP_i$ be a partition of $U_i$ into an empty exceptional set and $k$ clusters of size $m$, let $\cP_i^*$ be an $\ell^*$-refinement of $\cP_i$, and let $C^i$ be a Hamilton cycle on the clusters in $\cP_i$. Denote $\cP\coloneqq(\cP_1, \dots, \cP_4)$, $\cP^*\coloneqq(\cP_1^*, \dots, \cP_4^*)$, and $\cC\coloneqq(\cC^1, \dots, \cC^4)$. Then, there exist $D_1, \cP, \cP',\cR, \cC, \sU, \sU'$, and $\cM$ for which the following hold, where $D_2\coloneqq D\setminus D_1$.
	\begin{enumerate}
		\item $(\cU, \cP, \cP^*, \cC, \cM)$ is a consistent $(4, \ell^*, k, n)$-cycle-framework. In particular, the following hold. For any $i\in [4]$ and any cluster $V\in \cP_i$, the set $N_{M_i}(V)$ is a cluster in $\cP_{i+1}$ (where $\cP_5\coloneqq \cP_1$). The analogue holds for the partitions in $\cP^*$.\label{lm:CST-M}
		\newcounter{CST}
		\setcounter{CST}{\value{enumi}}
		\item For each $i\in [4]$, $D_1[V,W]$ is $[\varepsilon',\geq 1-3d]$-superregular whenever $V\subseteq U_i$ and $W\subseteq U_{i+1}$ are unions of clusters in $\cP_i^*$ and $\cP_{i+1}^*$, respectively. In particular, since $\cP_i^*$ is a refinement of $\cP_i$ for each $i\in [4]$, the analogue holds for the partitions in $\cP$.\label{lm:CST-supreg}
		\item $(D_2, \cU, \cP, \cP', \cP^*, \cR, \cC, \sU, \sU', \cM)$ is a $(4, \ell', \ell^*, k, m, \varepsilon', d)$-cycle-setup.\label{lm:CST-CST}
	\end{enumerate}
\end{lm}

\begin{proof}
    Fix additional constants $\varepsilon_1, \varepsilon_2$, and $\varepsilon_3$ satisfying $\varepsilon\ll \varepsilon_1\ll \varepsilon_2\ll \varepsilon_3 \ll \varepsilon'$.
    First, we construct the matchings in $\cM$. For each $i\in [4]$, let $V_{i,1}, \dots, V_{i,k}$ be an enumeration of the clusters in $\cP_i$ and, for each $(h,j)\in [\ell^*]\times [k]$, denote by $V_{i,j,h}$ the $h$\textsuperscript{th} subcluster of $V_{i,j}$ contained in $\cP_i^*$. For each $(h,i,j)\in [\frac{q}{f}]\times [4]\times [k]$, let $M_{h,i,j}$ be an auxiliary perfect matching from $V_{i+1,j,h}$ to $V_{i,j,h}$. For each $i\in [4]$, define $M_i\coloneqq \bigcup_{(h,j)\in [\frac{q}{f}]\times [k]}M_{h,i,j}$. Let $\cM\coloneqq (M_1, \dots, M_4)$ and observe that \cref{lm:CST-M} and \cref{def:CST-M} hold.
    
    Let $i\in [4]$. Denote by $\tD_i$ the $M_i$-contraction of $D[U_i, U_{i+1}]$. By \cref{fact:Ncontract}\cref{fact:Ncontract-N}, $\delta^0(\tD_i)\geq (1-2\varepsilon)n$ and so \cref{lm:URefdense} implies that $\cP_i^*$ is an $\sqrt{2\varepsilon}$-uniform refinement of $\cP_i$ with respect to $\tD_i$.
    
    Let $i\in [4]$ and let $V\subseteq U_i$ and $W\subseteq U_{i+1}$ be unions of clusters in $\cP_i^*$ and $\cP_{i+1}^*$, respectively. Then, observe that each $v\in V$ satisfies $|N_D^+(v)\cap W|\geq |W|-\varepsilon n\geq (1-\varepsilon_1)|W|$ and, similarly, each $w\in W$ satisfies $|N_D^-(v)\cap V|\geq |V|-\varepsilon n\geq (1-\varepsilon_1)|V|$. Thus, \cref{prop:almostcompleteeps} implies that $D[V,W]$ is $[\varepsilon_2, \geq 1-\varepsilon_2]$-superregular.
    
	Let $D_1$ be obtained from $D$ by selecting each edge independently with probability $1-2d$. Denote $D_2\coloneqq D\setminus D_1$.
	For each $i\in [4]$, denote by $\tD_i'$ the $M_i$-contraction of $D_2[U_i, U_{i+1}]$ and observe that, by definition, $\tD_i'$ is obtained from $\tD_i$ by selecting each edge independently with probability $2d$.
	Thus, \cref{lm:edgeslice,lm:URefrandom} imply that we may assume that the following hold.
	\begin{enumerate}[label=(\roman*$'$)]
		\setcounter{enumi}{\value{CST}}
		\item For each $i\in [4]$, $D_1[V,W]$ is $[\varepsilon_3,\geq 1-3d]$-superregular and $D_2[V,W]$ is $[\varepsilon_3,\geq d+\varepsilon_3]$-superregular whenever $V\subseteq U_i$ and $W\subseteq U_{i+1}$ are unions of clusters in $\cP_i^*$ and $\cP_{i+1}^*$, respectively.\label{lm:CST-supreg'}
		\item For each $i\in [4]$, $\cP_i^*$ is an $\varepsilon'$-uniform $\ell^*$-refinement of $\cP_i$ with respect to $\tD_i'$.\label{lm:CST-URef}
	\end{enumerate}
	In particular, \cref{lm:CST-supreg} is satisfied.
	
	We now construct the cycle-setup. First, observe that since $D$ is a blow-up $C_4$ with vertex partition $\cU$, $D_2$ satisfies \cref{def:CST-D}.
	Let $\cP_i'$ be the $\varepsilon$-uniform $\ell'$-refinement of $\cP_i$ obtained by applying \cref{lm:URefexistence} with $\tD_i', \cP_i$, and $\ell'$ playing the roles of $D, \cP$, and $\ell$.
	Let $R_i$ be the complete digraph on the clusters in $\cP_i$ and let $C^i\coloneqq V_{i,1} \dots V_{i,k}$. Note that $C^i$ is a Hamilton cycle of $R_i$. Let $U^i$ be the universal walk for $C^i$ with parameter $\ell'$ obtained by applying \cref{lm:U} with $R_i$ and $C^i$ playing the roles of $R$ and $C$. Let $U'^i$ be the closed walk on the clusters in $\cP_i'$ obtained from $U^i$ as described in \cref{def:ST-U'}.
	
	Denote $\cP'\coloneqq (\cP_1', \dots, \cP_4')$, $\cR\coloneqq (R_1, \dots, R_4)$, $\sU\coloneqq (U^1, \dots, U^4)$, and $\sU'\coloneqq (U'^1, \dots, U'^4)$. Let $i\in [4]$. 
	We need to show that $(\tD_i', \cP_i, \cP_i', \cP_i^*, R_i, C^i, U^i, U'^i)$ is an $(\ell',\ell^*, k, m, \varepsilon', d)$-setup. By construction and \cref{lm:CST-URef}, properties \cref{def:ST-P,def:ST-U,def:ST-U',def:ST-P*,def:ST-P'} are satisfied.
	Moreover, \cref{lm:CST-supreg'} and \cref{lm:contraction}\cref{lm:contraction-supreg} imply that $\tD_i'[V,W]$ is $[\varepsilon_3, \geq d+\varepsilon_3]$-superregular for any distinct clusters $V, W\in \cP_i$. Thus, \cref{def:ST-R,def:ST-C} are satisfied. 
	Moreover, \cref{lm:URefreg}\cref{lm:URef-supreg} implies that $\tD_i'[V,W]$ is $[\varepsilon', \geq d]$-superregular for any distinct clusters $V,W\in \cP_i'$. Therefore, \cref{def:ST-U'supreg} holds and so $(\tD_i', \cP_i, \cP_i', \cP_i^*, R_i, C^i, U^i, U'^i)$ is an $(\ell',\ell^*, k, m, \varepsilon', d)$-setup. By construction, the exceptional set in $\cP_i, \cP_i'$, and $\cP_i^*$ is empty and so \cref{def:CST-ST} holds.
	Thus, $(D_2, \cU, \cP, \cP', \cP^*, \cR, \cC, \sU, \sU', \cM)$ is a $(4, \ell', \ell^*,k, m, \varepsilon', d)$-cycle-setup and so \cref{lm:CST-CST} is satisfied.
\end{proof}

\onlyinsubfile{\bibliographystyle{abbrv}
	\bibliography{Bibliography/Bibliography}}

\section{Decomposing the backward edges}\label{sec:backward}

As briefly mentioned in the proof overview, the backward edges of $T$ will be decomposed separately at the beginning of the proof of \cref{thm:blowupC4}. We now discuss this in more detail.

\subsection{Feasible systems}\label{sec:feasible}

	\onlyinsubfile{
		\setcounter{section}{9}
		\setcounter{subsection}{1}
		\subsection{Feasible systems}}

The strategy for \cref{thm:blowupC4} will be to first decompose all the backward edges into $n$ edge-disjoint subdigraphs $\cF_1, \dots, \cF_n$ and then incorporate each $\cF_i$ into a distinct Hamilton cycle of the decomposition of $T$.
Each $\cF_i$ will need to have a very specific structure, which will be called a \emph{feasible system}; otherwise we would not be able to incorporate it into a Hamilton cycle. (See \cref{def:feasible} below for a formal definition.) To gain intuition, we start by giving some informal motivation.

First, each $\cF_i$ will have to be linear forest (since any proper subdigraph of a Hamilton cycle is a linear forest). This is property \cref{def:feasible-linforest} below. 
Moreover, we will show that for any Hamilton cycle of $T$, the set of backward edges satisfy the following ``balance property".
(To gain intuition behind \cref{cor:Hamcycle}, observe that in the proof of \cref{prop:tripartite}, we in fact showed that, in a tripartite tournament, one cannot construct a Hamilton cycle which contains a single backward edge.
The analogue holds in the blow-up $C_4$ case: a cycle which does not contain a ``balanced" number of backward edges will not cover all the vertex classes equitably.) 
Recall \cref{def:epspartition}.

\begin{prop}\label{cor:Hamcycle}
	Let $T$ be a bipartite tournament on $4n$ vertices and suppose that $\cU=(U_1, \dots, U_4)$ is an $(\varepsilon, 4)$-partition for $T$. Then, any Hamilton cycle $C$ of $T$ satisfies $e_C(U_1,U_4)=e_C(U_3,U_2)$ and $e_C(U_4,U_3)=e_C(U_2,U_1)$.
\end{prop}

\COMMENT{\begin{proof}
		By \cref{prop:cycle}, there exists $\ell\in \mathbb{Z}$ such that $\ell+ e_C(U_1,U_4)+e_C(U_4,U_3)=|U_4|=|U_3|=\ell+ e_C(U_4,U_3)+e_C(U_3,U_2)$ and $\ell+e_C(U_2,U_1)+e_C(U_1,U_4)=|U_1|=|U_4|=\ell+e_C(U_1,U_4)+e_C(U_4,U_3)$.
\end{proof}}

Thus, we will need to make sure that each $\cF_i$ contains the same number of backward edges in non-adjacent pairs of the blow-up $C_4$. This is property \cref{def:feasible-backward} below.
For convenience, we will also allow each $\cF_i$ to contain a few forward edges to ensure that all the exceptional vertices are covered. This is property \cref{def:feasible-exceptional} below. Roughly speaking, this means that we will decompose all the exceptional edges at the same time as the backward edges, which will enable us to ``ignore" the exceptional vertices when constructing our Hamilton cycles.

Altogether, this motivates the next definition.

\begin{definition}[\Gls*{feasible system}]\label{def:feasible}
	Let $U_1, \dots, U_4$ be disjoint vertex sets and denote $\cU\coloneqq(U_1, \dots, U_4)$. Let $U^*\subseteq \bigcup_{i\in [4]}U_i$ be an exceptional set.
	We say that $\cF$ is a \emph{feasible system (with respect to $\cU$ and $U^*$)} if the following hold.
	\begin{enumerate}[label=\rm(F\arabic*)]
		\item $e_\cF(U_1,U_4)=e_\cF(U_3,U_2)$ and $e_\cF(U_4,U_3)=e_\cF(U_2,U_1)$.\label{def:feasible-backward}
		\newcounter{feasible}
		\setcounter{feasible}{\value{enumi}}
		\item For each $v\in U^*$, $d_{\cF}^+(v)=1=d_{\cF}^-(v)$.\label{def:feasible-exceptional}
		\item $\cF$ is a linear forest.\label{def:feasible-linforest}
	\end{enumerate}
\end{definition}

Note that \cref{cor:Hamcycle} follows immediately from \cref{fact:partition}\cref{fact:partition-size} and the next result (which will also be used in the proof of \cref{fact:feasibleSC} below).

\begin{prop}\label{prop:cycle}
	Let $U_1, \dots, U_4$ be disjoint vertex sets (not necessarily of the same size) and let $C$ be a bipartite cycle on vertex classes $U_1\cup U_3$ and $U_2\cup U_4$.
	Then, there exists $\ell\in \mathbb{Z}$ such that, for each $i\in[4]$, $|U_i|=\ell + e_C(U_{i+1},U_i)+e_C(U_i,U_{i-1})$.
\end{prop}

\begin{proof}
	We proceed by induction on $|C|$.
	For the base case, suppose that $|C|=4$.
	If $|U_i|=1$ and $e_C(U_{i+1},U_i)=0$ for each $i\in [4]$, then we can let $\ell\coloneqq 1$ and we are done. If $|U_i|=1$ and $e_C(U_{i+1},U_i)=1$ for each $i\in [4]$, then we can let $\ell\coloneqq -1$ and we are done.
	Suppose that there exists $i\in [4]$ such that $C=v_1v_2v_3v_4$ for some $v_1\in U_i$, $v_2,v_4\in U_{i+1}$, and $v_3\in U_{i+2}$. Then, each $j\in [4]$ satisfies
	\begin{align*}
		|U_j|=\begin{cases}
			2 & \text{if } j=i+1;\\
			1 & \text{if } j\in \{i,i+2\};\\
			0 & \text{otherwise;}
		\end{cases}
		\quad\text{and}\quad
		e_C(U_{j+1},U_j)=\begin{cases}
			1 & \text{if } j\in  \{i, i+1\};\\
			0 & \text{otherwise}.
		\end{cases}
	\end{align*}
	Thus, we can let $\ell\coloneqq 0$ and we are done.
	We may therefore assume that there exists $i\in [4]$ such that $C=v_1v_2v_3v_4$ for some $v_1,v_3\in U_i$ and $v_2,v_4\in U_{i+1}$.
	Then, each $j\in [4]$ satisfies
	\begin{align*}
		|U_j|=\begin{cases}
			2 & \text{if } j\in \{i,i+1\};\\
			0 & \text{otherwise;}
		\end{cases}
		\quad\text{and}\quad
		e_C(U_{j+1},U_j)=\begin{cases}
			2 & \text{if } j=i;\\
			0 & \text{otherwise}.
		\end{cases}
	\end{align*}
	Thus, we can let $\ell\coloneqq 0$ and we are done.
	
	For the induction step, let $k>2$ and suppose that the \lcnamecref{prop:cycle} holds for any cycle of length $2(k-1)$. Assume that $|C|=2k$ and denote $C=v_1v_2\dots v_{2k}$. Suppose without loss of generality that $v_1\in U_1$. Then, observe that $v_{2k-2},v_{2k}\in U_2\cup U_4$ and $v_{2k-1}\in U_1\cup U_3$.
	For each $i\in [4]$, let $U_i'\coloneqq U_i\setminus \{v_{2k-1},v_{2k}\}$. Define a cycle $C'\coloneqq v_1\dots v_{2k-2}$. Then, $|C'|= 2(k-1)$ and $C'$ is a bipartite cycle on vertex classes $U_1'\cup U_3'$ and $U_2'\cup U_4'$. Thus, by the induction hypothesis, there exists $\ell'\in \mathbb{Z}$ such that $|U_i'|=\ell'+e_{C'}(U_{i+1}', U_i')+e_{C'}(U_i', U_{i-1}')$ for each $i\in [4]$.
	If $v_{2k-2}v_{2k-1}, v_{2k-1}v_{2k}$, and $v_{2k}v_1$ are all forward edges with respect to $\cU\coloneqq (U_1, \dots, U_4)$, then let $\ell\coloneqq \ell'+1$. If $v_{2k-2}v_{2k-1}, v_{2k-1}v_{2k}$, and $v_{2k}v_1$ are all backward edges with respect to $\cU$, then let $\ell\coloneqq \ell'-1$. Otherwise, let $\ell\coloneqq \ell'$.
	
	We now verify that $|U_i|=\ell+e_C(U_{i+1}, U_i)+e_C(U_i, U_{i-1})$ for each $i\in [4]$. We consider the case where $v_{2k-2}v_{2k-1}, v_{2k-1}v_{2k}$, and $v_{2k}v_1$ are all forward edges with respect to $\cU$ (the other cases can be verified with similar arguments).%
	\COMMENT{Case $v_{2k-2}v_{2k-1}, v_{2k-1}v_{2k}, v_{2k}v_1$ are all backward edges. Note that $v_{2k}\in U_2$, $v_{2k-1}\in U_3$, and $v_{2k-2}\in U_4$. Thus,
		\begin{equation*}
			|U_i'|=
			\begin{cases}
				|U_i| & \text{if }i\in \{1,4\};\\
				|U_i|-1 & \text{otherwise};
			\end{cases}
			\quad \text{and} \quad
			e_{C'}(U_{i+1}', U_i')=
			\begin{cases}
				e_C(U_{i+1}, U_i) & \text{if }i=4;\\
				e_C(U_{i+1}, U_i)-1 & \text{otherwise}.
			\end{cases}
		\end{equation*}
		Then, for each $i\in \{1,4\}$,
		\begin{align*}
			|U_i|&=|U_i'|=\ell' +e_{C'}(U_{i+1}', U_i')+e_{C'}(U_i', U_{i-1}')\\
			&=\ell+1+e_{C'}(U_{i+1}', U_i')+e_{C'}(U_i', U_{i-1}')
			=\ell +e_C(U_{i+1}, U_i)+e_C(U_i, U_{i-1}).
		\end{align*}
		Moreover, for each $i\in \{2,3\}$, 
		\begin{align*}
			|U_i|&=|U_i'|+1=\ell'+1 +e_{C'}(U_{i+1}', U_i')+e_{C'}(U_i', U_{i-1}')\\
			&=\ell+2 +e_{C'}(U_{i+1}', U_i')+e_{C'}(U_i', U_{i-1}')=\ell+e_C(U_{i+1}, U_i)+e_C(U_i, U_{i-1}),
		\end{align*}
		so we are done.}%
	\COMMENT{Case $v_{2k-2}v_{2k-1}$ and $v_{2k-1}v_{2k}$ are forward edges but $v_{2k}v_1$ is a backward edge.
		Then, note that $v_{2k}\in U_2$, $v_{2k-1}\in U_1$, and $v_{2k-2}\in U_4$. Thus,
		\begin{equation*}
			|U_i'|=
			\begin{cases}
				|U_i|-1 & \text{if }i\in [2];\\
				|U_i| & \text{otherwise};
			\end{cases}
			\quad \text{and} \quad
			e_{C'}(U_{i+1}', U_i')=
			\begin{cases}
				e_C(U_{i+1}, U_i)-1 & \text{if }i=1;\\
				e_C(U_{i+1}, U_i) & \text{otherwise}.
			\end{cases}
		\end{equation*}
		Then, for each $i\in [2]$,
		\begin{align*}
			|U_i|&=|U_i'|+1=\ell' +1+e_{C'}(U_{i+1}', U_i')+e_{C'}(U_i', U_{i-1}')\\
			&=\ell+e_C(U_{i+1}, U_i)+e_C(U_i, U_{i-1}).
		\end{align*}
		Moreover, for each $i\in [4]\setminus [2]$, 
		\begin{align*}
			|U_i|&=|U_i'|=\ell' +e_{C'}(U_{i+1}', U_i')+e_{C'}(U_i', U_{i-1}')\\
			&=\ell+e_C(U_{i+1}, U_i)+e_C(U_i, U_{i-1}),
		\end{align*}
		so we are done.}%
	\COMMENT{Case $v_{2k-2}v_{2k-1}$ and $v_{2k-1}v_{2k}$ are backward edges but $v_{2k}v_1$ is a forward edge. Note that $v_{2k}\in U_4$, $v_{2k-1}\in U_1$, and $v_{2k-2}\in U_2$. Thus,
		\begin{equation*}
			|U_i'|=
			\begin{cases}
				|U_i|-1 & \text{if }i\in \{1,4\};\\
				|U_i| & \text{otherwise};
			\end{cases}
			\quad \text{and} \quad
			e_{C'}(U_{i+1}', U_i')=
			\begin{cases}
				e_C(U_{i+1}, U_i)-1 & \text{if }i=4;\\
				e_C(U_{i+1}, U_i) & \text{otherwise}.
			\end{cases}
		\end{equation*}
		Then, for each $i\in \{1,4\}$,
		\begin{align*}
			|U_i|&=|U_i'|+1=\ell' +1+e_{C'}(U_{i+1}', U_i')+e_{C'}(U_i', U_{i-1}')\\
			&=\ell+e_C(U_{i+1}, U_i)+e_C(U_i, U_{i-1}).
		\end{align*}
		Moreover, for each $i\in \{2,3\}$, 
		\begin{align*}
			|U_i|&=|U_i'|=\ell' +e_{C'}(U_{i+1}', U_i')+e_{C'}(U_i', U_{i-1}')\\
			&=\ell+e_C(U_{i+1}, U_i)+e_C(U_i, U_{i-1}),
		\end{align*}
		so we are done.}%
	\COMMENT{Case $v_{2k-2}v_{2k-1}$ is a forward edge but $v_{2k-1}v_{2k}$ and $v_{2k}v_1$ are backward edges. Note that $v_{2k}\in U_2$, $v_{2k-1}\in U_3$, and $v_{2k-2}\in U_2$. Thus,
		\begin{equation*}
			|U_i'|=
			\begin{cases}
				|U_i|-1 & \text{if }i\in \{2,3\};\\
				|U_i| & \text{otherwise};
			\end{cases}
			\quad \text{and} \quad
			e_{C'}(U_{i+1}', U_i')=
			\begin{cases}
				e_C(U_{i+1}, U_i)-1 & \text{if }i=2;\\
				e_C(U_{i+1}, U_i) & \text{otherwise}.
			\end{cases}
		\end{equation*}
		Then, for each $i\in \{1,4\}$,
		\begin{align*}
			|U_i|&=|U_i'|=\ell' +e_{C'}(U_{i+1}', U_i')+e_{C'}(U_i', U_{i-1}')\\
			&=\ell+e_C(U_{i+1}, U_i)+e_C(U_i, U_{i-1}).
		\end{align*}
		Moreover, for each $i\in \{2,3\}$, 
		\begin{align*}
			|U_i|&=|U_i'|+1=\ell'+1 +e_{C'}(U_{i+1}', U_i')+e_{C'}(U_i', U_{i-1}')\\
			&=\ell+e_C(U_{i+1}, U_i)+e_C(U_i, U_{i-1}),
		\end{align*}
		so we are done.}
	Note that $v_{2k}\in U_4$, $v_{2k-1}\in U_3$, and $v_{2k-2}\in U_2$. Then,
	\begin{equation*}
		|U_i'|=
		\begin{cases}
			|U_i| & \text{if }i\in [2];\\
			|U_i|-1 & \text{otherwise};
		\end{cases}
		\quad \text{and} \quad
		e_{C'}(U_{i+1}', U_i')=
		\begin{cases}
			e_C(U_{i+1}, U_i)+1 & \text{if }i=1;\\
			e_C(U_{i+1}, U_i) & \text{otherwise}.
		\end{cases}
	\end{equation*}
	Then, for each $i\in [2]$,
	\begin{align*}
		|U_i|&=|U_i'|=\ell' +e_{C'}(U_{i+1}', U_i')+e_{C'}(U_i', U_{i-1}')\\
		&=\ell-1+e_{C'}(U_{i+1}', U_i')+e_{C'}(U_i', U_{i-1}')=\ell+e_C(U_{i+1}, U_i)+e_C(U_i, U_{i-1}).
	\end{align*}
	Moreover, for each $i\in [4]\setminus [2]$, 
	\begin{align*}
		|U_i|&=|U_i'|+1=\ell'+1 +e_{C'}(U_{i+1}', U_i')+e_{C'}(U_i', U_{i-1}')\\
		&=\ell+e_C(U_{i+1}, U_i)+e_C(U_i, U_{i-1}),
	\end{align*}
	so we are done.%
	\COMMENT{Case $v_{2k-2}v_{2k-1}$ is a backward edge but $v_{2k-1}v_{2k}$ and $v_{2k}v_1$ are forward edges. Note that $v_{2k}\in U_4$, $v_{2k-1}\in U_3$, and $v_{2k-2}\in U_4$. Thus,
		\begin{equation*}
			|U_i'|=
			\begin{cases}
				|U_i|-1 & \text{if }i\in \{3,4\};\\
				|U_i| & \text{otherwise};
			\end{cases}
			\quad \text{and} \quad
			e_{C'}(U_{i+1}', U_i')=
			\begin{cases}
				e_C(U_{i+1}, U_i)-1 & \text{if }i=3;\\
				e_C(U_{i+1}, U_i) & \text{otherwise}.
			\end{cases}
		\end{equation*}
		Then, for each $i\in [2]$,
		\begin{align*}
			|U_i|&=|U_i'|=\ell' +e_{C'}(U_{i+1}', U_i')+e_{C'}(U_i', U_{i-1}')\\
			&=\ell+e_C(U_{i+1}, U_i)+e_C(U_i, U_{i-1}).
		\end{align*}
		Moreover, for each $i\in[4]\setminus [2]$, 
		\begin{align*}
			|U_i|&=|U_i'|+1=\ell'+1 +e_{C'}(U_{i+1}', U_i')+e_{C'}(U_i', U_{i-1}')\\
			&=\ell+e_C(U_{i+1}, U_i)+e_C(U_i, U_{i-1}),
		\end{align*}
		so we are done.}%
	\COMMENT{Case $v_{2k-2}v_{2k-1}$ and $v_{2k}v_1$ are forward edges but $v_{2k-1}v_{2k}$ is a backward edge. Note that $v_{2k}\in U_4$, $v_{2k-1}\in U_1$, and $v_{2k-2}\in U_4$. Thus,
		\begin{equation*}
			|U_i'|=
			\begin{cases}
				|U_i|-1 & \text{if }i\in \{1,4\};\\
				|U_i| & \text{otherwise};
			\end{cases}
			\quad \text{and} \quad
			e_{C'}(U_{i+1}', U_i')=
			\begin{cases}
				e_C(U_{i+1}, U_i)-1 & \text{if }i=4;\\
				e_C(U_{i+1}, U_i) & \text{otherwise}.
			\end{cases}
		\end{equation*}
		Then, for each $i\in \{1,4\}$,
		\begin{align*}
			|U_i|&=|U_i'|+1=\ell'+1 +e_{C'}(U_{i+1}', U_i')+e_{C'}(U_i', U_{i-1}')\\
			&=\ell+e_C(U_{i+1}, U_i)+e_C(U_i, U_{i-1}).
		\end{align*}
		Moreover, for each $i\in \{2,3\}$, 
		\begin{align*}
			|U_i|&=|U_i'|=\ell'+e_{C'}(U_{i+1}', U_i')+e_{C'}(U_i', U_{i-1}')\\
			&=\ell+e_C(U_{i+1}, U_i)+e_C(U_i, U_{i-1}),
		\end{align*}
		so we are done.}%
	\COMMENT{Case $v_{2k-2}v_{2k-1}$ and $v_{2k}v_1$ are backward edges but $v_{2k-1}v_{2k}$ is a forward edge. Note that $v_{2k}\in U_2$, $v_{2k-1}\in U_1$, and $v_{2k-2}\in U_2$. Thus,
		\begin{equation*}
			|U_i'|=
			\begin{cases}
				|U_i|-1 & \text{if }i\in [2];\\
				|U_i| & \text{otherwise};
			\end{cases}
			\quad \text{and} \quad
			e_{C'}(U_{i+1}', U_i')=
			\begin{cases}
				e_C(U_{i+1}, U_i)-1 & \text{if }i=1;\\
				e_C(U_{i+1}, U_i) & \text{otherwise}.
			\end{cases}
		\end{equation*}
		Then, for each $i\in [2]$,
		\begin{align*}
			|U_i|&=|U_i'|+1=\ell'+1 +e_{C'}(U_{i+1}', U_i')+e_{C'}(U_i', U_{i-1}')\\
			&=\ell+e_C(U_{i+1}, U_i)+e_C(U_i, U_{i-1}).
		\end{align*}
		Moreover, for each $i\in [4]\setminus [2]$, 
		\begin{align*}
			|U_i|&=|U_i'|=\ell'+e_{C'}(U_{i+1}', U_i')+e_{C'}(U_i', U_{i-1}')\\
			&=\ell+e_C(U_{i+1}, U_i)+e_C(U_i, U_{i-1}),
		\end{align*}
		so we are done.}
\end{proof}

We now state a few useful properties of feasible systems. 
Observe that forward edges are only required to cover the exceptional set $U^*$, so any forward edge which is not incident to $U^*$ may be deleted or added from a feasible system.

\begin{fact}\label{fact:feasibleforward}
	Let $U_1, \dots, U_4$ be disjoint vertex sets. Denote $\cU\coloneqq(U_1, \dots, U_4)$ and let $U^*\subseteq \bigcup_{i\in [4]}U_i$.
	Let $\cF$ be a feasible system with respect to $\cU$ and $U^*$. Let $e$ be a forward edge with respect to $\cU$ which satisfies $V(e)\cap U^*=\emptyset$. Then, $\cF\setminus \{e\}$ is a feasible system and, if $\cF\cup \{e\}$ is a linear forest, then $\cF\cup \{e\}$ is also a feasible system.
\end{fact}

Note that isolated vertices play no role in a feasible system and so may be deleted.

\begin{fact}\label{fact:feasibleisolated}
	Let $\cU, U^*$, and $\cF$ be as in \cref{fact:feasibleforward}. Let $\cF'$ be obtained from $\cF$ by deleting all isolated vertices. Then, $\cF'$ is also a feasible system with respect to $\cU$ and $U^*$.
\end{fact}

As discussed above, we will decompose the backward and exceptional edges into $n$ feasible systems and then restrict ourselves to construct a Hamilton decomposition where each Hamilton cycle contains precisely one of the feasible systems.
The incorporation of feasible systems into the approximate decomposition is discussed in \cref{sec:blowupC4approxdecomp}. To decompose the leftovers, recall that we will be using the robust decomposition lemma for blow-up cycles (\cref{lm:cyclerobustdecomp}) and, as discussed in \cref{sec:constructESF}, all the cycles obtained via \cref{lm:cyclerobustdecomp} will be turned into Hamilton cycles of $T$ by incorporating a special cover.
Thus, we will require some of our feasible systems to form special covers. For \cref{lm:ESF}, these will also need to be balanced (recall \cref{def:BSC}).
In the next \lcnamecref{fact:feasibleSC}, we verify that feasible systems can induce balanced special covers.

\begin{lm}\label{fact:feasibleSC}
	Let $D$ be a digraph and $U_1, \dots, U_4$ be a partition of $V(D)$. Denote $\cU\coloneqq(U_1, \dots, U_4)$ and let $U^*\subseteq \bigcup_{i\in [4]}U_i$ be an exceptional set satisfying $|U^*\cap U_1|=\dots=|U^*\cap U_4|$.
	Let $\cF\subseteq D$ be a feasible system with respect to $\cU$ and $U^*$. If $V^0(\cF)=U^*$, then $\cF$ is a $\cU$-balanced special cover in $D$ with respect to $U^*$. In particular, $\cF$ is $\cU'$-balanced, where $\cU'\coloneqq (U_1\setminus U^*, \dots, U_4\setminus U^*)$.
\end{lm}

\begin{proof}
	Clearly, $\cF$ is a special cover in $D$ with respect to $U^*$ and the ``in particular part" holds since all the endpoints of the components of $\cF$ lie in $\bigcup\cU'$. We show that $\cF$ is $\cU$-balanced.
	By \cref{def:BSC,fact:feasibleisolated}, we may assume without loss of generality that $\cF$ does not contain any isolated vertex.
	For each $i\in [4]$, let $n_i^\pm\coloneqq |V^\pm(\cF)\cap U_i|$. By symmetry, it is enough to show that $n_1^+=n_2^-$.
	Using new vertices and edges, extend each component of $\cF$ to obtain a linear forest $\cF'\supseteq \cF$ and vertex sets $U_1'\supseteq U_1, \dots, U_4'\supseteq U_4$ such that the following hold, where $\cU''\coloneqq(U_1', \dots, U_4')$%
	\COMMENT{Greedily add forward edges at new vertices until \cref{cor:SCbalanced-endpoints} holds.}.
	\begin{enumerate}[label=(\alph*)]
		\item $\cF'$ is a bipartite linear forest on vertex classes $U_1'\cup U_3'$ and $U_2'\cup U_4'$.\label{cor:SCbalanced-bipartite}
		\item $E(\overleftarrow{\cF'}_{\cU''})=E(\overleftarrow{\cF}_\cU)$.\label{cor:SCbalanced-backward'}
		\item Each component of $\cF'$ is a path which starts in $U_1'$ and ends in $U_4'$.\label{cor:SCbalanced-endpoints}
		\item Each component of $\cF'\setminus \cF$ is a path of length at most $3$.\label{cor:SCbalanced-minimal}
	\end{enumerate}
	Let $\cF''$ be obtained from $\cF'$ by adding an edge from the ending point to the starting point of each component of $\cF'$.
	By \cref{cor:SCbalanced-endpoints,cor:SCbalanced-bipartite,cor:SCbalanced-backward'}, the following hold.
	\begin{enumerate}[label=(\alph*$'$)]
		\item $\cF''$ is a bipartite $1$-factor on vertex classes $U_1'\cup U_3'$ and $U_2'\cup U_4'$.\label{cor:SCbalanced-bipartite'}
		\item $E(\overleftarrow{\cF''}_{\cU''})=E(\overleftarrow{\cF}_\cU)$.\label{cor:SCbalanced-backward''}
	\end{enumerate}
	Let $\sC$ be the set of components of $\cF''$.
	For each $C\in \sC$, let $\ell_C\in \mathbb{Z}$ be the constant obtained by applying \cref{prop:cycle} with $V(C)\cap U_1', \dots, V(C)\cap U_4'$ playing the roles of $U_1, \dots, U_4$.	
	Then, 
	\begin{align}
		|U_1'|&
		\stackrel{\text{\eqmakebox[SC]{}}}{=}\sum_{C\in \sC}|V(C)\cap U_1|
		\stackrel{\text{\cref{prop:cycle}}}{=}\sum_{C\in \sC}(\ell_C+e_C(U_2',U_1')+e_C(U_1',U_4'))\nonumber\\
		&\stackrel{\text{\eqmakebox[SC]{}}}{=}\sum_{C\in \sC}\ell_C+e_{\cF''}(U_2',U_1')+e_{\cF''}(U_1',U_4')
		\stackrel{\text{\eqmakebox[SC]{\text{\cref{cor:SCbalanced-backward''}}}}}{=}\sum_{C\in \sC}\ell_C+e_{\cF''}(U_2',U_1')+e_\cF(U_1,U_4)\nonumber\\
		&\stackrel{\text{\cref{def:feasible-backward}}}{=}\sum_{C\in \sC}\ell_C+e_{\cF''}(U_2',U_1')+e_\cF(U_3,U_2)
		\stackrel{\text{\cref{cor:SCbalanced-backward''}}}{=}\sum_{C\in \sC}\ell_C+e_{\cF''}(U_2',U_1')+e_{\cF''}(U_3',U_2')\nonumber\\
		&\stackrel{\text{\eqmakebox[SC]{}}}{=}\sum_{C\in \sC}(\ell_C+e_C(U_2',U_1')+e_C(U_3',U_2'))
		\stackrel{\text{\cref{prop:cycle}}}{=}\sum_{C\in \sC}|V(C)\cap U_2'|\nonumber\\
		&\stackrel{\text{\eqmakebox[SC]{}}}{=}|U_2'|.\label{eq:SCbalanced}
	\end{align}
	Therefore,
	\begin{align*}
		(|V^0(\cF)\cap U_1|+n_1^++n_1^-)&+(n_2^++n_3^++n_4^+)\\
		&\stackrel{\text{\cref{cor:SCbalanced-backward'}--\cref{cor:SCbalanced-minimal}}}{=}|U_1|+|U_1'\setminus U_1|
		=|U_1'|
		\stackrel{\text{\cref{eq:SCbalanced}}}{=}|U_2'|
		=|U_2|+|U_2'\setminus U_2|\\
		&\stackrel{\text{\cref{cor:SCbalanced-backward'}--\cref{cor:SCbalanced-minimal}}}{=}(|V^0(\cF)\cap U_2|+n_2^++n_2^-)+(n_1^-+n_3^++n_4^+).
	\end{align*}
	Thus, $n_1^+=n_2^-$, as desired.
\end{proof}

\subsection{Optimal partitions}

	\onlyinsubfile{
		\setcounter{section}{9}
\subsection{Optimal partitions}}

We now introduce our main tool for decomposing the backward and exceptional edges into feasible systems. Suppose that $T$ is $\varepsilon$-close to the complete blow-up $C_4$ with vertex partition $\cU=(U_1, \dots, U_4)$. As observed in \cref{sec:regbiT}, $T$ is a bipartite tournament on vertex classes $U_1\cup U_3$ and $U_2\cup U_4$. Thus, the partition $\cU$ is not fixed (one may swap some vertices between $U_1$ and $U_3$, as well as some vertices between $U_2$ and $U_4$). We will consider a partition $\cU$ which minimises the number of backward edges.

\begin{definition}[\Gls*{optimal partition}]\label{def:optimal}
	Let $T$ be a bipartite tournament. An $(\varepsilon, 4)$-partition $\cU$ for $T$ is \emph{optimal} if it minimises the number of backward edges in $T$, that is, if
	\[|E(\overleftarrow{T}_\cU)|=\min\{|E(\overleftarrow{T}_{\cU'})| \mid \cU' \text{ is an $(\varepsilon, 4)$-partition for $T$}\}.\]
\end{definition}

Roughly speaking, an optimal $(\varepsilon,4)$-exceptional partition of $T$ will guarantee the existence of a subdigraph $H\subseteq T$ of small maximum degree which contains many backward edges. This will enable us to apply K\"{o}nig's theorem (\cref{prop:Koniglarge}) to find large matchings of backward edges. This is \cref{lm:optimalH} below. To state and prove this \lcnamecref{lm:optimalH}, we need some additional notation.

Let $U_1, \dots, U_4$ be disjoint vertex sets of size $n$ and denote $\cU\coloneqq (U_1, \dots, U_4)$. Let $T$ be a regular bipartite tournament on vertex classes $U_1\cup U_3$ and $U_2\cup U_4$. Recall from \cref{fact:backwarddegree} that $\overleftarrow{d}_{T,\cU}^+(v)=\overleftarrow{d}_{T,\cU}^-(v)$ for each $v\in V(T)$.
For each $i\in [4]$ and $0\leq \gamma < 1$, denote by
\begin{equation}\label{eq:Ugamma}
    U_i^{\gamma, \cU}(T)\coloneqq \left\{v\in U_i \mid \overleftarrow{d}_{T,\cU}^+(v)=\overleftarrow{d}_{T,\cU}^-(v)> \gamma n\right\}
\end{equation}
the set of vertices in $U_i$ whose backward out- and indegree is greater that $\gamma n$.
Define $U^{\gamma,\cU}(T)\coloneqq \bigcup_{i\in [4]}U_i^{\gamma,\cU}(T)$. 
In practice, $\cU$ will always be clear from the context and so we will omit the second superscript. That is, we will write $U_i^\gamma(T)$ and $U^\gamma(T)$ instead of $U_i^{\gamma,\cU}(T)$ and $U^{\gamma,\cU}(T)$.
Throughout the rest of this paper, the subscript $i$ in the above notation will always be taken modulo $4$, so $U_5^\gamma(T)\coloneqq U_1^\gamma(T)$ for example.

\begin{lm}\label{lm:optimalH}
	Let $0<\frac{1}{n}\ll \varepsilon\ll \gamma\leq \frac{1}{2}$. Let $T$ be a regular bipartite tournament on $4n$ vertices and let $\cU=(U_1, \dots, U_4)$ be an optimal $(\varepsilon,4)$-partition for $T$. Then, there exists $H\subseteq \overleftarrow{T}_\cU$ satisfying the following properties.
	\begin{enumerate}
		\item $\Delta^0(H)\leq \gamma n$.\label{lm:optimalH-Delta}
		\item For each $v\in U^{1-\gamma}(T)$, $d_H(v)=0$.\label{lm:optimalH-U**}
		\item For each $i\in [4]$, $e_{H-U^{1-\gamma}(T)}(U_i, U_{i-1})\geq (1-2\gamma)n|U_{i-2}^{1-\gamma}(T)\cup U_{i-3}^{1-\gamma}(T)|$.\label{lm:optimalH-e}
	\end{enumerate}
\end{lm}

Note that \cref{lm:optimalH}\cref{lm:optimalH-U**} implies that $H-U^{1-\gamma}(T)$ is simply $H$ in \cref{lm:optimalH}\cref{lm:optimalH-e}. However, we emphasise for later applications that none the edges are incident to $U^{1-\gamma}(T)$. Indeed, $H$ will be used in conjunction with other sets of edges incident to $U^{1-\gamma}(T)$ to construct feasible systems and the role of $H$ will be to balance out the number of backward edges chosen incident to $U^{1-\gamma}(T)$ (recall property \cref{def:feasible-backward} of a feasible system). At this stage, it will be crucial that there are many edges which are not incident to $U^{1-\gamma}(T)$.

To prove \cref{lm:optimalH}, we will need the next two results.

\begin{fact}\label{fact:U}
	Let $U_1, \dots, U_4$ be disjoint vertex sets of size $n$ and denote $\cU\coloneqq (U_1, \dots, U_4)$. Let $T$ be a regular bipartite tournament on vertex classes $U_1\cup U_3$ and $U_2\cup U_4$. For any $0\leq \gamma \leq \gamma'< 1$, $U^{\gamma'}(T)\subseteq U^\gamma(T)$.
\end{fact}

\begin{lm}\label{lm:gammaoptimal}
	Let $0<\varepsilon\leq 1$ and $0<\gamma \leq \frac{1}{2}$%
		\COMMENT{Only needed for the ``in particular part".}.
	Let $T$ be a bipartite tournament and let $\cU=(U_1, \dots, U_4)$ be an optimal $(\varepsilon, 4)$-partition for $T$. Then, for each $i\in [2]$, there exists $j_i\in \{i,i+2\}$ such that
	\[U_i^{1-\gamma}(T)=\emptyset=U_{i+2}^{1-\gamma}(T) \quad \text{or} \quad U_{j_i}^\gamma(T)=\emptyset.\]
	In particular,
	\[U_{j_1}^{1-\gamma}(T)=\emptyset=U_{j_2}^{1-\gamma}(T).\]
\end{lm}

\begin{proof}
	The ``in particular part" follows immediately from \cref{fact:U}.
	By symmetry, it suffices to show that the lemma holds for $i=1$. Let $j_1\in \{1, 3\}$ minimise $\max_{v\in U_{j_1}}\overleftarrow{d}_{T, \cU}(v)$. Suppose for a contradiction that there exist $u\in U_{j_1+2}^{1-\gamma}(T)$ and $v\in U_{j_1}^\gamma(T)$. We claim that swapping $u$ and $v$ decreases the number of backward edges and so $\cU$ is not optimal. Indeed, let $U_2'\coloneqq U_2$, $U_4'\coloneqq U_4$, $U_{j_1}'\coloneqq (U_{j_1}\setminus \{v\})\cup \{u\}$, and $U_{j_1+2}'\coloneqq (U_{j_1+2}\setminus \{u\})\cup \{v\}$. Define $\cU'\coloneqq (U_1', \dots, U_4')$. Then,
	\begin{align*}
		e_T(U_{j_1}', U_{j_1-1}')&=\overleftarrow{d}_{T, \cU'}^+(u)+\sum_{w\in U_{j_1}\setminus \{v\}} \overleftarrow{d}_{T, \cU'}^+(w)
		= \overrightarrow{d}_{T, \cU}^+(u)+\sum_{w\in U_{j_1}\setminus \{v\}} \overleftarrow{d}_{T, \cU}^+(w)\\
		&< \gamma n+\sum_{w\in U_{j_1}\setminus \{v\}} \overleftarrow{d}_{T, \cU}^+(w)
		< \overleftarrow{d}_{T, \cU}^+(v) +\sum_{w\in U_{j_1}\setminus \{v\}} \overleftarrow{d}_{T, \cU}^+(w) =e_T(U_{j_1}, U_{j_1-1}).
	\end{align*}
	Thus, \cref{fact:epsilon4partition,fact:backwardedges} imply that $\cU'$ is an $(\varepsilon,4)$-partition for $T$ which satisfies $|E(\overleftarrow{T}_{\cU'})|<|E(\overleftarrow{T}_\cU)|$. This contradicts the fact that $\cU$ is optimal and so
	\[U_{j_1+2}^{1-\gamma}(T)=\emptyset \quad \text{or} \quad U_{j_1}^\gamma(T)=\emptyset.\]
	Thus, it suffices to prove that if $U_{j_1+2}^{1-\gamma}(T)=\emptyset$, then $U_{j_1}^{1-\gamma}(T)=\emptyset$. Suppose not. Then, $\max_{w\in U_{j_1+2}}\overleftarrow{d}_{T, \cU}(w)\leq (1-\gamma)n < \max_{w\in U_{j_1}}\overleftarrow{d}_{T, \cU}(w)$, a contradiction to the definition of~$j_1$.
\end{proof}

\begin{proof}[Proof of \cref{lm:optimalH}]
	By \cref{fact:partition}\cref{fact:partition-cycle} and \cref{lm:gammaoptimal}, we may assume without loss of generality that, for each $i\in [2]$, 
	\begin{equation}\label{eq:optimalH2}
		U_i^{1-\gamma}(T)=\emptyset=U_{i+2}^{1-\gamma}(T) \quad \text{or} \quad U_{i+2}^\gamma(T)=\emptyset.
	\end{equation}
	In particular, we have
	\begin{equation}\label{eq:optimalH}
		U_3^{1-\gamma}(T)=\emptyset \quad \text{and} \quad U_4^{1-\gamma}(T)=\emptyset.
	\end{equation}
	
	For each $i\in \{1,4\}$ and $v\in U_i\setminus U^{1-\gamma}(T)$, let $E_v\subseteq E_T(\{v\}, U_{i-1})\subseteq E(\overleftarrow{T}_\cU)$ satisfy $|E_v|=|N_T^-(v)\cap U_{i+1}^{1-\gamma}(T)|$ (this is possible by \cref{fact:backwarddegree}). Similarly, for each $i\in \{2,3\}$ and $v\in U_i\setminus U^{1-\gamma}(T)$, let $E_v\subseteq E_T(U_{i+1}, \{v\})\subseteq E(\overleftarrow{T}_\cU)$ satisfy $|E_v|=|N_T^+(v)\cap U_{i-1}^{1-\gamma}(T)|$.
	Let $E_{14}\coloneqq \bigcup_{v\in U_1\setminus U^{1-\gamma}(T)}E_v$ and $E_{32}\coloneqq \bigcup_{v\in U_2\setminus U^{1-\gamma}(T)}E_v$.
	Define 
	\begin{equation*}
		E_{43}\coloneqq 
		\begin{cases}
			\emptyset& \text{if }U_1^{1-\gamma}(T)= \emptyset= U_2^{1-\gamma}(T);\\
			E_T(U_4, U_3)& \text{if } U_1^{1-\gamma}(T)\neq \emptyset\neq U_2^{1-\gamma}(T);\\
			\bigcup_{v\in U_4\setminus U^{1-\gamma}(T)}E_v& \text{if } U_1^{1-\gamma}\neq \emptyset=U_2^{1-\gamma}(T);\\
			\bigcup_{v\in U_3\setminus U^{1-\gamma}(T)}E_v& \text{if }U_1^{1-\gamma}= \emptyset\neq U_2^{1-\gamma}(T).
		\end{cases}
	\end{equation*}	
	Let $H$ be the digraph on $V(T)$ defined by $E(H)\coloneqq E_{14}\cup E_{32}\cup E_{43}$.
	
	By definition, $H\subseteq \overleftarrow{T}_\cU$.
	We verify that \cref{lm:optimalH-Delta,lm:optimalH-e,lm:optimalH-U**} are satisfied.
	One can easily verify that, for each $v\in V(T)\setminus U^{1-\gamma}(T)$ and $e\in E_v$, we have $V(e)\setminus \{v\}\subseteq U_3\cup U_4$%
		\COMMENT{$v\in U_1$ implies $E_v\subseteq E_T(\{v\},U_4)$.\\
		$v\in U_4$ implies that $E_v\subseteq E_T(\{v\},U_3)$.\\
		$v\in U_2$ implies that $E_v\subseteq E_T(U_3, \{v\})$.\\
		$v\in U_3$ implies that $E_v\subseteq E_T(U_4, \{v\})$.}.
	Together with \cref{eq:optimalH} and the fact that $E(H)\subseteq E_T(U_4, U_3)\cup \bigcup_{v\in V(T)\setminus U^{1-\gamma}(T)}E_v$, this implies that \cref{lm:optimalH-U**} holds.
	
	For \cref{lm:optimalH-Delta}, it only remains to check that $d_H^\pm(v)\leq \gamma n$ for each $v\in V(T)\setminus U^{1-\gamma}(T)$%
		\COMMENT{By \cref{lm:optimalH-U**}, we already have $d_H^\pm(v)=0\leq \gamma n$ for each $v\in U^{1-\gamma}(T)$.}.
	By \cref{fact:epsilon4partition}, the following holds for each $i\in [4]$.
	\begin{equation}\label{eq:optimalH-U*}
		|U_i^{1-\gamma}(T)|\leq \frac{\varepsilon n^2}{(1-\gamma)n}\leq \gamma n.
	\end{equation}
	Thus, for each $v\in U_1\setminus U^{1-\gamma}(T)$, we have
	\[d_H(v)=d_{E_{14}}(v)=|E_v|\leq |U_2^{1-\gamma}(T)|\stackrel{\text{\cref{eq:optimalH-U*}}}{\leq} \gamma n,\]
	as desired. 
	Similarly, each $v\in U_2\setminus U^{1-\gamma}(T)$ satisfies $d_H(v)\leq \gamma n$%
		\COMMENT{$d_H(v)=d_{E_{32}}(v)\leq |U_1^{1-\gamma}(T)|\leq \gamma n$.}.
	It remains to verify that $d_H^\pm(v)\leq \gamma n$ for each $v\in (U_3\cup U_4)\setminus U^{1-\gamma}(T)$.
	If $U_3^\gamma(T)=\emptyset=U_4^\gamma(T)$, then we have $d_H^\pm(v)\leq \overleftarrow{d}_{T,\cU}^\pm(v)\leq \gamma n$ for each $v\in U_3\cup U_4$, as desired. 
	Moreover, \cref{eq:optimalH2} implies that if $U_3^\gamma(T)\neq \emptyset\neq U_4^\gamma(T)$, then $U_i^{1-\gamma}(T)=\emptyset$ for each $i\in [4]$ and so $E(H)=\emptyset$. Thus, by symmetry, we may assume that $U_3^\gamma(T)=\emptyset\neq U_4^\gamma(T)$%
		\COMMENT{Case $U_3^\gamma(T)\neq \emptyset=U_4^\gamma(T)$:\\
		Each $v\in U_4$ satisfies $d_H^\pm(v)\leq \overleftarrow{d}_{T,\cU}^\pm(v)\leq \gamma n$, as desired.
		By \cref{eq:optimalH2}, we have $U_1^{1-\gamma}(T)=\emptyset$. Therefore, $E_{32}=\emptyset$ and so each $v\in U_3\setminus U^{1-\gamma}(T)$ satisfies $d_H^+(v)=d_{E_{32}}(v)=0$. Moreover, each $v\in U_3\setminus U^{1-\gamma}(T)$ satisfies
		\[d_H^-(v)=d_{E_{43}}(v)\leq|E_v|\leq |U_2^{1-\gamma}(T)|\stackrel{\text{\cref{eq:optimalH-U*}}}{\leq} \gamma n.\]}.
	Then, each $v\in U_3$ satisfies $d_H^\pm(v)\leq \overleftarrow{d}_{T,\cU}^\pm(v)\leq \gamma n$, as desired.
	By \cref{eq:optimalH2}, we have $U_2^{1-\gamma}(T)=\emptyset$. Therefore, $E_{14}=\emptyset$ and so each $v\in U_4\setminus U^{1-\gamma}(T)$ satisfies $d_H^-(v)=d_{E_{14}}(v)=0$. Moreover, each $v\in U_4\setminus U^{1-\gamma}(T)$ satisfies
	\[d_H^+(v)=d_{E_{43}}(v)\leq|E_v|\leq |U_1^{1-\gamma}(T)|\stackrel{\text{\cref{eq:optimalH-U*}}}{\leq} \gamma n.\]
	Thus, \cref{lm:optimalH-Delta} is satisfied.
	
	Finally, we verify \cref{lm:optimalH-e}. By \cref{eq:optimalH}, \cref{lm:optimalH-e} holds for $i=2$.
	Moreover,
	\begin{align*}
		e_{H-U^{1-\gamma}(T)}(U_1, U_4)
		&\stackrel{\text{\eqmakebox[optimalH]{}}}{=}|E_{14}|
		=\sum_{v\in U_1\setminus U^{1-\gamma}(T)}|N_T^-(v)\cap U_2^{1-\gamma}(T)|\\
		&\stackrel{\text{\eqmakebox[optimalH]{}}}{=}e_T(U_2^{1-\gamma}(T), U_1\setminus U^{1-\gamma}(T))
		\geq \sum_{v\in U_2^{1-\gamma}(T)}(\overleftarrow{d}_T^+(v)-|U_1^{1-\gamma}(T)|)\\
		&\stackrel{\text{\eqmakebox[optimalH]{}}}{\geq} |U_2^{1-\gamma}(T)|((1-\gamma)n-|U_1^{1-\gamma}(T)|)
		\stackrel{\text{\cref{eq:optimalH-U*}}}{\geq} (1-2\gamma)n|U_2^{1-\gamma}(T)|\\
		&\stackrel{\text{\eqmakebox[optimalH]{\text{\cref{eq:optimalH}}}}}{=} (1-2\gamma)n(|U_2^{1-\gamma}(T)|+|U_3^{1-\gamma}(T)|).
	\end{align*}
	Thus, \cref{lm:optimalH-e} holds for $i=1$. Similarly, \cref{lm:optimalH-e} holds for $i=3$%
		\COMMENT{\begin{align*}
				e_{H-U^{1-\gamma}(T)}(U_3, U_2)
				&\stackrel{\text{\eqmakebox[optimalH2]{}}}{=}|E_{32}|
				=\sum_{v\in U_2\setminus U^{1-\gamma}(T)}|N_T^+(v)\cap U_1^{1-\gamma}(T)|\\
				&\stackrel{\text{\eqmakebox[optimalH2]{}}}{=}e_T(U_2\setminus U^{1-\gamma}(T), U_1^{1-\gamma}(T))\\
				&\stackrel{\text{\eqmakebox[optimalH2]{}}}{\geq} \sum_{v\in U_1^{1-\gamma}(T)}(\overleftarrow{d}_T^-(v)-|U_2^{1-\gamma}(T)|)\\
				&\stackrel{\text{\eqmakebox[optimalH2]{}}}{\geq} |U_1^{1-\gamma}(T)|((1-\gamma)n-|U_2^{1-\gamma}(T)|)\\
				&\stackrel{\text{\eqmakebox[optimalH2]{\text{\cref{eq:optimalH-U*}}}}}{\geq} (1-2\gamma)n|U_1^{1-\gamma}(T)|\\
				&\stackrel{\text{\eqmakebox[optimalH2]{\text{\cref{eq:optimalH}}}}}{=} (1-2\gamma)n(|U_1^{1-\gamma}(T)|+|U_4^{1-\gamma}(T)|).
		\end{align*}}.
	If $U_1^{1-\gamma}(T)=\emptyset=U_2^{1-\gamma}(T)$, then \cref{lm:optimalH-e} is clearly satisfied for $i=4$.
	If $U_1^{1-\gamma}(T)\neq\emptyset\neq U_2^{1-\gamma}(T)$, then
	\begin{align*}
		e_{H-U^{1-\gamma}(T)}(U_4, U_3)
		&\stackrel{\text{\eqmakebox[optimalH3]{}}}{=}|E_{43}|
		=e_T(U_4, U_3)
		\stackrel{\text{\cref{fact:backwardedges}}}{=} e_T(U_2, U_1)\\
		&\stackrel{\text{\eqmakebox[optimalH3]{}}}{\geq} (|U_1^{1-\gamma}(T)|+|U_2^{1-\gamma}(T)|)(1-\gamma)n-|U_1^{1-\gamma}(T)||U_2^{1-\gamma}(T)|\\
		&\stackrel{\text{\eqmakebox[optimalH3]{\text{\cref{eq:optimalH-U*}}}}}{\geq}(1-2\gamma)n(|U_1^{1-\gamma}(T)|+|U_2^{1-\gamma}(T)|)
	\end{align*}
	and so \cref{lm:optimalH-e} holds for $i=4$.
	If $U_1^{1-\gamma}(T)\neq\emptyset=U_2^{1-\gamma}(T)$, then
	\begin{align*}
		e_{H-U^{1-\gamma}(T)}(U_4, U_3)
		&=|E_{43}|
		=\sum_{v\in U_4\setminus U^{1-\gamma}(T)}|N_T^-(v)\cap U_1^{1-\gamma}(T)|
		\stackrel{\text{\cref{eq:optimalH}}}{=}e_T(U_1^{1-\gamma}(T), U_4)\\
		&\geq (1-\gamma)n|U_1^{1-\gamma}(T)|
		=(1-\gamma)n(|U_1^{1-\gamma}(T)|+|U_2^{1-\gamma}(T)|)
	\end{align*}
	and so \cref{lm:optimalH-e} holds for $i=4$.
	Similarly, \cref{lm:optimalH-e} holds for $i=4$ if $U_1^{1-\gamma}(T)=\emptyset\neq U_2^{1-\gamma}(T)$%
		\COMMENT{\begin{align*}
				e_{H-U^{1-\gamma}(T)}(U_4, U_3)
				&\stackrel{\text{\eqmakebox[optimalH4]{}}}{=}|E_{43}|
				=\sum_{v\in U_3\setminus U^{1-\gamma}(T)}|N_T^+(v)\cap U_2^{1-\gamma}|\\
				&\stackrel{\text{\eqmakebox[optimalH4]{\text{\cref{eq:optimalH}}}}}{=}e_T(U_3, U_2^{1-\gamma}(T))
				\geq (1-\gamma)n|U_2^{1-\gamma}(T)|\\
				&\stackrel{\text{\eqmakebox[optimalH4]{}}}{=}(1-\gamma)n(|U_1^{1-\gamma}(T)|+|U_2^{1-\gamma}(T)|).
		\end{align*}}.
	Therefore, \cref{lm:optimalH-e} is satisfied.
\end{proof}

\subsection{Decomposing the backward and exceptional edges into feasible systems}\label{sec:main}

	\onlyinsubfile{
		\setcounter{section}{9}
		\setcounter{subsection}{1}
		\subsection{Decomposing the backward and exceptional edges into feasible systems}}
Finally, we state and motivate our main decomposition lemma. First, we need an additional definition.
As discussed in \cref{sec:CST}, the exceptional set $U^*$ will have to contain all the vertices of high backward degree (otherwise we would not be able to construct the cycle-setup required for the robust decomposition lemma). This motivates \cref{def:ES-backward} below. To facilitate the incorporation of the exceptional vertices into the Hamilton decomposition, we also require that $U^*$ is small and contain the same number of vertices from each vertex class. This is \cref{def:ES-size} below.  

\begin{definition}[\gls*{exceptional set}]\label{def:ES}
	Let $T$ be a regular bipartite tournament on $4n$ vertices. Suppose that $\cU=(U_1, \dots, U_4)$ is an $(\varepsilon, 4)$-partition for $T$. We say that $U^*$ is an \emph{$(\varepsilon', \cU)$-exceptional set for $T$} if the following hold, where $U_i^*\coloneqq U_i\cap U^*$ for each $i\in [4]$.
	\begin{enumerate}[label=\rm(ES\arabic*)]
		\item $U^{\varepsilon'}(T)\subseteq U^* \subseteq V(T)$.\label{def:ES-backward}
		\item $|U_1^*|=\dots=|U_4^*|\leq \varepsilon' n$.\label{def:ES-size}
	\end{enumerate}
\end{definition}

Let $T$ be a bipartite tournament on $4n$ vertices. Let $\cU$ be an optimal $(\varepsilon,4)$-partition for $T$ and let $U^*$ be an $(\varepsilon',\cU)$-exceptional set for $T$. Then, \cref{lm:main} states that $T$ contains $n$ edge-disjoint feasible systems $\cF_1, \dots, \cF_n$ which contain all the backward edges of $T$ (see \cref{lm:main}\cref{lm:main-backwardedges}). By \cref{lm:main}\cref{lm:main-size}, all these feasible systems are small, which will enable us to incorporate them into our Hamilton cycles. The first $t$ feasible systems will be those which will be incorporated into the Hamilton cycles given by the robust decomposition lemma (\cref{lm:cyclerobustdecomp}) and so we will require those to form special covers which are localised and balanced (see \cref{lm:ESF}). Together with \cref{fact:feasibleSC}, \cref{lm:main}\cref{lm:main-V0} will imply that $\cF_1, \dots, \cF_t$ are balanced special covers, as required for \cref{lm:ESF}. Additionally, \cref{lm:main}\cref{lm:main-H} ensures that $\cF_1, \dots, \cF_t$ are constructed out of prescribed sets $H_1, \dots, H_s$ of edges of $T$. These edges will be chosen in such a way that $\cF_1, \dots, \cF_t$ form localised special covers, as desired for \cref{lm:ESF}.
The last $n-t$ feasible systems will be incorporated into the approximate decomposition. For simplicity, we require that all of the components of these feasible systems are paths which start in $U_1$ and end in $U_4$ (see \cref{lm:main}\cref{lm:main-endpoints}).
Finally, \cref{lm:main}\cref{lm:main-backwardedges} will allow us to incorporate a small prescribed set $E$ of forward edges of $T$ into the feasible systems. In practice, $E$ will consists of all the edges of $T$ which cannot be decomposed via the robust decomposition lemma. (Recall from \cref{lm:cyclerobustdecomp} that the robustly decomposable digraph $D^{\rm rob}$ cannot decompose the edges which are lying along the auxiliary matchings in $\cM$.) This will ensure that they are not left over at the end of the approximate decomposition.

Roughly speaking, \cref{lm:main}\cref{lm:main-H-degree,lm:main-H-degreeV*,lm:main-H-degreeV**,lm:main-H-edges} ensure that $H_1, \dots, H_s$ contain many well distributed backward and exceptional edges. This is necessary, for otherwise we may not be able to construct the feasible systems satisfying \cref{lm:main}\cref{lm:main-H}. More precisely, \cref{lm:main}\cref{lm:main-H-degreeV*,lm:main-H-degreeV**} ensure that each exceptional vertex in $U^*$ has many in- and outneighbours in each $H_i$ (recall from \cref{def:feasible-exceptional} that a feasible system has to cover $U^*$). Additionally, \cref{lm:main}\cref{lm:main-H-degree,lm:main-H-edges} ensure that there are many backward edges and that these are evenly distributed across the non-exceptional vertices. This will enable us to use K\"{o}nig's theorem to find large matchings of backward edges, which will be convenient for adjusting the number of backward edges in each feasible system (recall from \cref{def:feasible-backward} that a feasible system must contain a balanced number of backward edges).

Note that $H_1, \dots, H_s$ will be constructed using \cref{lm:optimalH}.
(Compare the bounds in \cref{lm:main}\cref{lm:main-H-degree,lm:main-H-edges} to those in \cref{lm:optimalH}\cref{lm:optimalH-e,lm:optimalH-Delta}.)
This is a point where we make crucial use of the concept of optimal partitions.

Finally, observe that $H_1, \dots, H_s$ need not be edge-disjoint in \cref{lm:main}. The upper bound on $t$ will be sufficient to ensure that there are, overall, sufficiently many edges in $H_1, \dots, H_s$ to construct the edge-disjoint feasible systems $\cF_1, \dots, \cF_t$, each within its prescribed $H_i$.

\begin{lm}[Decomposing the backward and exceptional edges into feasible systems]\label{lm:main}
	Let $0<\frac{1}{n}\ll \varepsilon \ll \varepsilon'\ll \eta \ll\gamma \ll 1$ and $s\in \mathbb{N}$. Let $T$ be a regular bipartite tournament on $4n$ vertices. Let $\cU=(U_1, \dots, U_4)$ be an optimal $(\varepsilon, 4)$-partition for $T$ and $U^*$ be an $(\varepsilon, \cU)$-exceptional set for $T$.
	Suppose that, for each $j\in [s]$, $H_j\subseteq T$ satisfies the following.
	\begin{enumerate}
		\item For each $v\in U^{1-\gamma}(T)$, $\overleftarrow{d}_{H_j,\cU}^\pm(v)\geq 3\gamma n$.\label{lm:main-H-degreeV**}
		\item For each $v\in U^*\setminus U^{1-\gamma}(T)$, $\overrightarrow{d}_{H_j,\cU}^\pm(v)\geq \gamma^2 n$.\label{lm:main-H-degreeV*}
		\item For each $v\in V(T)\setminus U^{1-\gamma}(T)$, $\overleftarrow{d}_{H_j,\cU}^\pm(v)\leq 2\gamma n$.\label{lm:main-H-degree}
		\item For each $i\in [4]$, $e_{H_j-U^{1-\gamma}(T)}(U_i, U_{i-1})\geq 110\gamma n|U_{i-2}^{1-\gamma}(T)\cup U_{i-3}^{1-\gamma}(T)|$.\label{lm:main-H-edges}
	\end{enumerate}
	For each $i\in [s]$, let $s_i\in \mathbb{N}$ and $t_i\coloneqq \sum_{j\in [i-1]} s_j$. Let $t\coloneqq \sum_{i\in [s]} s_i$ and suppose that $t\leq \eta n$.
	Let $E\subseteq E(T)$ be such that the following hold.
	\begin{enumerate}[resume]
		\item $E\subseteq E(\overrightarrow{T}_\cU-U^*)$.\label{lm:main-E-forward}
		\item For each $v\in V(T)\setminus U^*$, $d_E^\pm(v)\leq 1$.\label{lm:main-E-deg}
	\end{enumerate}
	Then, there exist edge-disjoint feasible systems $\cF_1, \dots, \cF_n$ such that the following hold.
	\begin{enumerate}[label=\rm(\alph*),ref=(\alph*)]
		\item $E(\overleftarrow{T}_\cU)\cup E\subseteq \bigcup_{i\in [n]}E(\cF_i)\subseteq E(T)$.\label{lm:main-backwardedges}
		\item For each $i\in [n]$, $e(\cF_i)\leq \varepsilon'n$.\label{lm:main-size}
		\item For each $i\in [t]$, $V^0(\cF_i)=U^*$.\label{lm:main-V0}
		\item For each $i\in [s]$ and $j\in [s_i]$, $\cF_{t_i+j}\subseteq H_i\setminus E$\label{lm:main-H}.
		\item For each $i\in [n-t]$, we have $V^+(\cF_{t+i})\subseteq U_1$ and $V^-(\cF_{t+i})\subseteq U_4$.\label{lm:main-endpoints}
	\end{enumerate}
\end{lm}

To provide intuition into its formulation, we will first assume that \cref{lm:main} holds and derive \cref{thm:blowupC4}. The proof of \cref{lm:main} is spread over \cref{sec:pseudofeasible,sec:constructfeasible,sec:constructpseudofeasible,sec:specialpseudofeasible}. These \lcnamecrefs{sec:pseudofeasible} also include a detailed proof overview of \cref{lm:main}.

\section{The \texorpdfstring{$\varepsilon$}{epsilon}-close to the complete blow-up \texorpdfstring{$C_4$}{C4} case: proof of Theorem \ref{thm:blowupC4}}\label{sec:blowupC4}

	\onlyinsubfile{
		\setcounter{section}{9}
	\section{The complete blow-up \texorpdfstring{$C_4$}{C4} case}}

We will now prove \cref{thm:blowupC4}. First, we use our tools from \cref{sec:approxdecomp} to incorporate feasible systems into an approximate Hamilton decomposition (see \cref{lm:blowupC4approxdecomp} below). Then, we derive \cref{thm:blowupC4} from the robust decomposition lemma for blow-up cycles (\cref{lm:cyclerobustdecomp}), the decomposition lemma for backward and exceptional edges (\cref{lm:main}), and the approximate decomposition lemma (\cref{lm:blowupC4approxdecomp}).

\subsection{Approximate decomposition}\label{sec:blowupC4approxdecomp}

Let $T$ be a regular bipartite tournament on $4n$ vertices and suppose that $T$ is $\varepsilon$-close to the complete blow-up $C_4$ with vertex partition $\cU=(U_1, \dots, U_4)$. Our strategy for approximately decomposing $T$ is the following (see also \cref{fig:sketch-C4Ham}). First, we use \cref{cor:bipapproxmatchdecomp} to approximately decompose $T[U_1,U_2]$, $T[U_2, U_3]$, and $T[U_3, U_4]$ into perfect matchings. Combining a matching from each pair, we obtain an approximate decomposition of $T[U_1,U_2]\cup T[U_2, U_3] \cup T[U_3, U_4]$ into linear forests, each consisting of $n$ components which start in $U_1$ and end in $U_4$. Finally, using \cref{thm:biphalfapproxHamdecomp}, we close each of these linear forests into a Hamilton cycle by approximately decomposing $T[U_4, U_1]$ into ``suitable" perfect matchings.

Recall that in \cref{thm:biphalfapproxHamdecomp,cor:bipapproxmatchdecomp}, there is the flexibility of prescribing a few edges. This enables us to construct an approximate decomposition of $T$ which incorporates given feasible systems.

\begin{lm}[Incorporating feasible systems into an approximate Hamilton decomposition]\label{lm:blowupC4approxdecomp}
	Let $0<\frac{1}{n}\ll \tau\ll \delta \leq 1$ and  $0<\frac{1}{n}\ll\varepsilon\ll\eta,\nu\leq 1$. 
	Let $T$ be a regular bipartite tournament on $4n$ vertices. Let $\cU=(U_1, \dots, U_4)$ be an $(\varepsilon,4)$-partition for $T$ and $U^*$ be an $(\varepsilon,\cU)$-exceptional set for $T$.
	Let $\ell \leq 2(\delta-\eta)\left(n-\frac{|U^*|}{4}\right)$.
	Let $\cF_1, \dots, \cF_\ell\subseteq T$ be edge-disjoint feasible systems and $D\subseteq T\setminus \bigcup_{i\in [\ell]}\cF_i$.
	Suppose that the following hold.
	\begin{enumerate}
		\item For each $i\in [4]$, $D[U_i\setminus U^*, U_{i+1}\setminus U^*]$ is $(\delta,\varepsilon)$-almost regular.\label{lm:blowupC4approxdecomp-reg}
		\item For each $i\in [4]$, $D[U_i\setminus U^*, U_{i+1}\setminus U^*]$ is a bipartite robust $(\nu,\tau)$-expander with bipartition $(U_i\setminus U^*, U_{i+1}\setminus U^*)$.\label{lm:blowupC4approxdecomp-rob}
		\item For each $i\in [\ell]$, $e(\cF_i)\leq \varepsilon n$.\label{lm:blowupC4approxdecomp-size}
		\item For each $v\in V(T)\setminus U^*$, there exist at most $\varepsilon n$ indices $i\in [\ell]$ such that $v\in V(\cF_i)$.\label{lm:blowupC4approxdecomp-deg}
		\item For each $i\in [\ell]$, $V^+(\cF_i)\subseteq U_1$ and $V^-(\cF_i)\subseteq U_4$.\label{lm:blowupC4approxdecomp-endpoints}
	\end{enumerate}
	Then, there exist edge-disjoint Hamilton cycles $C_1, \dots, C_\ell$ of $T$ such that $\cF_i\subseteq C_i\subseteq D\cup \cF_i$ for each $i\in [\ell]$.	
\end{lm}

Given two digraphs $D$ and $D'$, we say that $D'$ is a \emph{subdivision} of $D$ if $D'$ can be obtained from $D$ by replacing some edges by internally vertex-disjoint paths.

\begin{proof}[Proof of \cref{lm:blowupC4approxdecomp}]
	For each $i\in [4]$, define $U_i^*\coloneqq U^*\cap U_i$. Let $A\coloneqq U_4\setminus U^*$ and $B\coloneqq U_1\setminus U^*$. By \cref{fact:partition}\cref{fact:partition-size} and \cref{def:ES-size}, we have 
	\begin{equation}\label{eq:blowupapproxdecomp-AB}
		|A|=|B|\geq (1-\varepsilon)n.
	\end{equation}
	
	\begin{steps}	
		\item \textbf{Approximately decomposing $D[U_1, U_2]\cup D[U_2,U_3]\cup D[U_3, U_4]$.}
		For each $i\in [3]$ in turn, we will use \cref{cor:bipapproxmatchdecomp} to approximately decompose $D[U_i, U_{i+1}]$ into matchings. We will then combine a matching from each pair to get an approximate decomposition of $D[U_1, U_2]\cup D[U_2,U_3]\cup D[U_3, U_4]$ into spanning linear forests.
		
		Let $i\in [3]$ and $j\in [\ell]$.
		Denote by $S_{i,j}^+$ the set of vertices $v\in U_i\setminus U^*$ such that $d_{\cF_j}^+(v)=1$ and let $S_{i+1,j}^-$ be the set of vertices $v\in U_{i+1}\setminus U^*$ such that $d_{\cF_j}^-(v)=1$. (Thus, $S_{i,j}^+\cup S_{i+1,j}^-$ is the set of vertices which are already covered by $\cF_j$ and so need to be avoided by the $j^{th}$ matching of $D[U_i, U_{i+1}]$.)
		Note that
		\begin{align}
			|S_{i,j}^+|
			&\stackrel{\text{\eqmakebox[blowupapprox]{\text{\cref{def:feasible-exceptional},\cref{def:feasible-linforest}}}}}{=} e_{\cF_j}(U_i,U_{i+1})+e_{\cF_j}(U_i, U_{i-1})-|U_i^*|\label{eq:Sij+}\\
			&\stackrel{\text{\eqmakebox[blowupapprox]{\text{\cref{def:feasible-backward},\cref{def:ES-size}}}}}{=}
			e_{\cF_j}(U_i,U_{i+1})+e_{\cF_j}(U_{i+2}, U_{i+1})-|U_{i+1}^*|
			\stackrel{\text{\cref{def:feasible-exceptional},\cref{def:feasible-linforest}}}{=}|S_{i+1,j}^-|.\nonumber
		\end{align}
		Let $F_{i,j}$ be an auxiliary perfect matching between $S_{i,j}^+$ and $S_{i+1,j}^-$. Then,
		\[e(F_{i,j})\stackrel{\text{\cref{eq:Sij+}}}{\leq} e(\cF_j)
		\stackrel{\text{\cref{lm:blowupC4approxdecomp-size}}}{\leq} \varepsilon n 
		\stackrel{\text{\cref{def:ES-size}}}{\leq}
		2\varepsilon (n-|U_i^*|)\]
		and so \cref{cor:bipapproxmatchdecomp}\cref{cor:bipapproxmatchdecomp-size} holds with $n- |U_i^*|, F_{i,j}$, and $2\varepsilon$ playing the roles of $n,F_i$, and $\varepsilon$.
		
		Let $i\in [3]$. For each $j\in [\ell]$, we have $V(F_{i,j})\subseteq V(\cF_j)$. Thus, \cref{lm:blowupC4approxdecomp-deg} implies that \cref{cor:bipapproxmatchdecomp}\cref{cor:bipapproxmatchdecomp-deg} holds with $D[U_i\setminus U^*, U_{i+1}\setminus U^*]$ and $F_{i,1}, \dots, F_{i,\ell}$ playing the roles of $G$ and $F_1, \dots, F_\ell$.
		Let $M_{i,1}, \dots, M_{i,\ell}$ be the matchings obtained by applying \cref{cor:bipapproxmatchdecomp} with $D[U_i\setminus U^*, U_{i+1}\setminus U^*], n-|U_i^*|, 2\varepsilon$, and $F_{i,1}, \dots, F_{i,\ell}$ playing the roles of $G, n, \varepsilon$, and $F_1, \dots, F_\ell$.
		For each $j\in [\ell]$, let $F_{i,j}'$ be obtained from $M_{i,j}\setminus F_{i,j}$ by orienting all the edges from $U_i$ to $U_{i+1}$ and observe that $F_{i,j}'\subseteq D(U_i\setminus U^*, U_{i+1}\setminus U^*)$.
		
		For each $j\in [\ell]$, let $\cF_j'\coloneqq \cF_j\cup \bigcup_{i\in [3]}F_{i,j}'$. We claim that $\cF_1', \dots, \cF_\ell'$ are edge-disjoint spanning linear forests whose components are paths which start in $B=U_1\setminus U^*$ and end in $A=U_4\setminus U^*$.
		
		\begin{claim}\label{claim:F'}
			$\cF_1', \dots, \cF_\ell'$ are edge-disjoint linear forests such that the following hold for each $i\in [\ell]$.
			\begin{enumerate}[label=\rm(\alph*),ref=(\alph*)]
				\item $E(\cF_i')\cap E_D(A,B)=\emptyset$.\label{claim:F'-D}
				\item $V(\cF_i')=V(T)$.\label{claim:F'-V}
				\item $|V^0(\cF_i')\cap (A\cup B)|\leq 3\varepsilon |A|$.\label{claim:F'-size}
				\item $V^+(\cF_i')\subseteq B$ and $V^-(\cF_i')\subseteq A$.\label{claim:F'-endpoints}
			\end{enumerate}		
		\end{claim}
		
		\begin{proofclaim}
			By assumption, $\cF_1, \dots, \cF_\ell$ are edge-disjoint. For each $i\in [3]$, \cref{cor:bipapproxmatchdecomp} implies that $F_{i,1}', \dots, F_{i,\ell}'$ are edge-disjoint matchings in $E_D(U_i, U_{i+1})$. Therefore, $\cF_1', \dots, \cF_\ell'$ are edge-disjoint, as desired.
			
			Let $j\in [\ell]$. Suppose for a contradiction that $\cF_j'$ is not a linear forest. By \cref{def:feasible-linforest} and construction, each $v\in V(T)$ satisfies both $d_{\cF_j'}^\pm(v)\leq 1$. Thus, $\cF_j'$ contains a cycle $C$. Clearly, $\bigcup_{i\in [3]}F_{i,j}'$ is a linear forest. Thus, there exists $e\in E(\cF_j)\cap E(C)$. Let $v$ be the starting point of the component of $\cF_j$ which contains $e$. 
			Then, $v\in V(C)$. Let $u$ be the inneighbour of $v$ in $C$. By assumption, $uv\notin E(\cF_j)$ (otherwise $v$ would not be the starting point of one of the components of $\cF_j$). Thus, $uv\in \bigcup_{i\in [3]}E(F_{i,j}')$ and so $v\in U_2\cup U_3\cup U_4$, which contradicts \cref{lm:blowupC4approxdecomp-endpoints}. Therefore, $\cF_j'$ is a linear forest, as desired.
			
			Let $j\in [\ell]$. We show that \cref{claim:F'-D,claim:F'-V,claim:F'-endpoints,claim:F'-size} are satisfied.
			By construction and since $E(\cF_j)\cap E(D)=\emptyset$, \cref{claim:F'-D} holds.
			By \cref{def:feasible-exceptional}, $U^*\subseteq V(\cF_j)\subseteq V(\cF_j')$ and, by construction, $(U_i\cup U_{i+1})\setminus (V(\cF_j)\cup U^*)\subseteq V(F_{i,j}')\subseteq V(\cF_j')$ for each $i\in [3]$. Thus, \cref{claim:F'-V} holds.
			By construction, $\bigcup_{i\in [3]}F_{i,j}$ does not contain any edge which starts in $A=U_4\setminus U^*$ or ends in $B=U_1\setminus U^*$. Thus, \[V^0(\cF_j')\cap A\subseteq (V^0(\cF_j)\cap A)\cup (V^+(\cF_j)\cap A)\] and \[V^0(\cF_j')\cap B\subseteq (V^0(\cF_j)\cap B)\cup (V^-(\cF_j)\cap B).\]
			Therefore,
			\begin{align*}
				|V^0(\cF_j')\cap (A\cup B)|\leq |V(\cF_j)|\stackrel{\text{\cref{lm:blowupC4approxdecomp-size}}}{\leq}2\varepsilon n\stackrel{\text{\cref{eq:blowupapproxdecomp-AB}}}{\leq} 3\varepsilon |A|
			\end{align*}
			and so \cref{claim:F'-size} holds.			
			Finally, we verify that $V^+(\cF_j')\subseteq B$ and $V^-(\cF_j')\subseteq A$. By \cref{def:feasible-exceptional}, each $v\in U^*$ satisfies $d_{\cF_j'}^+(v)=1=d_{\cF_j'}^-(v)$ and so $V^+(\cF_j')\cap U^*=\emptyset=V^-(\cF_j')\cap U^*$. 
			Let $i\in [3]$. Suppose that $v\in U_i\setminus U^*$. If $v\in S_{i,j}^+$, then $d_{\cF_j}^+(v)=1$; otherwise, $v\in V(F_{i,j}')$. Thus, $d_{\cF_j'}^+(v)=1$ and so $v\notin V^-(\cF_j')$. Therefore, $V^-(\cF_j')\subseteq U_4\setminus U^*=A$.
			Similarly, if $w\in U_{i+1}\setminus U^*$, then either $d_{\cF_j}^-(w)=1$ or $w\in V(F_{i,j}')$. Thus, $d_{\cF_j'}^-(w)=1$ for each $w\in U_{i+1}\setminus U^*$ and so $V^+(\cF_j')\subseteq U_1\setminus U^*=B$. Therefore, \cref{claim:F'-endpoints} is satisfied.
		\end{proofclaim}
		
		\item \textbf{Approximately decomposing $D[U_4,U_1]$.}
		In this step, we use \cref{thm:biphalfapproxHamdecomp} to approximately decompose	$D[U_4,U_1]$ into matchings which close $\cF_1', \dots, \cF_\ell'$ into Hamilton cycles of $T$.
		
		To apply \cref{thm:biphalfapproxHamdecomp}, we first need to contract $\cF_1', \dots, \cF_\ell'$ into auxiliary linear forests on $A\cup B$.
		For each $j\in [\ell]$, let $\widetilde{\cF}_j$ be the digraph on $A\cup B$ defined as follows. For any distinct $u,v\in A\cup B$, we let $uv\in E(\widetilde{\cF}_j)$ if and only if $\cF_j'$ contains a $(u,v)$-subpath $P$ which satisfies $V^0(P)\subseteq (U_2\cup U_3\cup U^*)$.
		
		\begin{claim}\label{claim:tF}
			Let $i\in [\ell]$. Then, $\cF_i'$ is a subdivision of $\widetilde{\cF}_i$. In particular, $\widetilde{\cF}_i$ is a linear forest satisfying the following properties.
			\begin{enumerate}[label=\rm(\greek*),ref=(\greek*)]
				\item $V^0(\widetilde{\cF}_i)=V^0(\cF_i')\cap (A\cup B)=(A\setminus V^-(\cF_i'))\cup (B\setminus V^+(\cF_i'))$.\label{claim:tF-V0}
				\item $V^+(\widetilde{\cF}_i)=V^+(\cF_i')\subseteq B$ and $V^-(\widetilde{\cF}_i)=V^-(\cF_i')\subseteq A$.\label{claim:tF-endpoints}
			\end{enumerate}
		\end{claim}
		
		\begin{proofclaim}
			Let $i\in [\ell]$. Using \cref{claim:F'}, it is easy to check that $\cF_i'$ is a subdivision of $\widetilde{\cF}_i$.%
			\COMMENT{For each $e\in E(\widetilde{\cF}_i)$, let $P_e\subseteq\cF_i'$ be the subpath which witnesses that $e\in E(\widetilde{\cF}_i)$ ($P_e$ is well defined since $(A\cup B)\cap (U_2\cup U_3\cup U^*)=\emptyset$).
				It suffices to show that any distinct $e,e'\in E(\widetilde{\cF}_i)$ satisfy \[V(P_e)\cap V(P_{e'})\subseteq (V^+(P_e)\cap V^-(P_{e'}))\cup (V^-(P_e)\cap V^+(P_{e'})).\]
				Let $e=uv,e'=u'v'\in E(\widetilde{\cF}_i)$ be distinct.
				Since $(A\cup B)\cap (U_2\cup U_3\cup U^*)=\emptyset$, both $V^\pm(P_e)\cap V^0(P_{e'})=\emptyset=V^\pm(P_{e'})\cap V^0(P_e)$.
				Thus, it suffices to show that $u\neq u'$, $v\neq v'$, and $V^0(P_e)\cap V^0(P_{e'})=\emptyset$.\\
				Suppose for a contradiction that $u=u'$. Since $e\neq e'$, we have $v\neq v'$. Therefore, there exists $w\in (V(P_e)\cap V(P_{e'}))\setminus \{v,v'\}$ such that $N_{P_e}^+(w)\neq N_{P_{e'}}^+(w)$. Then, $d_{\cF_i}^+(w)\geq 2$, which contradicts \cref{claim:F'}. Therefore, $u\neq u'$ and, by symmetry, $v\neq v'$.\\
				Suppose for a contradiction that $V^0(P_e)\cap V^0(P_{e'})\neq \emptyset$. Since $e\neq e'$, we may assume without loss of generality that $u\neq u'$. Then, there exists $w'\in V^0(P_e)\cap V^0(P_{e'})$ such that $N_{P_e}^-(w')\neq N_{P_{e'}}^-(w')$. Then, $d_{\cF_i}^-(w)\geq 2$, which contradicts \cref{claim:F'}.}
			Then, each $v\in V(\widetilde{\cF}_i)$ satisfies both $d_{\widetilde{\cF}_i}^\pm(v)=d_{\cF_i'}^\pm(v)$. Moreover, each cycle in $\widetilde{\cF}_i$ would induce a cycle in $\cF_i'$. Recall from \cref{claim:F'} that $\cF_i'$ is a linear forest. Thus, $\widetilde{\cF}_i$ is also a linear forest and \cref{claim:tF-V0,claim:tF-endpoints} follow from \cref{claim:F'-V,claim:F'-endpoints}.
		\end{proofclaim}
		
		Since $\widetilde{\cF}_1, \dots, \widetilde{\cF}_\ell$ may not be bipartite on vertex classes $A$ and $B$, we cannot apply \cref{thm:biphalfapproxHamdecomp} directly and need to consider equivalent linear forests (recall \cref{def:equivalentP}).
		
		\begin{claim}\label{claim:tF'}
			There exist bipartite linear forests $\widetilde{\cF}_1', \dots, \widetilde{\cF}_\ell'$ on vertex classes $A$ and $B$ such that $\widetilde{\cF}_i$ and $\widetilde{\cF}_i'$ are equivalent for each $i\in [\ell]$.
			In particular, the following hold for each $i\in [\ell]$.
			\begin{enumerate}[label=\rm(\greek*$'$)]
				\item $V^0(\widetilde{\cF}_i')=V^0(\widetilde{\cF}_i)=V^0(\cF_i')\cap (A\cup B)=(A\setminus V^-(\cF_i'))\cup (B\setminus V^+(\cF_i'))$.\label{claim:tF'-V0}
				\item $V^+(\widetilde{\cF}_i')=V^+(\widetilde{\cF}_i)=V^+(\cF_i')\subseteq B$ and $V^-(\widetilde{\cF}_i')=V^-(\widetilde{\cF}_i)=V^-(\cF_i')\subseteq A$.\label{claim:tF'-endpoints}
			\end{enumerate}
		\end{claim}
		
		\begin{proofclaim}
			Let $i\in [\ell]$. By \cref{claim:tF}, $\widetilde{\cF}_i$ is a linear forest which spans $A\cup B$ and whose components are all paths which start in $B$ and end in $A$. Recall from \cref{eq:blowupapproxdecomp-AB} that $|A|=|B|$. Thus, one can easily construct an auxiliary bipartite linear forest $\widetilde{\cF}_i'$ which is equivalent to $\widetilde{\cF}_i$. Then, \cref{claim:tF'-V0,claim:tF'-endpoints} follow from \cref{claim:tF-V0,claim:tF-endpoints}.
		\end{proofclaim}
		
		Let $\widetilde{\cF}_1', \dots, \widetilde{\cF}_\ell'$ be the linear forests obtained by applying \cref{claim:tF'}.
		We now verify that \cref{thm:biphalfapproxHamdecomp}\cref{thm:biphalfapproxHamdecomp-AB,thm:biphalfapproxHamdecomp-BA,thm:biphalfapproxHamdecomp-size} hold for $\widetilde{\cF}_1', \dots, \widetilde{\cF}_\ell'$. Let $i\in [\ell]$. 
		Since $\widetilde{\cF}_i'$ is a spanning bipartite linear forest on vertex classes $A$ and $B$, we have
		\begin{align*}
			e(\widetilde{\cF}_i'[B,A])=\sum_{v\in B}d_{\widetilde{\cF}_i'}^+(v)=
			|B\setminus V^-(\widetilde{\cF}_i')|
			\stackrel{\text{\cref{claim:tF'-endpoints}}}=|B|\stackrel{\text{\cref{eq:blowupapproxdecomp-AB}}}{=}|A|
		\end{align*}
		and
		\begin{align*}
			e(\widetilde{\cF}_i'[A,B])=\sum_{v\in A}d_{\widetilde{\cF}_i'}^+(v)
			=|A\setminus V^-(\widetilde{\cF}_i')|
			\stackrel{\text{\cref{claim:tF'-endpoints}}}
			=|V^0(\widetilde{\cF}_i)\cap A|
			\stackrel{\text{\cref{claim:F'-size},\cref{claim:tF'-V0}}}{\leq} 3\varepsilon |A|.
		\end{align*} 
		Thus, \cref{thm:biphalfapproxHamdecomp}\cref{thm:biphalfapproxHamdecomp-AB,thm:biphalfapproxHamdecomp-BA} hold with $\widetilde{\cF}_i', |A|$, and $3\varepsilon$ playing the roles of $F_i, n$, and $\varepsilon$.
		Moreover, each $v\in A$ satisfies
		\begin{align*}
			d_{\widetilde{\cF}_i'[A,B]}(v)=d_{\widetilde{\cF}_i'}^+(v) \stackrel{\text{\cref{claim:tF'-V0},\cref{claim:tF'-endpoints}}}{=}d_{\cF_i'}^+(v)\stackrel{\text{\cref{claim:F'-D}}}{=} d_{\cF_i}^+(v)=d_{\cF_i[A,B]}(v)
		\end{align*} 
		and, similarly, each $w\in B$ satisfies $d_{\widetilde{\cF}_i'[A,B]}(w)=d_{\cF_i[A,B]}(w)$. Thus, \cref{lm:blowupC4approxdecomp-deg,eq:blowupapproxdecomp-AB} imply that \cref{thm:biphalfapproxHamdecomp}\cref{thm:biphalfapproxHamdecomp-size} holds with $\widetilde{\cF}_i', |A|$, and $3\varepsilon$ playing the roles of $F_i, n$, and $\varepsilon$.
		
		Apply \cref{thm:biphalfapproxHamdecomp} with $D[A\cup B], |A|, 3\varepsilon$, and $\widetilde{\cF}_1', \dots, \widetilde{\cF}_\ell'$ playing the roles of $D, n,\varepsilon$, and $F_1, \dots, F_\ell$ to obtain edge-disjoint cycles $\tC_1, \dots, \tC_\ell$ such that, for each $i\in [\ell]$, $V(\tC_i)=A\cup B$ and $\widetilde{\cF}_i'\subseteq \tC_i\subseteq D(A,B)\cup \widetilde{\cF}_i'$. 		
		For each $i\in [\ell]$, let $\tC_i'\coloneqq (\tC_i\setminus \widetilde{\cF}_i') \cup \widetilde{\cF}_i$ and $C_i\coloneqq (\tC_i'\setminus \widetilde{\cF}_i) \cup \cF_i'=(\tC_i\setminus \widetilde{\cF}_i') \cup \cF_i'$.
		
		\item \textbf{Verifying the conclusions of the \lcnamecref{lm:blowupC4approxdecomp}.}
		We now verify that $C_1, \dots, C_\ell$ are edge-disjoint Hamilton cycles of $T$ such that, for each $i\in [\ell]$, $\cF_i\subseteq C_i\subseteq D\cup \cF_i$.
		
		Recall from \cref{claim:F'} that $\cF_1', \dots, \cF_\ell'$ are edge-disjoint. Thus, \cref{claim:F'-D} and \cref{thm:biphalfapproxHamdecomp} imply that $C_1, \dots, C_\ell$ are edge-disjoint.
		Let $i\in [\ell]$. By construction, $\cF_i'\subseteq D\cup \cF_i$ and $\tC_i\setminus \widetilde{\cF}_i'\subseteq D$. Thus, $C_i=(\tC_i\setminus \widetilde{\cF}_i') \cup \cF_i'\subseteq D\cup \cF_i$. Moreover, $\cF_i\subseteq \cF_i'\subseteq C_i$, as desired.
		
		Let $i\in [\ell]$ and recall from \cref{claim:tF'} that $\widetilde{\cF}_i$ and $\widetilde{\cF}_i'$ are equivalent. Thus, \cref{fact:equivalentP} implies that $\tC_i'$ is also a Hamilton cycle on $A\cup B$. By \cref{claim:tF}, $\cF_i'$ is a subdivision of $\widetilde{\cF}_i$ and so $C_i$ is a subdivision of $\tC_i'$. In particular, $C_i$ is a cycle satisfying
		\begin{align*}
			V(T)\supseteq V(C_i)=V(\tC_i')\cup V(\cF_i')
			\stackrel{\text{\cref{claim:F'-V}}}{\supseteq}V(T).
		\end{align*}
		That is, $C_i$ is a Hamilton cycle of $T$.\qedhere
	\end{steps}
\end{proof}

\subsection{Proof of Theorem \ref{thm:blowupC4}}\label{sec:blowupC4proof}

We are now ready to derive \cref{thm:blowupC4}. Our strategy is as follows. In \cref{step:CST}, we use \cref{lm:CST} to construct a cycle-setup for the robust decomposition lemma (\cref{lm:cyclerobustdecomp}). In \cref{step:backwardedges}, we decompose the backward edges and exceptional edges into feasible systems using \cref{lm:main}. In \cref{step:robustdecomplm}, we apply the robust decomposition lemma (\cref{lm:cyclerobustdecomp}) to obtain an absorber $D^{\rm rob}$ (the required extended special factors are constructed using \cref{lm:ESF}). In \cref{step:approxdecomp}, we construct an approximate Hamilton decomposition using \cref{lm:blowupC4approxdecomp}. In \cref{step:leftovers}, we decompose the leftovers using $D^{\rm rob}$.

\begin{proof}[Proof of \cref{thm:blowupC4}]
	Fix additional constants such that
	\[0<\frac{1}{n_0}\ll\varepsilon\ll \varepsilon_1\ll \varepsilon_2\ll \eta\ll \frac{1}{k}\ll \varepsilon_3\ll \gamma \ll \frac{1}{q}\ll\frac{1}{f}\ll d \ll \frac{1}{\ell'}, \frac{1}{g}, \nu\ll \tau\ll 1\]
	and $\frac{k}{14}, \frac{k}{f}, \frac{k}{g}, \frac{q}{f}, \frac{2fk}{3g(g-1)}\in \mathbb{N}$.
	Let $m_0\in \mathbb{N}$ be such that $\varepsilon_1^2 n\leq m_0\leq \varepsilon_1 n$ and $m\coloneqq \frac{n-m_0}{k}, \frac{m}{4\ell'}, \frac{fm}{q}\in \mathbb{N}$.
	Fix additional constants such that $\frac{1}{f}\ll \frac{r_1}{m}\ll d$ and $\eta\ll \frac{r}{m}\ll \frac{1}{k}$. Define
	\begin{equation}\label{eq:blowupC4-constants}
		r_2\coloneqq 96\ell'g^2kr, \quad r_3\coloneqq \frac{rfk}{q}, \quad r^\diamond\coloneqq r_1+r_2+r-(q-1)r_3, \quad s'\coloneqq rfk+7r^\diamond.
	\end{equation}
	For simplicity, we denote
	\begin{equation}\label{eq:blowupC4-Q}
		Q\coloneqq [r_3]\times [\tfrac{q}{f}]\times [4]\times[f] \quad \text{and} \quad Q'\coloneqq [r^\diamond]\times [1]\times [4]\times[7].
	\end{equation}
	Let $T$ be a regular bipartite tournament which is $\varepsilon$-close to the complete blow-up $C_4$ on vertex classes of size $n\geq n_0$. Let $\cU=(U_1, \dots, U_4)$ be an optimal $(\varepsilon, 4)$-partition for $T$ and denote by $H\subseteq \overleftarrow{T}_\cU$ the digraph obtained by applying \cref{lm:optimalH}.
	
	\begin{steps}
		\item \textbf{Constructing a cycle-setup.}\label{step:CST}
		We will use \cref{lm:CST}. We first construct the partitions in $\cP^*$ randomly, to ensure that the edges of $H$ are well distributed across the clusters.
		More precisely, for each $(h,i,j)\in [\frac{q}{f}]\times [4]\times [k]$, let $V_{i,j,h}\subseteq U_i$ be obtained by including each $v\in U_i$ with probability $\frac{f}{kq}$ independently of all other vertices. For each $(i,j)\in [4]\times [k]$, denote $V_{i,j}\coloneqq \bigcup_{h\in [\frac{q}{f}]} V_{i,j,h}$. For each $i\in [4]$, let $\tC^i\coloneqq V_{i,1}\dots V_{i,k}$ and let $\cI_i\coloneqq \{I_1, \dots, I_f\}$ denote the canonical interval partition of $\tC^i$ into $f$ interval, that is,
		\[I_j\coloneqq V_{i,(j-1)\frac{k}{f}+1}V_{i,(j-1)\frac{k}{f}+2}\dots V_{i,j\frac{k}{f}+1}\]
		for each $j\in [f]$ (see \cref{def:interval}).
		For each $(h,i,j)\in [\frac{q}{f}]\times [4]\times [f]$, define 
		\begin{equation}
			S_{h,i,j}\coloneqq V_{i,(j-1)\frac{k}{f}+2,h}\cup V_{i,(j-1)\frac{k}{f}+3,h}\cup\dots \cup V_{i,j\frac{k}{f},h},
		\end{equation}
		that is, $S_{h,i,j}$ is the union of the $h$\textsuperscript{th} subclusters of the internal clusters in the $j$\textsuperscript{th} interval in the canonical interval partition of $\tC^i$ into $f$ intervals.
		
		\begin{claim}
			With positive probability, all of the following hold.
			\begin{enumerate}[label=\rm(\roman*),ref=(\roman*)]
				\item For each $(h,i,j)\in [\frac{q}{f}]\times [4]\times [k]$, we have $|V_{i,j,h}|\geq \frac{(1- \varepsilon)fn}{kq}$.\label{thm:blowupC4-partition-size}
				\item For each $(h,i,j)\in [\frac{q}{f}]\times [4]\times [k]$ and $v\in V(T)$, we have $|N_T^\pm (v)\cap  V_{i,j,h}|\geq \frac{f|N_T^\pm(v)\cap U_i|}{kq}-\varepsilon n$.\label{thm:blowupC4-partition-d}
				\item For each $h,h'\in [\frac{q}{f}]$, $i\in [4]$, and $j,j'\in [f]$, we have\label{thm:blowupC4-partition-eH}
				\[e_{H-U^{1-\gamma}(T)}(S_{h,i,j}, S_{h',i-1,j'})\geq 110\gamma n |U_{i-2}^{1-\gamma}(T)\cup U_{i-3}^{1-\gamma}(T)|.\]
			\end{enumerate}
		\end{claim}
		
		\begin{proofclaim}
			By \cref{lm:Chernoff} and a union bound, \cref{thm:blowupC4-partition-size,thm:blowupC4-partition-d} hold with probability at least $1-\frac{1}{n}$%
			\COMMENT{Let $i\in [4]$ and $j\in [\frac{kq}{f}]$. Then, $\mathbb{E}[|V_{i,j}|]=\frac{fn}{kq}$ and so \cref{lm:Chernoff} implies that 
				\[\mathbb{P}[|V_{i,j}|<(1-\varepsilon)\mathbb{E}[|V_{i,j}|]]\leq \exp\left(-\frac{\varepsilon^2fn}{3kq}\right).\]
				Let $v\in V(T)$ and $\diamond\in \{+,-\}$. If $|N_T^\diamond(v)\cap U_i|\leq \varepsilon n$, then we are done. We may therefore assume that $|N_T^\diamond(v)\cap U_i|\geq \varepsilon n$.
				Then, $\mathbb{E}[|N_T^\diamond(v)\cap V_{i,j}|]=\frac{f|N_T^\diamond(v)\cap U_i|}{kq}\geq \frac{\varepsilon fn}{kq}$ and so \cref{lm:Chernoff} implies that
				\[\mathbb{P}\left[|N_T^\diamond(v)\cap V_{i,j}|<\frac{f|N_T^\diamond(v)\cap U_i|}{kq}-\varepsilon n\right]\leq \mathbb{P}[|N_T^\diamond(v)\cap V_{i,j}|<(1-\varepsilon)\mathbb{E}[|N_T^\diamond(v)\cap V_{i,j}|]]\leq \exp\left(-\frac{\varepsilon^3fn}{3kq}\right).\]}.
			
			Let $i\in [4]$. 
			If $|U_{i-2}^{1-\gamma}(T)\cup U_{i-3}^{1-\gamma}(T)|=0$, then \cref{thm:blowupC4-partition-eH} holds for $i$ with probability $1$. Suppose that $|U_{i-2}^{1-\gamma}(T)\cup U_{i-3}^{1-\gamma}(T)|\geq 1$.
			Denote $G\coloneqq H[U_i, U_{i-1}]$. By \cref{lm:optimalH}\cref{lm:optimalH-Delta,lm:optimalH-e}, $\Delta(G)\leq \gamma n$ and $e(G)\geq \frac{n}{2}$.
			Also observe that, for each $i'\in [4]$, the $q$ sets $S_{1,i',1}, \dots, S_{\frac{q}{f},i',1}, S_{1,i',2}, \dots, S_{\frac{q}{f},i',f}$ randomly partition $U_{i'}$.
			Thus, \cref{lm:edgepartition} (applied with $q,\gamma, U_i, U_{i-1},S_{1,i,1}, \dots, S_{\frac{q}{f},i,f}$, and $S_{1,i-1,1}, \dots, S_{\frac{q}{f},i-1,f}$ playing the roles of $k, \varepsilon, A, B$, $A_1, \dots, A_k$, and $B_1, \dots, B_k$) implies that, with probability at least $\frac{4}{5}$, all $h,h'\in [\frac{q}{f}]$ and $j,j'\in [f]$ satisfy
			\[e_H(S_{h,i,j}, S_{h',i-1,j'})\geq \frac{e_H(U_i, U_{i-1})}{2q^2}\stackrel{\text{\cref{lm:optimalH}\cref{lm:optimalH-e}}}{\geq} 110\gamma n |U_{i-2}^{1-\gamma}(T)\cup U_{i-3}^{1-\gamma}(T)|.\]
			Thus, \cref{lm:optimalH}\cref{lm:optimalH-U**} implies that \cref{thm:blowupC4-partition-eH} holds for $i$ with probability at least $\frac{4}{5}$.
			
			Therefore, a union bound over all $i\in [4]$ implies that \cref{thm:blowupC4-partition-eH} holds with probability at least $\frac{1}{5}$. Then, a union bound implies that, with positive probability, \cref{thm:blowupC4-partition-eH,thm:blowupC4-partition-size,thm:blowupC4-partition-d} are all satisfied.
		\end{proofclaim}
		
		We may therefore assume that \cref{thm:blowupC4-partition-d,thm:blowupC4-partition-eH,thm:blowupC4-partition-size} are all satisfied.
		We now equalise the partition classes to achieve \cref{thm:blowupC4-partition-size'} below, without affecting the bounds in \cref{thm:blowupC4-partition-d,thm:blowupC4-partition-eH} too much. For this, note that 
		\[|U_i^{\varepsilon_1}(T)|\stackrel{\text{\cref{eq:Ugamma}}}{\leq} \frac{e_T(U_i, U_{i-1})}{\varepsilon_1 n}\stackrel{\text{\cref{fact:epsilon4partition}}}{\leq} \frac{\varepsilon n^2}{\varepsilon_1n}\leq \varepsilon_1^3n\]
		for each $i\in [4]$. Moreover, \cref{thm:blowupC4-partition-size} implies that 
		\begin{align*}
			|V_{i,j,h}|&\geq \frac{fm}{q}+\frac{f(m_0-\varepsilon n)}{kq}\geq \frac{fm}{q}+\frac{(\varepsilon_1^2-\varepsilon)fn}{kq}
			\geq \frac{fm}{q}+|U_i^{\varepsilon_1}(T)|
		\end{align*} 
		for each $(h,i,j)\in [\frac{q}{f}]\times [4]\times [k]$.
		Thus, for each $i\in [4]$, we can let $U_i^*$ be obtained from $U_i^{\varepsilon_1}(T)$ by adding, for each 
		$(h,j)\in [\frac{q}{f}]\times [k]$, precisely $|V_{i,j,h}\setminus U_i^{\varepsilon_1}(T)|-\frac{fm}{q}$ vertices of $V_{i,j,h}\setminus U_i^{\varepsilon_1}(T)$. For each $(h,i,j)\in [\frac{q}{f}]\times [4]\times [k]$, let $V_{i,j,h}'\coloneqq V_{i,j,h}\setminus U_i^*$ and $V_{i,j}'\coloneqq V_{i,j}\setminus U_i^*$.
		For each $i\in [4]$, define $C^i\coloneqq V_{i,1}'\dots V_{i,k}'$. For each $(h,i,j)\in [\frac{q}{f}]\times [4]\times [f]$, let $S_{h,i,j}'\coloneqq S_{h,i,j}\setminus U_i^*$ and observe that $S_{h,i,j}'$ is the union of the $h$\textsuperscript{th} subclusters of the internal clusters in the $j$\textsuperscript{th} interval in the canonical interval partition of $C^i$ into $f$ intervals.
		Then, \cref{thm:blowupC4-partition-size,thm:blowupC4-partition-d,thm:blowupC4-partition-eH} imply that the following properties are satisfied.
		\begin{enumerate}[label=(\roman*$'$)]
			\item For each $(h,i,j)\in [\frac{q}{f}]\times [4]\times [k]$, we have $|V_{i,j,h}'|=\frac{fm}{q}$ and $|U_i^*|=m_0$.\label{thm:blowupC4-partition-size'}
			\item For each $(h,i,j)\in [\frac{q}{f}]\times [4]\times [f]$ and $v\in V(T)$, we have\label{thm:blowupC4-partition-d'}
			\begin{align*}
				\left|N_T^\pm(v)\cap  \left(U_i^*\cup S_{h,i,j}'\right)\right|&\geq \left(\frac{k}{f}-1\right)\cdot\left(\frac{f|N_T^\pm(v)\cap U_i|}{kq}-\varepsilon n\right)\\
				&\geq \frac{|N_T^\pm(v)\cap U_i|}{2q}-\varepsilon_1 n.
			\end{align*}
			\item For each $h,h'\in [\frac{q}{f}]$, $i\in [4]$, and $j,j'\in [f]$, we have\label{thm:blowupC4-partition-eH'}
			\[e_{H-U^{1-\gamma}(T)}\left(U_i^*\cup S_{h,i,j}', U_{i-1}^*\cup S_{h',i-1,j'}'\right)\geq 110\gamma n|U_{i-2}^{1-\gamma}(T)\cup U_{i-3}^{1-\gamma}(T)|.\]
		\end{enumerate}
		By \cref{thm:blowupC4-partition-size'} and construction, $U^*\coloneqq \bigcup_{i\in [4]}U_i^*$ is an $(\varepsilon_1,\cU)$-exceptional set for $T$ (see \cref{def:ES}). Thus, \cref{fact:partition}\cref{fact:partition-size} and \cref{def:ES-size} imply that 
		\begin{equation}\label{eq:blowupC4-n'}
			n'\coloneqq |U_1\setminus U^*|=\dots=|U_4\setminus U^*|=n-\frac{|U^*|}{4}\geq (1-\varepsilon_1)n.
		\end{equation}
		Therefore, each $i\in [4]$ satisfies
		\begin{align*}
			\delta(T[U_i\setminus U^*, U_{i+1}\setminus U^*])\geq n'-\varepsilon_1 n\geq (1-\sqrt{\varepsilon_1})n'.
		\end{align*}
		Define \[\cU'\coloneqq (U_1\setminus U^*, \dots, U_4\setminus U^*).\]
		For each $i\in [4]$, let $\cP_i$ be the partition of $U_i\setminus U^*$ into an empty exceptional set and the $k$ clusters $V_{i,1}', \dots, V_{i,k}'$ and let $\cP_i^*$ be the partition of $U_i\setminus U^*$ into an empty exceptional set and the $\frac{kq}{f}$ clusters $V_{i,1,1}', \dots, V_{i,1,\frac{q}{f}}', V_{i,2,1}', \dots, V_{i,k,\frac{q}{f}}'$. Denote $\cP\coloneqq (\cP_1, \dots, \cP_4)$, $\cP^*\coloneqq (\cP_1^*, \dots, \cP_4^*)$, and $\cC\coloneqq (C^1, \dots, C^4)$. For each $i\in [4]$, note that $\cP_i^*$ is a $\frac{q}{f}$-refinement of $\cP_i$ and $C^i$ is a Hamilton cycle on the clusters in $\cP_i$. 
		Let $D_1, D_2, \cP, \cP',\cR, \cC, \sU, \sU'$, and $\cM$ be obtained by applying \cref{lm:CST} with $\overrightarrow{T}_\cU-U^*,\cU', n', \frac{q}{f}, \sqrt{\varepsilon_1}$, and $\varepsilon_2$ playing the roles of $D, \cU, n, \ell^*, \varepsilon$, and $\varepsilon'$. Let $D_1'\coloneqq T\setminus D_2$ and observe that $D_1'$ is obtained from $D_1$ by adding backward and exceptional edges only. Thus, \cref{lm:CST}\cref{lm:CST-CST,lm:CST-M,lm:CST-supreg} are still satisfied with $D_1'$ playing the role of $D_1$. That is, the following hold.
		\begin{enumerate}[resume,label=(\roman*)]
			\item $(\cU', \cP, \cP^*, \cC, \cM)$ is a consistent $(4, \ell^*, k, n')$-cycle-framework. In particular, the following hold.\label{thm:blowupC4-M}
			\begin{itemize}
				\item By \cref{fact:CCFP}, $(\cU', \cP, \cP, \cC, \cM)$ is a consistent $(4, \ell^*, k, n')$-cycle-framework.
				\item For any $i\in [4]$ and any cluster $V\in \cP_i$, the set $N_{M_i}(V)$ is a cluster in $\cP_{i+1}$ (where $\cP_5\coloneqq \cP_1$).
				\item The analogue holds for the partitions in $\cP^*$.
			\end{itemize} 
			\item For any $i\in [4]$, $D_1'[V,W]$ is $[\varepsilon_2, \geq 1-3d]$-superregular whenever $V\subseteq U_i\setminus U^*$ and $W\subseteq U_{i+1}\setminus U^*$ are unions of clusters in $\cP_i^*$ and $\cP_{i+1}^*$, respectively. In particular, since $\cP_i^*$ is a refinement of $\cP_i$ for each $i\in [4]$, the analogue holds for the partitions in $\cP$.\label{thm:blowupC4-supreg}
			\item $(D_2, \cU', \cP, \cP', \cP^*, \cR, \cC, \sU, \sU', \cM)$ is a $(4, \ell', \frac{q}{f}, k, m, \varepsilon_2, d)$-cycle-setup. In particular, \cref{fact:CSTCF,fact:CFP} imply that $(\cU',\cP,\cP^*,\cC, \cM)$ is a $(4,\frac{q}{f},k,n')$-cycle-framework and $(\cU',\cP,\cP,\cC, \cM)$ is a $(4,1,k,n')$-cycle-framework.\label{thm:blowupC4-CST}
		\end{enumerate}
		As discussed in \cref{sec:CST}, we will use $D_1'$ to construct the required extended special factors (via \cref{lm:ESF}), while $D_2$ will be reserved for the application of the robust decomposition lemma for blow-up cycles (\cref{lm:cyclerobustdecomp}).
		
		\item \textbf{Decomposing the backward and exceptional edges.}\label{step:backwardedges}
		We will use \cref{lm:main}, so we start by building digraphs $H_1, \dots, H_s$ which satisfy \cref{lm:main}\cref{lm:main-H-degree,lm:main-H-degreeV**,lm:main-H-degreeV*,lm:main-H-edges}.
		
		First, observe that if we apply \cref{lm:optimalH}, then $H$ satisfies \cref{lm:main}\cref{lm:main-H-degree,lm:main-H-edges}, but not \cref{lm:main}\cref{lm:main-H-degreeV**,lm:main-H-degreeV*}. To achieve the latter, we add edges as follows.
		Let $\tH$ be the spanning subdigraph of $\overleftarrow{T}_\cU$ which consists of all the edges incident to $U^{1-\gamma}(T)$. Let $\hH$ be the spanning subdigraph of $\overrightarrow{T}_\cU$ which consists of all the edges incident to $U^*\setminus U^{1-\gamma}(T)$.
		By \cref{eq:Ugamma}, we see that $H\cup \tH\cup \hH$ now satisfies all the bounds in \cref{lm:main}\cref{lm:main-H-degree,lm:main-H-degreeV**,lm:main-H-degreeV*,lm:main-H-edges}.
		
		However, as discussed in \cref{sec:main}, the feasible systems $\cF_1, \dots, \cF_t$ constructed within $H_1, \dots, H_s$ will form the special covers required for \cref{lm:ESF}. By \namecrefs{fact:feasibleSC}~\ref{fact:feasibleSC} and \ref{lm:main}\cref{lm:main-V0}, these feasible systems $\cF_1, \dots, \cF_t$ will automatically form balanced special covers. Additionally, \cref{lm:ESF} requires $\cF_1, \dots, \cF_t$ to be localised (recall \cref{def:LSC}). To achieve this, each $H_\ell$ will be associated to one set of ``locality parameters" (that is, a choice of $(h,i,j)$ in \cref{def:LSC}) and then obtained from $H\cup \tH\cup \hH$ by removing all the edges which are forbidden with respect to this set of parameters.
		In \cref{claim:H} below, we will verify that not too many edges are removed and so $H_1, \dots, H_s$ still satisfy \cref{lm:main}\cref{lm:main-H-degree,lm:main-H-degreeV**,lm:main-H-degreeV*,lm:main-H-edges}. Moreover, each of the $\cF_i\subseteq H_\ell$ will automatically be localised.
		
		Recall from \cref{lm:cyclerobustdecomp} that we need two types of extended special factors: some $(\frac{q}{f}, 4, f)$-extended special factors with respect to $\cU', \cP^*, \cC$, and $\cM$ and some $(1,4,7)$-extended special factors with respect to $\cU', \cP, \cC$, and $\cM$. These will be constructed separately by applying \cref{lm:ESF} successively.
		First, we construct the $H_\ell$'s for the first application of \cref{lm:ESF}, that is, for the construction of the $(\frac{q}{f}, 4, f)$-extended special factors with respect to $\cU', \cP^*, \cC$, and $\cM$.
		
		More precisely, let $(h,i,j)\in [\frac{q}{f}]\times[4]\times [f]$. 
		Let $k'\coloneqq \frac{k}{f}+1$ and denote by $W_1 \dots W_{k'}$ the $j^{\rm th}$ interval in the canonical interval partition of $C^i$ into $f$ intervals.
		Denote by $W_{1,h}, \dots, W_{k',h}$ the $h^{\rm th}$ subclusters of $W_1, \dots, W_{k'}$ contained in $\cP_i^*$.
		Let $E_{h,i,j}$ be the set of edges $e\in E(T)$ such that 		
		\begin{equation}\label{eq:blowupC4-E}
			V(e)\cap (U_i\cup U_{i+1})\not\subseteq U^*\cup (W_{1,h}\cup\dots \cup W_{k'-1,h})\cup N_{M_i}(W_{2,h}\cup\dots \cup W_{k',h}).
		\end{equation}
		(Roughly speaking, the set $E_{h,i,j}$ consists of all the edges of $T$ which cannot be included in a $(\frac{q}{f},4,f,h,i,j)$-localised special cover. See the proof of \cref{claim:H}\cref{claim:H-localised} below for details.)
		Let $H_{h,i,j}\coloneqq (H\cup \tH\cup \hH)\setminus E_{h,i,j}$.
		We now claim that any special cover in $H_{h,i,j}$ is $(\frac{q}{f},4,f,h,i,j)$-localised with respect to $\cP^*, \cC$, and $\cM$, and that $H_{h,i,j}$ satisfies \cref{lm:main}\cref{lm:main-H-degree,lm:main-H-degreeV**,lm:main-H-degreeV*,lm:main-H-edges}.
		
		\begin{claim}\label{claim:H}
			For each $(h,i,j)\in [\frac{q}{f}]\times[4]\times[f]$, the following properties are satisfied.
			\begin{enumerate}[label=\rm(\alph*)]
				\item Let $SC$ be a special cover in $T$ with respect to $U^*$. If $SC\subseteq H_{h,i,j}$, then $SC$ is $(\frac{q}{f},4,f, h,i,j)$-localised with respect to $\cP^*, \cC$, and $\cM$.\label{claim:H-localised}
				\item For each $v\in U^{1-\gamma}(T)$, $\overleftarrow{d}_{H_{h,i,j},\cU}^\pm(v)\geq 3\gamma n$.\label{claim:H-backwardU**}
				\item For each $v\in U^*\setminus U^{1-\gamma}(T)$, $\overrightarrow{d}_{H_{h,i,j},\cU}^\pm(v) \geq \gamma^2 n$.\label{claim:H-forwardU*}
				\item For each $v\in V(T)\setminus U^{1-\gamma}(T)$, $\overleftarrow{d}_{H_{h,i,j},\cU}^\pm(v)\leq 2\gamma n$.\label{claim:H-backwardU'}
				\item For each $i'\in [4]$, $e_{H_{h,i,j}-U^{1-\gamma}(T)}(U_{i'}, U_{i'-1})\geq 110\gamma n|U_{i'-2}^{1-\gamma}(T)\cup U_{i'-3}^{1-\gamma}(T)|$.\label{claim:H-backwardeU'}
			\end{enumerate}
		\end{claim}
		
		\begin{proofclaim}
			Let $(h,i,j)\in [\frac{q}{f}]\times[4]\times [f]$.
			Denote by $W_1 \dots W_{k'}$ the $j^{\rm th}$ interval in the canonical interval partition of $C^i$ into $f$ intervals.
			Denote by $W_{1,h}, \dots, W_{k',h}$ the $h^{\rm th}$ subclusters of $W_1, \dots, W_{k'}$ contained in $\cP_i^*$.
			Note that $S_i\coloneqq S_{h,i,j}'=W_{2,h}\cup \dots \cup W_{k'-1,h}$ and, by \cref{thm:blowupC4-M}, $S_{i+1}\coloneqq S_{h,i+1,j}'=N_{M_i}(W_{2,h}\cup \dots\cup W_{k'-1,h})$.
			For each $i'\in [4]\setminus \{i,i+1\}$, let $S_{i'}\coloneqq S_{1,i',1}'$.
			
			Let $SC\subseteq H_{h,i,j}$ be a special cover in $T$ with respect to $U^*$. By \cref{def:SC}, $V^+(SC)\cup V^-(SC)\subseteq V(T)\setminus U^*$ and so \cref{eq:blowupC4-E} implies that 
			\[(V^+(SC)\cup V^-(SC))\cap (U_i\cup U_{i+1})\subseteq (W_{1,h}\cup \dots\cup W_{k'-1,h})\cup N_{M_i}(W_{2,h}\cup \dots \cup W_{k',h}).\] 
			Thus,			
			$SC$ is $(\frac{q}{f},4,f,h,i,j)$-localised with respect to $\cP^*, \cC$, and $\cM$, and so \cref{claim:H-localised} holds.
			
			Let $i'\in [4]$ and $v\in U_{i'}^{1-\gamma}(T)$.
			By \cref{fact:U} and \cref{def:ES-backward}, $v\in U^*$ and so \cref{eq:blowupC4-E} implies that $E_{h,i,j}$ does not contain any edge from $v$ to $U_{i'-1}^*\cup S_{i'-1}$.
			Thus,
			\begin{align*}
				\overleftarrow{d}_{H_{h,i,j},\cU}^+(v)
				&\geq d_{\tH\setminus E_{h,i,j}}^+(v)
				\geq |N_T^+(v)\cap (U_{i'-1}^*\cup S_{i'-1})|
				\stackrel{\text{\cref{thm:blowupC4-partition-d'}}}{\geq}\frac{(1-\gamma)n}{2q}-\varepsilon_1 n
				\geq 3\gamma n.
			\end{align*}
			Similarly, $\overleftarrow{d}_{H_{h,i,j},\cU}^-(v)\geq 3\gamma n$ and so \cref{claim:H-backwardU**} is satisfied.
			
			Let $i'\in [4]$ and $v\in U_{i'}^*\setminus U^{1-\gamma}(T)$.
			Since $v\in U^*$, \cref{eq:blowupC4-E} implies that $E_{h,i,j}$ does not contain any edge from $U_{i'-1}^*\cup S_{i'-1}$ to $v$. Thus,
			\begin{align*}
				\overrightarrow{d}_{H_{h,i,j},\cU}^-(v)
				&\geq d_{\hH\setminus E_{h,i,j}}^-(v)
				\geq |N_T^-(v)\cap (U_{i'-1}^*\cup S_{i'-1})|
				\stackrel{\text{\cref{thm:blowupC4-partition-d'}}}{\geq}\frac{\gamma n}{2q}-\varepsilon_1 n\geq \gamma^2n.
			\end{align*}
			Similarly, $\overrightarrow{d}_{H_{h,i,j},\cU}^+(v)\geq \gamma^2n$ and so \cref{claim:H-forwardU*} holds.
			
			For any $v\in V(T)\setminus U^{1-\gamma}(T)$, we have
			\begin{align*}
				\overleftarrow{d}_{H_{h,i,j}, \cU}^\pm(v)
				&\stackrel{\text{\eqmakebox[blowupC4backwardU']{}}}{\leq} d_H^\pm(v)+ d_{\tH}^\pm(v)
				\stackrel{\text{\cref{lm:optimalH}\cref{lm:optimalH-Delta}}}{\leq}\gamma n+|U^{1-\gamma}(T)|\\
				&\stackrel{\text{\eqmakebox[blowupC4backwardU']{\text{\cref{fact:U},\cref{def:ES}}}}}{\leq} \gamma n+ 4\varepsilon_1 n\leq 2\gamma n
			\end{align*}
			and so \cref{claim:H-backwardU'} holds.
			
			Let $i'\in [4]$. By \cref{eq:blowupC4-E}, $E_{h,i,j}$ does not contain any edge from $U_{i'}^*\cup S_{i'}$ to $U_{i'-1}^*\cup S_{i'-1}$.
			Thus,
			\begin{align*}
				e_{H_{h,i,j}-U^{1-\gamma}(T)}(U_{i'}, U_{i'-1})
				&\stackrel{\text{\eqmakebox[Hhij]{}}}{\geq} e_{(H\setminus E_{h,i,j})-U^{1-\gamma}(T)}(U_{i'}, U_{i'-1})\\
				&\stackrel{\text{\eqmakebox[Hhij]{}}}{\geq} e_{H-U^{1-\gamma}(T)}(U_{i'}^*\cup S_{i'},U_{i'-1}^*\cup S_{i'-1})\\
				&\stackrel{\text{\eqmakebox[Hhij]{\text{\cref{thm:blowupC4-partition-eH'}}}}}{\geq} 110\gamma n|U_{i'-2}^{1-\gamma}(T)\cup U_{i'-3}^{1-\gamma}(T)|
			\end{align*}
			and so \cref{claim:H-backwardeU'} is satisfied.
		\end{proofclaim}
		
		The $H_\ell$'s for the second application of \cref{lm:ESF}, that is, for the construction of the $(1,4,7)$-extended special factors with respect to $\cU', \cP, \cC$, and $\cM$, can be constructed analogously.
		More precisely, let $(h,i,j)\in [1]\times[4]\times [7]$. Let $k''\coloneqq \frac{k}{7}+1$ and denote by $W_1'\dots W_{k''}'$ the $j^{\rm th}$ interval in the canonical interval partition of $C^i$ into $7$ intervals.
		Let $E_{h,i,j}'$ be the set of edges $e\in E(T)$ such that 
		\[V(e)\cap (U_i\cup U_{i+1})\not\subseteq U^*\cup (W_1'\cup\dots\cup W_{k''-1}')\cup N_{M_i}(W_2'\cup\dots\cup W_{k''}').\]
		Let $H_{h,i,j}'\coloneqq (H\cup \tH\cup \hH)\setminus E_{h,i,j}'$.
		Since $f> 14$, note that every interval in the canonical interval partition of $C^i$ into $7$ intervals contains, as a subinterval, an interval from the canonical interval partition of $C^i$ into $f$ intervals. That is, there exists $(h',j')\in [\frac{q}{f}]\times [f]$ such that $S_{h',i,j'}'\subseteq W_2'\cup\dots\cup W_{k''-1}'$. Thus, we can apply similar arguments as in \cref{claim:H}, to show that the following hold.
		\begin{enumerate}[label=(\alph*$'$)]
			\item Let $SC$ be a special cover in $T$ with respect to $U^*$. If $SC\subseteq H_{h,i,j}'$, then $SC$ is $(1,4,7, h,i,j)$-localised with respect to $\cP, \cC$, and $\cM$.\label{claim:H'-localised'}
			\item For each $v\in U^{1-\gamma}(T)$, $\overleftarrow{d}_{H_{h,i,j}',\cU}^\pm(v)\geq 3\gamma n$.\label{claim:H'-backwardU**}
			\item For each $v\in U^*\setminus U^{1-\gamma}(T)$, $\overrightarrow{d}_{H_{h,i,j}',\cU}^\pm(v) \geq \gamma^2 n$.\label{claim:H'-forwardU*}
			\item For each $v\in V(T)\setminus U^{1-\gamma}(T)$, $\overleftarrow{d}_{H_{h,i,j}',\cU}^\pm(v)\leq 2\gamma n$.\label{claim:H'-backwardU'}
			\item For each $i'\in [4]$, $e_{H_{h,i,j}'-U^{1-\gamma}(T)}(U_{i'}, U_{i'-1})\geq 110\gamma n|U_{i'-2}^{1-\gamma}(T)\cup U_{i'-3}^{1-\gamma}(T)|$.\label{claim:H'-backwardeU'}
		\end{enumerate}

		Denote $\cH\coloneqq \{H_{h,i,j}\mid (h,i,j)\in [\frac{q}{f}]\times [4]\times [f]\}$ and $\cH'\coloneqq \{H_{h,i,j}'\mid (h,i,j)\in [1]\times[4]\times [7]\}$. Let $s\coloneqq 4q+28$ and let $H_1, \dots, H_s$ be an enumeration of $\cH\cup \cH'$.
		Recall from \cref{lm:cyclerobustdecomp} that we need to construct $r_3$ $(\frac{q}{f},4,f)$-extended special factors with respect to $\cU', \cP^*, \cC$, and $\cM$, as well as $r^\diamond$ $(1,4,7)$-extended special factors with respect to $\cU', \cP, \cC$, and $\cM$.
		Also recall that in \cref{lm:main}, $s_1, \dots, s_s$ denote the number of feasible systems constructed within $H_1, \dots, H_s$, respectively.
		Thus, for each $i\in [s]$, define
		\[s_i\coloneqq
		\begin{cases}
			r_3 & \text{if }H_i\in \cH;\\
			r^\diamond & \text{if }H_i\in \cH'.
		\end{cases}\]
		Note that $\sum_{i\in [s]}s_i=4qr_3+28r^\diamond=4s'\leq \varepsilon_3 n$ (see \cref{eq:blowupC4-constants} for the definition of $s'$).
		By \cref{claim:H-backwardU**,claim:H-backwardU',claim:H-backwardeU',claim:H-forwardU*} and \cref{claim:H'-backwardU**,claim:H'-backwardU',claim:H'-backwardeU',claim:H'-forwardU*}, \cref{lm:main}\cref{lm:main-H-degree,lm:main-H-degreeV**,lm:main-H-edges,lm:main-H-degreeV*} hold.
		
		Let $E\coloneqq \{uv\in E(T)\mid vu\in \bigcup\cM\}$ (this is precisely the set of edges which cannot be decomposed via \cref{lm:cyclerobustdecomp}). By \cref{thm:blowupC4-CST} and \cref{def:CF-M}, \cref{lm:main}\cref{lm:main-E-forward,lm:main-E-deg} hold. Recall the notation introduced in \cref{eq:blowupC4-Q}.
		By \cref{lm:main} (applied with $\varepsilon_1, \varepsilon_2$, and $\varepsilon_3$ playing the roles of $\varepsilon, \varepsilon'$, and $\eta$),
		there exist disjoint sets
		\[S=\{\cF_{\ell, h,i,j}\mid (\ell,h,i,j)\in Q\}, \enspace S'=\{\cF_{\ell,h,i,j}'\mid (\ell,h,i,j)\in Q'\},\enspace S''=\{\cF_i\mid i\in [n-4s']\}\]
		such that 
		$S^*\coloneqq S\cup S'\cup S''$ is a set of $n$ edge-disjoint feasible systems which satisfy the following properties.
		\begin{enumerate}[label=(\greek*)]
			\item $E(\overleftarrow{T}_\cU)\cup E\subseteq E(S^*)\subseteq E(T)$.\label{thm:blowupC4-backward-T}
			\item For each $\cF\in S^*$, $e(\cF)\leq \varepsilon_2n$.\label{thm:blowupC4-backward-size}
			\newcounter{backward}
			\setcounter{backward}{\value{enumi}}
			\item For each $\cF\in S\cup S'$, $V^0(\cF)=U^*$.\label{thm:blowupC4-backward-SC}
			\item For each $(\ell,h,i,j)\in Q$, we have $\cF_{\ell, h, i,j}\subseteq H_{h,i,j}\setminus E$.\label{thm:blowupC4-backward-LSC}
			\item For each $(\ell,h,i,j)\in Q'$, we have $\cF_{\ell, h, i,j}'\subseteq H_{h,i,j}'\setminus E$.\label{thm:blowupC4-backward-LSC'}
			\item For each $\cF\in S''$, we have $V^+(\cF)\subseteq U_1$ and $V^-(\cF)\subseteq U_4$.\label{thm:blowupC4-backward-endpoints}
		\end{enumerate}
		By \cref{fact:feasibleforward}, we may assume without loss of generality that all the forward edges in $E(S^*)$ are either edges of $E$ or incident to $U^*$. Thus, \cref{lm:main}\cref{lm:main-E-deg} implies that the following holds.
		\begin{enumerate}[resume,label=(\greek*)]
			\item For each $v\in V(T)\setminus U^*$, we have $|\overrightarrow{N}_{S^*}^\pm(v)\setminus U^*|\leq 1$.\label{thm:blowupC4-backward-forward}
		\end{enumerate}
		We next observe that the feasible systems in $S\cup S'$ are localised and balanced special covers.
		
		\begin{claim}\label{claim:SC}
			The following hold.
			\begin{enumerate}[label=\rm(\greek*$'$)]
				\setcounter{enumi}{\value{backward}}
				\item Each $\cF\in S\cup S'$ is a $\cU'$-balanced special cover in $D_1'$ with respect to $U^*$.\label{claim:SC-balanced}
				\item For each $(\ell,h,i,j)\in Q$, $\cF_{\ell,h,i,j}$ is $(\frac{q}{f},4,f,h,i,j)$-localised with respect to $\cP^*, \cC$, and~$\cM$.\label{claim:SC-localised}
				\item For each $(\ell,h,i,j)\in Q'$, $\cF_{\ell,h,i,j}'$ is $(1,4,7,h,i,j)$-localised with respect to $\cP, \cC$, and~$\cM$.\label{claim:SC-localised'}
			\end{enumerate}
		\end{claim}
		
		\begin{proofclaim}
			By assumption, \cref{thm:blowupC4-backward-LSC}, and \cref{thm:blowupC4-backward-LSC'}, all the forward edges in $E(S\cup S')$ are incident to $U^*$. Since by \cref{step:CST} $D_2=T\setminus D_1'\subseteq \overrightarrow{T}_\cU-U^*$, this implies that $E(S\cup S')\subseteq E(D_1')$.
			Moreover, \cref{def:ES-size} implies that $|U_1^*|=\dots=|U_4^*|$ and so \cref{fact:feasibleSC} and \cref{thm:blowupC4-backward-SC} imply that each $\cF\in S\cup S'$ is a $\cU'$-balanced special cover in $D_1'$ with respect to $U^*$. 
			In particular, \cref{claim:SC-balanced} holds.			
			Then, \cref{claim:SC-localised} follows from \cref{claim:H-localised} and \cref{thm:blowupC4-backward-LSC} and, similarly, \cref{claim:SC-localised'} holds by \cref{claim:H'-localised'} and \cref{thm:blowupC4-backward-LSC'}.
		\end{proofclaim}
		
		\item \textbf{Applying the robust decomposition lemma.}\label{step:robustdecomplm}
		In this step, we will apply the robust decomposition lemma (\cref{lm:cyclerobustdecomp}) to obtain a robustly decomposable digraph $D^{\rm rob}$ which will enable us to decompose the leftovers after the approximate decomposition. First, we use \cref{lm:ESF} to construct the required extended special factors.
		
		Recall from \cref{claim:SC} that $S\cup S'$ consists of special covers in $D_1'$ with respect to $U^*$. For each $(\ell,h,i,j)\in Q$, let $M_{\ell,h,i,j}$ denote the complete special sequence associated to $\cF_{\ell, h, i,j}\in S$ (see \cref{def:MSC}). Define a multiset $\sM$ by $\sM\coloneqq \{M_{\ell,h,i,j}\mid (\ell,h,i,j)\in Q\}$.
		For each $(\ell,h,i,j)\in Q'$, let $M_{\ell,h,i,j}'$ denote the complete special sequence associated to $\cF_{\ell, h, i,j}'\in S'$. Define a multiset $\sM'$ by $\sM'\coloneqq \{M_{\ell,h,i,j}\mid (\ell,h,i,j)\in Q'\}$.
		
		First, we construct the $(\frac{q}{f},4,f)$-extended special factors with respect to $\cU', \cP^*, \cC$, and $\cM$ required for \cref{lm:cyclerobustdecomp}. By \cref{thm:blowupC4-backward-size}, \cref{claim:SC-balanced}, and \cref{claim:SC-localised}, \cref{lm:ESF}\cref{lm:ESF-small,lm:ESF-localised,lm:ESF-balanced} hold with $\cF_{\ell,h,i,j}, \frac{q}{f}, \varepsilon_2$, and $\cU'$ playing the roles of $SC_{\ell,h,i,j}, \ell^*, \varepsilon$, and $\cU$.  Moreover, \cref{lm:ESF}\cref{lm:ESF-Mi} follows immediately from \cref{thm:blowupC4-M}.
		Let $D_1''\coloneqq D_1'\setminus (S'\cup S'')$. By \cref{prop:epsremovingadding}\cref{prop:epsremovingadding-supreg}, \cref{thm:blowupC4-supreg}, and \cref{thm:blowupC4-backward-forward}, \cref{lm:ESF}\cref{lm:ESF-supreg} holds with $D_1''$ and $3d$ playing the roles of $D$ and $\varepsilon'$.
		Apply \cref{lm:ESF} with $D_1'', \cU', r_3, \frac{q}{f}, n', \varepsilon_2, 3d$, and $S$ playing the roles of $D, \cU, r, \ell^*,n, \varepsilon, \varepsilon'$, and $SC$ to obtain $r_3$ $(\frac{q}{f},4,f)$-extended special factors $ESF_1, \dots, ESF_{r_3}$ with respect to $\cU', \cP^*, \cC$, and $\cM$ which satisfy the following properties, where for each $(\ell,h,i,j)\in Q$, $ESPS_{\ell,h,i,j}$ denotes the $(\frac{q}{f},4,f,h,i,j)$-extended special path system contained in $ESF_\ell$.
		\begin{enumerate}[label=(\Roman*)]
			\item For each $(\ell,h,i,j)\in Q$, we have $M_{\ell,h,i,j}\subseteq ESPS_{\ell,h,i,j}\subseteq (D_1''\setminus S)\cup M_{\ell,h,i,j}$.\label{thm:blowupC4-ESF-M}
			\item Let $(\ell,h,i,j),(\ell',h',i',j')\in Q$ be distinct. Then, we have $(ESPS_{\ell,h,i,j}\setminus M_{\ell,h,i,j})\cap (ESPS_{\ell',h',i',j'}\setminus M_{\ell',h',i',j'})=\emptyset$.\label{thm:blowupC4-ESF-disjoint}
		\end{enumerate}
		Define a multidigraph $\mathcal{ESF}$ by $\mathcal{ESF}\coloneqq ESF_1\cup \dots\cup ESF_{r_3}$.
		
		Next, we construct the $(1,4,7)$-extended special factors with respect to $\cU', \cP, \cC$, and $\cM$ required for \cref{lm:cyclerobustdecomp}. By \cref{thm:blowupC4-backward-size}, \cref{claim:SC-balanced}, and \cref{claim:SC-localised'}, \cref{lm:ESF}\cref{lm:ESF-small,lm:ESF-localised,lm:ESF-balanced} hold with $\cF_{\ell,h,i,j}', 1,7, \varepsilon_2$, and $\cU'$ playing the roles of $SC_{\ell,h,i,j},\ell^*, f, \varepsilon$, and $\cU$. By \cref{thm:blowupC4-M}, \cref{lm:ESF}\cref{lm:ESF-Mi} holds with $\cP$ playing the role of $\cP^*$.
		Let $D_1'''\coloneqq D_1'\setminus (S\cup S''\cup\mathcal{ESF})$. By \cref{prop:epsremovingadding}\cref{prop:epsremovingadding-supreg}, \cref{thm:blowupC4-supreg}, \cref{thm:blowupC4-backward-forward}, and \cref{cor:ESFreg}, \cref{lm:ESF}\cref{lm:ESF-supreg} holds with $D_1''', \cP$, and $3d$ playing the roles of $D, \cP^*$, and $\varepsilon'$.
		Apply \cref{lm:ESF} with $D_1''',\cU', \cP, r^\diamond, 1, 7, n', \varepsilon_2, 3d$, and $S'$ playing the roles of $D,\cU, \cP^*, r, \ell^*, f, n, \varepsilon, \varepsilon'$, and $SC$ to obtain $r^\diamond$ $(1,4,7)$-extended special factors $ESF_1', \dots, ESF_{r^\diamond}'$ with respect to $\cU',\cP, \cC$, and $\cM$ which satisfy the following properties, where for each $(\ell,h,i,j)\in Q'$, $ESPS_{\ell,h,i,j}'$ denotes the $(1,4,7,h,i,j)$-extended special path system contained in $ESF_\ell'$.
		\begin{enumerate}[label=(\Roman*$'$)]
			\item For each $(\ell,h,i,j)\in Q'$, we have $M_{\ell,h,i,j}'\subseteq ESPS_{\ell,h,i,j}'\subseteq (D_1'''\setminus S')\cup M_{\ell,h,i,j}'$.\label{thm:blowupC4-ESF'-M}
			\item Let $(\ell,h,i,j),(\ell',h',i',j')\in Q'$ be distinct. Then, we have $(ESPS_{\ell,h,i,j}'\setminus M_{\ell,h,i,j}')\cap (ESPS_{\ell',h',i',j'}'\setminus M_{\ell',h',i',j'}')=\emptyset$.\label{thm:blowupC4-ESF'-disjoint}
		\end{enumerate}
		Define a multidigraph $\mathcal{ESF}'$ by $\mathcal{ESF}'\coloneqq ESF_1'\cup \dots\cup ESF_{r^\diamond}'$.
		
		We are now ready to apply the robust decomposition lemma. Let $D_2'\coloneqq D_2\setminus S^*$ and recall from \cref{step:CST} that $D_2\subseteq \overrightarrow{T}_\cU-U^*$.
		By \cref{thm:blowupC4-CST}, \cref{thm:blowupC4-backward-forward}, and \cref{prop:CST}, $(D_2', \cU', \cP, \cP', \cP^*, \cR, \cC, \sU, \sU', \cM)$ is a $(4, \ell', \frac{q}{f}, k,m, \varepsilon_3, \frac{d}{2})$-cycle-setup. Let $D^{\rm rob}$ be the robustly decomposable digraph obtained by applying \cref{lm:cyclerobustdecomp} with $D_2', \cU', 4, \varepsilon_3$, and $\frac{d}{2}$ playing the roles of $D, \cU, K, \varepsilon$, and $d$.
		
		\item \textbf{Approximate decomposition.}\label{step:approxdecomp}
		Let $D\coloneqq T\setminus (S^*\cup \mathcal{ESF}\cup \mathcal{ESF}'\cup D^{\rm rob})$.
		In this step, we will approximately decompose $D\cup S''$ using \cref{lm:blowupC4approxdecomp}.
		By \cref{thm:blowupC4-backward-size,thm:blowupC4-backward-endpoints}, \cref{lm:blowupC4approxdecomp}\cref{lm:blowupC4approxdecomp-size,lm:blowupC4approxdecomp-endpoints} hold with $S'', n-t$, and $\varepsilon_2$ playing the roles of $\{\cF_1, \dots, \cF_\ell\}, \ell$, and $\varepsilon$.
		Moreover, \cref{thm:blowupC4-backward-forward} implies that each $v\in V(T)\setminus U^*$ satisfies
		\begin{equation}\label{eq:blowupC4-S3}
			d_{S''}(v)\leq \overleftarrow{d}_{T, \cU}(v)+|U^*|+2\stackrel{\text{\cref{def:ES}}}{\leq} 2\varepsilon_1 n+4\varepsilon_1 n+2\leq \varepsilon_2n
		\end{equation}
		and so \cref{lm:blowupC4approxdecomp}\cref{lm:blowupC4approxdecomp-deg} holds with $S'', n-t$, and $\varepsilon_2$ playing the roles of $\{\cF_1, \dots, \cF_\ell\}, \ell$, and $\varepsilon$.
		It remains to show that $D[U_i\setminus U^*, U_{i+1}\setminus U^*]$ is an almost regular bipartite robust expander for each $i\in [4]$.
		
		\begin{claim}\label{claim:D}
			Each $v\in V(T)$ satisfies 
			\begin{equation}
				d_D^\pm(v)=
				\begin{cases}
					0 & \text{if }v\in U^*;\\
					n-(4s'-r)-d_{S''}^\pm(v) & \text{otherwise}.
				\end{cases}
			\end{equation}
		\end{claim}
		
		\begin{proofclaim}
			By \cref{def:feasible-exceptional} and since the feasible systems in $S^*$ are edge-disjoint, each $v\in U^*$ satisfies $d_{S^*}^\pm(v)=n$ and so $d_D^\pm(v)=0$, as desired. Let $v\in V(T)\setminus U^*$.
			First, note that
			\begin{equation}\label{eq:blowupC4-ESF}
				|N_{\mathcal{ESF}}^\pm(v)\setminus N_T^\pm(v)|\stackrel{\text{\cref{thm:blowupC4-ESF-M}}}{=}d_{\sM}^\pm(v)\stackrel{\text{\cref{def:SC,def:MSC}}}{=}d_S^\pm(v)
			\end{equation}
			and, similarly, 
			\begin{equation}\label{eq:blowupC4-ESF'}
				|N_{\mathcal{ESF}'}^\pm(v)\setminus N_T^\pm(v)|\stackrel{\text{\cref{thm:blowupC4-ESF'-M}}}{=}d_{\sM'}^\pm(v)\stackrel{\text{\cref{def:SC,def:MSC}}}{=}d_{S'}^\pm(v).
			\end{equation}
			Moreover, \cref{cor:ESFreg} implies that
			\begin{equation}\label{eq:blowupC4-ESF''}
				d_{\mathcal{ESF}}^\pm(v)=(1+3q)r_3 \quad \text{and} \quad d_{\mathcal{ESF}'}^\pm(v)=22r^\diamond.
			\end{equation}
			By \cref{thm:blowupC4-ESF-M}, \cref{thm:blowupC4-ESF-disjoint}, \cref{thm:blowupC4-ESF'-M}, and \cref{thm:blowupC4-ESF'-disjoint}, $\mathcal{ESF}\setminus \sM$ and $\mathcal{ESF}'\setminus \sM'$ are edge-disjoint subdigraphs of $D_1'\setminus S^*\subseteq T\setminus S^*$. 			
			Moreover, \cref{step:robustdecomplm} and \cref{lm:cyclerobustdecomp} imply that $D^{\rm rob}\subseteq D_2'\subseteq T\setminus (S^*\cup \mathcal{ESF}\cup \mathcal{ESF}')$.
			Therefore, \cref{lm:cyclerobustdecomp} implies that
			\begin{align*}
				d_D^\pm(v)&\stackrel{\text{\eqmakebox[Drob]{}}}{=}d_T^\pm(v)-d_{D^{\rm rob}}^\pm(v)-d_{\mathcal{ESF}\setminus \sM}^\pm(v)-d_{\mathcal{ESF}'\setminus \sM'}^\pm(v)-d_{S^*}^\pm(v)\\
				&\stackrel{\text{\eqmakebox[Drob]{\text{\cref{eq:blowupC4-ESF},\cref{eq:blowupC4-ESF'}}}}}{=}n-d_{D^{\rm rob}}^\pm(v)-d_{\mathcal{ESF}}^\pm(v)-d_{\mathcal{ESF}'}^\pm(v)-d_{S''}^\pm(v)\\
				&\stackrel{\text{\eqmakebox[Drob]{\text{\cref{eq:blowupC4-ESF''}}}}}{=}n-(r_1+r_2+5r^\diamond)-(1+3q)r_3-22r^\diamond-d_{S''}^\pm(v)\\ &\stackrel{\text{\eqmakebox[Drob]{\text{\cref{eq:blowupC4-constants}}}}}{=}n-(4s'-r)-d_{S''}^\pm(v),
			\end{align*}
			as desired.
		\end{proofclaim}
		
		Let $\delta\coloneqq \frac{1}{2}\left(1-\frac{4s'-r}{n}\right)$. By \cref{claim:D}, \cref{eq:blowupC4-n'}, and \cref{eq:blowupC4-S3}, each $v\in V(T)\setminus U^*$ satisfies $d_D^\pm(v)=2\delta n\pm \varepsilon_2 n=2(\delta\pm \varepsilon_2)n'$. Since by \cref{thm:blowupC4-backward-T} $D$ only contains forward edges, this implies that $D[U_i\setminus U^*, U_{i+1}\setminus U^*]$ is $(\delta, \varepsilon_2)$-almost regular for each $i\in [4]$. Thus, \cref{lm:blowupC4approxdecomp}\cref{lm:blowupC4approxdecomp-reg} holds with $\varepsilon_2$ playing the role of $\varepsilon$. Moreover, \cref{prop:almostcompleterob} (applied with $\frac{4s'-r}{n}+\varepsilon_2$ playing the role of $\varepsilon$) implies that \cref{lm:blowupC4approxdecomp}\cref{lm:blowupC4approxdecomp-rob} is satisfied.
		By \cref{eq:blowupC4-n'}, we have
		\[n-4s'=2\delta n-r\leq 2\delta n'+2\varepsilon_1n'-r\leq 2(\delta-\eta)n'.\]
		Finally, recall that $U^*$ is an $(\varepsilon_1, \cU)$-exceptional set for $T$. Thus, \cref{fact:U} implies that $U^*$ is also an $(\varepsilon_2, \cU)$-exceptional set for $T$.
		Let $\sC_{\rm approx}$ be the set of $n-4s'$ Hamilton cycles of $T$ obtained by applying \cref{lm:blowupC4approxdecomp} with $n-4s', \varepsilon_2$, and $S''$ playing the roles of $\ell, \varepsilon$, and $\{\cF_1, \dots, \cF_\ell\}$.

		\item \textbf{Absorbing the leftovers.}\label{step:leftovers}
		Finally, we decompose $H\coloneqq D\setminus \sC_{\rm approx}$
		using the robustly decomposable digraph $D^{\rm rob}$.
		Define a multidigraph $D'$ by $D'\coloneqq (H-U^*)\cup D^{\rm rob}\cup \mathcal{ESF}\cup \mathcal{ESF}'$.
		
		\begin{claim}\label{claim:D'}
			$D'\setminus (\sM\cup \sM')$ is a digraph (rather than a multidigraph) and satisfies $E(D'\setminus (\sM\cup \sM'))= E(T\setminus (S\cup S'\cup \sC_{\rm approx}))$.
		\end{claim}
		
		\begin{proofclaim}
			By \cref{thm:blowupC4-ESF-M}, \cref{thm:blowupC4-ESF-disjoint}, \cref{thm:blowupC4-ESF'-M}, and \cref{thm:blowupC4-ESF'-disjoint}, $\mathcal{ESF}\setminus \sM$ and $\mathcal{ESF}'\setminus \sM'$ are digraphs (rather than multidigraphs) and are edge-disjoint subdigraphs of $D_1'\setminus S^*\subseteq D_1'\setminus (S\cup S')$.
			By \cref{step:robustdecomplm}, $D^{\rm rob}\subseteq D_2\setminus S^*\subseteq T\setminus (D_1'\cup S\cup S')$. 
			By definition,
			\begin{equation}\label{eq:blowupC4-H}
				H= D\setminus \sC_{\rm approx}= T\setminus (S^*\cup \mathcal{ESF} \cup \mathcal{ESF}'\cup D^{\rm rob}\cup\sC_{\rm approx}).
			\end{equation}
			Thus, $\mathcal{ESF}\setminus \sM$, $\mathcal{ESF}'\setminus \sM'$, $D^{\rm rob}$, and $H-U^*$ are pairwise edge-disjoint subdigraphs of $T$. Therefore, $D'\setminus (\cM\cup \cM')$ is a digraph. Moreover, \cref{lm:blowupC4approxdecomp} implies that $S''\subseteq \sC_{\rm approx}\subseteq D\cup S''\subseteq T\setminus (S\cup S'\cup \mathcal{ESF} \cup \mathcal{ESF}'\cup D^{\rm rob})$. Thus, \cref{eq:blowupC4-H} implies that $D'\setminus (\sM\cup \sM')=T\setminus (S\cup S'\cup \sC_{\rm approx})$, as desired.
		\end{proofclaim}
		
		We are now ready to decompose $D'$.
		By \cref{lm:blowupC4approxdecomp}, $\sC_{\rm approx}$ is a set of $n-4s'$ edge-disjoint Hamilton cycles of $D\cup S''$ which altogether cover $S''$ and so \cref{claim:D} implies that each $v\in V(T)$ satisfies
		\begin{equation*}
			d_H^\pm(v)=
			\begin{cases}
				0 & \text{if }v\in U^*;\\
				r & \text{otherwise}.
			\end{cases}
		\end{equation*}
		In particular, $H-U^*$ is $r$-regular.
		Moreover, \cref{thm:blowupC4-backward-T} implies that $H$ only consists of forward edges and avoids the edges in $E=\{uv\in E(T)\mid vu\in \bigcup\cM\}$. 
		In particular, $H-U^*$ is a blow-up $C_4$ with vertex partition $\cU'$. By definition, $H\subseteq D\subseteq T\setminus D^{\rm rob}$.
		Thus, \cref{lm:cyclerobustdecomp} implies that the multidigraph $D'$ has a decomposition $\sC_{\rm rob}$ into $4s'$ edge-disjoint Hamilton cycles on $V(T)\setminus U^*$ such that each cycle in $\sC_{\rm rob}$ contains precisely one of the extended special path systems in the multidigraph $\mathcal{ESF}\cup \mathcal{ESF}'$.
		That is, there is an enumeration
		\[\{C_{\ell,h,i,j}\mid (\ell,h,i,j)\in Q\}\cup \{C_{\ell,h,i,j}'\mid (\ell,h,i,j)\in Q'\}\]
		of $\sC_{\rm rob}$ such that
		$C_{\ell,h,i,j}\cap (\mathcal{ESF}\cup \mathcal{ESF}')=ESPS_{\ell,h,i,j}$ for each $(\ell,h,i,j)\in Q$ and
		$C_{\ell',h',i',j'}'\cap (\mathcal{ESF}\cup \mathcal{ESF}')=ESPS_{\ell',h',i',j'}'$ for each $(\ell',h',i',j')\in Q'$.
		
		Recall from \cref{claim:SC-balanced} that $S=\{\cF_{\ell,h,i,j}\mid (\ell,h,i,j)\in Q\}$ and $S'=\{\cF_{\ell,h,i,j}'\mid (\ell,h,i,j)\in Q'\}$ are edge-disjoint sets of edge-disjoint special covers in $D_1'$ with respect to $U^*$. By \cref{step:robustdecomplm}, $\cM$ and $\cM'$ are multisets which consist of the complete special sequences associated to the special covers in $S$ and $S'$, respectively.
		
		For each $(\ell,h,i,j)\in Q$, define $C_{\ell,h,i,j}^*\coloneqq (C_{\ell,h,i,j}\setminus M_{\ell,h,i,j})\cup \cF_{\ell,h,i,j}$ and observe that, by \cref{thm:blowupC4-ESF-M} and \cref{fact:SC}, $C_{\ell,h,i,j}^*$ is a Hamilton cycle of $T$. For each $(\ell,h,i,j)\in Q'$, define $C_{\ell,h,i,j}''\coloneqq (C_{\ell,h,i,j}'\setminus M_{\ell,h,i,j}')\cup \cF_{\ell,h,i,j}'$ and observe that, by \cref{thm:blowupC4-ESF'-M} and \cref{fact:SC}, $C_{\ell,h,i,j}''$ is a Hamilton cycle of $T$. Let
		\[\sC_{\rm rob}'\coloneqq \{C_{\ell,h,i,j}^*\mid (\ell,h,i,j)\in Q\}\cup \{C_{\ell,h,i,j}''\mid (\ell,h,i,j)\in Q'\}.\]
		By \cref{step:backwardedges,step:approxdecomp}, $S,S'$, and $\cC_{\rm approx}$ are pairwise edge-disjoint. Thus, \cref{claim:D'} implies that $\sC_{\rm rob}'$ is a Hamilton decomposition of $(D'\setminus (\sM\cup \sM'))\cup (S\cup S')= T\setminus \sC_{\rm approx}$.
		Recall from \cref{step:approxdecomp} that $\sC_{\rm approx}$ is a set of edge-disjoint Hamilton cycles of $T$.
		Therefore, $\sC_{\rm approx}\cup \sC_{\rm rob}'$ is a Hamilton decomposition of $T$.
		This completes the proof of \cref{thm:blowupC4}.
		\qedhere
	\end{steps}
\end{proof}

\onlyinsubfile{\bibliographystyle{abbrv}
	\bibliography{Bibliography/Bibliography}}

\section{Pseudo-feasible systems}\label{sec:pseudofeasible}

	\onlyinsubfile{
		\setcounter{section}{9}
		\section{Pseudo-feasible systems}}

It remains to prove \cref{lm:main}. First, we observe that one can initially decompose the backward and exceptional edges into structures (called pseudo-feasible systems) which are slightly more general than feasible systems.

\subsection{Definitions}

We now define the concept of a \emph{placeholder} (defined formally below). Rough\-ly speaking, an edge $e$ with precisely one endpoint $v\in U^*$ is called a \emph{placeholder} if $T$ contains many edges of the same type: if $e$ is a forward in/outedge at $v$, then $e$ is a placeholder if $T$ has many forward in/outedges at $v$; similarly, if $e$ is a backward in/outedge at $v$, then $e$ is a placeholder if $T$ has many backward in/outedges at $v$. A placeholder will be used to hold the place for an edge $e'$ of the same type as $e$. A suitable $e'$ will exist since, by definition of a placeholder, there exist many edges which are of the same type as $e$.

\begin{definition}[\Gls*{placeholder}]\label{def:placeholder}
	Let $T$ be a regular bipartite tournament on $4n$ vertices. Let $\cU=(U_1, \dots, U_4)$ be an $(\varepsilon,4)$-partition for $T$ and let $U^*$ be an $(\varepsilon,\cU)$-exceptional set for $T$. For each $i\in [4]$, denote $U_i^*\coloneqq U_i\cap U^*$.
	Let $uv\in E(T)$ and denote by $i,j\in [4]$ the unique indices such that $u\in U_i$ and $v\in U_j$. We say that $uv$ is a \emph{$(\gamma,T)$-placeholder (with respect to $\cU$ and $U^*$)} if one of the following holds.
	\begin{itemize}
		\item $u\in U_i^*$, $v\in U_j\setminus U_j^*$, and $|N_T^+(u)\cap U_j|> \gamma n$.
		\item $u\in U_i\setminus U_i^*$, $v\in U_j^*$, and $|N_T^-(v)\cap U_i|> \gamma n$.
	\end{itemize}
\end{definition}

\begin{fact}\label{fact:placeholderforward}
	Let $0\leq \varepsilon\ll \gamma\leq 1$. Let $T$ be a regular bipartite tournament. Let $\cU=(U_1, \dots, U_4)$ be an $(\varepsilon,4)$-partition for $T$ and $U^*$ be an $(\varepsilon,\cU)$-exceptional set for $T$. Suppose that $e\in E(\overrightarrow{T})$ is a $(\gamma,T)$-placeholder. Then, $V(e)\cap U^{1-\gamma}(T)=\emptyset$.
\end{fact}

Recall from \cref{def:feasible} that a feasible system is a linear forest which contains an appropriate number of backward and exceptional edges. Roughly speaking, we say that $\cF$ is a \emph{pseudo-feasible system} (defined formally below) if the only obstructions to $\cF$ being a feasible system are caused by placeholders. More precisely, a pseudo-feasible system $\cF$ may not form a linear forest (that is, $\cF$ may not satisfy property \cref{def:feasible-linforest} of a feasible system), but all of the cycles in $\cF$ contain a placeholder (see \cref{def:feasible-cycle} below) and all of the excess degree can be accounted for by placeholders (see \cref{def:feasible-degV'} below). Additionally, a pseudo-feasible system $\cF$ may not cover all of the vertices in $U^*$ (that is, $\cF$ may not satisfy property \cref{def:feasible-exceptional}), but the uncovered vertices in $U^*$ have high forward degree and so the missing edges at $U^*$ are forward placeholders (see \cref{def:feasible-degV*} below).

\begin{definition}[\Gls*{pseudo-feasible system}]\label{def:pseudo}
	We say that $\cF$ is a \emph{$(\gamma, T)$-pseudo-feasible system (with respect to $\cU$ and $U^*$)} if $\cF\subseteq T$, \cref{def:feasible-backward} is satisfied, and the following hold.
	\begin{enumerate}[label=\rm(F\arabic*$'$)]
		\setcounter{enumi}{1}
		\item For each $v\in U^*$, both $d_\cF^\pm(v)\leq 1$ and, if $v\in U^{1-\gamma}(T)$, then both $d_\cF^\pm(v)=1$.\label{def:feasible-degV*}
		\item Let $v\in V(T)\setminus U^*$. Then, $\cF$ contains at most one edge which starts at $v$ and is not a $(\gamma,T)$-placeholder. Similarly, $\cF$ contains at most one edge which ends at $v$ and is not a $(\gamma,T)$-placeholder.\label{def:feasible-degV'}
		\item Each cycle in $\cF$ contains a $(\gamma,T)$-placeholder.\label{def:feasible-cycle}
	\end{enumerate}
\end{definition}

As for feasible systems (recall \cref{fact:feasibleforward}), forward edges which are not incident to $U^*$ play no role in a pseudo-feasible system. Additionally, \cref{fact:placeholderforward} implies that all of the forward placeholders can be deleted.

\begin{fact}\label{fact:pseudofeasibleforward}
	Let $0\leq \varepsilon\ll \gamma\leq 1$. Let $T$ be a regular bipartite tournament. Let $\cU=(U_1, \dots, U_4)$ be an $(\varepsilon,4)$-partition for $T$ and $U^*$ be an $(\varepsilon,\cU)$-exceptional set for $T$.
	Let $\cF$ be a $(\gamma,T)$-pseudo-feasible system and $e\in E(\overrightarrow{\cF}_\cU)$.
	If $V(e)\cap U^*=\emptyset$ or $e$ is a $(\gamma, T)$-placeholder,
	then $\cF\setminus \{e\}$ is a $(\gamma,T)$-pseudo-feasible system%
		\COMMENT{$\varepsilon\ll \gamma$ implies that $U^{1-\gamma}(T)\subseteq U^*$. Together with \cref{fact:placeholderforward}, this implies that $V(e)\cap U^{1-\gamma}(T)=\emptyset$.}.
\end{fact}

\subsection{Transforming pseudo-feasible systems into feasible systems: proof overview}\label{sec:constructfeasiblesketch}

The next \lcnamecref{lm:feasible} states that pseudo-feasible systems can be transformed into feasible systems.
The idea behind the proof of \cref{lm:feasible} is to replace the placeholders with edges of the same type to form linear forests and then add forward edges to cover $U^*\setminus U^{1-\gamma}(T)$.

More precisely, let $T$ be a regular bipartite tournament on $4n$ vertices. Let $\cU$ be an $(\varepsilon,4)$-partition for $T$ and let $U^*$ be an $(\varepsilon,\cU)$-exceptional set for $T$.
Suppose that $\cF$ is a $(\gamma, T)$-pseudo-feasible system. We can transform $\cF$ into a feasible system as follows.
Suppose that $\cF$ contains a cycle $C$. By \cref{def:feasible-cycle}, $C$ contains a $(\gamma, T)$-placeholder $e$, say $e$ is a backward edge from $u\in U^*$ to $v\notin U^*$ for instance. 
Then, by definition of a placeholder, $T$ contains many backward edges which start at $u$ and end in $V(T)\setminus U^*$. 
Therefore, we can find a backward edge $e'=uv'$ with $v'\in V(T)\setminus (U^*\cup V(\cF))$. Then, replacing $e$ by $e'$ in $\cF$ breaks the cycle $C$ without affecting \cref{def:feasible-backward} and \cref{def:feasible-degV*,def:feasible-degV',def:feasible-cycle}. Repeating this argument, we can eventually remove all cycles in $\cF$. By \cref{def:feasible-degV',def:feasible-degV*}, we can use similar arguments to ensure that $\Delta^0(\cF)\leq 1$. Then, $\cF$ satisfies \cref{def:feasible-linforest}.
Finally, we add forward edges to ensure that \cref{def:feasible-exceptional} holds as follows. 
Suppose that $x\in U^*$ satisfies $d_\cF^+(x)=0$. Then, \cref{def:feasible-degV*} implies that $x\notin U^{1-\gamma}(T)$ and so $T$ contains many forward outedges at $x$. Thus, we can find a forward edge $e''=xy$ with $y\in V(T)\setminus V(\cF)$. Then, adding $e''$ to $\cF$ does not affect \cref{def:feasible-backward,def:feasible-linforest}. Repeating this argument, $\cF$ eventually satisfies \cref{def:feasible-exceptional} and so $\cF$ becomes a feasible system.

Additionally, we will add forward edges to incorporate a given suitable set of edges $E$ (see \cref{lm:main}\cref{lm:main-backwardedges}) and to ensure that all the components of the feasible systems start in $U_1$ and end in $U_4$ (see \cref{lm:main}\cref{lm:main-endpoints}).

\begin{lm}[Transforming pseudo-feasible systems into feasible ones]\label{lm:feasible}
	Let $0<\frac{1}{n}\ll \varepsilon\ll \eta \ll \gamma\ll 1$ and $(1-\eta)n\leq r\leq n$. Let $T$ be a regular bipartite tournament on $4n$ vertices. Let $\cU=(U_1, \dots, U_4)$ be an $(\varepsilon, 4)$-partition for $T$ and $U^*$ be an $(\varepsilon, \cU)$-exceptional set for $T$.
	Suppose that $D\subseteq T$ satisfies $\delta^0(D)\geq r$.
	Let $\cF_1, \dots, \cF_r$ be edge-disjoint $(\gamma, T)$-pseudo-feasible systems satisfying the following properties.
	\begin{enumerate}
		\item $E(\overleftarrow{D}_\cU)\cup E(D[U^*])\subseteq \bigcup_{i\in [r]}E(\cF_i)\subseteq E(D)$.\label{lm:feasible-F-E}
		\item For each $i\in [r]$, $e(\cF_i)\leq \varepsilon n$.\label{lm:feasible-F-e}
	\end{enumerate}
	Let $E\subseteq E(D)$ be such that the following hold.
	\begin{enumerate}[resume]
		\item $E\subseteq E(\overrightarrow{D}_\cU-U^*)$.\label{lm:feasible-E-forward}
		\item For each $v\in V(T)\setminus U^*$, $d_E^\pm(v)\leq 1$.\label{lm:feasible-E-deg}
	\end{enumerate}
	Then, there exist edge-disjoint feasible systems $\cF_1', \dots, \cF_r'$ such that the following hold.
	\begin{enumerate}[label=\rm(\alph*)]
		\item $E(\overleftarrow{D}_\cU)\cup E\subseteq \bigcup_{i\in [r]}E(\cF_i')\subseteq E(D)$.\label{lm:feasible-E}
		\item For each $i\in [r]$, $e(\cF_i')\leq \varepsilon^{\frac{1}{3}}n$.\label{lm:feasible-e}
		\item For each $v\in V(T)\setminus U^*$, there exist at most $\varepsilon^{\frac{1}{3}}n$ indices $i\in [r]$ such that $v\in V(\cF_i')$.\label{lm:feasible-deg}
		\item For each $i\in [r]$, $V^+(\cF_i')\subseteq U_1$ and $V^-(\cF_i')\subseteq U_4$.\label{lm:feasible-endpoints}
	\end{enumerate}
\end{lm}

Note that $E(D[U^*])$ appears in \cref{lm:feasible}\cref{lm:feasible-F-E} for technical reasons (this ensures that all the edges available for transforming the pseudo-feasible systems into feasible ones do not entirely lie in the exceptional set $U^*$). On the other hand, $E(D[U^*])$ does not need to explicitly appear in \cref{lm:feasible}\cref{lm:feasible-E} since these edges will automatically be covered by definition of a feasible system. Indeed, recall from \cref{lm:main} that we aim to construct $n$ edge-disjoint feasible systems. But property \cref{def:feasible-exceptional} of a feasible system states that each exceptional vertex is covered by both an in- and an outedge, so any set of $n$ edge-disjoint feasible systems automatically covers all the edges incident to $U^*$.

In practice, the above argument needs to be carried out to all of the pseudo-feasible systems in parallel. To gain intuition, we first derive \cref{lm:main} and defer the proof of \cref{lm:feasible} to \cref{sec:constructfeasible}.

\subsection{Proof of Lemma \ref{lm:main}}

By \cref{lm:feasible}, it is enough to decompose the backward edges into pseudo-feasible systems (rather than feasible ones).
However, recall from \cref{lm:main}\cref{lm:main-H} that we require a few of the feasible systems to be constructed out of prescribed sets $H_1, \dots, H_s$ of edges of $T$. It is therefore more convenient to construct these feasible systems straight away. Thus, it is most convenient to prove the following pseudo-feasible system analogue of \cref{lm:main}.
(The proof of \cref{lm:backwardedges}, as well as a detailed proof overview, can be found in \cref{sec:constructpseudofeasible}.)

\begin{lm}[Decomposing the backward and exceptional edges into pseudo-feasible systems]\label{lm:backwardedges}
	Let $0<\frac{1}{n}\ll \varepsilon\ll \eta \ll\gamma \ll 1$ and $s\in \mathbb{N}$. Let $T$ be a regular bipartite tournament on $4n$ vertices. Let $\cU=(U_1, \dots, U_4)$ be an optimal $(\varepsilon, 4)$-partition for $T$ and $U^*$ be an $(\varepsilon, \cU)$-exceptional set for $T$.
	Suppose that, for each $i\in [s]$, $H_i\subseteq T$ satisfies \cref{lm:main}\cref{lm:main-H-degree,lm:main-H-degreeV**,lm:main-H-degreeV*,lm:main-H-edges}.
	For each $i\in [s]$, let $s_i\in \mathbb{N}$ and $t_i\coloneqq \sum_{j\in [i-1]} s_j$. Let $t\coloneqq \sum_{i\in [s]} s_i$ and suppose that $t\leq \eta n$.	
	Then, there exist edge-disjoint $(\gamma,T)$-pseudo-feasible systems $\cF_1, \dots, \cF_n$ for which the following hold.
	\begin{enumerate}[label=\rm(\alph*),ref=(\alph*)]
		\item $E(\overleftarrow{T}_\cU)\cup E(T[U^*])\subseteq \bigcup_{i\in [n]}E(\cF_i)\subseteq E(T)$.\label{lm:backwardedges-backwardedges}
		\item For each $i\in [n]$, $e(\cF_i)\leq \sqrt{\varepsilon}n$.\label{lm:backwardedges-size}
		\item For each $i\in [t]$, $\cF_i$ is a feasible system with $V^0(\cF_i)=U^*$.\label{lm:backwardedges-feasible}
		\item For each $i\in [s]$ and $j\in [s_i]$, $\cF_{t_i+j}\subseteq H_i$\label{lm:backwardedges-H}.
	\end{enumerate}
\end{lm}

\begin{proof}[Proof of \cref{lm:main}]
	Let $\cF_1^*, \dots, \cF_n^*$ be the $(\gamma, T)$-pseudo-feasible systems obtained by applying \cref{lm:backwardedges}. For each $i\in [t]$, let $\cF_i\coloneqq \cF_i^*\setminus E$. By \cref{lm:main-E-forward}, \cref{fact:feasibleforward}, and \cref{lm:backwardedges}\cref{lm:backwardedges-feasible,lm:backwardedges-H}, $\cF_1, \dots, \cF_t$ are feasible systems which satisfy \cref{lm:main-V0,lm:main-H}.
	
	We transform $\cF_{t+1}^*, \dots, \cF_{n}^*$ into feasible systems using \cref{lm:feasible} as follows.
	Let $r\coloneqq n-t$ and $D\coloneqq T\setminus \bigcup_{i\in [t]}\cF_i$. By \cref{def:feasible-linforest}, $\delta^0(D)\geq r$ and, by \cref{lm:backwardedges}\cref{lm:backwardedges-backwardedges,lm:backwardedges-size}, \cref{lm:feasible}\cref{lm:feasible-F-E,lm:feasible-F-e} hold with $\cF_{t+1}^*, \dots, \cF_{n}^*$, and $\sqrt{\varepsilon}$ playing the roles of $\cF_1, \dots, \cF_r$, and $\varepsilon$.
	By construction, $E\subseteq E(D)$ and so \cref{lm:feasible}\cref{lm:feasible-E-deg,lm:feasible-E-forward} follow from \cref{lm:main-E-forward,lm:main-E-deg}. 
	Let $\cF_{t+1}, \dots, \cF_n$ be the feasible systems obtained by applying \cref{lm:feasible} with $\cF_{t+1}^*, \dots, \cF_n^*$, and $\sqrt{\varepsilon}$ playing the roles of $\cF_1, \dots, \cF_r$, and $\varepsilon$. Then, \cref{lm:main-backwardedges} follows from \cref{lm:feasible}\cref{lm:feasible-E} and \cref{lm:backwardedges}\cref{lm:backwardedges-backwardedges}, while \cref{lm:main-size} follows from \cref{lm:feasible}\cref{lm:feasible-e} and \cref{lm:backwardedges}\cref{lm:backwardedges-size}. Finally, \cref{lm:main-endpoints} holds by \cref{lm:feasible}\cref{lm:feasible-endpoints}.
\end{proof}

\section{Transforming pseudo-feasible systems into feasible systems: proof of Lemma~\ref{lm:feasible}}\label{sec:constructfeasible}

	\onlyinsubfile{
		\setcounter{section}{10}
\section{Proof of Lemma \ref{lm:feasible}}}

We proceed as described in \cref{sec:constructfeasiblesketch}. First, we redistribute all the placeholders contained in $\cF_1, \dots, \cF_r$ to break all the cycles and reduce the maximum semidegree to~$1$ (\cref{lm:redistribute}). Then, we add some forward edges to cover $U^*$ and thus form feasible systems (\cref{lm:coverU*}). Next, we incorporate the set $E$ of prescribed edges (\cref{lm:incorporateE}). Finally, we add some additional forward edges to ensure that each component of the feasible systems have its endpoints in the desired vertex classes (\cref{cor:endpoints}).

\subsection{Extending linear forests}

As mentioned above, the feasible systems will be constructed in stages. At each stage, we will consider linear forests and need to extend them in a prescribed way (e.g.\ in \cref{lm:coverU*} we will need to cover precisely the uncovered vertices in $U^*$). Most of the time, this will be done using the next \lcnamecref{lm:extendlinforest}.

Roughly speaking, \cref{lm:extendlinforest} states that a sufficiently dense bipartite digraph $D$ on vertex classes $A$ and $B$ contains edge-disjoint linear forests $\cQ_1, \dots, \cQ_\ell$, where each $\cQ_i$ covers a prescribed set $S_i^+\subseteq A$ with outedges, covers a prescribed set $S_i^-\subseteq A$ with inedges, and avoids a prescribed set $T_i\subseteq B$ (see \cref{lm:extendlinforest}\cref{lm:extendlinforest-cover} below). Moreover, these linear forests can be constructed in such a way that every vertex of $B$ is not covered by too many of the linear forests (see \cref{lm:extendlinforest}\cref{lm:extendlinforest-N}) and is adjacent to at most one edge in each linear forest (see \cref{lm:extendlinforest}\cref{lm:extendlinforest-degree1}).

In general, given linear forests $\cF_1, \dots, \cF_\ell$ that we want to extend, we will apply \cref{lm:extendlinforest} with $T_i=V(E(\cF_i))\cap B$ for each $i\in [\ell]$. This will ensure that the linear forests $\cQ_1, \dots, \cQ_\ell$ guaranteed by \cref{lm:extendlinforest} can be incorporated into $\cF_1, \dots, \cF_\ell$ to form larger linear forests. For each $i\in [\ell]$, the sets $S_i^+$ and $S_i^-$ will correspond to the sets of vertices that need to be covered in $\cF_i$ with out- and inedges (e.g.\ in the proof of \cref{lm:coverU*} we will apply this to the vertices of $U^*$ which are not yet covered with out- and inedges by $\cF_i$). 

Note that \cref{lm:extendlinforest}\cref{lm:extendlinforest-degree1} will not be used until \cref{sec:specialpseudofeasible}.

\begin{lm}\label{lm:extendlinforest}
	Let $D$ be a bipartite digraph on vertex classes $A$ and $B$.
	For each $i\in [\ell]$, let $S_i^+, S_i^-\subseteq A$ and $T_i\subseteq B$.
	For each $v\in B$, let $n_v$ denote the number of indices $i\in [\ell]$ such that $v\in T_i$.
	Let $1\leq N\leq 2|A|$.
	For each $i\in [\ell]$, $\diamond\in \{+,-\}$, and $v\in S_i^\diamond$,
	denote
	\[c_{i,\diamond, v}\coloneqq \max\{|\{i'\in [\ell]\mid v\in S_{i'}^\diamond\}|, 2(|S_i^+|+|S_i^-|+|T_i|), 2(\max_{w\in B}n_w+N)\}\]		
	and suppose that
	\begin{equation}\label{eq:extendlinforest}
		d_D^\diamond(v)\geq
		\begin{cases}
			c_{i,\diamond, v} & \text{if }N=2|A|;\\
			\frac{1}{\lfloor N\rfloor}\sum_{i'\in [\ell]}(|S_{i'}^+|+|S_{i'}^-|)+c_{i,\diamond, v} & \text{if }N<2|A|.\\
		\end{cases}
	\end{equation}
	Then, $D$ contains edge-disjoint linear forests $\cQ_1, \dots, \cQ_\ell$ such that the following hold.
	\begin{enumerate}[label=\rm(\alph*)]
		\item For each $i\in [\ell]$, $\cQ_i$ consists of a matching of $D(B\setminus T_i, S_i^-)$ of size $|S_i^-|$ and a matching of $D(S_i^+,B\setminus T_i)$ of size $|S_i^+|$.\label{lm:extendlinforest-cover}
		\item For each $v\in B$, there exists at most $N$ indices $i\in [\ell]$ such that $v\in V(\cQ_i)$.\label{lm:extendlinforest-N}
		\item For each $i\in [\ell]$ and $v\in B$, we have $d_{\cQ_i}(v)\leq 1$ (i.e.\ the two matchings in \cref{lm:extendlinforest-cover} do not intersect in $B$).\label{lm:extendlinforest-degree1}
	\end{enumerate}
	In particular, if $\cF_1, \dots, \cF_\ell$ are linear forests which are edge-disjoint from each other and from $D$ such that both 
	\[S_i^\pm \cap V(\cF_i)\subseteq V^\mp(\cF_i) \quad \text{and} \quad V(E(\cF_i))\cap B\subseteq T_i\]
	for each $i\in [\ell]$, then $\cF_1\cup \cQ_1, \dots, \cF_\ell\cup \cQ_\ell$ are edge-disjoint linear forests.
\end{lm}

\begin{proof}
	Note that the ``in particular part" follows immediately from \cref{lm:extendlinforest-cover,lm:extendlinforest-degree1}. Thus, it suffices to construct edge-disjoint linear forests $\cQ_1, \dots, \cQ_\ell$ which satisfy \cref{lm:extendlinforest-N,lm:extendlinforest-cover,lm:extendlinforest-degree1}. 
	Let $S^\pm \coloneqq \bigcup_{i\in [\ell]}S_i^\pm$ and denote $S\coloneqq \{(+,v)\mid v\in S^+\}\cup \{(-,v)\mid v\in S^-\}$. We will consider each tuple $(\diamond, v)\in S$ in turn and, at each stage, choose all the edges corresponding to the current tuple $(\diamond, v)\in S$ (that is, all the outedges at $v$ if $\diamond=+$ and all the inedges at $v$ if $\diamond=-$).
	
	Suppose inductively that, for some $0\leq k\leq |S|$, there exist $S^k\subseteq S$ of size $k$ and edge-disjoint linear forests $\cQ_1^k, \dots, \cQ_\ell^k$ such that the following hold, where $S^{\pm, k}\coloneqq \{v \mid (\pm, v)\in S^k\}$.
	\begin{enumerate}[label=(\greek*)]
		\item For each $i\in [\ell]$, $\cQ_i^k$ consists of a matching of $D(B\setminus T_i, S_i^-\cap S^{-,k})$ of size $|S_i^-\cap S^{-,k}|$ and a matching of $D(S_i^+\cap S^{+,k},B\setminus T_i)$ of size $|S_i^+\cap S^{+,k}|$.\label{lm:extendlinforest-IH-cover}
		\item For each $v\in B$, there exist at most $N$ indices $i\in [\ell]$ such that $v\in V(\cQ_i^k)$.\label{lm:extendlinforest-IH-N}
		\item For each $i\in [\ell]$ and $v\in B$, we have $d_{\cQ_i^k}(v)\leq 1$ (i.e.\ the two matchings in \cref{lm:extendlinforest-IH-cover} do not intersect in $B$).\label{lm:extendlinforest-IH-degree1}
	\end{enumerate}
	First, suppose that $k=|S|$. Let $\cQ_i\coloneqq \cQ_i^k$ for each $i\in [\ell]$. Then, \cref{lm:extendlinforest-N,lm:extendlinforest-cover,lm:extendlinforest-degree1} follow from \cref{lm:extendlinforest-IH-N,lm:extendlinforest-IH-cover,lm:extendlinforest-IH-degree1}.
	
	We may therefore assume that $k<|S|$. Let $(\diamond, v)\in S\setminus S^k$ and define $S^{k+1}\coloneqq S^k\cup \{(\diamond,v)\}$.
	Let $X$ be the set of vertices $w\in B$ such that there exist $\lfloor N\rfloor $ indices $i\in [\ell]$ such that $w\in V(\cQ_i^k)$ (so $X$ is the set of vertices of $B$ that cannot be used anymore). Let $Y\coloneqq \{i\in [\ell]\mid v\in S_i^\diamond\}$ (so $Y$ lists the $\cQ_i^k$'s to which we need to add an edge incident to $v$ in this step). 
	
	\begin{claim}
		For each $i\in Y$, the following hold.
		\begin{enumerate}[label=\rm(\Roman*)]
			\item $d_D^\diamond(v)\geq |X|+|Y|$.\label{cor:extendlinforest-delta1}
			\item $d_D^\diamond(v)\geq |X|+2(|S_i^+|+|S_i^-|+|T_i|)$.\label{cor:extendlinforest-delta2}
			\item $d_D^\diamond(v)\geq |X|+2(\max_{w\in B}n_w+N)$.\label{cor:extendlinforest-delta3}
		\end{enumerate}
	\end{claim}
	
	\begin{proofclaim}
		If $N= 2|A|$, then $N\geq|S|>k$ and so $X=\emptyset$. Thus, \cref{cor:extendlinforest-delta1,cor:extendlinforest-delta2,cor:extendlinforest-delta3} follow immediately from \cref{eq:extendlinforest}.
		We may therefore assume that $N<2|A|$.
		Since $D$ is a bipartite graph on vertex classes $A$ and $B$, we have
		\begin{equation*}\label{eq:extendlinforest-X}
			|X|\leq\frac{\sum_{i\in [\ell]}e(\cQ_i^k)}{\lfloor N\rfloor}\stackrel{\text{\cref{lm:extendlinforest-IH-cover}}}{\leq}\frac{1}{\lfloor N\rfloor}\sum_{i'\in [\ell]}(|S_{i'}^+|+|S_{i'}^-|).
		\end{equation*}
		Therefore, \cref{cor:extendlinforest-delta1,cor:extendlinforest-delta2,cor:extendlinforest-delta3} follow from \cref{eq:extendlinforest}.
	\end{proofclaim}
	
	If $\diamond=+$, then let $Z\coloneqq \{vw\in E(D)\mid w\notin X\}$; otherwise, let $Z\coloneqq \{wv\in E(D)\mid w\notin X\}$ (so $Z$ consists of the edges of $D$ that we may use to extend $\cQ_1^k, \dots, \cQ_\ell^k$ in this step). 
	Let $G$ be the auxiliary bipartite graph on vertex classes $Y$ and $Z$ defined as follows.
	For each $i\in Y$ and $e\in Z$, $ie\in E(G)$ if and only if $V(e)\cap B\cap (V(\cQ_i^k)\cup T_i)=\emptyset$.
	Note that
	\begin{align}
		|Z|&\geq d_D^\diamond(v)-|X|\label{eq:cor:extendlinforest-Z}\\
		&\stackrel{\text{\cref{cor:extendlinforest-delta1}}}{\geq} |Y|.\nonumber
	\end{align} 
	Then, each $i\in Y$ satisfies
	\begin{align*}
		d_G(i)\geq |Z|-|V(\cQ_i^k)\cap B|-|T_i|
		\stackrel{\text{\cref{lm:extendlinforest-IH-cover}}}{\geq}|Z|-(|S_i^+|+|S_i^-|)-|T_i|
		\stackrel{\text{\cref{cor:extendlinforest-delta2}}}{\geq}|Z|-\frac{d_D^\diamond(v)-|X|}{2}\stackrel{\text{\cref{eq:cor:extendlinforest-Z}}}{\geq}\frac{|Z|}{2}.
	\end{align*}
	Let $e\in Z$ and denote by $w$ the (unique) vertex $w\in V(e)\cap B$. Let $n_w'$ be the number of indices $i\in [\ell]$ such that $w\in V(\cQ_i^k)\cup T_i$. Then,
	\begin{align*}
		d_G(e)&\geq |Y|-n_w'\stackrel{\text{\cref{lm:extendlinforest-IH-N}}}{\geq}|Y|-(n_w+N)
		\stackrel{\text{\cref{cor:extendlinforest-delta3}}}{\geq} |Y|-\frac{d_D^\diamond(v)-|X|}{2}\stackrel{\text{\cref{eq:cor:extendlinforest-Z}}}{\geq}|Y|-\frac{|Z|}{2}.
	\end{align*}
	Apply \cref{prop:Hall} with $Y$ and $Z$ playing the roles of $A$ and $B$ to obtain a matching $M$ of $G$ which covers $Y$.
	For each $i\in Y$, let $e_i$ denote the (unique) neighbour of $i$ in $M$ and let $\cQ_i^{k+1}\coloneqq \cQ_i^k\cup \{e_i\}$. For each $i\in [\ell]\setminus Y$, let $\cQ_i^{k+1}\coloneqq \cQ_i^k$.
	
	Since $M$ is a matching, $\cQ_1^{k+1}, \dots, \cQ_\ell^{k+1}$ are pairwise edge-disjoint. 
	Moreover, the definition of $G$ and the induction hypothesis imply that \cref{lm:extendlinforest-IH-degree1} holds with $k+1$ playing the role of $k$ and $\cQ_i^{k+1}$ is a linear forest for each $i\in [\ell]$.
	By definition of $Y$ and $G$ and the induction hypothesis, \cref{lm:extendlinforest-IH-cover} holds with $k+1$ playing the role of $k$. Finally, 
	\cref{lm:extendlinforest-IH-N} holds with $k+1$ playing the role of $k$ by definition of $X$ and $Z$ and the induction hypothesis.
\end{proof}

\subsection{Proof of Lemma~\ref{lm:feasible}}

We are now ready to prove \cref{lm:feasible}, which states that edge-disjoint pseudo-feasible systems can be transformed into edge-disjoint feasible systems.
As discussed at the start of \cref{sec:constructfeasible}, we spilt the proof into several lemmas. First, we redistribute placeholders to break all the cycles and reduce the maximum semidegree to $1$.

\begin{lm}[Redistributing placeholders]\label{lm:redistribute}
	Let $0<\frac{1}{n}\ll \varepsilon\ll \eta \ll \gamma\ll 1$ and $(1-\eta)n\leq r\leq n$. Let $T$ be a regular bipartite tournament on $4n$ vertices. Let $\cU=(U_1, \dots, U_4)$ be an $(\varepsilon, 4)$-partition for $T$ and $U^*$ be an $(\varepsilon, \cU)$-exceptional set for $T$.
	Suppose that $D\subseteq T$ satisfies $\delta^0(D)\geq r$.
	Let $\cF_1, \dots, \cF_r$ be edge-disjoint $(\gamma, T)$-pseudo-feasible systems which satisfy the following properties.
	\begin{enumerate}
		\item $E(\overleftarrow{D}_\cU)\cup E(D[U^*])\subseteq \bigcup_{i\in [r]}E(\cF_i)\subseteq E(D)$.\label{lm:redistribute-E}
		\item For each $i\in [r]$, $e(\cF_i)\leq \varepsilon n$.\label{lm:redistribute-e}
		\item Suppose that $e\in \bigcup_{i\in [r]}E(\cF_i)$ is a forward edge. Then, $V(e)\cap U^*\neq \emptyset$ and $e$ is not a $(\gamma,T)$-placeholder.\label{lm:redistribute-forward}
	\end{enumerate}
	Then, there exist edge-disjoint $(\gamma, T)$-pseudo-feasible systems $\cF_1', \dots, \cF_r'$ such that the following hold.
	\begin{enumerate}[label=\rm(\alph*)]
		\item $\bigcup_{i\in [r]}E(\cF_i')=\bigcup_{i\in [r]}E(\cF_i)$. In particular, $E(\overleftarrow{D}_\cU)\cup E(D[U^*])\subseteq \bigcup_{i\in [r]}E(\cF_i')\subseteq E(D)$.\label{lm:redistribute-redistribute}
		\item For each $i\in [r]$, $e(\cF_i')= e(\cF_i)\leq \varepsilon n$.\label{lm:redistribute-size}
		\item For each $i\in [r]$, $\cF_i'$ is a linear forest.\label{lm:redistribute-linforest}
	\end{enumerate}
\end{lm}

\begin{proof}
	Let $E$ be the set of $(\gamma,T)$-placeholders contained in $\bigcup_{i\in [\ell]}E(\cF_i)$.
	For each $i\in [r]$,
	let $\widetilde{\cF}_i$ be obtained from $\cF_i\setminus E$ by removing all isolated vertices%
		\COMMENT{Why needed?}	
	and denote $E_i\coloneqq E\cap E(\cF_i)$. Note that, by \cref{def:feasible-cycle,def:feasible-degV',def:feasible-degV*}, $\widetilde{\cF}_1, \dots, \widetilde{\cF}_r$ are linear forests. Thus, we may assume without loss of generality that $E\neq \emptyset$ and so, by \cref{def:placeholder}, $U^*\neq \emptyset$%
		\COMMENT{Needed for $N\geq 1$.}.
	
	We will redistribute the placeholders in $E$ into the linear forests $\tF_1, \dots, \tF_r$ using \cref{lm:extendlinforest}. More precisely, we will add, for each $v\in U^*$ and $i\in [r]$, an in/outedge at $v$ from $E$ to $\tF_i$ if and only if $\cF_i$ contains a placeholder which is an in/outedge at $v$.
		
	Let $A\coloneqq U^*$ and $B\coloneqq V(T)\setminus U^*$.
	Let $D'$ be the digraph on $V(T)$ defined by $E(D')\coloneqq E$. By \cref{def:placeholder}, $D'$ is a bipartite digraph on vertex classes $A$ and $B$.
	For each $i\in [r]$, let $S_i^+,S_i^-\subseteq U^*$ be the sets of vertices which are incident to an out/inedge in $E_i$, respectively (so $S_i^+$ and $S_i^-$ list the vertices in $U^*$ which we need to cover with an out/inedge from $E$) and define $T_i\coloneqq V(E(\widetilde{\cF}_i))\cap B$.
	Note for later that \cref{def:feasible-degV*} implies that both
	\begin{equation}\label{eq:feasible-ST}
		S_i^\pm\cap V(\widetilde{\cF}_i)\subseteq  V^\mp(\widetilde{\cF}_i) \quad \text{and} \quad V(E(\widetilde{\cF}_i))\cap B\subseteq T_i
	\end{equation}
	for each $i\in [r]$.
	Define $N\coloneqq 2|U^*|$.
	For each $v\in B$, let $n_v$ denote the number of indices $i\in [r]$ such that $v\in T_i$.
	
	We verify that \cref{eq:extendlinforest} holds with $D'$ and $r$ playing the roles of $D$ and $\ell$.
	By \cref{def:placeholder}, each $i\in [r]$ satisfies
	\begin{align}
		|S_i^+|+|S_i^-|&\stackrel{\text{\eqmakebox[feasibleS]{}}}{=}|E_i|\label{eq:feasible-Siexact}\\
		&\stackrel{\text{\eqmakebox[feasibleS]{\text{\cref{def:feasible-degV*}}}}}{\leq} 2|U^*|\stackrel{\text{\cref{def:ES-size}}}{\leq} 8\varepsilon n\label{eq:feasible-Si}
	\end{align}
	and
	\begin{equation}\label{eq:feasible-Ti}
		|T_i|\leq |V(E(\cF_i))|\stackrel{\text{\cref{lm:redistribute-e}}}{\leq} 2\varepsilon n.
	\end{equation}
	Moreover, each $v\in B=V(T)\setminus U^*$ satisfies
	\begin{align}\label{eq:feasible-nv}
		n_v\leq \sum_{i\in [r]}d_{\cF_i}(v)\stackrel{\text{\cref{lm:redistribute-E},\cref{lm:redistribute-forward}}}{\leq} \overleftarrow{d}_{D,\cU}(v)+|\overrightarrow{N}_{D,\cU}(v)\cap U^*|\stackrel{\text{\cref{def:ES-backward}}}{\leq}2\varepsilon n+|U^*|\stackrel{\text{\cref{def:ES-size}}}{\leq} 6\varepsilon n.
	\end{align}
	Therefore, each $v\in U^\gamma(T)\subseteq U^*$%
		\COMMENT{This holds by \cref{def:ES-backward}.}
	satisfies 
	\begin{align}
		d_{D'}^\pm(v)&\stackrel{\text{\eqmakebox[feasibledeg]{}}}{=}d_E^\pm(v)=|\{i\in [r]\mid v\in S_i^\pm\}|\label{eq:feasible-redistribute}\\
		&\stackrel{\text{\eqmakebox[feasibledeg]{\text{\cref{def:placeholder},\cref{lm:redistribute-E}}}}}{=} \overleftarrow{d}_{D,\cU}^\pm(v)-|\overleftarrow{N}_{D,\cU}^\pm(v)\cap U^*|\nonumber\\
		&\stackrel{\text{\eqmakebox[feasibledeg]{}}}{\geq}
		\overleftarrow{d}_{T,\cU}^\pm(v)-\Delta^0(T\setminus D)-|\overleftarrow{N}_{T,\cU}^\pm(v)\cap U^*|\nonumber\\
		&\stackrel{\text{\eqmakebox[feasibledeg]{}}}{\geq} \gamma n-(n-r)-|U^*|\nonumber\\
		&\stackrel{\text{\eqmakebox[feasibledeg]{\text{\cref{def:ES-size},\cref{eq:feasible-Si,eq:feasible-Ti,eq:feasible-nv}}}}}{\geq} 2\max_{i\in [r]}(|S_i^+|+|S_i^-|+|T_i|)+2(\max_{w\in B}n_w+N).\label{eq:feasible-deg}
	\end{align}
	By \cref{lm:redistribute-forward}, all the edges in $E$ are backward edges and so \cref{def:placeholder} implies that each edge in $E$ is incident to a vertex in $U^\gamma(T)\subseteq U^*$.
	Thus, $S_i^+\cup S_i^-\subseteq U^\gamma(T)$ for each $i\in [r]$ and so \cref{eq:extendlinforest} follows from \cref{eq:feasible-deg,eq:feasible-redistribute}.
	
	Let $\cQ_1, \dots, \cQ_r$ be the edge-disjoint linear forests obtained by applying \cref{lm:extendlinforest} with $D'$ and $r$ playing the roles of $D$ and $\ell$.
	For each $i\in [r]$, denote $\cF_i'\coloneqq \widetilde{\cF}_i\cup \cQ_i$. We claim that $\cF_1', \dots, \cF_r'$ are edge-disjoint $(\gamma,T)$-pseudo-feasible systems which satisfy \cref{lm:redistribute-redistribute,lm:redistribute-size,lm:redistribute-linforest}.
	By construction, $\widetilde{\cF}_1, \dots, \widetilde{\cF}_r$ are edge-disjoint from each other and from $D'$. Thus, \cref{eq:feasible-ST} and the ``in particular part" of \cref{lm:extendlinforest} imply that $\cF_1', \dots, \cF_r'$ are edge-disjoint linear forests. In particular, \cref{lm:redistribute-linforest}, \cref{def:feasible-degV'}, and \cref{def:feasible-cycle} are satisfied.
	
	By \cref{lm:extendlinforest}\cref{lm:extendlinforest-cover}, $\bigcup_{i\in [r]}E(\cF_i')\subseteq \bigcup_{i\in [r]}E(\widetilde{\cF}_i)\cup E=\bigcup_{i\in [r]}E(\cF_i)$.
	Moreover, 
	\[\sum_{i\in [r]}e(\cF_i'\setminus \widetilde{\cF}_i)\stackrel{\text{\cref{lm:extendlinforest}\cref{lm:extendlinforest-cover}}}{=}\sum_{i\in [r]}(|S_i^+|+|S_i^-|)\stackrel{\text{\cref{eq:feasible-Siexact}}}{=}\sum_{i\in [r]}e(\cF_i\setminus \widetilde{\cF}_i).\]
	Thus, \cref{lm:redistribute-redistribute} is satisfied. 
	For each $i\in [r]$,
	\[e(\cF_i')\stackrel{\text{\cref{lm:extendlinforest}\cref{lm:extendlinforest-cover}}}{=}e(\widetilde{\cF}_i)+|S_i^+|+|S_i^-|\stackrel{\text{\cref{eq:feasible-Siexact}}}{=}e(\cF_i)\stackrel{\text{\cref{lm:redistribute-e}}}{\leq} \varepsilon n,\]
	so \cref{lm:redistribute-size} is satisfied.	
	Let $j\in [r]$. By \cref{lm:extendlinforest}\cref{lm:extendlinforest-cover} and definition of $S_j^\pm$, each $v\in U^*$ satisfies
	\begin{equation}\label{eq:redistribute-placeholder}
		\overleftarrow{d}_{\cF_j',\cU}^\pm(v)=\overleftarrow{d}_{\cF_j,\cU}^\pm(v).
	\end{equation}	
	Thus, \cref{def:feasible-degV*} follows from the fact that $\cF_j$ is a $(\gamma,T)$-pseudo-feasible system. Recall that $E(\cF_j'\setminus\widetilde{\cF}_j)\cup E(\cF_j\setminus \widetilde{\cF}_j)\subseteq E$ and so, by \cref{def:placeholder} and \cref{lm:redistribute-forward},  $E(\cF_j'\setminus\widetilde{\cF}_j)\cup E(\cF_j\setminus \widetilde{\cF}_j)$ is a set of backward edges which have one endpoint in $U^\gamma(T)\subseteq U^*$ and one endpoint in $V(T)\setminus U^*$.
	Thus, the following holds for each $i\in [4]$.
	\begin{align*}
		e_{\cF_j'}(U_i, U_{i-1})
		&\stackrel{\text{\eqmakebox[feasiblebackward]{}}}{=}e_{\widetilde{\cF}_j}(U_i, U_{i-1})+\sum_{v\in U_i^\gamma(T)}\left(\overleftarrow{d}_{\cF_j',\cU}^+(v)-\overleftarrow{d}_{\widetilde{\cF}_j,\cU}^+(v)\right)\\
		&\qquad\qquad+\sum_{v\in U_{i-1}^\gamma(T)}\left(\overleftarrow{d}_{\cF_j',\cU}^-(v)-\overleftarrow{d}_{\widetilde{\cF}_j,\cU}^-(v)\right)\\
		&\stackrel{\text{\eqmakebox[feasiblebackward]{\text{\cref{eq:redistribute-placeholder}}}}}{=}e_{\widetilde{\cF}_j}(U_i, U_{i-1})+\sum_{v\in U_i^\gamma(T)}\left(\overleftarrow{d}_{\cF_j,\cU}^+(v)-\overleftarrow{d}_{\widetilde{\cF}_j,\cU}^+(v)\right)\\
		&\qquad\qquad+\sum_{v\in U_{i-1}^\gamma(T)}\left(\overleftarrow{d}_{\cF_j,\cU}^-(v)-\overleftarrow{d}_{\widetilde{\cF}_j,\cU}^-(v)\right)\\
		&\stackrel{\text{\eqmakebox[feasiblebackward]{}}}{=}e_{\cF_j}(U_i, U_{i-1}).
	\end{align*}
	Thus, \cref{def:feasible-backward} follows from the fact that $\cF_j$ is a $(\gamma, T)$-pseudo-feasible system. Therefore, $\cF_j'$ is a $(\gamma, T)$-pseudo-feasible system, as desired.
\end{proof}

\begin{lm}[Covering $U^*$]\label{lm:coverU*}
	Let $0<\frac{1}{n}\ll \varepsilon\ll \eta \ll \gamma\ll 1$ and $(1-\eta)n\leq r\leq n$. Let $T$ be a regular bipartite tournament on $4n$ vertices. Let $\cU=(U_1, \dots, U_4)$ be an $(\varepsilon, 4)$-partition for $T$ and $U^*$ be an $(\varepsilon, \cU)$-exceptional set for $T$.
	Suppose that $D\subseteq T$ satisfies $\delta^0(D)\geq r$.
	Let $\cF_1, \dots, \cF_r$ be edge-disjoint $(\gamma, T)$-pseudo-feasible systems which satisfy the following.
	\begin{enumerate}
		\item $E(\overleftarrow{D}_\cU)\cup E(D[U^*])\subseteq \bigcup_{i\in [r]}E(\cF_i)\subseteq E(D)$.\label{lm:coverU*-E'}
		\item For each $i\in [r]$, $e(\cF_i)\leq \varepsilon n$.\label{lm:coverU*-e}
		\item Suppose that $e\in \bigcup_{i\in [r]}E(\cF_i)$ is a forward edge. Then, $V(e)\cap U^*\neq \emptyset$ and $e$ is not a $(\gamma,T)$-placeholder.\label{lm:coverU*-forward}
		\item For each $i\in [r]$, $\cF_i$ is a linear forest.\label{lm:coverU*-linforest}
	\end{enumerate}
	Then, there exist edge-disjoint feasible systems $\cF_1', \dots, \cF_r'$ such that the following hold.
	\begin{enumerate}[label=\rm(\alph*)]
		\item $\bigcup_{i\in [r]}E(\cF_i)\subseteq \bigcup_{i\in [r]}E(\cF_i')\subseteq E(D)$. In particular, $E(\overleftarrow{D}_\cU)\cup E(D[U^*])\subseteq \bigcup_{i\in [r]}E(\cF_i')$ $\subseteq E(D)$.\label{lm:coverU*-E}
		\item For each $i\in [r]$, $e(\cF_i')\leq 9\varepsilon n$.\label{lm:coverU*-size}
		\item For each $v\in V(T)\setminus U^*$, there exist at most $6\varepsilon n$ indices $i\in [r]$ such that $v\in V(\cF_i')$.\label{lm:coverU*-deg}
	\end{enumerate}
\end{lm}

\begin{proof}
	First, note that we may assume without loss of generality that $U^*\neq \emptyset$. Indeed, if $U^*=\emptyset$, then \cref{def:feasible-exceptional} holds automatically and so $\cF_1, \dots, \cF_r$ are already feasible systems.
	
	We extend the linear forests $\cF_1, \dots, \cF_r$ into larger linear forests which cover $U^*$ (and so satisfy \cref{def:feasible-exceptional}) using \cref{lm:extendlinforest}.
	Let $A\coloneqq U^*$ and $B\coloneqq V(T)\setminus U^*$. Let $D'$ be the bipartite digraph on vertex classes $A$ and $B$ induced by $\overrightarrow{D}_\cU-U^{1-\gamma}(T)$%
		\COMMENT{I.e.\ $E(D')$ consists of all the edges of $\overrightarrow{D}_\cU-U^{1-\gamma}(T)$ which have one endpoint in $A$ and one endpoint in $B$.}.
	Note for later that $E(D')$ is a set of $(\gamma, T)$-placeholders, so \cref{lm:coverU*-forward} implies that
	\begin{equation}\label{eq:coverU*-D'}
		E(D')\cap \bigcup_{i\in [r]}E(\cF_i)=\emptyset.
	\end{equation}
	For each $i\in [r]$, let $S_i^\pm$ be the set of vertices $v\in U^*$ which satisfy $d_{\cF_i}^\pm(v)=0$ (so $S_i^+$ and $S_i^-$ list the vertices in $U^*$ which are not yet covered with an out/inedge in $\cF_i$) and define $T_i\coloneqq V(E(\cF_i))\cap B$. 
	Note for later that both
	\begin{equation}\label{eq:coverU*-ST}
		S_i^\pm\cap V(\cF_i)\subseteq V^\mp(\cF_i) \quad \text{and} \quad V(E(\cF_i))\cap B\subseteq T_i
	\end{equation}
	for each $i\in [r]$.
	Define $N\coloneqq 2|U^*|$.
	For each $v\in B$, let $n_v$ denote the number of indices $i\in [\ell]$ such that $v\in T_i$.
	
	We verify that \cref{eq:extendlinforest} holds with $D'$ and $r$ playing the roles of $D$ and $\ell$.
	For each $i\in [r]$, we have
	\begin{equation}\label{eq:coverU*-S}
		|S_i^+|+|S_i^-|\leq 2|U^*|\stackrel{\text{\cref{def:ES-size}}}{\leq} 8\varepsilon n
	\end{equation}
	and
	\begin{equation}\label{eq:coverU*-T}
		|T_i|\leq |V(E(\cF_i))|\stackrel{\text{\cref{lm:coverU*-e}}}{\leq}2\varepsilon n.
	\end{equation}
	Moreover, each $v\in B=V(T)\setminus U^*$ satisfies
	\begin{align}\label{eq:coverU*-nv}
		n_v\leq \sum_{i\in [r]}d_{\cF_i}(v)\stackrel{\text{\cref{lm:coverU*-E'},\cref{lm:coverU*-forward}}}{\leq} \overleftarrow{d}_{D,\cU}(v)+|\overrightarrow{N}_{D,\cU}(v)\cap U^*|\stackrel{\text{\cref{def:ES-backward}}}{\leq}2\varepsilon n+|U^*|\stackrel{\text{\cref{def:ES-size}}}{\leq} 6\varepsilon n.
	\end{align}
	Therefore, each $v\in U^*\setminus U^{1-\gamma}(T)$ satisfies%
	    \COMMENT{First equality holds since $D'$ is bipartite on vertex classes $A$ and $B$ and since $U^{1-\gamma}(T)\subseteq U^*$.}
	\begin{align}
		d_{D'}^\pm(v)&\stackrel{\text{\eqmakebox[coverU*deg]{}}}{=}|\overrightarrow{N}_{D, \cU}^\pm(v)\setminus U^*|
		\geq \gamma n-|U^*|\nonumber\\
		&\stackrel{\text{\eqmakebox[coverU*deg]{\text{\cref{def:ES-size},\cref{eq:coverU*-S,eq:coverU*-nv,eq:coverU*-T}}}}}{\geq} 2\max_{i\in [r]}(|S_i^+|+|S_i^-|+|T_i|)+2(\max_{w\in B}n_w+N)\label{eq:coverU*-delta1}
	\end{align}
	and
	\begin{align}
		d_{D'}^\pm(v)&\stackrel{\text{\eqmakebox[coverU*deg2]{}}}{=}|\overrightarrow{N}_{D, \cU}^\pm(v)\setminus U^*|
		\stackrel{\delta^0(D)\geq r}{\geq} r-\overleftarrow{d}_{D, \cU}^\pm(v)-|\overrightarrow{N}_{D, \cU}^\pm(v)\cap U^*|
		\stackrel{\text{\cref{lm:coverU*-E'}}}{\geq}r-\sum_{i\in [r]}d_{\cF_i}^\pm(v)\nonumber\\
		&\stackrel{\text{\eqmakebox[coverU*deg2]{\text{\cref{lm:coverU*-linforest}}}}}{=}|\{i\in [r]\mid v\in S_i^\pm\}|.\label{eq:coverU*-delta2}
	\end{align}
	By \cref{def:feasible-degV*}, the vertices in $U^{1-\gamma}(T)$ are already covered with both an in- and outedge in each of the pseudo-feasible systems $\cF_1, \dots, \cF_r$, so $S_i^+\cup S_i^-\subseteq U^*\setminus U^{1-\gamma}(T)$ for each $i\in [r]$.
	Thus, \cref{eq:extendlinforest} follows from \cref{eq:coverU*-delta1,eq:coverU*-delta2}.
	
	Let $\cQ_1, \dots, \cQ_r$ be the edge-disjoint linear forests obtained by applying \cref{lm:extendlinforest} with $D'$ and $r$ playing the roles of $D$ and $\ell$.
	For each $i\in [r]$, denote $\cF_i'\coloneqq \cF_i\cup \cQ_i$. We claim that $\cF_1', \dots, \cF_r'$ are edge-disjoint feasible systems which satisfy \cref{lm:coverU*-E,lm:coverU*-size,lm:coverU*-deg}.
	By assumption, $\cF_1, \dots, \cF_r$ are edge-disjoint. Thus, \cref{eq:coverU*-D'}, \cref{eq:coverU*-ST}, and the ``in particular part" of \cref{lm:extendlinforest} imply that $\cF_1', \dots, \cF_r'$ are edge-disjoint linear forests. In particular, \cref{def:feasible-linforest} is satisfied.
	By construction and \cref{lm:extendlinforest}\cref{lm:extendlinforest-cover}, each $i\in [r]$ satisfies
	\begin{equation}\label{eq:coverU*-forward}
		E(\cF_i)\subseteq E(\cF_i')\subseteq E(\cF_i)\cup \{e\in E(\overrightarrow{D}_\cU)\mid V(e)\cap U^*\neq \emptyset\}.
	\end{equation}
	In particular, \cref{lm:coverU*-E} follows from \cref{lm:coverU*-E'}, while \cref{lm:coverU*-size} follows from \cref{lm:coverU*-e}, \cref{eq:coverU*-S}, and \cref{lm:extendlinforest}\cref{lm:extendlinforest-cover}.
	For each $v\in V(T)\setminus U^*$, we have
	\begin{align*}
		\sum_{i\in [r]}d_{\cF_i'}(v)\stackrel{\text{\cref{lm:coverU*-E'},\cref{lm:coverU*-forward},\cref{eq:coverU*-forward}}}{\leq} \overleftarrow{d}_{D,\cU}(v)+|\overrightarrow{N}_{D, \cU}(v)\cap U^*|\stackrel{\text{\cref{def:ES-backward}}}{\leq}2\varepsilon n+|U^*|\stackrel{\text{\cref{def:ES-size}}}{\leq} 6\varepsilon n.
	\end{align*}
	Thus, \cref{lm:coverU*-deg} holds.
	Let $i\in [r]$.
	By \cref{eq:coverU*-forward}, $\cF_i'$ is obtained from $\cF_i$ by adding forward edges, so \cref{def:feasible-backward} follows from the fact that $\cF_i$ is a $(\gamma, T)$-pseudo-feasible system. By definition of $S_i^+$ and $S_i^-$, \cref{lm:extendlinforest}\cref{lm:extendlinforest-cover} implies that \cref{def:feasible-exceptional} is satisfied. Therefore, $\cF_i'$ is a feasible system, as desired.
\end{proof}

\begin{lm}[Incorporating $E$]\label{lm:incorporateE}
	Let $0<\frac{1}{n}\ll \varepsilon\ll \eta \ll \gamma\ll 1$ and $(1-\eta)n\leq r\leq n$. Let $T$ be a regular bipartite tournament on $4n$ vertices. Let $\cU=(U_1, \dots, U_4)$ be an $(\varepsilon, 4)$-partition for $T$ and $U^*$ be an $(\varepsilon, \cU)$-exceptional set for $T$.
	Suppose that $D\subseteq T$ satisfies $\delta^0(D)\geq r$.
	Let $\cF_1, \dots, \cF_r$ be edge-disjoint feasible systems which satisfy the following.
	\begin{enumerate}
		\item $E(\overleftarrow{D}_\cU)\cup E(D[U^*])\subseteq \bigcup_{i\in [r]}E(\cF_i)\subseteq E(D)$.\label{lm:incorporateE-E'}
		\item For each $i\in [r]$, $e(\cF_i)\leq \varepsilon n$.\label{lm:incorporateE-e}
		\item For each $v\in V(T)\setminus U^*$, there exist at most $\varepsilon n$ indices $i\in [r]$ such that $v\in V(\cF_i)$.\label{lm:incorporateE-deg}
	\end{enumerate}
	Let $E\subseteq E(D)$ satisfy the following properties.
	\begin{enumerate}[resume]
		\item $E\subseteq E(\overrightarrow{D}_\cU-U^*)$.\label{lm:incorporateE-forward}
		\item For each $v\in V(T)\setminus U^*$, $d_E^\pm(v)\leq 1$.\label{lm:incorporateE-Edeg}
	\end{enumerate}
	Then, there exist edge-disjoint feasible systems $\cF_1', \dots, \cF_r'$ such that the following hold.
	\begin{enumerate}[label=\rm(\alph*)]
		\item $\bigcup_{i\in [r]}E(\cF_i')= \bigcup_{i\in [r]}E(\cF_i)\cup E$. In particular, $E(\overleftarrow{D}_\cU)\cup E(D[U^*])\cup E\subseteq \bigcup_{i\in [r]}E(\cF_i')\subseteq E(D)$.\label{lm:incorporateE-E}
		\item For each $i\in [r]$, $e(\cF_i')\leq e(\cF_i)+5\leq 2\varepsilon n$.\label{lm:incorporateE-size}
	\end{enumerate}
\end{lm}

\begin{proof}
	To ensure that we do not create any cycle when adding the edges in $E$, we will separate the feasible systems $\cF_1, \dots, \cF_r$ into four groups. For each $i\in [4]$, the edges of $E$ from $U_i$ to $U_{i+1}$ will be distributed among the feasible systems from the $i^{\rm th}$ group. In this way, each $\cF_j'$ will be obtained from $\cF_j$ by adding a matching of forward edges. The edges of $E$ will be distributed using Hall's theorem (\cref{prop:Hall}).
	
	For each $i\in [4]$, let $A_i\coloneqq E_E(U_i, U_{i+1})\setminus \bigcup_{j\in [r]}E(\cF_j)$.
	Let $B_1\cup \dots \cup B_4$ be a partition of $[r]$ such that $|B_i|\geq \left\lfloor\frac{r}{4}\right\rfloor\eqqcolon r'$ for each $i\in [4]$. For each $i\in [4]$, let $B_i'$ be the multiset which consists of~$5$ copies of each $j\in B_i$ and let $G_i$ be the auxiliary bipartite graph on vertex classes $A_i$ and $B_i'$ defined as follows. For each $e\in A_i$ and each (copy of) $j\in B_i'$, $ej\in E(G)$ if and only if $V(e)\cap V(E(\cF_j))=\emptyset$.
	
	Let $i\in [4]$. By \cref{lm:incorporateE-Edeg} and \cref{fact:partition}\cref{fact:partition-size}, $|A_i|\leq n\leq 5r'\leq |B_i'|$.
	For each $e\in A_i$, we have 
	\[d_{G}(e)\stackrel{\text{\cref{lm:incorporateE-deg}}}{\geq} 5(r'-2\varepsilon n)\geq \frac{|B_i'|}{2}.\]
	Moreover, each (copy of) $j\in B_i'$ satisfies
	\[d_{G_i}(j)\stackrel{\text{\cref{lm:incorporateE-Edeg}}}{\geq} |A_i|-|V(E(\cF_j))\geq |A_i|-2e(\cF_j)\stackrel{\text{\cref{lm:incorporateE-e}}}{\geq} |A_i|-\frac{|B_i'|}{2}.\]			
	Apply \cref{prop:Hall} to obtain a matching $M_i$ of $G_i$ which covers $A_i$.
	
	Denote $A\coloneqq \bigcup_{i\in [4]}A_i$ and $M\coloneqq \bigcup_{i\in [4]}M_i$.
	For each $j\in [r]$, let $\cF_j'$ be obtained from $\cF_j$ by adding all the edges $e\in A$ such that $e$ is adjacent to a copy of $j$ in $M$.
	We now verify that $\cF_1', \dots, \cF_r'$ are edge-disjoint feasible systems for which \cref{lm:incorporateE-E,lm:incorporateE-size} are satisfied. 
	By construction, $M$ is a matching covering $A$ and \cref{lm:incorporateE-forward} implies that $A=E\setminus \bigcup_{j\in [r]}E(\cF_j)$.
	Therefore, $\cF_1', \dots, \cF_r'$ are edge-disjoint and \cref{lm:incorporateE-E} holds. Moreover, \cref{lm:incorporateE-size} holds by \cref{lm:incorporateE-e} and definition of $B_1', \dots, B_4'$.
	
	Let $j\in [r]$. Recall that $\cF_j$ is a feasible system. In particular, \cref{def:feasible-linforest} implies that $\cF_j$ is a linear forest.
	By definition of $G_1, \dots, G_4$, we have $V(E(\cF_j'\setminus \cF_j))\cap V(E(\cF_j))=\emptyset$. Moreover, \cref{lm:incorporateE-Edeg} implies that $A_1, \dots, A_4$ are all matchings, so, by construction, $E(\cF_j'\setminus \cF_j)$ is a matching. Thus, $\cF_j'$ is also a linear forest and so \cref{fact:feasibleforward} and \cref{lm:incorporateE-forward} imply that $\cF_j'$ is also a feasible system.
\end{proof}

In the following \lcnamecref{lm:endpoints}, we add forward edges to ensure that all the components of each feasible system have their ending point in $U_4$.

\begin{lm}[Extending the ending points of feasible systems into $U_4$]\label{lm:endpoints}
	Let $0<\frac{1}{n}\ll \varepsilon\ll \eta \ll \gamma\ll 1$ and $(1-\eta)n\leq r\leq n$. Let $T$ be a regular bipartite tournament on $4n$ vertices. Let $\cU=(U_1, \dots, U_4)$ be an $(\varepsilon, 4)$-partition for $T$ and $U^*$ be an $(\varepsilon, \cU)$-exceptional set for $T$.
	Suppose that $D\subseteq T$ satisfies $\delta^0(D)\geq r$.
	Let $\cF_1, \dots, \cF_r$ be edge-disjoint feasible systems which satisfy the following.
	\begin{enumerate}
		\item $E(\overleftarrow{D}_\cU)\cup E(D[U^*])\subseteq \bigcup_{i\in [r]}E(\cF_i)\subseteq E(D)$.\label{lm:endpoints-E'}
		\item For each $i\in [r]$, $e(\cF_i)\leq \varepsilon n$.\label{lm:endpoints-e}
		\item For each $v\in V(T)\setminus U^*$, there exist at most $\varepsilon n$ indices $i\in [r]$ such that $v\in V(\cF_i)$.\label{lm:endpoints-deg}
	\end{enumerate}
	Then, there exist edge-disjoint feasible systems $\cF_1', \dots, \cF_r'$ such that the following hold.
	\begin{enumerate}[label=\rm(\alph*)]
		\item $\bigcup_{i\in [r]}E(\cF_i)\subseteq \bigcup_{i\in [r]}E(\cF_i')\subseteq E(D)$. In particular, $E(\overleftarrow{D}_\cU)\cup E(D[U^*])\subseteq \bigcup_{i\in [r]}E(\cF_i')$ $\subseteq E(D)$.\label{lm:endpoints-E}
		\item For each $i\in [r]$, $e(\cF_i')\leq 4e(\cF_i)\leq 4\varepsilon n$.\label{lm:endpoints-size}
		\item For each $v\in V(T)\setminus U^*$, there exist at most $4\sqrt{\varepsilon}n$ indices $i\in [r]$ such that $v\in V(\cF_i')$.\label{lm:endpoints-deg'}
		\item For each $i\in [r]$, $V^+(\cF_i')=V^+(\cF_i)$ and $V^-(\cF_i')\subseteq U_4$. \label{lm:endpoints-endpoints}
	\end{enumerate}
\end{lm}

\begin{proof}
	We extend the components of the feasible systems in three stages as follows. At each stage $i\in [3]$, we use edges of $D(U_i,U_{i+1})$ to extend the components of the feasible systems which currently end in $U_i$ into components which end in $U_{i+1}$.
	This is achieved via \cref{lm:extendlinforest}. 
	
	By \cref{fact:feasibleisolated}, we may assume without loss of generality that $\cF_1, \dots, \cF_r$ do not contain any isolated vertices. For each $i\in [r]$, let $\cF_i^0\coloneqq \cF_i$. 
	Suppose inductively that, for some $0\leq i\leq 3$, we have constructed edge-disjoint feasible systems $\cF_1^i, \dots, \cF_r^i$ such that the following hold.
	\begin{enumerate}[label=\rm(\alph*$'$)]
		\item For each $j\in [r]$, $E(\cF_j)\subseteq E(\cF_j^i)\subseteq E(D)$.\label{lm:endpoints-IH-E}
		\item For each $j\in [r]$, $e(\cF_j^i)\leq (i+1)e(\cF_j)\leq (i+1)\varepsilon n$.\label{lm:endpoints-IH-size}
		\item For each $v\in V(T)\setminus U^*$, there exist at most $(i+1)\sqrt{\varepsilon}n$ indices $j\in [r]$ such that $v\in V(\cF_j^i)$.\label{lm:endpoints-IH-deg'}
		\item For each $j\in [r]$, $V^+(\cF_j^i)=V^+(\cF_j)$.\label{lm:endpoints-IH-starting}
		\item For each $j\in [r]$, $V^-(\cF_j^i)\subseteq V(T)\setminus \bigcup_{i'\in [i]}U_{i'}$.\label{lm:endpoints-IH-ending}
		\item For each $j\in [r]$, $\cF_j^i$ does not contain any isolated vertex.\label{lm:endpoints-IH-isolated}
	\end{enumerate}
	First, assume that $i=3$. For each $j\in [r]$, let $\cF_j'\coloneqq \cF_j^i$. Then, \cref{lm:endpoints-E,lm:endpoints-deg',lm:endpoints-endpoints,lm:endpoints-size} follow from \cref{lm:endpoints-IH-E,lm:endpoints-IH-deg',lm:endpoints-IH-ending,lm:endpoints-IH-size,lm:endpoints-IH-starting}.
	
	We may therefore assume that $i<3$. We construct $\cF_1^{i+1}, \dots, \cF_r^{i+1}$ using \cref{lm:extendlinforest} as follows.
	Let $A\coloneqq U_{i+1}\setminus U^*$ and $B\coloneqq U_{i+2}\setminus U^*$. Let $D'$ be the bipartite digraph on vertex classes $A$ and $B$ which is induced by $(\overrightarrow{D}_\cU\setminus \bigcup_{j\in [r]}\cF_j^i)-U^*$%
		\COMMENT{I.e.\ $E(D')$ consists of all the edges of $(\overrightarrow{D}\setminus \bigcup_{j\in [r]}\cF_j^i)-U^*$ with an endpoint in $A$ and an endpoint in $B$.}.
	For each $j\in [r]$, let $S_j^-\coloneqq \emptyset$, let $S_j^+\coloneqq V^-(\cF_j^i)\cap A$ (so $S_j^+$ lists the ending points of the components that currently end in $U_{i+1}\setminus U^*$ and which we want to extend in this step), and define $T_j\coloneqq V(E(\cF_j^i))\cap B$.
	Note for later that both
	\begin{equation}\label{eq:endpoints-ST}
		S_j^\pm\cap V(\cF_j^i)\subseteq V^\mp(\cF_j^i) \quad \text{and} \quad V(E(\cF_j^i))\cap B\subseteq T_j
	\end{equation}
	for each $j\in [r]$.
	Define $N\coloneqq \sqrt{\varepsilon}n$. For each $v\in A\cup B$, let $n_v$ denote the number of indices $j\in [r]$ such that $v\in V(\cF_j^i)$ and observe that%
		\COMMENT{For the lower bound: if $v\in B$ then  $|\{j\in [r]\mid v\in S_j^+\}|=0$; otherwise, $|\{j\in [r]\mid v\in S_j^+\}|$ is the number of indices $j\in [r]$ such that $v\in V^-(\cF_j^i)$.}
	\begin{equation}\label{eq:endpoints-nv}
		|\{j\in [r]\mid v\in S_j^+\}|\leq n_v\stackrel{\text{\cref{lm:endpoints-IH-deg'}}}{\leq}4\sqrt{\varepsilon} n.
	\end{equation}
	
	We verify that \cref{eq:extendlinforest} holds with $D'$ and $r$ playing the roles of $D$ and $\ell$. Recall that $\cF_1, \dots, \cF_r$ do not contain any isolated vertices.
	Thus, each $j\in [r]$ satisfies
	\begin{align}
		|S_j^+|+|S_j^-|&\stackrel{\text{\eqmakebox[endpoints]{}}}{\leq} |V^-(\cF_j^i)|=|V^+(\cF_j^i)|\stackrel{\text{\cref{lm:endpoints-IH-starting}}}{=}|V^+(\cF_j)|\leq e(\cF_j)\label{eq:endpoints-Sjexact}\\ &\stackrel{\text{\eqmakebox[endpoints]{\text{\cref{lm:endpoints-e}}}}}{\leq} \varepsilon n\label{eq:endpoints-Sj}
	\end{align}
	and
	\begin{equation}\label{eq:edpoints-Tj}
	    |T_j|\leq |V(E(\cF_j^i))|\stackrel{\text{\cref{lm:endpoints-IH-size}}}{\leq} 8\varepsilon n.
	\end{equation}
	Then,
	\begin{equation}\label{eq:endpoints-S}
		\frac{1}{\lfloor N\rfloor}\sum_{j\in [r]}(|S_j^+|+|S_j^-|)\stackrel{\text{\cref{eq:endpoints-Sj}}}{\leq} \frac{\varepsilon n r}{\lfloor\sqrt{\varepsilon}n\rfloor}\leq 2\sqrt{\varepsilon}n.
	\end{equation}
	For each $v\in U_{i+1}\setminus U^*=A$, we have
	\begin{align*}
		d_{D'}^+(v)&\stackrel{\text{\eqmakebox[endpointsdeg]{}}}{\geq} |N_D^+(v)\cap U_{i+2}|- |N_D^+(v)\cap U^*|-\sum_{j\in [r]}d_{\cF_j^i}^+(v)\\
		&\stackrel{\text{\eqmakebox[endpointsdeg]{\text{\cref{def:feasible-linforest}}}}}{\geq} \overrightarrow{d}_{D,\cU}^+(v)-|U^*|-n_v\\
		&\stackrel{\text{\eqmakebox[endpointsdeg]{}}}{\geq} \left(\overrightarrow{d}_{T,\cU}^+(v)-(n-r)\right)-|U^*|-n_v\\
		&\stackrel{\text{\eqmakebox[endpointsdeg]{\text{\cref{def:ES},\cref{eq:endpoints-nv}}}}}{\geq} (1-2\eta)n-4\varepsilon n-4\sqrt{\varepsilon}n\\
		&\stackrel{\text{\eqmakebox[endpointsdeg]{\text{\cref{eq:endpoints-nv},\cref{eq:endpoints-Sj,eq:endpoints-S,eq:edpoints-Tj}}}}}{\geq} \frac{1}{\lfloor N\rfloor}\sum_{j\in [r]}(|S_j^+|+|S_j^-|)+|\{j\in [r]\mid v\in S_j^+\}|\\
		&\eqmakebox[endpointsdeg]{}\qquad\qquad +2\max_{j\in [r]}(|S_j^+|+|S_j^-|+|T_j|)+2(\max_{w\in B}n_w+N).
	\end{align*}
	Therefore, \cref{eq:extendlinforest} is satisfied.
	
	Let $\cQ_1, \dots, \cQ_r$ be the edge-disjoint linear forests obtained by applying \cref{lm:extendlinforest} with $D'$ and $r$ playing the roles of $D$ and $\ell$.
	For each $j\in [r]$, denote $\cF_j^{i+1}\coloneqq \cF_j^i\cup \cQ_j$.
	We claim that $\cF_1^{i+1}, \dots, \cF_r^{i+1}$ are edge-disjoint feasible systems such that \cref{lm:endpoints-IH-E,lm:endpoints-IH-deg',lm:endpoints-IH-ending,lm:endpoints-IH-size,lm:endpoints-IH-starting,lm:endpoints-IH-isolated} are satisfied with $i+1$ playing the role of $i$.
	By assumption and definition of $D'$, $\cF_1^i, \dots, \cF_r^i$ are edge-disjoint from each other and from $D'$. Thus, \cref{eq:endpoints-ST} and the ``in particular part" of \cref{lm:extendlinforest} imply that $\cF_1^{i+1}, \dots, \cF_r^{i+1}$ are edge-disjoint linear forests.
	Moreover, \cref{lm:endpoints-IH-isolated} follows from \cref{lm:extendlinforest}\cref{lm:extendlinforest-cover} and the induction hypothesis, while \cref{lm:endpoints-IH-deg'} holds by \cref{lm:extendlinforest}\cref{lm:extendlinforest-N} and the induction hypothesis.
	Furthermore, \cref{lm:endpoints-IH-size} follows from \cref{lm:endpoints-e}, \cref{eq:endpoints-Sjexact}, \cref{lm:extendlinforest}\cref{lm:extendlinforest-cover}, and the induction hypothesis.
	By \cref{lm:extendlinforest}\cref{lm:extendlinforest-cover}, each $j\in [r]$ satisfies
	\[E(\cF_j^i)\subseteq E(\cF_j^{i+1})\subseteq E(\cF_j^i)\cup E(\overrightarrow{D}_\cU-U^*).\]
	Therefore, \cref{lm:endpoints-IH-E} follows from the induction hypothesis and, by \cref{fact:feasibleforward}, $\cF_j^{i+1}$ is still a feasible system for each $j\in [r]$.
	For each $j\in [r]$, the definition of $S_j^+$ and $S_j^-$ and \cref{lm:extendlinforest}\cref{lm:extendlinforest-cover} imply that all the edges of $\cF_j^{i+1}\setminus \cF_j^i=\cQ_j$ start at a vertex in $V^-(\cF_j^i)$. Thus, \cref{lm:endpoints-IH-starting} holds. 
	By \cref{lm:extendlinforest}\cref{lm:extendlinforest-cover}, each $j\in[r]$ satisfies
	\begin{align*}
		V^-(\cF_j^{i+1})&\stackrel{\text{\eqmakebox[endpointsV-]{}}}{\subseteq} (V^-(\cF_j^i)\setminus A)\cup B
		\subseteq (V^-(\cF_j^i)\setminus U_{i+1})\cup (V^-(\cF_j^i)\cap U^*)\cup U_{i+2}\\
		&\stackrel{\text{\eqmakebox[endpointsV-]{\text{\cref{def:feasible-exceptional}}}}}{=}(V^-(\cF_j^i)\setminus U_{i+1})\cup U_{i+2}
		\stackrel{\text{\cref{lm:endpoints-IH-ending}}}{=}\left(V(T)\setminus \left(\bigcup_{i'\in [i]}U_{i'}\cup U_{i+1}\right)\right)\cup U_{i+2}\\
		&\stackrel{\text{\eqmakebox[endpointsV-]{}}}{=}V(T)\setminus \bigcup_{i'\in [i+1]}U_{i'}.
	\end{align*} 
	Therefore, \cref{lm:endpoints-IH-ending} holds.
\end{proof}

By symmetry, we can proceed analogously to ensure that the starting point of each component of the feasible systems also lies in the correct vertex class.

\begin{lm}[Extending the starting points of feasible systems into $U_1$]\label{cor:endpoints}
	Under the conditions of \cref{lm:endpoints}, there exist edge-disjoint feasible systems $\cF_1', \dots, \cF_r'$ such that the following hold.
	\begin{enumerate}[label=\rm(\alph*)]
		\item $\bigcup_{i\in [r]}E(\cF_i)\subseteq \bigcup_{i\in [r]}E(\cF_i')\subseteq E(D)$. In particular, $E(\overleftarrow{D}_\cU)\cup E(D[U^*])\subseteq \bigcup_{i\in [r]}E(\cF_i')$ $\subseteq E(D)$.\label{cor:endpoints-E}
		\item For each $i\in [r]$, $e(\cF_i')\leq 7e(\cF_i)\leq 7\varepsilon n$.\label{cor:endpoints-size}
		\item For each $v\in V(T)\setminus U^*$, there exist at most $7\sqrt{\varepsilon}n$ indices $i\in [r]$ such that $v\in V(\cF_i')$.\label{cor:endpoints-deg'}
		\item For each $i\in [r]$, $V^+(\cF_i')\subseteq U_1$ and $V^-(\cF_i')\subseteq U_4$.\label{cor:endpoints-endpoints}
	\end{enumerate}
\end{lm}

    \COMMENT{\begin{proof}
    By \cref{fact:feasibleisolated}, we may assume without loss of generality that $\cF_1, \dots, \cF_r$ do not contain any isolated vertices.
    Let $\widetilde{\cF}_1, \dots, \widetilde{\cF}_r$ be the feasible systems obtained by applying \cref{lm:endpoints}. We now extend the starting points analogously.\\
    By \cref{fact:feasibleisolated}, we may assume without loss of generality that $\widetilde{\cF}_1, \dots, \widetilde{\cF}_r$ do not contain any isolated vertices. For each $i\in [r]$, let $\widetilde{\cF}_i^0\coloneqq \widetilde{\cF}_i$. 
	Suppose inductively that, for some $0\leq i\leq 3$, we have constructed edge-disjoint feasible systems $\widetilde{\cF}_1^i, \dots, \widetilde{\cF}_r^i$ such that the following hold.
	\begin{enumerate}[label=\rm(\alph*$''$)]
		\item For each $j\in [r]$, $E(\widetilde{\cF}_j)\subseteq E(\widetilde{\cF}_j^i)\subseteq E(D)$.\label{cor:endpoints-IH-E}
		\item For each $j\in [r]$, $e(\widetilde{\cF}_j^i)\leq (i+4)e(\cF_j)\leq (i+4)\varepsilon n$.\label{cor:endpoints-IH-size}
		\item For each $v\in V(T)\setminus U^*$, there exist at most $(i+4)\sqrt{\varepsilon}n$ indices $j\in [r]$ such that $v\in V(\widetilde{\cF}_j^i)$.\label{cor:endpoints-IH-deg'}
		\item For each $j\in [r]$, $V^+(\widetilde{\cF}_j^i)\subseteq  \bigcup_{i'\in [4-i]}U_{i'}$.\label{cor:endpoints-IH-starting}
		\item For each $j\in [r]$, $V^-(\widetilde{\cF}_j^i)=V^-(\widetilde{\cF}_j)$.\label{cor:endpoints-IH-ending}
		\item For each $j\in [r]$, $\widetilde{\cF}_j^i$ does not contain any isolated vertex.\label{cor:endpoints-IH-isolated}
	\end{enumerate}
	First, assume that $i=3$. For each $j\in [r]$, let $\cF_j'\coloneqq \widetilde{\cF}_j^i$. Then, \cref{cor:endpoints-E,cor:endpoints-deg',cor:endpoints-endpoints,cor:endpoints-size} follow from \cref{cor:endpoints-IH-E,cor:endpoints-IH-deg',cor:endpoints-IH-ending,cor:endpoints-IH-size,lm:endpoints-IH-starting} and \cref{lm:endpoints}\cref{lm:endpoints-E,lm:endpoints-endpoints}.\\
	We may therefore assume that $i<3$. We construct $\widetilde{\cF}_1^{i+1}, \dots, \widetilde{\cF}_r^{i+1}$ using \cref{lm:extendlinforest} as follows.
	Let $A\coloneqq U_{4-i}\setminus U^*$ and $B\coloneqq U_{3-i}\setminus U^*$. Let $D'$ be the bipartite digraph on vertex classes $A$ and $B$ which is induced by $(\overrightarrow{D}_\cU\setminus \bigcup_{j\in [r]}\widetilde{\cF}_j^i)-U^*$.
	For each $j\in [r]$, let $S_j^+\coloneqq \emptyset$, let $S_j^-\coloneqq V^+(\widetilde{\cF}_j^i)\cap A$ (so $S_j^-$ lists the starting points of the components that currently start in $U_{4-i}\setminus U^*$ and which we want to extend in this step), and define $T_j\coloneqq V(E(\widetilde{\cF}_j^i))\cap B$.
	Note for later that both
	\begin{equation*}
		S_j^\pm\cap V(\widetilde{\cF}_j^i)\subseteq V^\mp(\widetilde{\cF}_j^i) \quad \text{and} \quad V(E(\widetilde{\cF}_j^i))\cap B\subseteq T_j
	\end{equation*}
	for each $j\in [r]$.
	Define $N\coloneqq \sqrt{\varepsilon}n$. For each $v\in A\cup B$, let $n_v$ denote the number of indices $j\in [r]$ such that $v\in V(\widetilde{\cF}_j^i)$ and observe that
	\begin{equation*}
		|\{j\in [r]\mid v\in S_j^-\}|\leq n_v\stackrel{\text{\cref{cor:endpoints-IH-deg'}}}{\leq}7\sqrt{\varepsilon} n.
	\end{equation*}
	We verify that \cref{eq:extendlinforest} holds with $D'$ and $r$ playing the roles of $D$ and $\ell$.
	Recall that $\cF_1, \dots,\cF_r$ do not contain any isolated vertices. Thus, each $j\in [r]$ satisfies
	\begin{align*}
		|S_j^+|+|S_j^-|&\leq |V^+(\widetilde{\cF}_j^i)|=|V^-(\widetilde{\cF}_j^i)|\stackrel{\text{\cref{cor:endpoints-IH-ending}}}{=} |V^-(\widetilde{\cF}_j)|=|V^+(\widetilde{\cF}_j)|\stackrel{\text{\cref{lm:endpoints}\cref{lm:endpoints-endpoints}}}{=} |V^+(\cF_j)| \leq e(\cF_j)
		\stackrel{\text{\cref{lm:endpoints}\cref{lm:endpoints-e}}}{\leq} \varepsilon n
	\end{align*}
	and
	\begin{align*}
		|T_j|\leq |V(E(\widetilde{\cF}_j))|\stackrel{\text{\cref{cor:endpoints-IH-size}}}{\leq} 14\varepsilon n.
	\end{align*}
	Thus,
	\begin{equation*}
		\frac{1}{\lfloor N\rfloor}\sum_{j\in [r]}(|S_j^+|+|S_j^-|)\leq \frac{\varepsilon n r}{\lfloor\sqrt{\varepsilon}n\rfloor}\leq 2\sqrt{\varepsilon}n.
	\end{equation*}
	For each $v\in U_{4-i}\setminus U^*=A$, we have
	\begin{align*}
		d_{D'}^-(v)&\geq |N_D^-(v)\cap U_{3-i}|- |N_D^-(v)\cap U^*|-\sum_{j\in [r]}d_{\widetilde{\cF}_j^i}^-(v)
		\stackrel{\text{\cref{def:feasible-linforest}}}{\geq} \overrightarrow{d}_{D,\cU}^-(v)-|U^*|-n_v\\
		&\geq (\overrightarrow{d}_{T,\cU}^-(v)-(n-r))-|U^*|-n_v
		\stackrel{\text{\cref{def:ES}}}{\geq} (1-2\eta)n-4\varepsilon n-7\sqrt{\varepsilon}n\\
		&\geq \frac{1}{\lfloor N\rfloor}\sum_{j\in [r]}(|S_j^+|+|S_j^-|)+|\{j\in [r]\mid v\in S_j^-\}|\\
		&\qquad +2\max_{j\in [r]}(|S_j^+|+|S_j^-|+|T_j|)+2(\max_{w\in B}n_w+N).
	\end{align*}
	Therefore, \cref{eq:extendlinforest} is satisfied.\\
	Let $\cQ_1, \dots, \cQ_r$ be the edge-disjoint linear forests obtained by applying \cref{lm:extendlinforest} with $D'$ and $r$ playing the roles of $D$ and $\ell$.
	For each $j\in [r]$, denote $\widetilde{\cF}_j^{i+1}\coloneqq \widetilde{\cF}_j^i\cup \cQ_j$.
	We claim that $\widetilde{\cF}_1^{i+1}, \dots, \widetilde{\cF}_r^{i+1}$ are edge-disjoint feasible systems such that \cref{cor:endpoints-IH-E,cor:endpoints-IH-deg',cor:endpoints-IH-ending,cor:endpoints-IH-size,cor:endpoints-IH-starting,cor:endpoints-IH-isolated} are satisfied with $i+1$ playing the role of $i$.
	By assumption and definition of $D'$, $\widetilde{\cF}_1^i, \dots, \widetilde{\cF}_r^i$ are edge-disjoint from each other and from $D'$. Thus, the ``in particular part" of \cref{lm:extendlinforest} implies that $\widetilde{\cF}_1^{i+1}, \dots, \widetilde{\cF}_r^{i+1}$ are edge-disjoint linear forests.
	Moreover, \cref{cor:endpoints-IH-isolated} follows from \cref{lm:extendlinforest}\cref{lm:extendlinforest-cover} and the induction hypothesis, while \cref{cor:endpoints-IH-deg'} holds by \cref{lm:extendlinforest}\cref{lm:extendlinforest-N} and the induction hypothesis.
	Furthermore, \cref{cor:endpoints-IH-size} follows from \cref{lm:endpoints-e}, \cref{lm:extendlinforest}\cref{lm:extendlinforest-cover}, and the induction hypothesis.
	By \cref{lm:extendlinforest}\cref{lm:extendlinforest-cover}, each $j\in [r]$ satisfies
	\[E(\widetilde{\cF}_j^i)\subseteq E(\widetilde{\cF}_j^{i+1})\subseteq E(\widetilde{\cF}_j^i)\cup E(\overrightarrow{D}_\cU-U^*).\]
	Therefore, \cref{cor:endpoints-IH-E} follows from the induction hypothesis and, by \cref{fact:feasibleforward}, $\widetilde{\cF}_j^{i+1}$ is still a feasible system for each $j\in [r]$.
	For each $j\in [r]$, the definition of $S_j^+$ and $S_j^-$ and \cref{lm:extendlinforest}\cref{lm:extendlinforest-cover} imply that all the edges of $\widetilde{\cF}_j^{i+1}\setminus \widetilde{\cF}_j^i=\cQ_j$ end at a vertex in $V^+(\widetilde{\cF}_j^i)$. Thus, \cref{cor:endpoints-IH-ending} holds. 
	By \cref{lm:extendlinforest}\cref{lm:extendlinforest-cover}, each $j\in[r]$ satisfies
	\begin{align*}
		V^+(\widetilde{\cF}_j^{i+1})&\stackrel{\text{\eqmakebox[endpoints']{}}}{\subseteq} (V^+(\widetilde{\cF}_j^i)\setminus A)\cup B
		\subseteq (V^+(\widetilde{\cF}_j^i)\setminus U_{4-i})\cup (V^+(\widetilde{\cF}_j^i)\cap U^*)\cup U_{3-i}\\
		&\stackrel{\text{\eqmakebox[endpoints']{\text{\cref{def:feasible-exceptional}}}}}{=}(V^+(\widetilde{\cF}_j^i)\setminus U_{4-i})\cup U_{3-i}
		\stackrel{\text{\cref{cor:endpoints-IH-starting}}}{=} \left(\bigcup_{i'\in [4-i]}U_{i'}\setminus U_{4-i}\right)\cup U_{3-i}\\
		&\stackrel{\text{\eqmakebox[endpoints']{}}}{=} \bigcup_{i'\in [4-(i+1)]}U_{i'}.
	\end{align*} 
	Therefore, \cref{cor:endpoints-IH-starting} holds.
    \end{proof}}

We are now ready to derive \cref{lm:feasible}.

\begin{proof}[Proof of \cref{lm:feasible}]
	Let $i\in [r]$. Suppose that $e\in E(\cF_i)$ is a forward edge such that $V(e)\cap U^*=\emptyset$ or $e$ is a $(\gamma, T)$-placeholder. Then, \cref{lm:feasible-F-E,lm:feasible-F-e,lm:feasible-E-deg,lm:feasible-E-forward} are still satisfied if we replace $\cF_i$ by $\cF_i\setminus \{e\}$. Moreover, \cref{fact:pseudofeasibleforward} implies that $\cF_i\setminus \{e\}$ is still a $(\gamma, T)$-pseudo-feasible system. Thus, by deleting edges if necessary, we may assume that \cref{lm:redistribute}\cref{lm:redistribute-forward} is satisfied. Moreover, \cref{lm:redistribute}\cref{lm:redistribute-E,lm:redistribute-e} follow from \cref{lm:feasible-F-E,lm:feasible-F-e}.
	Apply \cref{lm:redistribute} to obtain edge-disjoint $(\gamma, T)$-pseudo-feasible systems $\cF_1^1, \dots, \cF_r^1$ satisfying \cref{lm:redistribute}\cref{lm:redistribute-redistribute,lm:redistribute-linforest,lm:redistribute-size}.
	
	By \cref{lm:redistribute}\cref{lm:redistribute-redistribute,lm:redistribute-size,lm:redistribute-linforest}, \cref{lm:coverU*}\cref{lm:coverU*-E',lm:coverU*-e,lm:coverU*-forward,lm:coverU*-linforest} are satisfied with $\cF_1^1, \dots, \cF_r^1$ playing the roles of $\cF_1, \dots, \cF_r$%
		\COMMENT{\cref{lm:coverU*}\cref{lm:coverU*-E'} follows from \cref{lm:redistribute}\cref{lm:redistribute-redistribute}.\\
		\cref{lm:coverU*}\cref{lm:coverU*-e} follows from \cref{lm:redistribute}\cref{lm:redistribute-size}.\\
		\cref{lm:coverU*}\cref{lm:coverU*-forward} follows from \cref{lm:redistribute}\cref{lm:redistribute-redistribute}.\\
		\cref{lm:coverU*}\cref{lm:coverU*-linforest} follows from \cref{lm:redistribute}\cref{lm:redistribute-linforest}.}. 
	Apply \cref{lm:coverU*} with $\cF_1^1, \dots, \cF_r^1$ playing the roles of $\cF_1, \dots, \cF_r$ to obtain edge-disjoint feasible systems $\cF_1^2, \dots, \cF_r^2$ satisfying \cref{lm:coverU*}\cref{lm:coverU*-E,lm:coverU*-deg,lm:coverU*-size}.

	By \cref{lm:coverU*}\cref{lm:coverU*-E,lm:coverU*-deg,lm:coverU*-size}, \cref{lm:incorporateE}\cref{lm:incorporateE-deg,lm:incorporateE-E',lm:incorporateE-e} are satisfied with $\cF_1^2, \dots, \cF_r^2$, and $9\varepsilon$ playing the roles of $\cF_1, \dots, \cF_r$, and $\varepsilon$%
		\COMMENT{\cref{lm:incorporateE}\cref{lm:incorporateE-E'} follows from \cref{lm:coverU*}\cref{lm:coverU*-E}.\\
		\cref{lm:incorporateE}\cref{lm:incorporateE-e} follows from \cref{lm:coverU*}\cref{lm:coverU*-size}.\\
		\cref{lm:incorporateE}\cref{lm:incorporateE-deg} follows from \cref{lm:coverU*}\cref{lm:coverU*-deg}.}.
	By \cref{lm:feasible-E-forward,lm:feasible-E-deg}, \cref{lm:incorporateE}\cref{lm:incorporateE-Edeg,lm:incorporateE-forward} are satisfied.
	Apply \cref{lm:incorporateE} with $\cF_1^2, \dots, \cF_r^2$, and $9\varepsilon$ playing the roles of $\cF_1, \dots, \cF_r$, and $\varepsilon$ to obtain edge-disjoint feasible systems $\cF_1^3, \dots, \cF_r^3$ for which \cref{lm:incorporateE}\cref{lm:incorporateE-E,lm:incorporateE-size} are satisfied (with $9\varepsilon$ playing the role of $\varepsilon$).
	
	By \cref{lm:feasible-E-deg}, \cref{lm:coverU*}\cref{lm:coverU*-deg}, and \cref{lm:incorporateE}\cref{lm:incorporateE-E} and \cref{lm:incorporateE-size}, \cref{lm:endpoints}\cref{lm:endpoints-E',lm:endpoints-e,lm:endpoints-deg} are satisfied with $\cF_1^3, \dots, \cF_r^3$, and $18\varepsilon$ playing the roles of $\cF_1, \dots, \cF_r$, and $\varepsilon$%
		\COMMENT{\cref{lm:endpoints}\cref{lm:endpoints-E'} follows from \cref{lm:incorporateE}\cref{lm:incorporateE-E}.\\
		\cref{lm:endpoints}\cref{lm:endpoints-e} follows from \cref{lm:incorporateE}\cref{lm:incorporateE-size}.\\
		\cref{lm:endpoints}\cref{lm:endpoints-deg} follows from \cref{lm:feasible-E-deg}, \cref{lm:coverU*}\cref{lm:coverU*-deg}, and \cref{lm:incorporateE}\cref{lm:incorporateE-E}.\\}.
	Apply \cref{cor:endpoints} with $\cF_1^3, \dots, \cF_r^3$, and $18\varepsilon$ playing the roles of $\cF_1, \dots, \cF_r$, and $\varepsilon$ to obtain edge-disjoint feasible systems $\cF_1', \dots, \cF_r'$ satisfying \cref{cor:endpoints}\cref{cor:endpoints-E,cor:endpoints-deg',cor:endpoints-endpoints,cor:endpoints-size} (with $18\varepsilon$ playing the role of $\varepsilon$).	
	Then, \cref{lm:feasible-E} follows from \cref{lm:incorporateE}\cref{lm:incorporateE-E} and \cref{cor:endpoints}\cref{cor:endpoints-E}.
	Moreover, \cref{lm:feasible-e,lm:feasible-deg,lm:feasible-endpoints} follow from \cref{cor:endpoints}\cref{cor:endpoints-size,cor:endpoints-deg',cor:endpoints-endpoints}, respectively.
\end{proof}

\section{Constructing pseudo-feasible systems: proof of Lemma~\ref{lm:backwardedges}}\label{sec:constructpseudofeasible}

	\onlyinsubfile{
		\setcounter{section}{11}
		\section{Proof of Lemma \ref{lm:backwardedges}}}
	
In this section, we prove \cref{lm:backwardedges}, which states that the backward and exceptional edges of a regular bipartite tournament can be decomposed into pseudo-feasible systems.
	
\subsection{Proof overview}\label{sec:sketchbackward}

Let $T$ be a bipartite tournament on $4n$ vertices. Let $\cU=(U_1, \dots, U_4)$ be an $(\varepsilon,4)$-partition for $T$ and $U^*$ be an $(\varepsilon, \cU)$-exceptional set for $T$. Suppose that we want to decompose the backward edges of $T$ into $n$ pseudo-feasible systems. The main difficulty is to construct linear forests (up to placeholders) which have a balanced number of backward edges (recall \cref{def:feasible-backward}).

\subsubsection{Simplified argument}

For simplicity, first assume that $\Delta^0(\overleftarrow{T}_\cU)\leq \frac{n}{2}$. The idea is to decompose each pair $E_T(U_i, U_{i-1})$ into $\frac{n}{2}$ matchings (which is possible by K\"{o}nig's theorem) and then construct pseudo-feasible systems from these matchings as follows. First, we pair each of the $\frac{n}{2}$ matchings from $E_T(U_1, U_4)$ with a distinct matching from $E_T(U_3, U_2)$. Overall, we obtain a decomposition of $E_D(U_1, U_4)\cup E_D(U_3, U_2)$ into $\frac{n}{2}$ matchings $\cF_1, \dots, \cF_{\frac{n}{2}}$.
Similarly, we pair each of the $\frac{n}{2}$ matchings from $E_T(U_4, U_3)$ with a distinct matching from $E_T(U_2, U_1)$ to obtain a decomposition of $E_T(U_4, U_3)\cup E_T(U_2, U_1)$ into $\frac{n}{2}$ matchings $\cF_{\frac{n}{2}+1}, \dots, \cF_n$. 
In particular, $\cF_1, \dots, \cF_n$ are linear forests and so satisfy \cref{def:feasible-cycle,def:feasible-degV'}. By assumption, $T$ does not contain any vertex of very high backward degree and so \cref{def:feasible-degV*} also holds.
Thus, for $\cF_1, \dots, \cF_n$ to be pseudo-feasible systems, we only need them to contain a balanced number of backward edges (see \cref{def:feasible-backward}). More precisely, each of $\cF_1, \dots, \cF_{\frac{n}{2}}$ must contain the same number of edges from $E_T(U_1, U_4)$ and $E_T(U_3, U_2)$ and each of $\cF_{\frac{n}{2}+1}, \dots, \cF_n$ must contain the same number of edges from $E_T(U_4, U_3)$ and $E_T(U_2, U_1)$.
Since $T$ contains the same number of backward edges in each pair of the blow-up $C_4$ (recall \cref{fact:backwardedges}), this can be easily achieved by using \cref{prop:Konigsamesize} to initially decompose each $E_T(U_i, U_{i-1})$ into $\frac{n}{2}$ matchings of (almost) the same size.

\subsubsection{General argument}\label{sec:sketchbackward-general}

In general, $\Delta^0(\overleftarrow{T}_\cU)$ may be larger than $\frac{n}{2}$ and so the above strategy does not work (we cannot decompose each pair into $\frac{n}{2}$ matchings of backward edges). However, we adapt the above argument by constructing $\frac{n}{2}$ pseudo-feasible systems which mostly consist of edges from $E_T(U_1, U_4)\cup E_T(U_3, U_2)$ and $\frac{n}{2}$ pseudo-feasible systems which mostly consist of edges from $E_T(U_4, U_3)\cup E_T(U_2, U_1)$. 

To do so, we consider an auxiliary digraph $D$ obtained from $\overleftarrow{T}_\cU$ as follows. For each $v\in U^{\frac{1}{2}}(T)$ (that is, for each $v\in V(T)$ satisfying $\overleftarrow{d}_{T, \cU}^+(v)=\overleftarrow{d}_{T, \cU}^-(v)>\frac{n}{2}$ (recall \cref{fact:backwarddegree,eq:Ugamma})), we replace $v$ by two copies $v$ and replace all the edges incident to $v$ by an edge incident to one of the copies of $v$. By splitting neighbourhoods evenly between the two copies of each vertex in $U^{\frac{1}{2}}(T)$, we can ensure that $\Delta^0(D)\leq \frac{n}{2}$.
Then, we can proceed as above to partition $E_D(U_1, U_4)\cup E_D(U_3, U_2)$ into $\frac{n}{2}$ pseudo-feasible systems $\cF_1, \dots, \cF_{\frac{n}{2}}$ and partition $E_D(U_4, U_3)\cup E_D(U_2, U_1)$ into $\frac{n}{2}$ pseudo-feasible systems $\cF_{\frac{n}{2}+1}, \dots, \cF_n$.

Denote by $\cF_1', \dots, \cF_n'$ the decomposition of $\overleftarrow{T}_\cU$ induced by $\cF_1, \dots, \cF_n$. Then, $\cF_1', \dots, \cF_n'$ each contain a balanced number of backward edges but may contain up to two edges (of the same direction) incident to the vertices in $U^{\frac{1}{2}}(T)$ (all the other vertices have degree at most one). We solve this problem as follows. For each $i\in [n]$, we move an edge of $\cF_i'$ at each vertex of degree two to $\cF_{\frac{n}{2}+i}'$ (where the index $\frac{n}{2}+i$ is taken modulo $n$). By construction, $\cF_i'$ and $\cF_{\frac{n}{2}+i}'$ are constructed from different pairs of the blow-up $C_4$. Thus, we do not create additional vertices of semidegree at least two and so we now have $\Delta^0(\cF_i')=1$ for each $i\in [n]$. In particular, \cref{def:feasible-degV'} is now satisfied. Of course, some cycles may be created in the process. However, each cycle will contain one of the moved edges and so will contain a backward edge incident to vertex of very high backward degree, that is, a placeholder. Thus, \cref{def:feasible-cycle} is also satisfied. We may have unbalanced the number of backward edges in each $\cF_i'$ in the process but, by moving a few additional edges, we will be able to satisfy \cref{def:feasible-backward} without affecting \cref{def:feasible-degV',def:feasible-cycle}. This latter step will be achieved using \cref{lm:movedegree2} below. The overall argument corresponds to \cref{lm:backwardall} below.

\subsubsection{Limitations}
To sum up, we have so far decomposed the edges of $\overleftarrow{T}_\cU$ into digraphs $\cF_1', \dots, \cF_n'$ satisfying \cref{def:feasible-backward}, \cref{def:feasible-degV'}, and \cref{def:feasible-cycle}. Moreover, $\Delta^0(\cF_i')=1$ for each $i\in [n]$. Thus, $\cF_1', \dots, \cF_n'$ are almost pseudo-feasible systems and it only remains to cover $U^{1-\gamma}(T)$ to ensure that \cref{def:feasible-degV*} is satisfied. Unfortunately, this may not be possible. Indeed, suppose that $v\in U^{1-\gamma}(T)$ satisfies $\overleftarrow{d}_{T, \cU}^+(v)=n-1$. Then, there is precisely one $\cF_i'$ which does not contain an outedge at $v$ and so, to turn this $\cF_i'$ into a pseudo-feasible system, we would have to add the unique forward outedge at $v$ in $T$ to $\cF_i'$ (so that we satisfy \cref{def:feasible-degV*}). However, this edge is not a placeholder and so we may break \cref{def:feasible-degV'} and/or \cref{def:feasible-cycle} in the process. More generally, we may not be able to cover the vertices in $U^{1-\gamma}(T)$ which are incident to at least one forward edge. (The vertices $v\in U^{1-\gamma}(T)$ with no forward edges are not a problem because the $n$ backward in- and outedges at $v$ are already entirely covering $v$ in each $\cF_i'$.)

This why, in \cref{lm:backwardall}, we will assume that none of the vertices in $U^{1-\gamma}(T)$ are adjacent to a forward edge (see \cref{lm:backwardall}\cref{lm:backwardall-V**,lm:backwardall-Delta}).
Before applying \cref{lm:backwardall}, we will thus have to construct a few pseudo-feasible systems which cover all the forward edges incident to $U^{1-\gamma}(T)$. This is achieved in \cref{lm:forwardUstar} below.

In \cref{lm:forwardUstar}, we also cover all the forward edges which entirely lie in $U^*$. 
Recall from \cref{lm:backwardedges}\cref{lm:backwardedges-backwardedges} that we will have to incorporate these edges into our pseudo-feasible systems. But, it may not be possible to incorporate them into the pseudo-feasible systems constructed with the above arguments. Indeed, the edges which lie entirely in $U^*$ are not placeholders and so incorporating them may break \cref{def:feasible-degV'} and/or \cref{def:feasible-cycle}. Thus, we will cover them separately with a few pseudo-feasible systems in \cref{lm:forwardUstar}.

Finally, observe that, with the above arguments, we are only constructing pseudo-feasible systems (rather than feasible systems) and we have no control on which edges are used in each of the pseudo-feasible system. Thus, we will have to construct the $t$ feasible systems satisfying \cref{lm:backwardedges}\cref{lm:backwardedges-H,lm:backwardedges-feasible} separately. This is achieved in \cref{lm:exceptionalfeasible} below.

\subsection{Proof of Lemma \ref{lm:backwardedges}}
	
First, we build the $t$ feasible systems which satisfy \cref{lm:backwardedges}\cref{lm:backwardedges-feasible,lm:backwardedges-H}. 
	
\begin{lm}[Constructing the few feasible systems]\label{lm:exceptionalfeasible}
	Let $0<\frac{1}{n}\ll \varepsilon\ll \eta \ll\gamma \ll 1$ and $s\in \mathbb{N}$. Let $T$ be a regular bipartite tournament on $4n$ vertices. Let $\cU=(U_1, \dots, U_4)$ be an optimal $(\varepsilon, 4)$-partition for $T$ and $U^*$ be an $(\varepsilon, \cU)$-exceptional set for $T$.
	Suppose that, for each $i\in [s]$, $H_i\subseteq T$ satisfies \cref{lm:main}\cref{lm:main-H-degree,lm:main-H-degreeV**,lm:main-H-edges,lm:main-H-degreeV*}.
	For each $i\in [s]$, let $s_i\in \mathbb{N}$ and $t_i\coloneqq \sum_{j\in [i-1]} s_j$. Let $t\coloneqq \sum_{i\in [s]} s_i$ and suppose that $t\leq \eta n$.
	Then, there exist edge-disjoint feasible systems $\cF_1, \dots, \cF_t\subseteq T$ for which the following hold, where $\cF\coloneqq \bigcup_{i\in [t]}\cF_i$.
	\begin{enumerate}[label=\rm(\alph*)]
		\item For each $i\in [4]$, we have $e_{\cF-U^{1-\gamma}(T)}(U_i, U_{i-1})=t|U_{i-2}^{1-\gamma}(T)\cup U_{i-3}^{1-\gamma}(T)|$.\label{lm:exceptionalfeasible-U'}
		\item For each $i\in [t]$, we have $e(\cF_i)\leq \sqrt{\varepsilon}n$.\label{lm:exceptionalfeasible-size}
        \item For each $i\in [t]$, we have $V^0(\cF_i)=U^*$.\label{lm:exceptionalfeasible-feasible}
        \item For each $i\in [s]$ and $j\in [s_i]$, we have $\cF_{t_i+j}\subseteq H_i$.\label{lm:exceptionalfeasible-H}
    \end{enumerate}
\end{lm}

Note that \cref{lm:exceptionalfeasible}\cref{lm:exceptionalfeasible-feasible,lm:exceptionalfeasible-H} will automatically imply \cref{lm:backwardedges}\cref{lm:backwardedges-feasible,lm:backwardedges-H}; while \cref{lm:exceptionalfeasible}\cref{lm:exceptionalfeasible-size} corresponds to \cref{lm:backwardedges}\cref{lm:backwardedges-size}. 

Then, we construct a few pseudo-feasible systems which cover all the forward edges in $U^*$ and all the forward edges incident to $U^{1-\gamma}(T)$. 

\begin{lm}[Covering the forward edges in $U^*$ and incident to $U^{1-\gamma}(T)$]\label{lm:forwardUstar}
	Let $0<\frac{1}{n}\ll \varepsilon\ll \eta \ll \gamma\ll 1$. Let $t\leq \eta n$ and $t'\in \{\lfloor\gamma n\rfloor,\lfloor\gamma n\rfloor+1\}$.
	Let $T$ be a regular bipartite tournament on $4n$ vertices. Let $\cU$ be an optimal $(\varepsilon,4)$-partition for $T$ and $U^*$ be an $(\varepsilon, \cU)$-exceptional set for $T$.
	Let $D\subseteq T$ and suppose that the following hold.
	\begin{enumerate}
		\item $\Delta^0(T\setminus D)\leq t$.\label{lm:forwardUstar-D-Delta0}
		\item For each $i\in [4]$, we have $e_{(T\setminus D)-U^{1-\gamma}(T)}(U_i, U_{i-1})\leq t|U_{i-2}^{1-\gamma}(T)\cup U_{i-3}^{1-\gamma}(T)|$.\label{lm:forwardUstar-D-U'}
	\end{enumerate}
	Then, there exist edge-disjoint $(\gamma,T)$-pseudo-feasible systems $\cF_1, \dots, \cF_{t'}$ such that the following hold, where $\cF\coloneqq \bigcup_{i\in [t']} \cF_i$.
	\begin{enumerate}[label=\rm(\alph*)]
		\item $E(\overrightarrow{D}_\cU[U^*])\subseteq E(\cF)\subseteq E(D)$.\label{lm:forwardUstar-forwardV*}
		\item For each $i\in [t']$, we have $e(\cF_i)\leq \sqrt{\varepsilon}n$.\label{lm:forwardUstar-size}
		\item For each $v\in U^{1-\gamma}(T)$, we have $\overrightarrow{d}_{\cF,\cU}^\pm(v)=\overrightarrow{d}_{D,\cU}^\pm(v)$.\label{lm:forwardUstar-forwardV**}
		\item For each $v\in U^{1-2\gamma}(T)\setminus U^{1-\gamma}(T)$, we have $\overleftarrow{d}_{\cF,\cU}^\pm(v)\geq t'-4\varepsilon n$.\label{lm:forwardUstar-forwarddegreeV*}
	\end{enumerate}
\end{lm}

Finally, we use the arguments outlined in \cref{sec:sketchbackward-general} to construct the remaining pseudo-feasible systems using all the leftover backward edges of $T$.

\begin{lm}[Decomposing the backward edges]\label{lm:backwardall}
	Let $0<\frac{1}{n}\ll\varepsilon\ll\gamma \ll 1$ and $\gamma n< r\leq \frac{n}{2}$. Let $T$ be a regular bipartite tournament on $4n$ vertices. Let $\cU$ be an $(\varepsilon, 4)$-partition for $T$ and $U^*$ be an $(\varepsilon, \cU)$-exceptional set for $T$.
	Let $D\subseteq \overleftarrow{T}_{\cU}$ satisfy the following.
	\begin{enumerate}
		\item $e_D(U_1,U_4)=e_D(U_3, U_2)$ and $e_D(U_4,U_3)=e_D(U_2,U_1)$.\label{lm:backwardall-backward}
		\item $\Delta^0(D)\leq 2r$.\label{lm:backwardall-Delta}
		\item For each $v\in U^{1-\gamma}(T)$, $d_D^\pm(v)=2r$.\label{lm:backwardall-V**}
	\end{enumerate}
	Then, there exist edge-disjoint $(\gamma,T)$-pseudo-feasible systems $\cF_1, \dots, \cF_{2r}$ such that $D=\bigcup_{i\in [2r]}\cF_i$ and $e(\cF_i)\leq \sqrt{\varepsilon}n$ for each $i\in [2r]$.
\end{lm}
	
We first assume that \cref{lm:exceptionalfeasible,lm:forwardUstar,lm:backwardall} hold and derive \cref{lm:backwardedges} as follows. \Cref{lm:exceptionalfeasible,lm:forwardUstar} will be proved in \cref{sec:specialpseudofeasible}.
	
\begin{proof}[Proof of \cref{lm:backwardedges}]		
	Apply \cref{lm:exceptionalfeasible} to obtain edge-disjoint feasible systems $\cF_1, \dots, \cF_t\subseteq T$ which satisfy \cref{lm:exceptionalfeasible}\cref{lm:exceptionalfeasible-U',lm:exceptionalfeasible-size,lm:exceptionalfeasible-feasible,lm:exceptionalfeasible-H}. In particular,  \cref{lm:backwardedges-feasible,lm:backwardedges-H} hold.
	
	Let $D\coloneqq T\setminus \bigcup_{i\in [t]}\cF_i$. Then, \cref{lm:forwardUstar}\cref{lm:forwardUstar-D-Delta0} follows from \cref{def:feasible-linforest} and \cref{lm:forwardUstar}\cref{lm:forwardUstar-D-U'} follows from \cref{lm:exceptionalfeasible}\cref{lm:exceptionalfeasible-U'}.
	Let $t'\in \{\lfloor\gamma n\rfloor, \lfloor\gamma n\rfloor+1\}$ be such that $n-t-t'$ is even.
	Let $\cF_{t+1}, \dots, \cF_{t+t'}$ be the edge-disjoint $(\gamma,T)$-pseudo-feasible systems obtained by applying \cref{lm:forwardUstar}. 
	
	Let $D'\coloneqq D\setminus \bigcup_{i\in [t']}\cF_{t+i}=T\setminus \bigcup_{i\in [t+t']}\cF_i$. Let $r\coloneqq \frac{n-t-t'}{2}$ and note that $r\in \mathbb{N}$.
	We claim that \cref{lm:backwardall}\cref{lm:backwardall-backward,lm:backwardall-Delta,lm:backwardall-V**,lm:backwardall-V**} are satisfied with $\overleftarrow{D'}_\cU$ playing the role of $D$.
	Indeed, \cref{lm:backwardall}\cref{lm:backwardall-backward} holds by \cref{fact:backwardedges} and \cref{def:feasible-backward}.
	By \cref{eq:Ugamma}, each $v\in V(T)\setminus U^{1-2\gamma}(T)$ satisfies $\overleftarrow{d}_{D',\cU}^\pm(v)\leq \overleftarrow{d}_{T, \cU}^\pm(v)\leq (1-2\gamma)n\leq 2r$. 
	Moreover, each $v\in U^{1-2\gamma}(T)\setminus U^{1-\gamma}(T)$ satisfies \[\overleftarrow{d}_{D',\cU}^\pm(v)=\overleftarrow{d}_{T, \cU}^\pm(v)-\sum_{i\in [t+t']}\overleftarrow{d}_{\cF_i,\cU}^\pm (v)\stackrel{\text{\cref{lm:forwardUstar}\cref{lm:forwardUstar-forwarddegreeV*}}}{\leq}(1-\gamma)n-(t'-4\varepsilon n)\leq 2r.\] 
	Similarly, each $v\in U^{1-\gamma}(T)$ satisfies
	\[\overleftarrow{d}_{D',\cU}^\pm(v)=\overleftarrow{d}_{T, \cU}^\pm(v)-\sum_{i\in [t+t']}\overleftarrow{d}_{\cF_i,\cU}^\pm (v)\stackrel{\text{\cref{def:feasible-exceptional},\cref{def:feasible-degV*},\cref{lm:forwardUstar}\cref{lm:forwardUstar-forwardV**}}}{=} \overleftarrow{d}_{T, \cU}^\pm(v)-(t+t'-\overrightarrow{d}_{T, \cU}^\pm(v))=2r.\]
	Thus, \cref{lm:backwardall}\cref{lm:backwardall-Delta,lm:backwardall-V**} are satisfied.			
	Apply \cref{lm:backwardall} with $\overleftarrow{D'}_\cU$ playing the role of $D$ to obtain $2r$ edge-disjoint $(\gamma,T)$-pseudo-feasible systems $\cF_{t+t'+1}, \dots, \cF_n$ such that $\overleftarrow{D'}_\cU=\bigcup_{i\in [2r]}\cF_{t+t'+i}$ and $e(\cF_{t+t'+i})\leq \sqrt{\varepsilon}n$ for each $i\in [2r]$.
	
	Then, \cref{lm:backwardedges-backwardedges} follows from \cref{lm:exceptionalfeasible}, \cref{lm:forwardUstar}\cref{lm:forwardUstar-forwardV*}, and \cref{lm:backwardall}.
	Moreover, \cref{lm:backwardedges-size} holds by \cref{lm:exceptionalfeasible}\cref{lm:exceptionalfeasible-size}, \cref{lm:forwardUstar}\cref{lm:forwardUstar-size}, and \cref{lm:backwardall}.
	Finally, \cref{lm:backwardedges-feasible,lm:backwardedges-H} follow from \cref{lm:exceptionalfeasible}\cref{lm:exceptionalfeasible-feasible,lm:exceptionalfeasible-H}.
\end{proof}

\subsection{Proof of Lemma \ref{lm:backwardall}}

We use the arguments presented in \cref{sec:sketchbackward-general}. To keep the number of backward edges balanced in each pseudo-feasible system, we will use the following lemma. Roughly speaking, \cref{lm:movedegree2} states that if we have two equal sized matchings $M_1$ and $M_2$ in an auxiliary digraph where a set $W$ of vertices is replaced by two copies $W^1$ and $W^2$ of $W$, then there exist equal sized $M_1'\subseteq M_1$ and $M_2'\subseteq M_2$ (see \cref{lm:movedegree2}\cref{lm:movedegree2-size2}) such that each of $M_1', M_1\setminus M_1', M_2'$, and $M_2\setminus M_2'$ cover at most one copy of each vertex in $W$ (see \cref{lm:movedegree2}\cref{lm:movedegree2-degree2}) and all edges in $M_1'$ or $M_2'$ are incident to $W^2$ (see \cref{lm:movedegree2}\cref{lm:movedegree2-A2}). In the proof of \cref{lm:backwardall}, we will move the matchings $M_1'$ and $M_2'$ from $\cF_i'$ to $\cF_{\frac{n}{2}+i}'$ (as discussed in \cref{sec:sketchbackward-general}) and so \cref{lm:movedegree2}\cref{lm:movedegree2-degree2} will ensure that the maximum semidegree is reduced to $1$. Moreover, we will construct our auxiliary digraph (see \cref{sec:sketchbackward-general}) in such a way that all the edges incident to the second copy $W^2$ of $W\coloneqq U^{\frac{1}{2}}(T)$ correspond to placeholders. Thus, \cref{lm:movedegree2}\cref{lm:movedegree2-A2} will ensure that no problematic cycle is created (i.e.\ every cycle will contain a placeholder). Since $|M_1'|=|M_2'|$ and $|M_1\setminus M_1'|=|M_2\setminus M_2'|$, the number of backward edges will remain balanced.

\begin{lm}\label{lm:movedegree2}
	Let $W$ and $V'$ be disjoint vertex sets and suppose that $W^1$ and $W^2$ are two copies of $W$. 
	Let $M_1$ and $M_2$ be undirected matchings satisfying the following properties.
	\begin{enumerate}
		\item $|M_1|=|M_2|$.\label{lm:movedegree2-size}
		\item $V(M_1\cup M_2)\subseteq V'\cup W^1\cup W^2$.\label{lm:movedegree2-V}
		\item $e_{M_1\cup M_2}(W^2,W^1\cup W^2)=0$.\label{lm:movedegree2-nonexceptional}
	\end{enumerate}
	Then, there exist $M_1'\subseteq M_1$ and $M_2'\subseteq M_2$ such that the following hold.
	\begin{enumerate}[label=\rm(\alph*),ref=(\alph*)]
		\item $|M_1'|=|M_2'|$.\label{lm:movedegree2-size2}
		\item There exists $i\in [2]$ such that all the edges in $M_i'$ are incident to $W^2$.\label{lm:movedegree2-A2}
		\item For each $i\in [2]$ and $w\in W$, both $V(M_i')$ and $V(M_i\setminus M_i')$ contain at most one copy of~$w$.\label{lm:movedegree2-degree2}
	\end{enumerate}
\end{lm}

\begin{proof}
	By induction on $m\coloneqq|M_1|=|M_2|$. If $m\in \{0,1\}$, then we let $M_1'\coloneqq \emptyset\eqqcolon M_2'$ and we are done. For the induction step, suppose that $m\geq 2$ and that the \lcnamecref{lm:movedegree2} holds for any matchings of size less than $m$ which satisfy \cref{lm:movedegree2-V,lm:movedegree2-nonexceptional,lm:movedegree2-size}.
	
	For any $w\in W$, denote by $w^1\in W^1$ and $w^2\in W^2$ the copies of $w$. 	
	For each $i\in [2]$, denote by $X_i^1\coloneqq \{w^1\in W^1\mid w^1,w^2\in V(M_i)\}$ the set of vertices $w^1\in W^1$ such that $M_i$ covers both $w^1$ and its corresponding vertex $w^2\in W^2$.
	Note that if $X_1^1=\emptyset=X_2^1$, then we may simply set $M_1'=\emptyset=M_2'$ and we are done. Thus, we may view $X_1^1$ and $X_2^1$ as the set of (first copies of the) problematic vertices in $M_1$ and $M_2$. For each $i\in [2]$, let $Y_i^1\coloneqq \{w^1\in X_i^1\mid N_{M_i}(w^1)\subseteq X_i^1\}$, define $Z_i^1\coloneqq X_i^1\setminus Y_i^1$, and note that $M_i[Y_i^1]$ is a matching of size $\frac{|Y_i^1|}{2}$.
	
	\begin{case}		
		\item \textbf{$\min \{|Y_1^1|,|Y_2^1|\}\neq 0$.}
		Then, there exist $v_1^1w_1^1\in M_1[Y_1^1]$ and $v_2^1w_2^1\in M_2[Y_2^1]$. For each $i\in [2]$, denote by $e_i$ and $e_i'$ the edges of $M_i$ which are incident to $v_i^2$ and $w_i^2$, respectively ($e_i$ and $e_i'$ exist by definition of $X_i^1\supseteq Y_i^1$). Note that \cref{lm:movedegree2-nonexceptional} implies that $e_i\neq e_i'$ for each $i\in [2]$.
		Then, observe that $\tM_1\coloneqq M_1\setminus \{v_1^1w_1^1,e_1,e_1'\}$ and $M_2\setminus \{v_2^1w_2^1,e_2,e_2'\}$ are matchings of size $m-3$ which still satisfy \cref{lm:movedegree2-V,lm:movedegree2-nonexceptional,lm:movedegree2-size}. Thus, the induction hypothesis implies that there exist $\tM_1'\subseteq \tM_1$ and $\tM_2'\subseteq \tM_2$ such that \cref{lm:movedegree2-A2,lm:movedegree2-degree2,lm:movedegree2-size2} hold with $\tM_1, \tM_1', \tM_2$, and $\tM_2'$ playing the roles of $M_1, M_1', M_2$, and $M_2'$. Let $M_1'\coloneqq \tM_1'\cup \{e_1, e_1'\}$ and $M_2'\coloneqq \tM_2'\cup \{e_2, e_2'\}$. Since \cref{lm:movedegree2-A2,lm:movedegree2-size2} hold with $\tM_1, \tM_1', \tM_2$, and $\tM_2'$ playing the roles of $M_1, M_1', M_2$, and $M_2'$ and since $e_1,e_1',e_2$, and $e_2'$ are all incident to $W^2$, \cref{lm:movedegree2-A2,lm:movedegree2-size2} hold. For \cref{lm:movedegree2-degree2}, let $i\in [2]$. By construction, \cref{lm:movedegree2-degree2} holds for $v_i$ and $w_i$. Let $w\in W\setminus \{v_i,w_i\}$. 
		By definition, $v_i^1w_i^1$ does not cover a copy of $w$ and, by \cref{lm:movedegree2-nonexceptional}, neither $e_i$ nor $e_i'$ covers a copy of $w$.
		Therefore, since \cref{lm:movedegree2-degree2} holds with $\tM_i$ and $\tM_i'$ playing the roles of $M_i$ and $M_i'$, we have
		\[|V(M_i\setminus M_i')\cap \{w^1, w^2\}|=|V(\tM_i\setminus \tM_i')\cap \{w^1, w^2\}|\leq 1\]
		and
		\[|V(M_i')\cap \{w^1, w^2\}|=|V(\tM_i')\cap \{w^1,w^2\}|\leq 1.\]
		Thus, \cref{lm:movedegree2-degree2} holds and we are done.
		
		\item \textbf{$\min \{|Z_1^1|,|Z_2^1|\}\neq 0$.}
		Then, there exist $w_1^1\in Z_1^1$ and $w_2^1\in Z_2^1$. For each $i\in [2]$, denote by $e_i^1$ and $e_i^2$ the edges of $M_i$ which are incident to $w_i^1$ and $w_i^2$, respectively ($e_i^1$ and $e_i^2$ exist by definition of $X_i^1\supseteq Z_i^1$). Note that, by \cref{lm:movedegree2-nonexceptional}, $e_i^1$ and $e_i^2$ are distinct. Moreover, $\hM_1\coloneqq M_1\setminus \{e_1^1,e_1^2\}$ and $\hM_2\coloneqq M_2\setminus \{e_2^1,e_2^2\}$ are matchings of size $m-2$ and still satisfy \cref{lm:movedegree2-V,lm:movedegree2-nonexceptional,lm:movedegree2-size}. Thus, the induction hypothesis implies that there exist $\hM_1'\subseteq \hM_1$ and $\hM_2'\subseteq \hM_2$ such that \cref{lm:movedegree2-A2,lm:movedegree2-degree2,lm:movedegree2-size2} hold with $\hM_1, \hM_1', \hM_2$, and $\hM_2'$ playing the roles of $M_1, M_1', M_2$, and $M_2'$. Let $M_1'\coloneqq \hM_1'\cup \{e_1^2\}$ and $M_2'\coloneqq \hM_2'\cup \{e_2^2\}$. Since \cref{lm:movedegree2-A2,lm:movedegree2-size2} hold with $\hM_1, \hM_1', \hM_2$, and $\hM_2'$ playing the roles of $M_1, M_1', M_2$, and $M_2'$ and since $e_1^2$ and $e_2^2$ are both incident to $W^2$, \cref{lm:movedegree2-size2,lm:movedegree2-A2} hold.
		For \cref{lm:movedegree2-degree2}, let $i\in [2]$. By construction, \cref{lm:movedegree2-degree2} holds for $w_i$. Suppose that $w\in W\setminus \{w_i\}$.
		If $w^1\notin X_i^1$, then $V(M_i)$ contains at most one copy of $w$ and so \cref{lm:movedegree2-degree2} holds for $w$. We may therefore assume that $w^1\in X_i^1$. 
		By \cref{lm:movedegree2-nonexceptional}, $e_i^2$ does not cover a copy of $w$ and, by definition of $Z_i^1$, neither does $e_i^1$.
		Therefore, since \cref{lm:movedegree2-degree2} holds with $\hM_i$ and $\hM_i'$ playing the roles of $M_i$ and $M_i'$, we have
		\[|V(M_i\setminus M_i')\cap \{w^1, w^2\}|=|V(\hM_i\setminus \hM_i')\cap \{w^1, w^2\}|\leq 1\]
		and
		\[|V(M_i')\cap \{w^1, w^2\}|=|V(\hM_i')\cap \{w^1,w^2\}|\leq 1.\] 
		Thus, \cref{lm:movedegree2-degree2} holds and we are done.
		
		\item \textbf{$\min \{|Y_1^1|,|Y_2^1|\}=0=\min \{|Z_1^1|,|Z_2^1|\}$.} Define $y\coloneqq \max \{|Y_1^1|,|Y_2^1|\}$ and $z\coloneqq \max \{|Z_1^1|,|Z_2^1|\}$.
		For each $i\in [2]$, denote $X_i^2\coloneqq \{w^2\mid w^1\in X_i^1\}$ and recall that $M_i[Y_i^1]$ is a matching of size $\frac{|Y_i^1|}{2}$. Thus, \cref{lm:movedegree2-nonexceptional} implies that each $i\in [2]$ and $S^2\subseteq X_i^2$ satisfies
    	\begin{align}
    		|\{e\in M_i\mid V(e)\cap S^2\neq \emptyset\}|
    		&= |S^2|\label{eq:movedegree2-Xi'}
    	\end{align}
    	and
    	\begin{align}
    		|\{e\in M_i\mid V(e)\cap (X_i^1\cup X_i^2)\neq \emptyset\}|
    	    &\stackrel{\text{\eqmakebox[movedegree2]{}}}{=} |\{e\in M_i\mid V(e)\cap Y_i^1\neq \emptyset\}|\nonumber\\
    	    &\eqmakebox[movedegree2]{}\qquad+ |\{e\in M_i\mid V(e)\cap Z_i^1\neq \emptyset\}|\nonumber\\
    		&\eqmakebox[movedegree2]{}\qquad+|\{e\in M_i\mid V(e)\cap X_i^2\neq \emptyset\}|\nonumber\\
    		&\stackrel{\text{\eqmakebox[movedegree2]{\text{\cref{eq:movedegree2-Xi'}}}}}{=} \frac{|Y_i^1|}{2}+|Z_i^1|+|X_i^2|=\frac{3|Y_i^1|}{2}+2|Z_i^1|.\label{eq:movedegree2}
    	\end{align}
    	We proceed as follows.
    	\begin{enumerate}[label=\textbf{Case 3.\arabic*:},ref=3.\arabic*,wide,parsep=0pt,itemsep=10pt,topsep=10pt]
    		\item \textbf{There exists $i\in [2]$ such that $|Y_i^1|=0=|Z_i^1|$.}\label{case:sameside}
    		Suppose without loss of generality that $|Y_1^1|=0=|Z_1^1|$, $|Y_2^1|=y$, and $|Z_2^1|=z$. Then,
    		\[|M_1|\stackrel{\text{\cref{lm:movedegree2-size}}}{=}|M_2| \stackrel{\text{\cref{eq:movedegree2}}}{\geq}\frac{3y}{2}+2z\geq y+z.\]
    		Let $M_1'\subseteq M_1$ satisfy $|M_1'|=y+z$ and let $M_2'$ consists of all the edges of $M_2$ which are incident to $X_2^2$. Then, all the edges of $M_2'$ are incident to $W^2$ and so \cref{lm:movedegree2-A2} holds. Moreover,
    		\[|M_2'|\stackrel{\text{\cref{eq:movedegree2-Xi'}}}{=}y+z=|M_1'|.\]
    		Thus, \cref{lm:movedegree2-size2} holds. Recall that $X_1^1=\emptyset=X_1^2$. By construction, $X_2^2\subseteq V(M_2')$ and, by \cref{lm:movedegree2-nonexceptional}, $V(M_2')\cap X_2^1=\emptyset$. Therefore, \cref{lm:movedegree2-degree2} holds and we are done.
    		
    		\item \textbf{$z \geq y$.}
    		By Case \ref{case:sameside}, we may assume without loss of generality that $|Y_1^1|=y$, $|Z_1^1|=0=|Y_2^1|$, and $|Z_2^1|=z$.
    		Then,
    		\begin{align*}
    			|M_1-(X_1^1\cup X_1^2)|\stackrel{\text{\cref{eq:movedegree2}}}{=} |M_1|-\frac{3y}{2} \stackrel{\text{\cref{lm:movedegree2-size}}}{=}|M_2|-\frac{3y}{2}\stackrel{\text{\cref{eq:movedegree2}}}{\geq} 2z-\frac{3y}{2}\geq z-y.
    		\end{align*}
    		Let $M_1'$ consist of all the edges of $M_1$ which are incident to $X_1^2$ plus $z-y$ edges of $M_1-(X_1^1\cup X_1^2)$. Let $M_2'$ consist of all the edges of $M_2$ which are incident to $X_2^2$.
    		Then, all the edges in $M_2'$ are incident to $W^2$ and so \cref{lm:movedegree2-A2} holds. Moreover,
    		\begin{align*}
    			|M_1'|\stackrel{\text{\cref{eq:movedegree2-Xi'}}}{=} y+(z-y)= z \stackrel{\text{\cref{eq:movedegree2-Xi'}}}{=} |M_2'|.
    		\end{align*}
    		Thus, \cref{lm:movedegree2-size2} holds. 
    		Let $i\in [2]$. By construction, $X_i^2\subseteq V(M_i')$ and, by \cref{lm:movedegree2-nonexceptional}, $V(M_i')\cap X_i^1=\emptyset$. Thus, \cref{lm:movedegree2-degree2} holds and we are done.
    		
    		\item \textbf{$y \geq 2z$.}
    		By Case \ref{case:sameside}, we may assume without loss of generality that $|Y_1^1|=y$, $|Z_1^1|=0=|Y_2^1|$, and $|Z_2^1|=z$.
    		Then,
    		\begin{align*}
    			|M_2-(X_2^1\cup X_2^2)|\stackrel{\text{\cref{eq:movedegree2}}}{=} |M_2|-2z \stackrel{\text{\cref{lm:movedegree2-size}}}{=}|M_1|-2z\stackrel{\text{\cref{eq:movedegree2}}}{\geq} \frac{3y}{2}-2z\geq y-z.
    		\end{align*}
    		Let $M_1'$ consist of all the edges of $M_1$ which are incident to $X_1^2$. Let $M_2'$ consist of all the edges of $M_2$ which are incident to $X_2^2$  plus $y-z$ edges of $M_2-(X_2^1\cup X_2^2)$.
    		Then, all the edges in $M_1'$ are incident to $W^2$ and so \cref{lm:movedegree2-A2} holds. Moreover,
    		\begin{align*}
    			|M_1'|\stackrel{\text{\cref{eq:movedegree2-Xi'}}}{=} y= z+(y-z) \stackrel{\text{\cref{eq:movedegree2-Xi'}}}{=} |M_2'|.
    		\end{align*}
    		Thus, \cref{lm:movedegree2-size2} holds. 
    		Let $i\in [2]$. By construction, $X_i^2\subseteq V(M_i')$ and, by \cref{lm:movedegree2-nonexceptional}, $V(M_i')\cap X_i^1=\emptyset$. Thus, \cref{lm:movedegree2-degree2} holds and we are done.
    		
    		\item \textbf{$2z\geq y \geq z$.}
    		By Case \ref{case:sameside}, we may assume without loss of generality that $|Y_1^1|=y$, $|Z_1^1|=0=|Y_2^1|$, and $|Z_2^1|=z$.
    		Then, $M_1[Y_1^1]$ is a matching of size $\frac{y}{2}\geq y-z$.
    		Let $S^1\subseteq M_1[Y_1^1]$ satisfy $|S^1|=y-z$ and define $S^2\coloneqq \{w^2\mid w^1\in V(S^1)\}$.
    		Let $M_1'$ be obtained from $S^1$ by adding all the edges $M_1$ which are incident to $X_1^2\setminus S^2$. Let $M_2'$ consist of all the edges of $M_2$ which are incident to $X_2^2$.
    		Then, all the edges in $M_2'$ are incident to $W^2$ and so \cref{lm:movedegree2-A2} holds.
    		Note that
    		\begin{align*}
    			|M_1'|\stackrel{\text{\cref{eq:movedegree2-Xi'}}}{=}|S^1|+ (y-2|S^1|)=z\stackrel{\text{\cref{eq:movedegree2-Xi'}}}{=} |M_2'|.
    		\end{align*}
    		Thus, \cref{lm:movedegree2-size2} holds. 
    		By construction, $X_2^2\subseteq V(M_2')$ and, by \cref{lm:movedegree2-nonexceptional}, $V(M_2')\cap X_2^1=\emptyset$. Moreover, \cref{lm:movedegree2-nonexceptional} implies that $V(M_1')\cap X_1^2=X_1^2\setminus S^2$ and $V(M_1')\cap X_1^1=V(S^1)$. Thus, \cref{lm:movedegree2-degree2} holds and we are done.\qedhere
    	\end{enumerate}
    	\end{case}	
\end{proof}

\begin{proof}[Proof of \cref{lm:backwardall}]
	We proceed as follows.
	\begin{steps}
		\item \textbf{Constructing auxiliary graphs.}
		For each $i\in [4]$, we construct an auxiliary (undirected) graph $H_i$ as follows.
		Let $i\in [4]$.
		Let $W_i$ be the set of vertices $w\in U_i\cup U_{i-1}$ such that $d_{D[U_i, U_{i-1}]}(w)\geq r$. Observe that
		\begin{equation}\label{eq:backwardall-W}
			W_i\subseteq U^\gamma(T)\stackrel{\text{\cref{def:ES-backward},\cref{fact:U}}}{\subseteq} U^*.
		\end{equation}
		Let $W_i^1$ and $W_i^2$ be two copies of $W_i$. For each $w\in W_i$, we denote by $w^1\in W_i^1$ and $w^2\in W_i^2$ the copies of $w$.
		For each $w\in W_i$, let $N_i^1(w)\cup N_i^2(w)$ be a partition of $N_{D[U_i,U_{i-1}]}(w)$ satisfying $|N_i^j(w)|\leq r$ for each $j\in [2]$ (this is possible by \cref{lm:backwardall-Delta}). By \cref{def:ES-size}, $|U^*|\leq r$ and so we may assume that
		\begin{equation}\label{eq:backwardall-U*}
			U^*\cap N_i^2(w)=\emptyset
		\end{equation}
		for each $w\in W_i$.
		
		For each $i\in [4]$, let $H_i$ be the (undirected) graph on $((U_i\cup U_{i-1})\setminus W_i)\cup (W_i^1\cup W_i^2)$ which contains all of the following edges, and no other edges.
		\begin{itemize}
			\item If $uv\in E(D[U_i\setminus W_i, U_{i-1}\setminus W_i])$, then $uv\in E(H_i)$.
			\item If $uv\in E(D[U_i\cap W_i, U_{i-1}\cap W_i])$, then $u^1v^1\in E(H_i)$.
			\item Suppose that $uv\in E(D[U_i\cap W_i, U_{i-1}\setminus W_i])$. If $v\in N_i^1(u)$, then $u^1v\in E(H_i)$. Otherwise, $v\in N_i^2(u)$ and $u^2v\in E(H_i)$.
			\item Suppose that $uv\in E(D[U_i\setminus W_i, U_{i-1}\cap W_i])$. If $u\in N_i^1(v)$, then $uv^1\in E(H_i)$. Otherwise, $u\in N_i^2(v)$ and $uv^2\in E(H_i)$.
		\end{itemize}
		
		\begin{claim}
			For each $i\in [4]$, $H_i$ is a bipartite graph which satisfies the following properties.
			\begin{enumerate}[label=\rm(\alph*)]
				\item $V(H_i)= ((U_i\cup U_{i-1})\setminus W_i)\cup (W_i^1\cup W_i^2)$.\label{claim:backwardallH-V}
				\item $e(H_i)=e_D(U_i, U_{i-1})$.\label{claim:backwardallH-e}
				\item $\Delta(H_i)\leq r$.\label{claim:backwardallH-Delta}
				\item $e_{H_i}(W_i^2, W_i^1\cup W_i^2)=0$.\label{claim:backwardallH-nonexceptional}
			\end{enumerate}
		\end{claim}
	
		\begin{proofclaim}
			Let $i\in [4]$. One can easily verify that $H_i$ is a bipartite graph on vertex classes $(U_i\setminus W_i)\cup \{w^j\mid j\in [2], w\in W_i\cap U_i\}$ and $(U_{i-1}\setminus W_i)\cup \{w^j\mid j\in [2], w\in W_i\cap U_{i-1}\}$. In particular, \cref{claim:backwardallH-V} holds.
			Moreover, there is a one-to-one correspondence between the edges of $D[U_i, U_{i-1}]$ and $H_i$ and so \cref{claim:backwardallH-e} holds. For \cref{claim:backwardallH-Delta},			
			observe that each $v\in (U_i\cup U_{i-1})\setminus W_i$ satisfies $d_{H_i}(v)=d_{D[U_i, U_{i-1}]}(v)\leq r$, while each $w^1\in W_i^1$ satisfies
			\[d_{H_i}(w^1)=|N_{D[U_i, U_{i-1}]}(w)\cap W_i|+|N_i^1(w)\setminus W_i|\stackrel{\text{\cref{eq:backwardall-W},\cref{eq:backwardall-U*}}}{=}|N_i^1(w)|\leq r.\]
			Moreover, each $w^2\in W_i^2$ satisfies $N_{H_i}(w^2)=N_i^2(w)\setminus W_i$ and so $d_{H_i}(w^2)\leq r$. Thus, \cref{claim:backwardallH-Delta,claim:backwardallH-nonexceptional} hold.
		\end{proofclaim}
		
		\item \textbf{Decomposing the auxiliary graphs.}		
		For each $i\in [4]$, apply \cref{prop:Konigsamesize} to decompose $H_i$ into $r$ edge-disjoint (undirected) matchings $M_{i,1}, \dots, M_{i,r}$ such that, for any $j,j'\in [r]$, $||M_{i,j}|-|M_{i,j'}||\leq 1$ (this is possible by \cref{claim:backwardallH-Delta}).
		By \cref{lm:backwardall-backward} and \cref{claim:backwardallH-e}, we may assume without loss of generality that, for each $i\in [r]$, we have
		\begin{equation}\label{eq:backwardall-size}
			|M_{1,i}|=|M_{3,i}| \quad \text{and} \quad |M_{4,i}|=|M_{2,i}|.
		\end{equation}
		Moreover, each $i\in [4]$ and $j\in [r]$ satisfy
		\begin{align}\label{eq:backwardall-M}
			|M_{i,j}|\stackrel{\text{\cref{fact:epsilon4partition}}}{\leq} \left\lceil\frac{\varepsilon n^2}{r}\right\rceil \leq \frac{\sqrt{\varepsilon} n}{4}.
		\end{align}
	
		\item \textbf{Decomposing $D$.}
		Let $i\in [2]$ and $j\in [r]$.
		Define $W\coloneqq W_i\cup W_{i+2}$, $W^1\coloneqq W_i^1\cup W_{i+2}^1$, and $W^2\coloneqq W_i^2\cup W_{i+2}^2$. Let $V'\coloneqq V(T)\setminus W$.
		Note that \cref{lm:movedegree2}\cref{lm:movedegree2-V,lm:movedegree2-nonexceptional,lm:movedegree2-size} are satisfied with $M_{i,j}$ and $M_{i+2,j}$ playing the roles of $M_1$ and $M_2$.
		Indeed, \cref{lm:movedegree2}\cref{lm:movedegree2-size} follows from \cref{eq:backwardall-size}, while \cref{lm:movedegree2}\cref{lm:movedegree2-V} holds by \cref{claim:backwardallH-V}. Moreover, \cref{lm:movedegree2}\cref{lm:movedegree2-nonexceptional} follows from \cref{claim:backwardallH-V,claim:backwardallH-nonexceptional}%
			\COMMENT{Need \cref{claim:backwardallH-V} for $e_{H_i\cup H_{i+2}}(W_i^2,W_{i+2}^1\cup W_{i+2}^2)=0=e_{H_i\cup H_{i+2}}(W_{i+2}^2,W_i^1\cup W_i^2)$.}.
		Apply \cref{lm:movedegree2} with $M_{i,j}$ and $M_{i+2,j}$ playing the roles of $M_1$ and $M_2$ to obtain $M_{i,j}'\subseteq M_{i,j}$ and $M_{i+2,j}'\subseteq M_{i+2,j}$ satisfying \cref{lm:movedegree2}\cref{lm:movedegree2-size2,lm:movedegree2-A2,lm:movedegree2-degree2}.
		For each $i'\in \{i,i+2\}$, let $\tM_{i',j}$ and $\tM_{i',j}'$ be obtained from $M_{i',j}$ and $M_{i',j}'$ by replacing, for each $j\in [2]$, each $w^j\in W_{i'}^j$ by $w$, and then orienting all the edges from $U_{i'}$ to $U_{i'-1}$.
		By definition of $H_i$ and $H_{i+2}$, we have $\tM_{i,j}'\subseteq \tM_{i,j}\subseteq E_D(U_i, U_{i-1})$ and $\tM_{i+2,j}'\subseteq \tM_{i+2,j}\subseteq E_D(U_{i+2}, U_{i+1})$.
		
		For each $i\in [r]$, let 
		\begin{itemize}
		    \item $\cF_i\coloneqq (\tM_{1,i}\setminus \tM_{1,i}')\cup \tM_{2,i}'\cup (\tM_{3,i}\setminus \tM_{3,i}')\cup \tM_{4,i}'$ and
		    \item $\cF_{r+i}\coloneqq \tM_{1,i}'\cup(\tM_{2,i}\setminus \tM_{2,i}')\cup \tM_{3,i}'\cup(\tM_{4,i}\setminus \tM_{4,i}')$.
		\end{itemize} 
		By definition, there is a one-to-one correspondence between the edges of $H_1\cup \dots\cup H_4$ and $D$. Thus, $\cF_1, \dots, \cF_{2r}$ are edge-disjoint and $D=\bigcup_{i\in [2r]}\cF_i$. By \cref{eq:backwardall-M}, we have $e(\cF_i)\leq \sqrt{\varepsilon}n$ for each $i\in [2r]$.
		
		\item \textbf{Verifying \cref{def:feasible-backward} and \cref{def:feasible-degV*,def:feasible-degV',def:feasible-cycle}.} Finally, we verify that $\cF_1, \dots, \cF_{2r}$ are $(\gamma, T)$-pseudo-feasible systems.
		Let $j\in [r]$.
		First, observe that
		\begin{align*}
			e_{\cF_j}(U_1, U_4)=|\tM_{1,j}|- |\tM_{1,j}'|\stackrel{\text{\cref{eq:backwardall-size},\cref{lm:movedegree2}\cref{lm:movedegree2-size2}}}{=}|\tM_{3,j}|- |\tM_{3,j}'|=e_{\cF_j}(U_3,U_2)
		\end{align*}
		and
		\begin{align*}
			e_{\cF_j}(U_4,U_3)=|\tM_{4,j}'|\stackrel{\text{\cref{lm:movedegree2}\cref{lm:movedegree2-size2}}}{=} |\tM_{2,j}'|=e_{\cF_j}(U_2, U_1).
		\end{align*}
		Thus, \cref{def:feasible-backward} holds.
		By \cref{lm:movedegree2}\cref{lm:movedegree2-degree2}, $\tM_{i,j'}'$ and $\tM_{i,j'}\setminus \tM_{i,j'}'$ are matchings for each $i\in [4]$ and $j'\in [r]$. 
		Therefore, $\Delta^0(\cF_{j'})\leq 1$ for each $j'\in [2r]$. In particular, \cref{def:feasible-degV'} holds for $\cF_j$.
		Moreover, \cref{lm:backwardall-V**} implies that each $v\in U^{1-\gamma}(T)$ satisfies $d_{\cF_{j'}}^+(v)=1=d_{\cF_{j'}}^-(v)$ for each $j'\in [2r]$. In particular, \cref{def:feasible-degV*} holds for $\cF_j$. For \cref{def:feasible-cycle}, suppose that $C$ is a cycle in $\cF_j$. By \cref{lm:movedegree2}\cref{lm:movedegree2-A2}, there exists $i\in \{2,4\}$ such that all the edges in $M_{i,j}'$ are incident to $W_i^2$. Since $C\subseteq D\subseteq \overleftarrow{T}_\cU$, there exists $e\in E_C(U_i,U_{i-1})\subseteq \tM_{i,j}'$.
		Denote by $e'$ the edge of $M_{i,j}'$ which witnesses that $e\in \tM_{i,j}'$. By assumption, $e'$ is incident to $W_i^2$, say $e'=w^2v$ for some $w^2\in W_i^2$ (similar arguments hold if the ending point of $e'$ is in $W_i^2$). By \cref{claim:backwardallH-nonexceptional}, we have $v\in U_{i-1}\setminus W_i$ and so, by construction, $e=wv$ with $w\in W_i\cap U_i$ and $v\in N_i^2(w)$.
		Therefore, \cref{eq:backwardall-W,eq:backwardall-U*} imply that $e$ is a backward edge from $U^{\gamma}(T)\cap U^*$ to $V(T)\setminus U^*$. Thus, $e$ is a $(\gamma, T)$-placeholder and so \cref{def:feasible-cycle} holds. Therefore, $\cF_j$ is a $(\gamma, T)$-pseudo-feasible system and, by similar arguments, $\cF_{r+j}$ is also a $(\gamma, T)$-pseudo-feasible system.\qedhere
	\end{steps}
\end{proof}

\section{Constructing a few special (pseudo)-feasible systems: proofs of Lemmas~\ref{lm:exceptionalfeasible}~and~\ref{lm:forwardUstar}}\label{sec:specialpseudofeasible}

	\onlyinsubfile{
		\setcounter{section}{11}
		\section{Constructing a few special (pseudo)-feasible systems: proof of Lemmas \ref{lm:exceptionalfeasible} and \ref{lm:forwardUstar}}}

Finally, we show how to construct a few (pseudo)-feasible systems which satisfy some special properties: in \cref{lm:exceptionalfeasible}, we construct a few feasible systems out of prescribed sets of edges and, in \cref{lm:forwardUstar}, we construct a few pseudo-feasible systems which incorporate a given set of exceptional edges.

\subsection{Proof overview}\label{sec:sketch-specialpseudofeasible}
Each (pseudo)-feasible system $\cF$ in \cref{lm:exceptionalfeasible,lm:forwardUstar} will be constructed using the following approach.
Let $T$ be a regular bipartite tournament. Let $\cU=(U_1, \dots, U_4)$ be an optimal $(\varepsilon,4)$-partition for $T$ and let $U^*$ be an $(\varepsilon, \cU)$-exceptional set for $T$.

\begin{steps}
	\item \textbf{Selecting the forward edges which are incident to $U^{1-\gamma}(T)$.}\label{step:sketch-forwardU**}
	First, we fix the forward edges incident to $U^{1-\gamma}(T)$ that we want $\cF$ to cover. We make sure that these edges form a linear forest $\cF^1$.
	(Note that this step will be void in the proof of \cref{lm:exceptionalfeasible}, all these forward edges will be covered in \cref{lm:forwardUstar}.)
	
	\item \textbf{Covering $U^{1-\gamma}(T)$.}\label{step:sketch-coverU**}
	Then, to ensure that \cref{def:feasible-degV*} is satisfied, we add backward edges to $\cF^1$ to cover all the uncovered vertices in $U^{1-\gamma}(T)$. 
	Since the vertices in $U^{1-\gamma}(T)$ have, by definition, very high backward degree, we can do so greedily and in such a way that we still have a linear forest $\cF^2$.
	
	\item \textbf{Balancing the number of backward edges.}\label{step:sketch-backward} 
	Next, we construct a linear forest $\cF^3$ by adding to $\cF^2$ precisely $m^{i\downarrow}\coloneqq e_{\cF^2}(U_{i-2},U_{i-3})$ additional backward edges of $T(U_i,U_{i-1})$ for each $i\in [4]$ (where the superscript $i$ is taken modulo $4$). Then, $e_{\cF^3}(U_i, U_{i-1})=m^{(i+2)\downarrow}+m^{i\downarrow}$ for each $i\in [4]$ and so \cref{def:feasible-backward} holds.
	
	These new edges will be selected using K\"{o}nig's theorem as follows. First, we find a subdigraph $H\subseteq \overleftarrow{T}_\cU-U^{1-\gamma}(T)$ which contains many edges but has small maximum degree. (In practice, $H$ is already given in \cref{lm:exceptionalfeasible} (see \cref{lm:main}\cref{lm:main-H-degree,lm:main-H-edges}). For \cref{lm:forwardUstar}, $H$ will be constructed using \cref{lm:optimalH}.)
	Then, we can use \cref{prop:Koniglarge} to find, for each $i\in [4]$, a matching of size $m^{i\downarrow}$ in $H(U_i, U_{i-1})$ which avoids all the vertices in $\cF^2$.
	
	\item \textbf{Adding forward edges incident to $U^*\setminus U^{1-\gamma}(T)$.}\label{step:sketch-forwardU*}
	Finally, we add to $\cF^3$ a few extra forward edges incident to $U^*\setminus U^{1-\gamma}(T)$. We do this in such a way that the resulting digraph $\cF$ is still a linear forest.
	
	For \cref{lm:exceptionalfeasible}, this is necessary because we want $\cF$ to be a feasible system and so $U^*$ needs to be entirely covered by in- and outedges (see \cref{def:feasible-exceptional}). This can be done greedily since the vertices in $U^*\setminus U^{1-\gamma}(T)$ have high forward degree (see \cref{lm:main}\cref{lm:main-H-degreeV*}).
	
	For \cref{lm:forwardUstar}, we only need a pseudo-feasible system and so $U^*$ need not be entirely covered (see \cref{def:feasible-degV*}). However, recall that we will need to incorporate all the forward edges of $T$ which lie inside $U^*$ (see \cref{lm:forwardUstar}\cref{lm:forwardUstar-forwardV*}). We will thus add a few of these to $\cF^3$ at this stage (and distribute the remaining such edges to the other pseudo-feasible systems).
\end{steps}

In practice, all the (pseudo)-feasible systems in \cref{lm:exceptionalfeasible,lm:forwardUstar} will be constructed in parallel. In particular, for \cref{lm:forwardUstar}, we will decompose all the forward edges of $T$ which are incident to $U^{1-\gamma}(T)$ at the start of the proof (this corresponds to \cref{step:sketch-forwardU**} above) and the other exceptional forward edges of $T$ will be distributed greedily at the end of the proof (this corresponds to \cref{step:sketch-forwardU*}).

\subsection{Selecting backward edges}

\Cref{step:sketch-backward,step:sketch-coverU**} from \cref{sec:sketch-specialpseudofeasible} are combined into the following lemma.
Let $T$ be a regular bipartite tournament. Let $\cU=(U_1, \dots, U_4)$ be an optimal $(\varepsilon,4)$-partition for $T$ and let $U^*$ be an $(\varepsilon, \cU)$-exceptional set for $T$.
Let $H\subseteq \overleftarrow{T}_\cU$ contain many well distributed backward edges. 
Roughly speaking, \cref{lm:backwardmatchings} states that $H$ contains edge-disjoint linear forests $\cF_1, \dots, \cF_\ell$, where each $\cF_j(U_i,U_{i-1})$ covers all vertices of $U_i^{1-\gamma}(T)\cup U_{i-1}^{1-\gamma}(T)$ apart from those in a given prescribed set $S_j^{i\downarrow}$ of vertices to avoid and contains a prescribed number $m_j^{i\downarrow}$ of additional edges (see \cref{lm:backwardmatchings}\cref{lm:backwardmatchings-matchings}). 
Moreover, these linear forests can be constructed in such a way that every vertex of $V(T)\setminus U^{1-\gamma}(T)$ is not covered by too many of the linear forests (see \cref{lm:backwardmatchings}\cref{lm:backwardmatchings-degree}) and is adjacent to at most one edge in each linear forest (see \cref{lm:backwardmatchings}\cref{lm:backwardmatchings-degree1}).

In our applications, the sets $S_j^{i\downarrow}$ of vertices to avoid will consist of the vertices which are already covered at the end of \cref{step:sketch-forwardU**} from \cref{sec:sketch-specialpseudofeasible} and the constants $m_j^{i\downarrow}$ will be chosen as described in \cref{step:sketch-backward} from \cref{sec:sketch-specialpseudofeasible} in order to balance the number of backward edges in each pair of the blow-up $C_4$.

As explained in \cref{sec:sketch-specialpseudofeasible}, the backward edges will be chosen in two stages, depending on whether they are incident to $U^{1-\gamma}(T)$ or not. However, we will swap the order of \cref{step:sketch-coverU**,step:sketch-backward}. That is, we will select the backward edges incident to $U^{1-\gamma}(T)$ only after the other backward edges have been selected. This is because it is much easier to select backward edges incident to $U^{1-\gamma}(T)$ (the vertices in $U^{1-\gamma}(T)$ have high backward degree and so can be covered greedily).

\begin{lm}[Selecting backward edges]\label{lm:backwardmatchings}
	Let $0<\frac{1}{n}\ll \varepsilon \ll \gamma\ll 1$ and $\ell \leq \gamma n$. Let $T$ be a regular bipartite tournament on $4n$ vertices and $\cU=(U_1, \dots, U_4)$ be an $(\varepsilon,4)$-partition for $T$.
	Suppose that $H\subseteq \overleftarrow{T}$ satisfies the following.
	\begin{enumerate}
		\item For each $v\in U^{1-\gamma}(T)$, $d_H^\pm(v)\geq 2\gamma n$.\label{lm:backwardmatchings-H-degreeV**}
		\item For each $v\in V(T)\setminus U^{1-\gamma}(T)$, $d_H^\pm(v)\leq 2\gamma n$.\label{lm:backwardmatchings-H-degree}
		\item For each $i\in [4]$, $e_{H-U^{1-\gamma}(T)}(U_i, U_{i-1})\geq 109\gamma n|U_{i-2}^{1-\gamma}(T)\cup U_{i-3}^{1-\gamma}(T)|$.\label{lm:backwardmatchings-H-edges}
	\end{enumerate}
	For each $i\in [4]$ and $j\in [\ell]$, let $S_j^{i\downarrow}\subseteq U_i\cup U_{i-1}$  and $m_j^{i\downarrow}\in \mathbb{N}$ satisfy the following.
	\begin{enumerate}[resume]
		\item $|S_j^{i\downarrow}\setminus U^{1-\gamma}(T)|\leq |U_{i-2}^{1-\gamma}(T)\cup U_{i-3}^{1-\gamma}(T)|$.\label{lm:backwardmatchings-sizeS}
		\item $m_j^{i\downarrow}\leq |U_{i-2}^{1-\gamma}(T)\cup U_{i-3}^{1-\gamma}(T)|$.\label{lm:backwardmatchings-m}
	\end{enumerate}
	Then, $H$ contains edge-disjoint linear forests $\cF_1, \dots, \cF_\ell$ such that the following hold, where $\cF\coloneqq \bigcup_{i\in [\ell]}\cF_i$.
	\begin{enumerate}[label=\rm(\alph*)]
		\item For each $j\in [\ell]$, $\cF_j$ consists of\label{lm:backwardmatchings-matchings}
		\begin{enumerate}[label=\rm(\greek*), ref=\rm(\alph{enumi}.\greek*)]
			\item a matching of $H_j^{i\downarrow}\coloneqq H(U_i\setminus (U^{1-\gamma}(T)\cup S_j^{i\downarrow}), U_{i-1}\setminus (U^{1-\gamma}(T)\cup S_j^{i\downarrow}))$ of size $m_j^{i\downarrow}$ for each $i\in [4]$;\label{lm:backwardmatchings-matchings-V'}
			\item a matching of $H(U_i^{1-\gamma}(T)\setminus S_j^{i\downarrow}, U_{i-1}\setminus (U^{1-\gamma}(T)\cup S_j^{i\downarrow}))$ of size $|U_i^{1-\gamma}(T)\setminus S_j^{i\downarrow}|$ for each $i\in [4]$; and\label{lm:backwardmatchings-matchings-V*}
			\item a matching of $H(U_i\setminus (U^{1-\gamma}(T)\cup S_j^{i\downarrow}),U_{i-1}^{1-\gamma}(T)\setminus S_j^{i\downarrow})$ of size $|U_{i-1}^{1-\gamma}(T)\setminus S_j^{i\downarrow}|$ for each $i\in [4]$.\label{lm:backwardmatchings-matchings-V*'}
		\end{enumerate}
		\item For each $v\in V(T)\setminus U^{1-\gamma}(T)$, we have $d_{\cF}(v)\leq \frac{\gamma n}{6}$.\label{lm:backwardmatchings-degree}
		\item For each $i\in [\ell]$ and $v\in V(T)\setminus U^{1-\gamma}(T)$, we have $d_{\cF_i}(v)\leq 1$. \label{lm:backwardmatchings-degree1}
	\end{enumerate}
\end{lm}

\begin{proof}
	We will first select the edges which are not incident to $U^{1-\gamma}(T)$ (i.e.\ those in \cref{lm:backwardmatchings-matchings-V'}) as follows.
	
	\begin{claim}\label{claim:backwardmatchings}
		$H$ contains edge-disjoint linear forests $\cQ_1, \dots, \cQ_\ell$ such that the following hold, where $\cQ\coloneqq \bigcup_{i\in [\ell]}\cQ_i$.
		\begin{enumerate}[label=\rm(\alph*$'$)]
			\item For each $j\in [\ell]$, $\cQ_j$ consists of
			a matching of $H_j^{i\downarrow}$ of size $m_j^{i\downarrow}$ for each $i\in [4]$.\label{lm:backwardmatchings-matchings'}
			\newcounter{backwardmatchings}
			\setcounter{backwardmatchings}{\value{enumi}}
			\item For each $v\in V(T)\setminus U^{1-\gamma}(T)$, we have $d_\cQ(v)\leq \frac{\gamma n}{6}$.\label{lm:backwardmatchings-degree'}
			\item For each $i\in [\ell]$ and $v\in V(T)\setminus U^{1-\gamma}(T)$, we have $d_{\cQ_i}(v)\leq 1$.\label{lm:backwardmatchings-degree1'}
		\end{enumerate}
	\end{claim}
	
	First, we assume that \cref{claim:backwardmatchings} holds and select the edges incident to $U^{1-\gamma}(T)$ (i.e.\ those in \cref{lm:backwardmatchings-matchings-V*,lm:backwardmatchings-matchings-V*'}) using \cref{lm:extendlinforest} as follows. Let $\cQ_1, \dots, \cQ_\ell$ be the edge-disjoint linear forests obtained by applying \cref{claim:backwardmatchings} and denote $\cQ\coloneqq \bigcup_{i\in [\ell]}\cQ_i$.
	If $U^{1-\gamma}(T)=\emptyset$, then let $\cF_i\coloneqq \cQ_i$ for each $i\in [\ell]$ and observe that \cref{lm:backwardmatchings-matchings,lm:backwardmatchings-degree,lm:backwardmatchings-degree1} follow from \cref{lm:backwardmatchings-matchings',lm:backwardmatchings-degree',lm:backwardmatchings-degree1'}. We may therefore assume that $U^{1-\gamma}(T)\neq \emptyset$%
		\COMMENT{Needed for $N\geq 1$.}.
	
	First, note that each $i\in [\ell]$ satisfies%
		\COMMENT{$e(\cQ_i)\leq 2|U^{1-\gamma}(T)|$ because each $U_j^{1-\gamma}(T)$ appears twice in \cref{lm:backwardmatchings-m}.}
	\begin{equation}\label{eq:backwardmatchings-Q}
		|\cQ_i| \leq 2e(\cQ_i)\stackrel{\text{\cref{lm:backwardmatchings-m},\cref{lm:backwardmatchings-matchings'}}}{\leq}4|U^{1-\gamma}(T)|\stackrel{\text{\cref{fact:U},\cref{def:ES}}}{\leq}16\varepsilon n.
	\end{equation}
	Let $X$ be the set of vertices $v\in V(T)\setminus U^{1-\gamma}(T)$ such that $d_{\cQ}(v)\geq \frac{\gamma n}{7}$ (so $X$ is the set of vertices which are already covered by many of the linear forests $\cQ_1, \dots, \cQ_\ell$).
	Note that
	\begin{equation}\label{eq:backwardmatchings-X}
		|X|\stackrel{\text{\cref{eq:backwardmatchings-Q},\cref{lm:backwardmatchings-degree1'}}}{\leq} \frac{7\cdot 16\varepsilon n\ell}{\gamma n}\leq 112\varepsilon n.
	\end{equation}
	For each $j\in [\ell]$, denote by
	\begin{equation*}
		S_j\coloneqq \bigcup_{i\in [4]}(S_j^{i\downarrow}\setminus U^{1-\gamma}(T))
	\end{equation*}
	the set of vertices outside $U^{1-\gamma}(T)$ that need to be avoided by $\cF_j$
	and observe that
	\begin{align}
		|S_j|&\leq \sum_{i\in [4]}|S_j^{i\downarrow}\setminus U^{1-\gamma}(T)| \stackrel{\text{\cref{lm:backwardmatchings-sizeS}}}{\leq} 2|U^{1-\gamma}(T)|
		\stackrel{\text{\cref{fact:U},\cref{def:ES}}}{\leq} 8\varepsilon n.\label{eq:backwardmatchings-S}
	\end{align}
	Let $Y$ be the set of vertices $v\in V(T)\setminus U^{1-\gamma}(T)$ for which there exist at least $\frac{\gamma n}{7}$ indices $i\in [\ell]$ such that $v\in S_i$ (so $Y$ is the set vertices which need to be avoided by many of the linear forests $\cF_1, \dots, \cF_\ell$).
	Note that 
	\begin{equation}\label{eq:backwardmatchings-Y}
		|Y|\stackrel{\text{\cref{eq:backwardmatchings-S}}}{\leq}\frac{7\cdot 8\varepsilon n \ell}{\gamma n}\leq 56\varepsilon n.
	\end{equation}
	Let $A\coloneqq U^{1-\gamma}(T)$ and $B\coloneqq V(T)\setminus (A\cup X\cup Y)$. Let $D$ be the bipartite digraph on vertex classes $A$ and $B$ induced by $H$%
		\COMMENT{I.e.\ $E(D)$ consists of all the edges of $H$ with one endpoint in $A$ and one endpoint in $B$.}.
	For each $j\in [\ell]$, let $S_j'\coloneqq V(E(\cQ_j))\cup S_j$ and define 
	\begin{equation*}
		T_j^+\coloneqq \bigcup_{i\in [4]} U_i^{1-\gamma}(T)\setminus S_j^{i\downarrow} \quad \text{and}\quad T_j^-\coloneqq \bigcup_{i\in [4]} U_{i-1}^{1-\gamma}(T)\setminus S_j^{i\downarrow}.
	\end{equation*}
	By \cref{lm:backwardmatchings-matchings'}, $V(\cQ_i)\cap  U^{1-\gamma}(T)=\emptyset$ for each $i\in [\ell]$. Thus, note for later that both
	\begin{equation}\label{eq:backwardmatchings-ST}
		T_i^\pm\cap V(\cQ_i)=\emptyset \quad \text{and} \quad V(E(\cQ_i))\cap B\subseteq S_i'
	\end{equation}
	for each $i\in [\ell]$.
	Define $N\coloneqq 2|U^{1-\gamma}(T)|=2|A|$.
	For each $v\in B$, let $n_v$ denote the number of indices $i\in [\ell]$ such that $v\in S_i'$.
	
	We verify that \cref{eq:extendlinforest} holds with $T_1^+, \dots, T_\ell^+, T_1^-, \dots, T_\ell^-$, and $S_1', \dots, S_\ell'$ playing the roles of $S_1^+, \dots, S_\ell^+, S_1^-, \dots, S_\ell^-$, and $T_1, \dots, T_\ell$.
	For each $i\in [\ell]$, we have
	\begin{equation}\label{eq:backwardmatchings-T}
		|T_i^+|+|T_i^-|\leq 2|U^{1-\gamma}(T)|\stackrel{\text{\cref{fact:U},\cref{def:ES}}}{\leq} 8\varepsilon n
	\end{equation}
	and
	\begin{equation}\label{eq:backwardmatchings-S'}
	    |S_i'|\leq |\cQ_i|+|S_i|\stackrel{\text{\cref{eq:backwardmatchings-Q},\cref{eq:backwardmatchings-S}}}{\leq} 24\varepsilon n.
	\end{equation}
	By definition of $X$ and $Y$, each $v\in B\subseteq V(T)\setminus (U^{1-\gamma}(T)\cup X\cup Y)$ satisfies \begin{equation}\label{eq:backwardmatchings-nv}
		n_v\leq d_\cQ(v)+|\{i\in [\ell]\mid v\in S_i\}|\leq \frac{\gamma n}{7}+\frac{\gamma n}{7}=\frac{2\gamma n}{7}.
	\end{equation}
	Thus, \cref{eq:backwardmatchings-X,eq:backwardmatchings-Y,eq:backwardmatchings-nv,eq:backwardmatchings-T,eq:backwardmatchings-S'} imply that each $v\in U^{1-\gamma}(T)$ satisfies
	\begin{align*}\label{eq:backwardmatchings-deg}
		d_D^\pm(v)
		&\stackrel{\text{\eqmakebox[backwardmatchings]{\text{\cref{lm:backwardmatchings-H-degreeV**}}}}}{\geq} 2\gamma n-|U^{1-\gamma}(T)|-|X|-|Y|\\
		&\stackrel{\text{\eqmakebox[backwardmatchings]{\text{\cref{fact:U},\cref{def:ES}}}}}{\geq} \ell+2\max_{j\in [\ell]}(|T_j^+|+|T_j^-|+|S_j'|)
		+2(\max_{w\in B}n_w+N).
	\end{align*}
	Thus, \cref{eq:extendlinforest} holds with $T_1^+, \dots, T_\ell^+, T_1^-, \dots, T_\ell^-$, and $S_1', \dots, S_\ell'$ playing the roles of $S_1^+, \dots, S_\ell^+$, $S_1^-, \dots, S_\ell^-$, and $T_1, \dots, T_\ell$.
	
	Let $\cQ_1', \dots, \cQ_\ell'$ be the edge-disjoint linear forests obtained by applying \cref{lm:extendlinforest} with $T_1^+,\dots, T_\ell^+,T_1^-, \dots, T_\ell^-$, and $S_1', \dots, S_\ell'$ playing the roles of $S_1^+,\dots, S_\ell^+,S_1^-, \dots, S_\ell^-$, and $T_1, \dots$, $T_\ell$. For each $i\in [\ell]$, denote $\cF_i\coloneqq \cQ_i\cup \cQ_i'$. Let $\cF\coloneqq \bigcup_{i\in [\ell]}\cF_i$. We claim that $\cF_1,\dots, \cF_\ell$ are edge-disjoint linear forests which satisfy \cref{lm:backwardmatchings-matchings,lm:backwardmatchings-degree,lm:backwardmatchings-degree1}. By \cref{claim:backwardmatchings}, $\cQ_1, \dots, \cQ_\ell$ are edge-disjoint linear forests. By \cref{lm:backwardmatchings-matchings'}, their edges are not adjacent to $U^{1-\gamma}(T)$ and so $\cQ_1, \dots, \cQ_\ell$ are edge-disjoint from $D$. Therefore, \cref{eq:backwardmatchings-ST} and the ``in particular part'' of \cref{lm:extendlinforest} implies that $\cF_1, \dots, \cF_\ell$ are edge-disjoint linear forests. Moreover, \cref{lm:backwardmatchings-matchings} holds by \cref{lm:backwardmatchings-matchings'} and \cref{lm:extendlinforest}\cref{lm:extendlinforest-cover}. For \cref{lm:backwardmatchings-degree}, let $v\in V(T)\setminus U^{1-\gamma}(T)$. If $v\in X$, then recall that $X\cap V(D)=\emptyset$ and so \cref{lm:backwardmatchings-degree'} implies that $d_\cF(v)=d_\cQ(v)\leq\frac{\gamma n}{6}$, as desired. Otherwise, the definition of $X$ implies that
	\[d_\cF(v)\leq d_\cQ(v)+d_D(v)\leq \frac{\gamma n}{7}+2|U^{1-\gamma}(T)|\stackrel{\text{\cref{fact:U},\cref{def:ES}}}{\leq} \frac{\gamma n}{7}+8\varepsilon n\leq \frac{\gamma n}{6},\]
	so \cref{lm:backwardmatchings-degree} holds. For \cref{lm:backwardmatchings-degree1}, let $i\in [\ell]$ and $v\in V(T)\setminus U^{1-\gamma}(T)$. If $d_{\cQ_i}(v)=0$, then \cref{lm:extendlinforest}\cref{lm:extendlinforest-degree1} implies that $d_{\cF_i}(v)\leq 1$. Otherwise, \cref{lm:extendlinforest}\cref{lm:extendlinforest-cover} and the definition of $S_i'$ imply that $d_{\cQ_i'}(v)=0$ and so \cref{lm:backwardmatchings-degree1'} implies that $d_{\cF_i}(v)\leq 1$. Thus, \cref{lm:backwardmatchings-degree1} holds.
	
	\begin{proofclaim}[Proof of \cref{claim:backwardmatchings}]
		We will construct, for each $i\in [4]$ and $j\in [\ell]$, a matching $M_j^{i\downarrow}\subseteq H_j^{i\downarrow}$ of size $m_j^{i\downarrow}$. Then, we will let each $\cQ_j$ consist of the union $\bigcup_{i\in [4]}M_j^{i\downarrow}$. In this way, \cref{lm:backwardmatchings-matchings'} will be automatically satisfied. 
		
		These matchings will be constructed one by one using \cref{prop:Koniglarge} as follows.
		Suppose that we want to construct $M_j^{i\downarrow}$ for some $i\in [4]$ and $j\in [\ell]$, and suppose furthermore that we have already constructed $M_j^{i'\downarrow}$ for some $i'\in [4]\setminus \{i\}$.
		Then, in order to satisfy \cref{lm:backwardmatchings-degree1'}, we need to avoid the vertices in $V(M_j^{i'\downarrow})$. Thus, in order to minimise the number of vertices we have to avoid at each stage (and thus maximise the number of available edges for each matching), we will construct the matchings in ascending size order.
		
		Let $\emptyset \eqqcolon X_0\subsetneq X_1\subsetneq \dots \subsetneq X_{4\ell}\coloneqq [4]\times [\ell]$ be such that, for each $k\in [4\ell]$, $(i,j)\in X_k\setminus X_{k-1}$, and $(i',j')\in X_{k-1}$, we have $m_{j'}^{i'\downarrow}\leq m_j^{i\downarrow}$. Note that $|X_k\setminus X_{k-1}|=1$ for each $k\in [4\ell]$%
		\COMMENT{This holds since $|X_0|=0$, $|X_{4\ell}|=4\ell$, and $|X_{k'}|>|X_{k'-1}|$ for each $k'\in [4\ell]$.}.
		
		Suppose inductively that, for some $0\leq k\leq 4\ell$, we have constructed a set $\cM_k=\{M_j^{i\downarrow}\mid (i,j)\in X_k\}$ of edge-disjoint matchings such that the following hold.
		\begin{enumerate}[label=(\alph*$''$)]
			\item For each $(i,j)\in X_k$, $M_j^{i\downarrow}$ is a matching of $H_j^{i\downarrow}$ of size $m_j^{i\downarrow}$.\label{lm:backwardmatchings-matchings''}
			\item For each $v\in V(T)$, we have $\sum_{(i,j)\in X_k}d_{M_j^{i\downarrow}}(v)\leq \frac{\gamma n}{6}$.\label{lm:backwardmatchings-degree''}
			\item For each $j\in [\ell]$ and $v\in V(T)\setminus U^{1-\gamma}(T)$, we have $\sum_{i\colon(i,j)\in X_k} d_{M_j^{i\downarrow}}(v)\leq 1$.\label{lm:backwardmatchings-degree1''}
		\end{enumerate}
		First, suppose that $k=4\ell$.
		For each $j\in [\ell]$, let $\cQ_j\coloneqq \bigcup_{i\in [4]}M_j^{i\downarrow}$.
		Then, \cref{lm:backwardmatchings-matchings',lm:backwardmatchings-degree',lm:backwardmatchings-degree1'} follow from \cref{lm:backwardmatchings-matchings'',lm:backwardmatchings-degree'',lm:backwardmatchings-degree1''}.
		
		We may therefore assume that $k<4\ell$. Let $(i,j)\in X_{k+1}\setminus X_k$ and let $H'$ be obtained from $H(U_i\setminus U^{1-\gamma}(T), U_{i-1}\setminus U^{1-\gamma}(T))$ by deleting all the edges in $\bigcup \cM_k$.
		The matching $M_j^{i\downarrow}$ will be constructed in $H'$ using \cref{prop:Koniglarge}.
		By definition of $X_0, \dots, X_{4\ell}$, we have
		\begin{align}
			e(H')&\stackrel{\text{\eqmakebox[backwardmatchingsH']{\text{\cref{lm:backwardmatchings-matchings''}}}}}{=} e_{H-U^{1-\gamma}(T)}(U_i, U_{i-1})-\sum_{j'\colon (i,j')\in X_k}|M_{j'}^{i\downarrow}|\nonumber\\
			&\stackrel{\text{\eqmakebox[backwardmatchingsH']{\text{\cref{lm:backwardmatchings-matchings''}}}}}{=}
			e_{H-U^{1-\gamma}(T)}(U_i, U_{i-1})-\sum_{j'\colon (i,j')\in X_k}m_{j'}^{i\downarrow}\nonumber
			\geq e_{H-U^{1-\gamma}(T)}(U_i, U_{i-1})-\ell m_j^{i\downarrow}\nonumber\\
			&\stackrel{\text{\eqmakebox[backwardmatchingsH']{\text{\cref{lm:backwardmatchings-H-edges},\cref{lm:backwardmatchings-m}}}}}{\geq} 108\gamma n|U_{i-2}^{1-\gamma}(T)\cup U_{i-3}^{1-\gamma}(T)|.\label{eq:backwardmatchings-H'}
		\end{align}
		We will now list and count all the vertices that $M_j^{i\downarrow}$ needs to avoid.
		For \cref{lm:backwardmatchings-matchings''}, we will need to avoid the vertices in $S_j^{i\downarrow}\setminus U^{1-\gamma}(T)$, where
		\begin{equation}\label{eq:backwardmatchings-1}
			|S_j^{i\downarrow}\setminus U^{1-\gamma}(T)|\stackrel{\text{\cref{lm:backwardmatchings-sizeS}}}{\leq} |U_{i-2}^{1-\gamma}(T)\cup U_{i-3}^{1-\gamma}(T)|.
		\end{equation}
		(The vertices in $S_j^{i\downarrow}\cap U^{1-\gamma}(T)$ will automatically be avoided since $V(H')\cap U^{1-\gamma}(T)=\emptyset$.)
		For \cref{lm:backwardmatchings-degree''}, we will need to avoid the set of vertices $Y_j^{i\downarrow}$ of vertices $v\in V(H')$ for which $\sum_{(i',j')\in X_k}d_{M_{j'}^{i'\downarrow}}(v)=\left\lfloor\frac{\gamma n}{6}\right\rfloor$ where, by definition of $X_0, \dots, X_{4\ell}$,
		\begin{align}
			|Y_j^{i\downarrow}|&\stackrel{\text{\eqmakebox[backwardmatchingsY]{}}}{\leq} \frac{\sum_{(i',j')\in X_k}|V(M_{j'}^{i'\downarrow})|}{\left\lfloor\frac{\gamma n}{6}\right\rfloor}\stackrel{\text{\cref{lm:backwardmatchings-matchings''}}}{=} \frac{2\sum_{(i',j')\in X_k} m_{j'}^{i'\downarrow}}{\left\lfloor\frac{\gamma n}{6}\right\rfloor}
			\leq \frac{2\cdot 4\ell\cdot m_j^{i\downarrow}}{\left\lfloor\frac{\gamma n}{6}\right\rfloor}\nonumber\\ 
			&\stackrel{\text{\eqmakebox[backwardmatchingsY]{\text{\cref{lm:backwardmatchings-m}}}}}{\leq} 49 |U_{i-2}^{1-\gamma}(T)\cup U_{i-3}^{1-\gamma}(T)|.\label{eq:backwardmatchings-3}
		\end{align}
		Finally, for \cref{lm:backwardmatchings-degree1''}, we will need to avoid all the vertices in $Z_j^{i\downarrow}\coloneqq \bigcup_{i'\colon (i',j)\in X_k}(V(M_j^{i'\downarrow})\cap V(H'))$ where, by definition of $X_0, \dots, X_{4\ell}$,
		\begin{align}
		    |Z_j^{i\downarrow}|&\stackrel{\text{\eqmakebox[backwardmatchingsZ]{}}}{\leq} \sum_{i'\colon (i',j)\in X_k}	\left|V(M_j^{i'\downarrow})\cap (U_i\cup U_{i-1})\right|\stackrel{\text{\cref{lm:backwardmatchings-matchings''}}}{\leq} \sum_{i'\colon (i',j)\in X_k} m_j^{i'\downarrow}
			\leq 3m_j^{i\downarrow}\nonumber \\
			&\stackrel{\text{\eqmakebox[backwardmatchingsZ]{\text{\cref{lm:backwardmatchings-m}}}}}{\leq} 3|U_{i-2}^{1-\gamma}(T)\cup U_{i-3}^{1-\gamma}(T)|.\label{eq:backwardmatchings-2}
		\end{align}
		Let $\tS_j^{i\downarrow}$ consist of all the above mentioned vertices, that is, let
		\[\tS_j^{i\downarrow}\coloneqq (S_j^{i\downarrow}\setminus U^{1-\gamma}(T))\cup Y_j^{i\downarrow}\cup Z_j^{i\downarrow}.\]
		Let $\tH_j^{i\downarrow}\coloneqq H'-\tS_j^{i\downarrow}\subseteq H_j^{i\downarrow}$.
		By \cref{eq:backwardmatchings-1,eq:backwardmatchings-2,eq:backwardmatchings-3}, we have $|\tS_j^{i\downarrow}|\leq 53|U_{i-2}^{1-\gamma}(T)\cup U_{i-3}^{1-\gamma}(T)|$ and so
		\begin{align*}
			e(\tH_j^{i\downarrow})\geq e(H')-\Delta(H')|\tS_j^{i\downarrow}|\stackrel{\text{\cref{lm:backwardmatchings-H-degree},\cref{eq:backwardmatchings-H'}}}{\geq} 2\gamma n|U_{i-2}^{1-\gamma}(T)\cup U_{i-3}^{1-\gamma}(T)|
			\stackrel{\text{\cref{lm:backwardmatchings-H-degree},\cref{lm:backwardmatchings-m}}}{\geq} \Delta(\tH_j^{i\downarrow})m_j^{i\downarrow}.
		\end{align*}
		Thus, \cref{prop:Koniglarge} (applied with the undirected graph underlying $\tH_j^{i\downarrow}$ playing the role of $G$) implies that $\tH_j^{i\downarrow}$ contains a matching $M_j^{i\downarrow}$ of size $m_j^{i\downarrow}$.
		One can easily verify that \cref{lm:backwardmatchings-matchings'',lm:backwardmatchings-degree'',lm:backwardmatchings-degree1''} hold with $k+1$ playing the role of $k$. 
	\end{proofclaim}
	This completes the proof of \cref{lm:backwardmatchings}.
\end{proof}

\subsection{Proofs of Lemmas \ref{lm:exceptionalfeasible} and \ref{lm:forwardUstar}}

We are now ready to prove \cref{lm:forwardUstar,lm:exceptionalfeasible}, using the arguments described in \cref{sec:sketch-specialpseudofeasible}.

\begin{proof}[Proof of \cref{lm:exceptionalfeasible}]
    Denote $t_0\coloneqq 0\eqqcolon s_0$.
	Suppose inductively that, for some $0\leq k\leq s$, we have constructed
	edge-disjoint feasible systems $\cF_1, \dots, \cF_{t_k+s_k}$ such that the following hold.
	\begin{enumerate}[label=(\greek*)]
		\item For each $i\in [4]$ and $j\in [t_k+s_k]$, we have $e_{\cF_j-U^{1-\gamma}(T)}(U_i, U_{i-1}) = |U_{i-2}^{1-\gamma}(T)\cup U_{i-3}^{1-\gamma}(T)|$.\label{lm:exceptionalfeasible-IH-edgesV'}
		\item For each $i\in [t_k+s_k]$, $e(\cF_i)\leq \sqrt{\varepsilon}n$.\label{lm:exceptionalfeasible-IH-size}
		\item For each $i\in [t_k+s_k]$ and $v\in V(T)\setminus U^*$, $d_{\cF_i}(v)\leq 1$. \label{lm:exceptionalfeasible-IH-V'}
		\item For each $i\in [k]$ and $j\in [s_i]$, $\cF_{t_i+j}\subseteq H_i$.\label{lm:exceptionalfeasible-IH-H}
	\end{enumerate}
	First, suppose that $k=s$.
	Then, \cref{lm:exceptionalfeasible-U',lm:exceptionalfeasible-size,lm:exceptionalfeasible-H} follow from \cref{lm:exceptionalfeasible-IH-edgesV',lm:exceptionalfeasible-IH-H,lm:exceptionalfeasible-IH-size}, respectively.
	Let $i\in [t]$. By \cref{def:feasible-exceptional}, $U^*\subseteq V^0(\cF_i)$ and, by \cref{lm:exceptionalfeasible-IH-V'}, $V^0(\cF_i)\subseteq U^*$. Therefore, \cref{lm:exceptionalfeasible-feasible} holds.
	
	We may therefore assume that $k<s$. Note that $t_k+s_k=t_{k+1}$. We will now construct $\cF_{t_{k+1}+1}, \dots, \cF_{t_{k+1}+s_{k+1}}$.			
	Let $H\coloneqq H_{k+1}\setminus (H_{k+1}[U^{1-\gamma}(T)]\cup \bigcup_{i\in [t_{k+1}]}\cF_i)$.
	
	\begin{steps}
		\item \textbf{Selecting backward edges.}
		First, we show that \cref{lm:backwardmatchings}\cref{lm:backwardmatchings-H-degree,lm:backwardmatchings-H-degreeV**,lm:backwardmatchings-H-edges} hold with $\overleftarrow{H}_\cU$ playing the role of $H$.
		Note that \cref{lm:backwardmatchings}\cref{lm:backwardmatchings-H-degree} follows immediately from \cref{lm:main}\cref{lm:main-H-degree}.
		For each $v\in U^{1-\gamma}(T)$, we have
		\begin{align*}
			\overleftarrow{d}_{H,\cU}^\pm(v)&\stackrel{\text{\eqmakebox[exceptionalfeasible]{\text{\cref{def:feasible-linforest}}}}}{\geq} \overleftarrow{d}_{H_{k+1},\cU}^\pm(v)-|\overleftarrow{N}_{H_{k+1},\cU}^\pm(v)\cap U^{1-\gamma}(T)|-t_{k+1}\\
			&\stackrel{\text{\eqmakebox[exceptionalfeasible]{\text{\cref{lm:main}\cref{lm:main-H-degreeV**}}}}}{\geq}3\gamma n-|U^{1-\gamma}(T)|-\eta n\stackrel{\text{\cref{fact:U},\cref{def:ES}}}{\geq} 2\gamma n.
		\end{align*}
		Thus, \cref{lm:backwardmatchings}\cref{lm:backwardmatchings-H-degreeV**} holds.
		For each $i\in [4]$, we have
		\begin{align*}
			e_{H-U^{1-\gamma}(T)}(U_i,U_{i-1})
			&\stackrel{\text{\eqmakebox[exceptionalfeasible2]{}}}{=} e_{H_{k+1}-U^{1-\gamma}(T)}(U_i,U_{i-1})\\
			& \eqmakebox[exceptionalfeasible2]{} \qquad -\sum_{j\in [t_{k+1}]}
			e_{\cF_j-U^{1-\gamma}(T)}(U_i,U_{i-1})\\
			&\stackrel{\text{\eqmakebox[exceptionalfeasible2]{\text{\cref{lm:exceptionalfeasible-IH-edgesV'},\cref{lm:main}\cref{lm:main-H-edges}}}}}{\geq}109\gamma n|U_{i-2}^{1-\gamma}(T)\cup U_{i-3}^{1-\gamma}(T)|.
		\end{align*}
		Thus, \cref{lm:backwardmatchings}\cref{lm:backwardmatchings-H-edges} is satisfied.
		
		For each $i\in [4]$ and $j\in [s_{k+1}]$, let $S_j^{i\downarrow}\coloneqq \emptyset$ and 
		\begin{equation}\label{eq:exceptionalfeasible-m}
			m_j^{i\downarrow}\coloneqq |U_{i-2}^{1-\gamma}(T)\cup U_{i-3}^{1-\gamma}(T)|\stackrel{\text{\cref{fact:U},\cref{def:ES}}}{\leq}2\varepsilon n.
		\end{equation}
		Then, \cref{lm:backwardmatchings}\cref{lm:backwardmatchings-m,lm:backwardmatchings-sizeS} hold with $s_{k+1}$ playing the role of $\ell$.
		Let $\cF_{t_{k+1}+1}', \dots, \cF_{t_{k+1}+s_{k+1}}'$ be the edge-disjoint linear forests obtained by applying \cref{lm:backwardmatchings} with $\overleftarrow{H}_\cU$ and $s_{k+1}$ playing the roles of $H$ and $\ell$.
		
		\item \textbf{Covering $U^*$.}\label{step:exceptionalfeasible-coverU*}
		We now add forward edges to $\cF_{t_{k+1}+1}', \dots, \cF_{t_{k+1}+s_{k+1}}'$ to ensure that \cref{def:feasible-exceptional} is satisfied. We will use \cref{lm:extendlinforest} as follows.
		Let $A\coloneqq U^*$ and $B\coloneqq V(T)\setminus U^*$. Let $D$ be the bipartite digraph on vertex classes $A$ and $B$ induced by $\overrightarrow{H}_{\cU}$%
		    	\COMMENT{I.e.\ $E(D)$ consists of all the edges of $\overrightarrow{H}_\cU$ which have one endpoint in $A$ and one endpoint in $B$.}.
		For each $i\in [s_{k+1}]$, let $S_i^\pm$ be the set of vertices $v\in U^*$ which satisfy $d_{\cF_{t_{k+1}+i}'}^\pm(v)=0$ (so $S_i^+$ and $S_i^-$ list the vertices in $U^*$ which are not yet covered with an out/inedge in $\cF_{t_{k+1}+i}'$) and define $T_i\coloneqq V(E(\cF_{t_{k+1}+i}'))\cap B$. Note for later that both 
		\begin{equation}\label{eq:exceptionalfeasible-ST}
		    S_i^\pm\cap V(\cF_{t_{k+1}+i}')\subseteq V^\mp(\cF_{t_{k+1}+i}') \quad \text{and} \quad V(E(\cF_{t_{k+1}+i}')\cap B\subseteq T_i
		\end{equation}
		for each $i\in [s_{k+1}]$.
		Define $N\coloneqq 2|U^*|=2|A|$. For each $v\in B$, let $n_v$ denote the number of indices $i\in [s_{k+1}]$ such that $v\in T_i$.
		
		We verify that \cref{eq:extendlinforest} holds with $s_{k+1}$ playing the role of $\ell$. For each $i\in [s_{k+1}]$, we have
		\begin{equation}\label{eq:exceptionalfeasible-S}
		    |S_i^+|+|S_i^-|\leq 2|U^*|\stackrel{\text{\cref{def:ES-size}}}{\leq} 8\varepsilon n
		\end{equation}
		and
		\begin{align}\label{eq:exceptionalfeasible-T}
		    |T_i|\leq |V(E(\cF_{t_{k+1}+i}'))|\stackrel{\text{\cref{lm:backwardmatchings}\cref{lm:backwardmatchings-matchings},\cref{eq:exceptionalfeasible-m}}}{\leq} 4|U^{1-\gamma}(T)|\stackrel{\text{\cref{fact:U},\cref{def:ES}}}{\leq} 16\varepsilon n.
		\end{align}
		Moreover, each $v\in B=V(T)\setminus U^*$ satisfies
		\begin{equation}\label{eq:exceptionalfeasible-nv}
		    n_v\leq \sum_{i\in [s_{k+1}]}d_{\cF_{t_{k+1}+i}'}(v)\stackrel{\text{\cref{lm:backwardmatchings}\cref{lm:backwardmatchings-matchings}}}{\leq} \overleftarrow{d}_{H, \cU}(v)\stackrel{\text{\cref{def:ES-backward}}}{\leq} 2\varepsilon n.
		\end{equation}
		Therefore, each $v\in U^*\setminus U^{1-\gamma}(T)$ satisfies
		\begin{align}
			d_D^\pm(v)&\stackrel{\text{\eqmakebox[exceptionalfeasibledeg]{}}}{=}\overrightarrow{d}_{H_{k+1},\cU}^\pm(v)-|\overrightarrow{N}_{H_{k+1},\cU}^\pm(v)\cap U^*|-
			\sum_{j\in [t_{k+1}]}\overrightarrow{d}_{\cF_j}^\pm(v)\nonumber\\
			&\stackrel{\text{\eqmakebox[exceptionalfeasibledeg]{\text{\cref{lm:main}\cref{lm:main-H-degreeV*},\cref{def:feasible-linforest}}}}}{\geq} \gamma^2n-|U^*|-t_{k+1}
			\stackrel{\text{\cref{def:ES-size}}}{\geq} \gamma^3 n\nonumber\\
			&\stackrel{\text{\eqmakebox[exceptionalfeasibledeg]{\text{\cref{def:ES-size},\cref{eq:exceptionalfeasible-S,eq:exceptionalfeasible-T,eq:exceptionalfeasible-nv}}}}}{\geq} s_{k+1}+2\max_{i\in [s_{k+1}]}(|S_i^+|+|S_i^-|+|T_i|)\nonumber\\
			&\eqmakebox[exceptionalfeasibledeg]{} \qquad\qquad\qquad+2(\max_{w\in B}n_w+N).\label{eq:exceptionalfeasible-deg}
		\end{align}
		By \cref{lm:backwardmatchings}\cref{lm:backwardmatchings-matchings-V*,lm:backwardmatchings-matchings-V*'}, each vertex in $U^{1-\gamma}(T)$ is already covered with both an in- and an outedge in each of the linear forests $\cF_{t_{k+1}+1}', \dots, \cF_{t_{k+1}+s_{k+1}}'$, so $S_i^+\cup S_i^-\subseteq U^*\setminus U^{1-\gamma}(T)$ for each $i\in [s_{k+1}]$. Thus, \cref{eq:extendlinforest} follows from \cref{eq:exceptionalfeasible-deg}.
		
		Let $\cQ_1, \dots, \cQ_{s_{k+1}}$ be the edge-disjoint linear forests obtained by applying \cref{lm:extendlinforest} with $s_{k+1}$ playing the role of $\ell$. For each $i\in [s_{k+1}]$, denote $\cF_{t_{k+1}+i}\coloneqq \cF_{t_{k+1}+i}'\cup \cQ_i$.
		
		\item \textbf{Verifying \cref{lm:exceptionalfeasible-IH-H,lm:exceptionalfeasible-IH-V',lm:exceptionalfeasible-IH-edgesV',lm:exceptionalfeasible-IH-size}.}
		We claim that $\cF_{t_{k+1}+1}, \dots, \cF_{t_{k+1}+s_{k+1}}$ are edge-disjoint feasible systems and that \cref{lm:exceptionalfeasible-IH-H,lm:exceptionalfeasible-IH-V',lm:exceptionalfeasible-IH-edgesV',lm:exceptionalfeasible-IH-size} hold with $k+1$ playing the role of $k$.
		By \cref{lm:backwardmatchings}, $\cF_{t_{k+1}+1}', \dots$, $\cF_{t_{k+1}+s_{k+1}}'$ are edge-disjoint linear forests and, by \cref{lm:backwardmatchings}\cref{lm:backwardmatchings-matchings}, $\cF_{t_{k+1}+1}', \dots, \cF_{t_{k+1}+s_{k+1}}'$ consist of backward edges and so are edge-disjoint from $D$. Thus, \cref{eq:exceptionalfeasible-ST} and the ``in particular part" of \cref{lm:extendlinforest} imply that $\cF_{t_{k+1}+1}, \dots, \cF_{t_{k+1}+s_{k+1}}$ are edge-disjoint linear forests.
		Moreover, \cref{lm:exceptionalfeasible-IH-edgesV'} holds by \cref{lm:backwardmatchings}\cref{lm:backwardmatchings-matchings}, \cref{eq:exceptionalfeasible-m}, and the fact that \cref{step:exceptionalfeasible-coverU*} involves only forward edges.
		Note that each $j\in [s_{k+1}]$ satisfies
		\begin{align*}
			e(\cF_{t_{k+1}+j})
			&\stackrel{\text{\eqmakebox[exceptionalfeasiblesize]{}}}{\leq}e(\cF_{t_{k+1}+j}')+e(\cQ_j)\\
			&\stackrel{\text{\eqmakebox[exceptionalfeasiblesize]{\text{\cref{lm:backwardmatchings}\cref{lm:backwardmatchings-matchings},\cref{lm:extendlinforest}\cref{lm:extendlinforest-cover}}}}}{\leq} \left(\sum_{i\in [4]}m_j^{i\downarrow}+2|U^{1-\gamma}(T)|\right)+(|S_j^+|+|S_j^-|)\\
			&\stackrel{\text{\eqmakebox[exceptionalfeasiblesize]{\text{\cref{eq:exceptionalfeasible-m},\cref{eq:exceptionalfeasible-S}}}}}{\leq} 4|U^{1-\gamma}(T)|+2|U^*|
			\stackrel{\text{\cref{fact:U},\cref{def:ES}}}{\leq} \sqrt{\varepsilon}n,
		\end{align*}
		so \cref{lm:exceptionalfeasible-IH-size} holds.
		Recall from \cref{fact:U,def:ES} that $U^{1-\gamma}(T)\subseteq U^*$. Thus, \namecrefs{lm:backwardmatchings}~\ref{lm:backwardmatchings}\ref{lm:backwardmatchings-degree1} and \ref{lm:extendlinforest}\cref{lm:extendlinforest-degree1} imply that each $v\in V(T)\setminus U^*$ and $i\in [s_{k+1}]$ satisfy both $d_{\cF_{t_{k+1}+i}}(v)\leq 1$ and $d_{\cQ_i}(v)\leq 1$. Therefore, \cref{lm:extendlinforest}\cref{lm:extendlinforest-cover} and the definition of $T_1, \dots, T_{s_{k+1}}$ imply that \cref{lm:exceptionalfeasible-IH-V'} holds.
		Moreover, \cref{lm:exceptionalfeasible-IH-H} follows from \cref{lm:backwardmatchings}\cref{lm:backwardmatchings-matchings} and \cref{lm:extendlinforest}\cref{lm:extendlinforest-cover}.
		
		Let $j\in [s_{k+1}]$. We now check that $\cF_{t_{k+1}+j}$ is a feasible system. We have already verified that \cref{def:feasible-linforest} holds.
		By \cref{lm:backwardmatchings}\cref{lm:backwardmatchings-matchings} and since \cref{step:exceptionalfeasible-coverU*} only involves forward edges, each $i\in [4]$ satisfies 
		\begin{align*}
		    e_{\cF_{t_{k+1}+j}}(U_i, U_{i-1})=e_{\cF_{t_{k+1}+j}'}(U_i, U_{i-1})= m_j^{i\downarrow}+|U_i^{1-\gamma}(T)|+|U_{i-1}^{1-\gamma}(T)|\stackrel{\text{\cref{eq:exceptionalfeasible-m}}}{=}|U^{1-\gamma}(T)|.
		\end{align*}
		Thus, \cref{def:feasible-backward} is satisfied.
		By \cref{lm:extendlinforest}\cref{lm:extendlinforest-cover}, \cref{def:feasible-exceptional} is satisfied.  Therefore, $\cF_{t_{k+1}+j}$ is a feasible system, as desired.\qedhere
	\end{steps}
\end{proof}

Let $D$ be a digraph. For each $e\in E(D)$, let $L(e)$ be a list of colours. A \emph{proper list edge-colouring of $D$} is a colouring of the edges of $D$ such that each edge $e\in E(D)$ is coloured with one of the colours in its list $L(e)$ and no two adjacent edges receive the same colour. The next \lcnamecref{prop:listcolouring} states that if the lists are all sufficiently large, then a proper list edge-colouring exists. Its proof follows from a simple greedy colouring argument and is therefore omitted.

\begin{prop}\label{prop:listcolouring}
    Let $D$ be a digraph. For each $e=uv\in E(D)$, let $L(e)$ be a list of colours satisfying $|L(e)|\geq d_D(u)+d_D(v)+1$. Then, $D$ has a proper list edge-colouring.
\end{prop}

\COMMENT{\begin{proof}
    Suppose inductively that we have already coloured $0\leq k\leq e(D)$ edges. Let $e=uv$ be an uncoloured edge. At most $d_D(u)+d_D(v)$ colours are already used to colour edges adjacent to $e$, so there is a free colour in $L(e)$ for $e$.
\end{proof}}

\begin{proof}[Proof of \cref{lm:forwardUstar}]
	For each $i\in [4]$, denote $U_i^*\coloneqq U^*\cap U_i$.
	By \cref{fact:partition}\cref{fact:partition-cycle} and the ``in particular part" of \cref{lm:gammaoptimal} (applied with $2\gamma$ playing the role of $\gamma$), we may assume without loss of generality that
	\begin{align}\label{eq:forwardV*-optimal}
		U_3^{1-2\gamma}(T)=\emptyset =U_4^{1-2\gamma}(T).
	\end{align}		
	\begin{steps}
		\item \textbf{Decomposing the forward edges of $D$ which are incident to $U^{1-\gamma}(T)$ and the forward edges of $D[U^*]$ which are incident to $U^{1-2\gamma}(T)$.}\label{step:forwardV**}
		By \cref{eq:Ugamma}, each $v\in U^{1-\gamma}(T)$ satisfies \[\overrightarrow{d}_{D,\cU}^\pm(v)\leq \overrightarrow{d}_{T,\cU}^\pm(v)\leq\lfloor\gamma n\rfloor\leq t'.\]
		Moreover, each $v\in U^{1-2\gamma}(T)$ satisfies
		\[|\overrightarrow{N}_{D, \cU}^\pm(v)\cap U^*|\leq |U^*|\stackrel{\text{\cref{def:ES-size}}}{\leq} t'.\]
		Also recall from \cref{fact:U,def:ES} that $|U^{1-\gamma}(T)|\leq |U^{1-2\gamma}(T)|\leq t'$.
		For each $i\in [4]$, \cref{prop:Konigsamesize} (applied with the corresponding underlying undirected graph playing the role of $G$) implies that the digraph
		\[D(U_i^{1-\gamma}(T), U_{i+1})\cup D(U_i, U_{i+1}^{1-\gamma}(T))\cup D(U_i^{1-2\gamma}(T), U_{i+1}^*)\cup D(U_i^*, U_{i+1}^{1-2\gamma}(T))\]
		can be decomposed into $t'$ edge-disjoint matchings $M_1^{i\uparrow}, \dots, M_{t'}^{i\uparrow}$.
		Observe that the following hold.
		\begin{enumerate}[label=(\greek*)]
			\item For each $i\in [4]$ and $j\in [t']$, $M_j^{i\uparrow}\subseteq E_D(U_i, U_{i+1})$.\label{lm:forwardUstar-forwardV**-forward}
			\item $\bigcup_{(i,j)\in [4]\times [t']}M_j^{i\uparrow}= \{e\in E(\overrightarrow{D}_\cU)\mid V(e)\cap U^{1-\gamma}(T)\neq \emptyset\}\cup \{e\in E(\overrightarrow{D}_\cU[U^*])\mid V(e)\cap U^{1-2\gamma}(T)\neq \emptyset\}$. In particular, \cref{fact:U} implies that each edge in $\bigcup_{(i,j)\in [4]\times [t']}M_j^{i\uparrow}$ is incident to $U^{1-2\gamma}(T)$.\label{lm:forwardUstar-forwardV**-V**}
		\end{enumerate}
		
		\item \textbf{Selecting backward edges.}\label{step:backward}
		In this step, we use backward edges to ensure that each vertex in $U^{1-2\gamma}(T)$ is covered by both an in- and an outedge in each of the feasible system. Covering $U^{1-\gamma}(T)$ is necessary for \cref{def:feasible-degV*} to be satisfied, while covering $U^{1-2\gamma}(T)\setminus U^{1-\gamma}(T)$ will ensure that \cref{lm:forwardUstar-forwarddegreeV*} is satisfied. We will also balance the number of backward edges using edges which are not incident to $U^{1-2\gamma}(T)$. This corresponds to \cref{step:sketch-coverU**,step:sketch-backward} of the proof overview presented in \cref{sec:sketch-specialpseudofeasible} and will be carried out using \cref{lm:backwardmatchings}.
		
		Let $H\subseteq \overleftarrow{T}_\cU$ be the digraph obtained by applying \cref{lm:optimalH} with $2\gamma$ playing the role of $\gamma$. 
		Note that the following hold.
		\begin{enumerate}[label=\rm(\Roman*)]
		    \item $\Delta^0(H)\leq 2\gamma n$.\label{lm:forwardU*-Delta}
		    \item For each $v\in U^{1-2\gamma}(T)$, $d_H(v)=0$.\label{lm:forwardU*-U**}
		    \item For each $i\in [4]$, $e_{H-U^{1-2\gamma}(T)}(U_i, U_{i-1})\geq (1-4\gamma)n|U_{i-2}^{1-2\gamma}(T)\cup U_{i-3}^{1-2\gamma}(T)|$.\label{lm:forwardU*-e}
		\end{enumerate}
		Let $H'$ be obtained from $H\cap D$ by adding all the edges of $\overleftarrow{D}_\cU$ which have precisely one endpoint in $U^{1-2\gamma}(T)$ and precisely one endpoint in $V(T)\setminus U^*$. 
		Note that
		\begin{equation}\label{eq:forwardV*-H}
			e_{H'}(U^{1-2\gamma}(T),U^*)+e_{H'}(U^*,U^{1-2\gamma}(T))\stackrel{\text{\cref{lm:forwardU*-U**}}}{=}0.
		\end{equation}		
		We now verify that \cref{lm:backwardmatchings}\cref{lm:backwardmatchings-H-degree,lm:backwardmatchings-H-degreeV**,lm:backwardmatchings-H-edges} hold with $H'$ and $2\gamma$ playing the roles of $H$ and $\gamma$.
		For each $v\in U^{1-2\gamma}(T)$,
		\begin{align*}
			d_{H'}^\pm(v)&\stackrel{\text{\eqmakebox[forwardU*H']{\text{\cref{lm:forwardU*-U**}}}}}{=}|\overleftarrow{N}_{D,\cU}^\pm(v)\setminus U^*|
			\geq \overleftarrow{d}_{T,\cU}^\pm (v)-\Delta^0(T\setminus D)-|U^*|\\
			&\stackrel{\text{\eqmakebox[forwardU*H']{\text{\cref{lm:forwardUstar-D-Delta0}}}}}{\geq} (1-2\gamma)n-t-|U^*|\stackrel{\text{\cref{def:ES-size}}}{\geq} (1-3\gamma) n.
		\end{align*}
		Therefore, \cref{lm:backwardmatchings}\cref{lm:backwardmatchings-H-degreeV**} is satisfied, with room to spare.
		For each $v\in V(T)\setminus U^{1-2\gamma}(T)$, 
		\[d_{H'}^\pm(v)\leq d_H^\pm(v)+|U^{1-2\gamma}(T)|\stackrel{\text{\cref{lm:forwardU*-Delta},\cref{fact:U},\cref{def:ES}}}{\leq} 3\gamma n.\]
		Thus, \cref{lm:backwardmatchings}\cref{lm:backwardmatchings-H-degree} is satisfied, with room to spare.
		For each $i\in [4]$,
		\begin{align*}
			e_{H'-U^{1-2\gamma}(T)}(U_i, U_{i-1})\stackrel{\text{\cref{lm:forwardUstar-D-U'},\cref{lm:forwardU*-e}}}{\geq} (1-5\gamma)n|U_{i-2}^{1-2\gamma}(T)\cup U_{i-3}^{1-2\gamma}(T)|.
		\end{align*}
		Therefore, \cref{lm:backwardmatchings}\cref{lm:backwardmatchings-H-edges} holds, with room to spare.
		
		Let $i\in [4]$ and $j\in [t']$. Recall from \cref{lm:backwardmatchings}\cref{lm:backwardmatchings-matchings} that, when applying \cref{lm:backwardmatchings}, $S_j^{i\downarrow}$ denotes the set of vertices that need to be avoided by the edges from $U_i$ to $U_{i-1}$. Here, we need to avoid the vertices in $U_i$ which are already covered with an outedge and the vertices in $U_{i-1}$ which are already covered with an inedge. By \cref{lm:forwardUstar-forwardV**-forward}, these are precisely the vertices in
		\begin{equation}\label{eq:forwardV*-backward-S}
			S_j^{i\downarrow}\coloneqq (V(M_j^{i\uparrow})\cup V(M_j^{(i-2)\uparrow}))\cap (U_i\cup U_{i-1}).
		\end{equation}
		Then,	
		\begin{align*}
			|S_j^{i\downarrow}\setminus U^{1-2\gamma}(T)|
			&\stackrel{\text{\eqmakebox[forwardU*S]{\text{\cref{lm:forwardUstar-forwardV**-forward}}}}}{=}|V(M_j^{i\uparrow})\cap (U_i\setminus U^{1-2\gamma}(T))|+|V(M_j^{(i-2)\uparrow})\cap (U_{i-1}\setminus U^{1-2\gamma}(T))|\\
			&\stackrel{\text{\eqmakebox[forwardU*S]{\text{\cref{lm:forwardUstar-forwardV**-forward},\cref{lm:forwardUstar-forwardV**-V**}}}}}{\leq} |V(M_j^{i\uparrow})\cap U_{i+1}^{1-2\gamma}(T)|+|V(M_j^{(i-2)\uparrow})\cap U_{i-2}^{1-2\gamma}(T)|\\
			&\stackrel{\text{\eqmakebox[forwardU*S]{}}}{\leq} |U_{i+1}^{1-2\gamma}(T)|+ |U_{i-2}^{1-2\gamma}(T)|.
		\end{align*}
		Therefore, \cref{lm:backwardmatchings}\cref{lm:backwardmatchings-sizeS} is satisfied with $2\gamma$ playing the role of $\gamma$.
		Let 
		\begin{equation}\label{eq:forwardV*-backward-m}
			m_j^{i\downarrow}\coloneqq \left|\left(U_{i-2}^{1-2\gamma}(T)\cup U_{i-3}^{1-2\gamma}(T)\right)\setminus S_j^{(i-2)\downarrow}\right|.
		\end{equation}
		Then, \cref{lm:backwardmatchings}\cref{lm:backwardmatchings-m} holds with $2\gamma$ playing the role of $\gamma$.
		
		Let $\cF_1', \dots, \cF_{t'}'$ be the edge-disjoint linear forests obtained by applying \cref{lm:backwardmatchings} with $H',t'$, and $2\gamma$ playing the roles of $H,\ell$, and $\gamma$.
		For each $j\in [t']$, note that
		\begin{equation}\label{eq:forwardV*-F'}
		    \cF_j'\subseteq \overleftarrow{D}_\cU \subseteq \overleftarrow{T}_\cU
		\end{equation}
		and let $\cF_j''\coloneqq \bigcup_{i\in [4]} M_j^{i\uparrow}\cup \cF_j'$. Denote $\cF''\coloneqq \bigcup_{j\in [t']}\cF_j''$.
		
		\begin{claim}\label{claim:forwardV*}
			$\cF_1'', \dots, \cF_{t'}''$ are edge-disjoint and satisfy the following properties.
			\begin{enumerate}[label=\rm(\Alph*)]
				\item For each $j\in [t']$, $\cF_j''$ is a $(\gamma,T)$-pseudo-feasible system.\label{lm:forwardUstar-F'-feasible}
				\item For each $j\in [t']$, $e(\cF_j'')\leq 24\varepsilon n$.\label{lm:forwardUstar-F'-size}
				\item For each $v\in U^{1-\gamma}(T)$, $\overrightarrow{d}_{\cF'', \cU}^\pm(v)=\overrightarrow{d}_{D,\cU}^\pm(v)$.\label{lm:forwardUstar-F'-V**}
				\item For each $v\in U^{1-2\gamma}(T)\setminus U^{1-\gamma}(T)$, $\overleftarrow{d}_{\cF'', \cU}^\pm(v)\geq t'-4\varepsilon n$.\label{lm:forwardUstar-F'-forwarddegreeV*}
				\item For each $v\in V(T)\setminus U^{1-2\gamma}(T)$, $d_{\cF''}(v)\leq \frac{2\gamma n}{5}$.\label{lm:forwardUstar-F'-V'}
				\item $E(\overrightarrow{\cF''}_\cU)=\{e\in E(\overrightarrow{D}_\cU)\mid V(e)\cap U^{1-\gamma}(T)\neq \emptyset\}\cup \{e\in E(\overrightarrow{D}_\cU[U^*])\mid V(e)\cap U^{1-2\gamma}(T)\neq \emptyset\}$. In particular, $E(\overrightarrow{\cF''}_\cU[U^*\setminus U^{1-2\gamma}(T)])=\emptyset$.\label{lm:forwardUstar-F'-forward}
			\end{enumerate}
		\end{claim}
		
		\begin{proofclaim}
			By \cref{lm:forwardUstar-forwardV**-forward} and \cref{eq:forwardV*-F'}, $E(\cF_j')\cap M_{j'}^{i\uparrow}=\emptyset$ for any $i\in [4]$ and $j,j'\in [t']$. Thus, $\cF_1'', \dots, \cF_{t'}''$ are edge-disjoint.	
			Moreover, \cref{eq:forwardV*-F'} implies that $\overrightarrow{\cF''}_\cU=\bigcup_{(i,j)\in [4]\times [t']}M_j^{i\uparrow}$. Thus, \cref{lm:forwardUstar-F'-V**,lm:forwardUstar-F'-forward} follow from \cref{lm:forwardUstar-forwardV**-V**}. Moreover, note for later that each $v\in U^{1-2\gamma}(T)\setminus U^{1-\gamma}(T)$ satisfies
			\begin{equation}\label{eq:forwardV*-U**}
				\overrightarrow{d}_{\cF'',\cU}^\pm(v)\stackrel{\text{\cref{lm:forwardUstar-forwardV**-V**}}}{=}|\overrightarrow{N}_{D,\cU}^\pm(v)\cap U^*|\stackrel{\text{\cref{def:ES-size}}}{\leq}4\varepsilon n.
			\end{equation}
			For each $j\in [t']$,
			\begin{align*}
				e(\cF_j'')&\stackrel{\text{\eqmakebox[forwardU*e]{}}}{=}\sum_{i\in [4]}|M_j^{i\uparrow}|+e(\cF_j')\\
				&\stackrel{\text{\eqmakebox[forwardU*e]{\text{\cref{lm:forwardUstar-forwardV**-forward},\cref{lm:forwardUstar-forwardV**-V**},\cref{lm:backwardmatchings}\cref{lm:backwardmatchings-matchings}}}}}{\leq} 2|U^{1-2\gamma}(T)|+\left(\sum_{i\in [4]}m_j^{i\downarrow}+2|U^{1-2\gamma}(T)|\right)\\
				&\stackrel{\text{\eqmakebox[forwardU*e]{\text{\cref{eq:forwardV*-backward-m}}}}}{\leq} 6|U^{1-2\gamma}(T)|
				\stackrel{\text{\cref{fact:U},\cref{def:ES}}}{\leq} 24\varepsilon n, 
			\end{align*}
			so \cref{lm:forwardUstar-F'-size} holds. For each $i\in [4]$ and $v\in U_i\setminus U^{1-2\gamma}(T)$, we have%
				\COMMENT{Recall that we applied \cref{lm:backwardmatchings} with $2\gamma$ playing the role of $\gamma$.}
			\begin{align*}
				d_{\cF''}(v)&\stackrel{\text{\eqmakebox[forwardU*deg]{\text{\cref{lm:forwardUstar-forwardV**-forward}}}}}{=}\sum_{j\in[t']}\left(\left(d_{M_j^{i\uparrow}}(v)+d_{M_j^{(i-1)\uparrow}}(v)\right)+d_{\cF_j'}(v)\right)\\
				&\stackrel{\text{\eqmakebox[forwardU*deg]{\text{\cref{lm:forwardUstar-forwardV**-V**},\cref{lm:backwardmatchings}\cref{lm:backwardmatchings-degree}}}}}{\leq} 2|U^{1-2\gamma}(T)|+\frac{\gamma n}{3}
				\stackrel{\text{\cref{fact:U},\cref{def:ES}}}{\leq} \frac{2\gamma n}{5},
			\end{align*}
			so \cref{lm:forwardUstar-F'-V'} holds.
			Let $j\in [t']$. We show that $\cF_j''$ is a $(\gamma, T)$-pseudo-feasible system.
			First, note that
			\begin{align*}
				e_{\cF_j''}(U_1, U_4)&
				\stackrel{\text{\eqmakebox[forwardU*F1]{\text{\cref{lm:forwardUstar-forwardV**-forward}}}}}{=}e_{\cF_j'}(U_1, U_4)
				\stackrel{\text{\cref{lm:backwardmatchings}\cref{lm:backwardmatchings-matchings}}}{=}m_j^{1\downarrow}+|(U_1^{1-2\gamma}(T)\cup U_4^{1-2\gamma}(T))\setminus S_j^{1\downarrow}|\\
				&\stackrel{\text{\eqmakebox[forwardU*F1]{\text{\cref{eq:forwardV*-backward-m}}}}}{=} |(U_3^{1-2\gamma}(T)\cup U_2^{1-2\gamma}(T))\setminus S_j^{3\downarrow}|+m_j^{3\downarrow}\\
				&\stackrel{\text{\eqmakebox[forwardU*F1]{\text{\cref{lm:backwardmatchings}\cref{lm:backwardmatchings-matchings}}}}}{=} e_{\cF_j'}(U_3,U_2)
				\stackrel{\text{\cref{lm:forwardUstar-forwardV**-forward}}}{=}e_{\cF_j''}(U_3, U_2).
			\end{align*} 
			Similarly, $e_{\cF_j''}(U_4, U_3)=e_{\cF_j''}(U_2, U_1)$%
			\COMMENT{\begin{align*}
					e_{\cF_j''}(U_2, U_1)&
					\stackrel{\text{\eqmakebox[forwardU*F1']{\text{\cref{lm:forwardUstar-forwardV**-forward}}}}}{=}e_{\cF_j'}(U_2, U_1)
					\stackrel{\text{\cref{lm:backwardmatchings}\cref{lm:backwardmatchings-matchings}}}{=}m_j^{2\downarrow}+|(U_2^{1-2\gamma}(T)\cup U_1^{1-2\gamma})\setminus S_j^{2\downarrow}|\\
					&\stackrel{\text{\eqmakebox[forwardU*F1']{\text{\cref{eq:forwardV*-backward-m}}}}}{=}|(U_4^{1-2\gamma}(T)\cup U_3^{1-2\gamma}(T))\setminus S_j^{4\downarrow}|+|(U_2^{1-2\gamma}(T)\cup U_1^{1-2\gamma})\setminus S_j^{2\downarrow}|\\
					&\stackrel{\text{\eqmakebox[forwardU*F1']{\text{\cref{eq:forwardV*-backward-m}}}}}{=} |(U_4^{1-2\gamma}(T)\cup U_3^{1-2\gamma}(T))\setminus S_j^{4\downarrow}|+m_j^{4\downarrow}\\
					&\stackrel{\text{\eqmakebox[forwardU*F1']{\text{\cref{lm:backwardmatchings}\cref{lm:backwardmatchings-matchings}}}}}{=} e_{\cF_j'}(U_4,U_3)
					\stackrel{\text{\cref{lm:forwardUstar-forwardV**-forward}}}{=}e_{\cF_j''}(U_4, U_3).
			\end{align*}}
			and so \cref{def:feasible-backward} is satisfied.
			For each $i\in [4]$ and $v\in U_i$,
			\begin{align*}
				d_{\cF_j''}^+(v)&\stackrel{\text{\cref{lm:forwardUstar-forwardV**-forward},\cref{eq:forwardV*-F'}}}{=}d_{M_j^{i\uparrow}}(v)+d_{\cF_j'(U_i,U_{i-1})}(v)
				\stackrel{\text{\cref{eq:forwardV*-backward-S},\cref{lm:backwardmatchings}\cref{lm:backwardmatchings-matchings}}} \leq 1.
			\end{align*}
			Similarly, each $v\in V(T)$ satisfies $d_{\cF_j''}^-(v)\leq 1$%
			\COMMENT{Suppose that $v\in U_i$. Then, 
				\begin{align*}
					d_{\cF_j''}^-(v)&\stackrel{\text{\cref{lm:forwardUstar-forwardV**-forward},\cref{eq:forwardV*-F'}}}{=}d_{M_j^{(i-1)\uparrow}}(v)+d_{\cF_j'(U_{i+1},U_i)}(v)
					\stackrel{\text{\cref{eq:forwardV*-backward-S},\cref{lm:backwardmatchings}\cref{lm:backwardmatchings-matchings}}} \leq 1.
			\end{align*}}. 
			Thus, \cref{def:feasible-degV'} holds.
			Moreover, each $i\in [4]$ and $v\in U_i^{1-2\gamma}(T)$ satisfy
			\begin{align*}
				d_{\cF_j''}^+(v)&\stackrel{\text{\cref{lm:forwardUstar-forwardV**-forward},\cref{eq:forwardV*-F'}}}{=}d_{M_j^{i\uparrow}}(v)+d_{\cF_j'(U_i,U_{i-1})}(v)
				\stackrel{\text{\cref{eq:forwardV*-backward-S},\cref{lm:backwardmatchings}\cref{lm:backwardmatchings-matchings}}} = 1.
			\end{align*}
			Similarly, each $v\in U^{1-2\gamma}(T)$ satisfies $d_{\cF_j''}^-(v)=1$%
			\COMMENT{Suppose that $v\in U_i^{1-2\gamma}(T)$. Then,
				\begin{align*}
					d_{\cF_j''}^-(v)&\stackrel{\text{\cref{lm:forwardUstar-forwardV**-forward},\cref{eq:forwardV*-F'}}}{=}d_{M_j^{(i-1)\uparrow}}(v)+d_{\cF_j'(U_{i+1},U_i)}(v)
					\stackrel{\text{\cref{eq:forwardV*-backward-S},\cref{lm:backwardmatchings}\cref{lm:backwardmatchings-matchings}}} = 1.
			\end{align*}}.
			Thus, \cref{def:feasible-degV*} holds and \cref{lm:forwardUstar-F'-forwarddegreeV*} follows from \cref{eq:forwardV*-U**}.
			
			Finally, to verify \cref{def:feasible-cycle}, suppose that $C$ is a cycle in $\cF_j''$. We show that $C$ contains a $(\gamma,T)$-placeholder.
			By \cref{eq:forwardV*-optimal}, \cref{lm:forwardUstar-forwardV**-V**}, and \cref{eq:forwardV*-F'},
			\[e_{\cF_j''}(U_3,U_4)+e_{\cF_j''}(U_4^{1-2\gamma}(T),U_3)+e_{\cF_j''}(U_4,U_3^{1-2\gamma}(T))=0.\]
			Therefore, \cref{lm:backwardmatchings}\cref{lm:backwardmatchings-degree1} implies that each edge in $E_{\cF_j''}(U_4, U_3)$ forms a component in $\cF_j''$. Altogether, we have
			\begin{equation}\label{lm:forwardUstar-C}
				e_C(U_3,U_4)+e_C(U_4,U_3)=0.
			\end{equation}
		
			Suppose that there exists $e\in E_C(U_2,U_1)\subseteq E_{\cF_j''}(U_2,U_1)$.
			By \cref{lm:forwardUstar-forwardV**-forward}, $e\in E(\cF_j')$ and, by \cref{eq:forwardV*-optimal,eq:forwardV*-backward-m}, $m_j^{2\downarrow}=0$.
			Thus, \cref{lm:backwardmatchings}\cref{lm:backwardmatchings-matchings} implies that $V(e)\cap U^{1-2\gamma}(T)\neq \emptyset$.
			By \cref{lm:backwardmatchings}\cref{lm:backwardmatchings-matchings}, \cref{eq:forwardV*-H}, \cref{fact:U}, and \cref{def:ES-backward}, $e$ is a backward edge with precisely one endpoint in $U^{1-2\gamma}(T)\subseteq U^*$ and one endpoint in $V(T)\setminus U^*$. Thus, $e$ is a $(\gamma,T)$-placeholder, as desired.
			
			We may therefore assume that $e_C(U_2,U_1)=0$.
			Then, \cref{lm:forwardUstar-forwardV**-forward}, \cref{eq:forwardV*-F'}, and \cref{lm:forwardUstar-C} imply that either $V(C)\subseteq U_4\cup U_1$ or $V(C)\subseteq U_3\cup U_2$%
			    \COMMENT{By \cref{lm:backwardmatchings}\cref{lm:backwardmatchings-matchings} and \cref{lm:forwardUstar-forwardV**-forward} all the edges lie in the pairs of the blow-up $C_4$.}.
			Suppose the former (similar arguments hold in the other case). By \cref{lm:backwardmatchings}, $\cF_j'$ is a linear forest and so \cref{lm:forwardUstar-forwardV**-forward} implies that there exists $uv\in M_j^{4\uparrow}\cap E(C)$. By \cref{eq:forwardV*-optimal}, \cref{lm:forwardUstar-forwardV**-forward}, and \cref{lm:forwardUstar-forwardV**-V**}, $v\in U_1^{1-2\gamma}(T)$. 
			Let $w$ denote the outneighbour of $v$ on $C$. By assumption, \cref{lm:forwardUstar-forwardV**-forward}, and \cref{eq:forwardV*-F'}, $w\in U_4$. In particular, $vw$ is a backward edge and so \cref{lm:forwardUstar-forwardV**-forward} and \cref{lm:backwardmatchings}\cref{lm:backwardmatchings-matchings} imply that
			$vw\in E(H')$. 
			Thus, \cref{eq:forwardV*-H} implies that $w\in V(T)\setminus U^*$ and so $vw$ is a $(\gamma,T)$-placeholder, as desired.
			Therefore, \cref{def:feasible-cycle} holds and so $\cF_j''$ is a $(\gamma,T)$-pseudo-feasible system. Thus, \cref{lm:forwardUstar-F'-feasible} holds.
		\end{proofclaim}
		
		\item \textbf{Covering the edges of $\overrightarrow{D}[U^*\setminus U^{1-2\gamma}(T)]$.}\label{step:forwardV*}
		We will use \cref{prop:listcolouring} as follows. For each $e\in E(\overrightarrow{D}_\cU[U^*\setminus U^{1-2\gamma}(T)])$, let $L(e)$ be the set of colours $i\in [t']$ such that $V(e)\cap V(E(\cF_i''))=\emptyset$.
		For each $e=uv\in E(\overrightarrow{D}_\cU[U^*\setminus U^{1-2\gamma}(T)])$, we have
		\[|L(e)|\geq t' -d_{\cF''}(u)-d_{\cF''}(v) \stackrel{\text{\cref{lm:forwardUstar-F'-V'}}}{\geq}\frac{\gamma n}{6}\stackrel{\text{\cref{def:ES-size}}}{\geq}|\overrightarrow{N}_{D,\cU}(u)\cap U^*|+ |\overrightarrow{N}_{D,\cU}(v)\cap U^*|+1.\]
		Thus, \cref{prop:listcolouring} implies that $\overrightarrow{D}_\cU[U^*\setminus U^{1-2\gamma}(T)]$ has a proper list edge-colouring $\phi: E(\overrightarrow{D}_\cU[U^*\setminus U^{1-2\gamma}(T)])\longrightarrow [t']$. 
		For each $i\in [t']$, let $\cF_i\coloneqq \cF_i'' \cup \phi^{-1}(i)$. Since $\phi$ is an edge-colouring of $\overrightarrow{D}_\cU[U^*\setminus U^{1-2\gamma}(T)]$, any distinct $i,i'\in [t']$ satisfy
		\begin{equation}\label{eq:forwardU*-Fdisjoint}
		    E(\cF_i\setminus \cF_i'')\cap E(\cF_{i'}\setminus \cF_{i'}'')=\emptyset.
		\end{equation}
		Denote $\cF\coloneqq \bigcup_{i\in [t']}\cF_i$ and note that
		\begin{equation}\label{eq:forwardU*-Fforward}
		    E(\cF\setminus \cF'')\stackrel{\text{\cref{lm:forwardUstar-F'-forward}}}{=} E(\overrightarrow{D}_\cU[U^*\setminus U^{1-2\gamma}(T)]).
		\end{equation}

		\item \textbf{Verifying \cref{lm:forwardUstar-size,lm:forwardUstar-forwardV*,lm:forwardUstar-forwardV**,lm:forwardUstar-forwarddegreeV*}.}
		We claim that $\cF_1, \dots, \cF_{t'}$ are edge-disjoint $(\gamma,T)$-pseudo-feas\-ible systems satisfying \cref{lm:forwardUstar-forwardV*,lm:forwardUstar-forwardV**,lm:forwardUstar-size,lm:forwardUstar-forwarddegreeV*}.
		Recall from \cref{claim:forwardV*} that $E(\cF_i'')\cap E(\cF_{i'}'')=\emptyset$ for any distinct $i,i'\in [t']$. Thus, \cref{eq:forwardU*-Fdisjoint}, \cref{eq:forwardU*-Fforward}, and the ``in particular part" of \cref{lm:forwardUstar-F'-forward} imply that $\cF_1, \dots, \cF_{t'}$ are edge-disjoint.
		
		Let $i\in [t']$. We now show that $\cF_i$ is a $(\gamma, T)$-pseudo-feasible system.
		By construction, $E(\cF_i)\setminus E(\cF_i'')\subseteq E(\overrightarrow{T}_\cU)$. Thus, \cref{def:feasible-backward} follows from \cref{lm:forwardUstar-F'-feasible}.
		By construction of the lists of colours, we have $V(E(\cF_i\setminus \cF_i''))\cap V(E(\cF_i''))=\emptyset$ and, since $\phi$ is proper, $\phi^{-1}(i)$ is a matching for each $i\in [t']$. Thus, \cref{def:feasible-degV*,def:feasible-degV'} follow from \cref{lm:forwardUstar-F'-feasible}. Moreover, each cycle in $\cF_i$ is a cycle in $\cF_i''$. Thus, \cref{def:feasible-cycle} also follows from \cref{lm:forwardUstar-F'-feasible}. Therefore, $\cF_i$ is a $(\gamma,T)$-pseudo-feasible system, as desired.
		
		Moreover, \cref{lm:forwardUstar-forwardV*} follows from \cref{lm:forwardUstar-F'-forward} and \cref{eq:forwardU*-Fforward},
		\cref{lm:forwardUstar-forwardV**} follows from \cref{lm:forwardUstar-F'-V**} and \cref{eq:forwardU*-Fforward}, and \cref{lm:forwardUstar-forwarddegreeV*} follows from \cref{lm:forwardUstar-F'-forwarddegreeV*}.
		Finally, the fact that $\phi^{-1}(i)$ is a matching implies that
		\[e(\cF_i)=e(\cF_i'')+|\phi^{-1}(i)|\stackrel{\text{\cref{lm:forwardUstar-F'-size}}}{\leq}24\varepsilon n+|U^*\setminus U^{1-2\gamma}(T)|\stackrel{\text{\cref{def:ES-size}}}{\leq} \sqrt{\varepsilon}n\]
		and so \cref{lm:forwardUstar-size} holds.
		\qedhere	
	\end{steps}
\end{proof}

\section*{Acknowledgements}
We thank Ant\'{o}nio Gir\~{a}o, Daniela K\"{u}hn, Allan Lo, and Deryk Osthus for helpful discussions and advice throughout this project. In particular, the author is very grateful to Allan Lo for sharing ideas and to Daniela K\"{u}hn and Deryk Osthus for supplying guidance and feedback.

Thank you to Eoin Long for helpful comments on an earlier version of this paper.

\bibliographystyle{abbrv}
\bibliography{Bibliography/Bibliography}

\APPENDIX{\appendix
    \section{Optimal packings of Hamilton cycles: proof of Corollary \ref{thm:packings}}\label{app:packings}
    
	\onlyinsubfile{
		\setcounter{section}{13}
		\section{Packings}}

In this appendix, we prove \cref{thm:packings}. First, we will need the following properties of $(\varepsilon,d)$-regular bipartite graphs. (Recall that those were defined in \cref{sec:regularity}.)
The next two \lcnamecrefs{lm:epsvertexslice} hold by definition (and so their proofs are omitted).

\begin{lm}\label{lm:epsvertexslice}
	Let $0<\frac{1}{m}\ll\varepsilon\ll d< 1$ and $\varepsilon\leq \eta \ll 1$.
	Let $G$ be a bipartite graph on vertex classes $A$ and $B$ of size $m$. Suppose that $G$ is $(\varepsilon, d)$-regular. Let $A'\subseteq A$ and $B'\subseteq B$ satisfy $|A'|, |B'|\geq \eta m$. 
	Then, $G[A', B']$ is $(\frac{\varepsilon}{\eta}, \geq d-\varepsilon)$-regular.
\end{lm}

\COMMENT{\begin{proof}
	Let $A''\subseteq A'$ and $B''\subseteqq B'$ satisfy $|A''|\geq \frac{\varepsilon}{\eta}|A'|\geq \varepsilon m$ and $|B''|\geq \frac{\varepsilon}{\eta}|B'|\geq \varepsilon m$. Then, $d_G(A'',B'')=d\pm \varepsilon=d_G(A',B')\pm 2\varepsilon$.
\end{proof}}

\begin{lm}[{\cite[Proposition 4.2]{kuhn2013hamilton}}]\label{lm:epsdeg}
	Let $0<\varepsilon\leq d\leq 1$.
	Let $G$ be an  $(\varepsilon, d)$-regular bipartite graph on vertex classes $A$ and $B$. Then, fewer than $\varepsilon |A|$ vertices $a\in A$ satisfy $d_G(a)\geq (d+\varepsilon)|B|$ and fewer than $\varepsilon |A|$ vertices $a\in A$ satisfy $d_G(a)\leq (d-\varepsilon)|B|$.
\end{lm}

One can easily deduce that $\varepsilon$-regular bipartite graphs of linear minimum degree are also bipartite robust expanders.

\begin{lm}\label{lm:epsrob}
    Let $0<\frac{1}{n}\ll \varepsilon\ll \nu\ll \tau \ll \delta \leq 1$. Let $G$ be a bipartite graph on vertex classes $A$ and $B$ of size $n$. Suppose that $G$ is $\varepsilon$-regular and $\delta(G)\geq \delta n$. Then, $G$ is a bipartite robust $(\nu, \tau)$-expander with bipartition $(A,B)$.
\end{lm}

\begin{proof}
    Note that $G$ is $(\varepsilon, \geq \delta)$-regular.
    Let $S\subseteq A$ satisfy $\tau n\leq |S|\leq (1-\tau)n$. If suffices to show that $|RN_{\nu, G}(S)|\geq |S|+\nu n$. By \cref{lm:epsvertexslice}, $G[S, B]$ is $(\sqrt{\varepsilon}, \geq \delta-\varepsilon)$-regular and so \cref{lm:epsdeg} implies that all but at most $\sqrt{\varepsilon}n$ vertices $v\in B$ satisfy $|N_G(v)\cap S|\geq (\delta-\varepsilon-\sqrt{\varepsilon})|S|\geq \frac{\delta \tau n}{2}\geq \nu n$. Thus, $|RN_{\nu, G}(S)|\geq (1-\sqrt{\varepsilon})n\geq |S|+\nu n$, as desired.  
\end{proof}

Using the max-flow min-cut theorem, Frieze and Krivelevich \cite{frieze2005packing} showed that $\varepsilon$-regular bipartite graphs of linear minimum degree contain a dense regular spanning subgraph.

\begin{lm}[{\cite{frieze2005packing}}]\label{lm:epsregeven}
    Let $0<\frac{1}{n}\ll \varepsilon\ll \delta \leq 1$. Let $G$ be an $\varepsilon$-regular bipartite graph on vertex classes of size $n$. Suppose that $\delta(G)\geq \delta n$. Then, $\reg_{\rm even}(G)\geq (\delta-2\varepsilon)n$.
\end{lm}

\COMMENT{\begin{proof}
    \emph{Same arguments as in \cite{frieze2005packing}:}
    Denote by $A$ and $B$ the vertex classes of $G$. Let $d\coloneqq d_G(A,B)$ and $r$ be the largest even integer which is at most $(\delta-\varepsilon)n$. Note that $d\geq \delta$ and $r\geq (\delta-2\varepsilon)n$.\\
    Construct a network by adding vertices $s$ and $t$ with an edge $sa$ of capacity $r$ for each $a\in A$ and an edge $bt$ of capacity $r$ for each $b\in B$. Orient all the edges of $G$ from $A$ to $B$ and assign them capacity $1$.
    Note that an $rn$-flow corresponds to an $r$-regular subgraph of $G$.\\
    Consider an $s-t$ cut $(S,T)$. Denote $S_A\coloneqq S\cap A$ and $T_A\coloneqq A\setminus S$. Define $S_B$ and $T_B$ analogously.
    The capacity of $(S,T)$ is $r|T_A|+e(S_A, T_B)+r|S_B|$. By the max-flow min-cut theorem, it is enough to show that $e(S_A, T_B)\geq r(n-|T_A|-|S_B|)=r(|S_A|-|S_B|)$.
    We may therefore assume that $|S_A|\geq |S_B|$. If both $|S_A|,|T_B|\geq \varepsilon n$, then $\varepsilon$-regularity implies that \[e(S_A, T_B)\geq (d-\varepsilon)|S_A||T_B|= (d-\varepsilon)|S_A|(n-|S_B|)\geq (\delta-\varepsilon)n(|S_A|-|S_B|),\]
    as desired.
    If $|T_B|<\varepsilon n$, then the minimum degree condition implies that
    \[e(S_A, T_B)\geq \delta n|T_B|-|T_A||T_B| \geq \delta n|T_B|-\varepsilon n|T_A|\geq \delta n(n-|S_B|)-\varepsilon n(n-|S_B|)\geq (\delta-\varepsilon)n(|S_A|-|S_B|),\]
    as desired.
    If $|S_A|<\varepsilon n$, then the minimum degree condition implies that
    \[e(S_A, T_B)\geq \delta n|S_A|-|S_A||S_B| \geq \delta n|S_A|-\varepsilon n|S_B|\geq \delta n|S_A|-\varepsilon n|S_A|\geq (\delta-\varepsilon)n(|S_A|-|S_B|),\]
    as desired.
\end{proof}}

By \cref{prop:almostcompleteeps}, the complete bipartite graph $K_{n,n}$ is $[\varepsilon, 1]$-superregular. Therefore, \cref{lm:randomreg} implies that $G_{n,n,p}$ is also superregular with high probability.

\begin{cor}\label{lm:randomepsG}
    Let $0<\frac{1}{n}\ll \varepsilon\ll p\leq 1$. With high probability, $G_{n,n,p}$ is $\varepsilon$-regular and $\delta(G_{n,n,p})\geq (p-\varepsilon)n$.
\end{cor}

\COMMENT{\begin{proof} 
    Fix an additional constant such that $\varepsilon'\ll \varepsilon \ll p$.
    By \cref{prop:almostcompleteeps}, the complete bipartite graph $K_{n,n}$ is $[\varepsilon', 1]$-regular and so \cref{lm:randomreg} implies that $G_{n,n,p}$ is $\varepsilon$-regular and $\delta^0(G_{n,n,p})\geq (p-\varepsilon)n$ with high probability.
\end{proof}}

We are now ready to prove \cref{thm:packings}.

\begin{proof}[Proof of \cref{thm:packings}]
	Let $0<p\leq 1$.
    Fix additional constants such that $0<\frac{1}{n_0}\ll\varepsilon\ll \varepsilon_1\ll \varepsilon_2\ll \varepsilon_3\ll \nu\ll \tau\ll p$.
    By \cref{lm:randomepsG}, \cref{thm:packings-Gnnp} follows immediately from \cref{thm:packings-epsG} (with $p-\varepsilon$ playing the role of $p$). 
    
    For \cref{thm:packings-Dnnp}, denote by $A$ and $B$ the vertex classes of $D_{n,n,p}$. Observe that $D_{n,n,p}[A,B]\sim G_{n,n,p}$ and $D_{n,n,p}[B,A]\sim G_{n,n,p}$. Thus, \cref{lm:randomepsG} implies that $D_{n,n,p}$ is $\varepsilon$-regular of minimum semidegree $\delta^0(D_{n,n,p})\geq (p-\varepsilon)n$ with high probability and so \cref{thm:packings-Dnnp} follows from~\cref{thm:packings-epsD} (with $p-\varepsilon$ playing the role of $p$).
    
    For \cref{thm:packings-T}, let $T$ be chosen uniformly at random among the bipartite tournaments on vertex classes $A$ and $B$ of size $n$. Observe that $T[A,B]\sim G_{n,n,\frac{1}{2}}$ and $T[B,A]\sim G_{n,n,\frac{1}{2}}$. Thus, \cref{lm:randomepsG} implies that $T$ is $\varepsilon$-regular of minimum semidegree $\delta^0(T)\geq (\frac{1}{2}-\varepsilon)n$ with high probability and so \cref{thm:packings-T} also follows from \cref{thm:packings-epsD} (with $\frac{1}{2}-\varepsilon$ playing the role of $p$).

    For \cref{thm:packings-epsG}, let $G$ be an $\varepsilon$-regular bipartite graph on vertex classes $A$ and $B$ of size $n$ and suppose that $\delta(G)\geq pn$. Note that $G$ is $(\varepsilon, d)$-regular for some $d\geq p$. Let $S$ be the set of vertices $v\in V(G)$ which satisfy $d_G(v)\geq (d+\varepsilon)n$. By \cref{lm:epsdeg}, $|S|\leq \varepsilon n$ and so \cref{prop:epsremovingadding}\cref{prop:epsremovingadding-reg} implies that $G-S$ is still $\varepsilon_1$-regular.
    Let $G'$ be a spanning $\reg_{\rm even}(G)$-regular subgraph of $G$. By \cref{lm:epsregeven}, $G'-S$ is obtained from $G-S$ by deleting at most $2\varepsilon n$ edges incident to each vertex and so \cref{prop:epsremovingadding}\cref{prop:epsremovingadding-reg} implies that $G'-S$ is $\varepsilon_2$-regular. Another application of \cref{prop:epsremovingadding}\cref{prop:epsremovingadding-reg} implies that $G'$ is $\varepsilon_3$-regular. Thus, \cref{lm:epsrob} implies that $G'$ is a bipartite robust $(\nu, \tau)$-expander with bipartition $(A, B)$, as well as with bipartition $(B, A)$. Apply \cref{cor:undirected} to decompose $G'$ into $\frac{\reg_{\rm even}(G)}{2}$ edge-disjoint Hamilton cycles.
    
    For \cref{thm:packings-epsD}, let $D$ be an $\varepsilon$-regular bipartite digraph on vertex classes $A$ and $B$ of size $n$ and suppose that $\delta^0(D)\geq pn$. Let $D'$ be a $\reg(D)$-regular spanning subdigraph of $D$. Observe that \cref{lm:epsregeven} implies that $\reg(D)\geq (p-2\varepsilon)n$ and so, by similar arguments as above%
    	\COMMENT{By definition, $D[A,B]$ and $D[B,A]$ are both $\varepsilon$-regular of minimum degree at least $pn$. Thus, the above arguments imply that $D'[A,B]$ and $D'[B,A]$ are both of minimum degree at least $(p-2\varepsilon)n$ and are bipartite robust $(\nu, \tau)$-expanders with bipartition $(A,B)$ and $(B,A)$, respectively. Thus, $D'$ is a bipartite robust $(\nu, \tau)$-outexpander with bipartition $(A,B)$.},
    $D'$ is a bipartite robust $(\nu, \tau)$-outexpander with bipartition $(A,B)$. Apply \cref{thm:biprobexp} to decompose $D'$ into $\reg(D)$ edge-disjoint Hamilton cycles.
\end{proof}

\onlyinsubfile{\bibliographystyle{abbrv}
	\bibliography{Bibliography/Bibliography}}

	\section{Approximate decomposition: proof of Theorem \ref{thm:biphalfapproxHamdecomp}}\label{app:approximatedecomp}
	
	\onlyinsubfile{
		\appendix
		\section{Approximate decomposition}}

In this appendix, we adapt the arguments of \cite{girao2020path} to prove \cref{thm:biphalfapproxHamdecomp}.

\subsection{Preliminaries}

We will need some additional preliminary results.

\COMMENT{\begin{lm*}[{\cite[Lemma 4.19]{girao2020path}}]\label{cor:Hallreg}
		Let $0<\frac{1}{n}\ll\varepsilon \ll \delta\leq 1$. Let $G$ be a bipartite graph on vertex classes $A$ and $B$ such that $|A|,|B|=(1\pm \varepsilon)n$. Suppose that, for each $v\in V(G)$, $d_G(v)=(\delta\pm \varepsilon) n$. Then, $G$ contains a matching of size at least $\left(1-\frac{3\varepsilon}{\delta}\right)n$. 
\end{lm*}}

\subsubsection{Regularity}
Recall the definition of an $(\varepsilon,d)$-regular bipartite graph from \cref{sec:regularity}.
We need the (non-bipartite version of the) degree form regularity lemma for digraphs.

\begin{lm}[Degree form regularity lemma for digraphs]\label{lm:reglm}
	For all $\varepsilon>0$ and $M'\in \mathbb{N}$, there exist $M, n_0\in \mathbb{N}$ such that if $D$ is a digraph on $n\geq n_0$ vertices and $d\in [0,1]$, then there exist a spanning subdigraph $D'\subseteq D$ and a partition of $V(D)$ into an \emph{exceptional set} $V_0$ and $k$ \emph{clusters} $V_1,\dots, V_k$ such that the following hold.
	\begin{enumerate}
		\item $M'\leq k\leq M$.
		\item $|V_0|\leq \varepsilon n$.\label{lm:reglm-V0}
		\item $|V_1|=\dots=|V_k|\eqqcolon m$.
		\item For each $v\in V(D)$, both $d_{D'}^\pm(v)>d_D^\pm(v)-(d+\varepsilon)n$.
		\item For each $i\in [k]$, $D'[V_i]$ is empty.
		\item Let $i,j\in [k]$ be distinct. Then, $D'[V_i,V_j]$ is either empty or $(\varepsilon, \geq d)$-regular. Moreover, if $D'[V_i, V_j]$ is non-empty, then $D'[V_i, V_j]=D[V_i, V_j]$.\label{lm:reglm-reg}
	\end{enumerate}
\end{lm}

Let $\varepsilon>0$, $M'\in \mathbb{N}$, and $d\in [0,1]$.
Let $D$ be a digraph.
The \emph{pure digraph of $D$ with parameters $\varepsilon, d$, and $M'$} is the digraph $D'\subseteq D$ obtained by applying \cref{lm:reglm} with these parameters.
The \emph{reduced digraph of $D$ with parameters $\varepsilon, d$, and $M'$} is the digraph $R$ defined as follows. Let $V_0, V_1, \dots, V_k$ be the partition obtained by applying \cref{lm:reglm} with parameters $\varepsilon, d$, and $M'$. Denote by $D'$ the pure digraph of $D$ with parameters $\varepsilon, d$, and $M'$.
Then, $V(R)\coloneqq \{V_i\mid i\in [k]\}$ and, for any distinct $U,V\in V(R)$, $UV\in E(R)$ if and only if $D'[U,V]$ is non-empty. Note that \cref{lm:reglm}\cref{lm:reglm-reg} implies that $D'[U,V]=D[U,V]$ is $(\varepsilon, \geq d)$-regular for any $UV\in E(R)$.

The following result states that robust outexpansion is inherited by the reduced digraph.

\begin{lm}[{\cite[Lemma 14]{kuhn2010hamiltonian}}]\label{lm:Rrob2}
	Let $0<\frac{1}{n}\ll \varepsilon \ll d\ll \nu, \tau, \delta \leq 1$ and $\frac{M'}{n}\ll 1$. Let $D$ be a robust $(\nu, \tau)$-outexpander on $n$ vertices. Suppose that $\delta^0(D)\geq \delta n$.
	Let $R$ be the reduced digraph of $D$ with parameters $\varepsilon, d$, and $M'$.
	Then, $\delta^0(R)\geq \frac{\delta |R|}{2}$ and $R$ is a robust $(\frac{\nu}{2}, 2\tau)$-outexpander. 
\end{lm}

\subsubsection{Robust outexpanders}

By definition of a bipartite robust outexpander, the $\tau$-parameter can be made arbitrarily small. An analogous observation for the non-bipartite setting was made (and proved) in \cite[Lemma 4.3]{girao2020path}, so we omit the details here.

\begin{lm}\label{lm:robparameters}
	Let $0<\frac{1}{n}\ll \nu\ll \tau\leq \frac{\delta}{2}\leq 1$. Let $D$ be a bipartite digraph on vertex classes $A$ and $B$ of size $n$. Suppose that $D$ is a bipartite robust $(\nu, \frac{\delta}{2})$-outexpander with bipartition $(A,B)$. Suppose furthermore that $\delta^0(D)\geq \delta n$. Then, $D$ is a bipartite robust $(\nu, \tau)$-outexpander with bipartition $(A,B)$. 
\end{lm}

\COMMENT{\begin{proof}
		Let $S\subseteq A$ satisfy $\tau n\leq |S|\leq (1-\tau)n$ and denote $T\coloneqq RN_{\nu, D}^+(S)$. If $\frac{\delta n}{2}\leq |S|\leq (1-\frac{\delta}{2})n$, then, by assumption, $|RN_{\nu, D}^+(S)|\geq |S|+\nu n$.\\
		Assume that $|S|<\frac{\delta n}{2}$. Let $m\coloneqq |\{e\in E(D)\mid V^+(e)\subseteq S\}|$. Then, $m\geq |S|\delta n$. Moreover, $m\leq |T||S|+(n-|T|)\nu n\leq |T||S|+\nu n^2$. Therefore, $|T|\geq \frac{|S|\delta n-\nu n^2}{|S|}\geq \delta n-\frac{\nu n}{\tau}\geq \frac{\delta n}{2}+\nu n\geq |S|+\nu n$,
		as desired.\\
		Assume that $|S|>(1-\frac{\delta}{2})n$. Then, for each $v\in V(D)$, $|N_D^\pm(v)\cap S|\geq \delta n- |A\setminus S|\geq \frac{\delta n}{2}\geq \nu n$. Therefore, $T=B$ and $|T|=n\geq |S|+\nu n$. This completes the proof.
\end{proof}}

\begin{lm}[{\cite[Lemma 4.2]{girao2020path}}]\label{lm:verticesedgesremovalrobout}
	Let $0<\varepsilon\leq \nu\ll \tau\leq 1$. Let $D$ be a robust $(\nu, \tau)$-outexpander on $n$ vertices.
	\begin{enumerate}
		\item If $D'$ is obtained from $D$ by removing at most $\varepsilon n$ inedges and at most $\varepsilon n$ outedges at each vertex, then $D'$ is a robust $(\nu-\varepsilon,\tau)$-outexpander.\label{lm:verticesedgesremovalrobout-edges}
		\item If $D'$ is obtained from $D$ by adding or removing at most $\varepsilon n$ vertices, then $D'$ is a robust $(\nu-\varepsilon, 2\tau)$-outexpander.\label{lm:verticesedgesremovalrobout-vertices}
	\end{enumerate}
	
\end{lm}

\COMMENT{\begin{proof}
		Let $D'$ be obtained from $D$ by adding or removing at most $\varepsilon n$ vertices. 
		Let $S\subseteq V(D')$ satisfy $2\tau|D'|\leq |S|\leq (1-2\tau)|D'|$. We show that $|RN_{\nu-\varepsilon,D'}^+(S)|\geq |S|+(\nu-\varepsilon)|D'|$.
		Let $S'\coloneqq S\cap V(D)$. By assumption,
		\[\tau n\leq 2\tau(1-\varepsilon)n-\varepsilon n\leq 2\tau|D'|-|S\setminus S'|\leq |S|-|S\setminus S'|=|S'|\leq (1-2\tau)(1+\varepsilon)n\leq (1-\tau)n.\]
		Moreover, observe that each $v\in RN_{\nu, D}^+(S')\cap V(D')$ satisfies $|N_{D'}^-(v)\cap S|\geq |N_{D'}^-(v)\cap S'|\geq |N_D^-(v)\cap S'|\geq \nu n \geq (\nu-\varepsilon)|D'|$, so	
		$RN_{\nu-\varepsilon, D'}^+(S)\supseteq RN_{\nu, D}^+(S')\cap V(D')$. 
		If $D'$ is obtained from $D$ by adding at most $\varepsilon n$ vertices, then $|RN_{\nu-\varepsilon, D'}^+(S)|\geq |RN_{\nu, D}^+(S')|\geq |S|+\nu n\geq |S|+(\nu-\varepsilon)|D'|$. Otherwise, $D'$ is obtained from $D$ by removing at most $\varepsilon n$ vertices and so $|RN_{\nu-\varepsilon, D'}^+(S)|\geq |RN_{\nu, D}^+(S')|-|V(D)\setminus V(D')|\geq |S|+(\nu-\varepsilon) n\geq |S|+(\nu-\varepsilon)|D'|$, as desired.
\end{proof}}

\COMMENT{\label{lm:verticesedgesremovalbiprobexp}
\begin{lm*}
	Let $0<\frac{1}{m}, \frac{1}{m}\ll\varepsilon\leq \nu\leq 1$. 
	Let $G$ be a bipartite robust $(\nu, \tau)$-expander on vertex classes $A$ and $B$ of size $m$.
	If $G'$ is obtained from $G$ by removing at most $\varepsilon m$ edges at each vertex, then $G'$ is a bipartite robust $(\nu-\varepsilon, \tau)$-expander with bipartition $(A,B)$.
\end{lm*}
\begin{proof}
	$RN_{\nu-\varepsilon, G'}(S)\supseteq RN_{\nu, G}(S)$ for any $S\subseteq A$.
\end{proof}}

\COMMENT{\label{cor:robshortpaths}
	\begin{lm*}[{\cite[Corollary 4.6]{girao2020path}}]
		Let $0<\frac{1}{n}\ll \varepsilon\ll\nu\ll \tau\leq \frac{\delta}{2}\leq 1$. Let $D$ be a robust $(\nu, \tau)$-outexpander on $n$ vertices. Suppose that $\delta^0(D)\geq \delta n$ and let $S\subseteq V(D)$ be such that $|S|\leq \varepsilon n$. Let $k\leq \nu^3 n$ and $x_1, \dots, x_k, x_1', \dots, x_k'$ be (not necessarily distinct) vertices of $D$. Let $X\coloneqq \{x_1, \dots, x_k,x_1', \dots, x_k'\}$. Then, there exist internally vertex-disjoint paths $P_1, \dots, P_k\subseteq D$ such that, for each $i\in [k]$, $P_i$ is an $(x_i, x_i')$-path of length at most $2\nu^{-1}$ and $V^0(P_i)\subseteq V(D)\setminus (X\cup S)$.
	\end{lm*}}
\COMMENT{\label{cor:robcycle}\begin{lm*}[{\cite[Corollary 4.8(c)]{girao2020path}}]
		Let $0<\frac{1}{n}\ll\nu\ll \tau\leq \frac{\delta}{2}\leq 1$ and $k\leq \nu^3 n$. Let $D$ be a digraph and $P_1, \dots, P_k\subseteq D$ be vertex-disjoint paths. For each $i\in [k]$, denote by $v_i^+$ and $v_i^-$ the starting and ending points of $P_i$, respectively.
		Let $V'\coloneqq V(D)\setminus \bigcup_{i\in [k]}V(P_i)$.
		Suppose that $D'\coloneqq D[V']$ is a robust $(\nu,\tau)$-outexpander on $n$ vertices satisfying $\delta^0(D')\geq \delta n$. Assume that for each $i\in [k-1]$, $|N_D^+(v_i^-)\cap V'|\geq 2k$ and $|N_D^-(v_{i+1}^+)\cap V'|\geq 2k$. Then, there exists a Hamilton cycle $C$ of $D$ such that, for each $i\in [k]$, $P_i\subseteq C$.
	\end{lm*}}

We will need \cite[Lemma 14.3]{girao2020path}, which states that robust outexpansion is inherited by random vertex subsets with high probability. Note that we only gave a brief proof overview of this lemma in \cite{girao2020path} as it was only used to sketch a new shorter proof of the main result of \cite{osthus2013approximate}. Thus, for completeness, we include its proof here.

\begin{lm}[{\cite[Lemma 14.3]{girao2020path}}]\label{lm:epsilonrob}
	Let $0<\frac{1}{n}\ll \varepsilon\ll \nu'\ll \delta, \nu, \tau\ll 1$. Fix a positive integer $n'\geq \varepsilon n$. Suppose that $D$ is a robust $(\nu,\tau)$-outexpander on $n$ vertices satisfying $\delta^0(D)\geq \delta n$. Suppose that $V$ is chosen uniformly at random among the subsets of $V(D)$ of size $n'$. Then, $D[V]$ is a robust $(\nu',4\tau)$-outexpander with probability at least $1-n^{-2}$.
\end{lm}

\begin{proof}
	Fix additional constants such that $\frac{1}{n}\ll \varepsilon ' \ll \varepsilon \ll\nu'\ll d\ll \nu,\tau$ and $\frac{M'}{n}\ll 1$. 
	Let $V_0, V_1, \dots, V_k$ be the partition of $V(D)$ obtained by applying \cref{lm:reglm} with $\varepsilon'$ playing the role of $\varepsilon$ and define $m\coloneqq |V_1|=\dots =|V_k|$.	
	Denote by $R$ the reduced digraph of $D$ with parameters $\varepsilon', d$, and $M'$.
	By \cref{lm:Rrob2}, $R$ is a robust $(\frac{\nu}{2},2\tau)$-outexpander.
	Let $n'\geq \varepsilon n$ and suppose that $V$ is chosen uniformly at random among the subsets of $V(D)$ of size $n'$. We show that $D'\coloneqq D[V]$ is a robust $(\nu',4\tau)$-outexpander with probability at least $1-n^{-2}$.
	
	For each $i\in [k]$, denote $V_i'\coloneqq V_i\cap V(D')$. Let $i\in [k]$. Then, $\mathbb{E}[|V_i'|]=\frac{n' m}{n}\eqqcolon m'\geq \varepsilon m$. Then, \cref{lm:Chernoff} implies that
	\[\mathbb{P}[|V_i'|\neq (1\pm \varepsilon)m']\leq 2\exp\left(-\frac{\varepsilon^3m}{3}\right).\]
	Thus, a union bound implies that $|V_i'|=(1\pm\varepsilon)m'$ for each $i\in [k]$ with probability at least $1-n^{-2}$.
	Therefore, we assume that $|V_i'|=(1\pm\varepsilon)m'$ for each $i\in [k]$ and show that $D'$ is a robust $(\nu',4\tau)$-outexpander.
	
	Note that \cref{lm:epsvertexslice} (with $\varepsilon'$ and $\varepsilon^2$ playing the roles of $\varepsilon$ and $\eta$) implies that, for each $V_iV_j\in E(R)$ and $S\subseteq V_i'$ satisfying $|S|\geq \varepsilon m'\geq \varepsilon^2 m$, the pair $D'[S,V_j']$ is still $(\varepsilon, \geq d-\varepsilon')$-regular. 
	Let $S\subseteq V(D')$ satisfy $4\tau n' \leq |S|\leq (1-4\tau)n'$.
	We need to show that $|RN_{\nu', D'}^+(S)|\geq |S|+\nu'n'$.
	Let $S'\coloneqq \{V_i\mid i\in [k], |S\cap V_i|=|S\cap V_i'|\geq dm'\}$. Then, 
	\begin{align}
		|S'|&\geq \frac{|S|-dm'k}{m'}\label{eq:epsilonrob-S'}\\
		&\geq \frac{4\tau n'}{m'}-d k= \frac{4\tau n}{m}-d k\geq 2\tau k.\nonumber
	\end{align}
	If $|S'|\leq (1-2\tau)k$, then let $S''\coloneqq S'$; otherwise, choose $S''\subseteq S'$ of size $(1-2\tau)k$.
	Then, $|RN_{\frac{\nu}{2},R}^+(S'')|\geq |S''|+\frac{\nu k}{2}$.
	
	Let $V_i\in S''$ and $S_i\coloneqq V_i'\cap S=V_i\cap S$. By definition of $S'$, we have $|S_i|\geq dm'$ and so $D'[S_i,V_j']$ is $(\varepsilon, \geq d-\varepsilon')$-regular for each $V_iV_j\in E(R)$. Then, \cref{lm:epsdeg} implies that, for each $V_iV_j\in E(R)$, all but at most $\varepsilon(1+\varepsilon)m'\leq 2\varepsilon m'$ vertices $v\in V_j'$ satisfy $d_{D'[S_i,V_j']}(v)\geq(d-\varepsilon'-\varepsilon)|S_i|\geq (d-2\varepsilon)dm'$.
	
	Thus, for each $V_i\in RN_{\frac{\nu}{2}, R}^+(S'')$, all but at most $\frac{2\varepsilon m' \cdot k}{2\sqrt{\varepsilon}k}=\sqrt{\varepsilon}m'$ vertices
	$v\in V_i'$ satisfy%
	\COMMENT{Count vertices of $V_i'$ which have good degree in at least $(\frac{\nu}{2}-2\sqrt{\varepsilon})k$ pairs of $R$.}
	\begin{align*}
		|N_{D'}^-(v)\cap S|\geq \left(\frac{\nu }{2}-2\sqrt{\varepsilon}\right)k\cdot  (d-2\varepsilon)dm'\geq \left(\frac{\nu d^2}{2}-2\sqrt{\varepsilon}d^2-\nu\varepsilon d\right)km'\stackrel{\text{\cref{lm:reglm}\cref{lm:reglm-V0}}}{\geq} \nu' n'.
	\end{align*}
	Therefore, 
	\begin{align*}
		|RN_{\nu',D'}^+(S)|&\stackrel{\text{\eqmakebox[epsilonrob]{}}}{\geq} |RN_{\frac{\nu}{2}, R}^+(S'')|\left(1-\varepsilon-\sqrt{\varepsilon}\right)m'\geq \left(|S''|+\frac{\nu k}{2}\right)(1-2\sqrt{\varepsilon})m'\\
		&\stackrel{\text{\eqmakebox[epsilonrob]{\text{\cref{lm:reglm}\cref{lm:reglm-V0}}}}}{\geq} |S''|m'+\frac{\nu n'}{3}.
	\end{align*}	
	If $|S''|=(1-2\tau)k$, then \cref{lm:reglm}\cref{lm:reglm-V0} implies that $|S''|m'\geq(1-4\tau)n'\geq |S|$ and so $|RN_{\nu',D'}^+(S)|\geq |S|+\nu'n'$, as desired. We may therefore assume that $S''=S'$. Then, \cref{eq:epsilonrob-S'} implies that $|S''|m'\geq |S|-d n'$ and so $|RN_{\nu',D'}^+(S)|\geq |S|+\nu'n'$, as desired.
\end{proof}

\subsubsection{Probabilistic estimates}

The following \lcnamecrefs{lm:randomslice} are easy consequences of \cref{lm:Chernoff}.

\begin{lm}\label{lm:randomslice}
	Let $0<\frac{1}{n}\ll \varepsilon\ll \nu \ll \tau \ll \gamma \ll \delta \leq 1$. Let $G$ be a balanced bipartite graph on vertex classes $A$ and $B$ of size $n$. Suppose that $G$ is a $(\delta,\varepsilon)$-almost regular bipartite robust $(\nu, \tau)$-expander with bipartition $(A,B)$.
	Let $\Gamma$ be obtained from $G$ by taking each edge independently with probability $\frac{\gamma}{\delta}$. Then, with positive probability, all of the following hold.
	\begin{enumerate}
		\item $G\setminus \Gamma$ is $(\delta-\gamma, \varepsilon)$-almost regular.\label{lm:randomslice-reg}
		\item $\Gamma$ is $(\gamma, \varepsilon)$-almost regular.\label{lm:randomslice-reg2}
		\item $\Gamma$ is a bipartite robust $(\frac{\gamma\nu}{2\delta}, \tau)$-expander with bipartition $(A,B)$.\label{lm:randomslice-rob}
	\end{enumerate}
\end{lm}

\COMMENT{For a proof, see comments in \cite{girao2020path} (Lemmas 4.12 and 4.13).}

\begin{lm}\label{lm:randompartition}
	Let $0<\frac{1}{n}\ll \varepsilon \leq 1$ and fix positive integers $k,\ell \geq \varepsilon n$.
	Let $A$ and $B$ be disjoint vertex sets of size $n$. Suppose that $M_1, \dots, M_\ell$ are bipartite perfect matchings on vertex classes $A$ and $B$.
	Suppose that $A_1, \dots, A_\ell$ are chosen independently and uniformly at random among subsets of $A$ of size $k$.
	Then, with probability at least $1-n^{-1}$, both of the following hold.
	\begin{enumerate}
		\item For each $v\in A$, there exist at most $\frac{(1+\varepsilon)\ell k}{n}$ indices $i\in [\ell]$ such that $v\in A_i$.\label{lm:randompartition-A}
		\item For each $v\in B$, there exist at most $\frac{(1+\varepsilon)\ell k}{n}$ indices $i\in [\ell]$ such that $v\in N_{M_i}(A_i)$.\label{lm:randompartition-B}
	\end{enumerate}
\end{lm}

\COMMENT{\begin{proof}
		Let $v\in A$. Let $X$ be the number of indices $i\in [\ell]$ such that $v\in A_i$. Then, $\mathbb{E}[X]=\frac{\ell k}{n}$ and so, by \cref{lm:Chernoff},
		\[\mathbb{P}\left[X\geq (1+\varepsilon)\frac{\ell k}{n}\right]\leq \exp\left(-\frac{\varepsilon^4 n}{3}\right)\leq \frac{1}{n^3}.\]
		Thus, by a union bound, \cref{lm:randompartition-A}, fails with probability at most $n^{-2}$.
		Similarly, \cref{lm:randompartition-B} fails with probability at most $n^{-2}$. The \lcnamecref{lm:randompartition} holds by a union bound.
\end{proof}}

\begin{lm}\label{lm:randomsubset}
	Let $0<\frac{1}{n}\ll \varepsilon \ll \delta \leq 1$ and fix a positive integer $k\geq \varepsilon n$.
	Let $D$ be a $(\delta, \varepsilon)$-almost regular digraph on $n$ vertices. Let $A$ be chosen uniformly at random among subsets of $V(D)$ of size $k$. Then, the following hold with probability at least $1-n^{-2}$.
	\begin{enumerate}
		\item $D[A]$ and $D-A$ are $(\delta, 3\varepsilon)$-almost regular.\label{lm:randomsubset-regular}
		\item Each $v\in V(D)\setminus A$ satisfies $|N_D^\pm(v)\cap A|\geq (\delta-3\varepsilon)|A|$.\label{lm:randomsubset-degree}
	\end{enumerate} 
\end{lm}

\COMMENT{\begin{proof}
		Let $v\in V(D)$. Then, $\mathbb{E}[|N_D^\pm(v)\cap A|]=\frac{d_D^\pm(v) k}{n}$ and so, by \cref{lm:Chernoff},
		\[\mathbb{P}\left[|N_D^\pm(v)\cap A|\neq (1\pm \varepsilon)\frac{d_D^\pm(v) k}{n}\right]\leq 2 \exp\left(-\frac{\varepsilon^3 (\delta-\varepsilon)n}{3}\right).\]
		By a union bound, with probability at least $1-n^{-2}$, each $v\in V(D)$ satisfies $|N_D^\pm(v)\cap A|= (1\pm \varepsilon)\frac{d_D^\pm(v) k}{n}$.\\
		Therefore, we assume that each $v\in V(D)$ satisfies $|N_D^\pm(v)\cap A|= (1\pm \varepsilon)\frac{d_D^\pm(v) k}{n}$ and show that \cref{lm:randomsubset-degree,lm:randomsubset-regular} are satisfied.
		Let $v\in V(D)$. We have $|N_D^\pm(v)\cap A|=(1\pm \varepsilon)\frac{d_D^\pm(v) k}{n}=(1\pm\varepsilon)(\delta\pm\varepsilon)|A|=(\delta \pm 3\varepsilon)|A|$ and $|N_D^\pm(v)\setminus A|=d_D^\pm(v)-(1\pm \varepsilon)\frac{d_D^\pm(v) k}{n}=(\delta\pm \varepsilon)n-(1\pm\varepsilon)(\delta\pm\varepsilon)|A|=(\delta \pm 3\varepsilon)(n-|A|)$. Thus, \cref{lm:randomsubset-degree,lm:randomsubset-regular} are satisfied.
\end{proof}}

\COMMENT{\label{lm:partition}\begin{lm*}
	Let $0<\frac{1}{n}\ll\frac{1}{k}, \varepsilon\ll \delta \leq 1$. Let $D$ be a $(\delta, \varepsilon)$-almost regular digraph on $n$ vertices. Let $n_1, \dots, n_k\in \mathbb{N}$ be such that $\sum_{i\in [k]}n_i=n$ and, for each $i\in [k]$, $n_i=\frac{n}{k}\pm 1$. Assume $V_1, \dots, V_k$ is a random partition of $V(D)$ such that, for each $i\in [k]$, $|V_i|=n_i$. Then, with probability at least $1-n^{-2}$,
	the following holds. For each $i\in [k]$ and $v\in V(D)$, $|N_D^\pm(v)\cap V_i|=(\delta\pm 2\varepsilon)\frac{n}{k}$.
\end{lm*}
\begin{proof}
	Let $i\in [k]$ and $v\in V(D)$. Then, $\mathbb{E}[|N_D^\pm(v)\cap V_i|]=\frac{d_D^\pm(v)n_i}{n}=(\delta\pm \varepsilon)n_i$. Therefore, by \cref{lm:Chernoff}, 
	\begin{align*}
		\mathbb{P}\left[|N_D^\pm(v)\cap V_i|\neq(\delta\pm 2\varepsilon)\frac{n}{k}\right]&\leq\mathbb{P}\left[|N_D^\pm(v)\cap V_i|\neq(\delta\pm 3\varepsilon)n_i\right]\\
		&\leq \mathbb{P}\left[|N_D^\pm(v)\cap V_i|\neq(1\pm \frac{5\varepsilon}{\delta})\mathbb{E}[|N_D^\pm(v)\cap V_i|]\right]\\
		&\leq  2\exp\left(-\frac{25\varepsilon^2 n}{6\delta k}\right).
	\end{align*}
	A union bound gives that the partition $V_1, \dots, V_k$ satisfies the desired properties with probability at least 
	\[1-2kn\exp\left(-\frac{25\varepsilon^2n}{6\delta k}\right)\geq 1-n^{-2}.\] This completes the proof.
\end{proof}}

\subsubsection{Matching contractions}

Let $G$ be a bipartite graph on vertex classes $A$ and $B$ and let $D$ be a digraph on $A$. Let $M$ be an auxiliary perfect matching from $B$ to $A$.
Recall \cref{def:contractexpand} and note that there is a one-to-one correspondence between the edges of $G\setminus M[B,A]$ and the $M$-contraction of $G$, as well as between the edges of $D$ and the $M$-expansion of $D$. Thus, edge-disjointness and sub(di)graph relations are preserved when considering $M$-contractions and $M$-expansions.

\begin{fact}\label{fact:subgraphcontract}
	Let $A$ and $B$ be disjoint vertex sets of equal size. Let $M$ be a directed perfect matching from $B$ to $A$.
	Let $G$ and $G'$ be bipartite graphs on vertex classes $A$ and $B$ and denote by $G_M$ and $G_M'$ the $M$-contractions of $G$ and $G'$, respectively. Let $D$ and $D'$ be digraphs on $A$ and let $D_M$ and $D_M'$ be the $M$-expansions of $D$ and $D'$, respectively.
	Then, the following hold.
	\begin{enumerate}
		\item If $G'\subseteq G$, then $G_M'\subseteq G_M$.\label{fact:subgraphcontract-subgraphG}
		\item If $G$ and $G'$ are edge-disjoint, then $G_M$ and $G_M'$ are edge-disjoint.\label{fact:subgraphcontract-disjointG}
		\item If $D'\subseteq D$, then $D_M'\subseteq D_M$.\label{fact:subgraphcontract-subgraphD}
		\item If $D$ and $D'$ are edge-disjoint, then $D_M$ and $D_M'$ are edge-disjoint.\label{fact:subgraphcontract-disjointD}
	\end{enumerate}
\end{fact}

\subsection{Proof of Theorem \ref{thm:biphalfapproxHamdecomp}}
We need a (simplified) bipartite analogue of \cite[Lemma 7.3]{girao2020path}.

\begin{lm}\label{lm:approxdecomp}
	Let $0<\frac{1}{n}\ll \varepsilon \ll \nu' \ll \nu \ll \tau\ll \gamma \ll \eta, \delta \leq 1$ and $\ell \leq 2(\delta-\eta)n$.
	Let $D$ and $\Gamma$ be edge-disjoint balanced bipartite digraphs on common vertex classes $A$ and $B$ of size $n$. 
	Suppose that $D[A,B]$ is $(\delta,\varepsilon)$-almost regular
	and $\Gamma[A,B]$ is $(\gamma, \varepsilon)$-almost regular.
	Suppose that $\Gamma[A,B]$ is a bipartite robust $(\nu,\tau)$-expander with bipartition $(A,B)$. 
	Suppose that, for each $i\in[\ell]$, $F_i$ is a bipartite directed linear forest on vertex classes $A$ and $B$ such that the following hold.
	\begin{enumerate}
		\item For each $i\in [\ell]$, $e_{F_i}(B,A)=n$.\label{lm:approxdecomp-perfectmatching}
		\item For each $i\in [\ell]$, $e_{F_i}(A,B)\leq \varepsilon^4 n$.\label{lm:approxdecomp-smallmatching}
		\item For each $v\in V(D)$, there exist at most $\varepsilon^3 n$ indices $i\in[\ell]$ such that $d_{F_i[A,B]}(v)>0$.\label{lm:approxdecomp-degreev}
	\end{enumerate}
	Define a multidigraph $\cF$ by $\cF\coloneqq \bigcup_{i\in [\ell]}F_i$.
	Then, the multidigraph $D\cup \Gamma\cup \cF$ contains edge-disjoint Hamilton cycles $C_1,\dots, C_\ell $ such that $F_i\subseteq C_i$ for each $i\in [\ell]$ and the following hold, where $D'\coloneqq D\setminus \bigcup_{i\in [\ell]}C_i$ and $\Gamma'\coloneqq \Gamma\setminus \bigcup_{i\in [\ell]}C_i$.
	\begin{enumerate}[label=\rm(\alph*)]
		\item If $\ell\leq \varepsilon^2 n$, then $\Gamma'[A,B]$ is obtained from $\Gamma[A,B]$ by removing at most $3\varepsilon^3(\nu')^{-4}n$ edges incident to each vertex.\label{lm:approxdecomp-smallchunks}
		\item If $\ell\leq (\nu')^5n$, then $D'[A,B]$ is $(\delta-\frac{\ell}{2n}, 2\varepsilon)$-almost regular and $\Gamma'[A,B]$ is $(\gamma, 2\varepsilon)$-almost regular. Moreover, $\Gamma'[A,B]$ is a bipartite robust $(\nu-\varepsilon, \tau)$-expander with bipartition $(A,B)$.\label{lm:approxdecomp-bigchunks}
		\item $D'[A,B]\cup \Gamma'[A,B]$ is a bipartite robust $(\frac{\nu}{2}, \tau)$-expander with bipartition $(A,B)$.\label{lm:approxdecomp-all}
	\end{enumerate}
\end{lm}

We first derive \cref{thm:biphalfapproxHamdecomp} from \cref{lm:approxdecomp}\cref{lm:approxdecomp-all}.

\begin{proof}[Proof of \cref{thm:biphalfapproxHamdecomp}]
	By \cref{fact:biprobexpparameters,lm:robparameters}, we may assume without loss of generality that $\varepsilon\ll \nu \ll \tau\ll \eta, \delta$. Define additional constants such that $\varepsilon\ll \nu'\ll \nu$ and $\tau\ll \gamma \ll \eta, \delta$.
	By \cref{lm:randomslice}, there exists $\Gamma\subseteq D$ such that $(D\setminus \Gamma)[A, B]$ is $(\delta-\gamma, \varepsilon)$-almost regular and $\Gamma[A,B]$ is a $(\gamma, \varepsilon)$-almost regular bipartite robust $(\frac{\gamma \nu}{2\delta}, \tau)$-expander with bipartition $(A,B)$.
	Apply \cref{lm:approxdecomp}\cref{lm:approxdecomp-all} with $D\setminus \Gamma, \delta-\gamma, \frac{\gamma \nu}{2\delta}$, and $\varepsilon^{\frac{1}{4}}$ playing the roles of $D, \delta, \nu$, and $\varepsilon$.
	This completes the proof of \cref{thm:biphalfapproxHamdecomp}.	
\end{proof}

We now prove \cref{lm:approxdecomp}.
First, \cref{lm:approxdecomp}\cref{lm:approxdecomp-bigchunks} follows by repeated applications of \cref{lm:approxdecomp}\cref{lm:approxdecomp-smallchunks} and \cref{lm:approxdecomp}\cref{lm:approxdecomp-all} follows by repeated applications of \cref{lm:approxdecomp}\cref{lm:approxdecomp-bigchunks}. The arguments are the same as in \cite{girao2020path}, so we omit these proofs here.%
\COMMENT{\begin{proof}[Proof of \cref{lm:approxdecomp}\cref{lm:approxdecomp-bigchunks}]
	Let $\ell'\coloneqq \left\lfloor \varepsilon^2n\right\rfloor$ and $k\coloneqq \left\lceil\frac{\ell}{\ell'}\right\rceil$. Note that $k\leq 2(\nu')^5 \varepsilon^{-2}$.
	We now group $F_1, \dots, F_\ell$ into $k$ batches, each of size at most $\ell'$.
	We aim to apply \cref{lm:approxdecomp}\cref{lm:approxdecomp-smallchunks} to each batch in turn.\\	
	Assume that, for some $0\leq m\leq k$, we have constructed edge-disjoint Hamilton cycles $C_1, \dots, C_{\min\{m\ell',\ell\}}\subseteq D\cup \Gamma\cup \cF$ such that, for each $i\in [\min\{m\ell',\ell\}]$, $E(C_i)\cap E(\cF)=E(F_i)$ and the following holds. Let $\Gamma_m\coloneqq \Gamma\setminus \bigcup_{i\in\{m\ell',\ell\}]}C_i$. 
	Then, for each $v\in V$,
	\begin{enumerate}[label=\rm(\greek*)]
		\item $|N_{\Gamma\cap \bigcup_{i\in [\min\{m\ell',\ell\}]} C_i}(v)|=|N_{\Gamma\setminus \Gamma_m}(v)|\leq 24\varepsilon^3(\nu')^{-4}mn\leq 48 \eps \nu' n \le \eps n$.\label{eq:approxdecomp-NGamma}
	\end{enumerate}
	Let $D_m\coloneqq D\setminus \bigcup_{i\in [\min\{m\ell',\ell\}]}C_i$.
	Observe that, by \cref{lm:approxdecomp-degreev}, $\bigcup_{i \in [ \min\{m \ell', \ell\}]} (C_i\setminus F_i)[A,B]$ is $(\frac{ \min\{m \ell', \ell\}}{2n}, \frac{\eps^3}{2})$-almost regular. 
	Together with \cref{eq:approxdecomp-NGamma}, this implies that $D_m[A,B]$ is $(\delta-\frac{\min\{m\ell',\ell\}}{2n},2\varepsilon)$-almost regular and $\Gamma_m[A,B]$ is $(\gamma, 2\varepsilon)$-almost regular.
	Moreover, by \cref{lm:verticesedgesremovalbiprobexp}, $\Gamma_m[A,B]$ is a bipartite robust $(\nu-\varepsilon, \tau)$-expander with bipartition $(A,B)$.\\	
	Thus, if $m=k$, we are done.\\
	Suppose that $m<k$.	
	Let $\ell''\coloneqq \min\{\ell-m\ell', \ell'\}$ and $\cF'\coloneqq \bigcup_{i\in [\ell'']}\cF_{m\ell'+i}$. Apply \cref{lm:approxdecomp}\cref{lm:approxdecomp-smallchunks} with $D_m$, $\Gamma_m$, $\cF'$, $\delta-\frac{m\ell'}{2n}$, $\nu-\varepsilon$, $2\varepsilon$, $\ell''$, and $F_{m\ell'+1}, \dots, F_{m\ell'+\ell''}$ playing the roles of $D$, $\Gamma$, $\cF$, $\delta$, $\nu$, $\varepsilon$, $\ell$, and $F_1, \dots, F_\ell$ to obtain edge-disjoint Hamilton cycles $C_{m\ell'+1}, \dots, C_{m\ell'+\ell''}\subseteq D_m\cup \Gamma_m\cup \cF'$ such that, for each $i\in [\ell'']$, $E(C_{m\ell'+i})\cap E(\cF')=E(F_{m\ell'+i})$ and, for each $v\in V(D)$, $|N_{\Gamma_m\setminus \Gamma_{m+1}}(v)|\leq 3(2\varepsilon)^3(\nu')^{-4}n\leq 24\varepsilon^3(\nu')^{-4}n$, where $\Gamma_{m+1}\coloneqq \Gamma_m\setminus \bigcup_{i\in [\ell'']}C_{m\ell'+i}$.
	In particular, \cref{eq:approxdecomp-NGamma} holds with $m+1$ playing the role of $m$. This completes the proof. 
\end{proof}}%
\COMMENT{\begin{proof}[Proof of \cref{lm:approxdecomp}\cref{lm:approxdecomp-all}]
	Let $\ell'\coloneqq \lfloor(\nu')^5 n\rfloor$ and $k\coloneqq \left\lceil\frac{\ell}{\ell'}\right\rceil$.
	Note that $k\leq 2(\nu')^{-5}$.	
	Assume inductively that, for some $0\leq m\leq k$, we have constructed edge-disjoint Hamilton cycles $C_1, \dots, C_{\min\{m\ell', \ell\}}\subseteq D\cup \Gamma\cup \cF$ such that the following hold, where $D_m\coloneqq D\setminus \bigcup_{j\in [\min\{m\ell', \ell\}]}C_j$ and $\Gamma_m\coloneqq \Gamma\setminus \bigcup_{j\in [\min\{m\ell', \ell\}]}C_j$.
	\begin{itemize}[--]
		\item For each $i\in [\min\{m\ell', \ell\}]$, $C_i$ is a Hamilton cycle satisfying $E(C_i)\cap E(\cF)=E(F_i)$;
		\item $D_m[A,B]$ is $(\delta-\frac{\min\{m\ell', \ell\}}{2n}, 2^m\varepsilon)$-almost regular; and 
		\item $\Gamma_m[A,B]$ is a $(\gamma, 2^m\varepsilon)$-almost regular bipartite robust $(\nu-2^m\varepsilon, \tau)$-expander with bipartition $(A,B)$.
	\end{itemize}	
	If $m=k$, then, since $k\leq 2(\nu')^{-5}$ and $\varepsilon\ll\nu'\ll \nu$, $\Gamma_m[A,B]$ is a bipartite robust $(\frac{\nu}{2}, \tau)$-expander with bipartition $(A,B)$
	and so is $D_m[A,B]\cup \Gamma_m[A,B]$, as desired.\\
	Assume that $m<k$. Let $\ell''\coloneqq \min\{\ell-m\ell', \ell'\}$ and $\cF'\coloneqq \bigcup_{i\in [\ell'']}\cF_{m\ell'+i}$. Then, apply \cref{lm:approxdecomp}\cref{lm:approxdecomp-bigchunks} with $D_m, \Gamma_m, \cF', \delta-\frac{m\ell'}{2n}, \nu-2^m\varepsilon, 2^m\varepsilon, \ell''$, and $F_{m\ell'+1}, \dots, F_{m\ell'+\ell''}$ playing the roles of $D, \Gamma, \cF, \delta, \nu, \varepsilon, \ell$, and $F_1, \dots, F_\ell$ to obtain edge-disjoint Hamilton cycles $C_{m\ell'+1}, \dots, C_{m\ell'+\ell''}\subseteq D_m\cup \Gamma_m\cup \cF'$ such that the following hold, where $D_{m+1}\coloneqq D_m\setminus \bigcup_{i\in [\ell'']}C_{m\ell'+i}$ and $\Gamma_{m+1}\coloneqq \Gamma_m\setminus \bigcup_{i\in [\ell'']}C_{m\ell'+i}$.
	\begin{itemize}
		\item For each $i\in [\ell'']$, $E(C_{m\ell'+i})\cap E(\cF')=E(F_{m\ell'+i})$.
		\item $D_{m+1}[A,B]$ is $(\delta-\frac{\min\{(m+1)\ell', \ell\}}{2n}, 2^{m+1}\varepsilon)$-almost regular.
		\item $\Gamma_{m+1}[A,B]$ is a $(\gamma, 2^{m+1}\varepsilon)$-almost regular bipartite robust $(\nu-(2^m+1)\varepsilon, \tau)$-expander with bipartition $(A,B)$.\qedhere
	\end{itemize}
\end{proof}}
It remains to prove \cref{lm:approxdecomp}\cref{lm:approxdecomp-smallchunks}.

\begin{proof}[Proof of \cref{lm:approxdecomp}\cref{lm:approxdecomp-smallchunks}]
	Let $i\in [\ell]$. Define $M_i\coloneqq E_{F_i}(B,A)$ and note that \cref{lm:approxdecomp-perfectmatching} implies that $M_i$ is a perfect matching from $B$ to $A$.
	Let $\tD_i$, $\tGamma_i$, and $\tF_i$ be the $M_i$-contractions of $D[A,B]$, $\Gamma[A,B]$, and $F_i[A, B]$, respectively.
	(In the rest of the proof, tildes will be used to denote ``contracted'' digraphs on vertex set $A$.)
	By \cref{lm:contraction}\cref{lm:contraction-regular,lm:contraction-rob}, $\tD_i$ is $(2\delta,2\varepsilon)$-regular and $\tGamma_i$ is a $(2\gamma, 2\varepsilon)$-regular robust $(\frac{\nu}{2}, \tau)$-outexpander.
	By \cref{fact:contractlinforest}, $\tF_i$ is a linear forest. 
	Let $\tP_{i,1}, \dots, \tP_{i,k_i}$ be an enumeration of the non-trivial components of $\tF_i$. For each $j\in [k_i]$, denote by $x_{i,j}^+$ and $x_{i,j}^-$ the starting and ending points of $\tP_{i,j}$. Let $S_i$ be the set of vertices $v\in A$ which are not isolated in $\tF_i$. Note that
	\begin{equation}\label{eq:approxdecomp-ki}
		k_i\leq e(\tF_i)\stackrel{\text{\cref{fact:contractinguncontracting}}}{\leq} e(F_i[A,B])\stackrel{\text{\cref{lm:approxdecomp-smallmatching}}}{\leq} \varepsilon^4 n
	\end{equation}
	and
	\begin{equation}\label{eq:approxdecomp-Si}
		|S_i|\leq 2e(\tF_i)\stackrel{\text{\cref{fact:contractinguncontracting}}}{\leq} 2e(F_i[A,B])\stackrel{\text{\cref{lm:approxdecomp-smallmatching}}}{\leq} 2\varepsilon^4 n.
	\end{equation}
	
	By \cref{lm:randompartition,lm:randomsubset,lm:epsilonrob}, there exist $A_1, \dots, A_\ell\subseteq A$ such that the following hold%
		\COMMENT{Apply \cref{lm:randompartition,lm:randomsubset,lm:epsilonrob} with $k=\lfloor\varepsilon(\nu')^{-4}n\rfloor$.
		\cref{lm:approxdecomp-Ai-A,lm:approxdecomp-Ai-B} hold from \cref{lm:randompartition} with probability at least $1-n^{-1}$.
		\cref{lm:approxdecomp-Ai-Dreg,lm:approxdecomp-Ai-Ddegree} hold from \cref{lm:randomsubset} with probability at least $1-\ell n^{-2}$.
		\cref{lm:approxdecomp-Ai-Gammareg} holds from \cref{lm:randomsubset} with probability at least $1-\ell n^{-2}$.
		\cref{lm:approxdecomp-Ai-Gammarob} holds from \cref{lm:epsilonrob} with probability at least $1-\ell n^{-2}$.}.
		\begin{enumerate}[label=(\greek*)]
			\item For each $v\in A$, there exist at most $2\varepsilon^3(\nu')^{-4}n$ indices $i\in [\ell]$ such that $v\in A_i$.\label{lm:approxdecomp-Ai-A}
			\item For each $v\in B$, there exist at most $2\varepsilon^3(\nu')^{-4}n$ indices $i\in [\ell]$ such that $v\in N_{F_i[B,A]}(A_i)$.\label{lm:approxdecomp-Ai-B}
			\item For each $i\in [\ell]$, $|A_i|=\lfloor\varepsilon(\nu')^{-4}n\rfloor$.\label{lm:approxdecomp-Ai-size}
			\item For each $i\in [\ell]$, $\tD_i-A_i$ is $(2\delta, 6\varepsilon)$-almost regular.\label{lm:approxdecomp-Ai-Dreg}
			\item For each $i\in [\ell]$ and $v\in A\setminus A_i$, $|N_{\tD_i}^\pm(v)\cap A_i|\geq \frac{\varepsilon \delta n}{(\nu')^4}$.\label{lm:approxdecomp-Ai-Ddegree}
			\item For each $i\in [\ell]$, $\tGamma_i[A_i]$ and $\tGamma_i-A_i$ are $(2\gamma, 6\varepsilon)$-almost regular.\label{lm:approxdecomp-Ai-Gammareg}
			\item For each $i\in [\ell]$, $\tGamma_i[A_i]$ is a robust $(\nu', 4\tau)$-outexpander.\label{lm:approxdecomp-Ai-Gammarob}
		\end{enumerate}
	For each $i\in [\ell]$, let $A_i'\coloneqq A_i\setminus S_i$. 
	By \cref{eq:approxdecomp-Si}, we have $|A_i'|\geq |A_i|-2\varepsilon^4n$.
	Therefore, \cref{lm:verticesedgesremovalrobout}\cref{lm:verticesedgesremovalrobout-vertices} and \cref{lm:approxdecomp-Ai-A,lm:approxdecomp-Ai-B,lm:approxdecomp-Ai-Ddegree,lm:approxdecomp-Ai-Dreg,lm:approxdecomp-Ai-Gammareg,lm:approxdecomp-Ai-Gammarob,lm:approxdecomp-Ai-size} imply that the following hold for each $i\in [\ell]$.
	\begin{enumerate}[label=(\greek*$'$)]
		\item For each $v\in A$, there exist at most $2\varepsilon^3(\nu')^{-4}n$ indices $i\in [\ell]$ such that $v\in A_i'$.\label{lm:approxdecomp-Ai'-A}
		\item For each $v\in B$, there exist at most $2\varepsilon^3(\nu')^{-4}n$ indices $i\in [\ell]$ such that $v\in N_{F_i[B,A]}(A_i')$.\label{lm:approxdecomp-Ai'-B}
		\item $|A_i'|=\varepsilon((\nu')^{-4}\pm 1)n$.\label{lm:approxdecomp-Ai'-size}
		\item $\tD_i-A_i'$ is $(2\delta, 7\varepsilon)$-almost regular\label{lm:approxdecomp-Ai'-Dreg}. 
		\item For each $v\in A\setminus A_i'$, $|N_{\tD_i}^\pm(v)\cap A_i'|\geq \frac{\varepsilon\delta n}{2(\nu')^4}$\label{lm:approxdecomp-Ai'-Ddegree}.
		\item $\tGamma_i[A_i']$ and $\tGamma_i-A_i'$ are both $(2\gamma, 7\varepsilon)$-almost regular.\label{lm:approxdecomp-Ai'-Gammareg}
		\item $\tGamma_i[A_i']$ and $\tGamma_i-A_i'$ are both robust $(\frac{\nu'}{2}, 8\tau)$-outexpanders.\label{lm:approxdecomp-Ai'-Gammarob}
	\end{enumerate}

	Assume inductively that for some $0\leq m\leq \ell$ we have constructed, for each $i\in [m]$, a set $\widetilde{\cQ}_i= \{\tQ_{i,j}\mid j\in[k_i]\}$ of paths in $\tD_i\cup \tGamma_i$ such that the following hold, where, for each $i\in [m]$, $\cQ_i$ is obtained from the $M_i$-expansion of $\widetilde{\cQ}_i$ by orienting all the edges from $A$ to~$B$.
	\begin{enumerate}[label=(\Alph*)]
		\item $\cQ_1, \dots, \cQ_m$ are edge-disjoint.\label{lm:approxdecomp-IH-disjoint}
		\item Let $i\in [m]$. For each $j\in [k_i]$, $\tQ_{i,j}$ is an $(x_{i,j}^-,x_{i,j+1}^+)$-path (where $x_{i,k_i+1}^+\coloneqq x_{i,1}^+$). Moreover, the paths in $\widetilde{\cQ}_i\cup \{\tP_{i,1}, \dots, \tP_{i,k_i}\}$ are internally vertex-disjoint and span~$A$. In particular, $\tC_i\coloneqq \widetilde{\cQ}_i\cup \tF_i$ is a Hamilton cycle on $A$.\label{lm:approxdecomp-IH-shape}
		\item For each $i\in [m]$ and $j\in [k_i-1]$, $V(\tQ_{i,j})\cap A_i'=\emptyset$ and $e(\tQ_{i,j})\leq 9(\nu')^{-1}$. Moreover, for each $v\in A$, there exist at most $\varepsilon^3 n$ indices $i\in [m]$ such that $v\in V(E(\widetilde{\cQ}_i\setminus \{\tQ_{i,k_i}\})\cap E(\tGamma_i))$ and for each $v\in B$, there exist at most $\varepsilon^3n$ indices $i\in[m]$ such that $v\in N_{F_i[B,A]}(V(E(\widetilde{\cQ}_i\setminus \{\tQ_{i,k_i}\})\cap E(\tGamma_i)))$.\label{lm:approxdecomp-IH-shortpaths}
		\item For each $i\in [m]$, $E(\tQ_{i, k_i})\cap E(\tGamma_i)\subseteq E(\tGamma_i[A_i'])$.\label{lm:approxdecomp-IH-spanningpath}
	\end{enumerate}
	Denote $D_{m+1}\coloneqq D\setminus \bigcup_{i\in [m]} \cQ_i$ and $\Gamma_{m+1}\coloneqq \Gamma\setminus \bigcup_{i\in [m]} \cQ_i$.
	
	\begin{claim}\label{claim:GammaD}
		$\Gamma_{m+1}[A,B]$ is obtained from $\Gamma[A,B]$ by removing at most $3\varepsilon^3(\nu')^{-4}n$ edges incident to each vertex and $D_{m+1}[A,B]$ is obtained from $D[A,B]$ by removing at most $m$ and at least $m-4\varepsilon^3(\nu')^{-4}n$ edges incident to each vertex.
	\end{claim}

	\begin{proofclaim}
		Let $v\in A$.
		By \cref{lm:approxdecomp-IH-shape}, $d_{\widetilde{\cQ}_i}^+(v)\leq 1$ for each $i\in [m]$ and so
		\begin{align}
			|N_{D[A,B]}(v)\setminus N_{D_{m+1}[A,B]}(v)|
			&\stackrel{\text{\eqmakebox[DAB]{}}}{=}\sum_{i\in [m]}|N_{D[A,B]}(v)\cap N_{\cQ_i[A,B]}(v)|\nonumber\\
			&\stackrel{\text{\eqmakebox[DAB]{\text{\cref{fact:Nuncontract}\cref{fact:Nuncontract-A}}}}}{=}\sum_{i\in[m]}|N_{\tD_i}^+(v)\cap N_{\widetilde{\cQ}_i}^+(v)|\label{eq:approxdecomp-DAB}\\
			&\stackrel{\text{\eqmakebox[DAB]{}}}{\leq} m.\nonumber
		\end{align}
		Moreover, \cref{lm:approxdecomp-IH-shortpaths} implies that there are at most $\varepsilon^3 n$ indices $i\in [m]$ such that $v\in V(E(\widetilde{\cQ}_i\setminus \{\tQ_{i,k_i}\})\cap E(\tGamma_i))$ and, by \cref{lm:approxdecomp-Ai'-A} and \cref{lm:approxdecomp-IH-spanningpath}, there are at most $2\varepsilon^3(\nu')^{-4}n$ indices $i\in [m]$ such that $v\in V(E(\tQ_{i,k_i})\cap E(\tGamma_i))$.
		Thus,
		\begin{align}
			|N_{\Gamma[A,B]}(v)\setminus N_{\Gamma_{m+1}[A,B]}(v)|
			&\stackrel{\text{\eqmakebox[GammaAB]{}}}{=}\sum_{i\in [m]}|N_{\Gamma[A,B]}(v)\cap N_{\cQ_i[A,B]}(v)|\nonumber\\
			&\stackrel{\text{\eqmakebox[GammaAB]{\text{\cref{fact:Nuncontract}\cref{fact:Nuncontract-A}}}}}{=}\sum_{i\in[m]}|N_{\tGamma_i}^+(v)\cap N_{\widetilde{\cQ}_i}^+(v)|\nonumber\\
			&\stackrel{\text{\eqmakebox[GammaAB]{}}}{\leq} \varepsilon^3n+2\varepsilon^3(\nu')^{-4}n\nonumber\\
			&\stackrel{\text{\eqmakebox[GammaAB]{}}}{\leq} 3\varepsilon^3(\nu')^{-4}n.\label{eq:approxdecomp-GammaABa2}
		\end{align}
		Moreover, \cref{fact:contractlinforest} implies that, for each $i\in [m]$, we have $d_{\tF_i}^+(v)> 0$ if and only if $d_{F_i[A,B]}^+(v)> 0$. Thus, \cref{lm:approxdecomp-degreev} implies that there are at most $\varepsilon^3 n$ indices $i\in [m]$ such that $d_{\tF_i}^+(v)> 0$ and so
		\begin{align*}
			|N_{D[A,B]}(v)\setminus N_{D_{m+1}[A,B]}(v)|&\stackrel{\text{\eqmakebox[DAB2]{\text{\cref{eq:approxdecomp-DAB}}}}}{=}\sum_{i\in[m]}|N_{\tD_i}^+(v)\cap 	N_{\widetilde{\cQ}_i}^+(v)|\\
			&\stackrel{\text{\eqmakebox[DAB2]{\text{\cref{lm:approxdecomp-IH-shape}}}}}{=} m-\sum_{i\in [m]}|N_{\tGamma_i}^+(v)\cap N_{\widetilde{\cQ}_i}^+(v)|-\sum_{i\in [m]}|N_{\tF_i}^+(v)|\\
			&\stackrel{\text{\eqmakebox[DAB2]{\text{\cref{eq:approxdecomp-GammaABa2}}}}}{\geq} m-3\varepsilon^3(\nu')^{-4}n-\varepsilon^3n\geq m-4\varepsilon^3(\nu')^{-4}n.
		\end{align*}
		
		Similarly, let $v\in B$. For each $i\in [m]$, denote by $v_i$ the unique vertex of $A$ such that $vv_i\in F_i[B,A]$ ($v_i$ exists by \cref{lm:approxdecomp-perfectmatching}).
		By \cref{lm:approxdecomp-IH-shape}, $d_{\widetilde{\cQ}_i}^-(v_i)\leq 1$ for each $i\in [m]$ and so
		\begin{align}
			|N_{D[A,B]}(v)\setminus N_{D_{m+1}[A,B]}(v)|
			&\stackrel{\text{\eqmakebox[DAB3]{}}}{=}\sum_{i\in [m]}|N_{D[A,B]}(v)\cap N_{\cQ_i[A,B]}(v)|\nonumber\\
			&\stackrel{\text{\eqmakebox[DAB3]{\text{\cref{fact:Nuncontract}\cref{fact:Nuncontract-B}}}}}{=}\sum_{i\in[m]}|N_{\tD_i}^-(v_i)\cap N_{\widetilde{\cQ}_i}^-(v_i)|\label{eq:approxdecomp-DAB2}\\
			&\stackrel{\text{\eqmakebox[DAB3]{}}}{\leq} m.\nonumber
		\end{align}		
		Moreover, \cref{lm:approxdecomp-IH-shortpaths} implies that there are at most $\varepsilon^3 n$ indices $i\in [m]$ such that $v_i\in V(E(\widetilde{\cQ}_i\setminus \{\tQ_{i,k_i}\})\cap E(\tGamma_i))$ and, by \cref{lm:approxdecomp-Ai'-B} and \cref{lm:approxdecomp-IH-spanningpath}, there are at most $2\varepsilon^3(\nu')^{-4}n$ indices $i\in [m]$ such that $v_i\in V(E(\tQ_{i,k_i})\cap E(\tGamma_i))$.
		Thus,
		\begin{align}
			|N_{\Gamma[A,B]}(v)\setminus N_{\Gamma_{m+1}[A,B]}(v)|
			&\stackrel{\text{\eqmakebox[GammaAB2]{}}}{=}\sum_{i\in [m]}|N_{\Gamma[A,B]}(v)\cap N_{Q_i[A,B]}(v)|\nonumber\\
			&\stackrel{\text{\eqmakebox[GammaAB2]{\text{\cref{fact:Nuncontract}\cref{fact:Nuncontract-B}}}}}{=}\sum_{i\in[m]}|N_{\tGamma_i}^-(v_i)\cap N_{\widetilde{\cQ}_i}^-(v_i)|\nonumber\\
			&\stackrel{\text{\eqmakebox[GammaAB2]{}}}{\leq} \varepsilon^3n+2\varepsilon^3(\nu')^{-4}n\nonumber\\
			&\stackrel{\text{\eqmakebox[GammaAB2]{}}}{\leq} 3\varepsilon^3(\nu')^{-4}n.\label{eq:approxdecomp-GammaABb2}
		\end{align}
		Moreover, \cref{fact:contractlinforest} implies that, for each $i\in [m]$, we have $d_{\tF_i}^-(v_i)> 0$ if and only if $d_{F_i[A,B]}^-(v)> 0$. Thus, \cref{lm:approxdecomp-degreev} implies that there are at most $\varepsilon^3 n$ indices $i\in [m]$ such that $d_{\tF_i}^-(v_i)> 0$ and so
		\begin{align*}
			|N_{D[A,B]}(v)\setminus N_{D_{m+1}[A,B]}(v)|&\stackrel{\text{\eqmakebox[DAB4]{\text{\cref{eq:approxdecomp-DAB2}}}}}{=}\sum_{i\in[m]}|N_{\tD_i}^-(v_i)\cap N_{\widetilde{\cQ}_i}^-(v_i)|\\
			&\stackrel{\text{\eqmakebox[DAB4]{\text{\cref{lm:approxdecomp-IH-shape}}}}}{=} m-\sum_{i\in [m]}|N_{\tGamma_i}^-(v_i)\cap N_{\widetilde{\cQ}_i}^-(v_i)|-\sum_{i\in [m]}|N_{\tF_i}^-(v_i)|\\
			&\stackrel{\text{\eqmakebox[DAB4]{\text{\cref{eq:approxdecomp-GammaABb2}}}}}{\geq} m-3\varepsilon^3(\nu')^{-4}n-\varepsilon^3n\geq m-4\varepsilon^3(\nu')^{-4}n.
		\end{align*}
		This completes the proof of \cref{claim:GammaD}.
	\end{proofclaim}
	
	Suppose that $m=\ell$. For each $i\in [\ell]$, let $C_i\coloneqq \cQ_i\cup F_i$.
	By \cref{lm:approxdecomp-IH-disjoint}, $C_1, \dots, C_\ell$ are edge-disjoint.
	Let $i\in [\ell]$. Note that \cref{fact:contractinguncontracting} implies that $F_i[A,B]$ is the $M_i$-expansion of $\tF_i$. Thus, \cref{lm:approxdecomp-IH-shape} implies that $C_i$ is obtained from the $M_i$-expansion of $\tC_i$ by orienting all the edges from $A$ to $B$, and then adding the edges in $E_{F_i}(B,A)=M_i$.
	In particular, \cref{fact:contractingHamcycle,lm:approxdecomp-IH-shape} imply that $C_i$ is a Hamilton cycle on $V(D)$.  Moreover, \cref{lm:approxdecomp-smallchunks} follows from \cref{claim:GammaD}.
	It remains to show that $F_i\subseteq C_i\subseteq D\cup \Gamma \cup F_i$.
	By~\cref{lm:approxdecomp-IH-shape}, $\tF_i\subseteq \tC_i\subseteq \tD_i\cup \tGamma_i\cup \tF_i$ and, by \cref{fact:contractinguncontracting}, the $M_i$-expansions of $\tD_i$ and $\tGamma_i$ are subgraphs of $D[A,B]$ and $\Gamma[A,B]$. Thus, \cref{fact:subgraphcontract}\cref{fact:subgraphcontract-subgraphD} implies that $F_i\subseteq C_i\subseteq D\cup \Gamma \cup F_i$ and so we are done.
	
	Assume that $m<\ell$. Let $\tD_{m+1}'$ and $\tGamma_{m+1}'$ be the $M_{m+1}$-contractions of $D_{m+1}$ and $\Gamma_{m+1}$, respectively. By \cref{fact:subgraphcontract}\cref{fact:subgraphcontract-subgraphG}, $\tD_{m+1}'\subseteq \tD_{m+1}$ and $\tGamma_{m+1}'\subseteq \tGamma_{m+1}$. By \cref{fact:Ncontract}\cref{fact:Ncontract-N} and \cref{claim:GammaD}, $\tGamma_{m+1}'$ is obtained from $\tGamma_{m+1}$ by removing at most $3\varepsilon^3(\nu')^{-4}n$ inedges incident to each vertex and at most $3\varepsilon^3(\nu')^{-4}n$ outedges incident to each vertex.
	Similarly, $\tD_{m+1}'$ is obtained from $\tD_{m+1}$ by removing at most $m$ and at least $m-4\varepsilon^3(\nu')^{-4}n$ inedges incident to each vertex, as well as at most $m$ and at least $m-4\varepsilon^3(\nu')^{-4}n$ outedges incident to each vertex.
	Thus, using \cref{lm:approxdecomp-Ai'-Gammareg,lm:approxdecomp-Ai'-Gammarob,lm:approxdecomp-Ai'-size,lm:approxdecomp-Ai'-Dreg,lm:approxdecomp-Ai'-Ddegree} and \cref{lm:verticesedgesremovalrobout}\cref{lm:verticesedgesremovalrobout-edges}, it is easy to check that the following hold%
		\COMMENT{\cref{lm:approxdecomp-IS-Dreg} follows from \cref{lm:approxdecomp-Ai'-Dreg,lm:approxdecomp-Ai'-size}.\\
		\cref{lm:approxdecomp-IS-Ddegree} follows from \cref{lm:approxdecomp-Ai'-Ddegree}.\\
		\cref{lm:approxdecomp-IS-Gammareg} follows from \cref{lm:approxdecomp-Ai'-size,lm:approxdecomp-Ai'-Gammareg}.\\
		\cref{lm:approxdecomp-IS-Gammarob} follows from \cref{lm:approxdecomp-Ai'-size}, \cref{lm:approxdecomp-Ai'-Gammarob}, and \cref{lm:verticesedgesremovalrobout}\cref{lm:verticesedgesremovalrobout-edges}.}.
	\begin{enumerate}[label=(\Roman*)]
		\item $\tD_{m+1}'-A_{m+1}'$ is $(2\delta-\frac{m}{n}, 8\varepsilon)$-almost regular.\label{lm:approxdecomp-IS-Dreg}
		\item For each $v\in A\setminus A_{m+1}'$, $|N_{\tD_{m+1}'}^\pm(v)\cap A_{m+1}'|\geq \frac{\varepsilon\delta n}{3(\nu')^4}$. \label{lm:approxdecomp-IS-Ddegree}
		\item $\tGamma_{m+1}'[A_{m+1}']$ and $\tGamma_{m+1}'-A_{m+1}'$ are both $(2\gamma, 8\varepsilon)$-almost regular.\label{lm:approxdecomp-IS-Gammareg}
		\item $\tGamma_{m+1}'[A_{m+1}']$ and $\tGamma_{m+1}'-A_{m+1}'$ are robust $(\frac{\nu'}{4}, 8\tau)$-outexpanders.\label{lm:approxdecomp-IS-Gammarob}
	\end{enumerate}
	
	Let $S_A$ be the set of vertices $v\in A$ for which there exist $\lfloor\varepsilon^3 n\rfloor$ indices $i\in [m]$ such that $v\in V(E(\widetilde{\cQ}_i\setminus \{\tQ_{i, k_i}\})\cap E(\tGamma_i))$. Observe that, by \cref{eq:approxdecomp-ki}, \cref{lm:approxdecomp-Ai'-size}, and \cref{lm:approxdecomp-IH-shortpaths}, $|S_A|\leq \frac{9(\nu')^{-1} \cdot \varepsilon^4 n\cdot m}{\lfloor\varepsilon^3 n\rfloor}\leq \varepsilon |A\setminus A_{m+1}'|$%
		\COMMENT{$|S_A|\leq \frac{9(\nu')^{-1} \cdot \varepsilon^4n\cdot \ell }{\lfloor\varepsilon^3 n\rfloor}\leq \frac{10(\nu')^{-1}\varepsilon^6n^2}{\varepsilon^3n}\leq 10(\nu')^{-1}\varepsilon^3n\leq 10\varepsilon^2n\leq \varepsilon |A\setminus A_{m+1}'|$}.
	Let $S_B$ be the set of vertices $v\in B$ such that there exist $\lfloor \varepsilon^3n\rfloor$ indices $i\in [m]$ such that $v\in N_{F_i[B,A]}(V(E(\widetilde{\cQ}_i\setminus \{\tQ_{i, k_i}\})\cap E(\tGamma_i)))$.
	Let $S_B'\coloneqq N_{F_{m+1}[B,A]}(S_B)$.
	Then, by similar arguments as above, $|S_B'|=|S_B|\leq \varepsilon |A\setminus A_{m+1}'|$%
		\COMMENT{$|S_B'|=|S_B|\leq \frac{\varepsilon^4 n\cdot 9(\nu')^{-1}\cdot \ell}{\lfloor \varepsilon^3n\rfloor}\leq \frac{10(\nu')^{-1}\varepsilon^6n^2}{\varepsilon^3n}\leq 10(\nu')^{-1}\varepsilon^3n\leq 10\varepsilon^2n\leq \varepsilon |A\setminus A_{m+1}'|$.}.
	Recall that $S_{m+1}$ denotes the set of vertices $v\in A$ which are not isolated in $\tF_{m+1}$ (and so $\{x_{m+1,i}^+,x_{m+1,i}^-\mid i\in [k_{m+1}]\}\subseteq S_{m+1}$). Moreover, $S_{m+1}\cap A_{m+1}'=\emptyset$. By \cref{eq:approxdecomp-Si}, \cref{lm:approxdecomp-Ai'-size}, and the above, $|S_A\cup S_B'\cup S_{m+1}|\leq 3\varepsilon|A\setminus A_{m+1}'|$. By \cref{eq:approxdecomp-ki} and \cref{lm:approxdecomp-Ai'-size}, $k_i\leq \varepsilon^4n\leq \varepsilon^3|A\setminus A_{m+1}'|$.
	Thus, \cref{lm:approxdecomp-IS-Dreg} implies that there exist distinct $y_{m+1,1}^-,\dots, y_{m+1, k_{m+1}-1}^-,y_{m+1,2}^+,\dots, y_{m+1, k_{m+1}}^+\in A\setminus (A_{m+1}'\cup S_A\cup S_B'\cup S_{m+1})$ such that $y_{m+1,i}^-\in N_{\tD_{m+1}'}^+(x_{m+1,i}^-)$ and $y_{m+1,i+1}^+\in N_{\tD_{m+1}'}^-(x_{m+1,i+1}^+)$ for each $i\in [k_{m+1}-1]$ and we can apply the arguments of~\cite{girao2020path} to construct vertex-disjoint paths $\tQ_{m+1,1}', \dots, \tQ_{m+1,k_{m+1}-1}'\subseteq \tGamma_{m+1}-A_{m+1}'$ such that, for each $i\in[k_{m+1}-1]$,
	\begin{itemize}
		\item $\tQ_{m+1,i}'$ is a $(y_{m+1,i}^-,y_{m+1,i+1}^+)$-path of length at most $8(\nu')^{-1}$; and
		\item $V(\tQ_{m+1,i}')\subseteq A\setminus (A_{m+1}'\cup S_A\cup S_B'\cup S_{m+1})$.
	\end{itemize}%	
		\COMMENT{Apply \cref{cor:robshortpaths} with $\tGamma_{m+1}'-A_{m+1}', \frac{\nu'}{4}, 8\tau, 2\gamma-8\varepsilon, 3\varepsilon, k_{m+1}-1,S_A\cup S_B'\cup S_{m+1}, y_{m+1,1}^-, \dots, y_{m+1,k_{m+1}-1}^-$, and $y_{m+1,2}^+, \dots, y_{m+1,k_{m+1}}^+$ playing the roles of $D, \nu, \tau, \delta, \varepsilon, k, S, x_1, \dots, x_k$, and $x_1', \dots, x_k'$ to obtain internally vertex-disjoint paths $\tQ_{m+1,1}',\dots, \tQ_{m+1,k_{m+1}-1}'\subseteq \tGamma_{m+1}'-A_{m+1}'$ such that, for each $i\in [k_{m+1}-1]$, $\tQ_{m+1, i}'$ is a $(y_{m+1,i}^-,y_{m+1,i+1}^+)$-path of length at most $8(\nu')^{-1}$ with $V(\tQ_{m+1, i}')\subseteq A\setminus (A_{m+1}'\cup S_A\cup S_B'\cup S_{m+1})$.}%
	(Roughly speaking, the paths $\tQ_{m+1,1}', \dots, \tQ_{m+1,k_{m+1}-1}'$ are constructed greedily by applying the definition of robust outexpansion.)
	For each $i\in [k_{m+1}-1]$, define 
	\[\tQ_{m+1,i}\coloneqq x_{m+1,i}^-y_{m+1,i}^-\tQ_{m+1,i}'y_{m+1,i+1}^+x_{m+1,i+1}^+.\]
	Let $\widetilde{\cQ}_{m+1}'\coloneqq \{\tQ_{m+1,i}\mid i\in [k_{m+1}-1]\}$ and proceed as in \cite{girao2020path} to build an $(x_{m+1,k_{m+1}}^-,x_{m+1,1}^+)$-path $\tQ_{m+1,k_{m+1}}$ satisfying the following.
	\begin{itemize}
		\item $V^0(\tQ_{m+1,k_{m+1}})=A\setminus (V(\widetilde{\cQ}_{m+1}')\cup S_{m+1})$.
		\item $\tQ_{m+1,k_{m+1}}[A_{m+1}']\subseteq \tGamma_{m+1}'$.
		\item $\tQ_{m+1,k_{m+1}}\setminus \tQ_{m+1,k_{m+1}}[A_{m+1}']\subseteq \tD_{m+1}'$.
	\end{itemize}%
		\COMMENT{Let $z\notin A$ be a new vertex. Let $\tH$ be the digraph on vertex set $V(\tH)\coloneqq A\setminus (S_{m+1}\cup V(\widetilde{\cQ}_{m+1}'))\cup \{z\}$ defined as follows. Recall that, by construction, $x_{m+1,k_{m+1}}^-, x_{m+1,1}^+\notin A_{m+1}'$. Then, let $N_{\tH}^+(z)\coloneqq N_{\tD_{m+1}'}^+ (x_{m+1,k_{m+1}}^-)\cap V(\tH)$, $N_{\tH}^-(z)\coloneqq N_{\tD_{m+1}'}^- (x_{m+1,1}^+)\cap V(\tH)$, $\tH[A_{m+1}']\coloneqq \tGamma_{m+1}'[A_{m+1}']$, and, for each $v\in V(\tH)\setminus (A_{m+1}'\cup \{z\})$, $N_{\tH-\{z\}}^\pm(v)\coloneqq N_{\tD_{m+1}'}^\pm(v)\cap V(\tH)$.
		Note that, by \cref{lm:approxdecomp-IS-Dreg,lm:approxdecomp-IS-Ddegree,lm:approxdecomp-IS-Gammareg,lm:approxdecomp-IS-Gammarob}, the following hold.
		\begin{enumerate}[label=(\Roman*$'$)]
			\item $\tH-A_{m+1}'$ is $(2\delta-\frac{m}{n}, 9\varepsilon)$-almost regular.\label{lm:approxdecomp-H-reg}
			\item For each $v\in V(\tH)\setminus A_{m+1}'$, $|N_{\tH}^\pm(v)\cap A_{m+1}'|\geq \frac{\varepsilon\delta n}{3(\nu')^4}$.\label{lm:approxdecomp-H-degreeacross}
			\item $\tH[A_{m+1}']$ is $(2\gamma, 8\varepsilon)$-almost regular.\label{lm:approxdecomp-H-Aireg}
			\item $\tH[A_{m+1}']$ is a robust $(\frac{\nu'}{4}, 8\tau)$-outexpander.\label{lm:approxdecomp-H-rob}
		\end{enumerate}
		Indeed, to check \cref{lm:approxdecomp-H-reg}, note that $\tH-A_{m+1}'$ is obtained from $\tD_{m+1}'-A_{m+1}'$ by adding $z$ and deleting \[|S_{m+1}\cup V(\widetilde{\cQ}_{m+1}')|\stackrel{\text{\cref{eq:approxdecomp-ki},\cref{eq:approxdecomp-Si}}}{\leq} 2\varepsilon^4n+\varepsilon^4 n\cdot 9(\nu')^{-1}\leq \varepsilon^3 n\]
		vertices.\\
		Our aim is to find a Hamilton cycle of $\tH$ which contains few edges of $\tGamma_{m+1}'[A_{m+1}']$. 
		First, we cover $V(\tH) \setminus A_{m+1}'$ with a small number of paths as follows. 
		Let $k'\coloneqq \left\lfloor \frac{|V(\tH)\setminus A_{m+1}'|}{\varepsilon n}\right\rfloor$. Apply \cref{lm:partition} with $\tH-A_{m+1}', |V(\tH)\setminus A_{m+1}'|, k', 2\delta-\frac{m}{n}$, and $9\varepsilon$ playing the roles of $D, n, k,\delta$, and $\varepsilon$ to obtain a partition $V_1, \dots, V_{k'}$ of $V(\tH)\setminus A_{m+1}'$ such that, for each $i\in [k']$, $|V_i|=\frac{|V(\tH)\setminus A_{m+1}'|}{k'}\pm 1=(1\pm 2\varepsilon)\varepsilon n$
		%\COMMENT{$|V_i|=\frac{|V(\tH)\setminus A_{m+1}'|}{k'}\pm 1=\frac{|V(\tH)\setminus A_{m+1}'|}{\frac{|V(\tH)\setminus A_{m+1}'|}{\varepsilon n}\mp 1}\pm 1=\varepsilon n(1\pm \frac{\varepsilon n}{|V(\tH)\setminus A_{m+1}'|\mp \varepsilon n})\pm 1$.}
		and, for each $i\in [k']$ and $v\in V_i$, $|N_{\tH}^-(v)\cap V_{i-1}|= (2\delta-\frac{m}{n}\pm 20\varepsilon)\varepsilon n$ if $i>1$ and $|N_{\tH}^+(v)\cap V_{i+1}|= (2\delta-\frac{m}{n}\pm 20\varepsilon)\varepsilon n$ if $i<k'$.\\
		Then, for each $i\in [k'-1]$, apply \cref{cor:Hallreg} with $\tH[V_i, V_{i+1}]$, $V_i$, $V_{i+1}$, $\varepsilon n$, $2\delta-\frac{m}{n}$, and $20\varepsilon$ playing the roles of $G, A, B, n, \delta$, and $\varepsilon$ to obtain a matching $\tN_i$ of $\tH[V_i, V_{i+1}]$ of size at least $\left(1-\frac{60\varepsilon}{\delta}\right)\varepsilon n$. For each $i\in [k'-1]$, denote by $\tN_i'$ the directed matching obtained from $N_i$ by directing all edges from $V_i$ to $V_{i+1}$. Note that, by construction, $\tN_i'\subseteq \tH$.
		Define $\tF\subseteq \tH$ by letting $V(\tF)\coloneqq V(\tH)\setminus A_{m+1}'$ and $E(\tF)\coloneqq \bigcup_{i\in [k'-1]} \tN_i'$. Observe that $\tF$ is a linear forest which spans $V(\tH)\setminus A_{m+1}'$ and has $f\leq \frac{64\varepsilon n}{\delta}$ components.
		Indeed, one can count the number of paths in $\tF$ by counting the number of ending points as follows. (An isolated vertex is considered as the ending point of a trivial path of length $0$.) Note that, for each $i\in [k'-1]$, $v\in V_i$ is the ending point of path in $\tF$ if and only if $v\notin V(\tN_i)$, while every $v\in V_{k'}$ is the ending point of a path in $\tF$.
		Moreover, for each $i\in [k'-1]$, we have $|V_i\setminus V(\tN_i)|\leq |V_i|-|\tN_i|\leq \varepsilon n+2\varepsilon^2 n-\left(1-\frac{60\varepsilon}{\delta}\right)\varepsilon n\leq \frac{62\varepsilon^2 n}{\delta}$. Thus, since $k'-1\leq \varepsilon^{-1}-1$, we have $f\leq \frac{62\varepsilon^2 n}{\delta}(\varepsilon^{-1}-1)+|V_k|\leq \frac{64\varepsilon n}{\delta}$, as desired.\\
		Denote the components of $\tF$ by $\tP_1, \dots, \tP_f$.
		We now join $\tP_1, \dots, \tP_f$ into a Hamilton cycle as follows.
		Note that, by \cref{lm:approxdecomp-Ai'-size}, $f\leq \left(\frac{\nu'}{4}\right)^3 |A_{m+1}'|$.
		%\COMMENT{Indeed, $|A_{m+1}'|\geq \varepsilon((\nu')^{-4}-1)n\geq \frac{\varepsilon n}{2(\nu')^4}$. Therefore, $\left(\frac{\nu'}{4}\right)^3|A_{m+1}'|\geq \frac{\varepsilon n}{2\cdot 4^3\cdot \nu'}\geq f$.}
		For each $i\in[f]$, denote by $v_i^+$ and $v_i^-$ the starting and ending points of $\tP_i$. By \cref{lm:approxdecomp-H-degreeacross}, for each $i\in[f]$, we have $|N_{\tH}^\mp(v_i^\pm)\cap A_{m+1}'|\geq 2f$.
		Apply \cref{cor:robcycle} with $\tH, A_{m+1}', f, \frac{\nu'}{4}, 8\tau$, and $\gamma-4\varepsilon$ playing the roles of $D, V', k, \nu, \tau$, and $\delta$ to obtain a Hamilton cycle $\tC$ of $\tH$ such that $\tF\subseteq \tC$.
		Denote by $z^\pm$ the (unique) vertices such that $z^\pm \in N_{\tC}^\pm(z)$, respectively. Let $\tQ_{m+1,k_{m+1}}\coloneqq (\tC-\{z\})\cup \{x_{m+1,k_{m+1}}^-z^+,z^-x_{m+1,1}^+\}$. By construction, $\tQ_{m+1,k_{m+1}}$ is a $(x_{m+1,k_{m+1}}^-,x_{m+1,1}^+)$-path such that $\tQ_{m+1,k_{m+1}}\subseteq \tD_{m+1}'\cup \tGamma_{m+1}'$ and $V^0(\tQ_{m+1,k_{m+1}})=A\setminus (V(\widetilde{\cQ}_{m+1}')\cup S_{m+1})$. Moreover, $\tQ_{m+1,k_{m+1}}[A_{m+1}']\subseteq \tGamma_{m+1}'$ and $\tQ_{m+1,k_{m+1}}\setminus \tQ_{m+1,k_{m+1}}[A_{m+1}']\subseteq \tD_{m+1}'$.}%
	(Roughly speaking, $\tQ_{m+1,k_{m+1}}$ is constructed by applying \cref{lm:rob} to find a Hamilton cycle in a suitable auxiliary digraph.)
	Let $\widetilde{\cQ}_{m+1}\coloneqq \widetilde{\cQ}_{m+1}'\cup \{\tQ_{m+1,k_{m+1}}\}$.
	Then, \cref{lm:approxdecomp-IH-shape,lm:approxdecomp-IH-shortpaths,lm:approxdecomp-IH-spanningpath,lm:approxdecomp-IH-disjoint} hold with $m+1$ playing the role of~$m$.
\end{proof}

\onlyinsubfile{\bibliographystyle{abbrv}
\bibliography{Bibliography/Bibliography}}

	\section{The robust decomposition lemmas: proofs of Lemmas \ref{lm:newrobustdecomp} and \ref{lm:cyclerobustdecomp}}\label{app:robustdecomp}
	
	\onlyinsubfile{
		\appendix
		\section{The robust decomposition lemma}}
	
In this appendix, we derive the modified robust decomposition lemma (\cref{lm:newrobustdecomp}) and the robust decomposition lemma for blow-up cycles (\cref{lm:cyclerobustdecomp}).

\subsection{Proof of Lemma \ref{lm:newrobustdecomp}}

We need the following version of the robust decomposition lemma, which follows immediately from the proof of \cite[Lemma 12.1]{kuhn2013hamilton} and the definition of a ``chord absorber" in \cite{kuhn2013hamilton}. (Note that \cite[Lemma 12.1]{kuhn2013hamilton} is not explicitly proven because its proof is identical to that of \cite[Lemma 11.2]{kuhn2013hamilton}. See the paragraph before \cite[Lemma 12.1]{kuhn2013hamilton} for more details.)

\begin{lm}[{Robust decomposition lemma \cite{kuhn2013hamilton}}]\label{lm:robustdecomp}
	Let $0<\frac{1}{m}\ll \frac{1}{k}\ll \eps \ll \frac{1}{q} \ll \frac{1}{f} \ll \frac{r_1}{m}\ll d\ll \frac{1}{\ell'}, \frac{1}{g}\ll 1$ and suppose
	that $rk^2\le m$. Let
	\[r_2\coloneqq 96\ell'g^2kr, \quad  r_3\coloneqq \frac{rfk}{q}, \quad r^\diamond\coloneqq r_1+r_2+r-(q-1)r_3,\]
	and suppose that $\frac{k}{14}, \frac{k}{f}, \frac{k}{g}, \frac{q}{f}, \frac{m}{4\ell'}, \frac{fm}{q}, \frac{2fk}{3g(g-1)} \in \mathbb{N}$.
	Suppose that $(D,\cP,\cP', \cP^*,R,C, U,U')$ is an $(\ell',\frac{q}{f},k,m,\eps,d)$-setup with empty exceptional set $V_0$.
	Suppose that $\mathcal{SF}\subseteq D$ consists of $r_3$ edge-disjoint $(\frac{q}{f},f)$-special factors
	with respect to $\cP^*$ and $C$.
	Then, there exists a spanning subdigraph $CA^\diamond(r)\subseteq D$ for which the following hold.
	\begin{enumerate}
		\item $CA^\diamond(r)$ is an $(r_1+r_2)$-regular spanning subdigraph of $D$ which is edge-disjoint from $\mathcal{SF}$.\label{lm:robustdecomp-CA}
		\item Let $H$ be an $r$-regular digraph on $V(D)$ which is edge-disjoint from $CA^\diamond(r)$.
		Suppose that $\mathcal{SF}^*$ consists of $r_3$ edge-disjoint $(\frac{q}{f},f)$-special factors
		with respect to $\cP^*$ and $C$ which are edge-disjoint from $CA^\diamond(r)\cup H$. (Note that $\mathcal{SF}^*$ is not necessarily a subdigraph of $D$ here.) Then, there exists a set $\sC_1$ of $rfk$ edge-disjoint Hamilton cycles such that $E(H)\cup E(\mathcal{SF}^*)\subseteq E(\sC_1)\subseteq E(CA^\diamond(r))\cup E(H)\cup E(\mathcal{SF}^*)$ and each cycle in $\sC_1$ contains precisely one of the special path systems contained in $\mathcal{SF}^*$. Denote $H'\coloneqq CA^\diamond(r)\setminus E(\sC_1)$.\label{lm:robustdecomp-CAH}
		\item Suppose that $\mathcal{SF}'\subseteq D\setminus (CA^\diamond(r)\cup \mathcal{SF})$ consists of $r^\diamond$ edge-disjoint $(1,7)$-special factors
		with respect to $\cP$ and $C$.
		Then, there exists a spanning subdigraph $PCA^\diamond(r)\subseteq D$ for which the following hold.\label{lm:robustdecomp-PCA}
		\begin{enumerate}[label=\rm(\alph*),ref=\rm(\roman{enumi}.\alph*)]
			\item $PCA^\diamond(r)$ is a $5r^\diamond$-regular spanning subdigraph of $D$ which
			is edge-disjoint from $CA^\diamond(r)\cup \mathcal{SF}\cup \mathcal{SF}'$.\label{lm:robustdecomp-PCAreg}
			\item $H'\cup PCA^\diamond(r)\cup \mathcal{SF}'$ has a decomposition $\sC_2$ into $7r^\diamond$ edge-disjoint Hamilton cycles such that each cycle in $\sC_2$ contains precisely one of the special path systems in $\mathcal{SF}'$.\label{lm:robustdecomp-Hamcycles}
		\end{enumerate}
	\end{enumerate}
	The analogue holds if $(D,\cP,\cP', \cP^*,R,C,U,U')$ is an $(\ell',\frac{q}{f},k,m,\eps,d)$-bi-setup and $H$ is an $r$-regular bipartite digraph on the same vertex classes as $D$.
\end{lm}

Note that \cref{lm:robustdecomp-CA} and \cref{lm:robustdecomp-PCAreg} correspond to \cite[Lemma 12.1(i) and (ii.a)]{kuhn2013hamilton}. To see \cref{lm:robustdecomp-CAH}, observe that by the proof of \cite[Lemma 12.1]{kuhn2013hamilton} (see the proof of \cite[Lemma 11.2]{kuhn2013hamilton}), $CA^\diamond(r)\cup \mathcal{SF}$ is a ``chord absorber". By definition, $CA^\diamond(r)\cup \mathcal{SF}^*$ is also a ``chord absorber" in $D\cup \mathcal{SF}^*$. Moreover, since $r_3$ is very small, \cref{prop:bisetupedgesremoval} implies that $(D\cup \mathcal{SF}^*, \cP, \cP', \cP^*, R, C, U, U')$ also forms a (bi)-setup (with slightly worse parameters).
Thus, $\cC_1$ in \cref{lm:robustdecomp-CAH} can be obtained by applying the arguments of \cite[Lemma 11.2]{kuhn2013hamilton} with $D\cup \mathcal{SF}^*$ and $CA^\diamond(r)\cup \mathcal{SF}^*$ playing the role of $G$ and $CA^\diamond(r)\cup \mathcal{SF}$. Then, $\cC_2$ in \cref{lm:robustdecomp-Hamcycles} is the set of Hamilton cycles obtained by applying the arguments of \cite[Lemma 11.2]{kuhn2013hamilton} with $H'$ playing the role of $H_1$.

Here, \cref{lm:robustdecomp} is stated in terms of our simplified definition of a special factors (see \cref{sec:SPS}). The fact that $\mathcal{SF}\cup \mathcal{SF}'\subseteq D$ ensures that the special factors in $\mathcal{SF}\cup \mathcal{SF}'$ are indeed special factors in the stronger sense of \cite{kuhn2013hamilton} (the set of fictive edges is empty). As discussed above, the special factors in $\mathcal{SF}^*$ will be considered within the digraph $D\cup \mathcal{SF}^*$, so they also satisfy the original definition of special factors \cite{kuhn2013hamilton} with $D\cup \mathcal{SF}^*$ playing the role of $G$ (once again, with an empty set of fictive edges).

As discussed in \cref{sec:newrobustdecomp}, the key idea for deriving \cref{lm:newrobustdecomp} from \cref{lm:robustdecomp} is to consider equivalent special path systems. These will be constructed using the superregular pairs of the (bi)-setup.

\begin{lm}\label{cor:equivalentSPS}
	Let $0<\frac{1}{m}\ll \frac{1}{k}\ll \varepsilon\ll d\leq 1$ and $\frac{1}{k}\ll \frac{1}{f}, \frac{1}{\ell^*}\leq 1$ and $\frac{r\ell^*}{m}\ll d$. Let $(D,\cP,\cP', \cP^*,R,C,U,U')$ be an $(\ell',\ell^*,k,m,\eps,d)$-(bi)-setup and suppose that $\cP^*$ is an $\varepsilon$-uniform $\ell^*$-refinement of $\cP$. 
	Let $(h,j)\in [\ell^*]\times [f]$ and suppose that $SPS_1, \dots, SPS_r$ are $(\ell^*, f,h,j)$-special path systems with respect to $\cP^*$ and $C$. 
	Then, there exist edge-disjoint $(\ell^*,f,h,j)$-special path systems $SPS_1', \dots, SPS_r'\subseteq D$ with respect to $\cP^*$ and $C$ such that $SPS_i$ and $SPS_i'$ are equivalent for each $i\in [r]$.
\end{lm}

\begin{proof}
	Fix additional constants such that $\frac{r\ell^*}{m}, \varepsilon\ll \varepsilon_1\ll \varepsilon_2 \ll d$.
	Denote by $I=V_1\dots V_{k'}$ the $j^{th}$ interval in the canonical interval partition of $C$ into $f$ intervals. For each $i\in [k']$, denote by $V_{i,h}$ the $h^{th}$ subcluster of $V_i$ contained in $\cP^*$ and observe that \cref{def:ST-P}, \cref{def:ST-P*}, \cref{def:BST-P}, and \cref{def:BST-P*} imply that $|V_{i,h}|=\frac{m}{\ell^*}$.
	Suppose inductively that, for some $\ell\in [r]$, we have already constructed edge-disjoint $(\ell^*,f,h,j)$-special path systems $SPS_1', \dots, SPS_{\ell-1}'\subseteq D$ with respect to $\cP^*$ and $C$ such that $SPS_i$ and $SPS_i'$ are equivalent for each $i\in [\ell-1]$. We construct $SPS_\ell'$ using \cref{lm:regularitypaths} as follows.
	
	Let $D'\coloneqq D\setminus \bigcup_{i\in [\ell-1]}SPS_i$. We claim that $D'[V_{i,h}, V_{i+1, h}]$ is $[3\sqrt{\varepsilon_2}, \geq d]$-superregular for each $i\in [k'-1]$. Indeed, \cref{def:ST-C}, \cref{def:BST-C}, and \cref{lm:URefreg}\cref{lm:URef-supreg} imply that $D[V_{i,h}, V_{i+1,h}]$ is $[\varepsilon_1, \geq d]$-superregular for each $i\in [k'-1]$. Since $D'$ is obtained from $D$ by removing at most $r\leq \frac{\varepsilon_2m}{\ell^*}$ in- and outedges incident to each vertex, \cref{prop:epsremovingadding}\cref{prop:epsremovingadding-supreg} implies that $D'[V_{i,h}, V_{i+1, h}]$ is $[3\sqrt{\varepsilon_2}, \geq d]$-superregular for each $i\in [k'-1]$.
	Let $u_1, \dots, u_{\frac{m}{\ell^*}}$ be an enumeration of $V^+(SPS_\ell)$. For each $i\in [\frac{m}{\ell^*}]$, let $v_i$ denote the ending point of the path in $SPS_\ell$ which starts at $u_i$. By \cref{def:SPS-V+-}, $u_1, \dots, u_{\frac{m}{\ell^*}}$ and $v_1, \dots, v_{\frac{m}{\ell^*}}$ are enumerations of $V_{1,h}$ and $V_{k',h}$, respectively.
	Let $SPS_\ell'$ be the set of paths obtained by applying \cref{lm:regularitypaths} with $D'[\bigcup_{i\in [k']}V_{i,h}], \frac{m}{\ell^*},k', 3\sqrt{\varepsilon_2}$, and $V_{1,h}, \dots, V_{k', h}$ playing the roles of $D, m, k, \varepsilon$, and $V_1, \dots, V_k$. Then, $SPS_\ell'$ is an $(\ell^*,f,h,j)$-special path system with respect to $\cP^*$ and $C$ and, by construction, it is equivalent to $SPS_\ell$, as desired.
\end{proof}

\begin{proof}[Proof of \cref{lm:newrobustdecomp}]
	We prove the setup and bi-setup versions of \cref{lm:newrobustdecomp} in parallel.
	Fix additional constants such that $\varepsilon\ll \varepsilon_1\ll \varepsilon_2\ll \frac{1}{q}$ and $\frac{r_1}{m}\ll \varepsilon_3\ll d$.
	By \cref{lm:randomsetup,lm:URefrandom}, there exist edge-disjoint $D_1,D_2\subseteq D$ such that both $(D_1,\cP,\cP', \cP^*,R,C, U,U')$ and $(D_2,\cP,\cP', \cP^*,R,C, U,U')$ are $(\ell',\frac{q}{f},k,m,\varepsilon_1,\frac{d}{2})$-(bi)-setups.
	
	Since $\mathcal{SF}$ consists of special factors which are not necessarily edge-disjoint, we need to construct auxiliary special path systems which are edge-disjoint from each other and equivalent to those in $\mathcal{SF}$.
	For each $(h,j)\in [\frac{q}{f}]\times [f]$, denote by $SPS_{1,h,j}, \dots, SPS_{r_3,h,j}$ the $(\frac{q}{f},f,h,j)$-special path systems contained in $\mathcal{SF}$.
	
	\begin{claim}\label{claim:newSPS}
		For each $(h,j)\in [\frac{q}{f}]\times [f]$, there exist edge-disjoint $(\frac{q}{f},f,h,j)$-special path systems $SPS_{1,h,j}^\diamond, \dots, SPS_{r_3, h,j}^\diamond$ with respect to $\cP^*$ and $C$ such that $SPS_{i,h,j}$ and $SPS_{i,h,j}^\diamond$ are equivalent for each $i\in [r_3]$. In particular, $SF_i^\diamond \coloneqq \bigcup_{(h,j)\in [\frac{q}{f}]\times [f]}SPS_{i,h,j}^\diamond$ is a $(\frac{q}{f},f)$-special factor with respect to $\cP^*$ and $C$ for each $i\in [r_3]$.
	\end{claim}

	\begin{proofclaim}
		For the setup version of \cref{lm:newrobustdecomp}, let $K$ be the complete graph on $V(D)$; for the bi-setup version, let $K$ be the complete bipartite graph on the vertex classes of $D$. One can easily verify that $(K,\cP, \cP', \cP^*, R,C, U,U')$ is an $(\ell',\frac{q}{f},k,m,\varepsilon,d)$-(bi)-setup ((super)regular pairs exist by \cref{prop:almostcompleteeps}).
		For each $(h,j)\in [\frac{q}{f}]\times [f]$, let $SPS_{1,h,j}^\diamond, \dots, SPS_{r_3, h,j}^\diamond$ be the special path systems obtained by applying \cref{cor:equivalentSPS} with $K, \frac{q}{f},r_3$, and $SPS_{1,h,j}, \dots, SPS_{r_3,h,j}$ playing the roles of $D, \ell^*,r$, and $SPS_1, \dots, SPS_r$.
	\end{proofclaim}
	
	Let $\mathcal{SF}^\diamond$ be the union of the $r_3$ edge-disjoint $(\frac{q}{f},f)$-special factors with respect to $\cP^*$ and $C$ obtained by applying \cref{claim:newSPS}.
	Let $D_1'\coloneqq D_1\cup \mathcal{SF}^\diamond$ and observe that $D_1'$ is obtained from $D_1$ by adding at most $r_3$ in- and outedges incident to each vertex (recall from \cref{def:SF} that special factors are digraphs of maximum semidegree $1$). Thus, \cref{prop:bisetupedgesremoval} implies that $(D_1',\cP,\cP', \cP^*,R,C,U,U')$ is still an $(\ell',\frac{q}{f},k,m,\varepsilon_2,\frac{d}{4})$-(bi)-setup.
	Let $CA^\diamond(r)$ be the spanning subdigraph of $D_1'$ obtained by applying \cref{lm:robustdecomp} with $D_1', \mathcal{SF}^\diamond, \varepsilon_2$, and $\frac{d}{4}$ playing the roles of $D, \mathcal{SF}, \varepsilon$, and $d$. Note that \cref{lm:robustdecomp}\cref{lm:robustdecomp-CA} implies that $CA^\diamond(r)\subseteq D_1\subseteq D$.
	
	Since the special factors in $\mathcal{SF}'$ may not be edge-disjoint subdigraphs of $D_1''\coloneqq D_1'\setminus (CA^\diamond(r)\cup \mathcal{SF}^\diamond)=D_1\setminus (CA^\diamond(r)\cup \mathcal{SF}^\diamond)$, we need to construct auxiliary special path systems which are equivalent to those in $\mathcal{SF}'$. For each $(h,j)\in [1]\times [7]$, let $SPS_{1,h,j}', \dots, SPS_{r^\diamond,h,j}'$ denote the $(1,7,h,j)$-special path systems contained in $\mathcal{SF}'$.
	
	\begin{claim}\label{claim:newSPS'}
		For each $(h,j)\in [1]\times [7]$, there exist edge-disjoint $(1,7,h,j)$-special path systems $SPS_{1,h,j}'', \dots, SPS_{r^\diamond, h,j}''\subseteq D_1''$ with respect to $\cP$ and $C$ such that $SPS_{i,h,j}'$ and $SPS_{i,h,j}''$ are equivalent for each $i\in [r^\diamond]$. In particular, $SF_i'' \coloneqq \bigcup_{(h,j)\in [1]\times [7]}SPS_{i,h,j}$ is a $(1,7)$-special factor with respect to $\cP$ and $C$ for each $i\in [r^\diamond]$.
	\end{claim}
	
	\begin{proofclaim}
		By \cref{fact:bisetupP}, $(D_1,\cP, \cP', \cP, R,C,U,U')$ is an $(\ell', 1, k,m, \varepsilon_1, \frac{d}{2})$-(bi)-setup.
		By \cref{lm:robustdecomp}\cref{lm:robustdecomp-CA} and \cref{def:SF}, $D_1''$ is obtained from $D_1$ removing at most $r_1+r_2+r_3$ in- and outedges at each vertex, so \cref{prop:bisetupedgesremoval} implies that $(D_1'',\cP, \cP', \cP, R,C,U,U')$ is still an $(\ell', 1, k,m, \varepsilon_3, \frac{d}{4})$-(bi)-setup.
		For each $(h,j)\in [1]\times [7]$, let $SPS_{1,h,j}'', \dots, SPS_{r^\diamond, h,j}''$ be the special path systems obtained by applying \cref{cor:equivalentSPS} with $D_1'', \cP, \varepsilon_3, \frac{d}{4}, 1, 7, r^\diamond$, and $SPS_{1,1,j}', \dots, SPS_{r^\diamond, 1,j}'$ playing the roles of $D, \cP^*, \varepsilon, d,  \ell^*, f,r$, and $SPS_1, \dots, SPS_r$.
	\end{proofclaim}

	Let $\mathcal{SF}''$ be the union of the $r^\diamond$ edge-disjoint $(1,7)$-special factors with respect to $\cP$ and $C$ obtained by applying \cref{claim:newSPS'}. Let $PCA^\diamond(r)$ be the spanning subdigraph of $D_1$ obtained by applying \cref{lm:robustdecomp}\cref{lm:robustdecomp-PCA} with $\mathcal{SF}''$ playing the role of $\mathcal{SF}'$. By \cref{lm:robustdecomp}\cref{lm:robustdecomp-PCAreg}, $PCA^\diamond(r)\subseteq D_1''\setminus \mathcal{SF}''\subseteq D_1\subseteq D$.
	Define $D^{\rm rob}\coloneqq CA^\diamond(r)\cup PCA^\diamond(r) \subseteq D$ and observe that \cref{lm:robustdecomp}\cref{lm:robustdecomp-CA,lm:robustdecomp-PCAreg} imply that $D^{\rm rob}$ is $(r_1+r_2+5r^\diamond)$-regular, as desired.
	
	Let $H$ be an $r$-regular digraph on $V(D)$. For the bi-setup version of \cref{lm:newrobustdecomp}, suppose furthermore that $H$ is a bipartite digraph on the same vertex classes as $D$.	
	It remains to show that the multidigraph $H\cup D^{\rm rob}\cup \mathcal{SF}\cup \mathcal{SF}'$ has a decomposition $\sC$ into $s'$ edge-disjoint Hamilton cycles such that each Hamilton cycle in $\sC$ contains precisely one of the special path systems in the multidigraph $\mathcal{SF}\cup \mathcal{SF}'$.
	We will use \cref{lm:robustdecomp}\cref{lm:robustdecomp-CAH,lm:robustdecomp-Hamcycles}.
	
	\begin{claim}\label{claim:newSPS*}
		For each $(h,j)\in [\frac{q}{f}]\times [f]$, there exist edge-disjoint $(\frac{q}{f},f,h,j)$-special path systems $SPS_{1,h,j}^*, \dots, SPS_{r_3, h,j}^*$ with respect to $\cP^*$ and $C$ which are edge-disjoint from $CA^\diamond(r)\cup H$ and such that $SPS_{i,h,j}$ and $SPS_{i,h,j}^*$ are equivalent for each $i\in [r_3]$. In particular, $SF_i^* \coloneqq \bigcup_{(h,j)\in [\frac{q}{f}]\times [f]}SPS_{i,h,j}^*$ is a $(\frac{q}{f},f)$-special factor with respect to $\cP^*$ and $C$ for each $i\in [r_3]$.
	\end{claim}

	\begin{proofclaim}
		Let $D_2'\coloneqq D_2\setminus H$. Since $CA^\diamond(r)\subseteq D_1$, note that $D_2'$ is edge-disjoint from $CA^\diamond(r)\cup H$.  By \cref{prop:bisetupedgesremoval}, $(D_2',\cP, \cP', \cP^*, R,C, U,U')$ is an $(\ell',\frac{q}{f},k,m,\varepsilon_2,\frac{d}{4})$-(bi)-setup.
		For each $(h,j)\in [\frac{q}{f}]\times [f]$, let $SPS_{1,h,j}^*, \dots, SPS_{r_3, h,j}^*$ be the $(\frac{q}{f},f)$-special path systems with respect to $\cP^*$ and $C$ obtained by applying \cref{cor:equivalentSPS} with $D_2', \varepsilon_2, \frac{d}{4}, \frac{q}{f}$, and $r_3$ playing the roles of $D, \varepsilon, d, \ell^*$, and $r$.
	\end{proofclaim}
	
	Let $\mathcal{SF}^*$ be the union of the $r_3$ edge-disjoint $(\frac{q}{f},f)$-special factors with respect to $\cP^*$ and $C$ obtained by applying \cref{claim:newSPS*}. 
	By \cref{lm:robustdecomp}\cref{lm:robustdecomp-CAH}, there exists a set $\sC_1$ of $rfk$ edge-disjoint Hamilton cycles such that $E(H)\cup E(\mathcal{SF}^*)\subseteq E(\sC_1)\subseteq  E(CA^\diamond(r))\cup E(H)\cup E(\mathcal{SF}^*)$ and each cycle in $\sC_1$ contains precisely one of the special path systems contained in $\mathcal{SF}^*$. Denote $H'\coloneqq CA^\diamond(r)\setminus E(\sC_1)$.
	By \cref{lm:robustdecomp}\cref{lm:robustdecomp-Hamcycles}, $H'\cup PCA^\diamond(r)\cup \mathcal{SF}''$ has a decomposition $\sC_2$ into $7r^\diamond$ edge-disjoint Hamilton cycles such that each cycle in $\sC_2$ contains precisely one of the special path systems in $\mathcal{SF}''$.
	Altogether, $\sC_1\cup \sC_2$ forms a decomposition of the multidigraph $H\cup D^{\rm rob}\cup \mathcal{SF}^*\cup \mathcal{SF}''$ into $s'$ Hamilton cycles such that each cycle in $\sC_1\cup \sC_2$ contains precisely one of the special path systems in $\mathcal{SF}^*\cup \mathcal{SF}''$.
	By \cref{claim:newSPS*,claim:newSPS'} and \cref{fact:equivalentP}, $\sC_1\cup \sC_2$ induces a decomposition $\sC$ of the multidigraph $H\cup D^{\rm rob}\cup \mathcal{SF}\cup \mathcal{SF}'$ into $s'$ edge-disjoint Hamilton cycles such that each Hamilton cycle in $\sC$ contains precisely one of the special path systems contained in the multidigraph $\mathcal{SF}\cup \mathcal{SF}'$.
\end{proof}

\subsection{Proof of Lemma \ref{lm:cyclerobustdecomp}}

We know derive the blow-up cycle version of the robust decomposition lemma using the strategy presented in \cref{sec:cyclerobustdecomp-sketch}.

\begin{proof}[Proof of \cref{lm:cyclerobustdecomp}]
	By \cref{fact:equivalentP} and \cref{def:ESPS}, we may assume without loss of generality that all extended special path systems contained in the multidigraph $\mathcal{ESF}\cup \mathcal{ESF}'$ are friendly.
	For each $(h,i,j)\in [\frac{q}{f}]\times [K]\times [f]$, denote by $ESPS_{1,h,i,j}, \dots, ESPS_{r_3,h,i,j}$ the $r_3$ friendly $(\frac{q}{f},K,f,h,i,j)$-extended special path systems contained in $\mathcal{ESF}$.
	For each $(h,i,j)\in [1]\times [K]\times [7]$, denote by $ESPS_{1,h,i,j}', \dots, ESPS_{r^\diamond,h,i,j}'$ the $r^\diamond$ friendly $(\frac{q}{f},K,f,h,i,j)$-extended special path systems contained in $\mathcal{ESF}'$.
	For each $i\in [K]$, define
	\begin{equation*}
		\mathcal{ESF}_i\coloneqq \bigcup_{(\ell,h,j)\in [r_3]\times[\frac{q}{f}]\times [f]}ESPS_{\ell,h,i,j} \quad \text{and} \quad \mathcal{ESF}_i'\coloneqq \bigcup_{(\ell,h,j)\in [r^\diamond]\times[1]\times [7]}ESPS_{\ell,h,i,j}'.
	\end{equation*}
	
	\begin{steps}
		\item \textbf{Applying the robust decomposition lemma in each contracted pair.}
		Let $i\in [K]$. Denote by $\tD_i$ the $M_i$-contraction of $D[U_i, U_{i+1}]$.
		By \cref{def:CST-ST}, $(\tD_i, \cP_i, \cP_i', \cP_i^*, R_i, C^i, U^i, U'^i)$ is an $(\ell', \frac{q}{f}, k,m, \varepsilon,d)$-setup with an empty exceptional set. We construct the required special factors for applying the robust decomposition lemma in $\tD_i$ as follows.
		For each $(h,j)\in [\frac{q}{f}]\times [f]$, let $SPS_{1,h,i,j}, \dots, SPS_{r_3, h,i,j}$ be obtained from the $M_i$-contractions of $ESPS_{1,h,i,j}[U_i,U_{i+1}], \dots$, $ESPS_{r_3,h,i,j}[U_i,U_{i+1}]$ by deleting all isolated vertices. For each $(h,j)\in [\frac{q}{f}]\times [f]$, \cref{def:FESPS-SPS} implies that $SPS_{1,h,i,j}, \dots, SPS_{r_3, h,i,j}$ are $(\frac{q}{f},f,h,j)$-special path systems with respect to $\cP_i^*$ and $C^i$.
		For each $\ell\in [r_3]$, let $SF_{\ell,i}\coloneqq \bigcup_{(h,j)\in [\frac{q}{f}]\times [f]}SPS_{\ell,h,i,j}$ and observe that $SF_{\ell,i}$ is a $(\frac{q}{f},f)$-special factor with respect to $\cP_i^*$ and $C^i$. Define a multidigraph $\mathcal{SF}_i$ by $\mathcal{SF}_i\coloneqq \bigcup_{\ell\in [r_3]}SF_{\ell,i}$. Define $\mathcal{SF}_i'$ analogously.
		Let $\tD_i^{\rm rob}$ be the spanning subdigraph of $\tD_i$ obtained by applying \cref{lm:newrobustdecomp} with $\tD_i, \cP_i, \cP_i', \cP_i^*, R_i, C^i, U^i, U'^i, \mathcal{SF}_i$, and $\mathcal{SF}_i'$ playing the roles of $D, \cP, \cP', \cP^*, R, C, U, U', \mathcal{SF}$, and $\mathcal{SF}'$.
		
		\item \textbf{Constructing the robustly decomposable digraph.}
		For each $i\in [K]$, let $D_i^{\rm rob}$ be obtained from the $M_i$-expansion of $\tD_i^{\rm rob}$ by orienting all the edges from $U_i$ to $U_{i+1}$. Define $D^{\rm rob}\coloneqq \bigcup_{i\in [K]}D_i^{\rm rob}$. First, observe that $D^{\rm rob}$ is a regular spanning subdigraph of~$D$.
		
		\begin{claim}
			$D^{\rm rob}$ is an $(r_1+r_2+5r^\diamond)$-regular spanning subdigraph of $D$.
		\end{claim}
		
		\begin{proofclaim}
			By definition, $V(D_i^{\rm rob})=U_i\cup U_{i+1}$ for each $i\in [K]$ and so $D^{\rm rob}$ is spanning. For each $i\in [K]$, $\tD_i^{\rm rob}\subseteq \tD_i$ and \cref{fact:contractinguncontracting} implies that the $M_i$-expansion of $\tD_i$ is a subdigraph of $D[U_i, U_{i+1}]$. Thus, \cref{fact:subgraphcontract}\cref{fact:subgraphcontract-subgraphD} implies that $D_i^{\rm rob}[U_i, U_{i+1}]\subseteq D[U_i, U_{i+1}]$ for each $i\in [K]$ and so $D^{\rm rob}\subseteq D$.
			Let $i\in [K]$ and $v\in U_i$. By construction, 
			\begin{align*}
				d_{D^{\rm rob}}^+(v)=d_{D_i^{\rm rob}}^+(v)\stackrel{\text{\cref{fact:Nuncontract}\cref{fact:Nuncontract-A}}}{=}d_{\tD_i^{\rm rob}}^+(v)\stackrel{\text{\cref{lm:newrobustdecomp}}}{=}r_1+r_2+5r^\diamond.
			\end{align*}
			Let $v'$ be the unique neighbour of $v$ in $M_{i-1}$. Then,
			\begin{align*}
				d_{D^{\rm rob}}^-(v)=d_{D_{i-1}^{\rm rob}}^-(v)\stackrel{\text{\cref{fact:Nuncontract}\cref{fact:Nuncontract-B}}}{=}d_{\tD_{i-1}^{\rm rob}}^-(v')\stackrel{\text{\cref{lm:newrobustdecomp}}}{=}r_1+r_2+5r^\diamond.
			\end{align*}
			Thus, $D^{\rm rob}$ is $(r_1+r_2+5r^\diamond)$-regular.
		\end{proofclaim}
		
		Moreover, observe that each $D_i^{\rm rob}$ can decompose a sparse digraph in the pair $(U_i, U_{i+1})$ into perfect matchings.
		
		\begin{claim}\label{claim:Mdecomp}
			Let $i\in [K]$. Let $H_i$ be a bipartite digraph on vertex classes $U_i$ and $U_{i+1}$ which is edge-disjoint from $D_i^{\rm rob}$ and such that $E(H_i)\cap \{uv\mid vu\in M_i\}=\emptyset$. Suppose that $H_i[U_i, U_{i+1}]$ is $r$-regular and $H_i[U_{i+1}, U_i]$ is empty.
			Denote by $\tH_i$ the $M_i$-contraction of $H_i[U_i, U_{i+1}]$. 
			Define a multidigraph $\cH_i$ by $\cH_i\coloneqq H_i\cup D_i^{\rm rob}\cup \mathcal{ESF}_i(U_i, U_{i+1})\cup \mathcal{ESF}_i'(U_i, U_{i+1})$ and define a multidigraph $\widetilde{\cH}_i$ by $\widetilde{\cH}_i\coloneqq \tH_i\cup \tD_i^{\rm rob}\cup \mathcal{SF}_i\cup \mathcal{SF}_i'$.
			Then, the following hold.
			\begin{enumerate}
				\item The multidigraph $\cH_i$ can be obtained from the $M_i$-expansion of the multidigraph $\cH_i'$ by orienting all the edges from $U_i$ to $U_{i+1}$.\label{claim:Mdecomp-expansion}
				\item The multidigraph $\cH_i$ has a decomposition $\sM_i$ into $s'$ perfect matchings from $U_i$ to $U_{i+1}$ such that the following hold for each $M\in \sM_i$.\label{claim:Mdecomp-decomp}
				\begin{enumerate}[label=\rm(\alph*),ref=\rm(\roman{enumi}.\alph*)]
					\item $M\cup M_i$ forms a Hamilton cycle on $U_i\cup U_{i+1}$.\label{claim:Mdecomp-Ham}
					\item There exists an extended special path system $ESPS_M$ in the multidigraph $\mathcal{ESF}_i\cup \mathcal{ESF}_i'$ such that $M\cap (E(\mathcal{ESF})\cup E(\mathcal{ESF}'))= E_{ESPS_M}(U_i,U_{i+1})$. (I.e.\ $M$ contains precisely one of the ``special path system parts" in $\mathcal{ESF}\cup \mathcal{ESF}'$.)\label{claim:Mdecomp-SF}
				\end{enumerate}
			\end{enumerate}
		\end{claim}
	
		\begin{proofclaim}
			First, we show \cref{claim:Mdecomp-expansion}. By \cref{fact:contractinguncontracting} and assumption, $H_i[U_i, U_{i+1}]$ is the $M_i$-expansion of $\tH_i$. Moreover, recall that $H[U_{i+1}, U_i]$ is empty, so $H_i$ can be obtained from the $M_i$-expansion of $\tH_i$ by orienting all the edges from $U_i$ to $U_{i+1}$. By definition, $D_i^{\rm rob}$ is obtained from the $M_i$-expansion of $\tD_i^{\rm rob}$ by orienting all the edges from $U_i$ to $U_{i+1}$.
			By \cref{def:FESPS}, each extended special path system $ESPS$ in the multidigraph $\mathcal{ESF}_i\cup \mathcal{ESF}_i'$ satisfies $ESPS[U_i, U_{i+1}]\cap M_i[U_{i+1}, U_i]=\emptyset$ (otherwise, \cref{def:FESPS-M} would imply that $ESPS$ is not a linear forest). Thus, \cref{fact:contractinguncontracting} implies that the $M_i$-expansions of $\mathcal{SF}_i$ and $\mathcal{SF}_i'$ are $\mathcal{ESF}_i[U_i, U_{i+1}]$ and $\mathcal{ESF}_i'[U_i, U_{i+1}]$. Altogether, this implies that the multidigraph $\cH_i$ can indeed be obtained from the $M_i$-expansion of the multidigraph $\widetilde{\cH}_i$ by orienting all the edges from $U_i$ to $U_{i+1}$, as desired.
			
			For \cref{claim:Mdecomp-decomp}, we decompose the multidigraph $\cH_i$ as follows. By \cref{fact:Ncontract}\cref{fact:Ncontract-N}, $\tH_i$ is an $r$-regular digraph on $U_i$ and, by \cref{fact:subgraphcontract}\cref{fact:subgraphcontract-disjointG}, $\tH_i$ is edge-disjoint from $\tD_i^{\rm rob}$. Thus, \cref{lm:newrobustdecomp} implies that the multidigraph $\widetilde{\cH}_i$ has a decomposition $\widetilde{\sC}_i$ into $s'$ Hamilton cycles on $U_i$ such that each cycle in $\widetilde{\sC}_i$ contains precisely one of the special path systems contained in the multidigraph $\mathcal{SF}_i\cup \mathcal{SF}_i'$.
			Let $\sM_i$ consist of the digraphs obtained by orienting all the edges from $U_i$ to $U_{i+1}$ in the $M_i$-expansions of the cycles in $\widetilde{\sC}_i$.
			
			Then, \cref{fact:contractingHamcycle} implies that \cref{claim:Mdecomp-Ham} holds and $\sM_i$ is a set of $s'$ perfect matchings from $U_i$ to $U_{i+1}$. By \cref{fact:subgraphcontract}\cref{fact:subgraphcontract-disjointD}, the matchings in $\sM_i$ are edge-disjoint.
			Thus, \cref{claim:Mdecomp-expansion} implies that $\cM_i$ is a decomposition of $\cH_i$, as desired.
			
			For \cref{claim:Mdecomp-SF}, let $M\in \sM_i$ and denote by $C\in \widetilde{\sC}_i$ its corresponding cycle. By \cref{lm:newrobustdecomp}, the multidigraph $\mathcal{SF}_i\cup \mathcal{SF}_i'$ contains a special path system $SPS$ such that $E(\widetilde{\sC}_i)\cap (E(\mathcal{SF}_i)\cup E(\mathcal{SF}_i'))=E(SPS)$. Let $SPS'$ be obtained from the $M_i$-expansion of $SPS$ by orienting all the edges from $U_i$ to $U_{i+1}$. By \cref{fact:subgraphcontract}\cref{fact:subgraphcontract-disjointD}, $M\cap (E(\mathcal{ESF})\cup E(\mathcal{ESF}'))=E(SPS')$. By construction, there exists an extended special path system $ESPS$ in the multidigraph $\mathcal{ESF}\cup \mathcal{ESF}'$ such that $SPS$ is obtained from the $M_i$-contraction of $ESPS[U_i, U_{i+1}]$ by deleting isolated vertices. By \cref{def:FESPS}, $ESPS[U_i, U_{i+1}]\cap M_i[U_{i+1}, U_i]=\emptyset$ and so \cref{fact:contractinguncontracting} implies that $E(SPS')= E_{ESPS}(U_i, U_{i+1})$ and so \cref{claim:Mdecomp-SF} holds.
		\end{proofclaim}
	
		\item \textbf{Decomposing $H\cup D^{\rm rob}\cup \mathcal{ESF}\cup \mathcal{ESF}'$.}
		For each $i\in [K]$, let $H_i$ be digraph on $U_i\cup U_{i+1}$ defined by $E(H_i)\coloneqq E_H(U_i, U_{i+1})$. Since $H$ is a blow-up $C_K$ with vertex partition $\cU$, we have $H=\bigcup_{i\in [K]}H_i$ and so it is enough to show that, for each $i\in [K]$, the multidigraph $H_i\cup D_i^{\rm rob}\cup \mathcal{ESF}_i\cup \mathcal{ESF}_i'$ has a decomposition $\sC_i$ into $s'$ Hamilton cycles such that each cycle in $\sC_i$ contains precisely one of the extended special path systems in the multidigraph $\mathcal{ESF}_i\cup \mathcal{ESF}_i'$.
		
		Let $i\in [K]$. Denote by $\sM_i$ the decomposition of the multidigraph $H_i\cup D_i^{\rm rob}\cup \mathcal{ESF}_i(U_i,U_{i+1})\cup \mathcal{ESF}_i'(U_i,U_{i+1})$ obtained by applying \cref{claim:Mdecomp}\cref{claim:Mdecomp-decomp}.
		Let $\sC_i$ be obtained from $\sM_i$ by replacing each $M\in \sM_i$ by the digraph $M\cup ESPS_M$ (recall that $ESPS_M$ was defined in \cref{claim:Mdecomp}\cref{claim:Mdecomp-SF}). 
		Then, \cref{fact:cyclerobustdecomp-sketch}, \cref{def:FESPS-M}, and \cref{claim:Mdecomp}\cref{claim:Mdecomp-Ham} imply that $\sC_i$ is a Hamilton decomposition of the multidigraph $H_i\cup D_i^{\rm rob}\cup \mathcal{ESF}_i\cup \mathcal{ESF}_i'$. By \cref{claim:Mdecomp}\cref{claim:Mdecomp-SF}, each cycle in $\sC_i$ contains precisely one of the extended special path systems in the multidigraph $\mathcal{ESF}_i\cup \mathcal{ESF}_i'$.\qedhere
	\end{steps}
\end{proof}

\onlyinsubfile{\bibliographystyle{abbrv}
	\bibliography{Bibliography/Bibliography}}

	\section{The preprocessing step: proof of Lemma \ref{lm:preprocessing}}\label{app:preprocessing}
	
	\onlyinsubfile{
		\appendix
		\setcounter{section}{2}
\section{The Preprocessing step}}

In this appendix, we discuss how to derive the preprocessing lemma for bipartite digraphs (\cref{lm:preprocessing}). First, note that \cref{lm:preprocessing} is a direct corollary of the bipartite versions of \cite[Lemma 8.6]{kuhn2013hamilton} (which guarantees the existence of $PG$) and \cite[Corollary 8.5]{kuhn2013hamilton} (which verifies the properties of $PG$). \Cref{fig:preprocessing} illustrates the overall structure of the proofs of \cite[Corollary 8.5 and Lemma 8.6]{kuhn2013hamilton}.
In this appendix, we will discuss the bipartite versions of the dark grey lemmas from \cref{fig:preprocessing}. 
The white lemmas from \cref{fig:preprocessing} can be used in their original versions. The statements of the light grey lemmas from \cref{fig:preprocessing} can be adapted simply by replacing a consistent system by a consistent bi-system whose exceptional set forms an independent set, while their proofs follow immediately from the white lemmas from \cref{fig:preprocessing} and the bipartite versions of the grey lemmas from \cref{fig:preprocessing}.

% Define block styles
\tikzstyle{blue} = [rectangle, rounded corners, draw, fill=gray!80]
\tikzstyle{purple} = [rectangle, rounded corners, draw, fill=gray!40]
\tikzstyle{orange} = [rectangle, rounded corners, draw, fill=white]
\pgfdeclarelayer{background}    % declare background layer
\pgfsetlayers{background,main}

\begin{figure}[htb]
    \centering
    \begin{tikzpicture}[scale=0.5]
        % Place nodes
        \node [blue] (53) {Lemma 5.3};
        \node [orange, below of=53, node distance=1cm] (47) {Lemma 4.7};
        \node [blue, right of=47, node distance=3.5cm] (75) {Lemma 7.5};
        \node [orange, above of=75, node distance=2cm] (43) {Proposition 4.3};
        \node [purple, right of=75, node distance=3.5cm] (76) {Lemma 7.6};
        \node [purple, above of=76, node distance=1cm] (71) {Lemma 7.1};
        \node [orange, above of=71, node distance=1cm] (82) {Lemma 8.2};
        \node [blue, above of=82, node distance=1cm] (52) {Lemma 5.2};
        \node [purple, right of=82, node distance=3.5cm] (86) {Lemma 8.6};
        \node [orange, right of=76, node distance=3.5cm] (414) {Proposition 4.14};
        \node [orange, below of=414, node distance=1cm] (65) {Lemma 6.5};
        \node [blue, below of=47, node distance=2cm] (72) {Lemma 7.2};
        \node [orange, right of=72, node distance=3.5cm] (73) {Proposition 7.3};
        \node [orange, below of=72, node distance=2cm] (74) {Observation 7.4};
        \node [orange, below of=74, node distance=1cm] (412) {Lemma 4.12};
        \node [orange, below of=412, node distance=1cm] (64) {Lemma 6.4};
        \node [blue, right of=74, node distance=3.5cm] (83) {Lemma 8.3};
        \node [orange, below of=83, node distance=1cm] (81) {Lemma 8.1};
        \node [purple, right of=83, node distance=3.5cm] (84) {Lemma 8.4};
        \node [purple, right of=84, node distance=3.5cm] (85) {Corollary 8.5};
        \node [orange, below of=84, node distance=2cm] (48) {Proposition 4.8};
        \node [orange, below of=81, node distance=1cm] (61) {Proposition 6.1};
        %\node [block, left of=evaluate, node distance=3cm] (update) {update model};
        % Draw edges
        \begin{pgfonlayer}{background}
            \draw [->,thick] (53) -- (75);
            \draw [->,thick] (47) -- (75);
            \draw [->,thick] (75) -- (76);
            \draw [->,thick] (43) -- (75);
            \draw [->,thick] (43) -- (71);
            \draw [->,thick] (43) -- (76);
            \draw [->,thick] (76) -- (86);
            \draw [->,thick] (71) -- (86);
            \draw [->,thick] (82) -- (86);
            \draw [->,thick] (52) -- (86);
            \draw [->,thick] (414) -- (76);
            \draw [->,thick] (65) -- (76);
            \draw [->,thick] (72) -- (75);
            \draw [->,thick] (72) -- (83);
            \draw [->,thick] (73) -- (76);
            \draw [->,thick] (73) -- (83);
            \draw [->,thick] (74) -- (83);
            \draw [->,thick] (412) -- (83);
            \draw [->,thick] (64) -- (83);
            \draw [->,thick] (83) -- (84);
            \draw [->,thick] (81) -- (84);
            \draw [->,thick] (84) -- (85);
            \draw [->,thick] (48) -- (84);
            \draw [->,thick] (61) -- (84);
        \end{pgfonlayer}
    \end{tikzpicture}
    \caption{The structure of the proofs of \cite[Corollary 8.5 and Lemma 8.6]{kuhn2013hamilton}.\label{fig:preprocessing}}
\end{figure}
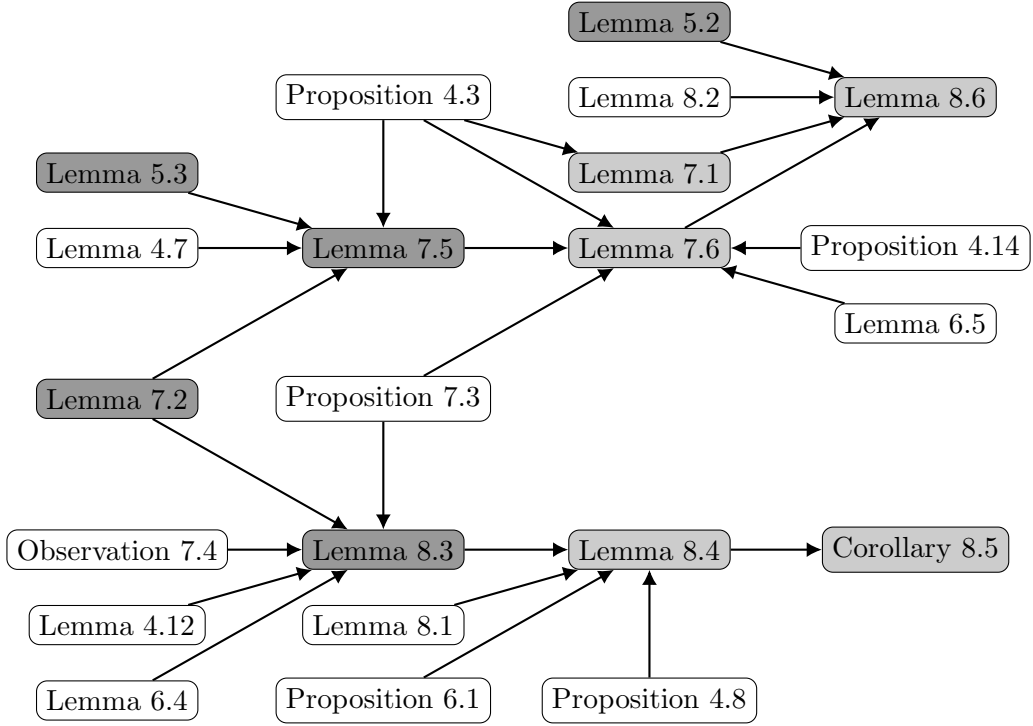

Note that \cref{lm:SF} corresponds to the bipartite version of \cite[Lemma 7.6]{kuhn2013hamilton} and \cref{lm:biprobblowup2} is the bipartite version of \cite[Lemma 5.3]{kuhn2013hamilton}. As already mentioned, \cref{lm:biprobblowup2} can be proven using the same arguments as in the proof of \cite[Lemma 5.3]{kuhn2013hamilton}, so we omit the details.
Finally, note that we will use the bipartite version of \cite[Lemma 7.2]{kuhn2013hamilton} (\cref{buildchord} below) to derive \cref{lm:BU} (that is, the bipartite version of \cite[Lemma 9.1]{kuhn2013hamilton}) at the end of this appendix.

\begin{lm}[{Bipartite version of \cite[Lemma 5.2]{kuhn2013hamilton}}]\label{regrobust}
	Let $0<\frac{1}{n}\ll\varepsilon\ll \nu \leq \tau \ll \delta<1$ and $\frac{1}{n} \ll \xi\leq \frac{\nu^2}{3}$. Let~$D$ be a balanced bipartite digraph on vertex classes $A$ and $B$ of size~$n$. Suppose that $\delta^0(D)\geq \delta n$ and that $D$ is a bipartite robust $(\nu,\tau)$-outexpander with bipartition $(A,B)$. For each $v\in V(D)$, let $n_v^+,n_v^-\in\mathbb{N}$
	be such that $(1-\varepsilon)\xi n\le n_v^+, n_v^-\leq (1+\varepsilon)\xi n$ and such that both $\sum_{v\in A} n_v^\pm=\sum_{v\in B} n_v^\mp$.
	Then, $D$ contains a spanning subdigraph $D'$ such that $d_{D'}^+(v)=n_v^+$ and $d_{D'}^-(v)=n_v^-$ for each $v\in V(D)$.
\end{lm}

\begin{proof}
	By symmetry, it is enough to find a spanning subgraph $H\subseteq D[A, B]$ satisfying $d_D(a)=n_a^+$ for each $a\in A$ and $d_H(b)=n_b^-$ for each $b\in B$.
	Let $N$ be the flow network obtained from $D(A,B)$ by giving each edge of $D(A,B)$ capacity $1$, by adding a source $s$ which is
	joined to every vertex $a\in A$ with an outedge of capacity $n_a^+$, and by adding a sink $t$ which is
	joined to every vertex $b\in B$ with an inedge of capacity $n_b^-$. Let $r\coloneqq \sum_{v\in A} n_v^+=\sum_{v\in B} n_v^-$.
	Using similar arguments as in \cite[Lemma 5.2]{kuhn2013hamilton}, one can show that any $s-t$ cut in $N$ has capacity at least $r$%	
		\COMMENT{Let $\cC$ be a minimal cut. Let $S$ be the set of vertices $a\in A$ for which $sa\notin \mathcal{C}$.
		Similarly, let $T$ be the set of vertices $b\in B$ for which $bt\notin \mathcal{C}$.
		Let $S'\coloneqq A\setminus S$ and $T'\coloneqq B\setminus T$.
		Then, the capacity of $\cC$ is
		\[c\coloneqq \sum_{v\in S'} n_v^++e_D(S,T)+ \sum_{v\in T'} n_v^-.\]
		If $|T'|\geq |S|+\frac{\nu n}{2}$, then
		\[c\geq \sum_{v\in S'} n_v^++\sum_{v\in T'} n_v^-
		\geq (1-\varepsilon)\xi n (|S'|+|T'|)\geq (1-\varepsilon)\xi n (n+\frac{\nu n}{2})\geq (1+\varepsilon)\xi n^2\geq r,\]
		as required.\\		
		If $|T'|\le |S|+\frac{\nu n}{2}$ and $\tau n<|S|<(1-\tau)n$, then $|T\cap RN^+_{\nu,D}(S)|\geq \frac{\nu n}{2}$ and so
		\[c\geq  e_D(S,T)\geq \frac{\nu^2 n^2}{2}\geq (1+\varepsilon)\xi n^2\geq r,\] as required.\\		
		If $|T'|\leq |S|+\frac{\nu n}{2}$ and $|S|\leq \tau n$, then
		\[c\geq \sum_{v\in S'} n_v^++|S|(\delta n-|T'|)\geq \sum_{v\in S'} n_v^+ +|S|(1+\varepsilon)\xi n\geq r,\]
		as required.\\		
		Finally, if $|S|\geq (1-\tau )n$, then
		\[c\geq \sum_{v\in T'} n_v^-+|T|(\delta n-|S'|)\geq \sum_{v\in T'} n_v^- +|T|(1+\varepsilon)\xi n\geq r,\]
		as required.}.	
	Thus, the max-flow min-cut theorem implies that $N$ has an $s-t$ flow of value $r$. This flow corresponds to the desired spanning subgraph $H\subseteq D[A,B]$.
\end{proof}

By definition, a bipartite digraph $R$ can only contain a chord sequence between clusters which belong to a common vertex class of $R$. Such chord sequences can be constructed using the same arguments as in \cite[Lemma 7.2]{kuhn2013hamilton}, so we omit the details.

\begin{lm}[Bipartite version of {\cite[Lemma 7.2]{kuhn2013hamilton}}] \label{buildchord}
	Let $0<\frac{1}{k}\ll \nu \ll \tau \ll \delta <1$.
	Let $R$ be a balanced bipartite digraph on vertex classes $\cA$ and $\cB$ of size $k$. Suppose that $R$ is a bipartite robust $(\nu, \tau)$-outexpander with bipartition $(A,B)$ and that $\delta^0(R)\geq \delta k$. Let $C$ be a Hamilton cycle in $R$.
	Let $\cV\subseteq V(R)$ satisfy $|\cV| \leq \frac{\nu k}{4}$.
	Suppose that $A_1,A_2\in \cA$. Then, there exists a chord sequence $CS(A_1,A_2)\subseteq E_R(\cB, \cA)$ containing at most $3\nu^{-1}$ edges and such that $V(CS(A_1,A_2)) \cap \cV \subseteq \{A_1^-,A_2\}$%
		\COMMENT{We may assume without loss of generality that $A_1^-$ and $A_2$ occur only one, at the start and end, respectively. So the statement matches \cite[Lemma 7.2]{kuhn2013hamilton}.}, 
	where $A_1^-$ denote the predecessor of $A_1$ on $C$.
\end{lm}

\COMMENT{\begin{proof}
	Given $V\in V(R)$, we write $V^-$ and $V^+$ for the predecessor and successor of $V$ on $C$.
	Let $\cV'\coloneqq \cV\cup \{V^+ \mid V\in \cV\}$ and note that $|\cV'| \leq \frac{\nu k}{2}$.
	Let $\cV^*\coloneqq V(R)\setminus \cV'$.
	Let $A_0\in N_R^+(A_1^-)\cap \cV^*$ (possible since $d_R^+(A_1^-)\geq \delta k>|\cV'|$). Note that $A^-\in \cB$ and so $A_0\in \cA$.
	Let $S_1\coloneqq N_R^+(A_0^-)\cap \cV^*$. Note that $|S_1|\geq \delta k-|\cV'|\geq \tau k$ and $S_1\subseteq \cA$.
	For each $i\in \mathbb{N}\setminus \{0\}$, let $S_{i+1}\coloneqq N_R^+(N_C^-(S_i))\cap \cV^*$.
	Note that for each $i\in \mathbb{N}\setminus \{0\}$, $S_i\subseteq S_{i+1}\subseteq \cA$.
	Moreover, for each $i\in \mathbb{N}\setminus \{0\}$ and $A\in S_i$, there exists a chord sequence from $A_0$ to $A$ containing at most $i$ edges which avoid the vertices in $\cV'$ and are oriented from $\cB$ to $\cA$.\\		
	Let $i\in \mathbb{N}\setminus \{0\}$. If $|S_i|<(1-\tau)k$, then
	\[|S_{i+1}|\geq |RN_{\nu, R}^+(N_C^-(S_i))|-|\cV'\cap \cA| \geq |N_C^-(S_i)|+\nu k-\frac{\nu k}{2}\geq |S_i|+\frac{\nu k}{2}.\]
	If $|S_i|\geq (1-\tau)k$, then $|S_{i+1}|\geq |S_i|\geq (1-\tau)k$.
	Therefore, $|S_{\lceil2\nu^{-1}\rceil}|\geq (1-\tau)k$. Thus, there exists $A_3 \in N_R^+(N_R^-(A_2))\cap S_{\lceil2\nu^{-1}\rceil}$.
	By construction, there exists a chord sequence $CS(A_0, A_3)\subseteq E_R(\cB, \cA)$ from $A_0$ to $A_3$ which contains at most $\lceil 2\nu^{-1}\rceil$ edges and avoids the vertices in $\cV'$.
	Let $CS(A_1, A_2)$ be obtained by appending the edges $A_1^-A_0$ and $A_3^-A_2$ at the start and end of $CS(A_0, A_3)$, respectively. By construction, $A_0, A_3\notin \cV'\supseteq \cV$ and so $A_3^-\notin \cV'\supseteq \cV$. This completes the proof.
\end{proof}}

Since we can no longer construct chord sequences between any pair of clusters, we will need to be more careful and adapt the proofs of the lemmas which use \cite[Lemma 7.2]{kuhn2013hamilton} (that is, \cite[Lemmas 7.5 and 8.3]{kuhn2013hamilton}).

Define a \emph{path system extender $PE$ for $C$ and $R$ with parameters $(\eps,d,d',\zeta)$} as in \cite{kuhn2013hamilton}. (The precise definition is not relevant for our purposes and so we omit it here.)
	\COMMENT{Let $D$ be a bipartite graph with vertex classes $A$ and $B$ of size $m$.
		We say that $D$ is \emph{$(\varepsilon,d,c)$-regular} if the following conditions are satisfied.
		\begin{enumerate}[label=\rm(Reg\arabic*)]
			\item Whenever $A'\subseteq A$ and $B'\subseteq B$ are sets of size at least $\varepsilon m$, then $d_D(A,B)=(1\pm \eps)d$.
			\item For all $a,a'\in A$, we have $|N(a)\cap N(a')|\leq c^2m$. Similarly, for all $b,b'\in B$, we have $|N(b)\cap N(b')|\leq c^2m$.
			\item $\Delta(D)\leq cm$.
		\end{enumerate}
		We say that $D$ is \emph{$(\varepsilon,d,d^*,c)$-superregular} if it is $(\varepsilon,d,c)$-regular and
		\begin{enumerate}[resume, label=\rm(Reg\arabic*)]
			\item $\delta(G)\ge d^*m$.
	\end{enumerate}}
	\COMMENT{Let $D, \cP, R$, and $C$ satisfy \cref{def:ST-P,def:ST-R,def:ST-C} and denote by $V_0$ the exceptional set in $\cP$.
	A \emph{path system extender $PE$ (for $C$ and $R$) with parameters $(\eps,d,d',\zeta)$} is a spanning subdigraph of $D-V_0$
	consisting of an edge-disjoint union of two graphs $\cB(C)_{PE}$ and $\cB(R)_{PE}$ on $V(G)\setminus V_0$
	which are defined as follows.
	\begin{enumerate}[label=\rm(PE\arabic*)]
		\item For each $UV\in E(C)$, the pair $\cB(C)_{PE}[U,V]$ is $(\eps,d',\zeta d',2d'/d)$-superregular. Moreover, $\cB(C)_{PE}$ does not contain any other edges (that is, for any $uv\in E(\cB(C)_{PE})$, there exists $UV\in E(C)$ such that $u\in U$ and $v\in V$).
		\item For each $UV\in E(R)$, the pair $\cB(R)_{PE}$ is $(\eps,d'/k,2d'/dk)$-regular. Moreover, $\cB(R)_{PE}$ does not contain any other edges (that is, for any $uv\in E(\cB(R)_{PE})$, there exists $UV\in E(R)$ such that $u\in U$ and $v\in V$).
	\end{enumerate}}

\begin{lm}[{Bipartite version of \cite[Lemma 8.3]{kuhn2013hamilton}}]\label{findham}
	Let $0<\frac{1}{n}\ll d'\ll \frac{1}{k}\ll \varepsilon\ll \frac{1}{\ell^*}\ll d\ll \nu\ll \tau\ll \delta, \theta\leq 1$ and $d\ll \zeta\leq \frac{1}{2}$. Suppose that $\frac{m}{50}\in \mathbb{N}$.
	Let $(D,\cP_0, R_0,C_0,\cP,R,C)$ be a consistent $(\ell^*,k,m,\eps,d,\nu,\tau,\delta,\theta)$-bi-system
	with $|D|=n$ and exceptional set $V_0$. Let $\cP'$ be a $(d')^2$-uniform $50$-refinement of $\cP$.
	Let $PE$ be a path system extender with parameters $(\eps,d,d',\zeta)$ for $C$ and $R$.
	Let $s\coloneqq \frac{10^7}{\nu^2}$ and suppose that $Q$ is a set of vertex-disjoint paths of $D$ such that the following hold.
	\begin{enumerate}
		\item $Q$ and $PE$ are edge-disjoint.
		\item $Q$ contains a special cover $SC$ in $D$ with respect to $V_0$ such that each component of $SC$ is a path of length $2$.\label{lm:findham-SC}
		\item There exists a set $\cV$ of five clusters of $\cP'$ such that each $e\in E(Q)$ satisfies $V(e)\subseteq V_0\cup \bigcup\cV$.
		\item $|E(Q)| \leq \frac{1230n}{s}$.
	\end{enumerate}
	Then, $D$ contains a Hamilton cycle $H$ such that $Q\subseteq H\subseteq PE \cup Q$. 
\end{lm}

\begin{proof}
	Let $M_{SC}$ be the complete special sequence associated to $SC$ and denote by $E$ the edge set of $(Q\setminus SC)\cup M_{SC}$. (Note that $E$ is precisely $E(Q^{\rm basic})$ with respect to the notation of \cite{kuhn2013hamilton}).
	For any cluster $V\in \cP$, denote by $V^-$ and $V^+$ the predecessor and successor of $V$ on $C$. For any vertex $v\in V(D)\setminus V_0$, denote by $V_v$ the cluster in $\cP$ which contains $v$.

	\begin{claim}\label{claim:findham}
		There exist chord sequences $CS(W_1, \tW_1^+), \dots, CS(W_{|E|},\tW_{|E|}^+)$ in $R$ for which the following hold.
		\begin{enumerate}[label=\rm(\alph*)]
			\item There exists an enumeration $u_1, \dots, u_{|E|}$ of the starting points of the edges in $E$ such that $\tW_i=V_{u_i}$ for each $i\in [|E|]$.\label{claim:findham-end}
			\item There exists an enumeration $v_1, \dots, v_{|E|}$ of the ending points of the edges in $E$ such that $W_i=V_{u_i}$ for each $i\in [|E|]$.\label{claim:findham-start}
			\item Altogether, $CS(W_1, \tW_1^+), \dots, CS(W_{|E|},\tW_{|E|}^+)$ contain at most $\frac{21m}{100}$ edges incident to each cluster in $\cP$ and at most $\frac{m}{50}$ occurrences of every edge of $R$.\label{claim:findham-interior}
		\end{enumerate}
	\end{claim}

	If \cref{claim:findham} holds, one can conclude the proof of \cref{findham} using the arguments of \cite[Lemma 8.3]{kuhn2013hamilton}. Thus, it suffices to prove \cref{claim:findham}.
	
	\begin{proofclaim}[Proof of \cref{claim:findham}]
		In the proof of \cite[Lemma 8.3]{kuhn2013hamilton}, \cite[Lemma 7.2]{kuhn2013hamilton} is used to construct, for each edge $uv\in E$, a chord sequence $CS(V_v, V_u^+)$ in $R$. This is not possible here because \cref{def:CBSys-biproboutexp} and \cref{lm:findham-SC} imply that, for any $uv\in M_{SC}\subseteq E$, $u$ and $v$ belong to a common vertex class of $D$ and so $V_v$ and $V_u^+$ belong to distinct vertex classes of $C\subseteq R$.
		
		We circumvent this problem as follows. Recall from \cref{def:CBSys-biproboutexp} that $D$ is a balanced bipartite digraph. Denote by $A$ and $B$ the vertex classes of $D$. Let $M_{SC,A}\coloneqq \{e\in V(M_{SC})\mid V(e)\subseteq A\}$ and $M_{SC,B}\coloneqq \{e\in V(M_{SC})\mid V(e)\subseteq B\}$. By \cref{lm:findham-SC} and since $D$ is bipartite, $M_{SC,A}$ and $M_{SC,B}$ partition $M_{SC}$. By \cref{def:CBSys-P0}, we have $|V_0\cap A|=|V_0\cap B|$ and so $|M_{SC,A}|=|M_{SC,B}|$.
		Fix a bijection $\phi\colon M_{SC,A}\longrightarrow M_{SC,B}$ and define $E'\coloneqq (Q\setminus SC)\cup \{ab',ba'\mid aa'\in M_{SC,A}, bb'=\phi(aa')\}$.
		By construction, each edge in $E'$ has precisely one endpoint in $A$ and one endpoint in $B$.
		Using \cref{buildchord} instead of its non-bipartite analogue \cite[Lemma 7.2]{kuhn2013hamilton}, one can apply the arguments of \cite[Lemma 8.3]{kuhn2013hamilton} to construct, for each $uv\in E'$, a chord sequence $CS(V_v, V_u^+)$ in $R$ such that, altogether, \cref{claim:findham-interior} holds. Then, \cref{claim:findham-end,claim:findham-start} hold by definition of $E'$.
	\end{proofclaim}
	
	This completes the proof of \cref{findham}.
\end{proof}

A similar problem arises when adapting \cite[Lemma 7.5]{kuhn2013hamilton} to the bipartite case. Roughly speaking, \cite[Lemma 7.5]{kuhn2013hamilton} guarantees the existence of a special cover $SC$ whose components are paths of length $2$, and chord sequences from $V_v$ to $V_u^+$ for each edge $uv$ of the complete special sequence $M_{SC}$ associated to $SC$ (where $V_v$ denotes the cluster in $\cP$ which contains $v$ and $V_u^+$ denotes the successor on $C$ of the cluster in $\cP$ which contains $u$). As discussed in the proof of \cref{findham}, such chord sequences do not exist. We can only guarantee that the number of times a cluster is at the start/end of a chord sequence equals the number of edges in $M_{SC}$ which start/end in that cluster. This is sufficient for proving the bipartite version of \cite[Lemma 7.6]{kuhn2013hamilton} (that is, \cref{lm:SF}).

\begin{lm}[{Bipartite version of \cite[Lemma 7.5]{kuhn2013hamilton}}]\label{prelimfactor}
	Let $0<\frac{1}{n}\ll \frac{1}{k}\ll \varepsilon\ll \varepsilon'\ll d\ll \nu\ll \tau\ll \delta,\theta\le 1$ and $\frac{f}{\ell^*}\ll 1$ and $\varepsilon\ll \frac{1}{\ell'}, \frac{1}{f}$. Suppose that $\frac{\ell^*}{f}, \frac{m}{\ell'}\in\mathbb{N}$.
	Let $(D,\cP_0, R_0,C_0,\cP,R,C)$ be a consistent $(\ell^*,k,m,\eps,d,\nu,\tau,\delta,\theta)$-bi-system
	with $|D|=n$ and exceptional set $V_0$. Suppose that $D'$ is a spanning subdigraph of $D$ and that $\cP'$ is a partition of $V(D)$ such that the following
	conditions are satisfied.
	\begin{enumerate}
		\item $\cP'$ is an $\varepsilon$-uniform $\ell'$-refinement of $\cP$.
		\item For any $v\in V_0$, we have $d^\pm_D(x)- d^\pm_{D'}(x)\leq \varepsilon n$.
		\item For any $v\in V(D)\setminus V_0$, we have $d^\pm_D(v)- d^\pm_{D'}(v)\leq \frac{(\varepsilon')^3m}{\ell'}$.
	\end{enumerate}
	For any cluster $V\in \cP$, denote by $V^-$ and $V^+$ the predecessor and successor of $V$ on $C$. For any vertex $v\in V(D)\setminus V_0$, denote by $V_v$ the cluster in $\cP$ which contains $v$.
	Let $(h,j)\in [\ell']\times [f]$ and let $I=W_1 \dots W_{k'}$ denote the $j^{\rm th}$ interval in the canonical interval partition of $C$ into $f$ intervals. For any cluster $V\in \cP$, denote by $V^h$ the $h^{\rm th}$ subcluster of $V$ in $\cP'$. In particular, let $W_1^h, \dots, W_{k'}^h$ denote the $h^{\rm th}$ subclusters of $W_1, \dots, W_{k'}$ in $\cP'$. Then, the following hold.
	\begin{enumerate}[label=\rm(\alph*)]
		\item There exists a special cover $SC$ in $D'$ with respect to $V_0$ which satisfies the following properties.
		\begin{itemize}
			\item Each component of $SC$ is a path of length $2$.
			\item $V(SC)\subseteq V_0\cup W_3^h\cup \dots W_{k'-2}^h$.
			\item Each cluster $V\in \cP$ satisfies $|V\cap V(SC)|\leq \varepsilon^{\frac{1}{4}} m$.
		\end{itemize}
		\item There exist chord sequences $CS(\tW_1, \hW_1^+), \dots, CS(\tW_{|V_0|}, \hW_{|V_0|}^+)$ in $R$ for which the following hold.\label{lm:prelimfactor-CS}
		\begin{itemize}
			\item There exists an enumeration $u_1, \dots, u_{|V_0|}$ of the starting points of the components of $SC$ such that $\hW_i=V_{u_i}$ for each $i\in [|V_0|]$.
			\item There exists an enumeration $v_1, \dots, v_{|V_0|}$ of the ending points of the components of $SC$ such that $\tW_i=V_{v_i}$ for each $i\in [|V_0|]$.
			\item For each $i\in [|V_0|]$, $CS(\tW_i, \hW_i^+)$ contains at most $3\nu^{-3}$ edges and all its vertices lie in $W_2\cup \dots\cup W_{k'-1}$.
			\item Altogether, $CS(\tW_1, \hW_1^+), \dots, CS(\tW_{|V_0|}, \hW_{|V_0|}^+)$ contain at most $4\varepsilon^{\frac{1}{4}}m$ edges incident to each cluster in $\cP$.
		\end{itemize}
		\item $D'$ contains a matching $M$ which satisfies the following properties.
		\begin{itemize}
			\item $M$ can be obtained by replacing, for each $i\in [|V_0|]$, each edge $UV$ of $CS(\tW_i, \hW_i^+)$ by an edge of $D'(U^h, V^h)$.
			\item $V(M)\cap V(SC)=\emptyset$.
		\end{itemize}
		\item For each $UV\in E(C)$, the pair $D'[U^h,V^h]$ is $[\varepsilon',\geq d]$-superregular.
	\end{enumerate}
\end{lm}

\Cref{prelimfactor} can be proven using the same arguments as in \cite[Lemma 7.5]{kuhn2013hamilton}, with \cref{lm:biprobblowup2} and \cref{buildchord} playing the roles of \cite[Lemmas 5.3 and 7.2]{kuhn2013hamilton} and using the arguments of \cref{claim:findham} of the proof of \cref{findham} to choose the endpoints of the chord sequences in \cref{prelimfactor}\cref{lm:prelimfactor-CS}. Therefore, we omit the details here.

Finally, we use \cref{buildchord} and adapt the arguments of \cite[Lemma 9.1]{kuhn2013hamilton} to derive \cref{lm:BU}.

\begin{proof}[Proof of \cref{lm:BU}]
	Denote $C=V_1\dots V_{2k}$. Since $R$ is bipartite, we may assume without loss of generality that $\cA= \{V_i\mid i\in [2k] \text{ is odd}\}$ and $\cB= \{V_i\mid i\in [2k] \text{ is even}\}$. Denote $V_{2k+1}\coloneqq V_1$ and $V_{2k+2}\coloneqq V_2$.
	For simplicity, split \cref{def:BU-edges} into two parts as follows.
	\begin{enumerate}[label=(BU1\alph*)]
		\item The edge set of $U$ has a partition into $U_{\rm odd}$ and $U_{\rm even}$ and, for every $i\in [2k]$, $U$ contains a chord sequence $CS(V_i,V_{i+2})$
		from $V_i$ to $V_{i+2}$ such that \cref{def:BU-size}, \cref{def:BU-degree}, and the following hold.
		All of the edges in the multiset $\bigcup \{CS(V_i,V_{i+2})\mid i\in [2k] \text{ is odd}\}$ are contained in $U_{\rm odd}$, all of the edges in the multiset $\bigcup \{CS(V_i,V_{i+2})\mid i\in [2k] \text{ is even}\}$ are contained in $U_{\rm even}$, and \label{def:BU-edges1} 
		\item all the remaining edges of $U$ lie on $C$.\label{def:BU-edges2} 
	\end{enumerate}
	
	Apply the arguments of \cite[Lemma 9.1]{kuhn2013hamilton} with \cref{buildchord} playing the role of \cite[Lemma 7.2]{kuhn2013hamilton} to obtain chord sequences $CS(V_1, V_3), CS(V_2,V_4), \dots, CS(V_{2k}, V_{2k+2})$ which satisfy the following properties,
	where $U_{\rm odd}'$ denotes the multiset of edges defined by
	\[U_{\rm odd}'\coloneqq E(CS(V_1,V_3))\cup E(CS(V_3,V_5))\cup \dots \cup E(CS(V_{2k-1},V_{2k+1}))\]
	and $U_{\rm even}'$ denotes the multiset of edges defined by
	\[U_{\rm even}'\coloneqq E(CS(V_2,V_4))\cup E(CS(V_4,V_6))\cup \dots \cup E(CS(V_{2k},V_{2k+2})).\]%
		\COMMENT{Suppose inductively that, for some $0\leq \ell < k$, we have constructed chord sequences $CS(V_1,V_3), CS(V_3,V_5), \dots, CS(V_{2\ell-1}, V_{2\ell+1})$ satisfying \cref{lm:BU-size,lm:BU-full}. Let $\cV$ be the set of clusters which are already visited at least $\frac{\ell'}{3}$ times. Since each chord sequence contains at most $3\nu^{-1}$ edges and since $R$ is bipartite, we have $|\cV|\leq \frac{3\cdot 3\nu^{-1}\cdot k}{\ell'}=\frac{\nu k}{4}$. Let $CS(V_{2\ell+1}, V_{2\ell+3})$ be the chord sequence obtained by applying \cref{buildchord}.\\
		Construct  $CS(V_2,V_4), CS(V_4,V_6), \dots, CS(V_{2k}, V_{2k+2})$ similarly.}
	\begin{enumerate}[label=(\roman*)]
		\item For each $i\in [2k]$, $CS(V_i, V_{i+2})$ contains at most $3\nu^{-1}\leq \frac{\sqrt{\ell'}}{2}$ edges.\label{lm:BU-size}
		\item For each $i\in [2k]$, we have $d_{U_{\rm odd}'}(V_i)\leq \frac{2\ell'}{5}$ and $d_{U_{\rm even}'}(V_i)\leq \frac{2\ell'}{5}$.\label{lm:BU-full}
	\end{enumerate}
	Let $U'$ be the multidigraph on $V(R)$ whose multiset of edges is defined by $E(U')\coloneqq U_{\rm odd}'\cup U_{\rm even}'=\bigcup_{i\in [2k]} E(CS(V_i, V_{i+2}))$.
	By \cref{lm:BU-size} and construction, $U'$ satisfies \cref{def:BU-edges1} and \cref{def:BU-size}.
	
	For each $i\in [2k]$, let $n_{i,\rm odd}^\pm\coloneqq d_{U_{\rm odd}'}^\pm(V_i)$ and $n_{i,\rm even}^\pm\coloneqq d_{U_{\rm even}'}^\pm(V_i)$.
	By similar arguments as in the proof of \cite[Lemma 9.1]{kuhn2013hamilton}, we have $n_{i+1,\rm odd}^-=n_{i,\rm odd}^+$ and $n_{i+1,\rm even}^-=n_{i,\rm even}^+$ for each $i\in [2k]$ (where $n_{2k+1, \rm odd}^-\coloneqq n_{1, \rm odd}^-$ and $n_{2k+1, \rm even}^-\coloneqq n_{1, \rm even}^-$).%
		\COMMENT{Since $R$ is bipartite, we have
		\begin{enumerate}[resume,label=(\roman*)]
			\item $U_{\rm odd}\subseteq E_R(\cB, \cA)$.\label{lm:BU-odd}
			\item $U_{\rm even}\subseteq E_R(\cA, \cB)$.\label{lm:BU-even}
		\end{enumerate}
		Let $e=V_iV_{i'}\in U_{\rm odd}'$. By \cref{lm:BU-even,lm:BU-odd}, $i$ is even and there exists an odd $j\in [2k]$ such that $e\in E(CS(V_j, V_{j+2}))$. If $i\neq j-1$, then the edge before $e$ in $CS(V_j, V_{j+2})$ ends at $V_{i+1}$. If $i=j-1$, then $e$ is the first edge in $CS(V_j, V_{j+2})$, but $i+1=j$ and so the last edge in $CS(V_{j-2}, V_j)$ ends at $V_{i+1}$. Therefore, $n_{i,\rm odd}^+\leq n_{i+1,\rm odd}^-$.\\		
		Similarly, let $e=V_{i'}V_{i+1}\in U_{\rm odd}'$. By \cref{lm:BU-even,lm:BU-odd}, $i+1$ is odd and there exists an odd $j\in [2k]$ such that $e\in E(CS(V_j, V_{j+2}))$. If $i+1\neq j+2$, then the edge after $e$ in $CS(V_j, V_{j+2})$ starts at $V_i$. If $i+1=j+2$, then $e$ is the last edge in $CS(V_j, V_{j+2})$, but $i=j+1$ and so the first edge in $CS(V_{j+2}, V_{j+4})$ starts at $V_i$. Therefore, $n_{i+1,\rm odd}^-\leq n_{i,\rm odd}^+$.\\		
		Now let $e=V_iV_{i'}\in U_{\rm even}'$. By \cref{lm:BU-even,lm:BU-odd}, there exists an even $j\in [2k]$ such that $e\in E(CS(V_j, V_{j+2}))$. If $i\neq j-1$, then the edge before $e$ in $CS(V_j, V_{j+2})$ ends at $V_{i+1}$. 
		If $i=j-1$, then $e$ is the first edge of $CS(V_j, V_{j+2})$ but, $i+1=j$ and so the last edge of $CS(V_{j-2}, V_j)$ ends at $V_{i+1}$ and belongs to $U_{\rm even}'$ (by \cref{lm:BU-even}).		
		Thus, $n_{i+1, \rm even}^-\geq n_{i, \rm even}^+$.\\		
		Similarly, let $e=V_{i'}V_{i+1}\in U_{\rm even}'$. By \cref{lm:BU-even,lm:BU-odd}, there exists an even $j\in [2k]$ such that $e\in E(CS(V_j, V_{j+2}))$. If $i+1\neq j+2$, then the edge after $e$ in $CS(V_j, V_{j+2})$ starts at $V_i$. 
		If $i+1=j+2$, then $e$ is the last edge $CS(V_j, V_{j+2})$ but, $i=j+1$ and so the first edge of $CS(V_{j+2}, V_{j+4})$ starts at $V_i$ and belongs to $U_{\rm even}'$. Thus, $n_{i, \rm even}^+\geq n_{i+1, \rm even}^-$.}
	For each $i\in [2k]$, let $\ell_{i, \rm odd}\coloneqq \frac{\ell'}{2}-n_{i,\rm odd}^-$ and $\ell_{i, \rm even}\coloneqq \frac{\ell'}{2}-n_{i,\rm even}^-$.
	Let $U$ be obtained from $U'$ by adding, for each $i\in [2k]$, exactly $\ell_{i,\rm odd}+\ell_{i, \rm even}$ copies of the edge $V_{i-1}V_i$. Let $U_{\rm odd}$ be obtained from $U_{\rm odd}'$ by adding exactly $\ell_{i, \rm odd}$ copies of the edge $V_{i-1}V_i$ and let $U_{\rm even}$ be obtained from $U_{\rm even}'$ by adding exactly $\ell_{i, \rm even}$ copies of the edge $V_{i-1}V_i$. Note that $U_{\rm odd}$ and $U_{\rm even}$ partition the edges of $U$.
	Since $U'$ satisfies \cref{def:BU-edges1} and \cref{def:BU-size}, $U$ also satisfies \cref{def:BU-edges1} and \cref{def:BU-size}. By construction, \cref{def:BU-edges2} also holds.
		
	For each $i\in [2k]$, we have $d_{U_{\rm odd}}^-(V_i)=n_{i, \rm odd}^-+\ell_{i,\rm odd}=\frac{\ell'}{2}$ and \[d_{U_{\rm odd}}^+(V_i)=n_{i, \rm odd}^++\ell_{i+1,\rm odd}=n_{i+1, \rm odd}^-+\ell_{i+1,\rm odd}=\frac{\ell'}{2}.\]
	Similarly, both $d_{U_{\rm even}}^\pm(V_i)=\frac{\ell'}{2}$ for each $i\in[2k]$ and so \cref{def:BU-degree} holds.
	One can show that $U$ forms a closed walk in $R$ using similar arguments as in \cite[Lemma 9.1]{kuhn2013hamilton}.%
		\COMMENT{By \cref{lm:BU-full}, $U$ contains at least one copy of each edge of $C$. Let $U''$ be obtained from $U$ by deleting one copy of each such edge.
		By \cite[Proposition 6.1]{kuhn2013hamilton}, there exists a decomposition $\cF$ of $U''$ into $\ell'-1$ $1$-factors.
		Let $C_{1,1}, \dots, C_{1,n_1}, C_{2,1}, \dots,C_{2,n_2}, C_{3,1}, \dots, C_{2k,n_{2k}}$ be an enumeration of all the cycles contained in the $1$-factors in $\cF$ such that, for each $i\in [2k]$ and $j\in [n_i]$, $i$ is the smallest index such that $V_i\in V(C_{i,j})$. Then, the edges of $U$ form the closed walk $V_1C_{1,1}V_1\dots V_1C_{1,n_1}V_1V_2C_{2,1}V_2\dots V_2C_{2,n_2}V_2V_3\dots V_{2k}C_{2k,n_{2k}}V_{2k}V_1$.}
\end{proof}

\onlyinsubfile{\bibliographystyle{abbrv}
	\bibliography{Bibliography/Bibliography}}

	\section{Applying the regularity lemma: proof of Lemma \ref{lm:bisetup}}\label{app:reglm}
	
	\onlyinsubfile{
		\appendix
		\setcounter{section}{1}
		\section{Applying the regularity lemma: proof of Lemma \ref{lm:bisetup}}}

In this appendix, we will prove \cref{lm:bisetup}, which guarantees the existence of consistent bi-systems and bi-setups in a bipartite robust outexpander. We will need the bipartite analogue of \cref{lm:verticesedgesremovalrobout}\cref{lm:verticesedgesremovalrobout-vertices}. The proof follows easily from the definition of a bipartite robust outexpander and is therefore omitted.

\begin{lm}\label{cor:verticesedgesremovalbiproboutexp2}
	Let $0<\frac{1}{n}\ll\varepsilon\leq \nu\ll \tau \leq 1$. 
	Let $D$ be a bipartite digraph on vertex classes $A$ and $B$ of size $n$ and suppose that $D$ is a bipartite robust $(\nu, \tau)$-outexpander with bipartition $(A,B)$. Let $A'\subseteq A$ and $B'\subseteq B$ satisfy $|A'|=|B'|\geq (1-\varepsilon)n$.
	Then, $D(A',B')$ is a bipartite robust $(\nu-\varepsilon, 2\tau)$-outexpander with bipartition $(A', B')$.
\end{lm}

\COMMENT{\begin{proof}
		Denote $n'\coloneqq |A'|$.
		Let $S\subseteq A'$ satisfy $2\tau n'\leq |S|\leq (1-2\tau)n'$. Then,
		\[\tau n\leq 2\tau (1-\varepsilon)n\leq |S|\leq (1-2\tau)n\leq (1-\tau)n.\]
		Thus, $|RN_{\nu, D}^+(S)\cap B'|\geq |S|+\nu n-\varepsilon n\geq |S|+(\nu-\varepsilon)n'$. Since $RN_{\nu-\varepsilon, D(A',B')}^+(S)\supseteq RN_{\nu, D}^+(S)\cap B'$, we are done.
	\end{proof}}

\begin{proof}[Proof of \cref{lm:bisetup}]
	Let $0<\frac{1}{M'}\ll \varepsilon$. Fix additional constants such that $\frac{1}{M'}\ll\varepsilon_1\ll \varepsilon_2\ll \varepsilon_3\ll \varepsilon_4 \ll \varepsilon$.
	
	\begin{steps}
		\item \textbf{Applying the regularity lemma.}\label{step:bisetup-reg}	
		Let $M$ and $n_0$ be the constants obtained by applying \cref{lm:bipreglm} with $\varepsilon_1$ playing the role of $\varepsilon$.
		By \cref{lm:bipreglm}\cref{lm:bipreglm-k}, we may assume without loss of generality that $0<\frac{1}{n_0}\ll \frac{1}{M}\leq \frac{1}{M'}\ll \varepsilon_1$.
		Fix additional constants such that
		$\varepsilon\ll\frac{1}{q}\ll \frac{1}{f}, \frac{1}{\ell^*}\ll d \ll \nu \ll \tau \ll \delta, \theta \ll 1$ and $d\ll \frac{1}{g}\ll 1$. Moreover, let $\ell'\geq 324\nu^{-2}$ be even.
		Let $D$ be a balanced bipartite on vertex classes $A$ and $B$ of size $n\geq n_0$.
		Suppose that $D$ is a bipartite robust $(\nu, \tau)$-outexpander with bipartition $(A, B)$ and that $\delta^0(D)\geq \delta n$.
	
		Apply \cref{lm:bipreglm} with $\varepsilon_1$ and $4d$ playing the roles of $\varepsilon$ and $d$ to obtain a spanning subdigraph $D'\subseteq D$ and a partition $\widetilde{\cP}=\{\tV_0, \tV_1, \dots, \tV_{2\tk}\}$ of $V(D)$ such that	
		\cref{lm:bipreglm}\cref{lm:bipreglm-k,lm:bipreglm-V0,lm:bipreglm-m,lm:bipreglm-deg,lm:bipreglm-empty,lm:bipreglm-reg,lm:bipreglm-AB} hold with $2\tk, \tm, \tV_0, \tV_1, \dots, \tV_{2\tk}$ playing the roles of $k, m, V_0, V_1, \dots, V_{2k}$.
		Denote $\widetilde{\cA}\coloneqq \{\tV_i\mid i\in [2\tk], \tV_i\subseteq A\}$ and $\widetilde{\cB}\coloneqq \{\tV_i\mid i\in [2\tk], \tV_i\subseteq B\}$.
		We may assume without loss of generality that $\widetilde{\cA}\coloneqq \{V_{2i-1}\mid i\in [\tk]\}$ and $\widetilde{\cB}\coloneqq \{\tV_{2i}\mid i\in [\tk]\}$.
		Let $\tR$ be the bipartite reduced digraph of $D$ with parameters $\varepsilon_1, 4d$, and $M'$.
		By \cref{lm:Rrob}, $\delta^0(\tR)\geq \frac{\delta \tk}{2}$ and $\tR$ is a bipartite robust $(\frac{\nu}{2}, 2\tau)$-outexpander with bipartition $(\widetilde{\cA}, \widetilde{\cB})$.
		Observe that, by \cref{lm:bipreglm}\cref{lm:bipreglm-reg}, $D[U,V]$ is $(\varepsilon_1, \geq 4d)$-regular for each $UV\in E(\tR)$.
	
		\item \textbf{Ensuring the desired divisibility conditions.}\label{step:bisetup-k}
		Let $\hk$ be the largest integer satisfying $\hk\leq \tk$ and $\frac{\hk}{21fg(g-1)}\in \mathbb{N}$.
		Let $\hV_0\coloneqq \tV_0\cup \bigcup_{i\in [2\tk-2\hk]}\tV_{2\hk+i}$.
		By \cref{lm:bipreglm}\cref{lm:bipreglm-V0,lm:bipreglm-m,lm:bipreglm-k}, 
		\begin{equation}\label{eq:bisetup-V0}
			|\hV_0\cap A|=|\hV_0\cap B|\leq \varepsilon_1n+21fg(g-1)\tm\leq 2\varepsilon_1 n.
		\end{equation}
		Let $\widehat{\cP}\coloneqq \{\hV_0, \tV_1, \dots, \tV_{2\hk}\}$ and let $\hV_1, \dots, \hV_{2\hk}$ be a relabelling of $\tV_1, \dots, \tV_{2\hk}$.
		Let $\hR\coloneqq \tR-\{\tV_{2\hk+i} \mid i\in [2\tk-2\hk]\}$.
		Denote $\widehat{\cA}\coloneqq \widetilde{\cA} \setminus \{\tV_{2\hk+i}\mid i\in [2\tk-2\hk]\}$ and $\widehat{\cB}\coloneqq \widetilde{\cB} \setminus \{\tV_{2\hk+i} \mid i\in [2\tk-2\hk]\}$.
		Then, $\delta^0(\hR)\geq \frac{\delta \tk}{2}-21fg(g-1)\geq \frac{\delta \hk}{3}$ and, by \cref{cor:verticesedgesremovalbiproboutexp2}, $\hR$ is a bipartite robust $(\frac{\nu}{3}, 4\tau)$-outexpander with bipartition $(\widehat{\cA}, \widehat{\cB})$.
		
		\item \textbf{Finding a Hamilton cycle and a bi-universal walk in the reduced graph.}
		Apply \cref{lm:biprobHamcycle} with $\hR, \widehat{\cA}, \widehat{\cB}, \hk, \frac{\nu}{3}, 4\tau$, and $\frac{\delta}{3}$ playing the roles of $D, A, B, n, \nu, \tau$, and $\delta$ to obtain a Hamilton cycle $\hC$ of $\hR$.
		We may assume without loss of generality that $\hC=\hV_1\dots \hV_{2\hk}$.
		Apply \cref{lm:BU} with $\hR, \hC, \hk, \frac{\nu}{3}, 4\tau$, and $\frac{\delta}{3}$ playing the roles of $R, C, k, \nu, \tau$, and $\delta$ to obtain a universal walk $\hU$ for $\hC$ in $\hR$ with parameter $\ell'$. Denote $\hU=\hV_{i_1}\dots \hV_{i_{2\ell'\hk}}$.
		
		\item \textbf{Forming superregular pairs.}\label{step:bisetup-supreg}
		Let $E\coloneqq \{e\in E(\hC)\cup E(\hU)\}$. We adjust the partition $\widehat{\cP}$ to ensure that each edge in $E$ corresponds to a superregular pair in $D$.
		By \cref{def:BU-degree}, each $V\in V(\hR)$ satisfies $d_E^\pm(V)\leq \ell'+1$.
		For each $e=UV\in E$, denote $d_e\coloneqq d_D(U,V)$ and observe that, by \cref{step:bisetup-reg}, $d_e\geq 4d$.
		Fix an integer $m_0$ such that $(1-2\sqrt{\varepsilon_1})\tm\leq m_0\leq (1-\sqrt{\varepsilon_1})\tm$ and $\frac{fm_0}{200q\ell'\ell^*}\in \mathbb{N}$.	
		Let $i\in [2\hk]$. Let $\hV_i'$ consist of all the vertices $v\in \hV_i$ such that there exists $j\in [2\hk]\setminus \{i\}$ such that $e=\hV_i\hV_j\in E$ but $|N_D^+(v)\cap \hV_j|\neq (d_e\pm \varepsilon_1)\tm$ or, $e'=\hV_j\hV_i\in E$ but $|N_D^-(v)\cap \hV_j|\neq (d_{e'}\pm \varepsilon_1)\tm$.
		By \cref{lm:epsdeg}, $|\hV_i'|\leq 2\varepsilon_1\tm d_E(\hV_i)\leq 4\varepsilon_1\tm(\ell'+1)\leq \sqrt{\varepsilon_1} \tm$.
		Let $\hV_i'\subseteq \hV_i''\subseteq \hV_i$ satisfy $|\hV_i''|=\tm-m_0$.
		Let $V_0\coloneqq \hV_0\cup \bigcup_{i\in [2\hk]}\hV_i''$.
		By \cref{eq:bisetup-V0}, 
		\[|V_0\cap A|=|V_0\cap B|\leq 2\varepsilon_1n+2\sqrt{\varepsilon_1}\tm\cdot 2\hk\leq \varepsilon n.\]
		For each $i\in [2\hk]$, let $V_i^0\coloneqq \hV_i\setminus \hV_i''$.
		By construction, $|V_1^0|=\dots=|V_{2\hk}^0|=m_0$.
		Let $\cP_0\coloneqq \{V_0, V_1^0, \dots, V_{2\hk}^0\}$.
		Denote $\cA_0\coloneqq \{V_i^0\in \cP_0\mid \hV_i\in \widehat{\cA}\}$ and $\cB_0\coloneqq \{V_i^0\in \cP_0\mid \hV_i\in \widehat{\cB}\}$.
		Let $R_0$ be the digraph on $\cA^0\cup \cB^0$ which is induced by $\hR$, i.e.\ defined as follows. For any $i,j\in [2\hk]$, $V_i^0V_j^0\in E(R_0)$ if and only if $\hV_i\hV_j\in E(\hR)$.
		By \cref{lm:epsvertexslice}, $D[U,V]$ is $(\varepsilon_2, \geq 3d)$-regular for each $UV\in E(R_0)$.
		Moreover, $\delta^0(R_0)\geq \frac{\delta \hk}{3}$ and $R_0$ is a bipartite robust $(\frac{\nu}{3}, 4\tau)$-outexpander with bipartition $(\cA_0,\cB_0)$.
		Let $C_0\coloneqq V_1^0\dots V_{2\hk}^0$ and $U_0=V_{i_1}^0\dots V_{i_{2\ell'\hk}}^0$.
		By construction, $C_0$ is Hamilton cycle of $R_0$ and $U_0$ is a universal walk for $C_0$ in $R_0$ with parameter $\ell'$.
		Moreover, $D[U,V]$ is $[\varepsilon_2, \geq 3d]$-superregular for each $UV\in E(C_0)\cup E(U_0)$%	
			\COMMENT{Each remaining vertex started with degree $(d_e\pm \varepsilon_1)\tm$. Delete at most $2\sqrt{\varepsilon_1}\tm$ vertices per cluster. So now each remaining vertex has degree $(d_e\pm 3\sqrt{\varepsilon_1})\tm=(d_e\pm \varepsilon_2)\tm$. Recall that $d_e\geq 3d$.}.
		
		\item \textbf{Finding the refinements.}\label{step:bisetup-refinements}
		Apply \cref{lm:URefexistence} with $2n, m_0, 2\hk, \cP_0, \varepsilon_2$, and $\ell^*$ playing the roles of $n, m, k, \cP, \varepsilon$, and $\ell$ to obtain an $\varepsilon_2$-uniform $\ell^*$-refinement $\cP$ of $\cP_0$.
		Let $\cA$ be the set of clusters $V\in \cP$ such that $V\subseteq W$ for some $W\in \cA_0$. Let $\cB$ be the set of clusters in $\cP\setminus \cA$.
		Let $k\coloneqq \ell^*\hk$ and $m\coloneqq \frac{m_0}{\ell^*}$.
		Let $R$ be the $\ell^*$-fold blow-up of $R_0$ induced by $\cP$. Then, $\delta^0(R)=\ell^*\delta^0(R_0)\geq \frac{\delta k}{3}$. By \cref{lm:biprobblowup2}, $R$ is a bipartite robust $(4\nu^4, 8\tau)$-outexpander with bipartition $(\cA, \cB)$.
		By \cref{lm:URefreg}\cref{lm:URef-reg} and \cref{step:bisetup-supreg}, $D[U,V]$ is $(\varepsilon_3, \geq 2d)$-regular for each $UV\in E(R)$.
		For each $i\in [2\hk]$, denote by $V_{i,1}^0, \dots, V_{i, \ell^*}^0$ the subclusters of $V_i^0$ contained in $\cP$.
		Let $C\coloneqq V_{1,1}^0V_{2,1}^0\dots V_{2\hk,1}^0V_{1,2}^0\dots V_{2\hk, \ell^*}^0$ and $U\coloneqq V_{i_1,1}^0V_{i_2,1}^0\dots V_{i_{2\ell'\hk},1}^0V_{1,2}^0\dots V_{i_{2\ell'\hk}, \ell^*}^0$.
		Then, $C$ is a Hamilton cycle of $R$ and $U$ is a bi-universal walk for $C$ in $R$ with parameter $\ell'$.
		Moreover, by \cref{lm:URefreg}\cref{lm:URef-supreg} and \cref{step:bisetup-supreg}, $D[U,V]$ is $[\varepsilon_3, \geq 2d]$-superregular for each $UV\in E(C)\cup E(U)$.
	
		Apply \cref{lm:URefexistence} with $2n, \varepsilon_3$, and $\ell'$ playing the roles of $n, \varepsilon$, and $\ell$ to obtain an $\epsilon_3$-uniform $\ell'$-refinement $\cP'$ of $\cP$.
		Let $V_1, \dots, V_{2k}$ be a relabelling of the clusters in $\cP$ such that $C=V_1\dots V_{2k}$ and let $i_1', \dots, i_{2\ell'k}'$ be such that $U=V_{i_1'}\dots V_{i_{2\ell'k}'}$.
		For each $i\in [2k]$, denote by $V_{i,1}, \dots, V_{i, \ell'}$ the subclusters of $V_i$ contained in $\cP'$.
		Let $U'\coloneqq V_{i_1',1}V_{i_2',1}\dots V_{i_{2\ell'k}',1}V_{i_1',2}\dots V_{i_{2\ell'k}',\ell'}$.
		By \cref{lm:URefreg}\cref{lm:URef-supreg}, $D[U,V]$ is $[\varepsilon_4, \geq 2d]$-superregular for each $UV\in E(U')$.
		
		Apply \cref{lm:URefexistence} with $2n, \varepsilon_3$, and $\frac{q}{f}$ playing the roles of $n, \varepsilon$, and $\ell$ to obtain an $\varepsilon_3$-uniform $\frac{q}{f}$-refinement $\cP^*$ of $\cP$.
		
		\item \textbf{Verifying \cref{lm:bisetup-parameters,lm:bisetup-bisetup}.}	
		Let $M''\coloneqq \ell^*M$. By our choice of $\hk$ in \cref{step:bisetup-k} and definition of $k$ in \cref{step:bisetup-refinements}, we have $\frac{\ell^*\tk}{2}\leq \ell^*\hk =k\leq \ell^*\tk$.
		Then, \cref{lm:bipreglm}\cref{lm:bipreglm-k} (with $\tk$ playing the role of $k$) implies that $M'\leq k\leq M''$. Moreover, \cref{step:bisetup-k} implies that $\frac{k}{7}, \frac{k}{f}, \frac{k}{g}, \frac{2fk}{3g(g-1)}\in \mathbb{N}$.
		By our choice of $m_0$ in \cref{step:bisetup-supreg} and definition of $m$ in \cref{step:bisetup-refinements}, we have $\frac{m}{50}, \frac{m}{4\ell'}, \frac{fm}{q}\in \mathbb{N}$. Thus, \cref{lm:bisetup-parameters} is satisfied.
				
		Let $D_1$ be obtained from $D$ by taking each edge independently with probability $\frac{1}{2}$. Define $D_2\coloneqq D\setminus D_1$. We need to show that \cref{lm:bisetup-bisetup} holds with positive probability. By \cref{lm:randomsetup,lm:randomsystem}, it suffices to show that the following properties are satisfied.
		\begin{enumerate}[label=\rm(\alph*)]
			\item $(D,\cP_0, R_0, C_0, \cP, R, C)$ is a consistent $(\ell^*,2k, m, \varepsilon_4, 2d, 4\nu^4, 8\tau, \frac{\delta}{3}, 3\theta)$-bi-system.\label{lm:bisetup-bisetup-bisystem}
			\item $(D, \cP, \cP',\cP^*, R, C, U ,U')$ is an $(\ell', \frac{q}{f}, 2k, m, \varepsilon_4, 2d)$-bi-setup.\label{lm:bisetup-bisetup-bisetup}
		\end{enumerate}
		By \cref{lm:bipreglm}\cref{lm:bipreglm-AB} and \cref{step:bisetup-refinements,step:bisetup-supreg}, \cref{def:BST-P} holds. Moreover, \cref{def:BST-R,def:BST-C,def:BST-U,def:BST-P',def:BST-U',def:BST-U'supreg,def:BST-P*} follow from \cref{step:bisetup-refinements}. Thus, \cref{lm:bisetup-bisetup-bisetup} holds.
		By \cref{step:bisetup-supreg} and definition of $k$ and $m$ in \cref{step:bisetup-refinements}, \cref{def:CBSys-P0} holds. By \cref{step:bisetup-refinements}, \cref{def:CBSys-P,def:CBSys-R,def:CBSys-R0} are satisfied and \cref{def:CBSys-deg} follows from \cref{def:URef}. Moreover, \cref{def:CBSys-C,def:CBSys-biproboutexp,def:CBSys-C0} follow from \cref{step:bisetup-refinements,step:bisetup-supreg}. Therefore \cref{lm:bisetup-bisetup-bisystem} holds.		
		\qedhere
	\end{steps}
\end{proof}

\onlyinsubfile{\bibliographystyle{abbrv}
	\bibliography{Bibliography/Bibliography}}

\begin{thebibliography}{10}

\bibitem{alon2004testing}
N.~Alon and A.~Shapira.
\newblock Testing subgraphs in directed graphs.
\newblock {\em Journal of Computer and System Sciences}, 69(3):354--382, 2004.

\bibitem{alspach2008wonderful}
B.~R. Alspach.
\newblock The wonderful {W}alecki construction.
\newblock {\em Bulletin of the Institute of Combinatorics and its
  Applications}, 52:7--20, 2008.

\bibitem{bermond1976decomposition}
J.-C. Bermond and V.~Faber.
\newblock Decomposition of the complete directed graph into $k$-circuits.
\newblock {\em Journal of Combinatorial Theory, Series B}, 21(2):146--155,
  1976.

\bibitem{csaba2016proof}
B.~Csaba, D.~K{\"u}hn, A.~Lo, D.~Osthus, and A.~Treglown.
\newblock Proof of the $1$-factorization and {H}amilton decomposition
  conjectures.
\newblock {\em Memoirs of the {A}merican {M}athematical {S}ociety}, 244(1154),
  2016.

\bibitem{dirac1972hamilton}
G.~A. Dirac.
\newblock On {H}amilton circuits and {H}amilton paths.
\newblock {\em Mathematische Annalen}, 197(1):57--70, 1972.

\bibitem{ferber2017counting}
A.~Ferber, M.~Krivelevich, and B.~Sudakov.
\newblock Counting and packing {H}amilton cycles in dense graphs and oriented
  graphs.
\newblock {\em Journal of Combinatorial Theory, Series B}, 122:196--220, 2017.

\bibitem{ferber2018counting}
A.~Ferber, E.~Long, and B.~Sudakov.
\newblock Counting {H}amilton decompositions of oriented graphs.
\newblock {\em International Mathematics Research Notices},
  2018(22):6908--6933, 2018.

\bibitem{frieze2005packing}
A.~Frieze and M.~Krivelevich.
\newblock On packing {H}amilton cycles in $\varepsilon$-regular graphs.
\newblock {\em Journal of Combinatorial Theory, Series B}, 94(1):159--172,
  2005.

\bibitem{girao2020path}
A.~Gir\~{a}o, B.~Granet, D.~K\"{u}hn, A.~Lo, and D.~Osthus.
\newblock Path decompositions of tournaments.
\newblock {\em Proceedings of the London Mathematical Society}, to appear.

\bibitem{glebov2017number}
R.~Glebov, Z.~Luria, and B.~Sudakov.
\newblock The number of {H}amiltonian decompositions of regular graphs.
\newblock {\em Israel Journal of Mathematics}, 222(1):91--108, 2017.

\bibitem{hall1935representatives}
P.~Hall.
\newblock On representatives of subsets.
\newblock {\em Journal of the London Mathematical Society}, s1-10(1):26--30,
  1935.

\bibitem{hetyei19751}
G.~Hetyei.
\newblock On the $1$-factors and {H}amilton circuits of complete $n$-colorable
  graphs (in {H}ungarian).
\newblock {\em Acta Academiae Paedagogicae in Civitate Pécs. Seria 6.
  Mathematica-Physica-Chemica-Technica}, 19:5--10, 1975.

\bibitem{jackson1981long}
B.~Jackson.
\newblock Long paths and cycles in oriented graphs.
\newblock {\em Journal of Graph Theory}, 5(2):145--157, 1981.

\bibitem{janson2011random}
S.~Janson, T.~\L{}uczak, and A.~Ruci\'{n}ski.
\newblock {\em Random Graphs}.
\newblock John Wiley \& Sons, 2000.

\bibitem{joos2021decomposing}
F.~Joos, M.~K{\"u}hn, and B.~Sch{\"u}lke.
\newblock Decomposing hypergraphs into cycle factors.
\newblock {\em arXiv preprint arXiv:2104.06333}, 2021.

\bibitem{keevash2009exact}
P.~Keevash, D.~K{\"u}hn, and D.~Osthus.
\newblock An exact minimum degree condition for {H}amilton cycles in oriented
  graphs.
\newblock {\em Journal of the London Mathematical Society}, 79(1):144--166,
  2009.

\bibitem{konig1916graphok}
D.~K{\"o}nig.
\newblock Graphok {\'e}s alkalmaz{\'a}suk a determin{\'a}nsok {\'e}s a halmazok
  elm{\'e}let{\'e}re.
\newblock {\em Mathematikai {\'e}s Term{\'e}szettudom{\'a}nyi Ertesito},
  34:104--119, 1916.

\bibitem{konig1916graphen}
D.~K{\"o}nig.
\newblock {\"U}ber {G}raphen und ihre {A}nwendung auf {D}eterminantentheorie
  und {M}engenlehre.
\newblock {\em Mathematische Annalen}, 77(4):453--465, 1916.

\bibitem{kuhn2015robust}
D.~K{\"u}hn, A.~Lo, D.~Osthus, and K.~Staden.
\newblock The robust component structure of dense regular graphs and
  applications.
\newblock {\em Proceedings of the London Mathematical Society}, 110(1):19--56,
  2015.

\bibitem{kuhn2013hamilton}
D.~K{\"u}hn and D.~Osthus.
\newblock Hamilton decompositions of regular expanders: a proof of {K}elly's
  conjecture for large tournaments.
\newblock {\em Advances in Mathematics}, 237:62--146, 2013.

\bibitem{kuhn2014hamiltonICM}
D.~K{\"u}hn and D.~Osthus.
\newblock Hamilton cycles in graphs and hypergraphs: an extremal perspective.
\newblock In {\em Proceedings of the {I}nternaltional {C}ongress of
  {M}athematicians}, volume~4, pages 381--406, Seoul, South Korea, 2014.

\bibitem{kuhn2014hamilton}
D.~K{\"u}hn and D.~Osthus.
\newblock Hamilton decompositions of regular expanders: applications.
\newblock {\em Journal of Combinatorial Theory, Series B}, 104:1--27, 2014.

\bibitem{kuhn2010hamiltonian}
D.~K{\"u}hn, D.~Osthus, and A.~Treglown.
\newblock Hamiltonian degree sequences in digraphs.
\newblock {\em Journal of Combinatorial Theory, Series B}, 100(4):367--380,
  2010.

\bibitem{laskar1976decomposition}
R.~Laskar and B.~Auerbach.
\newblock On decomposition of $r$-partite graphs into edge-disjoint {H}amilton
  circuits.
\newblock {\em Discrete Mathematics}, 14(3):265--268, 1976.

\bibitem{liebenau2020approximate}
A.~Liebenau and Y.~Pehova.
\newblock An approximate version of {J}ackson’s conjecture.
\newblock {\em Combinatorics, Probability and Computing}, 29(6):886--899, 2020.

\bibitem{lucas1883recreationsII}
{\'E}.~Lucas.
\newblock {\em R{\'e}cr{\'e}ations Math{\'e}matiques}, volume~II.
\newblock Gauthier-Villars, 1883.

\bibitem{mcdiarmid1989method}
C.~McDiarmid.
\newblock On the method of bounded differences.
\newblock In {\em Surveys in Combinatorics 1989, London Mathematical Society
  Lecture Note Series 141}, pages 148--188. Cambridge University Press, 1989.

\bibitem{moon1968topics}
J.~W. Moon.
\newblock {\em Topics on tournaments in graph theory}.
\newblock Holt, Rinehart and Winston, 1968.

\bibitem{ng1997hamiltonian}
L.~L. Ng.
\newblock Hamiltonian decomposition of complete regular multipartite digraphs.
\newblock {\em Discrete Mathematics}, 177(1-3):279--285, 1997.

\bibitem{osthus2013approximate}
D.~Osthus and K.~Staden.
\newblock Approximate {H}amilton decompositions of robustly expanding regular
  digraphs.
\newblock {\em SIAM Journal on Discrete Mathematics}, 27(3):1372--1409, 2013.

\bibitem{taylor2013regularity}
A.~Taylor.
\newblock {\em The regularity method for graphs and digraphs}.
\newblock M{S}ci thesis, University of Birmingham, 2013.

\bibitem{tillson1980hamiltonian}
T.~W. Tillson.
\newblock A {H}amiltonian decomposition of ${K}_{2m}^*$, $2m\geq 8$.
\newblock {\em Journal of Combinatorial Theory, Series B}, 29(1):68--74, 1980.

\bibitem{west2001introduction}
D.~B. West.
\newblock {\em Introduction to Graph Theory (Second Edition)}.
\newblock Pearson, 2001.

\end{thebibliography}

	}

\printglossary
\end{document}